\def\cleardoublepage{\clearpage\if@twosize\if@odd\c@page\else
\vspace*{\fill}
\thispagestyle{empty}
\newpage
\if@twocolumn\hbox{}\newpage\fi\fi\fi}
\newtheorem{theorem}{Theorem}[section]
\newtheorem{corollary}[theorem]{Corollary}
\newtheorem*{corolario}{Corolario}
\newtheorem*{teorema}{Teorema}
\newtheorem*{corolario1}{Corollary}
\newtheorem*{teorema1}{Theorem}
\newtheorem{definition}[theorem]{Definition}
\newtheorem{example}[theorem]{Example}
\newtheorem{examples}[theorem]{Examples}
\newtheorem{lemma}[theorem]{Lema}
\newtheorem{question}[theorem]{Question}
\newtheorem{notation}[theorem]{Notation}
\newtheorem{proposition}[theorem]{Proposition}
\newtheorem{remark}[theorem]{Remark}
\newtheorem{TG}[theorem]{Terminology of Grothendieck}
\newtheorem{remarks}[theorem]{Remarks}
\newcommand{\pullbackcorner}[1][dr]{\save*!/#1+1.2pc/#1:(1,-1)@^{*}\restore}
\newcommand{\pushoutcorner}[1][dr]{\save*!/#1-1.2pc/#1:(-1,1)@^{*}\restore}
\newcommand{\Z}{$\mathbb{Z}$}
\newcommand{\p}{\mathbf{p}}
\newcommand{\C}{\mathcal{C}}
\newcommand{\D}{\mathcal{D}}
\newcommand{\Hom}{\text{Hom}}
\newcommand{\Ext}{\text{Ext}}
\newcommand{\MinSpec}{\text{MinSpec}}
\newcommand{\End}{\text{End}}
\newcommand{\Pres}{\text{Pres}}
\newcommand{\Copres}{\text{Copres}}
\newcommand{\Supp}{\text{Supp}}
\newcommand{\Add}{\text{Add}}
\newcommand{\add}{\text{add}}
\newcommand{\ann}{\text{ann}}
\newcommand{\Rej}{\text{Rej}}
\newcommand{\Spec}{\text{Spec}}
\newcommand{\Prod}{\text{Prod}}
\newcommand{\te}{\mathbf{t}}
\newcommand{\flecha}{\xymatrix{ \ar[r] &}}
\newcommand{\Ker}{\text{Ker}}
\newcommand{\Gen}{\text{Gen}}
\newcommand{\Cogen}{\text{Cogen}}
\newcommand{\Cone}{\text{Cone}}
\newcommand{\Coker}{\text{Coker}}
\newcommand{\Imagen}{\text{Im}}
\newcommand{\Mod}{\text{Mod-}}
\newcommand{\Mode}{\text{-Mod}}
\newcommand{\limite}{\varinjlim_{\mathcal{H}_{\mathbf{t}}}}
\newcommand{\Ht}{\mathcal{H}_{\mathbf{t}}}
\newcommand{\Hp}{\mathcal{H}_{\phi}}
\newcommand{\G}{\mathcal{G}}
\newcommand{\progen}{\xymatrix{G:=\cdots \ar[r] & 0 \ar[r] & X \ar[r]^{j} & Q \ar[r]^{d} & P\ar[r] & 0 \ar[r] & \cdots}}
\newcommand{\T}{\mathcal{T}}
\newcommand{\F}{\mathcal{F}}
\newcommand{\iso}{\xymatrix{\ar[r]^{\sim} &}}
\newcommand{\epic}{\xymatrix{\ar@{>>}[r] &}}
\newcommand{\monic}{\xymatrix{\ar@{^(->}[r] &}}
\begin{document}
\thispagestyle{empty}
\begin{titlepage}
\begin{center}
 \includegraphics[width=50mm]{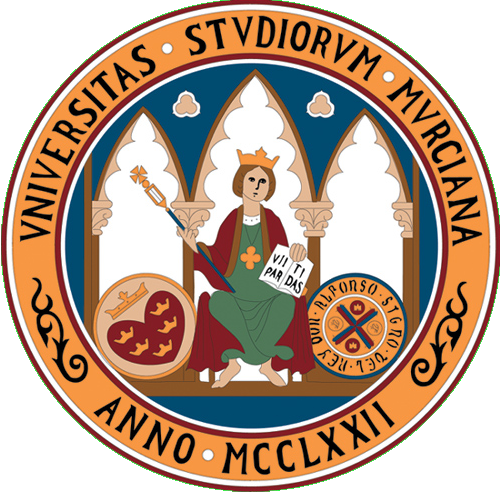}
\end{center}
\vspace{1mm}
\begin{center}
{\LARGE{\bf UNIVERSIDAD DE MURCIA}}\\[8mm]
{\Large \sffamily FACULTAD DE MATEM\'ATICAS}\\[3mm]

\vspace{2cm}
\begin{large}{ \slshape HEARTS OF T-STRUCTURES WHICH ARE GROTHENDIECK OR MODULE CATEGORIES} 
\end{large}


\vspace{0.5 cm}
\begin{large}{\slshape CORAZONES DE T-ESTRUCTURAS QUE SON CATEGOR\'IAS DE GROTHENDIECK O DE M\'ODULOS}
\end{large} 

\vspace{4cm}
\begin{Large}
D. Carlos Eduardo Parra Molina\\ 
\end{Large}
\vspace{0.8 cm}
\begin{Large}
2014
\end{Large}
\end{center}
\end{titlepage}


\newpage




\textbf{Acknowledgements}
\bigskip

I am grateful to my supervisor, Manuel Saor\'in Casta\~no, whose expertise, understanding, generous guidance and support made it possible for me to work on a topic that was of great interest to me. It was a pleasure to work with him.\\

I also thank the Universidad de Los Andes, Venezuela, for making this
study possible by providing me with a grant.\\

Finally, I thank the members of the Department of Informatics of the University of Verona, and specially professor Lidia Angeleri-H$\ddot{u}$gel, for their help and support during my visit while preparing this thesis.

\newpage

\tableofcontents

\fancyhead{}
\fancyhead[RO,LE]{\Large Introducci\'on}
\chapter*{Introducci\'{o}n}
\addcontentsline{toc}{chapter}{Introducci\'on}
\renewcommand{\thepage}{\roman{page}}
\setcounter{page}{1}

La noci\'on de \emph{t-estructura}\index{t-estructura} fue introducida por Beilinson, Bernstein y Deligne \cite{BBD}, en su estudio de los haces perversos sobre una variedad anal\'itica o algebraica estratificada por un subconjunto cerrado. Permite asociar a un objeto de una categor\'ia triangulada arbitraria sus correspondientes ``objetos de homolog\'ia'', que viven en una cierta subcategor\'ia abeliana de dicha categor\'ia triangulada. Tal subcategor\'ia es llamada \emph{coraz\'on}\index{coraz\'on} de la t-estructura. Las t-estructuras han dado lugar a una cantidad considerable de resultados que son importantes en muchas ramas de las Matem\'aticas. No obstante, la definici\'on de t-estructura da origen a un problema dif\'icil que ha llamado la atenci\'on de varios investigadores, como Colpi, Gregorio, Mantese, Tonolo, etc. De manera natural surge la cuesti\'on de cu\'ando dicho coraz\'on es una categor\'ia abeliana lo m\'as ``agradable'' posible. Lo que en el argot categ\'orico significar\'ia cu\'ando la categor\'ia en cuesti\'on es una categor\'ia de m\'odulos o, en su defecto, una categor\'ia de Grothendieck. Planteado de esa manera tan general, el problema es inabordable. Por tanto, es natural imponer restricciones sobre la categor\'ia triangulada ambiente y, tambi\'en sobre la t-estructura que se considera en la misma. De hecho, todos los trabajos que conocemos en esta direcci\'on, est\'an concentrados en la llamada t-estructura de Happel-Reiten-Smal\o.\\

En 1996, Happel, Reiten y Smal\o \ \cite{HRS} asocian a cada par de torsi\'on en una categor\'ia abeliana $\mathcal{A}$, una t-estructura en la categor\'ia derivada acotada $\D^{b}(\mathcal{A})$. En el caso en que $\D(\mathcal{A})$ sea una categor\'ia bien definida, esta t-estructura es la restricci\'on de un t-estructura en $\D(\mathcal{A})$. Diversos autores han estudiado su coraz\'on, con el objetivo de dar respuesta a las siguientes cuestiones: \\

{\bf Cuesti\'on 1}: Dado un par de torsi\'on $\te=(\T,\F)$ en una categor\'ia abeliana $\mathcal{A}$, ?`cu\'ando es el coraz\'on $\Ht$ de la t-estructura asociada en $\D(\mathcal{A})$ una categor\'ia de Grothendieck?. \\

Para el caso m\'as ambicioso:\\

{\bf Cuesti\'on 2}: Dado un anillo asociativo con unidad $R$ y un par de torsi\'on $\te=(\T,\F)$ en $R$-Mod, ?`cu\'ando es $\Ht$ una categor\'ia de m\'odulos?.\\

La cuesti\'on 1, fue estudiada por Colpi, Gregorio y Mantese en \cite{CGM}. En dicho trabajo, utilizan la teor\'ia de \emph{inclinaci\'on}\index{inclinaci\'on} de m\'odulos y la relaci\'on de esta con las categor\'ias derivadas, descubierta por Happel \cite{Ha}. Los autores mencionados se plantean el problema en el contexto particular de la teor\'ia de inclinaci\'on. Para ello toman la noci\'on de objeto inclinante en una categor\'ia abeliana arbitraria $\mathcal{A}$ introducida en \cite{CF} y consideran el par de torsi\'on que dicho objeto, denotado $V$ en lo que sigue, define en $\mathcal{A}$. Por el llamado ``teorema inclinante'' (ver principio de la secci\'on \ref{remark tilting theorem}), este par de torsi\'on define a su vez un par de torsi\'on \emph{fiel}, $\te^{'}=(\mathcal{X,Y})$ en Mod-$R$, donde $R=\text{End}_{\mathcal{A}}(V)$ es el anillo de endomorfismos de $V$. En el curso de la aproximaci\'on al problema, los autores muestran que una categor\'ia abeliana $\mathcal{A}$ tiene un objeto inclinante si, y s\'olo si, es equivalente a $\mathcal{H}_{\mathbf{t}^{'}}$, donde $\te^{'}$ es un par de torsi\'on fiel en una categor\'ia de m\'odulos. A la vista de este hecho, los autores estudiaron un caso particular de la cuesti\'on 1, sacando provecho del objeto inclinante que aparece de manera impl\'icita:\\

{\bf Cuestion 1}$^{'}$: Dado un anillo $R$ y un par de torsi\'on fiel $\te^{'}=(\mathcal{X,Y})$ en $R$-Mod, ?`cu\'ando es $\mathcal{H}_{\te^{'}}$ una categor\'ia de Grothendieck? \\

Para tal cuesti\'on, Colpi, Gregorio y Mantese dan una condici\'on necesaria (ver \cite[Proposition 3.8]{CGM}), que posteriormente Colpi y Gregorio en \cite{CG}, muestran que es tambi\'en suficiente y, por tanto, es una caracterizaci\'on. En concreto muestran, que $\mathcal{H}_{\te^{'}}$ es una categor\'ia de Grothendieck si, y s\'olo si, $\te^{'}$ es una teor\'ia de torsi\'on coinclinante. Durante la elaboraci\'on de esta memoria, nosotros decidimos liberarnos de esta aproximaci\'on por la v\'ia de la teor\'ia de la inclinaci\'on, decidiendo abordar la cuesti\'on 1 sin precondiciones sobre $\te$, pero exigiendo que $\mathcal{A}=\G$ sea una categor\'ia de Grothendieck. \\

En 1964, Barry Mitchell caracteriza las categor\'ias de m\'odulos como las categor\'ias abelianas con coproductos que tienen un generador projectivo compacto (=peque\~no). Tal objeto es llamado \emph{progenerador}\index{progenerador}. Es por ello, que estudiar la cuesti\'on 2, se reduce a construir un progenerador de $\Ht$. Un ejemplo en el que $\Ht$ es una categor\'ia de m\'odulos es debido a Hoshino, Kato y Miyachi \cite{HKM}. Estos autores asocian a cada complejo de dos t\'erminos finitamente generados proyectivos, un par de subcategor\'ias de $R$-Mod y estudian las condiciones bajo las cuales dichas subcategor\'ias forman un par de torsi\'on. En tal caso, el correspondiente coraz\'on es una categor\'ia de m\'odulos. En esta memoria llamamos \emph{complejo HKM} \index{complejo HKM}a cualquier complejo como el indicado que define un par de torsi\'on. Llamamos \emph{par de torsi\'on HKM} a un tal par. Surge entonces la siguiente cuesti\'on:\\

{\bf Cuestion 3:} Si $\Ht$ es una categor\'ia de m\'odulos, ?`es $\te$ un par de torsi\'on HKM?\\

En \cite{CGM}, los autores dan respuesta definitiva a la cuesti\'on 2, para pares de torsi\'on fieles en una categor\'ia de m\'odulos $R$-Mod. Concretamente, el coraz\'on asociado a un par de torsi\'on fiel, es una categor\'ia de m\'odulos si, y s\'olo si, tal coraz\'on es equivalente a el coraz\'on de una t-estructura generada por un complejo inclinante. Sin embargo, esta \'ultima condici\'on es dif\'icil de verificar, quiz\'as tanto como verificar la existencia del progenerador. Esta raz\'on motiv\'o a Colpi, Mantese y Tonolo \cite{CMT} a enfocar el problema con la existencia del progenerador, obteniendo una caracterizaci\'on en funci\'on del mismo, m\'as f\'acil de verificar. No obstante, qued\'o el problema abierto para pares de torsi\'on que no son fieles. En los trabajos \cite{CGM} y \cite{CMT}, no hay un abordaje explic\'ito de la cuesti\'on 3. Ello es llevado a cabo por Mantese y Tonolo \cite{MT}. En este trabajo relacionan los resultados anteriores, de Colpi, Gregorio, Mantese y Tonolo con el trabajo \cite{HKM}. Prueban que si el anillo $R$ es semiperfecto ``poised'' \'o $\te$ es un par de torsi\'on fiel, entonces la respuesta a la pregunta 3 es afirmativa. Esto hizo m\'as significativa la pregunta, por la posibilidad de que la respuesta fuese siempre afirmativa.  \\ 



Uno de nuestros objetivos, desde el principio de este trabajo, fue tratar de abstraer algunas propiedades (ideas y/o argumentos) que surgen al estudiar la t-estructura de Happel-Reiten-Smal\o,  para aplicarlas a t-estructuras arbitrarias. Debido a un trabajo reciente de Alonso,  Jerem\'ias y Saor\'in \cite{AJS}, donde clasifican todas las t-estructuras compactamente generada en la categor\'ia derivada de un anillo Noetheriano conmutativo, el pr\'oximo objetivo a estudiar fue el coraz\'on de tales t-estructuras, plante\'andonos tambi\'en en este contexto la cuesti\'on de cu\'ando el coraz\'on es una categor\'ia de Grothendieck o una categor\'ia de m\'odulos. \\


Pasamos a describir el contenido de esta memoria, que est\'a dividido en 6 cap\'itulos. \\

{\bf Cap\'itulo 1}\\

En este cap\'itulo se introduce las definiciones y resultados m\'as relevantes. Hemos tratado de mantener una equidistancia entre preparar una memoria totalmente autocontenida, que la habr\'ia alargada en exceso, y el abordar los problemas planteados directamente, sin una m\'inima dosis de preliminares. Esperamos que el resumen de conceptos y resultados cl\'asicos m\'as relevantes para la memoria sea suficiente para la \'agil lectura de la misma.\\


{\bf Cap\'itulo 2}\\

El segundo cap\'itulo de esta tesis, est\'a dedicado a presentar los resultados previos a los que hemos hecho menci\'on anteriormente. Est\'an organizados en tres secciones de la siguiente manera:

\begin{enumerate}
\item[-] En la secci\'on 1, presentamos los resultados previos a las cuestiones 1 y 2, que fueron estudiadas en los trabajos \cite{CGM}, \cite{CMT}, \cite{HKM}, \cite{CG} y \cite{MT}. Todo esto con el fin, de aclarar los casos a\'un pendientes de estudio.

\item[-] En la secci\'on 2, introducimos la notaci\'on y terminolog\'ia de \cite{AJS}, as\'i como tambi\'en algunos resultados de dicho trabajo, que fueron \'utiles para nuestros fines.

\item[-] En la secci\'on 3, planteamos una lista de preguntas que surgen de manera natural, tanto para la t-estructura de Happel-Reiten-Smal\o, como para t-estructuras m\'as generales.

\end{enumerate}

{\bf Cap\'itulo 3}\\

En el tercer cap\'itulo, se empieza estudiando con detenimiento el coraz\'on de cualquier t-estructura, con fines de obtener resultados que nos caracterizen cu\'ando dicho coraz\'on es AB5. Como es natural, lo primero que deb\'iamos chequear es la condici\'on AB3, la cual queda resuelta con la proposici\'on \ref{AB3 t-structure}, donde s\'olo exigimos condiciones sobre la categor\'ia triangulada ambiente. El siguiente paso natural a seguir es estudiar la condici\'on AB4. Esta condici\'on es resuelta en la proposici\'on \ref{sufficient AB4}, exigiendo un m\'inimo de condiciones tanto a la categor\'ia triangulada como a la t-estructura. Motivados por el trabajo \cite{CGM}, se logra dar una condici\'on suficiente para que el coraz\'on de una t-estructura en la categor\'ia derivada de una categor\'ia de Grothendieck, sea AB5.

\begin{corolario}[\ref{sufficient AB5 general}]
Sea $\mathcal{G}$ una categor\'ia de Grothendieck, sea $(\mathcal{U},\mathcal{U}^{\perp}[1])$ una t-estructura en $\D(\G)$ y denotemos $\mathcal{H}:=\mathcal{U}\cap \mathcal{U}[1]$, su coraz\'on. \\

Si los funtores de homolog\'ia $\xymatrix{H^{m}:\mathcal{H} \ar[r] & \mathcal{G}}$ conmutan con l\'imites directos, para todo entero $m$, entonces $\mathcal{H}$ es una categor\'ia abeliana AB5.
\end{corolario}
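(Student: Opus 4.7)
The plan is to reduce AB5 for $\mathcal{H}$ to the AB5 property of $\mathcal{G}$ via the cohomology functors. By Proposition \ref{AB3 t-structure}, the heart $\mathcal{H}$ is AB3, so filtered direct limits exist in $\mathcal{H}$, and it suffices to show that they preserve short exact sequences. Fix therefore a directed system of short exact sequences $(0 \to X_i \to Y_i \to Z_i \to 0)_{i \in I}$ in $\mathcal{H}$ and set $A := \varinjlim X_i$, $B := \varinjlim Y_i$, $C := \varinjlim Z_i$, all direct limits computed in $\mathcal{H}$.

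By the general theory of t-structures, each short exact sequence in $\mathcal{H}$ arises from a distinguished triangle $X_i \to Y_i \to Z_i \to X_i[1]$ in $\mathcal{D}(\mathcal{G})$. Applying the cohomology functors $H^m: \mathcal{D}(\mathcal{G}) \to \mathcal{G}$ yields the long exact sequence
\[\cdots \to H^{m-1}(Z_i) \to H^m(X_i) \to H^m(Y_i) \to H^m(Z_i) \to H^{m+1}(X_i) \to \cdots\]
in $\mathcal{G}$. Since $\mathcal{G}$ is AB5, the direct limit over $I$ preserves exactness, and the hypothesis that each $H^m$ commutes with direct limits in $\mathcal{H}$ converts the limit sequence into the exact sequence
\[\cdots \to H^{m-1}(C) \to H^m(A) \to H^m(B) \to H^m(C) \to H^{m+1}(A) \to \cdots\]
in $\mathcal{G}$, valid for every integer $m$.

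Right-exactness of colimits in any AB3 abelian category yields that $A \to B \to C \to 0$ is already exact in $\mathcal{H}$, so it remains only to show that $\phi: A \to B$ is a monomorphism. Let $K$ be its kernel and $I$ its image in $\mathcal{H}$, producing two short exact sequences $0 \to K \to A \to I \to 0$ and $0 \to I \to B \to C \to 0$. Applying the octahedral axiom to the factorisation $A \twoheadrightarrow I \hookrightarrow B$ in $\mathcal{D}(\mathcal{G})$ produces a distinguished triangle
\[K[1] \to \mathrm{cone}(\phi) \to C \to K[2]\]
and, in particular, a natural morphism $\mathrm{cone}(\phi) \to C$. Applying $H^m$ to the triangle $A \to B \to \mathrm{cone}(\phi) \to A[1]$ yields a long exact sequence, and the natural morphism above provides a map from this long exact sequence to the displayed limit sequence that is compatible with all connecting homomorphisms. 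The five lemma forces $H^m(\mathrm{cone}(\phi)) \to H^m(C)$ to be an isomorphism for every $m$, so $\mathrm{cone}(\phi) \to C$ is a quasi-isomorphism, hence an isomorphism in $\mathcal{D}(\mathcal{G})$. The octahedral triangle then forces $K[1] = 0$, whence $K = 0$ and $\phi$ is a monomorphism.

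The main obstacle is the compatibility verification that underpins the five-lemma step: one must check explicitly that the connecting homomorphisms produced by the octahedral construction agree, under $H^m$, with those arising from the direct limit of the original triangles, so that the diagram relating the two long exact sequences is genuinely commutative. Once this naturality is in hand, the rest of the argument is formal.
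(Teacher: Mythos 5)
Your overall strategy is the same as the paper's (its Proposition \ref{sufficient AB5 general}, specialized to the cohomological datum $(H^{0},\infty)$): reduce everything to a five--lemma comparison between the direct limit of the long exact homology sequences and the long exact sequence of a triangle built from the factorization of $\varinjlim X_i\to\varinjlim Y_i$. The first half of your argument (right exactness gives $A\to B\to C\to 0$, so only the injectivity of $\phi$ is at stake) is exactly what the paper does. The problem is the step you yourself single out at the end: the commutativity of the square relating the connecting homomorphism $H^{m}(\mathrm{Cone}(\phi))\to H^{m+1}(A)$ of the cone triangle with the composite $H^{m}(\mathrm{Cone}(\phi))\to H^{m}(C)\xrightarrow{\partial} H^{m+1}(A)$, where $\partial$ is the direct limit of the connecting maps $H^{m}(Z_i)\to H^{m+1}(X_i)$. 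This is not a routine naturality check. The map $\partial$ is not induced by any morphism $C\to A[1]$ in $\D(\G)$ (if it were, $A\to B\to C$ would essentially already be a triangle), and the TR3 completions $Z_i\to\mathrm{Cone}(\phi)$ of the commuting squares $(X_i\to Y_i,\,A\to B)$ are neither unique nor functorial in $i$, so they need not assemble into anything compatible with the cocone $Z_i\to C$. Without that square the five lemma cannot be applied, so as written the proof is incomplete precisely at its load-bearing point.

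The paper sidesteps this by never comparing with the cone. It factors $\phi$ through its image $L=\Imagen_{\mathcal H}(\phi)$ and compares the limit long exact sequence with the long exact sequence of the \emph{genuine} short exact sequence $0\to L\to B\to C\to 0$ in $\mathcal H$. The triples $(X_i\to L,\;Y_i\to B,\;Z_i\to C)$ form a morphism of short exact sequences in the heart, and for such morphisms the long exact sequence of $H^{\bullet}$ is functorial for free: the connecting morphism is the Yoneda class in $\Ext^{1}_{\mathcal H}(Z,X)=\Hom_{\D(\G)}(Z,X[1])$, which is natural in the short exact sequence. The five lemma then yields that $\varinjlim H^{m}(X_i)\to H^{m}(L)$ is an isomorphism for all $m$; combined with your hypothesis that $\varinjlim H^{m}(X_i)\to H^{m}(A)$ is an isomorphism, this forces $H^{m}(A)\to H^{m}(L)$ to be an isomorphism for all $m$, so the kernel $W$ of $A\twoheadrightarrow L$ satisfies $H^{m}(W)=0$ for all $m$ and hence $W=0$. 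If you replace your cone comparison by this image comparison, your argument becomes complete and coincides with the paper's.
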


De dicho corolario se desprende un resultado m\'as general, que nos trae como consecuencia el siguiente teorema.

\begin{teorema}[\ref{teo. H compactly general}]
Sea $\D$ una categor\'ia triangulada con coproductos, tal que coproductos arbitrarios de tri\'angulos son tri\'angulos. Para cualquier t-estructura compactamente generada en $\D$, los l\'imites directos numerables son exactos en el coraz\'on.   
\end{teorema}

En el resto del cap\'itulo, nos concentramos en estudiar la t-estructura de Happel-Reiten-Smal\o, con miras a resolver la cuesti\'on 1, donde la categor\'ia abeliana ambiente es una categor\'ia de Grothendieck. En tal caso, todo objeto del coraz\'on viene determinado por una sucesi\'on exacta corta con tallos en los extremos, concentrados en grados -1 y 0 (sus homolog\'ias en las respectivas posiciones). Es por ello, que prestamos atenci\'on a los tallos del coraz\'on, obteniendo as\'i el siguiente resultado, donde el rec\'iproco del corolario anterior se cumple.

\begin{teorema}[\ref{caracterizacion AB5}]
Sea $\G$ una categor\'ia de Grothendieck y $\te=(\T,\F)$ un par de torsi\'on en $\G$. Las siguientes afirmaciones son equivalentes:
\begin{enumerate}
\item[1)] $\Ht$ es una categor\'ia abeliana AB5; 
\item[2)] $\F$ es una clase cerrada bajo l\'imites directos y para cualquier sistema directo $(M_i)$ de $\Ht$, el morfismo can\'onico $\varinjlim{H^{-1}(M_i)} \flecha H^{-1}(\varinjlim_{\Ht}M_i) $ es un monomorfismo;
\item[3)] Los funtores de homolog\'ia $H^{m}:\Ht \flecha \G$ conmutan con l\'imites directos, para todo entero $m$.
\end{enumerate} 
\end{teorema}

Quedando solventado el problema ``AB5'', procedimos entonces a estudiar cu\'ando el coraz\'on tiene un generador. Garantizar la existencia de un generador en el coraz\'on no es una tarea f\'acil. Por ello nos hemos visto obligados a estudiar unos casos particulares de pares de torsi\'on. Nuestros resultados prueban que, para los pares de torsi\'on m\'as usuales (por ejemplo, inclinantes, coinclinantes o hereditarios), el coraz\'on de la t-estructura asociada tiene un generador. M\'as a\'un, se tiene:

\begin{teorema}[\ref{Grothendieck characterization}]
Sea $\G$ una categor\'ia de Grothendieck y $\te=(\T,\F)$ un par de torsi\'on en $\G$ satisfaciendo al menos una, de las siguientes condiciones:
\begin{enumerate}
\item[a)] $\te$ es hereditario.

\item[b)] Cada objeto de $\Ht$ es isomorfo en $\D(\G)$ a un complejo $F$, tal que $F^{k}=0$, si $k\neq -1,0$, y $F^{k}\in \F$, si $k=-1,0$.

\item[c)] Cada objeto de $\Ht$ es isomorfo en $\D(\G)$ a un complejo $T$, tal que $T^{k}=0$, si $k\neq -1,0$, y $T^{k}\in \T$, si $k=-1,0$.
\end{enumerate}
Las siguientes afirmaciones son equivalentes:
\begin{enumerate}
\item[1)] $\Ht$ es una categor\'ia de Grothendieck; 
\item[2)] $\Ht$ es una categor\'ia abeliana AB5;
\item[3)] $\F$ es una clase cerrada bajo l\'imites directos.
\end{enumerate} 
\end{teorema}

Utilizando el teorema anterior, logramos dar resultados m\'as all\'a de la condici\'on Grothendieck que se buscaba para los pares de torsi\'on inclinante y coinclinante, generalizando algunos resultados de \cite{CG}, \cite{CGM} y \cite{BK}. \\

{\bf Cap\'itulo 4}\\

En este cap\'itulo damos respuesta definitiva a la cuesti\'on 2. Al igual que Colpi, Mantese y Tonolo, nuestro objetivo se bas\'o en ver qu\'e propiedades debe satisfacer un complejo para ser un progenerador del coraz\'on. Aqu\'i, el teorema \ref{caracterizacion AB5} juega un papel importante, pues el hecho de que la clase $\F$ sea cerrada para l\'imites directos arroja cierta informaci\'on sobre la homolog\'ia en cero del progenerador. Ello nos ha permitido dar la caracterizaci\'on que busc\'abamos.

\begin{teorema}[\ref{teo. Ht is a module category}]
El coraz\'on $\Ht$ es una categor\'ia de m\'odulos si, y s\'olo si, existe un complejo de $R$-m\'odulos
$$\xymatrix{G:=\cdots \ar[r] & 0 \ar[r] & X \ar[r]^{j} & Q \ar[r]^{d} & P \ar[r] & 0 \ar[r] & \cdots}$$
con $P$ en grado cero, satisfaciendo las siguientes propiedades, donde $V=H^{0}(G)$:
\begin{enumerate}
\item[1)] $\T=\Pres(V)\subseteq \Ker(\Ext^{1}_{R}(V,?))$;

\item[2)] $Q$ y $P$ son $R$-m\'odulos proyectivos finitamente generados y $j$ es un monomorfismo tal que $H^{-1}(G)\in \F$;

\item[3)] $H^{-1}(G)\subseteq \Rej_{\T}(\frac{Q}{X})$;

\item[4)] $\Ext^{1}_{R}(\Coker(j),?)$ se anula en $\F$;

\item[5)] Existe un morfismo $h:(\frac{Q}{X})^{(I)}\flecha \frac{R}{t(R)}$, para alg\'un conjunto $I$, tal que el con\'ucleo de su restricci\'on a $(H^{-1}(G))^{(I)}$ pertenece a $\overline{Gen}$(V)
\end{enumerate} 
(En este enunciado $\Rej_\T(M)$ es el ``\emph{reject}'' de $\T$ en $M$, se decir, la intersecci\'on de todos los n\'ucleos de homomorfismos $M\flecha T$, con $T\in \T$. Por otro lado, $\overline{\Gen}(V)$ es la clase de los $R$-m\'odulos subgenerados por $V$).
\end{teorema}

Aprovechando este resultado, nos concentramos en la cuesti\'on 3. M\'as concretamente, si $\te$ es un par de torsi\'on HKM, ?`qu\'e relaci\'on hay entre el complejo HKM que define el par de torsi\'on $\te$ y el progenerador de $\Ht$?. Como consecuencia, hemos obtenido dos ejemplos con cierta particularidad. Uno de estos ejemplos muestra un complejo HKM que no pertenece a $\Ht$ (ver ejemplo \ref{exam. HKM torsion}), mientras que el otro ejemplo da respuesta negativa a la cuesti\'on 3 (ver corolario \ref{cor. Ht module not HKM}).\\

En gran parte de este cap\'itulo, nos concentramos en el caso que $\te$ es un par de torsi\'on hereditario. En tal caso las condiciones del teorema \ref{teo. Ht is a module category}, en cierta forma, se simplifican. Sin embargo, surge de manera natural una divisi\'on del problema. Por un lado est\'a el caso cuando $\T$ es cerrada para productos. Por otro lado est\'a el caso cuando $\te$ es la parte derecha de una terna TTF en $R$-Mod. \\

Otra pregunta natural que surge es la de encontrar un progenerador de $\Ht$ que sea lo m\'as sencillo posible. En la memoria estudiamos cu\'ando dicho progenerador puede ser elegido de manera que sea una suma directa de tallos. En el caso de un solo tallo, tenemos:

\begin{corolario}[\ref{cor. progenerator casi tilting}]
Fijemos un $R$-m\'odulo $V$ y consideremos las siguientes condiciones:
\begin{enumerate}
\item[1)] $V$ es un m\'odulo 1-inclinante cl\'asico;

\item[2)] $\te=(\Pres(V),\Ker(\Hom_R(V,?)))$ es un par de torsi\'on en $R$-Mod y $V[0]$ es un progenerador de $\Ht$;

\item[3)] $V$ es $R$-m\'odulo finitamente presentado y satisface las siguientes condiciones:
\begin{enumerate}
\item[a)] $\T:=\Pres(V)=\Gen(V)\subseteq \Ker(\Ext^{1}_{R}(V,?))$;

\item[b)] $\Ext^{2}_{R}(V,?)$ se anula en m\'odulos de $\F:=\Ker(\Hom_{R}(V,?))$;

\item[c)] Todo m\'odulo de $\F$ es isomorfo a un subm\'odulo de alg\'un m\'odulo de $\T$.
\end{enumerate}
\end{enumerate}
Entonces se verifican las implicaciones 1) $\Longrightarrow$ 2) $\Longleftrightarrow$ 3). 
\end{corolario}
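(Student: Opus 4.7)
The plan is to establish the cycle $(1)\Rightarrow(2)\Rightarrow(3)\Rightarrow(2)$. For $(1)\Rightarrow(2)$, I invoke classical tilting theory: a classical $1$-tilting module $V$ induces the torsion pair $(\T,\F)=(\Gen(V),\Ker\Hom_R(V,-))$ and Happel's equivalence $\D^{b}(\Mod R)\simeq\D^{b}(\Mod S)$, $S=\End_R(V)$, which carries $\Ht$ onto $\Mod S$ and $V[0]$ onto $S[0]$, so $V[0]$ is a progenerator of $\Ht$.

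For $(3)\Rightarrow(2)$, I verify directly that $V[0]$ is a progenerator of $\Ht$. Membership $V[0]\in\Ht$ follows from $V\in\Pres(V)=\T$. Projectivity: for each $N\in\Ht$, the truncation triangle $H^{-1}(N)[1]\to N\to H^{0}(N)\to H^{-1}(N)[2]$ sandwiches $\Hom_{\Ht}(V[0],N[1])$ between $\Ext^{1}_R(V,H^{0}(N))$ and $\Ext^{2}_R(V,H^{-1}(N))$, both zero by $(a)$ and $(b)$. Generation: for $0\neq M\in\Ht$, if $H^{0}(M)\neq 0$ the surjection $\Hom_{\Ht}(V[0],M)\twoheadrightarrow\Hom_R(V,H^{0}(M))$ coming from $\Ext^{2}_R(V,H^{-1}(M))=0$ is nonzero since $V$ generates $\Gen(V)$; if instead $M=F[1]$ with $0\neq F\in\F$, condition $(c)$ embeds $F\hookrightarrow T\in\T$, and the long exact sequence attached to $0\to F\to T\to T/F\to 0$ reduces (via $\Ext^{1}_R(V,T)=0$) to $0\to\Hom_R(V,T)\to\Hom_R(V,T/F)\to\Ext^{1}_R(V,F)\to 0$, whose last term I claim is nonzero: otherwise a $V$-presentation $V^{(K)}\to V^{(I)}\to T/F\to 0$ supplied by $\T=\Pres(V)$ would lift to $V^{(I)}\to T$ with image $J\in\T$ satisfying $J+F=T$; the image of $V^{(K)}\to T$ would then be $J\cap F$, belonging to $\T$ (as a quotient of $V^{(K)}$) and to $\F$ (as a submodule of $F$), hence $J\cap F=0$, so $T=J\oplus F$ with $F\cong T/J\in\T\cap\F=0$, contradicting $F\neq 0$. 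Compactness of $V[0]$ in $\Ht$ follows from $V$ being finitely presented together with Theorem \ref{caracterizacion AB5} applied to $(a)$, which makes $\Ht$ an AB5 category.

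The converse $(2)\Rightarrow(3)$ is obtained by running the above in reverse: projectivity of $V[0]$ yields the vanishing of $\Ext^{1}_R(V,T)$ on $\T$ and of $\Ext^{2}_R(V,F)$ on $\F$ via the same truncation triangle, and generation provides, for each $0\neq F\in\F$, an epimorphism $V[0]^{(I)}\twoheadrightarrow F[1]$ in $\Ht$ whose kernel $K$ satisfies $H^{-1}(K)=0$ and $0\to F\to H^{0}(K)\to V^{(I)}\to 0$ with $H^{0}(K)\in\T$, delivering condition $(c)$; the equality $\Pres(V)=\Gen(V)$ in $(a)$ is then deduced by identifying $\Ht$ with $\Mod\End_{\Ht}(V[0])$ through Mitchell's theorem and pulling back $\End_{\Ht}(V[0])$-presentations of $\Hom_R(V,T)$ to $V$-presentations of any $T\in\Gen(V)\subseteq\T$.

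The main obstacle, and the only nonroutine computation in the plan, is the nonvanishing of $\Ext^{1}_R(V,F)$ for $0\neq F\in\F$ in the generation step of $(3)\Rightarrow(2)$. Condition $(c)$ on its own only supplies an embedding, and the passage to the $\Ext^{1}$-nonvanishing hinges on the stronger form of $(a)$, namely $\Pres(V)=\Gen(V)$, together with the torsion-pair identity $\T\cap\F=0$ and the closure of $\F$ under submodules; these jointly force the hypothetical splitting $T=J\oplus F$ and deliver the required contradiction.
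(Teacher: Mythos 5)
Your argument is correct in substance, but it re-proves by hand what the paper obtains by specialization. The paper gets 1) $\Longrightarrow$ 2) from Proposition \ref{prop. V[0] progenerator} (a direct check that a classical $1$-tilting $V$ makes $V[0]$ a compact projective generator), whereas you invoke Happel's derived equivalence; both work. For 2) $\Longleftrightarrow$ 3) the paper simply takes $Y=0$ in Proposition \ref{prop. progenerator V[0]+Y[1]}, which already contains your truncation-triangle computations for projectivity and compactness. The one place where you genuinely diverge is the generation step for stalks $F[1]$ in 3) $\Longrightarrow$ 2): you prove $\Ext^{1}_{R}(V,F)\neq 0$ for $0\neq F\in\F$ by a splitting contradiction and then use that a projective object $P$ of an AB3 abelian category with $\Hom(P,M)\neq 0$ for all $M\neq 0$ is a generator. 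The paper's route is shorter: the embedding $F\monic T$ of condition 3.c) gives $T/F\in\T$ and hence an exact sequence $0\to T[0]\to (T/F)[0]\to F[1]\to 0$ in $\Ht$, exhibiting $F[1]$ directly as a quotient of an object of $\Gen_{\Ht}(V[0])$. Your detour is valid but buys nothing.

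Two items are missing and should be patched. First, in 3) $\Longrightarrow$ 2) you never check that $(\Pres(V),\Ker(\Hom_R(V,?)))$ is a torsion pair, which is part of assertion 2); this requires $\Gen(V)$ to be closed under extensions, which follows from $\Gen(V)\subseteq\Ker(\Ext^{1}_{R}(V,?))$ as in Lemma \ref{lem. quasi-tilting}. Second, in 2) $\Longrightarrow$ 3) you never show that $V$ is finitely presented; compactness of $V[0]$ only gives directly that $\Hom_R(V,?)$ preserves direct limits of objects of $\T$ (using that $\varinjlim_{\Ht}T_i[0]\cong(\varinjlim T_i)[0]$), and one must combine this with the closure of $\F$ under direct limits, guaranteed by Theorem \ref{caracterizacion AB5}, and Lemma \ref{lem. homology 0 module}(2) to conclude, as is done in Lemma \ref{lem. progenerator of Ht}. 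Finally, the equality $\Gen(V)=\Pres(V)$ in 3.a) does not need Mitchell's theorem: since $\T=\Pres(V)$ is by hypothesis a torsion class containing $V$, it is closed under coproducts and quotients, so $\Gen(V)\subseteq\T=\Pres(V)\subseteq\Gen(V)$.
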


En tal caso, por inspecci\'on uno pudiese esperar que tal par de torsi\'on es inclinante. Sin embargo, el siguiente resultado muestra lo contrario.

\begin{teorema}[\ref{teo. V[0] prig. which is not tilting}]
Sea $A$ un algebra finito dimensional sobre un cuerpo algebraicamente cerrado $K$. Fijemos $V$ un $A$-m\'odulo por la izquierda 1-inclinante cl\'asico tal que $\Hom_{A}(V,A)=0$ y un $A$-m\'odulo simple por la derecha $X$ tal que $X \otimes_A V=0$. Consideremos la extenci\'on trivial $R=A \rtimes M$, donde $M=V \otimes_K X$. Si vemos $V$ como un $R$-m\'odulo por la izquierda anulado por $0 \rtimes M$, entonces el par $\te=(\Gen(V),\Ker(\Hom_R(V,?)))$ es un par de torsi\'on no inclinante de $R$-Mod, y sin embargo, $V[0]$ es un progenerador de $\Ht$.
\end{teorema}

En el caso en que suponemos que $\te$ es la parte derecha de una terna TTF en $R$-Mod, en unas condiciones suficientemente generales, la caracterizaci\'on queda reducida.

\begin{teorema}[\ref{teo. progenerator Imd=aP}]
Sea $(\mathcal{C,T,F})$ una terna TTF en $R$-Mod definida por el ideal idempotente $\mathbf{a}$ y sea $\te=(\T,\F)$ su parte derecha. Supongamos que el morfimo de monoides $V(R) \flecha V(R/\mathbf{a})$ es suprayectivo. Las siguientes afirmaciones son equivalentes:
\begin{enumerate}
\item[1)] $\Ht$ es una categor\'ia de m\'odulos;

\item[2)] $\mathbf{a}$ es un ideal finitamente generado por la izquierda y existe un $R$-m\'odulo proyectivo finitamente generado $P$ tal que:

\begin{enumerate}
\item[a)] $P/\mathbf{a}P$ es un progenerador de $R/\mathbf{a}$-Mod;

\item[b)] Existe una sucesi\'on exacta de la forma
$\xymatrix{0 \ar[r] & F \ar[r] & C \ar[r] & \mathbf{a}P \ar[r] & 0}$,
con $F\in \F$ y $C$ un $R$-m\'odulo finitamente generado que pertenece a $\mathcal{C}$, tal que $\Ext^{1}_{R}(C,?)_{|\F}=0$ y $C$ genera $\mathcal{C}\cap \F$.
\end{enumerate}
\end{enumerate}
\end{teorema}

Finalmente, para los anillos conmutativo, semi hereditarios, locales, perfectos y noetherianos, damos respuesta definitiva a la cuesti\'on 2 para los pares de torsi\'on hereditarios. \\

{\bf Cap\'itulo 5}\\

En este cap\'itulo, dados un anillo conmutativo Noetheriano $R$ y una t-estructura compactamente generada en $\D(R)$, estudiamos cu\'ando el coraz\'on de \'esta es una categor\'ia de Grothendieck o una categor\'ia de m\'odulos. Para ello usamos la clasificaci\'on de dichas t-estructuras dada por Alonso, Jerem\'ias y Saor\'in (ver \cite{AJS}), en t\'erminos de filtraciones por soportes del espectro. \\

La experiencia desarrollada en el estudio de la t-estructura de Happel-Reiten-Smal\o \ nos motiv\'o a estudiar en primera instancia el comportamiento de los tallos con respecto a los l\'imites directos. Aunque hay aspectos similares aprovechables, hay una gran diferencia en el sentido de que hay complejos en $\Hp$ cuyos tallos asociados (es decir, las homolog\'ias en sus grados respectivos) no pertenecen ninguno a $\Hp$. Aqu\'i $\phi$ denota una filtraci\'on por soportes del espectro y $\Hp$ es el coraz\'on de la t-estructura asociada. Lo que s\'i es una herramienta fundamental para el estudio de la condici\'on AB5 de $\Hp$ es la localizaci\'on. Los dos siguientes resultados son una muestra de ello. En ellos $\phi_\p$ denota la filtraci\'on por soportes de $\Spec(R_\p)$ dada por $\phi_\p(i)=\{\mathbf{q}R_\p \ | \ \mathbf{q} \subseteq \mathbf{p}$ y $\mathbf{q} \in \phi(i)\}$ para todo $i\in \mathbb{Z}$ y $\mathcal{H}_{\phi_\p}$ es el coraz\'on de la t-estructura asociada en $\D(R_\p)$. 



\begin{corolario}[\ref{cor. localization AB5}]
Las siguientes afirmaciones son equivalentes:
\begin{enumerate}
\item[1)] $\Hp$ es una categor\'ia abeliana AB5;
\item[2)] $\mathcal{H}_{\phi_{\mathbf{p}}}$ es una categor\'ia abeliana AB5, para todo $\mathbf{p}\in \Spec(R)$.
\end{enumerate}
\end{corolario}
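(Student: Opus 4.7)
The plan is to use the derived localization functor $L_\p\colon\D(R)\to \D(R_\p)$, $X\mapsto X\otimes_R R_\p$, as the principal tool. Since $R_\p$ is $R$-flat, $L_\p$ is triangulated, commutes with arbitrary coproducts and direct limits, and satisfies $H^i(L_\p X)\cong H^i(X)_\p$. Using the Alonso--Jerem\'ias--Saor\'in description of the t-structure associated to $\phi$ in \cite{AJS}, whereby $\mathcal{U}_\phi$ consists of the complexes $X$ with $\Supp_R H^i(X)\subseteq \phi(i)$ for all $i$, together with the identity $\Supp_{R_\p}(M_\p)=\{\mathbf{q}R_\p : \mathbf{q}\in\Supp_R(M),\ \mathbf{q}\subseteq \p\}$, one checks that $L_\p$ sends $\mathcal{U}_\phi$ into $\mathcal{U}_{\phi_\p}$, and the analogous computation on the co-aisles shows that $L_\p$ restricts to an exact functor $L_\p\colon\Hp\to \mathcal{H}_{\phi_\p}$. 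Moreover, this restricted functor is essentially surjective, because any $N\in \mathcal{H}_{\phi_\p}\subseteq \D(R_\p)$ satisfies $N\otimes_R R_\p\cong N$.

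For the implication $(1)\Rightarrow (2)$, assume $\Hp$ is AB5 and fix $\p\in\Spec(R)$. Given a directed system $(N_i)$ in $\mathcal{H}_{\phi_\p}$, I would lift it to a directed system $(M_i)$ in $\Hp$ with $L_\p(M_i)\cong N_i$, exploiting essential surjectivity on objects together with the adjunction between $L_\p$ and restriction of scalars to realise the transition morphisms. Then exactness of $\varinjlim$ in $\Hp$, combined with exactness of $L_\p$ and its commutation with direct limits, transfers exactness of $\varinjlim$ to $\mathcal{H}_{\phi_\p}$.

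For the converse $(2)\Rightarrow (1)$, given a monomorphism of directed systems $(M_i)\hookrightarrow (M'_i)$ in $\Hp$, the induced morphism $\varinjlim M_i\to \varinjlim M'_i$ computed in $\Hp$ becomes, under $L_\p$, the analogous morphism of direct limits in $\mathcal{H}_{\phi_\p}$, which is monic by hypothesis. Since $L_\p$ is exact, preserves direct limits, and the family $\{L_\p\}_{\p\in \Spec(R)}$ is jointly conservative on $\Hp$---for any object of $\Hp$ has $R$-module cohomologies, and vanishing of an $R$-module is a local property on $\Spec(R)$---the original morphism is itself monic, which yields AB5 of $\Hp$.

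The chief obstacle is the joint conservativity step in the converse, together with the compatibility of direct limits in the heart with $L_\p$: direct limits in $\Hp$ are not the naive degree-by-degree limits of cohomologies, so one must express them, following the proof of Proposition \ref{AB3 t-structure}, as a coequaliser of coproducts---both operations that $L_\p$ preserves---and then argue that the vanishing of the resulting canonical morphism is detectable prime by prime. A secondary delicate point is the realisation of the transition morphisms of the lifted directed system in the forward direction, which requires a careful use of the adjunction between $L_\p$ and restriction of scalars on derived categories, together with a truncation-compatibility check so that the lift indeed lands in $\Hp$ rather than only in $\D(R)$.
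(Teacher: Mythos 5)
Your overall strategy coincides with the paper's: localize at each prime, use exactness and direct-limit preservation of $f^*=?\otimes_RR_\p$ for $(1)\Rightarrow(2)$ after transporting the system, and use joint conservativity of the family $\{f^*\}_{\p}$ on $\Hp$ (an object of the heart whose cohomologies vanish locally is acyclic, hence zero) for $(2)\Rightarrow(1)$. The converse direction as you describe it is essentially the paper's argument verbatim: take the right-exact colimit sequence in $\Hp$ and show its kernel dies at every localization.

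The one step that does not work as stated is the ``lift'' in the forward direction. Essential surjectivity of $L_\p\colon\Hp\to\mathcal{H}_{\phi_\p}$ does not let you lift a \emph{directed system}: choosing preimages object by object gives you no transition morphisms, and trying to manufacture them afterwards from the adjunction is exactly the wrong order of operations. The clean fix, which is the content of the paper's lemma \ref{lem. first localization}, is that the right adjoint $f_*$ (restriction of scalars) \emph{itself} restricts to an exact, fully faithful functor $\mathcal{H}_{\phi_\p}\to\Hp$ with $f^*\circ f_*\cong 1$; this rests on proposition \ref{prop. Localization AJS}, which identifies $S^{-1}\mathcal{U}_\phi$ with $\mathcal{U}_\phi\cap\D(R_\p)$ and likewise for the co-aisle, so that both truncations of an object of $\D(R_\p)$ taken in $\D(R)$ already live in $\D(R_\p)$. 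With that in hand there is nothing to lift: apply $f_*$ to the whole direct system of short exact sequences, take $\varinjlim$ in $\Hp$ using AB5, and push back down with $f^*$, which preserves direct limits and satisfies $f^*f_*\cong 1$. You should therefore replace your object-by-object lifting by this functorial transport; otherwise the forward implication has a genuine hole.
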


\begin{corolario}[\ref{cor. localizando (H) categorías de modulos}]
Si $\Hp$ es una categor\'ia de m\'odulos, entonces $\mathcal{H}_{\phi_{\mathbf{p}}}$ es una categor\'ia de m\'odulos, para cada $\mathbf{p}\in \Spec(R)$.
\end{corolario}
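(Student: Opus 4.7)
The strategy is to transfer a progenerator of $\Hp$ to one of $\mathcal{H}_{\phi_{\mathbf{p}}}$ via localization of scalars at $\mathbf{p}$; by Mitchell's theorem this will yield that $\mathcal{H}_{\phi_{\mathbf{p}}}$ is a module category.

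First, I would set up the classical localization as an exact triangulated functor $\lambda_{\mathbf{p}} := R_{\mathbf{p}} \otimes_R (-) : \D(R) \to \D(R_{\mathbf{p}})$. Flatness of $R_{\mathbf{p}}$ over $R$ gives the exactness and the identity $H^i(\lambda_{\mathbf{p}} M) \cong H^i(M)_{\mathbf{p}}$; moreover, $\lambda_{\mathbf{p}}$ preserves coproducts, being a left adjoint to the restriction-of-scalars functor $\iota : \D(R_{\mathbf{p}}) \to \D(R)$. Combining this with the support compatibility
\[
\Supp_{R_{\mathbf{p}}}(H^i(M)_{\mathbf{p}}) = \{\mathbf{q}R_{\mathbf{p}} \ : \ \mathbf{q} \in \Supp_R(H^i(M)),\ \mathbf{q} \subseteq \mathbf{p}\}
\]
and the very definition of $\phi_{\mathbf{p}}$ inherited from \cite{AJS}, one checks that $\lambda_{\mathbf{p}}$ restricts to an exact functor $\Hp \to \mathcal{H}_{\phi_{\mathbf{p}}}$ which is essentially surjective; the essential surjectivity, producing for any $N \in \mathcal{H}_{\phi_{\mathbf{p}}}$ a lift $M \in \Hp$ with $\lambda_{\mathbf{p}}(M) \cong N$, should be a preparatory lemma exploiting the $\phi$-truncation of $\iota N$ together with the specialization-closedness of $\phi(i)$.

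Second, with $G$ a progenerator of $\Hp$ (provided by the hypothesis and Mitchell's theorem), I would verify that $G_{\mathbf{p}} := \lambda_{\mathbf{p}}(G)$ is a progenerator of $\mathcal{H}_{\phi_{\mathbf{p}}}$. For the generator property: given $N \in \mathcal{H}_{\phi_{\mathbf{p}}}$, lift to $M \in \Hp$ with $\lambda_{\mathbf{p}}(M) \cong N$ and fix an epimorphism $G^{(I)} \to M$ in $\Hp$; applying the exact functor $\lambda_{\mathbf{p}}$ yields the required epimorphism $G_{\mathbf{p}}^{(I)} \to N$ in $\mathcal{H}_{\phi_{\mathbf{p}}}$. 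For the projectivity and compactness of $G_{\mathbf{p}}$, I would use the tensor-hom adjunction
\[
\Hom_{\D(R_{\mathbf{p}})}(\lambda_{\mathbf{p}} X, Y) \cong \Hom_{\D(R)}(X, \iota Y),
\]
valid for $X \in \D(R)$ and $Y \in \D(R_{\mathbf{p}})$. Specialising to $X = G$ and $Y \in \mathcal{H}_{\phi_{\mathbf{p}}}$, and noting that coproducts of $R_{\mathbf{p}}$-complexes agree with coproducts of the underlying $R$-complexes, one transfers the exactness of $\Hom_{\Hp}(G, -)$ and its commutation with coproducts to the corresponding properties of $\Hom_{\mathcal{H}_{\phi_{\mathbf{p}}}}(G_{\mathbf{p}}, -)$.

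Main obstacle: The delicate point is the essential surjectivity of $\lambda_{\mathbf{p}}$ between the hearts, and the careful matching between coproducts in $\mathcal{H}_{\phi_{\mathbf{p}}}$ and in $\Hp$ mediated by $\iota$ and $\lambda_{\mathbf{p}}$. Both rest on the classification of compactly generated t-structures by sp-filtrations in \cite{AJS} and on the tailored definition of $\phi_{\mathbf{p}}$, which guarantees that the $\phi$-t-structure on $\D(R)$ is carried under $\lambda_{\mathbf{p}}$ precisely to the $\phi_{\mathbf{p}}$-t-structure on $\D(R_{\mathbf{p}})$. Once these compatibilities are established, the inheritance of the progenerator property is a formal consequence of the adjunction.
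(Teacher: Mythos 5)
Your proposal is correct and follows essentially the same route as the paper: localize the progenerator along $f:R\to R_{\mathbf{p}}$ and transfer projectivity, the generator property and compactness through the adjunction $(f^{*},f_{*})$ between the hearts. The one point you flag as delicate --- essential surjectivity of $\lambda_{\mathbf{p}}:\Hp\to\mathcal{H}_{\phi_{\mathbf{p}}}$ --- needs no truncation argument at all: by the compatibility of the aisle and co-aisle with localization (proposition \ref{prop. Localization AJS}), the restriction of scalars $\iota N$ of any $N\in\mathcal{H}_{\phi_{\mathbf{p}}}$ already lies in $\Hp$, so one simply covers $\iota N$ by copies of the progenerator and applies the exact, coproduct-preserving functor $\lambda_{\mathbf{p}}$ together with the counit isomorphism $\lambda_{\mathbf{p}}\iota N\cong N$, which is exactly what the paper does.
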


El problema de la existencia de un generador en $\Hp$ es m\'as asequible. La proposici\'on \ref{prop. H has a generator}, prueba que el coraz\'on de cualquier t-estructura compactamente generada en $\D(R)$ tiene un generador. En consecuencia, el problema de decidir cu\'ando dicho coraz\'on es una categor\'ia de Grothendieck se reduce a determinar cu\'ando es una categor\'ia abeliana AB5.\\

El estudio de la condici\'on AB5, lo dividimos en dos casos. El primero cubre el caso en que en la filtraci\'on por soportes s\'olo interviene una cantidad finita de subconjuntos de $\Spec(R)$ estables por especializaci\'on. El segundo cubre el caso en que la filtraci\'on por soportes es acotada por la izquierda. En el primer caso, la prueba la realizamos por inducci\'on, mientras que en el segundo se utiliz\'o fuertemente el resultado del caso primero. Concretamente, mostramos que:


\begin{teorema}[\ref{teo. first main chapter V}]
Sea $R$ un anillo conmutativo Noetheriano y $\phi$ una filtraci\'on por soportes acotada por la izquierda de $\Spec(R)$, entonces los funtores de homolog\'ia \linebreak $H^{m}:\Hp \flecha R$-Mod conmutan con l\'imites directos. En particular, $\Hp$ es una categor\'ia de Grothendieck. 
\end{teorema}

Para la cuesti\'on de cu\'ando $\Hp$ es de m\'odulos, empezamos estudiando las filtraciones por soportes tales que los complejos de $\Hp$, est\'an acotados por la derecha. Ello lo hemos hecho as\'i porque en el caso de Happel-Reiten-Smal\o, la homolog\'ia de la derecha del progenerador, en cierto sentido, determinaba el par de torsi\'on as\'i como el progenerador. Para obtener tales filtraciones por soportes, basta con hacer $\phi(m)=\emptyset$, para alg\'un entero $m$. En tal caso, el resultado es el siguiente:

\begin{teorema}[\ref{prop. H mod. cat. R. con.}]
Supongamos que $R$ es un anillo conmutativo Noetheriano conexo y sea $\phi$ una filtraci\'on por soportes de $\Spec(R)$ que es eventualmente trivial, tal que $\phi(i)\neq \emptyset$, para alg\'un entero $i$. Las siguientes afirmaciones son equivalentes:
\begin{enumerate}
\item[1)] $\Hp$ es una categor\'ia de m\'odulos;

\item[2)] Existe un entero $m$, tal que $\phi(m)=\Spec(R)$ y $\phi(m+1)=\emptyset$;

\item[3)] Existe un entero $m$, tal que la t-estructura asociada a $\phi$ coincide con $(\D^{\leq m}(R),\D^{\geq m}(R))$. 
\end{enumerate}
En este caso, $\Hp$ es equivalente a $R$-Mod.
\end{teorema}

Aprovechando el corolario \ref{cor. localizando (H) categorías de modulos}, junto con el teorema anterior, hemos dado respuesta definitiva a la cuesti\'on de cu\'ando $\Hp$ es de m\'odulos, donde la categor\'ia cociente de $R$-Mod por una clase de torsi\'on hereditaria $\T$ juega un papel importante


\begin{teorema}[\ref{teo. second main}]
Supongamos que $R$ es un anillo conmutativo Noetheriano, sea $(\mathcal{U,U}^{\perp}[1])$ una t-estructura compactamente generada en $\D(R)$, tal que $\mathcal{U}\neq \mathcal{U}[-1]$, y sea $\mathcal{H}$ su coraz\'on. Las siguientes afirmaciones son equivalentes:
\begin{enumerate}
\item[1)] $\mathcal{H}$ es una categor\'ia de m\'odulos;

\item[2)] Existe un subconjunto $Z$ de $\Spec(R)$ estable bajo especializaci\'on, posiblemente vac\'io, junto con una familia de idempotentes ortogonales no nulos $\{e_1,\dots,e_t\}$ del anillo de cocientes $R_Z$, y una familia de enteros $m_1 < m_2 < \dots < m_t$ satisfaciendo las siguientes propiedades:
\begin{enumerate}
\item[a)] Si $\mu_{*}:R_Z \Mode \flecha R\Mode$ es el functor restricci\'on de escalares y \linebreak $q:\D(R) \flecha \D(\frac{R\Mode}{\T_Z})$ es el funtor cociente, entonces $\mathcal{U}$ esta dado por los complejos $U\in \D(R)$ tal que $q(U)$ pertence a $\underset{1 \leq k \leq t}{\oplus}\D^{\leq m_k}(\frac{R_Ze_k\Mode}{\mu_{*}^{-1}(\T_Z)\cap R_Ze_k\Mode})$;
\item[b)] La categor\'ia cociente $\frac{R_Ze_k\Mode}{\mu_{*}^{-1}(\T_Z)\cap R_Ze_k\Mode}$ es una categor\'ia de m\'odulos, para todo $k=1,\dots,t$.
\end{enumerate}
En tal caso, $\mathcal{H}$ es equivalente a $\frac{R_Ze_1\Mode}{\mu_{*}^{-1}(\T_Z)\cap R_Ze_1\Mode} \times \dots \times \frac{R_Ze_t\Mode}{\mu_{*}^{-1}(\T_Z)\cap R_Ze_t\Mode}$.
\end{enumerate}
\end{teorema}

{\bf Cap\'itulo 6}\\

En este cap\'itulo, dejamos una lista de problemas abiertos, a los que nos hubiese gustado dar respuesta. Todos estos problemas surgen de manera natural de los resultados obtenidos en este trabajo.


\fancyhead[RO,LE]{\Large Introduction}
\chapter*{Introduction}
\addcontentsline{toc}{chapter}{Introduction}

The notion of t-structure was introduced by Beilinson, Bernstein and Deligne \cite{BBD}, in their study of the perverse sheaves over an analytic or algebraic variety stratified by some closed subset. This notion allows us to associate to an object of an arbitrary triangulated category its corresponding ``objects of homology'', which live in some abelian subcategory of such triangulated category. Such subcategory is called the \emph{heart}\index{heart} of the t-structure. T-structures have made space for a big amount of results which are very important in several branches of Mathematics. However, the definition of t-structure gives rise to a difficult problem that has interested several researchers, as Colpi, Gregorio, Mantese, Tonolo etc. Naturally the following question arises: when is that heart the ``nicest" possible as an abelian category?. In the categorical jargon this means: when is such a category a module category?, or in lack of that, when is it a Grothendieck category?. Asked in such a general way, this question is unapproachable. Therefore, it is natural to put some conditions or restrictions on the triangulated category, and also on the t-structure that is considered in it. In fact, all the works that we know of in this respect are focused on the so-called t-structure of Happel-Reiten-Smal\o.\\

In 1996, Happel, Reiten and Smal\o \ \cite{HRS} associated to each torsion pair in an abelian category $\mathcal{A}$, a t-structure in the bounded derived category $\D^{b}(\mathcal{A})$. In the case when $\D(\mathcal{A})$ is a well-defined category, this t-structure is actually the restriction of a t-structure in the unbounded derived category $\D(\mathcal{A})$. Several authors have studied its heart, with the goal of giving an answer to the questions:\\

{\bf Question 1:} Given a torsion pair $\te=(\T,\F)$ in an abelian category $\mathcal{A}$, when is the heart $\Ht$ of the associated t-structure in $\D(\mathcal{A})$ a Grothendieck category? \\

More ambitiously in the context of modules categories:\\

{\bf Question 2}: Given an associative ring with unit $R$ and a torsion pair $\te=(\T,\F)$ in $R$-Mod, when is $\Ht$ a module category?\\

The first question was studied by Colpi, Gregorio and Mantese in [CGM]. In that paper they use the \emph{tilting theory}\index{tilting theory} of modules and its relationship with derived categories, discovered by Happel \cite{Ha}. For this reason the mentioned authors consider the problem in the particular context of tilting theory. In order to do it, they take the notion of tilting object in an arbitrary abelian category $\mathcal{A}$ introduced in \cite{CF} and they consider the torsion pair defined in $\mathcal{A}$ by that object, which we denote by $V$. From the ``tilting theorem'' (see the begining of the section \ref{remark tilting theorem}), this torsion pair defines, in turn, a faithful torsion pair $\te^{'}= (\mathcal{X,Y})$ in Mod-$R$, where $R = \text{End}_{\mathcal{A}}(V)$ is the endomorphism ring of $V$. In the course of the approximation to the problem, the authors prove that an abelian category $\mathcal{A}$ has a tilting object if, and only if, it is equivalent to $\mathcal{H}_{\te^{'}}$, where $\te^{'}$ is a faithful torsion pair in a module category. In view of this fact, the authors studied a particular case of the question 1 by taking advantage of the tilting object which appears in an implicit way.\\

{\bf Question 1$^{'}$}: Given $R$ a ring and $\te^{'}=(\mathcal{X,Y})$ a faihtful torsion pair in $R$-Mod, when is $\mathcal{H}_{\te^{'}}$ a Grothendieck category?\\

For this question, Colpi, Gregorio and Mantese give a necessary condition (see \cite[Proposition 3.8]{CGM}). Later, Colpi and Gregorio in \cite{CG} prove that such condition is a characterization. In particular, they prove that $\mathcal{H}_{\te^{'}}$ is a Grothendieck category if, and only if, $\te^{'}$ is a cotilting torsion pair. During the research for this thesis, we decided to release from tilting theory, and we chose to deal with question 1 without preconditions over $\te$, although we require that $\mathcal{A}=\G$ is a Grothendieck category.\\

In 1964, Barry Mitchell characterized the module categories as those abelian categories with arbitrary coproducts which have a compact (=small) and projective generator. Such object is called \emph{progenerator}\index{object! progenerator}. That is why studying question 2 is reduced to constructing a progenerator of $\Ht$. An example where $\Ht$ is a module category is due to Hoshino, Kato y Miyachi \cite{HKM}. These authors associate to each complex of two finitely generated projective terms, a pair of subcategories of $R$-Mod and they study the conditions for such subcategories to form a torsion pair. In such case, its associated heart is a module category. In this thesis we call \emph{HKM complex} to any such kind of complex and we call \emph{HKM torsion pair} to its associated torsion pair. So the next question arises:\\

{\bf Question 3}: Suppose that $\Ht$ is a module category. Is $\te$ an HKM torsion pair?\\

In \cite{CGM}, the authors give a definitive answer to question 2 for faithful torsion pairs in a module category $R$-Mod. Specifically, the heart associated with a faithful torsion pair is equivalent to a module category if, and only if, it is the heart of a t-structure generated by a tilting complex. However, this condition is not easily verifiable and is at least as difficult as finding the progenerator. This fact, motivated Colpi, Matese and Tonolo \cite{CMT}, to focus the problem on the existence of the progenerator and they obtained a characterization depending on itself, which is easier to check. However, it remained the open problem for non-faithful torsion pairs. In the papers \cite{CGM} and \cite{CMT}, there is not an explicit result which deals with question 3. This is done by Mantese and Tonolo \cite{MT}. In this work they related  the previous results by Colpi, Gregorio, Mantese and Tonolo with the paper \cite{HKM}. They prove that if $R$ is a left poised and semiperfect ring or $\te$ is a faithful torsion pair, then question 3 has an affirmative answer. This fact makes this question more significant for the possibility that it had always an affirmative answer. \\

One of our goals, from the beginning of this work, was to abstract some properties (ideas and/or arguments) which arise when we study the t-structure of Happel-Reiten-Smal\o,  in order to apply them to arbitrary t-structures. Recently Alonso, Jerem\'ias and Saor\'in \cite{AJS}, classify all the compactly generated t-structures in the derived category of a commutative Noetherian ring. So our next goal was to study the heart of such t-structures, by considering, in this context, the question of when the heart of such a t-structure is a Grothendieck category or a module category.\\


Now, we describe the contents of this thesis, which is divided into six chapters.\\

{\bf Chapter 1}:\\

In this chapter we introduce the more relevant definitions and results. We have tried to keep an equidistance between a self-contained presentation, which would have lengthen excessively the manuscript, and going directly to the problems without minimal preliminary concepts. We hope that the summary of classical definitions and results which are more relevant for this thesis will be enough to make the reading attractive.\\

{\bf Chapter 2}:\\

This chapter is devoted to present the results which are previous to those mentioned before. They are organized in three sections in the following way:
\begin{enumerate}
\item[-] In section 1, we present the results which are previous to questions 1 and 2, which were studied in the works \cite{CGM}, \cite{CMT}, \cite{HKM}, \cite{CG} and \cite{MT}. This is done with the intention of clarify the cases which are pending of study. 
\item[-] In section 2, we introduce the notation and terminology from \cite{AJS}, and we also give some results of that paper which are useful for our goals.
\item[-] In section 3, we present a list of questions that arise in a natural way, for the t-structure of Happel-Reiten-Smal\o \ and for more general t-structures.
\end{enumerate}

{\bf Chapter 3}:\\

We begin the chapter by studying meticulously the heart of any t-structure, in order to obtain results that characterize when that heart is AB5. As natural, the first thing we should check is the condition AB3. It follows from proposition \ref{AB3 t-structure}, where we only require conditions over the triangulated category. Next step is to study the condition AB4. We show that this condition holds (see proposition \ref{sufficient AB4}), by requiring minimal conditions to the triangulated category and the t-structure. Motivated by the paper \cite{CGM}, we give a sufficient condition in order for the heart of a t-structure in the derived category of a Grothendieck category to be AB5.

\begin{corolario1}[\ref{sufficient AB5 general}]
Let $\mathcal{G}$ be a Grothendieck category and let $(\mathcal{U},\mathcal{U}^{\perp}[1])$ be a t-structure on $\D(\mathcal{G})$ and let $\mathcal{H}=\mathcal{U}\cap \mathcal{U}^{\perp}[1]$ be its heart. If the classical homological functors $\xymatrix{H^{m}:\mathcal{H} \ar[r] & \mathcal{G}}$ preserve direct limits, for all $m\in $ \Z, then $\mathcal{H}$ is AB5.
\end{corolario1}

This corollary comes from a more general result, which also gives next theorem.

\begin{teorema1}[\ref{teo. H compactly general}]
Let $\mathcal{D}$ be a triangulated category with coproducts and let $(\mathcal{U}, \mathcal{U}^{\perp}[1])$ be a compactly generated t-structure in $\mathcal{D}.$ Then countable direct limits are exact in $\mathcal{H}=\mathcal{U}\cap \mathcal{U}^{\perp}[1]$.
\end{teorema1}

In the rest of the chapter, we focus on the study of the t-structure of Happel-Reiten-Smal\o, in order to solve question 1, when the abelian category is a Grothendieck category. In this case, each object in the heart is determined by a short exact sequence with outer terms which are stalks concentrated in degrees -1 and 0 (their homologies in their respective position). For this reason, we put attention on the stalks of the heart and we obtain the next result, which shows that the converse of the previous corollary holds.

\begin{teorema1}[\ref{caracterizacion AB5}]
Let $\G$ be a Grothendieck category, let $\mathbf{t}=(\mathcal{T,F})$ be a torsion pair in $\G$, let $(\mathcal{U}_{\mathbf{t}},\mathcal{U}_{\mathbf{t}}^{\perp}[1])$ be its associated t-structure in $\D(\G)$ and let $\Ht=\mathcal{U}_{\mathbf{t}}\cap \mathcal{U}_{\mathbf{t}}^{\perp}[1]$ be the heart. The following assertions are equivalent:
\begin{enumerate}
\item[1)] $\Ht$ is an AB5 abelian category;
\item[2)] $\mathcal{F}$ is closed under taking direct limits in $\G$ and, for each direct system $(M_i)_{i\in I},$ the canonical morphism $\xymatrix{\varinjlim{H^{-1}(M_i)} \ar[r] & H^{-1}(\limite {M_i})}$ is a monomorphism;
\item[3)] The classical homological functors $\xymatrix{H^{m}:\Ht \ar[r] & \G}$ preserve direct limits, for all $m\in $ \Z.
\end{enumerate}
\end{teorema1}

When the AB5 problem was solved, we proceeded to study when the heart has a generator. To guarantee the existence of a generator in the heart is not an easy task, so we had to study some particular cases of torsion pairs, in order to answer the question. Our results show that, for the most usual torsion pairs (e.g. tilting, cotilting or hereditary) the heart of the associated t-structure has a generator. Concretely, we have:

\begin{teorema1}[\ref{Grothendieck characterization}]
Let $\G$ be a Grothendieck category and let $\mathbf{t}=(\T,\F)$ be a torsion pair in $\G$ satisfying at least one of the following conditions:
\begin{enumerate}
\item[a)] $\mathbf{t}$ is hereditary.
\item[b)] Each object of $\Ht$ is isomorphic in $\D(\G)$ to a complex $F$ such that $F^{k}=0$, for $k\neq -1,0$, and $F^{k}\in \F$, for $k=-1,0.$
\item[c)] Each object of $\Ht$ is isomorphic in $\D(\G)$ to a complex $T$ such that $T^{k}=0$, for $k\neq -1,0$ and $T^{k}\in \T$, for $k=-1,0$.
\end{enumerate}
The following assertions are equivalent: 
\begin{enumerate}
\item[1)] The heart $\Ht$ is a Grothendieck category;
\item[2)] $\Ht$ is an AB5 abelian category;
\item[3)] $\F$ is closed under taking direct limits in $\G$.
\end{enumerate}
\end{teorema1}

From the previous theorem we derive results beyond the Grothendieck condition for tilting and cotilting torsion pairs. They extend some results from \cite{CG}, \cite{CGM} and \cite{BK}. \\

{\bf Chapter 4}\\

In this chapter, we focus on giving a definitive answer to question 2. As Colpi, Mantese and Tonolo, our goal has been the identification of which properties have to be hold by a complex in order for it to be a progenerator of the heart. Here, theorem \ref{caracterizacion AB5} plays an important role because of the fact that $\F$ being closed under taking direct limits brings some information about the homology in degree 0 of the progenerator. Our next result gives the characterization which we were looking for.

\begin{teorema1}[\ref{teo. Ht is a module category}]
The heart $\Ht$ is a module category if, and only if, there is a chain complex of $R$-modules 

$$\xymatrix{G:=\cdots \ar[r] & 0 \ar[r] & X \ar[r]^{j} & Q \ar[r]^{d} & P \ar[r] & 0 \ar[r] & \cdots }$$

with $P$ in degree 0, satisfying the following properties, where $V:=H^{0}(G)$:

\begin{enumerate}
\item[1)] $\T=\Pres(V)\subseteq \Ker(\Ext^{1}_{R}(V,?))$;

\item[2)] $Q$ and $P$ are finitely generated projective $R$-modules and $j$ is a monomorphism such that $H^{-1}(G)\in \F$;

\item[3)] $H^{-1}(G)\subseteq \Rej_{\T}(\frac{Q}{X})$;

\item[4)] $\Ext^{1}_{R}(\Coker(j),?)$ vanishes on $\F$;

\item[5)] there is a morphism $h:(\frac{Q}{X})^{(I)} \flecha \frac{R}{t(R)}$, for some set $I$, such that the cokernel of its restriction to $(H^{-1}(G))^{(I)}$ is in $\overline{\Gen}(V)$.
\end{enumerate}
(Here, for each $R$-module $M$, the term $\Rej_{\T}(M)$ stands for the reject of $\T$ in $M$, i.e., the intersection of all the kernels of morphisms $f:M\flecha T$, with $T\in \T$. On the other hand, $\overline{\Gen}(V)$ is the class of modules subgenerated by $V$.)
\end{teorema1}

Once we have this result we can face question 3. Suppose that $\te$ is an HKM torsion pair, what is the relation between the HKM complex which defines $\te$ and the progenerator of $\Ht$?. As a consequence, we have obtained two special examples. One of these examples shows an HKM complex which does not belong to $\Ht$ (see example \ref{exam. HKM torsion}), while the other one gives a negative answer to question 3 (see corollary \ref{cor. Ht module not HKM}). \\

In a large part of this chapter, we focus on the case that $\te$ is a hereditary torsion pair. In this case, the conditions of the theorem \ref{teo. Ht is a module category}, are simplified in some way. However, this induces a division of the problem. First we have the case where $\T$ is closed under taking products. On the other hand, we have the case where $\te$ is the right constituent pair of a TTF triple in $R$-Mod.\\

There is another question, which arises in a natural way, namely, how to find a progenerator which is the simplest possible. We study when such a progenerator may be chosen to be a sum of stalk complexes. If we consider only one stalk we have:

\begin{corolario1}[\ref{cor. progenerator casi tilting}]
Let $V$ be an $R$-module and consider the following conditions
\begin{enumerate}
\item[1)] $V$ is a classical 1-tilting module;
\item[2)] $\te=(\Pres(V),\Ker(\Hom_{R}(V,?)))$ is a torsion pair in $R$-Mod and $V[0]$ is a progenerator of $\Ht$;

\item[3)] $V$ is a finitely presented and satisfies the following conditions:

\begin{enumerate}
\item[a)] $\T:=\Pres(V)=\Gen(V)\subseteq \Ker(\Ext^{1}_{R}(V,?))$;
\item[b)] $\Ext^{2}_{R}(V,?)$ vanishes on $\F:=\Ker(\Hom_{R}(V,?))$;
\item[c)] Each module of $\F$ embeds into a module of $T$.
\end{enumerate}
\end{enumerate}
Then the implications 1) $\Longrightarrow$ 2) $\Longleftrightarrow$ 3) hold true. 
\end{corolario1}

By inspection one could expect that a torsion pair satisfying either one of conditions 2 or 3 is tilting. The next result contradicts it.

\begin{teorema1}[\ref{teo. V[0] prig. which is not tilting}]
Let $A$ be a finite dimensional algebra over an algebraically closed field $K$, let $V$ be a classical 1-tilting left $A$-module such that $\Hom_{\mathcal{A}}(V,A)=0$, let $X$ be a simple right $A$-module such that $X \otimes_{A}V=0$ and let us consider the trivial extension $R=A \rtimes M$, where $M=V \otimes_{K}X$. Viewing $V$ as a left $R$-module annihilated by $0 \rtimes M$, the pair $\mathbf{t}=(\Gen(V),\Ker(\Hom_{R}(V,?)))$ is a non-tilting torsion pair in $R$-Mod such that $V[0]$ is a progenerator of $\Ht$.
\end{teorema1}

If we suppose that $\te$ is the right constituent pair of a TTF triple in $R$-Mod, under some general enough conditions, the characterization of when $\Ht$ is a module category gets simplified.

\begin{teorema1}[\ref{teo. progenerator Imd=aP}]
Let $\mathbf{a}$ be an idempotent ideal of the ring $R$, and let $(\mathcal{C,T,F})$ be the associated TTF triple in $R$-Mod. Let $\te=(\T,\F)$ its right constituent pair. Suppose that the monoid morphism $V(R) \flecha V(R/\mathbf{a})$ is surjective. The following assertions are equivalent:

\begin{enumerate}
\item[1] $\Ht$ is a module category;

\item[2)] $\mathbf{a}$ is a finitely generated ideal on the left and there is a finitely generated projective $R$-module $P$ satisfying the following conditions:
\begin{enumerate}
\item[a)] $P/\mathbf{a}P$ is a (pro)generator of $R/\mathbf{a}-Mod$;
\item[b)] There is an exact sequence $\xymatrix{0 \ar[r] & F \ar[r] & C \ar[r]^{q \hspace{0.1 cm}} & \mathbf{a}P \ar[r] & 0}$ in $R$-Mod, where $F\in \F$ and $C$ is a finitely generated module which is in $\C \cap \Ker(\Ext^{1}_{R}(?,\F))$ and generates $\C \cap \F$.
\end{enumerate}
\end{enumerate}
\end{teorema1}

Finally, for the following classes of rings, we give a definitive answer to question 2 for hereditary torsion pairs: commutative, semihereditary, local, perfect and Noetherian.\\

{\bf Chapter 5}\\

In this chapter, given a commutative Noetherian ring $R$ and a compactly generated t-structure in $\D(R)$, we study when its heart is a Grothendieck category or a module category. To do it we use the classification of those t-structures given by Alonso, Jerem\'ias and Saor\'in (see \cite{AJS}), in terms of filtrations by supports of $\Spec(R)$. \\

The experience obtained in the study of the t-structure of Happel-Reiten-Smal\o \ motived us to investigate, at first, the behavior of the stalks with respect to direct limits. Although there are similar aspects that can be used, there is a big difference because there are complexes in $\Hp$ whose associated stalks (i.e. the homologies in their respective degrees) do not belong to $\Hp$. Here, $\phi$ denotes a filtration by supports of $\Spec(R)$ and $\Hp$ is the heart of the associated t-structure. What is really a fundamental tool for the study of the condition AB5 of $\Hp$ is localization. The two following results confirm this fact. In their statement $\phi_\p$ is the filtration by supports of $\Spec(R_\p)$ such that $\phi_\p(i)=\{\mathbf{q}R_\p \ | \  \mathbf{q} \subseteq \p \text{ and } \mathbf{q} \in \phi(i)\},$ for all $i\in \mathbb{Z}$ and $\mathcal{H}_{\phi_\p}$ is the heart of the associated t-structure in $\D(R_\p)$.

\begin{corolario1}[\ref{cor. localization AB5}]
The following assertions are equivalent:
\begin{enumerate}
\item[1)] $\Hp$ is an AB5 abelian category; 
\item[2)] $\mathcal{H}_{\phi_{\mathbf{p}}}$ is an AB5 abelian category, for all $\mathbf{p}\in \Spec(R)$.
\end{enumerate}
\end{corolario1}

\begin{corolario1}[\ref{cor. localizando (H) categorías de modulos}]
If $\Hp$ is a module category, then $\mathcal{H}_{\phi_{\mathbf{p}}}$ is a module category, for each $\mathbf{p}\in \Spec(R)$.
\end{corolario1}

The problem of the existence of a generator in $\Hp$ is more attainable. Proposition \ref{prop. H has a generator} shows that the heart of any compactly generated t-structure in $\D(R)$ has a generator. Consequently, the problem of deciding when such a heart is a Grothendieck category is reduced to determining when it is an AB5 abelian category. \\

The study of the condition AB5, is divided into two cases. The first one covers the case in which a finite number of stable under specialization subsets of $\Spec(R)$ take part  in the filtration. The second one, covers the case in which the filtration is left bounded. In the first case, we use induction in order to prove the result while in the second one we use the result of the first case. Specifically, we prove:

\begin{teorema1}[\ref{teo. first main chapter V}]
Let $R$ be a commutative Noetherian ring and let $\phi$ be any left bounded filtration by supports of $\Spec(R)$. Then, the classical homology functors $H^{m}:\Hp \flecha R$-Mod, preserve direct limits, for all $m\in \mathbb{Z}$. In particular, $\Hp$ is a Grothendieck category. 
\end{teorema1}

For the question of when $\Hp$ is a module category, we start by studying the filtrations by supports such that the complexes of $\Hp$ are right bounded. The reason for this strategy is that in the Happel-Reiten-Smal\o \ cases  the homology of the right side of the progenerator, in some sense, determines the torsion pair and the progenerator. In order to obtain such filtrations by support, it is enough to put $\phi(m)=\emptyset$, for some integer $m$. In such case, we have the next result:

\begin{teorema1}[\ref{prop. H mod. cat. R. con.}]
Let us assume that $R$ is connected and let $\phi$ be an eventually trivial sp-filtration of $\Spec(R)$ such that $\phi(i)\neq \emptyset$, for some $i\in \mathbb{Z}$. The following assertions are equivalent:
\begin{enumerate}
\item[1)] $\Hp$ is a module category; 

\item[2)] There is an $m\in \mathbb{Z}$ such that $\phi(m)=\Spec(R)$ and $\phi(m+1)=\emptyset$;

\item[3)] There is an $m\in \mathbb{Z}$ such that the t-structure associated to $\phi$ is $(\D^{\leq m}(R),\D^{\geq m}(R))$.
\end{enumerate}

In that case $\Hp$ is equivalent to $R$-Mod.
\end{teorema1}

By taking advantage of corollary \ref{cor. localizando (H) categorías de modulos} together with the previous theorem, we have given a definitive answer to the question: when is $\Hp$ a module category?. The quotient category of $R\Mode$ by a hereditary torsion class $\T$ plays an important role.

\begin{teorema1}[\ref{teo. second main}]
Let $R$ be a commutative Noetherian ring, let $(\mathcal{U},\mathcal{U}^{\perp}[1])$ be a compactly generated t-structure in $\D(R)$ such that $\mathcal{U}\neq \mathcal{U}[-1]$ and let $\mathcal{H}$ be its heart. The following assertions are equivalent:
\begin{enumerate}
\item[1)] $\mathcal{H}$ is a module category;

\item[2)] There are a possibly empty stable under specialization subset $Z$ of $\Spec(R)$, a family $\{e_1,\dots,e_t\}$ of nonzero orthogonal idempotents of the ring quotients $R_Z$ and integers $m_1 < m_2 < \dots < m_t$ satisfying the following properties:
\begin{enumerate}
\item[a)] If $\mu_*: R_Z$-Mod$\flecha R$-Mod is the restriction of scalars functor and \linebreak $q:\D(R) \flecha \D(\frac{R\text{-Mod}}{\T_Z})$ is the canonical functor, then $\mathcal{U}$ consists of the complexes $U\in \D(R)$ such that $q(U)$ is in $\underset{1 \leq k \leq t}{\oplus} \D^{\leq m_t}(\frac{R_Ze_k \text{-Mod}}{\mu_{*}^{-1}(\T_Z)\cap R_Ze_k\text{-Mod}})$;

\item[b)] $\frac{R_Ze_k\text{-Mod}}{\mu_{*}^{-1}(\T_Z) \cap R_Ze_k\text{-Mod}}$ is a module category, for $k=1,\dots,t$.
\end{enumerate}
\end{enumerate}
In that case, $\mathcal{H}$ is equivalent to $\frac{R_Ze_1\text{-Mod}}{\mu_{*}^{-1}(\T_Z)\cap R_Ze_1\text{-Mod}} \times \dots \times \frac{R_Ze_t\text{-Mod}}{\mu_{*}^{-1}(\T_Z)\cap R_Ze_t\text{-Mod}}$.
\end{teorema1}

{\bf Chapter 6 }\\

In this chapter, we present a list of open problems that we would like to tackle in the future. All these problems arise in a natural way from our results in this thesis.

\chapter{Preliminares}
\renewcommand{\thepage}{\arabic{page}}
\setcounter{page}{1}

\section{Abelian categories}
Roughly speaking, an \emph{abelian category} is a category that, in what concerns constructions from homological algebra, behaves very similar to a module category. To go from module to abelian categories, it is necessary  to abstract definitions from homological algebra and module theory. In this section we follow this idea in order to emphasize the properties of categories which will be more useful for us.


\subsection{Additive categories}

\begin{definition} \rm{
A category $\mathcal{A}$ is called \emph{preadditive}\index{category! preadditive}, if it has a zero object and satisfies the two following properties:
\begin{enumerate}
\item[i)] All the sets $\text{Hom}_{\mathcal{A}}(X,Y)$ have a structure of abelian group; 
\item[ii)] The compositions maps
$$\xymatrix{\text{Hom}_{\mathcal{A}}(Y,Z) \times \text{Hom}_{\mathcal{A}}(X,Y) \longrightarrow \text{Hom}_{\mathcal{A}}(X,Z) \\ (g,f) \rightsquigarrow g \circ f }$$
are (\Z -)bilinear maps;
\newline
\newline
We will say that such an $\mathcal{A}$ is \emph{additive}\index{category! additive}, if it further satisfies
\item[iii)] Each finite family of objects of $\mathcal{A}$ has a coproduct.
\end{enumerate} }
\end{definition}

\begin{proposition}
Let $\mathcal{A}$ be a preadditive category and $X_1, X_2, X\in \text{Ob}(\mathcal{A})$. The following assertions are equivalent:
\begin{enumerate}
\item[1)] $X$ is isomorphic to the coproduct of $X_1$ and $X_2$;
\item[2)] there are morphisms $\xymatrix{X_{1} \ar@<-0.5ex>[r]_{u_1} & X \ar@<-1 ex>[l]_{p_1} \ar@<-0.5ex>[r]_{p_2}& X_2 \ar@<-1 ex>[l]_{u_2}}$, such that $p_i \circ u_j=\delta_{ij}1_{X_j}$ and $u_1 \circ p_1 + u_2 \circ p_2 =1_X$; where $\delta_{ij}=0$ if $i \neq j$ and $\delta_{ij}=1$ if $i=j$;
\item[3)] $X$ is isomorphic to the product of $X_1$ and $X_2$.
\end{enumerate} 
\end{proposition}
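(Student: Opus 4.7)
The plan is to prove the cyclic chain of implications $1) \Rightarrow 2) \Rightarrow 3) \Rightarrow 2) \Rightarrow 1)$, exploiting the self-dual nature of condition $2)$. Observe that condition $2)$ makes no distinction between sources and targets of $u_i$ and $p_i$, so it will suffice to prove $1) \Leftrightarrow 2)$ and then invoke duality to obtain $2) \Leftrightarrow 3)$.

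For $1) \Rightarrow 2)$, assume $X$ is a coproduct of $X_1$ and $X_2$ with canonical injections $u_1, u_2$. Applying the universal property of the coproduct to the pairs $(1_{X_1}, 0_{X_2,X_1})$ and $(0_{X_1,X_2}, 1_{X_2})$, I obtain unique morphisms $p_1:X\to X_1$ and $p_2:X\to X_2$ satisfying $p_i\circ u_j=\delta_{ij}1_{X_j}$. To verify $u_1\circ p_1+u_2\circ p_2=1_X$, I would apply the uniqueness part of the universal property: composing both $u_1\circ p_1+u_2\circ p_2$ and $1_X$ on the right with $u_j$ gives $u_j$ in each case (using bilinearity of composition together with the identities $p_i\circ u_j=\delta_{ij}1_{X_j}$). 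Hence the two morphisms out of $X$ must coincide.

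For $2) \Rightarrow 1)$, given morphisms $u_i, p_i$ as in the statement, I would show that $(X, u_1, u_2)$ satisfies the universal property of the coproduct. For arbitrary morphisms $f_i:X_i\to Y$, define $f:=f_1\circ p_1+f_2\circ p_2:X\to Y$, and check $f\circ u_j=f_j$ using bilinearity and $p_i\circ u_j=\delta_{ij}1_{X_j}$. For uniqueness, suppose $g:X\to Y$ satisfies $g\circ u_j=f_j$ for $j=1,2$; then the identity $u_1\circ p_1+u_2\circ p_2=1_X$ yields
\[
g=g\circ 1_X=g\circ u_1\circ p_1+g\circ u_2\circ p_2=f_1\circ p_1+f_2\circ p_2=f,
\]
which proves uniqueness.

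To conclude, condition $2)$ is strictly symmetric: the roles of $u_i$ (arrows into $X$) and $p_i$ (arrows out of $X$) can be swapped without changing the equations $p_i\circ u_j=\delta_{ij}1_{X_j}$ and $u_1\circ p_1+u_2\circ p_2=1_X$. Therefore the equivalence $2)\Leftrightarrow 3)$ follows by applying the proof of $1)\Leftrightarrow 2)$ in the opposite category $\mathcal{A}^{op}$, which is itself preadditive. The only subtlety worth checking carefully is the correct use of bilinearity of composition when verifying $u_1\circ p_1+u_2\circ p_2=1_X$ and when proving uniqueness in $2)\Rightarrow 1)$; this is where the preadditive hypothesis is essential, and is the main (though mild) technical point of the argument.
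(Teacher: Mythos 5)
Your proof is correct and is the standard argument: the paper itself does not prove this proposition but simply cites \cite[Proposition II.9.1]{HS}, and your reasoning (universal property plus bilinearity for $1)\Leftrightarrow 2)$, then self-duality of condition $2)$ to get $2)\Leftrightarrow 3)$ in $\mathcal{A}^{op}$) is precisely the textbook proof being referenced. The only cosmetic remark is that your opening sentence announces a ``cyclic chain'' while what you actually (and correctly) prove is the pair of equivalences $1)\Leftrightarrow 2)$ and $2)\Leftrightarrow 3)$, which is all that is needed.
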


\begin{proof}
See \cite[Proposition II.9.1]{HS}.
\end{proof}

\begin{remark}\rm{
The notion of additive category is self-dual.}
\end{remark}

\begin{notation}\rm{
Given a family of objects $\{X_1,\dots,X_n \}$, we will use $X=X_1 \times \dots  \times X_n$, $X=X_1 \coprod \dots \coprod X_n$ or $X=X_1 \oplus \dots \oplus X_n$, to denote an object which is both product and coproduct of this family of objects. In this case, we will say that each $X_i$ is a \emph{direct summand} of $X$.
\newline
\newline
Now, if $X=X_1 \oplus \dots \oplus X_n$ and $Y=Y_1 \oplus \dots \oplus Y_m$, then a morphism $f:X\rightarrow Y$ will be denoted by:

$$\begin{pmatrix}
 f_{11} & \dots & f_{1n}\\ 
 \vdots & \ddots & \vdots \\
 f_{m1} & \dots & f_{mn}
\end{pmatrix}$$

This indicates that $f$ is the unique morphism obtained by using the universal property of the product $Y$ and of the coproduct $X$, where $f_{ij}$ is the composition:

$$\xymatrix{X_j \ar[r]^{u_j} & X \ar[r]^{f} & Y \ar[r]^{p_i} & Y_i }$$
for all $i,j$. Note that the composition of maps between objects described above, adopts the form of a matrix multiplication between their corresponding matrices.   }
\end{notation}

\begin{definition}\rm{
An \emph{abelian category}\index{category! abelian} is an additive category $\mathcal{A}$ in which every morphism has a kernel and a cokernel and each morphism has a unique factorization as a cokernel followed by a kernel.}
\end{definition}

A consequence of this definition is the following corollary (see \cite[Chapter IV.4]{S} and \cite[Chapter IX.2]{M}).

\begin{corollary}
For any abelian category $\mathcal{A}$, the following assertions hold :
\begin{enumerate}
\item[$1)$] Every monomorphism is the kernel of its cokernel; 
\item[$1)^{op}$] Every epimorphism is the cokernel of its kernel;
\item[$2)$] A morphism $f$ is a monomorphism if, and only if, its kernel is zero;
\item[$2)^{op}$] A morphism $f$ is a epimorphism if, and only if, its cokernel is zero;
\item[$3)$] Every morphism $f:A \rightarrow B$ in $\mathcal{A}$ has a unique factorization of the form:
$$\xymatrix{A \ar[rr]^{f} \ar@{>>}[rd]^{p} && B \\ & Y \ar@{^(->}[ru]^{j} & }$$  
where $p$ is an epimorphism and $j$ is a monomorphism. 
\end{enumerate}
\end{corollary}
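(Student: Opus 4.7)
The plan is to handle the four assertions in a logical order that reflects their dependencies: I will first dispatch (2) and $(2)^{op}$, which follow quickly from the defining universal properties; then establish (3), which is essentially a repackaging of the factorization clause in the definition; and finally derive (1), from which $(1)^{op}$ follows by duality.

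For (2), the observation is that the kernel $k:K\rightarrow A$ of $f:A\rightarrow B$ satisfies $f\circ k=0$, so if $f$ is monic then $k=0$ and hence $K=0$. Conversely, if $\ker(f)=0$ and $f\circ g=f\circ h$, then $f\circ(g-h)=0$ forces $g-h$ to factor through the (zero) kernel, so $g=h$. The dual version $(2)^{op}$ is obtained by reversing arrows and replacing ``kernel'' with ``cokernel''. For (3), note first that any cokernel is epic and any kernel is monic (standard consequences of the universal properties, reusing (2) and $(2)^{op}$). The defining clause of an abelian category asserts precisely that $f$ factors uniquely as (cokernel)$\circ$(kernel) read from right to left, so $f=j\circ p$ with $p$ epic and $j$ monic, and the factorization is unique up to a unique compatible isomorphism of the middle object.

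The core of the argument is (1). Given a monomorphism $f:A\rightarrow B$, apply (3) to obtain $f=j\circ p$, where $p:A\rightarrow Y$ is the cokernel of some morphism $u:X\rightarrow A$ and $j:Y\rightarrow B$ is the kernel of some morphism $v:B\rightarrow Z$. By definition of cokernel, $p\circ u=0$, hence $f\circ u=j\circ p\circ u=0$, and since $f$ is monic we conclude $u=0$. But the cokernel of the zero morphism $0:X\rightarrow A$ is (isomorphic to) the identity $1_A$, so $p$ is an isomorphism and $f$ is itself isomorphic to the kernel $j$ of $v$. It remains to show that any morphism which is a kernel of some arrow is in fact the kernel of its own cokernel.

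The last step is a general fact. Let $f:A\rightarrow B$ be the kernel of $v:B\rightarrow Z$ and let $q:B\rightarrow C$ be the cokernel of $f$. Then $q\circ f=0$ by definition, and since $v\circ f=0$, there is a unique $v':C\rightarrow Z$ with $v=v'\circ q$. For any $g:W\rightarrow B$ satisfying $q\circ g=0$, we deduce $v\circ g=v'\circ q\circ g=0$, so $g$ factors uniquely through $\ker(v)=f$; this verifies the universal property and shows $f=\ker(q)$. The statement $(1)^{op}$ is then obtained by dualizing this entire argument. I expect the only subtle point to be keeping track of the direction of the factorization in (3) (``cokernel then kernel'' meaning epi$\circ$mono is in the \emph{opposite} order from $f=j\circ p$), and being careful to extract genuine isomorphisms from the uniqueness clause rather than mere equalities of objects.
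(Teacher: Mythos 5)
Your argument is correct. For this corollary the paper itself offers no proof at all --- it simply cites \cite[Chapter IV.4]{S} and \cite[Chapter IX.2]{M} --- so your self-contained derivation from the paper's definition of abelian category (existence of kernels and cokernels plus the unique ``cokernel followed by kernel'' factorization) is genuinely more than what the text provides, and it is essentially the standard argument one finds in those references. The one point worth tightening is the logical order around assertion 3: the definition gives uniqueness only among \emph{cokernel--kernel} factorizations, whereas assertion 3 claims uniqueness among all epi--mono factorizations; to pass from the former to the latter you need precisely assertions 1 and $1^{op}$ (every mono is a kernel, every epi is a cokernel), so the full uniqueness in 3 should be recorded as a consequence of 1 and $1^{op}$ rather than of the definition alone. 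Your proof of 1 only uses the \emph{existence} half of the factorization, so there is no circularity, but it would be worth saying so explicitly. The remaining steps --- $\Ker(f)=0$ characterizing monos via $f\circ(g-h)=0$, the cokernel of a zero morphism being an isomorphism, and a kernel being the kernel of its own cokernel via the induced map $v'$ with $v=v'\circ q$ --- are all sound.
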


\begin{remark}\rm{
The object $Y$ (or the subobject $Y\xymatrix{\ar@{^(->}[r]^{j} &} B$) of condition 3 above is called the \emph{image}\index{image} of $f$, denoted $\Imagen(f)$}.
\end{remark}

\begin{definition}\rm{
A sequence in $\mathcal{A}$ of the form $\xymatrix{A \ar[r]^{f} & B \ar[r]^{g} & C}$ is said to be \emph{exact} at $B$ when $\Ker(g)=\Imagen(f)$. An arbitrary sequence of morphisms  
$$\xymatrix{A_0 \ar[r]^{f_1} & A_1 \ar[r] & \cdots \ar[r]^{f_n} & A_n & (n \geq 2)}$$
is said to be \emph{exact at $A_t$} if the sequence $\xymatrix{ A_{t-1} \ar[r]^{f_t} & A_{t} \ar[r]^{f_{t+1} \hspace{0.3cm}} & A_{t+1}}$ is exact (in $A_t$). The sequence is called exact when it is exact in $A_i$, for $i=1, \dots, n-1$.}
\end{definition}

A consequence of this definition is the following fact.

\begin{proposition}
Let $\xymatrix{0 \ar[r] & A \ar[r]^{f} & B \ar[r]^{g} & C \ar[r] & 0}$ be a sequence in $\mathcal{A}$. The following assertions are equivalent:
\begin{enumerate}
\item[1)] the sequence is exact;
 
\item[2)] $f$ is a monomorphism, $g$ is a epimorphism and $\Imagen(f)=\Ker(g)$;

\item[3)] $f$ is the kernel of $g$ and $g$ is the cokernel of $f$.

\end{enumerate}
\end{proposition}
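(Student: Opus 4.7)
The plan is to combine the definition of exactness with the foundational properties collected in the preceding corollary (every monomorphism is the kernel of its cokernel, every epimorphism the cokernel of its kernel, a morphism is mono iff its kernel is zero and epi iff its cokernel is zero). I will treat (1)$\Leftrightarrow$(2) by unpacking what exactness means at each of the three interior positions of the sequence, and then derive (2)$\Leftrightarrow$(3) by identifying a monomorphism with its image and invoking the mono-kernel/epi-cokernel correspondence.

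First I would handle (1)$\Leftrightarrow$(2). Exactness at $A$ amounts to $\Ker(f)=\Imagen(0\to A)=0$, which by the corollary is equivalent to $f$ being a monomorphism. Dually, exactness at $C$ amounts to $\Imagen(g)=\Ker(C\to 0)=C$, equivalent to $g$ being an epimorphism. Exactness at $B$ is the equality $\Imagen(f)=\Ker(g)$ by definition. Thus (1) and (2) are two formulations of the same three conditions.

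Next I would establish (2)$\Leftrightarrow$(3). The key observation is that when $f$ is a monomorphism, in its canonical factorization $f=j\circ p$ the morphism $p$ is both epi (by construction) and mono (since $f$ is), hence an isomorphism; consequently $\Imagen(f)$ coincides with $f$ as a subobject of $B$. Assuming (2), the equality $\Imagen(f)=\Ker(g)$ then identifies $f$ with $\Ker(g)$ up to unique isomorphism. Using the dual statement that the epimorphism $g$ is the cokernel of its kernel, we get $g=\Coker(\Ker(g))=\Coker(f)$, which is (3). Conversely, assuming (3), kernels are monomorphisms and cokernels are epimorphisms, so $f$ is mono and $g$ is epi; and $\Imagen(f)=f=\Ker(g)$ is immediate.

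I do not expect any real obstacle: the argument is a direct translation between the three equivalent viewpoints on the data $(f,g)$, and the only subtle point is the identification $\Imagen(f)=f$ for $f$ a monomorphism, which is where the defining property of abelian categories (unique epi-mono factorization) is used.
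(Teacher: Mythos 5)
Your argument is correct: the paper states this proposition without proof, as an immediate consequence of the definition of exactness and of the preceding corollary, and your verification is exactly the standard one that fills in those routine details. In particular, your two key points — that exactness at the outer terms translates via the corollary into $f$ mono and $g$ epi, and that for a monomorphism $f$ the epi part of its canonical factorization is an isomorphism, so $f$ may be identified with $\Imagen(f)$ as a subobject and hence with $\Ker(g)$, whence $g=\Coker(f)$ by the epi-cokernel correspondence — are precisely what is needed, and the converse direction is handled correctly as well.
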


\begin{definition}\rm{
A sequence $\xymatrix{0 \ar[r] & A \ar[r]^{f} & B \ar[r]^{g} & C \ar[r] & 0}$  satisfying any of the equivalent conditions of the previous proposition (and hence all of them),
is said to be a \emph{short exact sequence}\index{exact sequence! short}.}
\end{definition}

The following short exact sequences are considered as the trivial ones, from certain point of view.

\begin{proposition}
For a short exact sequence $\xymatrix{0 \ar[r] & A \ar[r]^{f} & B \ar[r]^{g} & C \ar[r] & 0}$ the following assertions are equivalents:
\begin{enumerate}
\item[1)] It is isomorphic to the sequence: 
$$\xymatrix{0 \ar[r] & A \ar[rr]^{\begin{pmatrix}
 1\\ 
 0
\end{pmatrix} \hspace{0.6 cm}} & & A \oplus C \ar[rr]^{\hspace{0.3 cm}\begin{pmatrix}
 0 & 1
\end{pmatrix}} & & C \ar[r] & 0}$$

\item[2)] $f$ is a \emph{section} in $\mathcal{A}$ (i.e. there is a morphism  $p:B\rightarrow A$ such that $p \circ f = 1_{A}$);

\item[3)] $g$ is a \emph{retraction} in $\mathcal{A}$ (i.e. there is a morphism  $u:C\rightarrow B$ such that $g\hspace{0.03cm} \circ \hspace{0.03cm} u = 1_{C}$).

\end{enumerate}
\end{proposition}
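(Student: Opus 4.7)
The plan is to prove this standard splitting result by establishing the cyclic chain $1) \Rightarrow 2) \Rightarrow 3) \Rightarrow 1)$, where the implication $1) \Rightarrow 2)$ (and equally $1) \Rightarrow 3)$) is essentially immediate, while the heart of the argument is the construction of a splitting morphism on one side from a splitting on the other, using the fact that $f$ is a kernel of $g$ and $g$ is a cokernel of $f$.

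For $1) \Rightarrow 2)$ and $1) \Rightarrow 3)$, I would just observe that the trivial sequence with middle term $A \oplus C$ comes equipped with its canonical projection $(1,0): A \oplus C \to A$ and canonical injection $(0,1)^t: C \to A \oplus C$, which serve as the desired section and retraction; since these properties are preserved by isomorphism of exact sequences, both implications follow.

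The main step is $2) \Leftrightarrow 3)$, and equivalently producing both splittings simultaneously. Assuming $2)$, say $p \circ f = 1_A$, I would consider the endomorphism $1_B - f\circ p$ of $B$. Computing $(1_B - f\circ p)\circ f = f - f\circ p \circ f = f - f = 0$, and since $g$ is the cokernel of $f$ by exactness, there exists a unique $u: C \to B$ with $u \circ g = 1_B - f \circ p$. Composing with $g$ on the right gives $g \circ u \circ g = g - g \circ f \circ p = g$ (using $g \circ f = 0$), and because $g$ is epi we conclude $g \circ u = 1_C$. The reverse implication is the dual argument, factoring $1_B - u \circ g$ through $f = \ker(g)$.

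The last step is $2) + 3) \Rightarrow 1)$. Starting from splittings $p$ and $u$, I would first replace $p$ by $p' := p \circ (1_B - u \circ g)$ to arrange simultaneously $p' \circ f = 1_A$, $p' \circ u = 0$ and $f \circ p' + u \circ g = 1_B$ (the last identity is exactly the one obtained in the previous paragraph). Then the morphisms
\[
\phi := \begin{pmatrix} f & u \end{pmatrix} : A \oplus C \longrightarrow B, \qquad \psi := \begin{pmatrix} p' \\ g \end{pmatrix} : B \longrightarrow A \oplus C
\]
satisfy $\psi \circ \phi = 1_{A\oplus C}$ (by the matrix computation of the four entries) and $\phi \circ \psi = f\circ p' + u \circ g = 1_B$, so $\phi$ is an isomorphism. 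A final diagram chase, using that $\phi$ restricts to $f$ on $A$ and $g \circ \phi$ restricts to $1_C$ on $C$, shows that $\phi$ is an isomorphism of short exact sequences between the given sequence and the trivial one.

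The only subtle point, and the one I would flag as the potential obstacle, is the modification $p \rightsquigarrow p'$: without this correction $\psi \circ \phi$ has a nonzero off-diagonal entry $p \circ u$, so one must verify that the replacement $p'$ still satisfies $p' \circ f = 1_A$ while also killing $p' \circ u$ and producing the identity relation $f \circ p' + u \circ g = 1_B$; these three compatibilities are exactly what makes the matrix calculation work.
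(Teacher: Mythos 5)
Your proof is correct, and there is nothing in the paper to compare it against: the proposition is stated in the preliminaries as standard background, with no proof or citation attached, so your argument simply supplies the usual textbook splitting argument via the kernel/cokernel factorization. The one point you flagged — the correction $p' := p \circ (1_B - u \circ g)$ — does work: since $g \circ (1_B - u \circ g) = 0$, exactness gives $1_B - u \circ g = f \circ q$ with $q \circ f = 1_A$, and then $p' = p \circ f \circ q = q$, so the identity $f \circ p' + u \circ g = 1_B$ holds by construction (equivalently, one checks directly that $(1_B - f\circ p)\circ(1_B - u\circ g) = (1_B - f\circ p)\circ f \circ q = 0$). With that observation the matrix computation for $\psi \circ \phi = 1_{A\oplus C}$ and $\phi \circ \psi = 1_B$ goes through exactly as you wrote it.
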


\begin{definition}\rm{
We say that an exact sequence $\xymatrix{0 \ar[r] & A \ar[r]^{f} & B \ar[r]^{g} & C \ar[r] & 0}$ is \emph{split}\index{exact sequence! split} if it satisfies any of the conditions of the previous proposition.}
\end{definition}

\vspace{0.3 cm}

We finish this subsection with the next result. We will often use it.

\begin{proposition}{\bf (Snake Lemma)} \index{Snake lemma} \newline
Let $\mathcal{A}$ be an abelian category. Given a commutative diagram with exact rows
$$\xymatrix{& A \ar[r]^{f} \ar[d]^{u} & B \ar[r]^{g} \ar[d]^{v} & C \ar[r] \ar[d]^{w} & 0\\ 0 \ar[r] & A^{'} \ar[r]^{f^{'}} & B^{'} \ar[r]^{g^{'}} & C^{'} }$$
there is an exact sequence in $\mathcal{A}$
$$\xymatrix{Ker(u) \ar[r]^{\widetilde{f}} & Ker(v) \ar[r] & Ker(w) \ar[r]^{\delta \hspace{0.2cm}} & Coker(u) \ar[r] & Coker(v) \ar[r]^{\overline{g^{'}}} & Coker(w)}$$

Moreover, if $f$ is a monomorphism, then $\widetilde{f}$ is a monomorphism and if $g^{'}$ is an epimorphism, then $\overline{g^{'}}$ is an epimorphism.
\end{proposition}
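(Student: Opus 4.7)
The plan is to produce all six morphisms in the sequence and then verify exactness at the four intermediate positions. In an abelian category one could invoke the Freyd--Mitchell embedding theorem and run the standard element-wise diagram chase, but I prefer to work intrinsically in $\mathcal{A}$ by means of a pullback/pushout construction, which keeps the proof self-contained and in line with the rest of this chapter.

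\textbf{The four induced morphisms.} Let $\iota_u:\Ker(u)\hookrightarrow A$ denote the kernel inclusion and $p_u:A'\twoheadrightarrow \Coker(u)$ the canonical projection, and define the analogous arrows for $v$ and $w$. Since $v\circ f\circ\iota_u=f'\circ u\circ\iota_u=0$, the composite $f\circ\iota_u$ factors uniquely through $\Ker(v)\hookrightarrow B$, giving $\widetilde{f}:\Ker(u)\to \Ker(v)$; the same recipe yields $\widetilde{g}:\Ker(v)\to \Ker(w)$. Dually, $p_v\circ f'$ and $p_w\circ g'$ vanish on $\Imagen(u)$ and $\Imagen(v)$ respectively, and descend to $\overline{f'}:\Coker(u)\to\Coker(v)$ and $\overline{g'}:\Coker(v)\to\Coker(w)$.

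\textbf{The connecting morphism.} This is the heart of the proof. I would form the pullback
$$\xymatrix{P \ar[r]^{\pi_1} \ar[d]_{\pi_2} & \Ker(w) \ar[d] \\ B \ar[r]^{g} & C}$$
Since $g$ is an epimorphism (exactness of the top row at $C$), so is $\pi_1$, and the pullback identifies $\Ker(\pi_1)$ with $\Ker(g)=\Imagen(f)$ via the canonical morphism induced by $f$. From $g'\circ v\circ \pi_2=w\circ g\circ\pi_2=0$ one obtains that $v\circ\pi_2$ lands in $\Ker(g')=\Imagen(f')$; since $f'$ is monic, there is a unique $\psi:P\to A'$ with $f'\circ\psi=v\circ\pi_2$. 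I then claim that $p_u\circ\psi$ kills $\Ker(\pi_1)\cong\Imagen(f)$: precomposing $f'\circ\psi=v\circ\pi_2$ with the canonical $A\to P$ yields $v\circ f=f'\circ u$, whence by monicity of $f'$ the restriction of $\psi$ factors as $u$, so its image in $\Coker(u)$ vanishes; since $A\twoheadrightarrow\Imagen(f)$ is epic, $p_u\circ\psi$ itself vanishes on $\Imagen(f)$. Because $\pi_1$ is an epimorphism, $p_u\circ\psi$ descends through $\pi_1$ to the desired morphism $\delta:\Ker(w)\to\Coker(u)$.

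\textbf{Exactness and the addendum.} The main obstacle is checking exactness at $\Ker(w)$ and at $\Coker(u)$: exactness at $\Ker(v)$ and $\Coker(v)$ is a short universal-property argument from the very definitions of $\widetilde{f},\widetilde{g},\overline{f'},\overline{g'}$. For the two subtle positions I would use again the pullback $P$ (and, by duality, the analogous pushout that describes $\delta$) together with the identification $\Ker(\pi_1)\cong\Imagen(f)$ to reduce each containment $\Imagen\subseteq\Ker$ and its converse to a uniqueness-of-factorization argument through $f'$ (on the kernel side) and through $g$ (on the cokernel side); a morphism $T\to\Ker(w)$ with $\delta=0$ is lifted to $T\to P$ and then, modulo the ambiguity coming from $\Ker(\pi_1)=\Imagen(f)$, one adjusts $\pi_2$ so as to land in $\Ker(v)$, and dually for $\Coker(u)$. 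Finally, the addendum is immediate: if $f$ is monic, $\widetilde{f}$ is the composite of $f\circ\iota_u$ with a kernel inclusion and hence monic; dually, if $g'$ is epic, $\overline{g'}$ is obtained from the epimorphism $g'$ by composing with the projection $B'\twoheadrightarrow\Coker(v)$ and is therefore epic.
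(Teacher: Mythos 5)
The paper itself does not prove this statement at all: its ``proof'' is a citation to Rotman, so your intrinsic argument is a genuine addition rather than a rephrasing. Your construction of the connecting morphism is correct and is the standard element-free route: the pullback $P$ of $g$ and $\Ker(w)\flecha C$ has $\pi_1$ epic with $\Ker(\pi_1)\cong\Ker(g)=\Imagen(f)$, the factorization $\psi:P\flecha A'$ through the monomorphism $f'$ exists, and your verification that $p_u\circ\psi$ kills $\Ker(\pi_1)$ (by precomposing with the canonical $A\flecha P$ and using that $A\epic \Imagen(f)$ is epic) is exactly right, so $\delta$ descends along the epimorphism $\pi_1$.

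The gap is in the exactness verification at $\Ker(w)$ and $\Coker(u)$, which is where the real content of the lemma lies. The step ``a morphism $T\flecha\Ker(w)$ with $\delta=0$ is lifted to $T\flecha P$'' fails as stated: $\pi_1$ is an epimorphism but not split in general, so no such lift need exist. The standard repair is either to replace $T$ by the pullback $T\times_{\Ker(w)}P$ (which comes with an epimorphism onto $T$) and then show that the adjusted morphism into $\Ker(v)$ descends back to $T$, or to set up the calculus of pseudo-elements and chase as in the module case; either way this is an argument you have not actually given, only named. The same lifting issue already appears in the ``short universal-property argument'' for exactness at $\Ker(v)$: to prove $\Ker(\widetilde{g})\subseteq\Imagen(\widetilde{f})$ you must lift a test morphism through the epimorphism $A\epic\Imagen(f)$, which again requires a pullback of the test object. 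Finally, invoking ``the analogous pushout that describes $\delta$'' to handle exactness at $\Coker(u)$ by duality presupposes that the pushout construction produces the \emph{same} connecting morphism as your pullback construction; that compatibility is a (small but nontrivial) lemma you never establish, and without it the dualization argument does not apply. None of these defects is fatal --- the approach is the right one --- but as written the proof of exactness at the two outer positions is missing rather than merely compressed.
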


\begin{proof}
See \cite[Theorem 6.6.5]{R}.
\end{proof}

\subsection{Additive functors and their exactness}
In this subsection, we emphasize some properties that make additive functors special, in certain sense. In what follows, all rings will be assumed to be associative unital, and all modules will be assumed to be unitary. We shall use the following notation, with $R$ (or $S$) being a ring, 
\begin{center}
$R$-Mod = category of left $R$-modules,

Mod-$R$ = category of right $R$-modules,

$_{R}$Mod$_{S}$ = category of $R$-$S$ bimodules. 
\end{center}

\begin{definition}\rm{
A functor $F: \mathcal{A} \flecha \mathcal{B}$ between additives categories is called \emph{additive functor}\index{functor! additive} if, for each couple of objects $A$ and $A^{'}$ in $\mathcal{A}$, the map induced
$$\xymatrix{Hom_{\mathcal{A}}(A,A^{'}) \ar[r] & Hom_{\mathcal{B}}(F(A), F(A^{'})), & f \mapsto F(f)}$$
is a morphism of abelian groups.}
\end{definition}

\begin{remark}\rm{
An additive functor between additive categories always preserves finite (co)products. When the categories are abelian, it preserves split short exact sequences but, in general, it does not preserve arbitrary short exact sequences. }
\end{remark}

\begin{definition}\rm{
An additive functor $F: \mathcal{A} \flecha \mathcal{B}$ is called \emph{left exact}\index{functor! left exact} if for each exact sequence in $\mathcal{A}$
$$\xymatrix{0 \ar[r] & A^{'} \ar[r]^{f} & A \ar[r]^{g} & A^{''} \ar[r] & 0}$$
the sequence $\xymatrix{0 \ar[r] & F(A^{'}) \ar[r]^{F(f)} & F(A) \ar[r]^{F(g)} & F(A^{'})}$ is exact in $\mathcal{B}$. Dually, we can define a \emph{right exact}\index{functor! right exact} functor. }
\end{definition}

\begin{example}\rm{
\begin{enumerate} \vspace{0.1 cm}
\item[1)] For each abelian category $\mathcal{A}$ and $A\in Ob(\mathcal{A})$, the functor \newline $Hom_{\mathcal{A}}(A,-):\mathcal{A} \flecha \text{Ab}$ (resp. $Hom_{\mathcal{A}}(-,A):\mathcal{A}^{\text{op}} \flecha \text{Ab}$) is left exact;
\item[2)] If $\mathcal{A}=\text{Mod-}R$ and $N$ is a left $R$-module, then the tensor product functor \linebreak  $-\otimes_{R} N: \text{Mod-}R \flecha \text{Ab}$ is right exact.
\end{enumerate}}
\end{example}

\subsection{(Co)limits.}
In this subsection we introduce the notions of limit and colimit, which are fundamental for us. Recall that a category $I$ is called \emph{small}\index{category! small} when its objects form a set.

\begin{definition}\rm{
If $\mathcal{C}$ and $I$ are an arbitrary and a small category, respectively, then a functor $I \rightarrow \mathcal{C} $ will be called an $I$-\emph{diagram}\index{$I$-diagram} on $\mathcal{C}$, or simply a diagram when $I$ is understood. The $I$-diagrams on $\mathcal{C}$ together with their natural transformations form a category, which we will denote by $[I, \mathcal{C}]$. In this situation, we always have a \emph{constant diagram functor} $\kappa: \mathcal{C} \rightarrow [I,\mathcal{C}]$ which assigns to each object $X$ of $\mathcal{C}$ the constant diagram $\kappa(X):I \rightarrow \mathcal{C}$, which takes $i \rightsquigarrow X$ and $\alpha \rightsquigarrow 1_{X}$, for each object $i$ and morphism $\alpha$ of $I$. }
\end{definition}

\begin{definition}\rm{
Let $M\in [I,\mathcal{C}]$ be an $I$- diagram, we will say that the pair $(C,\tau)$ is the $I$-\emph{colimit}\index{$I$-colimit} of $M$, where $C\in Ob(\mathcal{C})$ and $\tau$ is a natural transformation $\xymatrix{M \ar[r]^{\tau \hspace{0.7 cm}} &\kappa(C)=C}$, if for each $\xymatrix{M \ar[r]^{\sigma \hspace{0.7 cm}} &\kappa(A)=A}$ natural transformation, there is an unique morphism \linebreak $f:C \flecha A$ such that the diagram
$$\xymatrix{& \kappa(C)=C \ar@{-->}[dd]^{f}\\ M \ar[ur]^{\tau} \ar[dr]^{\sigma}& \\ & \kappa(A)=A }$$
is commutative. Dually, we can define the $I$-\emph{limit}\index{$I$-limit} of $M$ which will be a pair of the form $(L, \kappa(L) \flecha M)$.} 
\end{definition}

\begin{notation}\rm{
If $(C, \xymatrix{M \ar[r]^{\tau \hspace{0.2 cm}} & \kappa(C)})$ is the $I$-colim of $M$, we denote by  $C=colim(M)$ or $C=colim(M_i)$. Dually, if $(L, \xymatrix{ \kappa (L) \ar[r]^{\hspace{0.15 cm}\tau} & M})$ is the $I$-limit of $M$, we denote by \linebreak $L=lim(M)$ or $L=lim(M_i)$. When each $I$-diagram has a colimit (resp. limit), we say that $\mathcal{C}$ has $I$-\emph{colimits} (resp. $I$-\emph{limits}). In such case, the functor $\kappa$ has a left (resp. right) adjoint $colim: [I,\mathcal{C}] \flecha \mathcal{C}$ (resp. $lim: [I,\mathcal{C}] \flecha \mathcal{C}$) called the $(I$-)\emph{colimit}\index{functor! colimit} (resp. $(I$-)\emph{limit}) \index{functor! limit}functor.   }
\end{notation}

This applies to two particular situations, which are of great interest to us.

\begin{example}\rm{
\begin{enumerate}
\item[1)] Let $I$ be an any set, viewed as a small category where the identities are their unique morphisms. In such a case $I$-limits (resp. $I$-colimits) are the same as $I$-products (resp. $I$-coproducts). We denote by $\prod:[I,\mathcal{C}] \flecha \mathcal{C}$ (resp. $\coprod: [I,\mathcal{C}] \flecha \mathcal{C}$) the corresponding functor, called the ($I$-)\emph{product} (resp. $I$-\emph{coproduct}) functor. 

\item[2)] Let $I$ be an upward directed (resp. downward directed) set. That is, $I$ is an ordered set $I=(I,\leq)$ such that, for each $i,j \in I,$ there is a $k\in I$ such that $i\leq k$ and $j\leq k$ (resp. $k\leq i$ and $k\leq j$). In this case, we view $I$ as a small category on which there is a unique morphism $i \flecha j$ exactly when $i\leq j$. By historical reasons, the corresponding colimit (resp. limit) functor is denoted by $\varinjlim: [I,\mathcal{C}] \flecha \mathcal{C}$ (resp. $\varprojlim: [I, \mathcal{C}] \flecha \mathcal{C}$) and is called the $(I)$-\emph{direct limit} \index{direct limit}(resp. \emph{inverse limit})\index{inverse limit} functor. When $I$ is upward directed, the $I$-diagrams on $\mathcal{C}$ are called $I$-\emph{directed systems}\index{$I$-direct systems} in $\mathcal{C}$. If $X:I \flecha \mathcal{C}$ is such a direct system, frequently, we will write $X=[(X_i)_{i\in I},(u_{ij})_{i \leq j}]$ or simply $X=(X_i)_{i \in I}$, where $u_{ij}$ is the image by $X$ of the unique morphism $i \flecha j$ in $I$.
\end{enumerate}}
\end{example}

\vspace{0.3 cm}

A key morphism that we shall frequently use is the following.

\begin{definition}\label{def. colimit-defining morphism}\rm{
Let $I$ be a directed set and $\mathcal{A}$ be any additive category with arbitrary coproducts. Given an $I$-directed system $[(X_i)_{i\in I},(u_{ij})_{i\leq j}]$, we put $X_{ij}=X_i$, for all $i\leq j$. The \emph{colimit-defining morphism}\index{morphism! colimit-defining} associated to the direct system is the unique morphism $f:\underset{i \leq j }{\coprod}X_{ij} \flecha \underset{i\in I}{\coprod}X_i$ such that if $\lambda_{kl}:X_{kl}\flecha \underset{i \leq j}{\coprod}X_{ij}$ and $\lambda_j:X_j \flecha \underset{i\in I}{\coprod}X_i$ are the canonical morphisms into the coproducts, then $f\circ\lambda_{ij}=\lambda_i-\lambda_j\circ u_{ij}$ for all $i\leq j$.}
\end{definition}

\begin{remark}\label{rem. cokerl=limite directo}\rm{
In the situation of last definition, the $I$-direct system $(X_i)$ has a colimit (i.e. a direct limit) if, and only if, $f$ has a cokernel in $\mathcal{A}$. In such case we have $L:=\varinjlim X_i=\text{Coker}(f)$ (see e.g. \cite[Proposition II.6.2]{P}).}
\end{remark}

\begin{remark}\label{rem. preserve colimits functor}\rm{ $M\in [I,\mathcal{A}]$ be an $I$-diagram in $\mathcal{A}$ and $F:\mathcal{A} \flecha \mathcal{A}^{'}$ be an additive functor. We will say that $F$ \emph{preserves $I$-colimits}\index{functor! preserves $I$-colimits}, when $F \circ M: I \flecha  \mathcal{A}^{'}$ has a colimit in $\mathcal{A}^{'}$, for each $I$-diagram $M:I \flecha \mathcal{A}$ which has a colimit in $\mathcal{A}$, and, moreover, the canonical morphism $\psi_M:colim(F \circ M) \flecha F (colim (M))$ is an isomorphism. In particular, we get a commutative diagram 
$$\xymatrix{colim(F \circ M) \ar[r]^{\psi_M} \ar[d]_{colim(F(f))} & F(colim(M)) \ar[d]^{F(colim(f))} \\ colim(F \circ N) \ar[r]^{\psi_N} & F(colim(N))}$$ 
for each $I$-diagram $N:I \flecha \mathcal{A}$ which has a colimit in $\mathcal{A}$ and each morphism $f:M \flecha N$ of $I$-diagrams.  }
\end{remark}

The following result is very useful and will be frequently used in this work.

\begin{proposition}
Let $I$ be a small category. Every left adjoint functor between abelian categories preserves $I$-colimits. 
\end{proposition}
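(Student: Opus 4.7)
The plan is to exploit the hom-functor characterization of colimits: an object $C$ together with a cocone $\tau:M\flecha\kappa(C)$ is the colimit of $M$ if and only if, for every object $A$, the natural map
$$\Hom_{\mathcal{A}}(C,A)\flecha \varprojlim_{i\in I}\Hom_{\mathcal{A}}(M_i,A)$$
sending $g\mapsto (g\circ\tau_i)_{i\in I}$ is a bijection. So, given a left adjoint $F:\mathcal{A}\flecha \mathcal{A}'$ with right adjoint $G:\mathcal{A}'\flecha\mathcal{A}$, and an $I$-diagram $M:I\flecha\mathcal{A}$ admitting a colimit $(C,\tau)$, I would show that $(F(C),F\tau)$ satisfies this universal property in $\mathcal{A}'$.

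The concrete steps are as follows. First, fix an object $A'$ of $\mathcal{A}'$ and chase through the chain of natural bijections
$$\Hom_{\mathcal{A}'}(F(C),A')\iso \Hom_{\mathcal{A}}(C,G(A'))\iso \varprojlim_{i\in I}\Hom_{\mathcal{A}}(M_i,G(A'))\iso \varprojlim_{i\in I}\Hom_{\mathcal{A}'}(F(M_i),A'),$$
where the first and last isomorphisms are instances of the adjunction $(F,G)$ (and are natural in the varying argument), and the middle one is the universal property of the colimit $(C,\tau)$. A direct unwinding of the definitions shows that the resulting composite bijection sends $f:F(C)\flecha A'$ to $(f\circ F(\tau_i))_{i\in I}$, which is precisely the canonical map measuring whether $(F(C),F\tau)$ is a colimit of $F\circ M$. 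Hence $F\circ M$ has a colimit in $\mathcal{A}'$ and the canonical comparison $\psi_M:\text{colim}(F\circ M)\flecha F(\text{colim}(M))$ is an isomorphism, which is exactly what is required by the definition recalled in Remark \ref{rem. preserve colimits functor}.

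The only subtle point, which I would be careful to justify, is the naturality of the middle isomorphism in $A'$: it requires checking that $\Hom_{\mathcal{A}}(M_i,G(-))$, viewed as a functor from $\mathcal{A}'$ to abelian groups, and its limit over $I$ behave compatibly, and then that the inverse-limit functor commutes with the adjunction bijection applied pointwise. This follows formally because limits of sets commute with each other, and because the unit/counit of the adjunction $(F,G)$ give naturality in both variables. Once this is in hand, no further computation is needed.

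The main (mild) obstacle is bookkeeping: verifying that the composite of the three natural bijections indeed agrees with the map $f\mapsto (f\circ F(\tau_i))_{i\in I}$, rather than some twisted version of it. This reduces to tracing the unit $\eta:1_{\mathcal{A}}\flecha GF$ of the adjunction through the diagram $\tau:M\flecha \kappa(C)$ and observing that the triangle identities make the naturality squares commute on the nose. No deep ingredient beyond the adjunction and the universal property of colimits is used, so the argument is self-contained at the level of the preliminaries already developed.
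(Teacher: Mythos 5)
Your argument is correct: the hom-set characterization of colimits combined with the adjunction bijection, plus the check that the composite bijection is the canonical comparison map $f\mapsto (f\circ F(\tau_i))_{i\in I}$, is exactly the standard proof, and the paper itself offers no independent argument but only cites \cite[Proposition II.12.1]{Mi}, where essentially this same reasoning appears. Nothing is missing; your care about naturality in $A'$ and the triangle identities covers the only point that needs verification.
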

\begin{proof}
See \cite[Proposition II.12.1]{Mi}.
\end{proof}

\begin{definition}\rm{
The category $\mathcal{C}$ is called \emph{complete}\index{category! complete} (resp. \emph{cocomplete}\index{category! cocomplete}) when it has $I$-limits (resp. $I$-colimits) in $\mathcal{C}$, for every small category $I$.}
\end{definition}

\vspace{0.3 cm}

A much easier criterion for (co)completeness is the following:

\begin{proposition}
Let $\mathcal{A}$ be an additive category. It is complete (resp. cocomplete) if, and only if, it has products (resp. coproducts) and kernels (resp. cokernels).
\end{proposition}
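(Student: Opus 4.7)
The plan is to prove the two directions separately, handling only the ``complete'' case, as the ``cocomplete'' case is dual. The forward implication is essentially immediate, so the real content lies in constructing arbitrary limits from products and kernels.

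For the forward direction, I would observe that products are already the limits of diagrams indexed by discrete small categories (i.e. sets, as in the earlier Example). For kernels, given a morphism $f:A\flecha B$ in $\mathcal{A}$, I would view the diagram $\xymatrix{A \ar@<0.5ex>[r]^{f} \ar@<-0.5ex>[r]_{0} & B}$ as an $I$-diagram where $I$ has two objects and two parallel non-identity arrows. Its limit is the equalizer of $f$ and $0$, which in an additive category with zero object coincides with $\Ker(f)$. Since $\mathcal{A}$ is assumed complete, both constructions exist.

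For the converse, suppose $\mathcal{A}$ has arbitrary products and kernels; I want to produce the limit of an arbitrary $I$-diagram $M:I\flecha\mathcal{A}$ with $I$ small. The key step is to mimic the classical Freyd/MacLane construction, but translating equalizers into kernels by additivity. I would form the two products
$$P=\prod_{i\in \text{Ob}(I)}M(i)\qquad \text{and}\qquad Q=\prod_{\alpha:i\to j \text{ in }I}M(j),$$
both of which exist by hypothesis (here I use that $I$ is small, so the indexing classes are sets). Then I would define two morphisms $u,v:P\flecha Q$ by specifying their composition with each projection $\pi_\alpha:Q\flecha M(j)$: namely $\pi_\alpha\circ u=M(\alpha)\circ p_i$ and $\pi_\alpha\circ v=p_j$, where $p_i:P\flecha M(i)$ are the projections of $P$. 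These exist and are unique by the universal property of $Q$. Setting $L:=\Ker(u-v)$ with structural morphism $k:L\monic P$, I would define $\tau_i:=p_i\circ k:L\flecha M(i)$ for each $i$.

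The core of the argument is then to verify that $(L,(\tau_i)_{i\in\text{Ob}(I)})$ is a limiting cone. That $\tau$ is a cone amounts to checking $M(\alpha)\circ \tau_i=\tau_j$ for every arrow $\alpha:i\to j$, which translates directly into $(u-v)\circ k=0$, the defining property of $L$. For the universal property, given any cone $(\sigma_i:X\flecha M(i))_{i\in\text{Ob}(I)}$, I would use the universal property of $P$ to produce a unique $\sigma:X\flecha P$ with $p_i\circ\sigma=\sigma_i$, then check $(u-v)\circ\sigma=0$ using the cone condition, and thereby factor $\sigma$ uniquely through $k$.

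The main obstacle I anticipate is bookkeeping rather than conceptual: carefully setting up the two morphisms $u,v$ in terms of the large product $Q$ indexed by all arrows of $I$, and being disciplined about the distinction between equalizers and kernels so that additivity is genuinely invoked at the right moment. Everything else is a routine diagram chase via the universal properties of products and kernels.
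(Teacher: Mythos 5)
Your proposal is correct, and it is precisely the standard construction (limits as kernels of a difference of two maps between products) that the paper defers to by citing \cite[Proposition II.5.2]{P} rather than proving itself. The one point worth being explicit about in a write-up is that the uniqueness of the factorization through $k$ follows because $k$, being a kernel, is a monomorphism; otherwise the argument is complete as outlined.
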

\begin{proof}
See \cite[Proposition II.5.2]{P}.
\end{proof}
\subsection{Grothendieck categories}
\vspace{0.1cm}

After the sixties of last century, Algebraic Geometry adopted a categorical point of view, being the category of quasi-coherent sheaves $Qcoh(X)$ on an algebraic scheme $X$ his cornerstone (see \cite{Gr2} and \cite{Har} for relevant definitions). The behaviour of this category is, in many aspects, similar to that of modules. Nevertheless, this is a category of modules just when the scheme $X$ is affine, that is, when, $X\cong \Spec(R)$ for a commutative ring $R$. Grothendieck saw the need of abstracting the properties of $Qcoh(X)$ and defining a new class of categories including $Qcoh(X)$ and $R$-Mod as particular cases. In this section we shall introduce such categories, known since then as Gothendieck categories. They play an important role in this work. In the sequel $\mathcal{A}$ is an abelian category and $I$ is a small category.

\begin{definition}\rm{
Given a small category $I$, we will say that $I$-\emph{colimits are exact} in $\mathcal{A}$, when $\mathcal{A}$ has $I$-colimits and the functor $colim_I: [I,\mathcal{A}] \flecha \mathcal{A}$ is exact, which is equivalent to saying that it preserves kernels.}
\end{definition}

\begin{TG}\rm{
Grothendieck (see \cite{Gr}) introduced the following terminology for an abelian category $\mathcal{A}$:

\begin{enumerate}
\item[-] $\mathcal{A}$ is AB3 (resp. AB3*) when it has coproducts (resp. products);

\item[-] $\mathcal{A}$ is AB4 (resp. AB4*) when it is AB3 (resp. AB3*) and the coproduct functor $\coprod: [I,\mathcal{A}] \flecha \mathcal{A}$ (resp. product functor $\prod: [I,\mathcal{A}] \flecha \mathcal{A}$) is exact, for each set $I$;

\item[-] $\mathcal{A}$ is AB5 (resp. AB5*) when it is AB3 (resp. AB3*) and the direct limit functor $\varinjlim:[I,\mathcal{A}] \flecha \mathcal{A}$ (resp. inverse limit functor $\varprojlim: [I,\mathcal{A}] \flecha \mathcal{A}$) is exact, for each upward (resp. downward) directed set $I$.
\end{enumerate}}
\end{TG}

Note that $\mathcal{A}$ is AB3 (resp. AB3*) if, and only if, it is cocomplete (resp. complete). On the other hand, for each upward (resp. downward) directed set $I$, the functor $\varinjlim: [I,\mathcal{A}] \flecha \mathcal{A}$ (resp. $\varprojlim:[I,\mathcal{A}] \flecha \mathcal{A}$) is always right (resp. left) exact since it is a left (resp. right) adjoint functor. 

\begin{definition}\label{def. locally small}\rm{
An object $G$ of $\mathcal{A}$ is called a \emph{generator}\index{object! generator} when the functor \linebreak $Hom_{\mathcal{A}}(G,?):\mathcal{G} \flecha Ab$ is \emph{faithful}\index{functor! faithful} i.e. $\Hom_{\mathcal{A}}(G,f)\neq 0 $ for each $f:A \flecha B$ non-zero morphism in $\mathcal{A}$. Dually, an object $E$ of $\mathcal{A}$ is called $\emph{cogenerator}\index{object! cogenerator}$ when it is a generator of the opposite category $\mathcal{A}^{op}$. When such $G$ (or $E$) exists, $\mathcal{A}$ is \emph{locally small}\index{category! locally small}, i.e., the class of subobjects of any given object is actually a set (see \cite[Proposition IV.6.6]{S}). }
\end{definition}

\vspace{0.3 cm}

The following result gives us a characterization of generator in AB3 abelian categories.

\begin{proposition}
Let $\mathcal{A}$ be a cocomplete (=AB3) abelian category and $G\in Ob(\mathcal{A})$. The following assertions are equivalent:
\begin{enumerate}
\item[1)] $G$ is a generator of $\mathcal{A}$;
\item[2)] Every object of $\mathcal{A}$ is a quotient of a coproduct of copies of $G$.
\end{enumerate} 
\end{proposition}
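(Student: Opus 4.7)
The plan is to prove each implication separately, using the AB3 hypothesis only in the direction $(1)\Rightarrow (2)$ in order to form a suitably large coproduct of copies of $G$.

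For $(2)\Rightarrow(1)$, I would start with an arbitrary nonzero morphism $f:A\flecha B$ in $\mathcal{A}$ and show $\Hom_{\mathcal{A}}(G,f)\neq 0$. By hypothesis there is an epimorphism $\pi:G^{(I)}\epic A$ for some index set $I$. If it were the case that $\Hom_{\mathcal{A}}(G,f)=0$, then for each canonical inclusion $\lambda_i:G\flecha G^{(I)}$ we would have $f\circ \pi\circ\lambda_i=0$. The universal property of the coproduct would force $f\circ\pi=0$, and since $\pi$ is an epimorphism, $f=0$, a contradiction. Hence $\Hom_{\mathcal{A}}(G,f)\neq 0$, i.e.\ $\Hom_{\mathcal{A}}(G,?)$ is faithful.

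For $(1)\Rightarrow(2)$, fix an object $A\in\mathcal{A}$ and consider the set $I:=\Hom_{\mathcal{A}}(G,A)$ (a set by local smallness, cf.\ Definition \ref{def. locally small}, which follows automatically once a generator exists). Using AB3 form the coproduct $G^{(I)}$ and let $\pi:G^{(I)}\flecha A$ be the unique morphism satisfying $\pi\circ\lambda_g=g$ for every $g\in I=\Hom_{\mathcal{A}}(G,A)$, where $\lambda_g:G\flecha G^{(I)}$ are the canonical inclusions. I claim $\pi$ is an epimorphism. Let $p:A\flecha \Coker(\pi)$ be its cokernel. Given any $h:G\flecha A$, one has $p\circ h=p\circ\pi\circ\lambda_h=0$ because $h$ equals the $h$-th component of $\pi$. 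Thus $\Hom_{\mathcal{A}}(G,p)=0$, and since $G$ is a generator, this forces $p=0$. Consequently $\Coker(\pi)=0$, so $\pi$ is an epimorphism, exhibiting $A$ as a quotient of a coproduct of copies of $G$.

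Neither direction presents a real obstacle: the only subtle point is that in $(1)\Rightarrow(2)$ the index set $\Hom_{\mathcal{A}}(G,A)$ must be a genuine set, which is ensured by the local smallness coming from the existence of a generator (or, if one prefers, by the blanket local smallness convention on abelian categories adopted in the text). The AB3 assumption is essential only for forming $G^{(I)}$; the kernel/cokernel machinery already available in any abelian category does the rest.
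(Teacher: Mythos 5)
Your proof is correct and is the standard argument that the paper itself does not spell out (it simply cites \cite[Lemma II.11.2]{P}), so there is nothing essentially different between your route and the intended one. One small remark: the index set $I=\Hom_{\mathcal{A}}(G,A)$ is a set simply because $\Hom$-classes are sets in the paper's definition of a (pre)additive category; the notion of local smallness in Definition \ref{def. locally small} refers to subobjects forming a set and is not what is needed here, though this does not affect the validity of your argument.
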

\begin{proof}
See \cite[Lemma II.11.2]{P}.
\end{proof}

\vspace{0.3 cm}

We can now define the desire new type of categories.

\begin{definition}\rm{
An AB5 abelian category $\mathcal{G}$ having a generator is called a \emph{Grothendieck category}\index{category! Grothendieck}.}
\end{definition}

We finish this section with some important properties of Grothendieck categories which we will use in this work.

\begin{definition}\rm{
Let $\mathcal{A}$ be a cocomplete (resp. complete) abelian category. Let $A$ be an object of $\mathcal{A}$ and let $(A_i)_{i\in I}$ be a family of subobjects of $A$, then the image (resp. kernel) of the induced morphism $\coprod A_{i} \flecha A$ (resp. $A \flecha \coprod A_{i}$) is called \emph{the sum of the $A_i$}\index{subobject! sum} (resp. \emph{intersection of the $A_i$}\index{subobject! intersection}), which we will denote by $\sum A_i$ (resp. $\cap A_i$). }
\end{definition}

\begin{definition}\rm{
Let $\mathcal{A}$ be an abelian category and let $A$ be an object of $\mathcal{A}$. A subobject $B$ of $A$ is called \emph{essential}\index{subobject! essential} if $B \cap B^{'}\neq 0$, for every nonzero subobject $B^{'}$ of $A$. More generally, we say that a monomorphism $\alpha:B \monic C$ is \emph{essential} \index{essential monomorphism} if $\Imagen(\alpha)$ is an essential subobject of $C$.    }
\end{definition}

\begin{definition}\rm{
An \emph{injective envelope}\index{injective envelope} of an object $A$ of $\mathcal{A}$, is an essential monomorphism $A \monic E$, where $E$ is an injective object of $\mathcal{A}$. Such, an injective envelope is uniquely determined by $A$, up to isomorphism.}
\end{definition}

The next result highlights some properties which make Grothendieck categories special.

\begin{theorem}
If $\mathcal{G}$ is a Grothendieck category, then $\G$ is complete and each object of $\G$ has an injective envelope.  
\end{theorem}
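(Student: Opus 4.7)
The plan is to prove the two assertions in order: first the existence of injective envelopes, and then completeness, since the most natural route to products in this setting passes through an injective cogenerator. I would fix once and for all a generator $G$ of $\mathcal{G}$, and note that by Definition \ref{def. locally small} the category is locally small, so the subobjects of $G$ form a genuine set.

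The first step I would carry out is a Baer-type criterion: an object $E$ is injective if and only if every morphism $f:G'\to E$, with $G'$ a subobject of $G$, extends to $G\to E$. The nontrivial direction uses Zorn's lemma on the poset of partial extensions of a morphism $A\to E$ along a monomorphism $A\hookrightarrow B$; a maximal partial extension $\tilde f: A'\to E$ must have $A'=B$, for otherwise any morphism $G\to B$ whose image is not contained in $A'$ produces, via the pullback $G'=G\times_B A'\hookrightarrow G$, a morphism $G'\to A'\xrightarrow{\tilde f} E$ which lifts to $G\to E$ by hypothesis, and this lift enlarges $\tilde f$, contradicting maximality.

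The second step is to prove that every object $A\in\mathcal{G}$ embeds in an injective $Q$. Here I would run a transfinite pushout construction: set $A_0=A$, and at each successor ordinal form $A_{\alpha+1}$ as the simultaneous pushout of $A_\alpha$ along all diagrams of the form $G\hookleftarrow G'\xrightarrow{h} A_\alpha$ with $G'$ a subobject of $G$ (this is a set, by local smallness, and the pushout exists by AB3); at limit ordinals I take the direct limit, which preserves monomorphisms thanks to AB5. The main obstacle is the cardinality argument showing that this chain stabilises: one must pick an ordinal $\lambda$ whose cofinality exceeds the cardinality of the set of subobjects of $G$, and use AB5 to see that any morphism $G'\to A_\lambda$ factors through some $A_\alpha$ with $\alpha<\lambda$, so that its compatible extension is already present at stage $\alpha+1$. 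The resulting $Q=A_\lambda$ then satisfies the Baer-type criterion and is injective.

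The third step is to extract the injective envelope. Having embedded $A\hookrightarrow Q$ with $Q$ injective, I would apply Zorn's lemma to the set of essential extensions of $A$ contained in $Q$ (a set because $\mathcal{G}$ is well-powered) to obtain a maximal essential extension $E(A)$. A standard argument, using injectivity of $Q$ and maximality, shows that $E(A)$ is itself injective and hence is an injective envelope of $A$.

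Finally, for completeness, I would construct an injective cogenerator $W=\bigoplus_{G'\subseteq G}E(G/G')$, the sum being indexed by the set of subobjects of $G$; a direct verification using the generator property of $G$ shows that $\Hom_\mathcal{G}(-,W)$ is faithful, so $W$ cogenerates. With $\mathcal{G}$ cocomplete (AB3), well-powered, and equipped with a cogenerator, the Special Adjoint Functor Theorem applies: the diagonal functor $\Delta:\mathcal{G}\to\mathcal{G}^I$ preserves colimits for any small $I$ (they are computed componentwise in $\mathcal{G}^I$), so it admits a right adjoint, which is the desired $I$-product functor. Combined with the already-available kernels, this gives completeness. The main difficulty throughout is keeping the transfinite/cardinality bookkeeping in step two honest; everything else is a matter of assembling standard abelian-category machinery on top of it.
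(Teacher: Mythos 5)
Your proposal is correct in outline, but it is a genuinely different route from the paper's: the paper offers no argument at all and simply cites Stenstr\"om (\cite[Example V.2.1, Corollaries X.4.3 and X.4.4]{S}), where the two corollaries sit immediately after the Gabriel--Popescu theorem, so completeness (and injective envelopes) are deduced by embedding $\mathcal{G}$ as a localization of a module category and transporting the structure through the fully faithful section functor. Your proof is the self-contained, first-principles one going back to Grothendieck: a Baer-type criterion relative to the fixed generator $G$, a transfinite small-object construction of an embedding into an injective (and you correctly isolate the two delicate points: exactness at limit ordinals via AB5, and the factorization of any $G'\to A_\lambda$ through an earlier stage by letting the chain $h^{-1}(A_\alpha)$ of subobjects of $G'$ stabilize once $\mathrm{cf}(\lambda)$ exceeds the cardinality of the subobject set of $G$), and then Zorn plus the complement argument inside $Q$ to get a maximal essential extension which is a direct summand of $Q$, hence an injective envelope. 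What the citation route buys is getting completeness and local smallness ``for free'' from $R$-Mod; what your route buys is never leaving $\mathcal{G}$ and exhibiting the envelope explicitly.

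Two slips in your last step, both repairable. First, $W=\bigoplus_{G'\subseteq G}E(G/G')$ need not be injective (coproducts of injectives are injective only in the locally noetherian case), so it is a cogenerator but not an ``injective cogenerator''; since your faithfulness check only uses injectivity of each summand $E(G/G')$, nothing breaks. Second, the version of the Special Adjoint Functor Theorem you quote is the wrong dualization: ``cocomplete, well-powered, with a cogenerator'' are the hypotheses that make limit-preserving functors have \emph{left} adjoints (after dualizing complete to cocomplete this is not what you need); to get a right adjoint of the colimit-preserving diagonal $\Delta\colon\mathcal{G}\to\mathcal{G}^I$ you need the source to be cocomplete, \emph{co-well-powered} and to possess a \emph{generator}. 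Fortunately all three hold: $\mathcal{G}$ is AB3, it has the generator $G$ by definition, and in an abelian category well-powered and co-well-powered coincide (quotient objects of $X$ biject with subobjects of $X$), well-poweredness being guaranteed by definition \ref{def. locally small}. With the correctly dualized statement the argument closes -- but note that it then makes no use of $W$, nor of injective envelopes at all: AB3 together with a generator already yields completeness, so your completeness step could be run before, and independently of, the injective-envelope half.
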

\begin{proof}
See \cite[Example V.2.1, Corollaries X.4.3 and X.4.4]{S}.
\end{proof}



\begin{definition}\rm{
An object $A$ of an additive category $\mathcal{A}$ with coproducts is called \emph{compact} \index{object! compact}, when the functor $\Hom_{\mathcal{A}}(A,?): \mathcal{A} \flecha$ Ab preserves coproducts. If $\mathcal{A}$ also has direct limits, then the object $A$ is said to be \emph{finitely presented} \index{object! finitely presented} when that functor preserves direct limits.}
\end{definition}

\begin{definition}\label{def. locally fp G}\rm{
A Grothendieck category $\G$ is called \emph{locally finitely presented}\index{category! locally finitely presented} if there is a set $\mathcal{S}$ of finitely presented objects such that each object of $\G$ is a direct limit of objects of $\mathcal{S}$. }
\end{definition}

\begin{examples}\rm{
The following are examples of locally finitely presented Grothendieck categories:
\begin{enumerate}
\item[1)] Each category of additive functors $\mathcal{A}\flecha \text{Ab}$, for every skeletally small preadditive category $\mathcal{A}$. Equivalently (see \cite[Proposition II.2]{G}), each category $R$-Mod of unitary modules over a ring $R$ with enough idempotents.

\item[2)] The category $Qcoh(X)$ of quasi-coherent sheaves over any quasi-compact and quasi-separated algebraic scheme (\cite[I.6.9.12]{Gr2}). 

\item[3)] Each quotient category $\G/\T$ (see subsection \ref{sec. quotients category}), where $\G$ is locally finitely presented Grothendieck category and $\T$ is a hereditary torsion class in $\G$ generated by finitely presented objects (\cite[Proposition 2.4]{ES}).
\end{enumerate}}
\end{examples}

 
\begin{definition}\rm{
Let $\mathcal{A}$ be an AB3 abelian category. An object $P$ of $\mathcal{A}$ will be called a \emph{progenerator}\index{object! progenerator} of $\mathcal{A}$ if it is projective, compact and a generator of $\mathcal{A}$.}
\end{definition}

\begin{theorem}[Gabriel-Mitchell]\label{teo. Gabriel-Michell}
An AB3 abelian category $\mathcal{A}$ is a module category if, and only if, it has a progenerator. 
\end{theorem}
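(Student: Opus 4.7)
The forward implication is immediate: if $\mathcal{A}\simeq R\Mode$, then $R$ itself (as a left $R$-module) is a progenerator, since it is free of rank one, hence projective and, by the characterisation of generators in AB3 categories above, a generator; it is compact because $\Hom_{R}(R,?)$ is isomorphic to the identity functor and therefore preserves coproducts. All the work lies in the converse.

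For the converse, I would fix a progenerator $P\in\mathcal{A}$, set $R:=\End_{\mathcal{A}}(P)^{op}$, and consider the functor $F:=\Hom_{\mathcal{A}}(P,?):\mathcal{A}\flecha R\Mode$. The aim is to show $F$ is an equivalence. First I would record the three defining properties of a progenerator in functorial form: projectivity of $P$ makes $F$ exact; compactness of $P$ makes $F$ preserve arbitrary coproducts, so that $F(P^{(I)})\cong R^{(I)}$ for every set $I$; and the generator property makes $F$ faithful. Combining compactness with the natural identifications $\Hom_{\mathcal{A}}(P^{(J)},P^{(I)})\cong\prod_{J}F(P^{(I)})\cong\prod_{J}R^{(I)}\cong\Hom_{R}(R^{(J)},R^{(I)})$, one sees that $F$ restricts to a fully faithful functor from the full subcategory $\Add(P):=\{P^{(I)}\mid I\text{ a set}\}$ onto the full subcategory of free $R$-modules.

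Next I would construct a quasi-inverse $G:R\Mode\flecha\mathcal{A}$. For each $M$ pick a free presentation $R^{(J)}\stackrel{\alpha}{\flecha}R^{(I)}\flecha M\flecha 0$, lift $\alpha$ via the fully faithful restriction above to a unique $\widetilde{\alpha}:P^{(J)}\flecha P^{(I)}$ in $\mathcal{A}$, and set $G(M):=\Coker(\widetilde{\alpha})$; this makes sense because $\mathcal{A}$ is AB3 and abelian. The universal property of cokernels, together with the fact that morphisms between presentations in $R\Mode$ correspond bijectively to morphisms between the lifted presentations in $\mathcal{A}$, makes $G$ into a functor that is well-defined up to canonical natural isomorphism, characterised as right exact, coproduct-preserving, with $G(R)\cong P$.

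The last step is to check the adjunction $G\dashv F$ with unit $\eta:1_{R\Mode}\flecha FG$ and counit $\varepsilon:GF\flecha 1_{\mathcal{A}}$, and that both are natural isomorphisms. For $\eta_{M}$: applying the exact $F$ to $P^{(J)}\flecha P^{(I)}\flecha G(M)\flecha 0$ produces the sequence $R^{(J)}\flecha R^{(I)}\flecha FG(M)\flecha 0$, which is the presentation of $M$ we started from, whence $FG(M)\cong M$. For $\varepsilon_{A}$: since $P$ is a generator in an AB3 category, every $A$ admits an epimorphism $P^{(I)}\epic A$, and iterating once on its kernel yields an exact sequence $P^{(J)}\flecha P^{(I)}\flecha A\flecha 0$; applying first $F$ and then $G$ identifies $GF(A)$ with $A$. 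The main obstacle, in my view, is the bookkeeping needed to show that $G$ is genuinely functorial and does not depend on the choices of presentations; the cleanest route is to establish simultaneously that $G$ is left adjoint to $F$ (so its universal property pins it down) rather than constructing it and verifying naturality separately.
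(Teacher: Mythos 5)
Your argument is correct, and it is the classical Morita--Mitchell proof: $F:=\Hom_{\mathcal{A}}(P,?)$ is exact (projectivity), coproduct-preserving (compactness) and faithful (generator), it is fully faithful on coproducts of copies of $P$, and a quasi-inverse is built by lifting free presentations, with the adjunction $G\dashv F$ used to dispose of the choice-of-presentation bookkeeping. There is nothing in the paper to compare it with at this level of detail: the paper does not prove the theorem at all, it simply cites \cite[Corollary 3.6.4]{Po}, and the cited result is established there by essentially the same functorial argument you outline. So your proposal fills in the omitted proof rather than diverging from it. Two small remarks. First, the class you denote $\Add(P)=\{P^{(I)}\}$ is not $\Add(P)$ in the sense the paper uses (which allows direct summands of coproducts); call it the class of ``free'' objects on $P$ to avoid a clash, though the argument is unaffected since full faithfulness on that smaller class is all you need. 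Second, your plan to pin $G$ down by the universal property of a left adjoint of $F$, rather than verifying by hand that the cokernel construction is independent of the chosen presentation, is indeed the cleanest way to close the only genuinely delicate point; once $G\dashv F$ is in place, your pointwise computations that the unit and counit are isomorphisms (using exactness of $F$ and the fact that the lift of $F(\alpha)$ is $\alpha$) complete the equivalence correctly.
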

\begin{proof}
see \cite[Corollary 3.6.4]{Po}.
\end{proof}

\section{Torsion pairs}\label{section:torsion pair}

The notion of torsion pair was introduced in the sixties by Dickson (see \cite{D}) in the setting of abelian categories, generalizing the classical notion for abelian groups. In this section, we introduce the terminology concerning torsion pairs and give the torsion pairs which will be frequently used in this work. The reader is referred to \cite{S} and \cite{Go} for the terminology concerning torsion pairs (also called torsion theories) that we use in this manuscript. 

\begin{definition}\rm{
A \emph{torsion pair}\index{torsion pair} in an abelian category $\mathcal{A}$ is a pair $\mathbf{t} = (\mathcal{T} , \mathcal{F})$ of full subcategories satisfying the following two conditions:

\begin{enumerate}
\item[1)] $\text{Hom}_{\mathcal{A}}(T,F)=0$, for all $T\in \mathcal{T}$ and $F \in \mathcal{F}$;

\item[2)] For each object $X$ of $\mathcal{A}$, there is an exact sequence 
$$0 \flecha T_{X} \flecha X \flecha F_{X} \flecha 0 $$
where $T_{X} \in \mathcal{T}$ and $F_{X} \in \mathcal{F}$.
\end{enumerate}
}

\end{definition}

\begin{remark}\rm{
\begin{enumerate}

\item[1)] In such case the objects $T_X$ and $F_X$ are uniquely determined, up to isomorphism, and the assignment $X \rightsquigarrow T_{X}$ (resp. $X \rightsquigarrow F_X$) underlies a functor $t: \mathcal{A} \flecha \mathcal{T}$ (resp. $(1:t): \mathcal{A} \flecha \mathcal{F}$), which is right (resp. left) adjoint to the inclusion functor $\mathcal{T} \monic \mathcal{A}$ (resp. $\mathcal{F} \monic \mathcal{A}$). The composition $\mathcal{A} \xymatrix{\ar[r]^{t}&} \T \monic \mathcal{A}$ (resp. $\mathcal{A} \xymatrix{\ar[r]^{(1:t)}&} \F \monic \mathcal{A}$), which we will still denote by $t$ (resp. (1:t)), is called the \emph{torsion radical} \index{functor! torsion radical}(resp. torsion coradical)\index{functor! torsion coradical}.

\item[2)] The class $\mathcal{T}$ is called the \emph{torsion class}\index{class! torsion} and its objects are the \emph{torsion objects}\index{object! torsion}, while $\mathcal{F}$ is the \emph{torsion-free class}\index{class! torsion-free} consisting of the \emph{torsion-free objects}\index{object! torsion-free}. All these notions are, of course, referred to the torsion pair $(\T,\F)$.

\item[3)] For each class $\mathcal{X}$ of objects of $\mathcal{A}$, we will put $\mathcal{X}^{\perp}=\{M\in \mathcal{A} \ | \ \Hom_{\mathcal{A}}(X,M)=0, \text{ for all }X\in \mathcal{X}\}$ and $^{\perp}\mathcal{X}=\{M\in \mathcal{A} \ | \  \Hom_{\mathcal{A}}(M,X)=0, \text{ for all }X\in \mathcal{X}\}$. It is easy to see that $\T=^{\perp}\F$ and $\F=\T^{\perp}$.

\item[4)] We will frequently write $X/t(X)$ to denote $(1:t)(X)$.

\item[5)] If $\mathcal{A}$ is a cocomplete and locally small category and $\mathcal{T}\subseteq \mathcal{A}$ is a full subcategory, then $\mathcal{T}$ is a torsion class if and only if $\mathcal{T}$ is closed under taking quotients (=epimorphic images), coproducts and extensions.

\item[5)$^{op}$] If $\mathcal{A}$ is a complete and locally small category and $\mathcal{F}\subseteq \mathcal{A}$ is a full subcategory, then $\mathcal{F}$ is a torsion-free class if and only if $\mathcal{F}$ is closed under taking subobjects, products and extensions.
\end{enumerate}}
\end{remark}

\begin{proposition}
Let $\mathcal{A}$ be an abelian category and $\mathbf{t}=(\mathcal{T},\mathcal{F})$ be a torsion pair in $\mathcal{A}$. The following assertions are equivalent:
\begin{enumerate}
\item[1)] $\mathcal{T}$ is closed under subobjects;
\item[2)] The functor $\xymatrix{\mathcal{A} \ar[r]^{t} & \mathcal{A}}$ is left exact;
\end{enumerate}
If $\mathcal{A}$ is a Grothendieck category, then the previous conditions are equivalent to:
\begin{enumerate} 
\item[3)] $\mathcal{F}$ is closed taking injective envelopes.
\end{enumerate}
\end{proposition}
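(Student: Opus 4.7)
The plan is to prove the two equivalences separately: first \(1) \Leftrightarrow 2)\) in any abelian category, then \(1) \Leftrightarrow 3)\) under the Grothendieck hypothesis, using the injective envelope.

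For \(1) \Rightarrow 2)\), the key idea is that if \(\mathcal{T}\) is closed under subobjects then, for every monomorphism \(X \monic Y\), one has the identification \(t(X) = t(Y) \cap X\). Indeed, \(t(X) \subseteq X\) is torsion, hence lands inside \(t(Y)\), giving \(t(X) \subseteq t(Y) \cap X\); conversely, \(t(Y) \cap X\) is a subobject of \(t(Y) \in \mathcal{T}\) and therefore torsion by hypothesis, so it must be contained in the largest torsion subobject \(t(X)\) of \(X\). Applied to a short exact sequence \(0 \to X \to Y \to Z \to 0\), this immediately yields that \(t\) sends monomorphisms to monomorphisms, and a short diagram chase (using that kernels and intersections pull back along monomorphisms) shows \(\mathrm{Ker}(t(Y) \to t(Z)) = t(Y) \cap X = t(X)\), establishing left exactness. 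For the converse \(2) \Rightarrow 1)\), given \(T \in \mathcal{T}\) and a subobject \(S \subseteq T\), apply \(t\) to \(0 \to S \to T \to T/S \to 0\); left exactness produces \(0 \to t(S) \to T \to t(T/S)\), and since \(S \to T/S\) is zero the inclusion \(S \monic T\) factors through \(t(S)\), forcing \(S = t(S) \in \mathcal{T}\).

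Assume now that \(\mathcal{A}\) is a Grothendieck category; I will prove \(1) \Leftrightarrow 3)\). For \(1) \Rightarrow 3)\), let \(F \in \mathcal{F}\) and let \(F \monic E\) be its injective envelope; consider \(t(E) \cap F\). Torsion-free classes are always closed under subobjects, so \(t(E) \cap F \in \mathcal{F}\); by hypothesis \(\mathcal{T}\) is closed under subobjects, so \(t(E) \cap F \subseteq t(E)\) lies in \(\mathcal{T}\). Hence \(t(E) \cap F \in \mathcal{T} \cap \mathcal{F} = 0\), and essentiality of \(F\) in \(E\) forces \(t(E) = 0\), i.e.\ \(E \in \mathcal{F}\). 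For \(3) \Rightarrow 1)\), take \(T \in \mathcal{T}\) with subobject \(S\); it suffices to show \(S/t(S) = 0\). Let \(E\) be the injective envelope of \(S/t(S)\); by hypothesis \(E \in \mathcal{F}\). Composing \(S \epic S/t(S) \monic E\) and extending along \(S \monic T\) via injectivity of \(E\), one obtains a morphism \(T \to E\) which must vanish because \(\mathrm{Hom}_{\mathcal{A}}(\mathcal{T}, \mathcal{F}) = 0\); therefore \(S \to E\) is zero, and since \(S/t(S) \monic E\), the epimorphism \(S \epic S/t(S)\) is zero, giving \(S/t(S) = 0\).

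The main obstacle I foresee is the implication \(1) \Rightarrow 2)\), specifically the clean identification \(t(X) = t(Y) \cap X\) for a subobject \(X \subseteq Y\); once this intersection-description is in hand, left exactness of \(t\) reduces to standard manipulations with pullbacks of subobjects. The Grothendieck half is cleaner because injective envelopes and essential extensions are available as a geometric tool, turning the abstract closure condition into the concrete vanishing \(t(E(F)) = 0\).
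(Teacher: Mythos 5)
Your proof is correct, and the paper does not supply its own argument here — it simply cites Stenstr\"om's Proposition VI.1.7, whose standard proof proceeds exactly as you do: the identification $t(X)=t(Y)\cap X$ for the equivalence $1)\Leftrightarrow 2)$, and the essentiality/injectivity arguments for $1)\Leftrightarrow 3)$. All steps check out, including the key computation $\Ker(t(Y)\to t(Z))=t(Y)\cap X$ via the monomorphism $t(Z)\monic Z$.
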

\begin{proof}
See \cite[Proposition VI.1.7]{S}.
\end{proof}

\begin{definition}\rm{
Let $\mathbf{t}=(\mathcal{T},\mathcal{F})$ be a torsion pair in $\mathcal{A}$. It is called \emph{hereditary} \index{torsion pair! hereditary}if it satisfies one of (and hence all) the conditions of the previous proposition. It is called \emph{split}\index{torsion pair! split} when the canonical sequence $\xymatrix{0 \ar[r] & t(X) \ar[r] & X \ar[r] & (1:t)(X) \ar[r] & 0}$ splits, for each object $X$ of $\mathcal{A}$. If $R$ is a ring and $\mathcal{A}=R$-Mod, we will say that $\mathbf{t}$ is \emph{faithful}\index{torsion pair! faithful} when $R\in \F$.   }
\end{definition}


\vspace{0.3 cm}

Among the hereditary torsion pairs we are interested in a particular case of them, which plays an important role in this work for the case of a module category.

\begin{definition}\rm{
A class $\mathcal{T}\subseteq R\text{-Mod}$ is a \emph{TTF class}\index{TTF} when it is both a torsion and a torsionfree class in $R$-Mod. Each triple of the form $(\mathcal{C,T,F})=(^{\perp}\mathcal{T},\mathcal{T},\mathcal{T}^{\perp})$, for some TTF class $\mathcal{T}$, will be called a \emph{TTF triple} and the two torsion pairs $(\mathcal{C},\mathcal{T})$ and $(\mathcal{T,F})$ will be called the \emph{left constituent pair}\index{TTF! left constituent pair} and \emph{right constituent pair}\index{TTF! right constituent pair} of the TTF triple.}
\end{definition}

\begin{remark}\rm{
\begin{enumerate}
\item[1)] It is well-known (see \cite[Chapter VI]{S}) that if $\mathcal{T}$ is a TTF class in $R$-Mod, then there is an unique idempotent two-sided ideal $\mathbf{a}$ of $R$ such that $\mathcal{T}$ consists of the $R$-modules $T$ such that $\mathbf{a}T=0;$
\item[2)] The torsion radical associated to the left constituent pair $(\mathcal{C,T})$ of the TTF triple assigns to each module $M$ the submodule $c(M)=\mathbf{a}M$. In particular, we have $\mathcal{C}=Gen(\mathbf{a})=\{C \in R\text{-Mod}| \mathbf{a}C=C\}$. 
\end{enumerate}}
\end{remark}

\begin{example}\label{exam. TTF projective module}\rm{
For each projective $R$-module $P$, the class $\T:=\Ker (\Hom_{R}(P,?))$ is a TTF class and $\Gen(P)=^{\perp}\T$. Moreover, the corresponding idempotent ideal is the trace of $P$ in $R$ (see  \cite[Proposition VI.9.4 and Corollary VI.9.5]{S}).}
\end{example}

\begin{definition}\rm{
For each TTF triple $(\mathcal{C},\T,\F)$ in $R$-Mod, we say that the TTF triple is \emph{left split} \index{TTF! left split} (resp. \emph{right split}\index{TTF!right split}) when its left (resp. right) constituent pair is split. Moreover, we say that the TTF triple is \emph{centrally split}\index{TTF! centrally split} when both its left and right constituent pairs are split. In such case, there is a central idempotent $e$ such that $t(M)=eM$ for every left $R$-module $M$ (see \cite[Proposition VI.8.5]{S}).}
\end{definition}

\vspace{0.3 cm}

We now look at a particular class of torsion pairs, introduced by Hoshino, Kato and Miyachi in \cite{HKM}, which will be of interest to us.

\begin{definition}\label{def. HKM torsion pair}\rm{
Let $P^{\bullet}:=\cdots \flecha 0 \flecha Q \xymatrix{\ar[r]^{d} & }P \flecha  0 \flecha  \cdots$ be a complex of finitely generated projective $R$-modules concentrated in degrees -1 and 0. We will define a pair $(\mathcal{X}(P^{\bullet}),\mathcal{Y}(P^{\bullet}))$ of full subcategories of $\Mod R$, where $M$ is an $R$-module, as follows:
\begin{center}
$M\in \mathcal{X}(P^\bullet) \Longleftrightarrow \Hom_{D(R)}(P^\bullet,M[1])=0 \linebreak M\in \mathcal{Y}(P^\bullet) \Longleftrightarrow \Hom_{D(R)}(P^\bullet,M[0])=0$.
\end{center}
We say that $P^{\bullet}$ is an \emph{HKM complex}\index{complex! HKM} when the pair $(\mathcal{X}(P^{\bullet}),\mathcal{Y}(P^{\bullet}))$ is a torsion pair. In such case, $\mathbf{t}=(\mathcal{X}(P^{\bullet}),\mathcal{Y}(P^{\bullet}))$ is called the \emph{HKM torsion pair associated}\index{torsion pair! HKM} to $P^{\bullet}$.
}
\end{definition}

\subsection{Tilting and Cotilting}

\emph{Tilting Theory}\index{Tilting Theory} started with a fundamental paper of Brenner and Butler \cite{BB}. They studied the relations between two module categories when one of the rings is the endomorphism ring of a tilting module over the other ring. In such case, we have an equivalence between the derived categories of the module categories involved (see \cite{Ha}, \cite{Ri} and also \cite{K2}).
In this section, we deal with torsion pairs associated to tilting and cotilting objects and generalizations of them. In the sequel, $\G$ is a Grothendieck category.

\begin{definition}\label{def. gen and cogen}\rm{
Let $X$ and $V$ be objects of $\G.$ We say that $X$ is $V$-generated \index{$V$-generated}(resp. $V$-presented\index{$V$-presented}) when there is an epimorphism $V^{(I)} \epic X$ (resp. an exact sequence \linebreak $V^{(J)} \flecha V^{(I)} \flecha X \flecha 0$, for some sets $I$ and $J$). We will denote by $\Gen(V)$ and $\Pres(V)$ the classes of $V$-generated and $V$-presented objects, respectively. Dually, we can define a $\Cogen(Q)$\index{$Q$-cogenerated} and $\Copres(Q)$\index{$Q$-copresented}, for each $Q$ object of $\G$.}
\end{definition}

\begin{remark}\rm{
For all objects $X$ and $V$ in $\G$, we have that:

\begin{enumerate}
\item[1)] $X$ always contains a largest $V$-generated subobject called the \emph{trace of $V$ in $X$}\index{trace}, given by $tr_{V}(X):=\underset{f\in \Hom_{\G}(V,X)}{\sum}\Imagen(f);$

\item[2)] As a sort of dual concept, given a class $\mathcal{S}$ of objects of $\G$, the \emph{reject of $\mathcal{S}$ in $X$}\index{reject} is $\Rej_{\mathcal{S}}(X):=\underset{f\in \Hom_{\G}(X,S)}{\cap} \Ker(f)$;

\item[3)] We say that $X$ is \emph{$V$-subgenerated}\index{$V$-subgenerated} when it is isomorphic to a subobject of a $V$-generated object. The class of $V$-subgenerated objects will be denoted by $\overline{\Gen}(V)$. This subcategory is itself a Grothendieck category, and the inclusion $\overline{\Gen}(V)\monic \G$ is an exact functor (see the arguments of \cite[Chapter 3]{Wi} although in this book $\overline{\Gen}(V)$ is denoted by $\sigma[V]$); 

\item[4)] We will denote by $\Add(V)$ (resp. $\add(V)$) the class of objects $X$ of $\G$ which are isomorphic to direct summands of coproducts (resp. finite coproducts) of copies of $V$. Dually, we can define a $\Prod(Q)$ and $\text{prod}(Q)$, for each $Q$ object of $\G$.   
\end{enumerate}}
\end{remark}

\begin{definition}\rm{
Slightly diverting from the terminology of \cite{CDT1} and \cite{CDT2}, Let $\mathcal{A}$ be an AB3 abelian category. An object $V$ of $\mathcal{A}$ will be called \emph{quasi-tilting}\index{object! quasi-tilting} when $\Gen(V)=\overline{\Gen}(V)\cap \Ker (\Ext^{1}_{\mathcal{A}}(V,?)).$ When, in addition, we have that $\overline{\Gen}(V)=\mathcal{A},$ we will say that $V$ is a \emph{1-tilting object}\index{object! 1-tilting}. That is, $V$ is 1-tilting if, and only if, $\Gen(V)=\Ker(\Ext^{1}_{\mathcal{A}}(V,?)).$}
\end{definition}

\begin{definition}\rm{
Let $\mathcal{A}$ be an AB3* abelian category. An object $Q$ of $\mathcal{A}$ is called \emph{1-cotilting}\index{object! 1-cotilting} when $\Cogen(Q)=\Ker (\Ext^{1}_{\mathcal{A}}(?,Q)).$} 
\end{definition}

With some additional hypotheses, one obtains the following more familiar characterization of 1-tilting objects. The dual result characterizes 1-cotilting objects in AB4* categories with an injective cogenerator.

\begin{proposition}\label{description tilting object}
Let $\mathcal{A}$ be an AB4 abelian category with a projective generator and let $V$ be any object of $\mathcal{A}$. Consider the following assertions:
\begin{enumerate}
\item[1)] $V$ is a 1-tilting object.
\item[2)] The following conditions hold:
\begin{enumerate}
\item[a)] There exists an exact sequence $0 \flecha P^{-1} \flecha P^{0} \flecha V \flecha 0$ in $\mathcal{A}$, where the $P^{k}$ are projective;
\item[b)] $\Ext^{1}_{\mathcal{A}}(V,V^{(I)})=0$, for all sets $I$;
\item[c)] for some (resp. every) projective generator $P$ of $\mathcal{A}$, there is an exact sequence
$$\xymatrix{0 \ar[r] & P \ar[r] & V^{0} \ar[r] & V^{1} \ar[r] & 0}$$
where $V^{i}\in \Add(V)$, for $i=0,1$.
\end{enumerate}
\end{enumerate}
The implications 2) $\Longrightarrow$ 1), 1) $\Longrightarrow$ 2.b) and 1) $\Longrightarrow$ 2.c) hold. When $\mathcal{A}$ has enough injectives, assertions 1 and 2 are equivalent. 
\end{proposition}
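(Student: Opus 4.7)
The implication $1)\Longrightarrow 2.\text{b})$ is immediate, since $V^{(I)}\in\Gen(V)=\Ker(\Ext^{1}_{\mathcal{A}}(V,?))$ forces $\Ext^{1}_{\mathcal{A}}(V,V^{(I)})=0$ for every set $I$. For $2)\Longrightarrow 1)$ I would prove the two inclusions defining $1$-tiltingness separately. For $\Gen(V)\subseteq\Ker(\Ext^{1}_{\mathcal{A}}(V,?))$: present any $M\in\Gen(V)$ as $0\flecha K\flecha V^{(I)}\flecha M\flecha 0$ and run the long exact Ext-sequence, noting that 2.a) forces $\Ext^{2}_{\mathcal{A}}(V,?)=0$ and 2.b) gives $\Ext^{1}_{\mathcal{A}}(V,V^{(I)})=0$, so $\Ext^{1}_{\mathcal{A}}(V,M)=0$. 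For the reverse inclusion, take $M$ with $\Ext^{1}_{\mathcal{A}}(V,M)=0$ and apply $\Hom_{\mathcal{A}}(-,M)$ to the sequence in 2.c): since $V^{1}\in\Add(V)$ and AB4 together with the existence of a projective generator yields $\Ext^{1}_{\mathcal{A}}(V^{1},M)$ as a direct summand of $\Ext^{1}_{\mathcal{A}}(V,M)^{I}=0$, the restriction map $\Hom_{\mathcal{A}}(V^{0},M)\flecha\Hom_{\mathcal{A}}(P,M)$ is surjective. Any epimorphism $P^{(L)}\epic M$ (coming from $P$ being a generator) then lifts componentwise through $P\monic V^{0}$ to a still epic map $(V^{0})^{(L)}\flecha M$, showing $M\in\Gen(V)$.

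For $1)\Longrightarrow 2.\text{c})$ the plan is a Bongartz-type construction. Fix a projective generator $P$, set $X:=\Ext^{1}_{\mathcal{A}}(V,P)$ (a set, since $\mathcal{A}$ is locally small by having a generator) and let $0\flecha P\flecha M\flecha V^{(X)}\flecha 0$ be the extension corresponding to the tautological class $\eta\in\Ext^{1}_{\mathcal{A}}(V^{(X)},P)\cong\Ext^{1}_{\mathcal{A}}(V,P)^{X}$ whose component at $\xi\in X$ is $\xi$ itself. Applying $\Hom_{\mathcal{A}}(V,-)$, the connecting map $\Hom_{\mathcal{A}}(V,V^{(X)})\flecha\Ext^{1}_{\mathcal{A}}(V,P)$ sends each canonical inclusion $i_{\xi}$ to $\xi$, hence is surjective, and combined with $\Ext^{1}_{\mathcal{A}}(V,V^{(X)})=0$ from 2.b) it forces $\Ext^{1}_{\mathcal{A}}(V,M)=0$, so $M\in\Gen(V)$. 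To upgrade $M\in\Add(V)$, form the tautological epimorphism $V^{(\Hom_{\mathcal{A}}(V,M))}\epic M$ with kernel $K$; applying $\Hom_{\mathcal{A}}(V,-)$ and using that the evaluation map is surjective by construction gives $\Ext^{1}_{\mathcal{A}}(V,K)=0$, so $K\in\Gen(V)$. Then applying $\Hom_{\mathcal{A}}(-,K)$ to the Bongartz sequence and using the AB4-formula $\Ext^{1}_{\mathcal{A}}(V^{(X)},K)\cong\Ext^{1}_{\mathcal{A}}(V,K)^{X}=0$, the exact piece $\Ext^{1}_{\mathcal{A}}(V^{(X)},K)\flecha\Ext^{1}_{\mathcal{A}}(M,K)\flecha\Ext^{1}_{\mathcal{A}}(P,K)=0$ yields $\Ext^{1}_{\mathcal{A}}(M,K)=0$; hence the tautological epimorphism onto $M$ splits and $M\in\Add(V)$. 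The sequence $0\flecha P\flecha M\flecha V^{(X)}\flecha 0$ with $V^{0}:=M$, $V^{1}:=V^{(X)}$ is the one desired, and since $P$ was any chosen projective generator this gives the ``every'' variant of 2.c).

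For the equivalence under the extra hypothesis of enough injectives it remains to prove $1)\Longrightarrow 2.\text{a})$. Given any $M\in\mathcal{A}$, embed $M\monic I$ into an injective and consider $0\flecha M\flecha I\flecha N\flecha 0$; since injectives trivially lie in $\Ker(\Ext^{1}_{\mathcal{A}}(V,?))=\Gen(V)$ and $\Gen(V)$ is a torsion class (in particular closed under quotients), $N\in\Gen(V)$ too, and dimension-shifting yields $\Ext^{2}_{\mathcal{A}}(V,M)\cong\Ext^{1}_{\mathcal{A}}(V,N)=0$. So $V$ has projective dimension at most one, and any epimorphism $P^{(J)}\epic V$ from the projective generator has projective kernel, producing the two-term projective resolution in 2.a). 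The main technical obstacle is the splitting step of $1)\Longrightarrow 2.\text{c})$: it hinges on the delicate fact that choosing the indexing set of the canonical epimorphism onto $M$ to be $\Hom_{\mathcal{A}}(V,M)$ itself forces the kernel back into $\Gen(V)$, and on the AB4-hypothesis which is what allows the vanishing of $\Ext^{1}_{\mathcal{A}}(V,-)$ on $\Gen(V)$ to propagate to vanishing of $\Ext^{1}_{\mathcal{A}}(V^{(X)},-)$; without AB4 the concluding Ext-splitting argument would not go through.
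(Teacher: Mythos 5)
Your proof is correct and follows essentially the same route as the paper: the crucial implication 1) $\Longrightarrow$ 2.c) is the same Bongartz-type construction with the tautological extension indexed by $\Ext^{1}_{\mathcal{A}}(V,P)$, and the remaining implications (which the paper only sketches as ``as in modules'' or delegates to the literature) are filled in by the standard module-style arguments you give. The only minor deviation is the final splitting step: the paper invokes $\Gen(V)=\Pres(V)$ to present the Bongartz middle term with kernel in $\T$, whereas you use the canonical evaluation presentation and the connecting-map argument to show its kernel lies in $\Ker(\Ext^{1}_{\mathcal{A}}(V,?))$ before splitting via $\Ext^{1}_{\mathcal{A}}(M,K)=0$ --- both versions work equally well.
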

\begin{proof}
It essentially follows as in module categories (see \cite[Proposition 1.3]{CT}, and also \cite[Section 2]{C}), but, for the sake of completeness, we give a proof. \\

2) $\Longrightarrow $ 1) It goes as in modules.\\

1) $\Longrightarrow $ 2) Condition 2.b) is automatic. On the other hand, if $P$ is any projective generator, then we look at $E:=\Ext^{1}_{\mathcal{A}}(V,P)$ as a set. The canonical morphism
$$\Psi:\Ext^{1}_{\mathcal{A}}(V^{(E)},P) \flecha \Ext^{1}_{\mathcal{A}}(V,P)^{E}$$
is an isomorphism due to the AB4 condition of $\mathcal{A}$ and the exactness of products in Ab. Let $\delta \in \Ext^{1}_{\mathcal{A}}(V^{(E)},P)$ be such that $\Psi(\delta)=(\delta_{e})_{e\in E}$, where $\delta_e=e$ for each $e\in E$. In particular, if $\iota_{e}:V\flecha V^{(E)}$ is the injection into the coproduct, we have that $\iota_{e}^{*}(\delta)=e$, for each $e\in E$, where $\iota_{e}^{*}=\Ext^{1}_{\mathcal{A}}(\iota_e,P):\Ext^{1}_{\mathcal{A}}(V^{(E)},P) \flecha \Ext^{1}_{\mathcal{A}}(V,P).$ Fix now an exact sequence in $\mathcal{A}$.
$$\xymatrix{0 \ar[r] & P \ar[r] & W \ar[r] & V^{(E)} \ar[r] &0}$$
representing $\delta$. When applying the long exact sequence of $\Ext(V,?)$, we then get that the connecting morphism $w:\Hom_{\mathcal{A}}(V,V^{(E)}) \flecha \Ext^{1}_{\mathcal{A}}(V,P)$ is surjective since $w(\iota_e)=e,$ for each $e\in E$. We then have a monomorphism $\Ext^{1}_{\mathcal{A}}(V,W) \monic  \Ext^{1}_{\mathcal{A}}(V,V^{(E)})=0$, which shows that $W\in \T:=\Ker(\Ext^{1}_{\mathcal{A}}(V,?))=\Gen(V)$. Note that we get an exact sequence of functors
$$\xymatrix{\cdots \ar[r] & \Ext^{1}_{\mathcal{A}}(V^{(E)},?) \ar[r] & \Ext^{1}_{\mathcal{A}}(W,?) \ar[r] & \Ext^{1}_{\mathcal{A}}(P,?)=0}$$

It follows that $\Ext^{1}_{\mathcal{A}}(W,?)$ vanishes on $\T$. On the other hand, bearing in mind that $\Gen(V)=\Pres(V),$ we can fix an exact sequence

$$\xymatrix{0 \ar[r] & W^{'} \ar[r] & V^{(I)} \ar[r] & W \ar[r] & 0}$$
for some set $I$, where $W^{'}\in \T$. Then this sequence splits, which shows that $W\in \Add(V)$ and that conditions 2.c) holds. \\

Finally, assuming also that $\mathcal{A}$ has enough injectives, to prove that condition 2.a) holds one uses the argument in \cite[Proposition 2.2]{C}.
\end{proof}

\vspace{0.3 cm}

The next lemma is proved for modules in \cite[Section 2]{CDT1}.

\begin{lemma}\label{lem. quasi-tilting}
Let $V$ be an object of $\G$. If $\Gen(V)\subseteq \Ker(\Ext^{1}_{\G}(V,?))$, then $\Gen(V)$ is closed under taking extensions in $\G$, and hence it is a torsion class. Consider now the following assertions, for an object $V$:
\begin{enumerate}
\item[1)] $\Gen(V)=\Pres(V)\subseteq \Ker(\Ext_{\G}^{1}(V,?))$;
\item[2)] $V$ is a quasi tilting object of $\G$;
\item[3)] $V$ is a 1-tilting object of $\overline{\Gen}(V)$. 
\end{enumerate}
Then the implications 1) $\Longleftrightarrow$ 2) $\Longrightarrow $ 3) hold true.
 \end{lemma}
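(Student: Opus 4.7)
The plan is to handle the extension closure first, then $2)\Longrightarrow 1)$, then $1)\Longrightarrow 2)$ (the hardest step), and finally $2)\Longrightarrow 3)$. The recurring computation throughout is $\Ext^{1}_{\G}(V^{(I)},N)\cong \Ext^{1}_{\G}(V,N)^{I}$, obtained by applying $\Hom_{\G}(V^{(I)},?)=\Hom_{\G}(V,?)^{I}$ to an injective resolution of $N$ and using the fact that products are exact in $\text{Ab}$. For the preliminary assertion, $\Gen(V)$ is visibly closed under quotients and coproducts, so it suffices to check extensions: given $0\to A\to B\to C\to 0$ with $A,C\in\Gen(V)$, I pull an epimorphism $p:V^{(I)}\to C$ back along $B\to C$; the upper row $0\to A\to P\to V^{(I)}\to 0$ splits because $\Ext^{1}_{\G}(V^{(I)},A)\cong \Ext^{1}_{\G}(V,A)^{I}=0$, so $P\cong A\oplus V^{(I)}\in\Gen(V)$, and since $P\to B$ is epic, $B\in\Gen(V)$.

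For $2)\Longrightarrow 1)$, the containment $\Gen(V)\subseteq \Ker(\Ext^{1}_{\G}(V,?))$ is immediate from the equality in $2)$. To see $\Gen(V)=\Pres(V)$, given $X\in\Gen(V)$, I would use the universal evaluation $V^{(\Hom_{\G}(V,X))}\to X$, whose kernel $K$ sits inside $V^{(\Hom_{\G}(V,X))}\in\Gen(V)$. The canonical insertions $\iota_{\phi}:V\to V^{(\Hom(V,X))}$ (for $\phi\in\Hom(V,X)$) render $\Hom_{\G}(V,V^{(\Hom(V,X))})\to \Hom_{\G}(V,X)$ surjective, so the long exact sequence, combined with $\Ext^{1}_{\G}(V,V^{(\Hom(V,X))})=0$, gives $\Ext^{1}_{\G}(V,K)=0$. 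Hence $K\in\overline{\Gen}(V)\cap\Ker(\Ext^{1}_{\G}(V,?))=\Gen(V)$ by $2)$, and $X\in\Pres(V)$.

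The converse $1)\Longrightarrow 2)$ is the main step and requires a Schanuel-style pullback. Take $X\in\overline{\Gen}(V)\cap\Ker(\Ext^{1}_{\G}(V,?))$ with $X\subseteq W$ for some $W\in\Gen(V)$. By $1)$, pick a presentation $0\to K\to V^{(I)}\stackrel{\pi}{\to}W\to 0$ with $K\in\Gen(V)$, set $Z:=\pi^{-1}(X)\subseteq V^{(I)}$, and observe that $0\to K\to Z\to X\to 0$ forces $\Ext^{1}_{\G}(V,Z)=0$ by the long exact sequence. Since $V^{(I)}/Z\cong W/X$ lies in $\Gen(V)=\Pres(V)$, choose a presentation $0\to Z'\to V^{(I')}\to V^{(I)}/Z\to 0$ with $Z'\in\Gen(V)$, and form the pullback $P$ of $V^{(I)}\to V^{(I)}/Z$ and $V^{(I')}\to V^{(I)}/Z$. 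The induced short exact sequences $0\to Z'\to P\to V^{(I)}\to 0$ and $0\to Z\to P\to V^{(I')}\to 0$ respectively exhibit $P$ as an extension inside $\Gen(V)$ (by the preliminary assertion) and split (since $\Ext^{1}_{\G}(V^{(I')},Z)\cong \Ext^{1}_{\G}(V,Z)^{I'}=0$). Therefore $Z$ is a direct summand of $P\in\Gen(V)$, whence $Z\in\Gen(V)$, and $X=Z/K\in\Gen(V)$.

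For $2)\Longrightarrow 3)$, the inclusion $\overline{\Gen}(V)\hookrightarrow\G$ is exact and $\overline{\Gen}(V)$ is closed in $\G$ under subobjects, quotients, coproducts and extensions, so $\Ext^{1}_{\overline{\Gen}(V)}(V,?)$ agrees with $\Ext^{1}_{\G}(V,?)$ on $\overline{\Gen}(V)$ and $\Gen_{\overline{\Gen}(V)}(V)=\Gen_{\G}(V)$; the equality in $2)$ then reads exactly as the definition of $V$ being $1$-tilting in $\overline{\Gen}(V)$. The chief obstacle is the coordinated choice of presentations in $1)\Longrightarrow 2)$ that makes $\Ext^{1}_{\G}(V,Z)=0$ and the splitting of $0\to Z\to P\to V^{(I')}\to 0$ occur in the same diagram; everything else is routine long-exact-sequence bookkeeping.
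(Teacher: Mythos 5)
Your preliminary assertion, your proof of 1) $\Longrightarrow$ 2) (a Schanuel-type variant of the paper's pullback: the paper covers $T/X$ and splits off $X$ directly, while you cover the ambient object, work with the preimage $Z=\pi^{-1}(X)$ and split off $Z$ with one extra pullback), and your trace-of-$V$ argument for $\Gen(V)=\Pres(V)$ in 2) $\Longrightarrow$ 1) (which is more self-contained than the paper's route through assertion 3 and the citation of \cite[Theorem 2.4]{CF}) are all correct. The gap is in 2) $\Longrightarrow$ 3). You assert that $\overline{\Gen}(V)$ is closed under extensions in $\G$ and deduce that $\Ext^{1}_{\overline{\Gen}(V)}(V,?)$ agrees with $\Ext^{1}_{\G}(V,?)$ on $\overline{\Gen}(V)$. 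That closure claim is false as a general fact about $\overline{\Gen}(V)=\sigma[V]$: for $\G=\mathbb{Z}\text{-Mod}$ and $V=\mathbb{Z}/p$ one has $\overline{\Gen}(V)=\mathbb{F}_p\text{-Mod}$, yet $\mathbb{Z}/p^{2}$ is an extension of $\mathbb{Z}/p$ by $\mathbb{Z}/p$ that does not lie in $\overline{\Gen}(V)$. Since $\Ext^{1}_{\overline{\Gen}(V)}(V,X)$ is, via Yoneda, only the subgroup of $\Ext^{1}_{\G}(V,X)$ made of extensions whose middle term lies in $\overline{\Gen}(V)$, your identification of the two $\Ext$ groups --- and with it the passage from $\Ker(\Ext^{1}_{\G}(V,?))\cap\overline{\Gen}(V)$ to $\Ker(\Ext^{1}_{\overline{\Gen}(V)}(V,?))$ --- does not follow from exactness of the inclusion; and you give no argument that would make it follow under hypothesis 2).

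What is actually needed, and what the paper proves, is only the special case: every extension $0\flecha X \flecha M \flecha V \flecha 0$ in $\G$ with $X\in\overline{\Gen}(V)$ has $M\in\overline{\Gen}(V)$. This is where the quasi-tilting hypothesis must be used a second time. Choose a monomorphism $X\monic T$ with $T\in\Gen(V)$ and push the sequence out along it; the resulting sequence $0\flecha T\flecha N\flecha V\flecha 0$ splits because $T\in\Gen(V)\subseteq\Ker(\Ext^{1}_{\G}(V,?))$, so $N\cong T\oplus V\in\Gen(V)$, and $M$ embeds in $N$, hence $M\in\overline{\Gen}(V)$. With this, $\Ext^{1}_{\G}(V,X)\cong\Ext^{1}_{\overline{\Gen}(V)}(V,X)$ for all $X\in\overline{\Gen}(V)$, and your concluding identification for 2) $\Longrightarrow$ 3) goes through.
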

\begin{proof}
Let $0 \flecha T \flecha M \flecha T^{'} \flecha 0$ be an exact sequence in $\G$, with $T,T^{'}\in \Gen(V) \subseteq (\Ker(\Ext^{1}_{\G}(V,?)))$. We want to prove that $M\in\Gen(V)$. By pulling back the exact sequence along an epimorphism $p:V^{(I)} \epic T^{'}$, for some set $I$, we can assume without loss of generality that $T^{'}=V^{(I)}$ since $\Gen(V)$ is closed under taking quotients. But in this case the sequence splits since $\Ext^{1}_{\G}(V^{(I)},T)=\Hom_{\D(G)}(V^{(I)}[0],T[1])\cong \Hom_{\D(G)}((V[0])^{(I)},T[1]) \cong \Hom_{\D(\G)}(V[0],T[1])^{I}=\Ext^{1}_{\G}(V,T)^{I}=0.$ Hence $M\cong T \oplus T^{'}$ which is in $\Gen(V)$.\\ 

1) $\Longrightarrow$ 2) We just need to verify that $\Ker(\Ext^{1}_{\G}(V,?))\cap \overline{\Gen}(V)\subseteq \Gen(V)$. Let $X \in \Ker(\Ext^{1}_{\G}(V,?))\cap \overline{\Gen}(V)$ and we consider an exact sequence in $\G$ of the form $\xymatrix{0 \ar[r] & X \ar[r] & T \ar[r] & T^{'} \ar[r] & 0}$, where $T,T^{'}$ are in $\Gen(V)$. We take an epimorphism $\pi:V^{(I)} \epic T^{'}$, for some set $I$ such that $\Ker(\pi)\in \Gen(V)$ (since $\Gen(V)=\Pres(V)$). By pulling back the exact sequence along the epimorphism $\pi$, we obtain the following commutative diagram with exact rows and columns:

$$\xymatrix{\\ & & \Ker(\pi) \ar@{=}[r] \ar@{^(->}[d] & \Ker(\pi) \ar@{^(->}[d] & \\ 0 \ar[r] & X \ar[r] \ar@{=}[d]& \tilde{T} \ar[r] \ar@{>>}[d] & V^{(I)} \ar[r] \ar@{>>}[d]^(0.41){\pi} & 0 \\ 0 \ar[r] & X \ar[r] & T \ar[r] & T ^{'}\ar[r] & 0}$$ 

From the first paragraph of this proof together with the central column, we obtain that $\tilde{T}\in \Gen(V)$ and the central row splits since $\Ext^{1}_{\G}(V^{(I)},X)=0$. Then $X$ is a direct summand of an object in $\Gen(V)$, so that $X\in \Gen(V)$.\\

2) $\Longrightarrow$ 3) First note that $\Gen_{\overline{\Gen}(V)}(V)=\Gen(V)$. On the other hand, let us take $X\in \overline{\Gen}(V)$ and consider an exact sequence 
$$\xymatrix{0 \ar[r] & X \ar[r] & M \ar[r] & V \ar[r] & 0 & (\ast)}$$
Taking now an monomoprhism $X \monic T$, with $T\in \Gen(V)$, we get the following commutative diagram, where the left square is cocartesian 
$$\xymatrix{0 \ar[r] & X \ar[r] \ar@{^(->}[d] & M \ar[r] \ar[d] & V \ar@{=}[d] \ar[r] & 0 \\ 0 \ar[r] & T \ar[r] & N \pullbackcorner  \ar[r]  & V \ar[r] & 0}$$ 
By hypothesis, the lower row splits and, hence $M\in \overline{\Gen}(V)$. This shows that the sequence $(\ast)$ lives in $\overline{\Gen}(V)$. Therefore, $\Ext^{1}_{\G}(V,X)\cong \Ext^{1}_{\overline{\Gen}(V)}(V,X)$, for all $X\in \overline{\Gen(V)}$. Therefore, $\Gen_{\overline{\Gen}(V)}(V)=\Gen(V)=\Ker(\Ext^{1}_{\G}(V,?))\cap \overline{\Gen}(V)=\Ker(\Ext^{1}_{\overline{\Gen}(V)}(V,?))$. \\

2) $\Longrightarrow$ 1) We have $\Gen(V)=\Gen_{\overline{\Gen}(V)}(V)$ and $\Pres(V)=\Pres_{\overline{\Gen}(V)}(V)$. Since we have already proved 2) $\Longrightarrow$ 3), we know that $V$ is a 1-tilting object of $\overline{\Gen}(V)$. But when $V$ is a tilting object in a Grothendieck category (e.g. $\overline{\Gen}(V)$), the classes of $V$-generated and $V$-presented objects coincide (see \cite[Theorem 2.4]{CF}). The inclusion $\Gen(V)\subseteq \Ker(\Ext^{1}_{\G}(V,?))$ is clear from the definition of quasi-tilting object.

\end{proof}

\begin{remark}\label{quasi-tilting torsion pair}\rm{
For each quasi-tilting object $V$ of $\G$, from the previous lemma we have that $\Gen(V)$ is a torsion class. 
The pair $(\Gen(V),\Ker (\Hom_{\G}(V,?)))$ is called the \emph{quasi-tilting torsion pair} \index{torsion pair! quasi-tilting}associated to $V$. In the particular case when $V$ is 1-tilting, this pair is called the \emph{tilting torsion pair} \index{torsion pair! tilting}associated to $V$. Similarly, if $Q$ is 1-cotilting, then $\Cogen(Q)=\Copres(Q)$ and $(\Ker(\Hom_{\G}(?,Q)), \Cogen(Q))$ is a torsion pair in $\G$, that is called the \emph{cotilting torsion pair}\index{torsion pair! cotilting} associated to $Q$.}
\end{remark}

\begin{definition}\rm{
A \emph{classical quasi-tilting}\index{object! classical quasi-tilting} (resp. \emph{classical 1-tilting}\index{object! classical 1-tilting}) object is a quasi-tilting (resp. 1-tilting) object $V$ that is \emph{self-small}\index{object! self-small}, i.e., when the canonical morphism \linebreak $\Hom_{\G}(V,V)^{(I)} \flecha \Hom_{G}(V,V^{(I)})$ is an isomorphism, for all sets $I$.}
\end{definition}

\begin{remark}\rm{
By \cite[Proposition 2.1]{CDT1}, we know that if $\G=R$-Mod, then a classical quasi-tilting $R$-module is just a finitely generated quasi-tilting module. Even more (see \cite[Proposition 1.3]{CT}), a classical 1-tilting $R$-module is just a finitely presented 1-tilting $R$-module. }
\end{remark}

\section{Triangulated Categories}
The notion of a derived category was introduced by Verdier in \cite{V}, based on the ideas of Grothendieck. Verdier also defined the notion of a triangulated category, based on the observation that a derived category has some special ``triangles", which in some way reflect the behavior of the short exact sequences in the category of chain complexes over it. 


\begin{definition}\rm{
A pair of composable morphisms $(i,p)$
$$\xymatrix{X \ar[r]^{i} & Y \ar[r]^{p} & Z}$$
in an additive category $\mathcal{A}$ is \emph{exact} when $i$ is the kernel of $p$ and $p$ is the cokernel of $i$.}
\end{definition}

\begin{definition}\rm{
Let $\mathcal{A}$ be an additive category. An exact pair $(i,p)$ in $\mathcal{A}$ is called \emph{split} if it is isomorphic to a short exact sequence of the form
$$\xymatrix{X \ar[rr]^{\begin{pmatrix} 1 \\ 0\end{pmatrix} \hspace{0.5cm}} & & X\oplus Z \ar[rr]^{\hspace{0.3 cm}\begin{pmatrix} 0  & 1 \end{pmatrix}} & & Z}$$
This is equivalent to require that $i$ is a section and $p$ is a retraction.}
\end{definition}

In \cite{K1} Keller gives a refinement of the classical definition of exact category given by Quillen in \cite{Q}.

\begin{definition}\rm{
An \emph{exact category}\index{category! exact} is an additive category $\mathcal{A}$ endowed with a set $\mathcal{E}$ of exact pairs closed under isomorphism and satisfying the following axioms \textbf{Ex0)-Ex2)$^{o}$}. The \emph{deflations} \index{deflations}(resp. \emph{inflations}\index{inflations}) mentioned in the axioms are by definition the morphisms $p$ (resp. $i$) occurring in pairs $(i,p)$ of $\mathcal{E}$. We shall refer to such a pair as a \emph{conflation}\index{conflation}.
\begin{enumerate}
\item[\bf{Ex0)}] The identity morphism of the zero object, $1_0$, is a deflation;
\item[\bf{Ex1)}] A composition of two deflations is a deflation;
\item[\bf{Ex1)$^{o}$}] A composition of two inflations is an inflation;
\item[\bf{Ex2)}] Each diagram 
$$\xymatrix{&Z^{'} \ar[d]^{c}\\ Y \ar[r]^{p} & Z}$$
where $p$ is a deflation, may be completed to a cartesian square
$$\xymatrix{Y^{'} \ar[r]^{p^{'}} \ar[d]^{b} & Z^{'} \ar[d]^{c}\\ Y \ar[r]^{p} & Z}$$
where $p^{'}$ is a deflation.
\item[\bf{Ex2)$^{o}$}] Each diagram
$$\xymatrix{X \ar[r]^{i} \ar[d]^{a} & Y \\ X^{'}}$$
where $i$ is an inflation, may be completed to a cartesian square
$$\xymatrix{X \ar[r]^{i} \ar[d]^{a} & Y  \ar[d]^{b}\\ X^{'} \ar[r]^{i^{'}} & Y^{'}}$$
where $i^{'}$ is a deflation.
\end{enumerate}}
\end{definition}

\begin{remark}\rm{
\begin{enumerate}
\item[1)]The terms \emph{inflation, deflation and conflation} were invented by Gabriel. They are a substitution of Quillen original terms \emph{admissible monomorphism, admissible epimorphism} and \emph{admissible short exact sequence}, respectively;
\item[2)] Observe that the definition of exact category is self-dual, i.e., if $(\mathcal{A},\mathcal{E})$ is an exact category, then $(\mathcal{A}^{o},\mathcal{E}^{o})$ so is, with $\mathcal{E}^{o}:=\{ (p^{o},i^{o}) | (i,p) \in \mathcal{E} \}$.
\end{enumerate}}
\end{remark}

\begin{definition}\rm{
Let $\mathcal{A}$ be an additive category. Denote by $\mathcal{C(A)}$ the \emph{category of cochain complexes or simply complexes}\index{category! of cochain complex}
$$\xymatrix{\cdots \ar[r] & X^{n} \ar[rr]^{d^{n}_X} & & X^{n+1} \ar[rr]^{\hspace{0.1cm}d^{n+1}_{X}} & & \cdots }$$
over $\mathcal{A}$. We define an automorphism of $\mathcal{C(A)}$, called the \emph{shifting functor}\index{functor! shifting}, as follows:
$$\xymatrix{?[1]:\mathcal{C(A)} \ar[r] & \mathcal{C(A)}}$$
\begin{enumerate}
\item[]Action on objects: For each object $(X,d_X)$ we have $X[1]^{n}:=X^{n+1}$ and $d^{n}_{X[1]}:=-d^{n+1}_X$, for all $n\in$\Z;
\item[]Action on morphisms: For each morphism $f$ we have $f[1]^n:=f^{n+1}$, for all $n\in$\Z.
\end{enumerate}}
\end{definition}

\begin{definition}\rm{
Let $\xymatrix{f:X \ar[r] & Y}$ be a morphism in $\mathcal{C(A)}$. The \emph{cone}\index{cone} of $f$ is the complex $C:=Cone(f)$ given by:
\begin{enumerate}
\item[-] $C^n:=X^{n+1}\oplus Y^n $, for all $n\in $\Z;
\item[-] The differential $\xymatrix{d^{n}_C:C^{n} \ar[r] & C^{n+1}}$ is defined in the  following way 
$$d^{n}_{C}=:\begin{pmatrix} -d^{n+1}_X & 0 \\ f^{n+1} & d^{n}_Y\end{pmatrix}: \xymatrix{X^{n+1}\oplus Y^{n} \ar[r] & X^{n+2}\oplus Y^{n+1}}, \text{ for all } n\in \text{\Z}. $$
\end{enumerate}}

\end{definition}

\begin{examples}\rm{
\begin{enumerate}
\item[1)] Let us assume that $\mathcal{A}$ is an abelian category (e.g. $\mathcal{A}$=$R$-Mod, for the some ring $R$) and $\mathcal{E}$ denote the class of short exact sequences in $\mathcal{A}$, then $(\mathcal{A},\mathcal{E})$ is an exact category;

\item[2)] Let $\mathcal{A}$ be an additive category. Endowed with all split short exact sequences, $\mathcal{A}$ becomes an exact category;

\item[3)] Let $\mathcal{A}$ be an additive category. Endow $\mathcal{C(\mathcal{A})}$ with the set of all pairs $(i,p)$ such that $(i^{n},p^{n})$ is a split short exact sequence for each $n \in \text{\Z}$. Then $\mathcal{C(\mathcal{A})}$ is an exact category. We will say that $\mathcal{C}(\mathcal{A})$ is \emph{endowed with its semi-split exact structure}. We also have that the \emph{right bounded full subcategory}

$$\mathcal{C}^{-}(\mathcal{A}):=\{ X \in obj(\mathcal{C(\mathcal{A})}) \hspace{0.1 cm} | \hspace{0.1 cm}X^{n}=0, \forall \hspace{0.1 cm} n >> 0\},$$
the \emph{left bounded full subcategory}
$$\mathcal{C}^{+}(\mathcal{A}):=\{ X \in obj(\mathcal{C(\mathcal{A})}) \hspace{0.1 cm} | \hspace{0.1 cm}X^{n}=0, \forall \hspace{0.1 cm} n << 0\},$$
and the \emph{bounded full subcategory}
$$\mathcal{C}^{b}(\mathcal{A}):=\{ X \in obj(\mathcal{C(\mathcal{A})}) \hspace{0.1 cm} | \hspace{0.1 cm}X^{n}=0, \forall \hspace{0.1 cm}  |n| >>0 \},$$
are fully exact subcategories of $\mathcal{C(A)}$. If $R$ is a ring sometimes we will frequently write $\mathcal{C}(R)$ instead of $\mathcal{C}(R\text{-Mod})$, $\mathcal{C}^{-}(R)$ instead of $\mathcal{C}^{-}(R\text{-Mod})$, etc.
\end{enumerate}}
\end{examples}

\begin{definition}\rm{
Let $\mathcal{A}$ be an exact category. An object $I\in \mathcal{A}$ is \emph{injective}\index{injective} (with respect to the set of conflations) if the sequence
$$\xymatrix{\mathcal{A}(Y,I) \ar[r]^{i^*} & \mathcal{A}(X,I) \ar[r] & 0}$$
is exact (in Ab) for each inflation $i:X\flecha Y$. \\

Dually, an object $P\in \mathcal{A}$ is \emph{projective}\index{projective} (with respect to the set of conflations) if it is injective when viewed as an object in $\mathcal{A}^{op}$. We will say that $\mathcal{A}$ has \emph{enough injectives} \index{enough injectives}if for each object $X\in \mathcal{A}$ there exists a conflation 
$$\xymatrix{X \ar[r]^{i_X} & IX \ar[r]^{p_X} & SX}$$
with injective $IX$. Dually, we will say that $\mathcal{A}$ has \emph{enough projectives}\index{enough projectives} if $\mathcal{A}^{\text{op}}$ has enough injectives. }
\end{definition}

\begin{definition}\rm{
Let $\mathcal{A}$ be an exact category with enough injectives and projectives. We will say that $\mathcal{A}$ is a \emph{Frobenius category}\index{category! Frobenius}, if the class of injective objects coincides with the class of projective ones.}
\end{definition}

\begin{examples}\rm{
\begin{enumerate}
\item[1)] The module category $R$-Mod, where $R$ is a quasi-Frobenius (QF) ring (e.g. $R=$\Z$_{p^{n}}$, for some prime $p$, or the group algebra $KG$, where $K$ is a field and $G$ is a finite group) is a Frobenius category;
\item[2)] Let $\mathcal{A}$ be an additive category. A complex of $ \mathcal{C(A)}$ is called \emph{contractible}\index{complex! contractible} if it is isomorphic to a complex
\begin{small}
$$\xymatrix{\cdots \ar[r] & Z^{n-1}\oplus Z^{n} \ar[rr]^{\begin{pmatrix} 0 & 1\\ 0 & 0 \end{pmatrix}}  && Z^{n}\oplus Z^{n+1} \ar[rr]^{\begin{pmatrix} 0 & 1\\ 0 & 0 \end{pmatrix}} & &Z^{n+1}\oplus Z^{n+2} \ar[rr]^{\hspace{0.7 cm}\begin{pmatrix} 0 & 1\\ 0 & 0 \end{pmatrix}} & & \cdots }$$
\end{small}
The category of chain complexes $\mathcal{C(A)}$, with the semi-split exact structure is a Frobenius category, where both the projectives and the injectives are the contractible complexes (see \cite[Section I.3]{H}).
\end{enumerate}}
\end{examples}

\begin{definition}\rm{
Let $\mathcal{A}$ be a Frobenius category. The \emph{stable category} \index{category! stable}$\underline{\mathcal{A}}$ associated to $\mathcal{A}$, is defined in the following way
\begin{enumerate}
\item[1)] $Obj(\underline{\mathcal{A}}):=Obj(\mathcal{A})$; 
\item[2)] $\Hom_{\underline{\mathcal{A}}}(X,Y):=\Hom_{\mathcal{A}}(X,Y)/\mathcal{P}(X,Y)$, where $\mathcal{P}(X,Y)$ is the subgroup of $\Hom_{\mathcal{A}}(X,Y)$ given by the morphisms that factor through a projective (=injective) object of $\mathcal{A}$;
\item[3)] If $\overline{f}\in \Hom_{\underline{\mathcal{A}}}(X,Y)$ and $\overline{g}\in \Hom_{\underline{\mathcal{A}}}(Y,Z)$, then the composition is given by $\overline{g} \circ \overline{f}:= \overline{g \circ f}.$ 
\end{enumerate}}
\end{definition}

\begin{proposition}[{\bf Universal property of the stable category}] 
Let $\mathcal{A}$ be a Frobenius category, let $\mathcal{B}$ be an additive category and let $\xymatrix{q:\mathcal{A} \ar[r] & \mathcal{B}}$ be a functor which preserves the zero object. The following assertions are equivalent:
\begin{enumerate}
\item[1)] $q$ vanishes on the projective objects of $\mathcal{A}$ and, for each additive functor $\xymatrix{F:\mathcal{A} \ar[r] & \mathcal{C}}$ that vanishes on the projective objects, where $\mathcal{C}$ is an additive category, there is an unique functor $\xymatrix{\overline{F}: \mathcal{B} \ar[r] & \mathcal{C}}$ such that $\overline{F} \circ q =F$;
\item[2)] There is an equivalence of categories $\xymatrix{H:\underline{A} \ar[r]^{\cong} & \mathcal{B} }$ such that $H \circ p=q$, where $\xymatrix{p:\mathcal{A} \ar[r] & \underline{\mathcal{A}}}$ is the canonical projection. 
\end{enumerate}
\end{proposition}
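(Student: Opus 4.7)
The strategy is to establish once and for all that the canonical projection $p\colon \mathcal{A}\longrightarrow \underline{\mathcal{A}}$ itself satisfies condition 1, and then derive the equivalence of 1) and 2) from the uniqueness clause of this universal property.

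First I would verify that $p$ fulfils condition 1. For the vanishing on projectives: if $P\in \mathcal{A}$ is projective (= injective), then $1_P\colon P\to P$ factors through $P$ itself, so $1_P\in \mathcal{P}(P,P)$, whence $1_P=0$ in $\underline{\mathcal{A}}$ and $p(P)\cong 0$. For the factorization, given an additive $F\colon \mathcal{A}\to \mathcal{C}$ vanishing on projectives, I would define $\overline{F}\colon \underline{\mathcal{A}}\to \mathcal{C}$ by $\overline{F}(X):=F(X)$ on objects and $\overline{F}(\overline{f}):=F(f)$ on morphisms. Well-definedness follows because if $\overline{f}=\overline{g}$, then $f-g$ factors through some projective $P$ as $f-g=\beta\circ\alpha$, giving $F(f)-F(g)=F(\beta)\circ F(\alpha)=0$ since $F(P)=0$. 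Uniqueness of $\overline{F}$ is automatic because $p$ is the identity on objects and surjective on morphisms; additivity of $\overline{F}$ is inherited from $F$ via the additivity of the quotient map.

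The implication 2) $\Longrightarrow$ 1) is then formal: if $H\colon \underline{\mathcal{A}}\to \mathcal{B}$ is an equivalence with $H\circ p=q$, then $q$ vanishes on projectives since $p$ does, and given $F\colon \mathcal{A}\to \mathcal{C}$ vanishing on projectives, the factorization just built yields $\overline{F}\colon \underline{\mathcal{A}}\to \mathcal{C}$ with $\overline{F}\circ p=F$, and one takes $\overline{F}\circ H^{-1}\colon \mathcal{B}\to \mathcal{C}$ as the functor solving the problem with respect to $q$; uniqueness is transported back through $H$.

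For 1) $\Longrightarrow$ 2), the key move is to use the universal property twice in opposite directions. Since $q$ vanishes on projectives, the universal property of $p$ (established in the first step) produces a unique functor $H\colon \underline{\mathcal{A}}\to \mathcal{B}$ with $H\circ p=q$. Symmetrically, since $p$ vanishes on projectives, condition 1 applied to $q$ yields a unique functor $K\colon \mathcal{B}\to \underline{\mathcal{A}}$ with $K\circ q=p$. Then $(K\circ H)\circ p=K\circ q=p=1_{\underline{\mathcal{A}}}\circ p$, and the uniqueness clause in the universal property of $p$ forces $K\circ H=1_{\underline{\mathcal{A}}}$; dually, $(H\circ K)\circ q=H\circ p=q=1_{\mathcal{B}}\circ q$, and the uniqueness clause in condition 1 for $q$ forces $H\circ K=1_{\mathcal{B}}$. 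Hence $H$ is an equivalence satisfying $H\circ p=q$, as required.

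The only delicate point I anticipate is bookkeeping the well-definedness of $\overline{F}$ on quotient hom-sets; all remaining steps are formal manipulations with universal properties, and no use of the Frobenius structure beyond the identification projective = injective (which is what ensures $\underline{\mathcal{A}}$ is additive and that condition 1) is symmetric between projectives and injectives).
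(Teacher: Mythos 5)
The paper states this proposition in its preliminaries without proof, so there is nothing to compare against; judged on its own merits, your argument is the standard one and its core is sound. You first establish that $p$ itself enjoys the universal property (it is the identity on objects and full, and $\overline{F}$ is well defined on stable hom-sets because an additive $F$ killing projectives kills any morphism factoring through one), and then obtain 1) $\Longrightarrow$ 2) by playing the two universal properties against each other. Note that this direction actually produces $KH=1_{\underline{\mathcal{A}}}$ and $HK=1_{\mathcal{B}}$, i.e.\ an isomorphism of categories, which is more than the equivalence required.

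Two points deserve attention. First, to factor $q$ through $p$ in 1) $\Longrightarrow$ 2), and to invoke the uniqueness clause of condition 1) with $F=q$ when proving $HK=1_{\mathcal{B}}$, you need $q$ to be additive, whereas the hypothesis only says that $q$ preserves the zero object; you should either record that additivity of $q$ is tacitly assumed here or explain how it follows. Second, in 2) $\Longrightarrow$ 1) the composite $H^{-1}\circ H$ of an equivalence with a quasi-inverse is only \emph{naturally isomorphic} to the identity, so $(\overline{F}\circ H^{-1})\circ q\cong F$ rather than $=F$, and likewise $G\circ H=G'\circ H$ only yields $G\cong G'$. With the strict equalities literally written in the statement this direction is problematic whenever $H$ fails to be bijective on objects (an equivalence that is not surjective on objects admits several strictly different, though isomorphic, extensions of $\overline{F}$ to $\mathcal{B}$). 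The proposition is meant, as is customary for such universal properties, up to natural isomorphism of functors; at that level your argument for 2) $\Longrightarrow$ 1) is complete, but it would be worth saying explicitly that this is the reading you adopt.
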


\begin{definition}\rm{
Let $\mathcal{A}$ be an additive category and $\xymatrix{f,g:X \ar[r] & Y}$ be morphisms in $\mathcal{C(A)}$. A \emph{homotopy}\index{homotopy} between $f$ and $g$ is a family of morphisms $$s:=(\xymatrix{s^n:X^n \ar[r] & Y^{n-1}})_{n \in \text{\Z}}$$ in $\mathcal{A}$ such that $g-f=d_Y \circ s + s \circ d_X$ (i.e. $g^n-f^n=d^{n-1}_Y \circ s^n +s^{n+1} \circ d^{n}_X$, $\forall n\in $\Z).}
\end{definition}

\begin{remark}\rm{
We will define an equivalence relation in $\mathcal{C(A)}$ given by: two chain maps are \emph{homotopic}\index{homotopic} if there is a homotopy between them. A chain map is called \emph{null-homotopic}\index{null-homotopic} when it is homotophic to the zero map. }
\end{remark}

\begin{definition}\rm{
The \emph{homotopy category}\index{category! homotopy} $\mathcal{H(A)}$ of an additive category $\mathcal{A}$ is the category whose objects are the chain complexes over $\mathcal{A}$ and its morphism are homotopy classes of morphisms in $\C(\mathcal{A})$. In the case $\mathcal{A}=R$-Mod, we write $\mathcal{H}(R)$ instead of $\mathcal{H}(R\text{-Mod})$.}
\end{definition}

\begin{proposition}
$\mathcal{H(A)}$ is the stable category of the category $\mathcal{C(A)}$.
\end{proposition}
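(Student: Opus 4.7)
My plan is to identify, as two-sided ideals of morphisms in $\mathcal{C}(\mathcal{A})$, the null-homotopic chain maps with those chain maps that factor through a contractible complex. Once this identification is established, the homotopy category $\mathcal{H}(\mathcal{A})$ and the stable category $\underline{\mathcal{C}(\mathcal{A})}$ share the same class of objects, the same hom-sets, and composition inherited from $\mathcal{C}(\mathcal{A})$, so the canonical identity-on-objects functor becomes an isomorphism of categories. Equivalently, one may invoke the universal property of the stable category, since the projection $\mathcal{C}(\mathcal{A})\to \mathcal{H}(\mathcal{A})$ is additive.

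For the easy direction I would observe that the identity $1_Z$ of a contractible complex $Z$ is null-homotopic: a direct matrix computation on the standard model shows that the shift $\bigl(\begin{smallmatrix} 0 & 1\\ 0 & 0\end{smallmatrix}\bigr)$ gives a contracting homotopy $s$ with $1_Z = d_Z s + s\, d_Z$. Hence for any factorization $f = g\circ h$ with $h:X\to Z$ and $g:Z\to Y$, using that $g$ and $h$ are chain maps one computes
$$f = g\, 1_Z\, h = g(d_Z s + s\, d_Z)h = d_Y(gsh) + (gsh) d_X,$$
so $gsh$ is a null-homotopy of $f$.

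The core of the argument is the converse. Given a null-homotopic $f:X\to Y$ with homotopy $s=(s^n:X^n\to Y^{n-1})_n$, I would factor $f$ through the cone $C(1_X)$, which has $C(1_X)^n = X^{n+1}\oplus X^n$ and differential $\bigl(\begin{smallmatrix} -d_X^{n+1} & 0\\ 1 & d_X^n\end{smallmatrix}\bigr)$; the same contracting homotopy as above shows $C(1_X)$ is contractible. Define $\iota:X\to C(1_X)$ by $\iota^n=\bigl(\begin{smallmatrix} 0\\ 1\end{smallmatrix}\bigr)$ and $g:C(1_X)\to Y$ by $g^n=(s^{n+1}\ f^n)$. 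The chain-map condition for $\iota$ is routine; the condition for $g$ amounts to the matrix identity $d_Y^n g^n = g^{n+1} d_{C(1_X)}^n$, which separates into the homotopy identity $f^{n+1} = d_Y^n s^{n+1} + s^{n+2} d_X^{n+1}$ and the chain-map condition on $f$ itself. Finally, $g^n \iota^n = (s^{n+1}\ f^n)\bigl(\begin{smallmatrix} 0\\ 1\end{smallmatrix}\bigr) = f^n$, so $f = g\iota$ factors through the contractible complex $C(1_X)$.

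The main obstacle is not conceptual but purely bookkeeping: one has to track signs, indices and the matrix entries of $d_{C(1_X)}$, $\iota$ and $g$ carefully so that the chain-map and factorization verifications go through without error. Once this is done, the equality of the two ideals is immediate, and the identification $\mathcal{H}(\mathcal{A}) \cong \underline{\mathcal{C}(\mathcal{A})}$ follows at once.
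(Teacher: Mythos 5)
Your proposal is correct, and it follows the paper's overall strategy — reduce everything to showing that a chain map is null-homotopic if and only if it factors through a contractible complex — but your explicit factorization in the hard direction is the dual of the one in the paper. You send $X$ into the contractible cone $C(1_X)$ along the canonical inflation $\iota^n=\bigl(\begin{smallmatrix}0\\1\end{smallmatrix}\bigr)$ and use the homotopy to build $g^n=(s^{n+1}\ f^n):C(1_X)\to Y$ with $g\iota=f$; the paper instead covers $Y$ by the contractible complex $P$ with $P^n=Y^{n-1}\oplus Y^n$ and deflation $h^n=(d_Y^{n-1}\ 1)$, and uses the homotopy to build $g^n=\bigl(\begin{smallmatrix}s^n\\ s^{n+1}d_X^n\end{smallmatrix}\bigr):X\to P$ with $hg=f$. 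These are the two standard Frobenius-type factorizations (through an inflation into an injective object of $\mathcal{C}(\mathcal{A})$ versus a deflation from a projective one) and they are equally effective; your easy direction is in fact a bit cleaner than the paper's, since conjugating a contracting homotopy of $1_Z$ gives $f=d_Y(gsh)+(gsh)d_X$ at once, without writing out components on the standard model. Two bookkeeping points to tighten. First, in the paper ``contractible'' means isomorphic to the standard split complex, so for $C(1_X)$ you should either exhibit the isomorphism onto the standard complex built on the objects $X^n$ (for instance $\bigl(\begin{smallmatrix}0&1\\1&d_X^n\end{smallmatrix}\bigr):X^{n+1}\oplus X^n\to X^n\oplus X^{n+1}$ does the job) or be aware that exhibiting a contracting homotopy is, in a general additive category, strictly weaker than this definition (passing from a null-homotopic identity to the split form needs idempotents to split); for the cone of an identity the isomorphism is explicit, so nothing breaks, but the justification should say so. Second, the contracting homotopy of the standard model is $\bigl(\begin{smallmatrix}0&0\\1&0\end{smallmatrix}\bigr)$ rather than the matrix of the differential; your $\bigl(\begin{smallmatrix}0&1\\0&0\end{smallmatrix}\bigr)$ is the correct homotopy for $C(1_X)$, but not for the complex whose differential is $\bigl(\begin{smallmatrix}0&1\\0&0\end{smallmatrix}\bigr)$.
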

\begin{proof}
We only need to prove that each chain map is null-homotopic if, and only if, it factors through contractible complex. We suppose that there exist applications $\xymatrix{X \ar[r]^{g} & P \ar[r]^{h} & Y}$, with $P$ contractible. We show that $f:=h \circ g$ is null-homotopic. Without loss of generality we can assume that $P$ is of the form, $P^{n}=Z^{n}\oplus Z^{n+1}$ in $\mathcal{A}$, for all $n\in \mathbb{Z}$, and $d^{n}_{P}:P^{n}\flecha P^{n+1}$ is given by:
$$d^{n}_{P}=\begin{pmatrix} 0 & 1_{Z^{n+1}} \\ 0 & 0 \end{pmatrix}: Z^{n} \oplus Z^{n+1} \flecha Z^{n+1} \oplus Z^{n+2}.$$

Putting $g^{n}=\begin{pmatrix}u^{n} \\ v^{n}\end{pmatrix}:X^{n} \flecha Z^{n}\oplus Z^{n+1}$ and $h^{n}=\begin{pmatrix}w^{n} & t^{n}\end{pmatrix}:Z^{n} \oplus Z^{n+1} \flecha Y^{n}$, for all $n\in \mathbb{Z}$, we obtain that $u^{n+1} \circ d^{n}_{X}=v^{n}$ and $w^{n+1}=d^{n}_{Y} \circ t^{n}$, for all $n\in \mathbb{Z}$. It follows that $s:=(t^{n-1} \circ u^{n}: X^{n} \flecha Y^{n-1})_{n\in \mathbb{Z}}$ is a homotopy between 0 and $f$.\\

Conversely, let $f:X \flecha Y$ be a cochain map which is null-homotopic, and let us fix a homotopy $s$ between $0$ and $f$. We define $P^{n}:=Y^{n-1} \oplus Y^{n}$ and $d^{n}_{P}=\begin{pmatrix} 0 & 1 \\ 0 & 0\end{pmatrix}:P^{n}=Y^{n-1}\oplus Y^{n} \flecha Y^{n} \oplus Y^{n+1}=P^{n+1}$, for all $n\in \mathbb{Z}$. Is clear that, $P$ is a contractible complex, furthermore, we get a deflation $h:P \epic Y$, given by $h^{n}:=\begin{pmatrix} d^{n-1}_Y & 1_{Y^{n}}\end{pmatrix}:P^{n}=Y^{n-1}\oplus Y^{n} \flecha Y^{n}$, for all $n\in \mathbb{Z}$. Finally, we define a chain map $g:X \flecha P$, given by
$$g^{n}:=\begin{pmatrix}s^{n} \\ s^{n+1}d^{n}_X\end{pmatrix}:X^{n} \flecha Y^{n-1} \oplus Y^{n}=P^{n}$$
It is easy to see that $f=h \circ g$.
\end{proof}

\begin{definition}\rm{
If $\mathcal{C}$ is an arbitrary category endowed with an endofunctor $\xymatrix{S:\mathcal{C} \ar[r] & \mathcal{C}}$, an $S$-\emph{sequence}\index{$S$-sequence} is a sequence $(u,v,w)$
$$\xymatrix{X \ar[r]^{u} & Y \ar[r]^{v} & Z \ar[r]^{w \hspace{0.2 cm}} & SX}$$
of morphisms of $\mathcal{C}$, and a \emph{morphism of $S$-sequences} from $(u,v,w)$ to $(u^{'},v^{'},w^{'})$ is a commutative diagram of the form
$$\xymatrix{X \ar[r]^{u} \ar[d]^{x} & Y \ar[r]^{v} \ar[d] & Z \ar[r]^{w \hspace{0.2 cm}} \ar[d]& SX \ar[d]^{Sx} \\ X^{'} \ar[r]^{u^{'}} & Y^{'} \ar[r]^{v^{'}} & Z^{'} \ar[r]^{w^{'} \hspace{0.2 cm}} & SX^{'}}$$
In this way we define \emph{the category of the $S$-sequences} of $\mathcal{C}.$ }
\end{definition}

\begin{remark}\rm{
Let $\mathcal{A}$ be a Frobenius category and $A\in Obj(\mathcal{A})$. For each choice of a inflation $\xymatrix{i:A \hspace{0.03 cm}\ar@{^(->}[r] & I}$, with $I$ injective (=projective) object, the $coker(i)$ is denoted by $\Omega^{-1}(A)$ is called \emph{the cosyzygy} \index{cosyzygy}of $A$. It is uniquely determine up to isomorphism in the stable category $\underline{\mathcal{A}}$ and does not depend of the choice of $i$. Moreover, the assignment $A \rightsquigarrow \Omega^{-1}(A)$ define a self-equivalence of categories $\Omega^{-1}: \underline{\mathcal{A}} \iso \underline{\mathcal{A}}$, whose inverse functor is called \emph{the syzygy functor}\index{syzygy}.}
\end{remark}

\begin{example}\label{triangles in stable Frobenius category}\rm{
We associate now with each conflation $e=(i,p)$ of the Frobenius category $\mathcal{A}$ an $S$-sequence in $\underline{\mathcal{A}}$
$$\xymatrix{X \ar[r]^{\overline{i}} & Y \ar[r]^{\overline{p}} & Z \ar[r]^{\delta(e)\hspace{0.2 cm}} & SX}$$
called \emph{standard triangle},\index{standard triangle} defined by requiring the existence of a commutative diagram in $\mathcal{A}$:
$$\xymatrix{X \ar[r]^{i} \ar@{=}[d] & Y \ar[r]^{p} \ar[d]^{g} & Z \ar[d]^{e_{1}} \\ X \ar[r]^{i_X} & IX \ar[r]^{p_X} & SX }$$
and taking $\delta(e)=\overline{e_{1}}$ (again $g$ exists by the injective condition of $IX$, $e_1$ exists by the universal property of cokernels and is uniquely determined by $g$, and $\overline{e_{1}}$ does not depend on the choice of $g$).}
\end{example}

\begin{definition}\rm{
A \emph{triangulated category}\index{category! triangulated} is a pair $(\mathcal{D},?[1])$ where $\mathcal{D}$ is an additive category and $?[1]$ is an additive self-equivalence of $\mathcal{D}$, together with a class of $?[1]$-sequences closed under isomorphism, and whose elements are called \emph{distinguished triangles}\index{distinguished triangles} or simply \emph{triangles}\index{triangles}, satisfying the following axioms:
\begin{enumerate}
\item[\bf{TR1})] For every object $X$ the $?[1]$-sequence 
$$\xymatrix{X \ar[r]^{1_X} & X \ar[r] & 0 \ar[r] & X[1]}$$
is a triangle. Each morphism $\xymatrix{u:X \ar[r] & Y}$ can be embedded into a triangle $(u,v,w)$;
\item[\bf{TR2})] $(u,v,w)$ is a triangle if and only if $(v,w,-u[1])$ is a triangle;

\item[\bf{TR3})] If $(u,v,w)$ and $(u^{'},v^{'},w^{'})$ are triangles and $x,y$ morphism such that $y\circ u= u^{'} \circ x $, then there exists a morphism $z$ such that $zv=v^{'}y$ and $x[1] \circ w= w^{'} \circ z$
$$\xymatrix{X \ar[r]^{u} \ar[d]^{x} & Y \ar[r]^{v}  \ar[d]^{y} & Z \ar[r]^{w} \ar@{-->}[d]^{z} & X[1] \ar[d]^{x[1]} \\ X^{'} \ar[r]^{u^{'}} & Y^{'} \ar[r]^{v^{'}} & Z^{'} \ar[r]^{w^{'}} & X^{'}[1]}$$ 

\item[\bf{TR4})] (Octahedral axiom)\index{Octahedral axiom} For each pair of composable morphisms
$$\xymatrix{X \ar[r]^{u} & Y \ar[r]^{v} & Z}$$
and for an arbitrary choice of triangles
$$\xymatrix{X \ar[r]^{u}  & Y \ar[r]^{x}  & Z^{'} \ar[r]  & X[1]  \\Y \ar[r]^{v}  & Z \ar[r]  & X^{'} \ar[r]^{r}  & Y[1]   \\ X \ar[r]^{u v}  & Z \ar[r]^{y}  & Y^{'} \ar[r]^{s}  & X[1]  }$$  
starting in $u,v$ and $uv$ respectively, there is a commutative diagram
$$\xymatrix{X \ar[r]^{u} \ar@{=}[d]  & Y \ar[r]^{x} \ar[d]^{v}  & Z^{'} \ar[r] \ar[d]^{w}  & X[1] \ar@{=}[d]  \\ X \ar[r]^{uv}  & Z \ar[r]^{y} \ar[d]  & Y^{'} \ar[r]^{s} \ar[d] & X[1] \ar[d]^{u[1]} \\  & X^{'} \ar@{=}[r] \ar[d]^{r}  & X^{'} \ar[r]^{r} \ar[d]  & Y[1]  \\   & Y[1] \ar[r]^{x[1]}  & Z^{'}[1]   &   }$$
where the first two rows and the two central columns are triangles.
\end{enumerate}
The self-equivalence $?[1]$ is called the \emph{shifting functor}\index{functor! shifting} or \emph{suspension functor}\index{functor! suspension}. We will then put $?[0] = 1_{\mathcal{D}}$ and $?[k]$ will denote the $k-th$ power of $?[1]$, for each integer $k$.}
\end{definition}

\begin{remark}\rm{
The notion of triangulated category is self-dual.}
\end{remark}

\begin{proposition}
Let $\mathcal{A}$ be a Frobenius category. Then $(\mathcal{A},\Omega^{-1})$ is a triangulated category with the class of distinguished triangles described in the example \ref{triangles in stable Frobenius category}.
\end{proposition}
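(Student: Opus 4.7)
The plan is to verify axioms TR1--TR4 in $\underline{\mathcal{A}}$ for the shift $\Omega^{-1}$ and the class of $\Omega^{-1}$-sequences obtained by closing the standard triangles of Example~\ref{triangles in stable Frobenius category} under isomorphism. A preliminary point, which I would address first, is that $\delta(e)$ is well defined in $\underline{\mathcal{A}}$: any two choices of the lift $g\colon Y\to IX$ differ by a map factoring through the projective $IX$, so the induced maps $Z\to \Omega^{-1}X$ differ by a map that factors through an injective--projective and thus coincide in $\underline{\mathcal{A}}$. Then for TR1, the trivial triangle on $X$ comes from the split conflation $X\xrightarrow{1_X} X\to 0$, and to embed an arbitrary $u\colon X\to Y$ into a triangle I would form the inflation $\bigl(\begin{smallmatrix}-u \\ i_X\end{smallmatrix}\bigr)\colon X\to Y\oplus IX$ (an inflation by Ex1$^{o}$), take its cokernel $Z$, and read off the associated standard triangle; the induced map $Y\hookrightarrow Y\oplus IX \twoheadrightarrow Z$ is the companion morphism $v$.

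For TR2, I would prove $(u,v,w)\Rightarrow (v,w,-u[1])$ is a triangle, the converse then following by iteration. Given a conflation $X\xrightarrow{i} Y\xrightarrow{p} Z$, the natural candidate triangle starting at $Y$ arises from the pushout conflation $Y\to P\to \Omega^{-1}X$, where $P$ is the pushout of $i$ along $i_X$; injectivity of $IX$ yields a morphism $P\to Z$ that becomes an isomorphism in $\underline{\mathcal{A}}$, producing the required isomorphism of triangles, the sign $-u[1]$ being forced by the chosen identification of $\Omega^{-1}X$. For TR3, given two standard triangles and a homotopy-commutative square $\overline{y\circ i}=\overline{i'\circ x}$, I would replace representatives so that $y\circ i - i'\circ x$ factors as $h\circ i_X$ for some $h\colon IX\to Y'$; after modifying $y$ by the composite of $h$ with a lift used in the definition of $\delta(e')$, one obtains an honest equality $y\circ i = i'\circ x$ in $\mathcal{A}$, and then the universal property of the cokernel produces $z\colon Z\to Z'$ whose class $\bar z$ completes the morphism of triangles.

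The main obstacle is TR4, the octahedral axiom. Given composable $u\colon X\to Y$ and $v\colon Y\to Z$, the plan is to build a commutative $3\times 3$ diagram of conflations in $\mathcal{A}$ whose rows realize the three given triangles simultaneously. Starting from inflations $i_X\colon X\to IX$ and $i_Y\colon Y\to IY$, I would form iterated pushouts to obtain conflations for $u$, $v$ and $v\circ u$ together with vertical conflations linking their third terms; the nine-lemma in the exact category $\mathcal{A}$ guarantees that the remaining middle row and column are conflations as well. Reading this diagram in $\underline{\mathcal{A}}$ yields exactly the octahedron, once one verifies that the connecting morphisms $\delta$ of the three horizontal conflations admit lifts to $\mathcal{A}$ compatible along the vertical conflations. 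The delicate point, and what I expect to be the main bookkeeping difficulty, is to check that all squares that commute only up to a map factoring through an injective (equivalently projective) object descend to strict commutativity in the stable category, and to pin down signs coherently; this is where the Frobenius hypothesis (injectives = projectives) is used essentially, since it ensures that every auxiliary correction term one introduces vanishes upon passing to $\underline{\mathcal{A}}$.
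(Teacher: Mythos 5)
Your outline is essentially the standard verification of TR1--TR4 for the stable category of a Frobenius category, which is exactly the argument the paper points to: it gives no proof of its own and simply cites Happel's Theorem I.2.6, where the well-definedness of $\delta(e)$, the mapping-cone construction via $X\to Y\oplus IX$, the rotation and fill-in arguments, and the pushout construction of the octahedron are carried out as you sketch. So your approach coincides with the paper's (i.e.\ Happel's) route; the only caveat is that yours is a plan rather than a full proof, with TR4 and the sign conventions left at the level of intentions.
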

\begin{proof}
See \cite[Theorem I.2.6]{Ha}.
\end{proof}

\begin{corollary}
For each additive category $\mathcal{A}$, the homotopy category $\mathcal{H(A)}$ admits a structure of triangulated category where the triangles are isomorphic to those described by each of the following two equivalent conditions:
\begin{enumerate}
\item[1)] The image of conflations in $\mathcal{C(A)}$ by the projection $\xymatrix{p:\mathcal{C(A)} \ar[r] & \mathcal{H(A)}}$, as described in the example \ref{triangles in stable Frobenius category};
\item[2)] Sequences of the form
$$\xymatrix{X \ar[r]^{\overline{f}} & Y \ar[r]^{\begin{pmatrix} 0 \\ 1\end{pmatrix} \hspace{0.6 cm}} & Cone(f) \ar[r]^{\hspace{0.3 cm}\begin{pmatrix} 1 & 0\end{pmatrix}} & X[1]}$$
where $f$ is an arbitrary chain map.
\end{enumerate}
\end{corollary}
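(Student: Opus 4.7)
My plan is to invoke the preceding proposition asserting that the stable category of any Frobenius category is triangulated. We have already established that $\mathcal{C(A)}$, endowed with its semi-split exact structure, is a Frobenius category and that $\mathcal{H(A)}$ is its stable category, so this immediately yields a triangulated structure on $\mathcal{H(A)}$ whose distinguished triangles are, by construction, the standard triangles associated to conflations in $\mathcal{C(A)}$ as described in Example~\ref{triangles in stable Frobenius category}. The only non-formal point at this stage is to identify the abstract suspension $\Omega^{-1}$ of the stable category with the translation functor $?[1]$ on complexes. For this I would fix, for each complex $X$, the contractible complex $\text{Cone}(1_X)$ together with the obvious termwise split short exact sequence $X\hookrightarrow \text{Cone}(1_X)\twoheadrightarrow X[1]$, which is a conflation in $\mathcal{C(A)}$ whose middle term is injective-projective; this exhibits a natural isomorphism $\Omega^{-1}(X)\cong X[1]$ in $\mathcal{H(A)}$, so that description (1) coincides on the nose with the class of distinguished triangles produced by the general construction.

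It remains to check that descriptions (1) and (2) define the same class of triangles up to isomorphism. For (1) $\Rightarrow$ (2) I start with a conflation $X\xrightarrow{i} Y\xrightarrow{p} Z$ and compare its standard triangle with the mapping cone triangle of~$i$. The chain map $\pi:\text{Cone}(i)\to Z$ defined in each degree by $(x,y)\mapsto p(y)$ makes the obvious diagram commute, and its kernel complex is isomorphic to $\text{Cone}(1_X)$, which is contractible; hence $\pi$ becomes an isomorphism in $\mathcal{H(A)}$. A direct computation using the explicit recipe for the cosyzygy from Example~\ref{triangles in stable Frobenius category} shows that, via $\pi$, the connecting morphism of the standard triangle matches the canonical projection $\text{Cone}(i)\to X[1]$, because $\text{Cone}(1_X)$ can be simultaneously used as the injective hull of $X$ in the recipe and as the kernel of $\pi$.

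For the converse (2) $\Rightarrow$ (1) I would use that for every chain map $f:X\to Y$ the sequence
$$\xymatrix{0 \ar[r] & Y \ar[rr]^{\binom{0}{1}\;\;\;} && \text{Cone}(f) \ar[rr]^{\;\;(1\;\;0)} && X[1] \ar[r] & 0}$$
is termwise split, so it is a conflation in $\mathcal{C(A)}$. Its associated standard triangle then has the shape $Y\to \text{Cone}(f)\to X[1]\to Y[1]$, and a computation with the cosyzygy construction identifies the connecting morphism with $f[1]$. Applying axiom TR2 (now available from the abstract triangulated structure) to rotate this triangle one place backwards produces a distinguished triangle $X\xrightarrow{f} Y\to \text{Cone}(f)\to X[1]$, which is the one announced in description (2).

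The main technical obstacle is the bookkeeping in the two computations of the connecting morphism: one must verify, with the signs prescribed by the differential of the cone, that the morphism $\delta(e)$ built in Example~\ref{triangles in stable Frobenius category} does coincide, up to an isomorphism in $\mathcal{H(A)}$, with the canonical projection $\text{Cone}(i)\to X[1]$ in the first case and with $f[1]$ in the second. Once this sign tracking is carried out, both implications (1)$\Leftrightarrow$(2) follow formally from the already-established Frobenius/stable-category machinery.
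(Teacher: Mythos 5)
Your proposal is correct and follows exactly the route the paper intends: the corollary is stated there without proof as an immediate consequence of the preceding proposition on Frobenius categories (citing Happel), applied to $\mathcal{C(A)}$ with its semi-split exact structure and to $\mathcal{H(A)}$ as its stable category, together with the identification $\Omega^{-1}\cong ?[1]$ via the conflation $X\monic \Cone(1_X)\epic X[1]$. Your comparison of the standard triangle of a conflation with the mapping-cone triangle (using that $\Ker(\pi)\cong \Cone(1_X)$ is contractible) and the TR2 rotation for an arbitrary chain map are precisely the standard verifications being left implicit, so the proof is fine.
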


\begin{definition}\rm{
Let $(\mathcal{D},?[1])$ be a triangulated category and let $\mathcal{A}$ be an abelian category. A functor $\xymatrix{H:\mathcal{D} \ar[r] & \mathcal{A}}$ is called \emph{cohomological}\index{functor! cohomological} if for every triangle
$$\xymatrix{X \ar[r]^{u} & Y \ar[r]^{v} & Z \ar[r]^{w} & X[1]}$$
the sequence
$$\xymatrix{H(X) \ar[r]^{H(u)} & H(Y) \ar[r]^{H(v)} & H(Z) }$$
is exact in the abelian category $\mathcal{A}$.}
\end{definition}

\begin{remark}\rm{
The cohomological functors are additive (see \cite[Remarque II.1.2.7]{V}).}
\end{remark}

\begin{example}\rm{
Let $(\mathcal{D},?[1])$ be a triangulated category and $X$ be an object of $\mathcal{D}$. Then the functor $$\xymatrix{Hom_{\mathcal{D}}(X,?): \mathcal{D} \ar[r] &\text{Ab}}$$
is cohomological. In particular, if 
$$\xymatrix{Y \ar[r] & Z \ar[r] & W \ar[r] & Y[1]}$$
is a triangle in $\mathcal{D}$, then we get the following long exact sequence in Ab, with $n\in $ \Z
$$\xymatrix{\cdots \ar[r] & Hom_{\mathcal{D}}(X,Y[n]) \ar[r] & Hom_{\mathcal{D}}(X,Z[n]) \ar[r] & Hom_{\mathcal{D}}(X,W[n]) \ar@(dr,u)[dll] \\ & Hom_{\mathcal{D}}(X,Y[n+1]) \ar[r] & Hom_{\mathcal{D}}(X,Z[n+1]) \ar[r] & \cdots}$$
Dually, the functor $\xymatrix{Hom_{\mathcal{D}}(?,X): \mathcal{D}^{\text{op}} \ar[r] & \text{Ab}}$, is cohomological.}
\end{example}

\begin{example}\rm{
Let $\mathcal{A}$ be an abelian category, the functor ``0-th'' object of homology \index{homology}
$\xymatrix{H^{0}:\mathcal{C(A)} \ar[r] & \mathcal{A}}$ vanishes on contractible complex, therefore there is a unique functor $\xymatrix{H^{0}:\mathcal{H(A)} \ar[r] & \mathcal{A}}$ which we shall still denote by $H^{0}$, such that $H^{0} \circ p = H^{0}$, where $\xymatrix{p: \mathcal{C(A)} \ar[r] & \mathcal{H(A)}}$ is the canonical functor. The functor $\xymatrix{H^{0}:\mathcal{H(A)} \ar[r] & \mathcal{A}}$ is cohomological and the equality $H^{0} \circ (?[n])=H^{n}$ holds for all $n\in $ \Z. Thus, for each triangle
$$\xymatrix{X \ar[r] & Y \ar[r] & Z \ar[r] & X[1] & (\ast)}$$
in $\mathcal{H(A)}$, we get the following long exact sequence in $\mathcal{A}$, called \emph{the long exact sequence of homology associated to the triangle}\index{exact sequence! long} ($\ast$)
$$\xymatrix{\cdots \ar[r] & H^{n}(X) \ar[r] & H^{n}(Y) \ar[r] & H^{n}(Z) \ar[r] & H^{n+1}(X) \ar[r] & H^{n+1}(Y) \ar[r] & \cdots}$$}
\end{example}

\begin{definition}\rm{
Let $(\mathcal{D},?[1])$ and $(\mathcal{D}^{'},?^{'}[1])$ be two triangulated categories. A \emph{triangulated functor}\index{functor! triangulated} from $\mathcal{D}$ to $\mathcal{D}^{'}$ is a pair $(F,\eta),$ where $\xymatrix{F:\mathcal{D} \ar[r] & \mathcal{D}^{'}}$ is a functor and $\xymatrix{\eta:F\circ ?[1] \ar[r] & ?^{'}[1] \circ F}$ is a natural transformation such that for each triangle in $\mathcal{D}$
$$\xymatrix{X \ar[r]^{u} & Y \ar[r]^{v} & Z \ar[r]^{w} & X[1]}$$
we have a triangle in $\mathcal{D}^{'}$
$$\xymatrix{F(X) \ar[r]^{F(u)} & F(Y) \ar[r]^{F(v)} & F(Z) \ar[rr]^{(\eta_X)(F(w)) \hspace{1.8 cm}} & & (?^{'}[1] \circ F)(X) = F(X)[1]}$$}
\end{definition}


\begin{definition}\rm{
Let $(\mathcal{D},?[1])$ be a triangulated category such that $\mathcal{D}$ has coproducts. We will say that it is \emph{compactly generated}\index{category!triangulated!compactly generated} when there is a set $\mathcal{S}$ of compact objects in $\mathcal{D}$ such that an object $X$ of $\mathcal{D}$ is zero whenever $Hom_{\mathcal{D}}(S[k], X)=0$, for all $S\in \mathcal{S}$ and $k\in$ \Z. In that case $\mathcal{S}$ is called a \emph{set of compact generators of} \index{set! of compact generators}$\mathcal{D}$.   }
\end{definition}

\begin{definition}\rm{
Let $(\mathcal{D},?[1])$ be a triangulated category. Given a sequence 
$$\xymatrix{X_0 \ar[r]^{f_1} & X_{1} \ar[r]^{f_2} &  \cdots \ar[r]^{f_n}& X_n \ar[r]^{f_{n+1}} & \cdots }$$ of morphism in $\mathcal{D}$, we will call \emph{Milnor colimit}\index{Milnor colimit} of the sequence, denoted $Mcolim X_n$, to the object given, up to non-canonical isomorphism, by the triangle
$$\xymatrix{\overset{\infty}{\underset{i=0}{\coprod}}X_i \ar[rr]^{1-shift} & & \overset{\infty}{\underset{i=0}{\coprod}}X_i \ar[rr] &&McolimX_n \ar[rr]&&(\overset{\infty}{\underset{i=0}{\coprod}}X_i)[1]  }$$
where the shift map $\xymatrix{\overset{\infty}{\underset{i=0}{\coprod}}X_i \ar[r]^{shift}&\overset{\infty}{\underset{i=0}{\coprod}}X_i }$ is represented by the infinite matrix of morphism
$$\begin{pmatrix} 0 & 0 & 0 & 0 & \dots\\ f_1 & 0 & 0 & 0 & \dots \\ 0 & f_2 & 0 & 0 & \dots \\ 0 & 0 & f_3 & 0 & \dots & \\ \vdots & \vdots & \vdots & \vdots & 
\end{pmatrix}$$}
\end{definition}

\section{Localization of categories}
In this section, we introduce of notion the \emph{localization of categories} that, roughly speaking, consists of inverting some fixed class of morphisms in a category. We do this, in order to introduce the derived category of an abelian category together with the quotient category.


\begin{definition}\rm{
Let $\mathcal{C}$ be a category, and $\Sigma$ a class of morphism in $\mathcal{C}$. The \emph{localization of $\mathcal{C}$ with respect to $\Sigma$} is a pair $(\mathcal{D},q)$, where $\mathcal{D}$ is a category and $\xymatrix{q:\mathcal{C} \ar[r] &  \mathcal{D}}$ is a functor such that the following assertions hold.

\begin{enumerate}
\item[1)] $q$ takes morphisms in $\Sigma$ onto isomorphisms;

\item[2)] if $\xymatrix{F:\mathcal{C} \ar[r] & \mathcal{C^{'}}}$ is a functor taking the morphisms in $\Sigma$ onto isomorphisms, then there exists a unique $\xymatrix{\overline{F}:\mathcal{D} \ar[r] & \mathcal{C}^{'}}$ making commutative the following diagram 
$$\xymatrix{\mathcal{C} \ar[r]^{q} \ar[d]_{F \vspace{1.2 cm}}& \mathcal{D} \ar@{-->}[dl]^{\overline{F}}\\ \mathcal{C}^{'}}$$  
\end{enumerate} 
the pair $(\mathcal{D},q)$ is obviously unique up to equivalence. The functor $q$ is called the \emph{localization functor}\index{functor! localization}, and $\mathcal{D}$ is the \emph{localized category}\index{category! localized}.}
\end{definition}

\begin{remark}\rm{
If such category $\mathcal{D}$ exists it is called the localization of $\mathcal{C}$ with respect to $\Sigma$ and it is usually denoted by $\mathcal{C}[\Sigma^{-1}]$. Gabriel and Zisman (see \cite{GZ}) proved that, if $\mathcal{D}$ exists, such a category only has a possible way of definition up to equivalence:

\begin{enumerate}
\item[-] $Ob(\mathcal{C}[\Sigma^{-1}])=Ob(\mathcal{C})$.

\item[-] If $X,Y\in Ob(\mathcal{C}[\Sigma^{-1}])$ then the morphisms $X \flecha Y$ in $\mathcal{C}[\Sigma^{-1}]$ are finite sequences

$$\xymatrix{X:=X_0 \ar[r] & X_{1} \ar[r] \ar[l] & \cdots \ar[r] \ar[l] & X_{n-1} \ar[r] \ar[l] & X_n=Y \ar[l] } $$ 

where each arrow $\xymatrix{X_{i-1} \ar[r] & X_{i} \ar[l]}$ is either a morphism $X_{i-1} \flecha X_i$ in $\mathcal{C}$ or a morphism $\xymatrix{X_{i-1} & X_{i} \ar[l]_{\hspace{0.4 cm}s} }$ in the class $\Sigma$.

\item[-] The composition of morphisms is the concatenation of sequences.
\end{enumerate}
}
\end{remark}

\vspace{0.3 cm}

In order to verify that the category $\mathcal{C}[\Sigma^{-1}]$ is well-defined (i.e. it is actually a category) it is necessary to prove that $\Hom_{\mathcal{C}[\Sigma^{-1}]}(X,Y)$ is a set, for any $X,Y\in Ob(\mathcal{C}[\Sigma^{-1}])$. But this is a very hard task because of the definition of the morphisms above. So, it is natural to impose specific conditions on the class $\Sigma$ in order to obtain more tractable morphisms in $\mathcal{C}[\Sigma^{-1}]$.

\begin{definition}\rm{
Let $\mathcal{C}$ be a category and $\Sigma$ is a class of morphisms in $\mathcal{C}$. We will say that $\Sigma$ \emph{admits a calculus of right fractions}\index{right fractions} if the following assertions holds:
\begin{enumerate}
\item[{\bf F1)}] For each $X \in \mathcal{C}$, $1_X\in \Sigma$ and $\Sigma$ is closed under compositions;

\item[{\bf F2)}] Each diagram $\xymatrix{X^{'} \ar[r]^{f^{'}} & Y^{'} & \ar[l]_{\hspace{0.3cm}s^{'}} Y}$ with $s^{'}\in \Sigma$ can be completed to a commutative square
$$\xymatrix{X \ar[r]^{f} \ar[d]^{s} & Y \ar[d]^{s^{'}} \\ X^{'} \ar[r]^{f^{'}} & Y^{'}}$$
with $s\in \Sigma$;

\item[{\bf F3)}] If $f$ and $g$ are morphisms and there exists $t\in \Sigma$ such that $t \circ f = t \circ g$, then there exists $s\in \Sigma$ such that $f \circ s = g \circ s$. 
\end{enumerate}

Dually we define the notion of \emph{admitting a calculus of left fractions}\index{left fractions}, by imposing conditions {\bf F1), F2)$^{op}$,} {\bf F3)$^{op}$}. A \emph{multiplicative system}\index{multiplicative system} is a class of morphisms such that it admits a calculus of left and right fractions, i.e., it satisfies axioms {\bf S1):=F1), S2):= F2)+F2)$^{op}$, S3:=F3)+F3)$^{op}$}.}
\end{definition}

\vspace{0.3 cm}

In the sequel $\mathcal{C}$ will denote a category and $\Sigma$ a class of morphisms of $\mathcal{C}$ admitting a calculus of right fractions.

\begin{definition}\rm{
A \emph{right roof}\index{right roof} of $\mathcal{C}$ with respect to $\Sigma$, starting in $X$ and ending in $Y$, is a pair $(s,f)$ of the form 
$$\xymatrix{&Y^{'} \ar[dl]_{s} \ar[dr]^{f}&\\ X & & Y}$$
where $s\in \Sigma$ and $f$ is an arbitrary morphism of $\mathcal{C}$. The class of all the right roofs starting in $X$ and ending in $Y$ is denoted by $\alpha(X,Y)$. Let $R(X,Y)$ be a binary relation on $\alpha(X,Y)$ defined in the following way: $((s,f),(t,g)) \in R(X,Y)$ if and only if there exists a commutative diagram of the form
$$\xymatrix{&Y^{'} \ar[dl]_{s} \ar[dr]^{f}&\\ X &Y^{'''} \ar[l] \ar[r] \ar[u]_{a} \ar[d]^(0.45){b} & Y \\&Y^{''} \ar[ul]^{t} \ar[ur]_{g}&}$$
with $as\in \Sigma$. Dually, we define a \emph{left roof.}\index{left roof}}
\end{definition}

\begin{lemma}
For each pair $(X,Y)$ of objects of $\mathcal{C}$, $R(X,Y)$ is an equivalence relation on $\alpha(X,Y)$.
\end{lemma}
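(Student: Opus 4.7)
My plan is to verify the three axioms of equivalence relation in turn, leaving most of the work for transitivity, which is the essential content.

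For \emph{reflexivity}, given a right roof $(s,f)$ I would witness $((s,f),(s,f))\in R(X,Y)$ by taking $Y'''=Y'$ with $a=b=1_{Y'}$; the condition $a\circ s = s \in \Sigma$ is automatic, and all squares commute trivially. For \emph{symmetry}, if $((s,f),(t,g))\in R(X,Y)$ via an object $Y'''$ and morphisms $a:Y'''\to Y'$, $b:Y'''\to Y''$ with $s\circ a=t\circ b\in\Sigma$, $f\circ a=g\circ b$, then the same data witnesses $((t,g),(s,f))\in R(X,Y)$, since $b\circ t= a\circ s\in\Sigma$ by hypothesis.

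The core of the argument is \emph{transitivity}. Assume $(s,f)\sim(t,g)$ via $Y'''$ with $a:Y'''\to Y'$, $b:Y'''\to Y''$ satisfying $s\circ a=t\circ b\in\Sigma$ and $f\circ a=g\circ b$, and assume $(t,g)\sim (u,h)$ via $Z$ with $c:Z\to Y''$, $d:Z\to Y^{iv}$ satisfying $t\circ c=u\circ d\in\Sigma$ and $g\circ c=h\circ d$. The main obstacle is to build a common refinement $W'$ mapping to $Y'$ and to $Y^{iv}$ so as to produce a witness for $(s,f)\sim(u,h)$. I would do this in two steps. First, applying axiom \textbf{F2)} to the pair of morphisms $s\circ a:Y'''\to X$ and $t\circ c:Z\to X$ (using that $t\circ c\in\Sigma$) yields an object $W$ with $p:W\to Y'''$ in $\Sigma$ and $q:W\to Z$ such that $(s\circ a)\circ p=(t\circ c)\circ q$. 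This equation rewrites as $t\circ(b\circ p)=t\circ(c\circ q)$, so by axiom \textbf{F3)} applied to $t\in\Sigma$, there exists $r:W'\to W$ in $\Sigma$ with $b\circ p\circ r=c\circ q\circ r$.

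Finally I would set $\alpha:=a\circ p\circ r:W'\to Y'$ and $\beta:=d\circ q\circ r:W'\to Y^{iv}$, and verify the three requirements of $((s,f),(u,h))\in R(X,Y)$: the equality $s\circ\alpha=u\circ\beta$ follows from $s\circ a=t\circ b$, $b\circ p\circ r=c\circ q\circ r$, and $t\circ c=u\circ d$; the equality $f\circ\alpha=h\circ\beta$ follows analogously from $f\circ a=g\circ b$ and $g\circ c=h\circ d$; and $s\circ\alpha=(s\circ a)\circ(p\circ r)$ lies in $\Sigma$ by axiom \textbf{F1)} since $s\circ a$, $p$ and $r$ are all in $\Sigma$. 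This completes transitivity and hence the proof.
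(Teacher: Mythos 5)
Your proof is correct and is exactly the standard calculus-of-fractions argument (reflexivity and symmetry from the definition, transitivity via \textbf{F2)} to build a common refinement and \textbf{F3)} to equalize the two maps into $Y''$, then \textbf{F1)} for the $\Sigma$-condition), which is the argument of Gabriel--Zisman that the paper cites in place of a proof. Only a cosmetic remark: in the symmetry step write $t\circ b=s\circ a\in\Sigma$ rather than $b\circ t=a\circ s$, so the composites typecheck in the usual notation.
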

\begin{proof}
See \cite[Section I.2]{GZ}.
\end{proof}

\begin{proposition}
Let $\mathcal{C}$ be a category, and $\Sigma$ be a multiplicative system of morphisms in $\mathcal{C}$. The category $\mathcal{C}[\Sigma^{-1}]$ is equivalent to a category defined as follows:
\begin{enumerate}
\item[1)] Its objects are those of $\mathcal{C}$;

\item[2)] The morphisms $\xymatrix{X \ar[r] & Y}$ are the elements of the quotient set 
$\alpha(X,Y)/R(X,Y)$. The equivalence class of a right roof $(s,f)$ is denoted by $fs^{-1}$;

\item[3)] Let $fs^{-1}\in \alpha(X,Y)/R(X,Y)$ and $gt^{-1}\in \alpha(Y,Z)/R(Y,Z)$ two morphisms. Their composition is given by $(g \circ f^{'})(s \circ t^{'})^{-1}\in \alpha(X,Z)/R(X,Z)$ where $f^{'}$ and $t^{'}$ fit into the following commutative diagram which exists by {\bf F2)}
$$\xymatrix{&&Z^{''} \ar@{-->}[dl]_{t^{'}} \ar@{-->}[dr]^{f^{'}} & & \\ & Y^{'} \ar[dl]_{s} \ar[dr]^{f} & & Z^{'} \ar[dl]_{t} \ar[dr]^{g} \\ X && Y && Z}$$
\end{enumerate}
The composition does not depend on the roofs representing the fractions. 
\end{proposition}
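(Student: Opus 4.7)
The plan is to produce the proof in two big steps. First I will verify that the prescription in items (1)--(3) really defines a category $\mathcal{D}$; second I will verify that $\mathcal{D}$, together with the obvious functor $q\colon \mathcal{C}\to\mathcal{D}$ sending a morphism $f\colon X\to Y$ to the class of the roof $(1_X,f)$, satisfies the universal property that characterizes $\mathcal{C}[\Sigma^{-1}]$ up to equivalence. Uniqueness of the localization up to equivalence then finishes the statement.

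First I would check well-definedness of composition. Given right roofs $(s,f)$ from $X$ to $Y$ and $(t,g)$ from $Y$ to $Z$, the axiom \textbf{F2)} produces a commutative square whose upper leg $t'$ belongs to $\Sigma$; this defines the candidate composite $(s\circ t',\,g\circ f')$. I must show two independence statements: (a) if we replace the chosen square by another one completing the same outer data, the resulting roof represents the same element of $\alpha(X,Z)/R(X,Z)$; (b) if we replace $(s,f)$ by a roof $R(X,Y)$-equivalent to it, and similarly $(t,g)$, the class of the composite is unchanged. Both (a) and (b) are obtained by successively fitting diagrams together via \textbf{F2)} to produce a common refinement, and then invoking \textbf{F3)} to collapse two parallel morphisms that agree after composition with an element of $\Sigma$. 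The identity at $X$ is the class of $(1_X,1_X)$; associativity follows from the very same refinement technique, choosing a common ``triple roof'' dominating both bracketings. This combinatorial diagram-chasing is where the bulk of the technical work lies and is the main obstacle.

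Next I would define $q\colon \mathcal{C}\to\mathcal{D}$ on objects as the identity and on a morphism $f$ as the class of $(1_X,f)$; functoriality is a direct consequence of the composition rule, and for any $s\in\Sigma$ the class of $(s,1)$ is a two-sided inverse of $q(s)$, so $q$ inverts $\Sigma$. To establish the universal property, suppose $F\colon \mathcal{C}\to \mathcal{C}'$ sends each element of $\Sigma$ to an isomorphism. Define $\overline{F}\colon \mathcal{D}\to \mathcal{C}'$ on objects as $F$ and on a roof $(s,f)$ by
\[
\overline{F}(fs^{-1}) \;:=\; F(f)\circ F(s)^{-1}.
\]
I must check that this is constant on $R(X,Y)$-equivalence classes: if $(s,f)$ and $(t,g)$ are related by a diagram with morphisms $a,b$ and $as\in\Sigma$, then applying $F$ and using that $F(s)$, $F(t)$, $F(as)$ are isomorphisms yields $F(f)F(s)^{-1}=F(g)F(t)^{-1}$. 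Functoriality of $\overline{F}$ follows from the composition rule of $\mathcal{D}$ together with the fact that $F$ turns the \textbf{F2)}-completion square into a commutative square of invertible (for the $\Sigma$-legs) morphisms in $\mathcal{C}'$. The relations $\overline{F}\circ q=F$ and $\overline{F}(q(s))^{-1}=\overline{F}((s,1))$ force $\overline{F}$ to coincide with any candidate on generating arrows $q(f)$ and $q(s)^{-1}$, and then the composition rule forces uniqueness on every fraction $fs^{-1}$.

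With well-definedness of $\mathcal{D}$ and the universal property in hand, the uniqueness-up-to-equivalence clause for objects characterized by a universal property gives the stated equivalence $\mathcal{D}\simeq \mathcal{C}[\Sigma^{-1}]$. I expect the hardest part to be the independence of composition from the choice of the \textbf{F2)}-square and from representatives: it is a bookkeeping exercise with several diagrams stacked on top of each other, where \textbf{F3)} must be invoked to turn equalities ``after multiplication by an element of $\Sigma$'' into genuine equalities of morphisms inside $\mathcal{D}$.
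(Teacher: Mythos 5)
Your outline is sound and is essentially the classical Gabriel--Zisman argument that the paper itself relies on without reproducing it (the proposition is recalled from \cite{GZ}, with no proof given in the text): construct the roof category, check that composition is independent of the \textbf{F2)}-completion and of representatives using \textbf{F2)} plus \textbf{F3)}, define $q(f)$ as the class of $(1_X,f)$, and deduce the equivalence with $\mathcal{C}[\Sigma^{-1}]$ from the universal property and uniqueness of localization. The deferred diagram chases (independence of the chosen square, compatibility with the relation $R(X,Y)$, associativity) are exactly where the technical content lies, but your identification of where \textbf{F2)} and \textbf{F3)} must be invoked, and your verification that $\overline{F}(fs^{-1})=F(f)\circ F(s)^{-1}$ is well defined and uniquely determined, are correct, so the plan carries through without modification.
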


\begin{remark}\rm{
Gabriel and Zisman proved that if $\Sigma$ is a multiplicative system in $\mathcal{C}$, then the category constructed as in the previous proposition but taking left roofs, is also equivalent to $\mathcal{C}[\Sigma^{-1}].$}
\end{remark}

\vspace{0.3 cm}

The classic example of the localization with respect to a class of morphisms is the following.

\begin{example}\rm{
Let $R$ be a commutative ring. We can see $R$ as a preadditive category with only one object. If $\Sigma$ is a multiplicative subset of $R$, then $\Sigma$ is a multiplicative system and the category $R[\Sigma^{-1}]$ is the classical ring of fractions of $R$ with respect to $\Sigma$.}
\end{example}

\subsection{Quotients of Grothendieck categories}\label{sec. quotients category}
In this subsection, we introduce the definition of the quotient category of a category of Grothendieck and recall some of its properties, where possibly the most important result is the Gabriel-Popescu's theorem.

\begin{proposition}
Let $(\T,\F)$ be a hereditary torsion pair in the Grothendieck category $\G$. The class of morphisms $s$ in $\G$ such that $\Ker(s),\Coker(s) \in \T$ form a multiplicative systems $\Sigma_\T$ in $\G$ such that $\G[\Sigma_{\T}^{-1}]$ is a Grothendieck category and the localization functor $q:\G \flecha \G[\Sigma^{-1}_{\T}]$ is exact and has a fully faithful right adjoint $j: \G[\Sigma^{-1}_{\T}]\flecha \G$.
\end{proposition}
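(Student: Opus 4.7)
The plan is to proceed in three stages: first verify the multiplicative system axioms for $\Sigma_\T$, then identify $\G[\Sigma_\T^{-1}]$ with the full subcategory of $\T$-closed objects, and finally derive the Grothendieck, exactness, and adjunction properties from this identification.

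For the multiplicative system axioms, S1 is immediate for identities, and for composition one applies the Snake Lemma to two composable morphisms $s,t \in \Sigma_\T$ to obtain a six-term exact sequence linking $\Ker(s), \Ker(t), \Ker(ts), \Coker(s), \Coker(t), \Coker(ts)$. Since $(\T,\F)$ is hereditary, $\T$ is closed under subobjects, quotients and extensions, so $ts \in \Sigma_\T$. For S2, given a cospan $X' \flecha Y' \xleftarrow{s'} Y$ with $s' \in \Sigma_\T$, one forms the pullback $X$; then the induced morphism $X \flecha X'$ has kernel isomorphic to $\Ker(s')$ and cokernel a subobject of $\Coker(s')$, both of which live in $\T$ thanks to hereditariness. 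The dual case uses pushouts. Axiom S3 follows by a direct calculation with kernels of differences, again invoking closure properties of $\T$.

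To identify the quotient, define $\G_\T := \{X \in \G : \Hom_\G(T,X)=0=\Ext^1_\G(T,X) \text{ for all } T \in \T\}$, the subcategory of $\T$-closed objects. For each $X \in \G$, the hereditary pair yields a torsion subobject $t(X)$, and $X/t(X) \in \F$ admits a smallest essential extension $a(X) \in \G_\T$ inside its injective envelope, obtained by intersecting those subobjects $Y \supseteq X/t(X)$ with $Y/(X/t(X)) \in \T$. The assignment $X \rightsquigarrow a(X)$ extends to a functor $a : \G \flecha \G_\T$ that is left adjoint to the inclusion $j : \G_\T \monic \G$. By construction $a$ inverts precisely the morphisms of $\Sigma_\T$, so the universal property of localization gives an equivalence $\G_\T \simeq \G[\Sigma_\T^{-1}]$ under which the canonical functor $q$ corresponds to $a$, and $j$ is fully faithful since $a \circ j \cong 1_{\G_\T}$.

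For the remaining properties, exactness of $q = a$ follows from the fact that $a$ preserves colimits (being a left adjoint) together with the hereditary property, which ensures that kernels computed in $\G$ map to kernels in $\G_\T$ after applying $a$; more explicitly, one checks that $\T$ is a Serre subclass and uses the standard argument that localizing at a Serre subclass yields an exact quotient functor. To see that $\G_\T$ is Grothendieck, note that if $U$ generates $\G$ then $q(U)$ generates $\G_\T$: any nonzero $f : M \flecha N$ in $\G_\T$ is nonzero in $\G$ because $j$ is fully faithful, so is detected by some morphism from $U$ in $\G$, hence by one from $q(U)$ in $\G_\T$ via the adjunction. The AB5 condition transfers from $\G$ because direct limits in $\G_\T$ are computed by applying $a$ to direct limits in $\G$, and $a$ is exact. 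The main obstacle, in my view, is the careful construction of the reflector $a$: establishing that $X/t(X)$ admits a functorial smallest $\T$-closed essential extension requires combining hereditariness with the existence of injective envelopes in $\G$, and some bookkeeping to verify that the assignment is functorial and inverts $\Sigma_\T$; once $a$ and the adjunction $a \dashv j$ are in hand, all the asserted properties follow by standard manipulations with adjoints and localizations.
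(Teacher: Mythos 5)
Your argument follows the classical Gabriel construction, which is exactly what the paper does: its ``proof'' is a citation to Gabriel's \emph{Des cat\'egories ab\'eliennes}, and your three stages (verify the Ore/multiplicative-system axioms using hereditariness, identify the localization with the Giraud subcategory of $\T$-closed objects via a reflector, then transfer generator and AB5 along the adjunction) reproduce that route faithfully. The verifications of S1--S3 and the final transfer of the Grothendieck axioms are all sound.

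There is, however, one concrete error in the step you yourself single out as the crux. You describe $a(X)$ as ``obtained by intersecting those subobjects $Y \supseteq X/t(X)$ with $Y/(X/t(X)) \in \T$'' inside the injective envelope. Taken literally this intersection is $X/t(X)$ itself, since $F:=X/t(X)$ belongs to the family ($F/F = 0 \in \T$), and $F$ need not be $\T$-closed. The correct recipe is dual: take the \emph{largest} such $Y$ inside $E(F)$, namely the preimage of $t\bigl(E(F)/F\bigr)$ under $E(F) \flecha E(F)/F$ (equivalently, the sum of all such $Y$). One then checks, using that $E(F) \in \F$ because $\F$ is closed under injective envelopes for hereditary pairs, that this $Y$ satisfies $\Hom_{\G}(T,Y) = 0 = \Ext^{1}_{\G}(T,Y)$ for all $T \in \T$, and that it is the smallest $\T$-closed subobject of $E(F)$ containing $F$. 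With that correction the construction of the reflector, its functoriality, and the rest of your argument go through as you describe.
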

\begin{proof}
See \cite{G}.
\end{proof}

\begin{remark}\rm{
 In the situation above, the following assertions hold.
\begin{enumerate}
\item[1)] the category $\G[\Sigma^{-1}_{\T}]$ is denoted $\frac{\G}{\T}$ and is called the \emph{quotient category} of $\G$ by $\T$.

\item[2)] the functor $j:\frac{\G}{\T} \flecha \G$ is usually called the \emph{section functor} and its essential image consists of the objects $Y$ such that $\Hom_{\G}(T,Y)=0=\Ext^{1}_{\G}(T,Y)$, for all $T\in \T$. We will denote by $\G_\T$ this full subcategory and will call it the \emph{Giraud subcategory}\index{subcategory! Giraud} associated to the hereditary torsion pair $(\T,\F)$.

\item[3)] Gabriel-Popescu's theorem (\cite[Theorem X.4.1]{S}) says that each Grothendieck category is equivalent to $\frac{R\Mode}{\T}$, for some ring $R$ and some hereditary torsion class $\T$ in $R$-Mod.

\item[4)] The injective objects of $\G_\T$ are precisely the torsionfree injective objects. Moreover, a sequence $\xymatrix{Y^{'} \ar[r]^{f} & Y \ar[r]^{g} & Y^{''}}$ in $\G_\T$ is exact in this category if, and only if, the object $\Ker(g)/\Imagen(f)$ is in $\T$.  

\item[5)] The counit of the adjoint pair $(q,j)$ is then an isomorphism and we will denote by $\mu:1_{\G} \flecha j \circ q$ its unit. Note that $\Ker(\mu_M)$ and $\Coker(\mu_M)$ are in $\T$, for each object $M$.
\end{enumerate}}
\end{remark}

\vspace{0.3 cm}

If $\T$ is a hereditary torsion class in $R$-Mod, then the set of left ideals $\mathbf{a}$ of $R$ such that $R/\mathbf{a}$ is in $\mathcal{T}$ form a \emph{Gabriel topology} \index{Gabriel topology} $\mathbf{F}=\mathbf{F}_\mathcal{T}$ (see \cite[Chapter VI]{S} for the definition), which is then a downward directed set. The $R$-module $R_\mathbf{F}:=
\varinjlim_{\mathbf{a}\in\mathbf{F}}\Hom_R(\mathbf{a},\frac{R}{t(R)})$ is then an object of $\mathcal{G}_\mathcal{T}$ and, moreover, admits a unique ring structure such that the composition of canonical maps

$$R\flecha \frac{R}{t(R)} \iso \Hom_R(R,\frac{R}{t(R)}) \flecha  R_\mathbf{F}$$
is a ring homomorphism. We call $R_\mathbf{F}$ the \emph{rings of quotients of $R$ with respect to the hereditary torsion pair $(\mathcal{T},\mathcal{T}^\perp )$}.  Viewed as a morphism of $R$-modules this last ring  homomorphism gets identified with the unit map $\mu_R:R \flecha (j\circ q)(R)$ of the adjunction $(R\Mode \xymatrix{\ar[r]^{q} & }\frac{R\Mode}{\T}, \frac{R\Mode}{\T} \xymatrix{ \ar[r]^{j} & } R\Mode)$. We will simply put $\mu =\mu_R$ and it will become clear from the context when we refer to the ring homomorphism or the unit natural transformation. 
It turns out that each module in $\mathcal{G}_\mathcal{T}$ has a canonical structure of $R_\mathbf{F}$-module. Furthermore, the section functor factors in the form

$$\xymatrix{j:\frac{R\Mode}{\mathcal{ T}} \ar[r]^{\hspace{0.01 cm}j'} & R_\mathbf{F}\Mode \ar[r]^{\hspace{0.15 cm}\mu_*} & R\Mode} $$
where $j'$ is fully faithful and $\mu_*$ is the restriction of scalars functor. Moreover, the functor $j'$ has an exact left adjoint $q':R_\mathbf{F}\Mode \flecha \frac{R\Mode}{\mathcal{T}}$ whose kernel is $\mu_*^{-1}(\mathcal{T})$, i.e., the class of $R_\mathbf{F}$-modules $\tilde{T}$ such that $\mu_*(\tilde{T})\in\mathcal{T}$. This latter class is clearly a hereditary torsion class in $R_\mathbf{F}\Mode$, and then $\mathcal{G}_\mathcal{T}$ may
be also viewed as the  Giraud subcategory of $R_\mathbf{F}\Mode$ associated to $(\mu_*^{-1}(\mathcal{T}),\mu_*^{-1}(\mathcal{T})^\perp )$. Note that then $\mu_*$ induces an equivalence of categories $\frac{R_\mathbf{F}\Mode}{\mu_*^{-1}(\mathcal{T})} \iso \frac{R\Mode}{\mathcal{T}}$. Although the terminology is slightly changed, we refer the reader to \cite[Chapters VI, IX, X]{S} for details on the concepts and their properties introduced and mentioned in this subsection. \\





\subsection{Localization of triangulated categories}
In this subsection we will see that, with the appropriate hypotheses on the class of morphisms, the localization of a triangulated category produces another triangulated category.

\begin{definition}\rm{
Let $(\mathcal{D},?[1])$ be a triangulated category and $\Sigma$ a multiplicative system in $\mathcal{D}$, we say that $\Sigma$ is \emph{compatible with the triangulation} if it satisfies the axioms:
\begin{enumerate}
\item[{\bf S4)}] $\Sigma$ is closed under taking powers of the automorphism $?[1]$, i.e, if $s\in \Sigma$, then $s[n]\in \Sigma$ for all $n\in $ \Z;
\item [{\bf S5)}] For every pair of triangles $(u,v,w),(u^{'},v^{'},w^{'})$ and every commutative square 
$$\xymatrix{X \ar[r]^{u} \ar[d]^(0.45){s} & Y \ar[d]^(0.45){s^{'}} \\ X^{'} \ar[r]^{u^{'}} & Y^{'}}$$ 
with $s,s^{'}\in \Sigma$, there exists $s^{''}\in \Sigma$ such that $(s,s^{'},s^{''})$ is a morphism of triangles.
\end{enumerate}}
\end{definition}

\begin{definition}\rm{
Let $(\mathcal{D},?[1])$ be a triangulated category and $\mathcal{N}$ be a subcategory of $\mathcal{D}$. We say that $\mathcal{N}$ is a \emph{full triangulated subcategory}\index{subcategory! triangulated} of $\mathcal{D}$ if the following conditions are satisfied:
\begin{enumerate}
\item[1)] $\mathcal{N}$ is a full subcategory of $\mathcal{D}$ containing the zero object;
\item[2)] $\mathcal{N}$ is closed under $?[1]$ and $?[-1]$;
\item[3)] $\mathcal{N}$ is closed under extensions, i.e., if the end terms of a triangle of $\mathcal{D}$ belong to $\mathcal{N}$, then so does the middle term. 
\end{enumerate}} 
\end{definition}

\begin{example}\rm{
If $(\mathcal{D},?[1])$ is a triangulated category and $\mathcal{N}$ is a full triangulated subcategory of $\mathcal{D}$, then the class $\Sigma=Mor_{\mathcal{N}}$ given by the morphisms $\xymatrix{f:X \ar[r] & Y}$ appearing in a triangle of the form
$$\xymatrix{X \ar[r]^{f} & Y \ar[r] & N \ar[r] & X[1]}$$
with $N\in \mathcal{N}$, gives a multiplicative system in $\mathcal{D}$ compatible with the triangulation (see details in \cite{V})}.
\end{example}

\begin{theorem}
Let $(\mathcal{D},?[1])$ be a triangulated category and $\Sigma$ be a multiplicative system in $\mathcal{D}$ compatible with the triangulation. The following assertions hold:
\begin{enumerate}
\item[1)] There exists on $\mathcal{D}[\Sigma^{-1}]$ a unique structure of additive category making the localization functor $\xymatrix{q:\mathcal{D} \ar[r] & \mathcal{D}[\Sigma^{-1}]}$ additive;

\item[2)] There exists on $\mathcal{D}[\Sigma^{-1}]$ a unique structure of triangulated category making the localization functor $\xymatrix{q:\mathcal{D} \ar[r] & \mathcal{D}[\Sigma^{-1}]}$ a triangulated functor. The distinguished triangles in $\mathcal{D}[\Sigma^{-1}]$ are the $?[1]$-sequences isomorphic to the image by $q$ of distinguished triangles in $\mathcal{D}$;

\item[3)] Every triangulated functor $\xymatrix{F:\mathcal{D} \ar[r] & \mathcal{D}^{\hspace{0.1cm}'}}$ taking elements of $\Sigma$ to isomorphisms factors uniquely through the localization functor in the form
$$\xymatrix{\mathcal{T} \ar[r]^{q \hspace{0.3 cm}} \ar[d]_{F}&  \mathcal{D}[\Sigma^{-1}] \ar@{-->}[dl]^{\tilde{F}} \\ \mathcal{D}^{\hspace{0.1 cm}'} }$$
where $\tilde{F}$ is a triangulated functor. Similarly for cohomological functors.
\end{enumerate}
\end{theorem}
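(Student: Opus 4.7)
The plan is to build the additive structure on $\mathcal{D}[\Sigma^{-1}]$ first, then the triangulation, and finally deduce the universal property; throughout I would work with right roofs $X \xleftarrow{s} X' \xrightarrow{f} Y$, which legitimately represent all morphisms of $\mathcal{D}[\Sigma^{-1}]$ because $\Sigma$ admits a calculus of right fractions.

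For part 1, I would define the sum of two parallel morphisms $fs^{-1}, gt^{-1}: X \to Y$ by bringing them to a common denominator: apply F2 to the pair $(s,t)$ to get a commutative square with vertex $X'''$ such that the composite $u\in\Sigma$ into $X$ satisfies $u = s\sigma = t\tau$, and declare $fs^{-1}+gt^{-1} := (f\sigma + g\tau)u^{-1}$. The crux is well-definedness: independence of the particular common denominator uses F2 and F3, and routine diagrammatic checks show bilinearity of composition. The shifting functor $?[1]$ is extended by $(s,f) \mapsto (s[1],f[1])$, which is legitimate by S4, and one checks compatibility with the equivalence relation on roofs. Uniqueness of the additive structure is immediate from the universal property applied to the functor $q$ composed with whichever additive structure.

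For part 2, I would declare the distinguished triangles in $\mathcal{D}[\Sigma^{-1}]$ to be those $?[1]$-sequences isomorphic to $q(\Delta)$ for some triangle $\Delta$ of $\mathcal{D}$. Axiom TR1 follows because every morphism of $\mathcal{D}[\Sigma^{-1}]$ can be represented as a right roof $(s,f)$: one embeds $f$ into a triangle of $\mathcal{D}$ and composes with $s^{-1}$. TR2 is immediate since rotation is preserved by $q$. The hard part is TR3: given two distinguished triangles in $\mathcal{D}[\Sigma^{-1}]$, which one may assume to be images of actual triangles $\Delta, \Delta'$ of $\mathcal{D}$, and a commuting square between their initial morphisms represented by roofs, one must use F2 iteratively and then S5 to replace $\Delta'$ by an isomorphic triangle $\Delta''$ such that the square is represented by honest morphisms of $\mathcal{D}$ between $\Delta$ and $\Delta''$; then one applies TR3 in $\mathcal{D}$ and transports back. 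The octahedral axiom TR4 is analogous but requires additionally a common-denominator argument for a pair of composable roofs $X \to Y \to Z$, replacing them by honest composable morphisms of $\mathcal{D}$ after replacing $Y$ and $Z$ by isomorphic objects via elements of $\Sigma$.

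For part 3, uniqueness of $\tilde{F}$ follows from the universal property of the localization at the level of plain categories, which was already recalled. That $\tilde{F}$ is automatically additive and triangulated follows from uniqueness of the additive/triangulated structure on $\mathcal{D}[\Sigma^{-1}]$ established in parts 1 and 2: given an additive (resp.\ triangulated) $F$, the transported structures through $\tilde{F} \circ q = F$ must coincide with the ones already constructed, forcing $\tilde{F}$ to preserve sums and send distinguished triangles to distinguished triangles. The cohomological version is identical, replacing ``triangulated functor'' by ``cohomological functor'' and observing that $q$ takes distinguished triangles to distinguished triangles. I expect the main obstacle to be the careful reduction in TR3 and TR4 from diagrams in $\mathcal{D}[\Sigma^{-1}]$ to diagrams in $\mathcal{D}$, which is where S5 does its real work; once this is handled, everything else is formal.
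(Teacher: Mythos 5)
The paper itself offers no proof of this theorem: it refers the reader to \cite[Th\'eor\`eme II.2.2.6]{V} and \cite[Chapter 2]{N}, and your sketch reconstructs precisely the classical Verdier construction found in those references — common denominators via the calculus of fractions for the additive structure, $\mathbf{S4}$ for compatibility of the shift with roofs, distinguished triangles defined as the $?[1]$-sequences isomorphic to images $q(\Delta)$, $\mathbf{S5}$ doing the real work in TR3 (and the common-denominator reduction in TR4), and the plain localization universal property for part 3. So your route is the same as the one the paper points to, and the outline is correct. Two points deserve more care than your sketch gives them. First, in part 2 the \emph{uniqueness} of the triangulated structure is not settled by verifying the axioms for your candidate class: you must also show that any class of triangles making $q$ triangulated consists exactly of the isomorphs of the $q(\Delta)$; this follows by embedding the first morphism of an arbitrary distinguished triangle into a candidate triangle and invoking TR3 together with the standard fact that a morphism of triangles which is an isomorphism on two of the three vertices is an isomorphism on the third. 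Second, in part 3 the appeal to ``uniqueness of the transported structure'' is imprecise, since $\tilde{F}$ does not a priori transport anything; the clean argument (and the one in the cited sources) is that $\tilde{F}$ is forced on roofs to be $\tilde{F}(fs^{-1})=F(f)\circ F(s)^{-1}$, from which additivity, compatibility with $?[1]$, and preservation of distinguished triangles (respectively exactness, in the cohomological case) are read off directly.
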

\begin{proof}
See \cite[Th\'eor\`eme II.2.2.6]{V} or \cite[Chapter 2]{N}.
\end{proof}

\section{The derived category of an abelian category}
At the beginnig, the use of Homology in order to deal with mathematical problems focused on the use of homology groups. In broad terms, the process consisted in assigning to each object (topological space, differentiable manifold, etc.) a complex of abelian groups, whose homology groups were used to study classification problems. In the process, the chain complexes were just tools to pass from the object to its homology groups. In the sixties, Grothendieck defended the idea that it was necessary to pay more attention to the complexes themselves. His idea was based on the possibility of finding, for instance, topological spaces which were very differents in their structure but whose homological groups were isomorphic. For him, this fact was a pathology which could be corrected by taking into account also the complexes. Grothendieck's idea, proposed to his student Verdier, was to associate to each ``good'' abelian category $\mathcal{A}$ a category whose objects were the objects of $\mathcal{C(A)}$ and in which the quasi-isomorphisms were invertible. Keeping in mind the parallel work of Gabriel and Zisman, the answer was clear; but the wealth of it was undiscovered.

\begin{definition}\rm{
Let $\mathcal{A}$ be an abelian category. The localization of $\mathcal{C(A)}$ with respect to the class $\Sigma$ of the quasi-isomorphisms is called \emph{derived category of $\mathcal{A}$}\index{category! derived} and it is denoted by $\D(\mathcal{A})$. If $R$ is a ring and $\mathcal{A}=R$-Mod, most times we write $\D(R)$ instead of $\D(R\text{-Mod})$. }
\end{definition}

Through this section $\mathcal{A}$ will be an abelian category and $\Sigma$ will be the class of the quasi-isomorphisms in $\mathcal{C(A)}$. \newline

The problem of the definition of $\D(\mathcal{A})$ is the same as in the general definition of localization of a category, the initial unmanageability of their morphisms. This is so because the quasi-isomorphisms do not admit a calculus of fractions in $\mathcal{C(A)}$. The first obstacle is subtly solved.

\begin{proposition}
The canonical functor $\xymatrix{q:\mathcal{C(A)} \ar[r] & \D(\mathcal{A})}$ vanishes on contractible complexes, and hence there is a unique $\xymatrix{q:\mathcal{H(A)} \ar[r] & \D(\mathcal{A})}$, still denoted by $q$, such that $q \circ p = q$ where $\xymatrix{p:\mathcal{C(A)} \ar[r] & \mathcal{H(A)}}$ is the canonical functor. Furthermore $\overline{\Sigma}:=p(\Sigma)$ is a multiplicative system in $\mathcal{H(A)}$ that is compatible with the triangulation such that $\mathcal{H(A)}[\overline{\Sigma}^{-1}]\cong \D(\mathcal{A}).$
\end{proposition}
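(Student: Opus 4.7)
My plan is to decompose the statement into three parts. \emph{First}, I would verify that $q$ sends every contractible complex to zero. If $C$ is contractible then $H^n(C)=0$ for all $n\in\mathbb{Z}$, so the unique chain map $0\to C$ is a quasi-isomorphism and therefore becomes an isomorphism in $\D(\mathcal{A})$; hence $q(C)\cong 0$. Recalling that $\mathcal{C(A)}$ endowed with the semi-split exact structure is a Frobenius category whose projective-injective objects are exactly the contractible complexes, and that $\mathcal{H(A)}$ is its stable category, I would then invoke the universal property of the stable category to obtain the unique additive factorization $\bar q:\mathcal{H(A)}\to\D(\mathcal{A})$ with $\bar q\circ p=q$.

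\emph{Second}, to check that $\overline{\Sigma}=p(\Sigma)$ is a multiplicative system in $\mathcal{H(A)}$ compatible with the triangulation, I would identify $\overline{\Sigma}$ with the class $\text{Mor}_{\mathcal{N}}$ attached to the full subcategory $\mathcal{N}\subset\mathcal{H(A)}$ of acyclic complexes. For any chain map $f:X\to Y$ there is a standard triangle $X\to Y\to\Cone(f)\to X[1]$ in $\mathcal{H(A)}$, and the associated long exact sequence of homology shows that $f\in\Sigma$ if and only if $\Cone(f)\in\mathcal{N}$; conversely, any morphism of $\mathcal{H(A)}$ whose cone lies in $\mathcal{N}$ is, up to isomorphism in $\mathcal{H(A)}$, the homotopy class of a quasi-isomorphism. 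The subcategory $\mathcal{N}$ is obviously stable under shifts and, via the long exact sequence of homology of a triangle, under extensions, so it is a full triangulated subcategory of $\mathcal{H(A)}$. By the example preceding the statement, $\text{Mor}_{\mathcal{N}}=\overline{\Sigma}$ is then a multiplicative system compatible with the triangulation.

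\emph{Third}, the equivalence $\mathcal{H(A)}[\overline{\Sigma}^{-1}]\cong\D(\mathcal{A})$ would follow from matching universal properties. Any additive $F:\mathcal{C(A)}\to\mathcal{D}'$ that inverts $\Sigma$ sends each contractible complex to zero (being quasi-isomorphic to $0$), and therefore factors uniquely as $F=\widetilde F\circ p$ with $\widetilde F:\mathcal{H(A)}\to\mathcal{D}'$, which furthermore inverts $\overline{\Sigma}$ by construction; conversely, any $G:\mathcal{H(A)}\to\mathcal{D}'$ inverting $\overline{\Sigma}$ yields $G\circ p$ inverting $\Sigma$. Hence $\D(\mathcal{A})$ and $\mathcal{H(A)}[\overline{\Sigma}^{-1}]$ corepresent the same universal problem among categories, and the comparison functor induced by the two universal properties is an equivalence (indeed, the identity on objects). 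The main obstacle I anticipate is in the second step: one must argue that every class in $\overline{\Sigma}$ admits a representative that is actually a quasi-isomorphism, not merely a morphism that becomes isomorphic to one in $\mathcal{H(A)}$; but this is absorbed in the general fact, recalled in the example preceding the proposition, that $\text{Mor}_{\mathcal{N}}$ satisfies the multiplicative system axioms for any full triangulated subcategory $\mathcal{N}$.
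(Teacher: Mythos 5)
The paper offers no proof of this proposition at all --- it simply cites Verdier --- so there is no in-house argument to measure yours against; what you have written is the standard proof, and its architecture (factor through the stable category, identify $\overline{\Sigma}$ with $\text{Mor}_{\mathcal{N}}$ for $\mathcal{N}$ the acyclic complexes, then match universal properties) is correct and is essentially what the cited source does. The identification $\overline{\Sigma}=\text{Mor}_{\mathcal{N}}$, including your remark that a morphism of $\mathcal{H(A)}$ whose cone is acyclic is automatically the class of a quasi-isomorphism, is handled properly.

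The one step you should tighten is the very first one. To factor $q$ through $p$ you invoke the universal property of the stable category, but as stated in the paper that universal property applies to \emph{additive} functors, and at this stage $\D(\mathcal{A})$ is not yet known to carry an additive structure --- that structure is only produced afterwards by the localization theorem for triangulated categories, so the argument risks circularity. What you actually need is the weaker assertion that $q(f)=q(g)$ whenever $f$ and $g$ are homotopic, and this is best proved directly: factor $f=H\circ i_0$ and $g=H\circ i_1$ through the cylinder $\text{Cyl}(X)$, note that the projection $r:\text{Cyl}(X)\flecha X$ is a quasi-isomorphism with $r\circ i_0=1_X=r\circ i_1$, so that $q(i_0)=q(r)^{-1}=q(i_1)$ and hence $q(f)=q(g)$. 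Since $p$ is full and the identity on objects, the factorization $q=\bar q\circ p$ then exists and is unique. The same remark repairs your third step, where you restrict to additive $F$: the cylinder argument shows that \emph{any} functor inverting $\Sigma$ identifies homotopic maps, which is what the comparison of the two universal properties (taken over all functors, not just additive ones) actually requires. With these adjustments the proof is complete.
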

\begin{proof}
See \cite{V}.
\end{proof}

\begin{corollary}
If $\Hom_{\D(\mathcal{A})}(X,Y)$ is a set for each pair of objects $X,Y$ of $\mathcal{A}$, then $\D(\mathcal{A})$ is a triangulated category and the quotient functor $\xymatrix{q:\mathcal{H(A)} \ar[r] & \D(\mathcal{A})}$ is a triangulated functor which satisfies the following universal property: \newline

If $\mathcal{D}$ is a triangulated category and $\xymatrix{F:\mathcal{H(A)} \ar[r] & \mathcal{D}}$ is a triangulated functor such that $F(\overline{s})$ is an isomorphism for each $s$ quasi-isomorphism, then there exist a unique triangulated functor $\xymatrix{\tilde{F}:\D(\mathcal{A}) \ar[r] & \mathcal{T}}$ such that $\tilde{F} \circ q = F.$ \end{corollary}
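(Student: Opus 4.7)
The plan is to reduce this corollary immediately to the general theorem on localization of triangulated categories (the one stated just before the section on derived categories), applied to the triangulated category $\mathcal{H(A)}$ together with its multiplicative system $\overline{\Sigma}=p(\Sigma)$ of (classes of) quasi-isomorphisms. Indeed, once we know that $\D(\mathcal{A})\cong \mathcal{H(A)}[\overline{\Sigma}^{-1}]$, there is essentially nothing to prove beyond invoking that theorem.

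First, I would recall that $\mathcal{H(A)}$ already carries a canonical triangulated structure, since it is the stable category of the Frobenius category $\mathcal{C(A)}$ equipped with its semi-split exact structure. Next I would use the proposition immediately preceding the corollary, which states that $\overline{\Sigma}$ is a multiplicative system in $\mathcal{H(A)}$ compatible with the triangulation and that $\mathcal{H(A)}[\overline{\Sigma}^{-1}]\cong \D(\mathcal{A})$. The hypothesis that $\Hom_{\D(\mathcal{A})}(X,Y)$ is a set for each pair $X,Y$ is precisely what is needed to guarantee that this localization is a genuine category in the sense of Gabriel--Zisman, so that no set-theoretic pathology obstructs the construction.

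With these ingredients in place, I would directly apply items (1), (2) and (3) of the theorem on localization of triangulated categories. Item (1) endows $\D(\mathcal{A})$ with a unique additive structure making $q\colon \mathcal{H(A)}\flecha \D(\mathcal{A})$ additive; item (2) equips $\D(\mathcal{A})$ with a unique triangulated structure making $q$ a triangulated functor, whose distinguished triangles are the $?[1]$-sequences isomorphic to images under $q$ of triangles of $\mathcal{H(A)}$; finally, item (3) gives the desired universal property: for every triangulated functor $F\colon \mathcal{H(A)} \flecha \mathcal{D}$ sending each class $\overline{s}$ of a quasi-isomorphism to an isomorphism, there is a unique triangulated functor $\tilde{F}\colon \D(\mathcal{A}) \flecha \mathcal{D}$ with $\tilde{F}\circ q=F$.

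The only genuinely delicate point is the set-theoretic one, which is handled by assuming it as a hypothesis rather than by proving it; the rest is a straightforward transfer of the general localization result. In particular, no further calculus of fractions arguments or direct verifications of the axioms $\textbf{TR1}$--$\textbf{TR4}$ are needed at this stage, since they were built into the general theorem applied to the pair $(\mathcal{H(A)},\overline{\Sigma})$.
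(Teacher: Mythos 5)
Your proposal is correct and is precisely the argument the paper intends: the corollary is a direct application of the theorem on localization of triangulated categories to $(\mathcal{H(A)},\overline{\Sigma})$, combined with the preceding proposition identifying $\D(\mathcal{A})$ with $\mathcal{H(A)}[\overline{\Sigma}^{-1}]$ and the set-theoretic hypothesis ensuring the localization is a genuine category. Nothing further is needed.
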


\begin{remark}\rm{
Note that the functor ``n-th'' object of homology\index{homology} $\xymatrix{H^{n}:\mathcal{H(A)} \ar[r] & \mathcal{A}}$ factors uniquely through the quotient functor $\xymatrix{q:\mathcal{H(A)} \ar[r] & \D(\mathcal{A})}$ by a cohomological functor which will be still denoted by $H^{n}$:

$$\xymatrix{\mathcal{H(A)} \ar[r]^{q} \ar[d]_{H^{n}} & \D(\mathcal{A}) \ar@{-->}[dl]^{H^{n}}\\ \mathcal{A}}$$}
\end{remark}

\begin{corollary}
The cohomological functor $\xymatrix{H^{0}:\mathcal{D(A)} \ar[r] & \mathcal{A}}$ satisfies the property of that $H^{0} \circ (?[n])=H^{n}$, for each $n$ integer. Therefore, given a triangle in $\D(\mathcal{A})$:
$$\xymatrix{X \ar[r] & Y \ar[r] & Z \ar[r] & X[1]},$$ 
we get the following exact sequence in $\mathcal{A}$
$$\xymatrix{\cdots \ar[r] & H^{n}(X) \ar[r] & H^{n}(Y) \ar[r] & H^{n}(Z) \ar[r] & H^{n+1}(X) \ar[r] & H^{n+1}(Y) \ar[r] & \cdots}$$
\end{corollary}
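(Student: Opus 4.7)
The plan is to reduce the result to two ingredients that are essentially built into the setup: the chain-level identity $H^0(X[n])=H^n(X)$ and the fact that $H^0:\D(\mathcal{A})\flecha \mathcal{A}$ is a cohomological functor (established in the preceding remark).

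First I would verify the functorial equality $H^0\circ (?[n])=H^n$ on $\D(\mathcal{A})$. At the level of $\mathcal{C(A)}$ the shift is given by $X[n]^k=X^{n+k}$ with differential $d^k_{X[n]}=(-1)^n d^{n+k}_X$. Since kernels and images are insensitive to the sign of a morphism, one computes directly
$$H^0(X[n])=\frac{\Ker(d^0_{X[n]})}{\Imagen(d^{-1}_{X[n]})}=\frac{\Ker(d^n_X)}{\Imagen(d^{n-1}_X)}=H^n(X),$$
and this identification is natural in $X$. Because both $H^0\circ (?[n])$ and $H^n$ vanish on contractible complexes and invert quasi-isomorphisms, the universal properties of $\mathcal{H(A)}$ and of $\D(\mathcal{A})$ force the identity to descend to the derived category.

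Second, I would apply the cohomological functor $H^0:\D(\mathcal{A})\flecha \mathcal{A}$ to iterated rotations of the given triangle. Axiom \textbf{TR2} says that for every integer $n$ the $?[1]$-sequence
$$\xymatrix{X[n] \ar[r] & Y[n] \ar[r] & Z[n] \ar[r] & X[n+1]}$$
is again a distinguished triangle (signs introduced by the rotation do not affect exactness, only the identification of connecting morphisms up to sign). Applying $H^0$ to three consecutive terms of this rotated sequence and using the identification of the previous paragraph, one obtains exact sequences $H^n(X)\flecha H^n(Y)\flecha H^n(Z)$, $H^n(Y)\flecha H^n(Z)\flecha H^{n+1}(X)$ and $H^n(Z)\flecha H^{n+1}(X)\flecha H^{n+1}(Y)$ for every $n\in\mathbb{Z}$. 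Splicing these together along the common objects yields the displayed long exact sequence.

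The only non-automatic point is the bookkeeping of the signs produced by the shift functor and by \textbf{TR2}; but since exactness in $\mathcal{A}$ only depends on kernels and images of differentials, these signs are harmless, and the argument reduces to the two ingredients above.
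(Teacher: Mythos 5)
Your argument is correct and is exactly the standard one the paper has in mind: the identity $H^{0}\circ(?[n])=H^{n}$ is checked at the chain level (where the sign $(-1)^{n}$ on the differential is harmless) and descends to $\D(\mathcal{A})$ by the universal properties, after which rotating the triangle via \textbf{TR2} and applying the cohomological functor $H^{0}$ yields the spliced long exact sequence. The paper states this corollary without proof as an immediate consequence of the preceding remark and of the analogous statement already made for $\mathcal{H(A)}$, so your write-up supplies precisely the intended details.
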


\vspace{0.3 cm}

It is still a difficult problem to know if the category $\D(\mathcal{A})$
 really exists, since the fact of that a class of morphisms in a category is a multiplicative system does not guarantee that the corresponding localization is a proper category (i.e. if $\Hom_{\D(\mathcal{A})}(X,Y)$ is a set, for all $X,Y\in \D(\mathcal{A})$). The idea is to try to replace each complex by a quasi-isomorphic one which works well passing from $\mathcal{H(A)}$ to $\D(\mathcal{A})$. Recall that a complex $X$ is said to be \emph{acyclic}\index{complex! acyclic} when $H^{n}(X)=0$ for all $n\in $ \Z.

\begin{proposition}
For each complex $I$ over $\mathcal{A}$ the following assertions are equivalent:
\begin{enumerate}
\item[1)] $\Hom_{\mathcal{H(A)}}(X,I)=0$, for each acyclic complex $X$;
\item[2)] For each quasi-isomorphism $\xymatrix{s:X \ar[r] & Y}$, the map 
$$\xymatrix{\Hom_{\mathcal{H(A)}}(s,I):\Hom_{\mathcal{H(A)}}(Y,I) \ar[r] & \Hom_{\mathcal{H(A)}}(X,I)}$$
is bijective;
\item[3)] The assignment 
$$\xymatrix{\Hom_{\mathcal{H(A)}}(Y,I) \ar[r] & \Hom_{D(\mathcal{A})}(Y,I) \hspace{0.2 cm} (f \leadsto q(f))} $$
is one-to-one for each complex $Y$ over $\mathcal{A}$
\end{enumerate}
\end{proposition}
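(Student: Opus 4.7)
The plan is to run the equivalences (1)$\Leftrightarrow$(2) and (2)$\Leftrightarrow$(3), both of which reduce to standard manipulations, the first via mapping cones in $\mathcal{H}(\mathcal{A})$ and the second via the calculus of right fractions for $\overline{\Sigma}$.

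For (1)$\Rightarrow$(2), I would use that a morphism $s:X\to Y$ in $\mathcal{H}(\mathcal{A})$ is a quasi-isomorphism if and only if its mapping cone $C(s)$ is acyclic, and I would fit $s$ into the standard triangle $X\xrightarrow{s}Y\to C(s)\to X[1]$ in $\mathcal{H}(\mathcal{A})$. Applying the cohomological functor $\Hom_{\mathcal{H}(\mathcal{A})}(-,I)$ yields a long exact sequence. Since $C(s)$ and its shift $C(s)[-1]$ are both acyclic, (1) makes the two terms flanking $\Hom_{\mathcal{H}(\mathcal{A})}(s,I)$ vanish, so this map is bijective. For (2)$\Rightarrow$(1), given an acyclic complex $X$, the unique morphism $0:X\to 0$ is a quasi-isomorphism, and (2) applied to it forces $\Hom_{\mathcal{H}(\mathcal{A})}(X,I)=\Hom_{\mathcal{H}(\mathcal{A})}(0,I)=0$.

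For (2)$\Rightarrow$(3), I would use that $\overline{\Sigma}$ admits a calculus of right fractions in $\mathcal{H}(\mathcal{A})$, so every element of $\Hom_{\D(\mathcal{A})}(Y,I)$ is represented by a right roof $Y\xleftarrow{s}Z\xrightarrow{f}I$ with $s\in\overline{\Sigma}$. Assumption (2) makes $\Hom_{\mathcal{H}(\mathcal{A})}(s,I)$ bijective, so there is a unique $g:Y\to I$ in $\mathcal{H}(\mathcal{A})$ with $g\circ s=f$, giving $q(g)=fs^{-1}$; this provides surjectivity of the canonical map and, after routine verification, does not depend on the chosen representative. Injectivity follows similarly: if $q(g_1)=q(g_2)$, the axiom F3 supplies a quasi-isomorphism $s$ with $g_1\circ s=g_2\circ s$, and the injectivity of $\Hom_{\mathcal{H}(\mathcal{A})}(s,I)$ from (2) forces $g_1=g_2$. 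For the converse (3)$\Rightarrow$(2), given any quasi-isomorphism $s:X\to Y$ the morphism $q(s)$ is an isomorphism in $\D(\mathcal{A})$, hence $\Hom_{\D(\mathcal{A})}(q(s),I)$ is bijective; naturality of $q$ yields a commutative square whose horizontal arrows are the bijections of (3), which forces the remaining vertical arrow $\Hom_{\mathcal{H}(\mathcal{A})}(s,I)$ to be bijective as well.

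The only delicate point will be the passage (2)$\Rightarrow$(3), where I must check that the assignment $fs^{-1}\mapsto g$ is well-defined on equivalence classes of right roofs (i.e., that equivalent roofs produce homotopic $g$'s). This is a direct diagram chase using axioms F2 and F3 together with the uniqueness part of the bijection in (2), so while notationally fiddly it presents no conceptual obstacle.
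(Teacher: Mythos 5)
The paper offers no argument of its own here (the proof is just a citation to Verdier), so there is no in-house proof to compare against; judged on its merits, your argument is the standard one and is essentially correct. The equivalence (1)$\Leftrightarrow$(2) via the cone triangle is fine. In (2)$\Rightarrow$(3), the ``delicate point'' you worry about is in fact not needed: to prove surjectivity of $f\mapsto q(f)$ you only have to exhibit, for one chosen roof $(s,f)$ representing a class $\phi\in\Hom_{\D(\mathcal{A})}(Y,I)$, a morphism $g$ with $g\circ s=f$ in $\mathcal{H}(\mathcal{A})$; then $q(g)\circ q(s)=q(f)$ forces $q(g)=\phi$, whatever representative was chosen, and injectivity is proved separately, so no inverse assignment ever has to be checked for well-definedness. (Also, the criterion ``$q(g_1)=q(g_2)$ iff $g_1\circ s=g_2\circ s$ for some quasi-isomorphism $s$'' is really the description of the equivalence relation on right roofs in a calculus of right fractions rather than axiom F3 itself, but the fact you invoke is correct.)

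The one step to repair is (3)$\Rightarrow$(2). Condition (3) as stated asserts only that the canonical map $\Hom_{\mathcal{H}(\mathcal{A})}(Y,I)\to\Hom_{\D(\mathcal{A})}(Y,I)$ is one-to-one, i.e.\ injective, whereas your commutative-square argument needs the horizontal maps to be bijective; under the literal reading your cycle does not close. If ``one-to-one'' was meant as ``bijective'' your step is fine, but the safe (and simpler) fix is to prove (3)$\Rightarrow$(1) directly: if $X$ is acyclic then $X\cong 0$ in $\D(\mathcal{A})$, so $\Hom_{\D(\mathcal{A})}(X,I)=0$, and injectivity of the canonical map gives $\Hom_{\mathcal{H}(\mathcal{A})}(X,I)=0$; then conclude with your (1)$\Rightarrow$(2). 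With that substitution the proof is complete.
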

\begin{proof}
See \cite{V}. 
\end{proof}

\begin{definition}\rm{
A complex $I$ satisfying some of (and hence all) the equivalent conditions of the previous proposition is said to be  \emph{homotopically injective}\index{complex! homotopically injective}. Given a complex $X\in \mathcal{C(A)}$, a quasi-isomorphism $\xymatrix{u:X \ar[r] & I}$, where $I$ is a homotopically injective complex is called \emph{homotopically injective resolution}\index{resolution! homotopically injective} of $X$. Dually, one defines \emph{homotopically projective complexes}\index{complex! homotopically projective} and \emph{homotopically projective resolution}\index{resolution! homotopically projective}.  }
\end{definition}

\begin{corollary}
Let $\mathcal{A}$ be an abelian category with enough injectives and $Y$ is a complex over $\mathcal{A}$ that admits a homotopically injective resolution. Then $\Hom_{\D(\mathcal{A})}(X,Y)$ is a set, for each $X\in Obj(\D(\mathcal{A}))$.
\end{corollary}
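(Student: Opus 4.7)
The plan is to use the homotopically injective resolution of $Y$ to replace $Y$ by a complex whose hom-class in $\D(\mathcal{A})$ is already controlled by hom in $\mathcal{H(A)}$, which is a genuine set.

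First I would fix a homotopically injective resolution $u:Y\flecha I$, which exists by hypothesis. By definition $u$ is a quasi-isomorphism, so the class $\Sigma$ sends $\overline{u}=p(u)$ to an isomorphism in $\D(\mathcal{A})=\mathcal{H(\mathcal{A})}[\overline{\Sigma}^{-1}]$. In particular $q(\overline{u}):Y\flecha I$ is an isomorphism in $\D(\mathcal{A})$, and therefore for every complex $X$ the map
$$(q(\overline{u}))_{*}:\Hom_{\D(\mathcal{A})}(X,Y)\flecha \Hom_{\D(\mathcal{A})}(X,I)$$
is a bijection of classes. Thus it suffices to prove that $\Hom_{\D(\mathcal{A})}(X,I)$ is a set.

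For that, I would invoke the previous proposition, whose condition 3) asserts precisely that, since $I$ is homotopically injective, the canonical map
$$\Hom_{\mathcal{H(A)}}(X,I)\flecha \Hom_{\D(\mathcal{A})}(X,I),\qquad f\mapsto q(f),$$
is bijective for every complex $X$. Now the right-hand side is in bijection with a genuine set: the morphisms in $\mathcal{H(A)}=\underline{\mathcal{C(A)}}$ between two fixed complexes form a quotient of the abelian group $\Hom_{\mathcal{C(A)}}(X,I)$, which is itself a set because $\mathcal{C(A)}$ has small hom-sets. Hence $\Hom_{\D(\mathcal{A})}(X,I)$ is a set, and transporting along the isomorphism $q(\overline{u})$ we conclude that so is $\Hom_{\D(\mathcal{A})}(X,Y)$.

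There is no real obstacle here: the content of the result lies entirely in the previous proposition, which already identifies $\Hom_{\D(\mathcal{A})}(-,I)$ with $\Hom_{\mathcal{H(A)}}(-,I)$ whenever $I$ is homotopically injective. The only thing the proof does on top of that is the standard trick of first replacing $Y$ by a quasi-isomorphic homotopically injective $I$ in order to put ourselves in a position where that proposition applies. The hypothesis that $\mathcal{A}$ has enough injectives is not needed in the argument itself (it is implicitly used to guarantee the existence of homotopically injective resolutions for reasonable classes of complexes, which in particular explains why the statement is non-vacuous).
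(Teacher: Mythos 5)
Your argument is correct and is exactly the intended derivation: the paper states this as an immediate corollary of the preceding proposition (cited to Verdier) without writing out a proof, and the intended reasoning is precisely your two steps — transport along the isomorphism $q(\overline{u}):Y\iso I$ in $\D(\mathcal{A})$, then use condition 3) (equivalently 2)) for the homotopically injective complex $I$ to identify $\Hom_{\D(\mathcal{A})}(X,I)$ with the genuine set $\Hom_{\mathcal{H(A)}}(X,I)$. Your side remarks are also accurate: ``one-to-one'' in condition 3) is to be read as a bijection (as the equivalence with condition 2) shows, and surjectivity is what really yields set-ness), and the ``enough injectives'' hypothesis plays no role once a homotopically injective resolution of $Y$ is assumed to exist.
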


\begin{remark}\rm{
Nothing guarantees that a given complex admits a homotopically injective or projective resolution. However some partial results are known since the dawn of the use of triangulated categories. Recall that for each pair $a,b\in$ \Z$\cup \{  \pm \infty \}$ with $a\leq b$, a chain complex $X$ is said to be \emph{concentrated in the range} $[a,b]$ if $X^{n}=0$ for all $n\notin [a,b]$. Similarly, we say that $X$ has \emph{homology concentrated in the range} $[a,b]$ when $H^{n}(X)=0$, for all $n\notin [a,b]$.}
\end{remark}

\begin{proposition}
Suppose that $\mathcal{A}$ has enough injectives. If $- \infty < a \leq b \leq +\infty $ and $X\in Obj(\mathcal{C(A)})$, then the following assertions are equivalents:
\begin{enumerate}
\item[1)] $X$ has homology concentrated in $[a,b]$; 
\item[2)] $X$ is quasi-isomorphic in $\D(\mathcal{A})$ to a chain complex concentrated in $[a,b]$;
\item[3)] There exists a homotopically injective resolution $\xymatrix{u: X \ar[r] & I}$, where $I$ is a complex of injectives concentrated in $[a,+\infty]$ and its cohomology concentrated in $[a,b]$.
\end{enumerate} 
\end{proposition}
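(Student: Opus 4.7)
The plan is to prove a cycle $(1)\Rightarrow(2)\Rightarrow(1)$ and $(1)\Rightarrow(3)\Rightarrow(1)$, with the content concentrated in the direction $(1)\Rightarrow(3)$; the other implications will be essentially bookkeeping with truncation functors and the basic properties of quasi-isomorphisms.

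First I would settle $(1)\Leftrightarrow(2)$. For $(2)\Rightarrow(1)$ I simply note that a quasi-isomorphism induces isomorphisms $H^n(X)\cong H^n(Y)$ for every $n\in\mathbb{Z}$, so if $Y^n=0$ outside $[a,b]$ then $H^n(X)=0$ for $n\notin[a,b]$. For $(1)\Rightarrow(2)$ I would introduce the smart truncation functors $\tau^{\geq a}$ and $\tau^{\leq b}$, defined by $(\tau^{\geq a}X)^a=\mathrm{coker}(d_X^{a-1})$, $(\tau^{\geq a}X)^n=X^n$ for $n>a$ and $0$ for $n<a$, and dually for $\tau^{\leq b}$. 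A direct calculation with the long exact sequence of homology shows that the canonical maps $X\to\tau^{\geq a}X$ and $\tau^{\leq b}X\to X$ are quasi-isomorphisms whenever $H^n(X)=0$ for $n<a$ (resp.\ $n>b$). Applying both gives a complex $Y:=\tau^{\leq b}\tau^{\geq a}X$ concentrated in $[a,b]$ and quasi-isomorphic to $X$ in $\mathcal{D}(\mathcal{A})$.

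Next I would tackle $(1)\Rightarrow(3)$, which is the heart of the statement. By the previous step, $X$ is quasi-isomorphic to its truncation $X':=\tau^{\geq a}X$, a complex concentrated in $[a,+\infty]$ with homology concentrated in $[a,b]$. Using that $\mathcal{A}$ has enough injectives, I would construct by induction on $n\geq a$ an injective resolution $X'\to I$, where $I^n=0$ for $n<a$ and each $I^n$ is injective. The standard way to do this is a Cartan--Eilenberg-style argument: pick an injective envelope $\bar{X}^a\hookrightarrow I^a$ of $X'^a$, and at each step embed the relevant pushout into an injective object, obtaining a chain map $X'\to I$ that becomes a quasi-isomorphism. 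The resulting $I$ is therefore a bounded-below complex of injective objects of $\mathcal{A}$ with $H^n(I)\cong H^n(X')\cong H^n(X)$, hence cohomology concentrated in $[a,b]$. The key fact I would then invoke is that any bounded-below complex of injectives is homotopically injective: given any acyclic complex $Z$, a chain map $Z\to I$ can be killed by a homotopy, built degree by degree starting at the lowest degree $a$ and using the injectivity of each $I^n$. Composing with the quasi-isomorphism $X\to X'$ produces the required resolution $u\colon X\to I$.

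Finally $(3)\Rightarrow(1)$ is immediate: if $u\colon X\to I$ is a quasi-isomorphism and $I$ has cohomology in $[a,b]$, then so does $X$. The only step where I expect some care is the inductive construction in $(1)\Rightarrow(3)$ together with the verification that a bounded-below complex of injectives is homotopically injective; this is classical, but one must be careful with the induction anchoring at degree $a$ so that the resulting resolution is actually concentrated in $[a,+\infty]$ rather than merely eventually bounded below. Everything else reduces to standard properties of the truncation functors and of the cohomological functors $H^n\colon\mathcal{H}(\mathcal{A})\to\mathcal{A}$.
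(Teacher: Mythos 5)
Your argument is correct and is precisely the classical route this statement rests on: the paper states the proposition without proof in its preliminaries, and the two ingredients you use — smart truncations $\tau^{\geq a},\tau^{\leq b}$ to reduce to a left-bounded complex, and the fact that a left-bounded complex of injectives is homotopically injective (which the paper records as the lemma immediately following, also without proof) — are exactly the intended tools. The only point worth being explicit about is the one you already flag: the composite $X\to\tau^{\geq a}X\to I$ is an honest chain map, so it is itself the required homotopically injective resolution concentrated in $[a,+\infty]$.
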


\begin{lemma}
Let $\mathcal{A}$ be an abelian category with enough injectives. If $I$ is a \emph{left bounded complex} \index{complex! left bounded} (i.e., $I^{n}=0$ for $n<<0$) of objects injectives, then $I$ is a complex homotopically injective. 
\end{lemma}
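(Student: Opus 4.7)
The plan is to prove the equivalent characterization (1) of homotopically injective complex: for every acyclic complex $X$, every cochain map $f:X\flecha I$ is null-homotopic. Fix an integer $N$ such that $I^n=0$ for all $n<N$, and let $f:X\flecha I$ be a chain map from an acyclic complex. I will construct a homotopy $s=(s^n:X^n\flecha I^{n-1})_{n\in \mathbb{Z}}$ satisfying $f^n=d_I^{n-1}\circ s^n+s^{n+1}\circ d_X^n$, by induction on $n$.

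For $n\leq N$, set $s^n:=0$, which is forced since $I^{n-1}=0$. The homotopy identity for the indices $n<N$ reduces to $0=0$, since both $f^n$ and $d_I^{n-1}$ vanish and $s^{n+1}=0$ too as long as $n+1\leq N$. For index $n=N$ the identity becomes $f^N=s^{N+1}\circ d_X^N$, so I must construct $s^{N+1}:X^{N+1}\flecha I^N$ solving this equation. Since $f^N\circ d_X^{N-1}=d_I^{N-1}\circ f^{N-1}=0$ and $X$ is acyclic, $f^N$ vanishes on $\Ker(d_X^N)=\Imagen(d_X^{N-1})$, so $f^N$ factors through the monomorphism $\Imagen(d_X^N)\monic X^{N+1}$. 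Using that $I^N$ is injective, this factorization extends to the desired $s^{N+1}$.

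Assume inductively that $s^k$ has been constructed for all $k\leq n$ (with $n\geq N$) so that the homotopy identity holds for all indices $k<n$. To produce $s^{n+1}:X^{n+1}\flecha I^n$ one needs $s^{n+1}\circ d_X^n=g$, where $g:=f^n-d_I^{n-1}\circ s^n:X^n\flecha I^n$. Precomposing $g$ with $d_X^{n-1}$ and using the identity $f^n\circ d_X^{n-1}=d_I^{n-1}\circ f^{n-1}$ together with the inductive relation $f^{n-1}=d_I^{n-2}\circ s^{n-1}+s^n\circ d_X^{n-1}$ gives
\[
g\circ d_X^{n-1}=d_I^{n-1}\circ(f^{n-1}-s^n\circ d_X^{n-1})=d_I^{n-1}\circ d_I^{n-2}\circ s^{n-1}=0.
\]
Hence $g$ vanishes on $\Imagen(d_X^{n-1})=\Ker(d_X^n)$ by acyclicity, so it factors through $\Imagen(d_X^n)\monic X^{n+1}$; injectivity of $I^n$ then yields the required extension $s^{n+1}$.

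The hardest (or most delicate) step is the inductive construction: one must simultaneously guarantee that the candidate for $g$ really factors through $d_X^n$, which forces one to carry along, and exploit, the previous step of the induction via the computation above. Once this is done, the injectivity hypothesis of each $I^n$ together with the acyclicity of $X$ make the extension automatic, and the left-boundedness of $I$ only enters to give a starting point for the induction.
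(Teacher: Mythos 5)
Your proof is correct and complete: the degree-by-degree construction of the homotopy, using acyclicity of $X$ to factor the obstruction map through $\Imagen(d_X^n)$ and injectivity of $I^n$ to extend it along $\Imagen(d_X^n)\monic X^{n+1}$, with left-boundedness supplying the starting point $s^n=0$ for $n\leq N$, is exactly the standard argument. The paper states this lemma without proof, so there is nothing to compare against; note only that your separate treatment of the index $n=N$ is in fact the first instance of your general inductive step (since $f^{N-1}=0$ and $d_I^{N-1}=0$ automatically), so it could be absorbed into it.
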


\begin{example}\rm{
The result of lat lemma is not true if $I$ is not left bounded. Indeed, the following chain complex
$$\xymatrix{\cdots \ar[r] & \mathbb{Z}_4 \ar[r]^{2.} &  \mathbb{Z}_4  \ar[r]^{2.} &  \mathbb{Z}_4  \ar[r] & \cdots}$$
is an acyclic complex of injective $ \mathbb{Z}_4$-modules which is not zero in $\mathcal{H}( \mathbb{Z}_4 )$, i.e., it is not contractible.}
\end{example}
A sample of the homological importance of the derived category is the following result, proved by Verdier in \cite{V}.

\begin{corollary}
Suppose that $\mathcal{A}$ has enough injectives. If $A$ and $B$ are objects of $\mathcal{A}$ and $n$ is an integer, then we have an isomorphism of abelian groups:
\begin{enumerate}
\item[1)] $\Hom_{\D(\mathcal{A})}(A,B[n])=0$, if $n<0$;
\item[2)] $\Hom_{\D(\mathcal{A})}(A,B[n])\cong \Ext^{\hspace{0.04cm}n}_{\mathcal{A}}(A,B)$, if $n\geq 0$.
\end{enumerate}   
\end{corollary}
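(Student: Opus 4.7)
The strategy is to replace $B[n]$ by a homotopically injective resolution and then compute the resulting Hom group in the homotopy category, where homotopy classes of chain maps can be written down explicitly.

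First I would apply the preceding proposition to $B$ (viewed as the stalk complex $B[0]$, which has homology concentrated in $[0,0]$). Since $\mathcal{A}$ has enough injectives, this yields a quasi-isomorphism $u:B\to I^\bullet$, where $I^\bullet$ is a left-bounded complex of injectives concentrated in $[0,+\infty)$ with $H^0(I^\bullet)\cong B$ and $H^k(I^\bullet)=0$ for $k\neq 0$; that is, $I^\bullet$ is an ordinary injective resolution of $B$. By the lemma stated just above, $I^\bullet$ is homotopically injective, and since the shift $?[n]$ is an auto-equivalence of $\mathcal{H}(\mathcal{A})$ sending acyclic complexes to acyclic complexes, $I^\bullet[n]$ is homotopically injective as well. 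In $\D(\mathcal{A})$ we have $B[n]\cong I^\bullet[n]$, so condition (3) of the characterization of homotopically injective complexes gives
\[\Hom_{\D(\mathcal{A})}(A,B[n])\;\cong\;\Hom_{\D(\mathcal{A})}(A,I^\bullet[n])\;\cong\;\Hom_{\mathcal{H}(\mathcal{A})}(A,I^\bullet[n]).\]

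Next I would compute this homotopy Hom group directly from the definition. Since the stalk complex $A$ has trivial differential, a chain map $A\to I^\bullet[n]$ is nothing but a single morphism $f:A\to I^\bullet[n]^{0}=I^{n}$ such that $d_{I^\bullet[n]}^{0}\circ f=0$, i.e., an element of $\Ker\Hom_{\mathcal{A}}(A,d_I^{n})$. Two such maps $f,g$ are homotopic exactly when there is a morphism $s:A\to I^\bullet[n]^{-1}=I^{n-1}$ with $g-f=d_I^{n-1}\circ s$, the remaining homotopy conditions being vacuous. Hence
\[\Hom_{\mathcal{H}(\mathcal{A})}(A,I^\bullet[n])\;=\;H^{n}\bigl(\Hom_{\mathcal{A}}(A,I^\bullet)\bigr).\]

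From this, both assertions follow immediately. For $n<0$, the complex $I^\bullet$ is zero in negative degrees, so $\Hom_{\mathcal{A}}(A,I^{n})=0$ and the $n$-th cohomology vanishes, proving (1). For $n\geq 0$, since $I^\bullet$ is an injective resolution of $B$, the cohomology of $\Hom_{\mathcal{A}}(A,I^\bullet)$ computes by definition the classical derived functors of $\Hom_{\mathcal{A}}(A,?)$, so $H^{n}(\Hom_{\mathcal{A}}(A,I^\bullet))\cong\Ext^{n}_{\mathcal{A}}(A,B)$, proving (2). The only nontrivial point in the plan is verifying that the shift of a homotopically injective complex remains homotopically injective, but this is immediate from the equivalent condition (1) in the definition, since the shift of an acyclic complex is acyclic; the rest is bookkeeping with the explicit description of morphisms in $\mathcal{H}(\mathcal{A})$.
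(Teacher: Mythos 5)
Your proof is correct. The paper itself gives no argument for this corollary (it is attributed to Verdier and cited without proof), and your argument is the standard one: replace $B$ by a left-bounded injective resolution $I^\bullet$, use the lemma that such complexes are homotopically injective (so that $\Hom_{\D(\mathcal{A})}(A,I^\bullet[n])\cong\Hom_{\mathcal{H}(\mathcal{A})}(A,I^\bullet[n])$), and identify the latter with $H^n(\Hom_{\mathcal{A}}(A,I^\bullet))$, which vanishes for $n<0$ and computes $\Ext^n_{\mathcal{A}}(A,B)$ for $n\geq 0$. All the steps check out, including the observation that shifting preserves homotopical injectivity (alternatively, $I^\bullet[n]$ is again a left-bounded complex of injectives, so the cited lemma applies to it directly); the sign conventions on the shifted differential do not affect the kernel and image being compared.
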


The next result, which is essential and was inspired by Spalstenstein (see \cite{Sp}), says that in general enough situations $\D(\mathcal{A})$ is a category strict sense.

\begin{theorem}
If $\mathcal{A}$ is a Grothendieck category, then every chain complex over $\mathcal{A}$ admits a homotopically injective resolution. Therefore $\Hom_{\D(\mathcal{A})}(X,Y)$ is a set, for each pair $X,Y\in \D(\mathcal{A})$. 
\end{theorem}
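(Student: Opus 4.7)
The plan is to adapt Spaltenstein's technique: resolve $X$ by a suitable inverse limit of bounded-below injective resolutions, and then derive the set-theoretic consequence for $\Hom_{\D(\mathcal{A})}$.

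For a bounded below complex $X$ (with $X^n=0$ for $n<N$), a Cartan--Eilenberg style induction, powered by the fact that a Grothendieck category has enough injectives, produces a left bounded complex $I$ of injective objects together with a quasi-isomorphism $X\to I$. By the preceding lemma, $I$ is homotopically injective, which settles the bounded below case.

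For an arbitrary $X$, I would form the tower of canonical truncations $X_n:=\tau^{\geq -n}(X)$ with natural surjections $X_{n+1}\twoheadrightarrow X_n$. For every fixed degree $k$ the system $(X_n^k)$ stabilizes at $X^k$ once $n>-k$, so $X=\varprojlim_n X_n$ at the level of complexes. By the bounded-below case, each $X_n$ admits a homotopically injective resolution $X_n\to I_n$, and using that each $I_n$ is homotopically injective one lifts inductively, up to homotopy, to obtain transition chain maps $I_{n+1}\to I_n$ covering $X_{n+1}\twoheadrightarrow X_n$. Replacing $I_{n+1}$ with a mapping cylinder when necessary, I would further arrange these transitions to be degreewise split epimorphisms. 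The candidate resolution is then $I:=\varprojlim_n I_n$, which exists since $\mathcal{A}$ is complete.

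The main obstacle is to verify that $I$ is homotopically injective and that the induced map $X\to I$ is a quasi-isomorphism, because inverse limits in $\mathcal{A}$ are only left exact. The degreewise-split condition on the transitions bypasses this: it produces a short exact sequence of complexes $0\to I\to \prod_n I_n\xrightarrow{1-\mathrm{shift}}\prod_n I_n\to 0$ that splits in every degree. Applying $\Hom_{\mathcal{H}(\mathcal{A})}(Z,?)$ for an acyclic complex $Z$, and using that a product of homotopically injective complexes remains homotopically injective, one obtains $\Hom_{\mathcal{H}(\mathcal{A})}(Z,I)=0$, so $I$ is homotopically injective. A Milnor-style argument, combined with the Mittag--Leffler property of the system $(H^k(I_n))_n$, yields that $X\to I$ is a quasi-isomorphism. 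Finally, the set-theoretic conclusion is immediate: for any $Y\in\D(\mathcal{A})$ pick a homotopically injective resolution $Y\to I_Y$; the earlier proposition then gives $\Hom_{\D(\mathcal{A})}(X,Y)\cong \Hom_{\mathcal{H}(\mathcal{A})}(X,I_Y)$, which is visibly a set.
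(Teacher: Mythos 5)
Your bounded-below step, the lifting of transitions up to homotopy, the mapping-cylinder trick to make them degreewise split epimorphisms, and the deduction that $I=\varprojlim_n I_n$ is homotopically injective from the termwise split sequence $0\to I\to\prod_n I_n\xrightarrow{1-\mathrm{shift}}\prod_n I_n\to 0$ are all sound. The genuine gap is the last analytic step: to conclude that $X\to I$ is a quasi-isomorphism you invoke a Milnor sequence together with Mittag--Leffler for $(H^k(I_n))_n$, but both devices presuppose that countable products are exact in $\mathcal{A}$ (equivalently, that $H^k(\prod_n I_n)\cong\prod_n H^k(I_n)$, or that $\varprojlim^1$ vanishes on towers of epimorphisms). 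A Grothendieck category is complete but need not be AB4*, even for countable products --- sheaf categories and Roos-type examples fail this --- so the long exact sequence of the above short exact sequence of complexes does not identify $H^k(I)$ with $\varprojlim_n H^k(I_n)=H^k(X)$, and the stabilization of the truncated cohomologies does not rescue the argument. As written, your proof establishes the theorem only for (countably) AB4* Grothendieck categories, e.g.\ module categories, which is essentially the setting Spaltenstein's tower argument was designed for; it does not reach the general statement.

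Note also that the paper does not prove this result internally: it quotes \cite[Theorem 5.4]{AJSo} (Alonso, Jerem\'ias, Souto), whose proof is organized differently precisely to avoid exactness hypotheses on products. Instead of an inverse limit of resolutions of truncations, it exploits the existence of a generator and the local presentability of a Grothendieck category (filtrations of an arbitrary complex by subcomplexes of bounded size and a transfinite, small-object style construction, in the spirit of the Gabriel--Popescu theorem) to manufacture homotopically injective resolutions directly. To repair your approach you would either have to add an AB4*-type hypothesis, or replace the tower-of-truncations step by such a generator-based transfinite construction.
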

\begin{proof}
See \cite[Theorem 5.4]{AJSo}.
\end{proof}

\begin{remarks}\rm{
\begin{enumerate}
\item[1)] The results for left bounded complexes
in the case that $\mathcal{A}$ has enough injectives can be immediately dualized to right bounded complexes in the case that $\mathcal{A}$ has enough projectives;
\item[2)] In the case that $\mathcal{A}$ is a Grothendieck category that also has enough projectives (e.g. $\mathcal{A}=R$-Mod), Spalstenstein proved that every chain complex over $\mathcal{A}$ admit a homotopically projective resolution. This means that if $X,Y\in \mathcal{C(A)}$, then we can use a homotopically projective resolution of $X$ or a homotopically injective resolution of $Y$ for calculate the morphisms in $\D(\mathcal{A})$. In fact, if $\xymatrix{p:P \ar[r] & X}$ and $\xymatrix{u:Y \ar[r] & I}$ are  homotopically projective and injective resolution respectively, then we have an isomorphism of abelian groups:
$$\xymatrix{\Hom_{\mathcal{H(A)}}(P,Y) \ar[r]^{\sim \hspace{0.1cm}} & \Hom_{\D(\mathcal{A})}(X,Y) &  \ar[l]_{\hspace{0.1cm}\sim} \Hom_{\mathcal{H(A)}}(X,I)}$$ 
\end{enumerate}}
\end{remarks}

In the situation of the previous remark, we can be more precise.

\begin{definition}
Let $\G$ be a Grothendieck category. The canonical functor \linebreak $q:\mathcal{H}(\G) \flecha \D(\G)$ always has a right adjoint triangulated functor, called the \emph{homotopically injective resolution functor} $\mathbf{i}:\D(\G) \flecha \mathcal{H}(\G)$. \\

In the particular case $\G=R\Mode$, the functor $q:\mathcal{H}(R)\flecha \D(R)$ also has a left adjoint triangulated functor, called \emph{homotopically projective resolution functor} $\mathbf{p}:\D(R) \flecha \mathcal{H}(R)$.
\end{definition}

\begin{definition}
Let $\G,\G^{'}$ be Grothendieck categories and let $F:\C(\G) \flecha \C(\G^{'})$ be a functor which takes contractible complexes to contractible complexes and preserves the semi-split exact sequences of complexes. The it induces a triangulated functor \linebreak $\mathcal{H}(\G) \flecha \mathcal{H}(\G^{'})$ which will be still denoted by $F$. The \emph{right derived functor of $F$}\index{functor! right derived} is the composition: 
$$\mathbf{R}F:\D(\G) \xymatrix{\ar[r]^{\mathbf{i} \hspace{0.5 cm}} & \mathcal{H}(\G) \ar[r]^{F} & \mathcal{H}(\G^{'}) \ar[r]^{q} & \D(\G^{'})}$$

In case $\G$ is a module category, one dually defines the \emph{left 
derived functor of F} \index{functor! left derived}as the composition:

$$\mathbf{L}F:\D(\G) \xymatrix{\ar[r]^{\mathbf{p}\hspace{0.5 cm}} & \mathcal{H}(\G) \ar[r]^{F} & \mathcal{H}(\G^{'}) \ar[r]^{q} & \D(\G^{'})}$$
\end{definition}

\begin{examples}\rm{
\begin{enumerate}
\item[1)] Let $\G,\G^{'}$ be Grothendieck categories and let $F:\G \flecha \G^{'}$ be an additive functor. Then one considers its canonical extension to a functor $F:\C(\G) \flecha \C(\G^{'})$. This functor satisfies the requirement of the previous definition and $\mathbf{R}F$ is called the \emph{right derived functor of F}. If $F$ is exact, then one has $F=\mathbf{R}F$, meaning that one does not need to take homotopically injective resolution.

\item[2)] Given $X,Y\in \mathcal{C}(R)$. We have a chain complex of abelian groups $\Hom^{\bullet}_R(X,Y)$ defined as follows:
\begin{enumerate}
\item[a)] $\Hom^{\bullet}_R(X,Y)^{n}=\Hom^{n}(X,Y)=\prod_{i\in \text{ \Z}}\Hom_{R}(X^{i},Y^{i+n})$ for each integer $n$;
\item[b)] The differential $\xymatrix{d^{n}:\Hom^{n}(X,Y) \ar[r] & \Hom^{n+1}(X,Y)}$ is define by the rule $d^{n}(f):=d_{Y} \circ f-(-1)^{n} f \circ d_{X}$.
\end{enumerate}
When $X$ is fixed, the assignment $Y  \rightsquigarrow \Hom^{\bullet}_R(X,Y)$ defines a functor $\Hom^{\bullet}_R(X,?):\C(R) \flecha \C(\text{Ab})$, which satisfies the requirement of the previous definition. Its right derived functor is denoted by $\mathbf{R}\Hom_R(X,?):\D(R) \flecha \D(\text{Ab})$. 

\item[3)] Given a complex of right $R$-modules $X$ and a complex of left $R$-modules, we denote by $X\otimes_R Y$ to complex defined as follows:
\begin{enumerate}
\item[a)] $(X\otimes_RY)^{n}=\underset{i+j=n}{\oplus} X^{i} \otimes_R Y^{j}$ for each integer $n$;

\item[b)] The differential $d^{n}:(X\otimes_RY)^{n} \flecha (X \otimes_R Y)^{n+1}$, $x\otimes y  \rightsquigarrow d_X(x)\otimes y + (-1)^{deg(x)} x \otimes d_Y(y)$, for all homogeneous elements $x\in X$ and $y\in Y$.
\end{enumerate}
When $X$ is fixed, the assignment $Y  \rightsquigarrow X \otimes_R Y$ gives a functor $X\otimes_R ?: \C(R) \flecha \C(\text{Ab})$ which satisfies the requirements of the previous definition. Its left derived functor is denoted by $X\otimes_R^{\mathbf{L}}?:\D(R)\flecha \D(\text{Ab})$.
\end{enumerate}}
\end{examples}

\vspace{0.3 cm}
In this thesis, we will use frequently the following result.

\begin{proposition}\label{prop. adjoint derived}
Let $\G$ be a Grothendieck category and let $\xymatrix{R\Mode \ar@<0.6ex>[r]^{\hspace{0.5 cm}F} & \G \ar@<0.6ex>[l]^{\hspace{0.5 cm}G } }$ a pair of functors such that $F$ is left adjoint of $G$. Then $\mathbf{L}F:\D(R) \flecha \D(\G)$ is left adjoint of $\mathbf{R}G:\D(\G) \flecha \D(R)$.
\end{proposition}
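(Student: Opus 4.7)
The plan is to lift the adjunction $(F,G)$ step by step: first from the module/Grothendieck categories to the corresponding categories of chain complexes, then to the homotopy categories, and finally to the derived categories by composing with the resolution adjunctions. All the preliminary pieces are available in the excerpt.

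First I would observe that any additive functor between additive categories extends termwise to chain complexes preserving differentials, so $F$ and $G$ extend canonically to $F:\C(R)\flecha \C(\G)$ and $G:\C(\G)\flecha \C(R)$. Since both functors are additive, they send null-homotopic maps to null-homotopic maps (homotopies are transported termwise), so they descend to triangulated functors $F:\mathcal{H}(R)\flecha \mathcal{H}(\G)$ and $G:\mathcal{H}(\G)\flecha \mathcal{H}(R)$. The original natural bijection $\Hom_{\G}(F(M),N)\cong \Hom_R(M,G(N))$ extends termwise to an isomorphism of cochain complexes of abelian groups $\Hom^{\bullet}_{\G}(F(X),Y)\cong \Hom^{\bullet}_R(X,G(Y))$, and taking $H^0$ (which computes $\Hom$ in the homotopy category, as homotopies are exactly $0$-coboundaries in $\Hom^{\bullet}$) yields a natural isomorphism
\[
\Hom_{\mathcal{H}(\G)}(F(X),Y)\cong \Hom_{\mathcal{H}(R)}(X,G(Y)).
\]
This shows $(F,G)$ is an adjoint pair at the homotopy level.

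Next I would invoke the resolution functors from the excerpt: the quotient $q:\mathcal{H}(R)\flecha \D(R)$ has a left adjoint $\mathbf{p}$, and $q:\mathcal{H}(\G)\flecha \D(\G)$ has a right adjoint $\mathbf{i}$. By definition $\mathbf{L}F=q\circ F\circ \mathbf{p}$ and $\mathbf{R}G=q\circ G\circ \mathbf{i}$. For any $X\in \D(R)$ and $Y\in \D(\G)$ I would then chain the three adjunctions just recalled:
\begin{align*}
\Hom_{\D(\G)}(\mathbf{L}F(X),Y)
&=\Hom_{\D(\G)}(qF\mathbf{p}(X),Y)\\
&\cong \Hom_{\mathcal{H}(\G)}(F\mathbf{p}(X),\mathbf{i}(Y))\\
&\cong \Hom_{\mathcal{H}(R)}(\mathbf{p}(X),G\mathbf{i}(Y))\\
&\cong \Hom_{\D(R)}(X,qG\mathbf{i}(Y))\\
&=\Hom_{\D(R)}(X,\mathbf{R}G(Y)),
\end{align*}
using $(q,\mathbf{i})$ in $\D(\G)$, the homotopy-level adjunction $(F,G)$, and $(\mathbf{p},q)$ in $\D(R)$, respectively. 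Naturality in $X$ and $Y$ follows from the naturality of each isomorphism used.

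The step I expect to be the main (though mild) obstacle is verifying that the adjunction genuinely lives at the level of complexes, i.e.\ that the unit and counit of $(F,G)$ induce chain maps compatible with differentials so that the isomorphism on $\Hom^{\bullet}$ is a genuine isomorphism of complexes of abelian groups; once this is checked, passing to $H^0$ to get the homotopy-level adjunction, and then chaining the resolution adjunctions, is formal. One could alternatively avoid the $\Hom^{\bullet}$ computation by appealing to a general principle: since $\mathbf{p}$ and $\mathbf{i}$ are fully faithful right/left adjoints to $q$ whose essential images are the homotopically projective and homotopically injective complexes respectively, it suffices to verify the adjunction $(F,G)$ on these full subcategories, where quasi-isomorphisms are already isomorphisms in $\mathcal{H}$ and thus the passage to $\D$ introduces nothing new.
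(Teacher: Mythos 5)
Your proof is correct. The paper itself gives no argument for this proposition — it only cites \cite[Proposition 2.28]{NS3} — and what you have written is precisely the standard proof that reference encapsulates: the adjunction isomorphism is compatible with the differentials of the Hom-complexes (by naturality of the unit/counit and the fact that $F$, $G$ act termwise), so it descends to the homotopy categories, and composing with the adjunctions $(\mathbf{p},q)$ and $(q,\mathbf{i})$ from the paper's definition of $\mathbf{L}F$ and $\mathbf{R}G$ gives the result. Only a cosmetic slip in your closing remark: $\mathbf{p}$ is the \emph{left} adjoint of $q$ (with essential image the homotopically projective complexes) and $\mathbf{i}$ the \emph{right} adjoint (homotopically injective complexes), so the words ``right/left'' there are transposed; your main chain of isomorphisms uses both adjunctions in the correct direction.
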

\begin{proof}
See \cite[Proposition 2.28]{NS3}. 
\end{proof}

\section{T-structures in triangulated categories}\label{section t-structure}
T-structures on triangulated categories were introduced in the early eighties by Beillison, Berstein and Deligne in their study of perverse sheaves on an algebraic or an analytic variety (see \cite{BBD}). The main discovery of this concept was the existence of an abelian category, called the heart of the t-structure, which allowed the development of a homology theory that is intrinsic to the triangulated category. Thereafter this concept has been interpreted by several authors (e.g. \cite{BR} and \cite{NS}) as the correspondent of torsion theory in abelian categories.

\begin{definition}\label{definition t-structure}\rm{
Let $(\mathcal{D},?[1])$ be a triangulated category. A \emph{t-structure} \index{t-structure}on $\mathcal{D}$ is a couple of full subcategories closed under direct summands $(\mathcal{U,W}[1])$ such that:
\begin{enumerate}
\item[1)] $\mathcal{U}[1]\subset \mathcal{U};$
\item[2)] $\Hom_{\mathcal{D}}(U,W)=0$, for $U\in \mathcal{U}$ and $W\in \mathcal{W}$;
\item[3)] For each $X \in \mathcal{D}$, there is a distinguished triangle:
$$\xymatrix{U \ar[r] & X \ar[r] & W \ar[r] & U[1]}$$
with $U\in \mathcal{U}$ and $W\in \mathcal{W}$. 
\end{enumerate}
The subcategory $\mathcal{U}$ is called the \emph{aisle}\index{aisle} of the t-structure, and $\mathcal{W}$ is called the \emph{co-aisle}\index{co-aisle}.}
\end{definition}

\begin{remarks}\label{properties t-structure}\rm{
\begin{enumerate}
\item[1)] It is easy to see that in such case $\mathcal{W} = \mathcal{U}^{\perp}[1]$ and \newline $\mathcal{U}=^{\perp}(\mathcal{W}[-1])= ^{\perp}(\mathcal{U}^{\perp})$. For this reason, we will write a t-structure as $(\mathcal{U}, \mathcal{U}^{\perp}[1])$;

\item[2)] The objects $U$ and $W$ in the above triangle are uniquely determined by $X$, up to isomorphism, and define functors $\xymatrix{\tau_{\mathcal{U}}: \mathcal{D} \ar[r] & \mathcal{U}}$ and $\xymatrix{\tau^{\mathcal{U}^{\perp}}:\mathcal{D} \ar[r] & \mathcal{U}^{\perp}}$ which are right and left adjoints to the respective inclusion functors. The composition $\xymatrix{\tau_{\mathcal{U}}: \mathcal{D} \ar[r] & \mathcal{U} \hspace{0.1cm} \ar@{^(->}[r] & \mathcal{D}}$ (resp. $\xymatrix{\tau^{\mathcal{U}^{\perp}}:\mathcal{D} \ar[r] & \mathcal{U}^{\perp} \hspace{0.08 cm}\ar@{^(->}[r] & \mathcal{D}}$), which we will still denote by $\tau_{\mathcal{U}}$ (resp. $\tau^{\mathcal{U}^{\perp}}$) is called the \emph{left truncation}\index{functor! left truncation} (resp. \emph{right truncation}\index{functor! right truncation}) functor with respect to the given t-structure.

\item[3)] $(\mathcal{U}[k], \mathcal{U}^{\perp}[k+1])$ is also a t-structure in $\mathcal{D}$, for each $k\in $ \Z;

\item[4)] For each class $\mathcal{U}\subseteq \mathcal{D}$, we have that $(\mathcal{U},\mathcal{U}^{\perp}[1])$ is a t-structure in $\mathcal{D}$ if and only if $(\mathcal{U}^{\perp}[1], \mathcal{U})$ is a t-structure in $\mathcal{D}^{op}$;

\item[5)] The full subcategory $\mathcal{H}=\mathcal{U} \cap \mathcal{U}^{\perp}[1]$ is called the \emph{heart}\index{heart} of the t-structure and it is an abelian category, where the short exact sequences ``are'' the triangles in $\mathcal{D}$ with its three terms in $\mathcal{D}$. In particular, one has $\Ext^{1}_{\mathcal{H}}(M,N)=\Hom_{\mathcal{D}}(M,N[1])$, for all objects $M$ and $N$ in $\mathcal{H}$ (see \cite{BBD});

\item[6)] The assignments $X \leadsto (\tau_{\mathcal{U}} \hspace{0.05 cm} \circ \hspace{0.05 cm} \tau^{\mathcal{U}^{\perp}[1]})(X)$ and $X \leadsto (\tau^{\mathcal{U}^{\perp}[1]} \hspace{0.05 cm} \circ \hspace{0.05 cm} \tau_{\mathcal{U}})(X)$ define naturally isomorphic functors $\xymatrix{\mathcal{D} \ar[r] & \mathcal{H}}$ which are cohomological (see \cite{BBD}). We will denote by $\xymatrix{\tilde{H}:\mathcal{D} \ar[r] & \mathcal{H}}$ either of these naturally isomorphic functors.

\end{enumerate}}
\end{remarks}

\begin{examples}\label{example t-structure}\rm{
\begin{enumerate}
\item[1)] Let $\mathcal{A}$ be an abelian category such that $\D(\mathcal{A})$ is a real (i.e. with Hom groups which are sets) category (e.g. if $\mathcal{A}$ is a Grothendieck category) and, for each $k\in$ \Z, will denote by $\D^{\leq k}(\mathcal{A})$ (resp. $\D^{\geq k}(\mathcal{A})$) the full subcategory of $\D(\mathcal{A})$ consisting of the chain complexes $X$ such that $H^{j}(X)=0$, for all $j>k$ (resp. $j<k$). The pair $(\D^{\leq k}(\mathcal{A}),\D^{\geq k}(\mathcal{A}))$ is a t-structure in $\D(\mathcal{A})$ whose heart is equivalent to $\mathcal{A}$. Its left and right truncation functors will be denoted by $\xymatrix{\tau^{\leq k}: \D(\mathcal{A}) \ar[r] & \D^{\leq k}(\mathcal{A})}$ and $\xymatrix{\tau^{> k}: \D(\mathcal{A}) \ar[r] & \D^{> k}(\mathcal{A}):=\D^{\geq k}(\mathcal{A})[-1]}.$ For $k=0$, the given t-structure is known as the \emph{canonical t-structure} in $\D(\mathcal{A})$.  

\item[2)] (Happel-Reiten-Smal\o)\index{Happel-Reiten-Smal\o} Let $\mathcal{G}$ be a Grothendieck category and $\mathbf{t}=(\mathcal{T,F})$ be a torsion pair in $\mathcal{G}$. As done in \cite{HRS} in the context of finite dimensional modules over a finite dimensional algebra and its corresponding bounded derived category, one gets a t-structure $(\mathcal{U}_{\mathbf{t}},\mathcal{U}_{\mathbf{t}}^{\perp}[1])=(\mathcal{U}_{\mathbf{t}},\mathcal{W}_{\mathbf{t}})$ in $\D(\mathcal{G}),$ where: 
\begin{center}
$\mathcal{U}_{\mathbf{t}}=\{X \in \D^{\leq 0}(\mathcal{G})| H^{0}(X)\in \mathcal{T}\}$ and $\mathcal{W}_{\mathbf{t}}=\{Y\in \D^{\geq -1}(\mathcal{G})| H^{-1}(Y) \in \mathcal{F}\}$
\end{center}

In this case, the heart $\mathcal{H}_{\mathbf{t}}$ consists of the chain complexes which are isomorphic in $\D(\mathcal{G})$ to complexes 
$$\xymatrix{M:=\cdots \ar[r] & 0 \ar[r] & M^{-1} \ar[r]^{d} & M^{0} \ar[r] & \cdots}$$ 
concentrated in degrees -1 and 0, such that $H^{-1}(M)=\Ker(d)\in \mathcal{F}$ and $H^{0}(M)=\Coker(d)\in \mathcal{T}$.

When $\G=R$-Mod, such a chain complex is always quasi-isomorphic to a chain complex of the form $\xymatrix{\cdots \ar[r] & 0 \ar[r] & X \ar[r]^{j} & Q \ar[r]^{d} & P \ar[r] & 0 \ar[r] & \cdots}$, concentrated in degrees -2,-1, 0, such that $j$ is a monomorphism and $P,Q$ are projective modules. If $M$ is such chain complex, then for each complex $N$ of $\Ht$, we have that the canonical map $\Hom_{\mathcal{H}(R)}(M,N) \flecha \Hom_{\D(R)}(M,N)=\Hom_{\Ht}(M,N)$ is bijective. We will frequently use this fact throughout the work.  
\end{enumerate}}
\end{examples}

\begin{definition}\label{t-structure compactly generated}\rm{
Let $\D$ be a triangulated category with coproducts, $(\mathcal{U},\mathcal{U}^{\perp}[1])$ be a t-structure in $\D$ and $\mathcal{S}\subset \mathcal{U}$ be a set of objects. We will say that $(\mathcal{U,U}^{\perp}[1])$ is the \emph{t-structure generated} by $\mathcal{S}$ if $\mathcal{U}^{\perp}$ consists of the objects $Y\in \D$ such that $\Hom_{\D}(S[k],Y)=0$, for all $k\geq 0$ and all $S\in \mathcal{S}$. In that case, we will write $\mathcal{U}=\text{aisle}<\mathcal{S}>$. \\

We will say that $(\mathcal{U,U}^{\perp}[1])$ is \emph{compactly generated} when there is a set of compact objects $\mathcal{S}\subset \mathcal{U}$ such that $(\mathcal{U,U}^{\perp}[1])$ is the t-structure generated by $\mathcal{S}$.}
\end{definition}

\vspace{0.3 cm}

If $\D=\D(R)$, for some ring $R$, and $\mathcal{S}$ is a set of object, then the smallest full subcategory of $\D$ closed under coproducts, extensions and positive shifts is the aisle of a t-structure (cf. \cite[Proposition 3.2]{AJSo2}). In that case, the corresponding t-structure is the one generated by $\mathcal{S}$.

\begin{example}\label{exam. Ht compactly generated}\rm{
Let $R$ be a commutative Noetherian ring. For each hereditary torsion pair $\te=(\T,\F)$ in $R$-Mod, the pair $(\mathcal{U}_{\mathbf{t}},\mathcal{U}_{\mathbf{t}}^{\perp}[1])$ is a compactly generated t-structure on $\D(R)$ (see details in \cite{AJS}). }
\end{example}

\begin{definition}\rm{
An object $T$ of a triangulated category $\mathcal{D}$ will be called \emph{classical tilting}\index{object! classical tilting on $\D$} when it satisfies the following conditions:
\begin{enumerate}
\item[i)] $T$ is compact in $\mathcal{D}$;
\item[ii)] $\Hom_{\mathcal{D}}(T,T[i])=0$, for all $i\neq 0$;
\item[iii)] If $X\in \mathcal{D}$ is an object such that $\Hom_{\mathcal{D}}(T[i],X)=0$, for all $i \in $ \Z, then $X=0$. 
\end{enumerate}}
\end{definition}

\begin{remark}\rm{
The compact objects of $\D(R)$ are the complexes which are quasi-isomorphic to bounded complexes of finitely generated projective modules (see \cite{Ri}).}
\end{remark}

\begin{example}\rm{
If $T$ is a classical 1-tilting $R$-module, then $T[0]$ is a classical tilting object of $\D(R)$. By a well-know result of Rickard (see \cite{Ri} and \cite{Ri2}), two rings $R$ and $S$ are \emph{derived equivalent}\index{ring! derived equivalent}, i.e., have equivalent derived categories, if and only if there exists a classical tilting object $T$ in $\D(R)$ such that $S\cong \text{End}_{\D(R)}(T)^{op}$ (see also \cite[Theorem 6.6]{AJSo2}).}
\end{example}

\section{Some points of Commutative Algebra}\label{sec. Commutative algebra}
In this section we assume that $R$ is commutative Noetherian ring and we denote by $\Spec (R)$ its spectrum. For each ideal $\mathbf{a} \subseteq R$ we  put $\text{V}(\mathbf{a}):=\{\mathbf{p}\in \Spec(R) \ | \ \mathbf{a} \subseteq \mathbf{p}\}$. These subsets are the closed subsets for the \emph{Zariski topology} in $\Spec (R)$. The so defined topological space is noetherian, i.e. satisfies ACC on open subsets. When one writes $1$ as a sum $1=\underset{1\leq i\leq n}{\sum}e_i$ of primitive orthogonal idempotents, we have a ring decomposition $R\cong Re_1\times \dots \times Re_n$ and can identify $\Spec(Re_i)$ with $V(R(1-e_i))$. These are precisely the connected components of $\Spec (R)$ as a topological space, so that we have a disjoint union $\Spec (R)=\underset{1\leq i\leq n}{\bigcup}\Spec(Re_i)$. We call the rings $Re_i$ the \emph{connected components of $R$}.\\

A subset $Z \subseteq \Spec(R)$ is \emph{stable under specialization} if, for any couple of prime ideals $\mathbf{p} \subseteq \mathbf{q}$ with $\mathbf{p}\in Z$, it holds that $\mathbf{q}\in Z$. Equivalently, when $Z$ is a union of closed subsets of $\Spec(R)$. Such a subset will be called \emph{sp-subset} in the sequel. The typical example is the \emph{support of a $R$-module} $N$, denoted $\Supp (N)$,  which consists of the prime ideals $\mathbf{p}$ such that $N_\mathbf{p}\cong R_{\mathbf{p}} \otimes_{R} N\neq 0$. Here $(?)_\mathbf{p}$ denotes the localization at $\mathbf{p}$, both when applied to $R$-modules or to the ring $R$ itself. Recall that the assignment $N\rightsquigarrow N_\mathbf{p}$ defines an exact functor $R\Mode \flecha R_\mathbf{p}\Mode$. If $f:R\flecha R_\mathbf{p}$ is the canonical ring homomorphism, then we have a natural isomorphism $(?)_\mathbf{p}\cong f^*$, where $f^*$ is the extension of scalars with respect to $f$. In particular, we have an adjoint pair of exact functors $(f^*,f_*)$, where $f_*:R_\mathbf{p}\Mode \flecha R\Mode$ is the restriction of scalars. \\

We have the following precise description for hereditary torsion pairs in $R$-Mod. 

\begin{proposition} \label{prop. sp-subsets versus torsion pairs}
The assignment $Z\rightsquigarrow (\mathcal T_Z,\mathcal T_Z^\perp)$ defines a bijection between the sp-subsets of $\Spec (R)$ and the hereditary torsion pairs in $R\Mode$, where $\mathcal T_Z$ is the class of modules $T$ such that $\Supp (T)\subseteq Z$. Its inverse takes $(\mathcal T,\mathcal T^\perp)$ to the set $Z_{\mathcal T}$ of prime ideals $\mathbf{p}$ such that $R/\mathbf{p}$ is in $\mathcal{T}$. 
\end{proposition}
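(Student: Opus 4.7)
The plan is to verify that the two assignments are well-defined and mutually inverse. First, I would check well-definedness: given an sp-subset $Z$, the exactness of localization at each prime ensures that $\Supp(M)=\Supp(M')\cup\Supp(M'')$ for every short exact sequence $0\to M'\to M\to M''\to 0$, and $\Supp(\coprod M_i)=\bigcup\Supp(M_i)$; these properties make $\mathcal{T}_Z$ closed under submodules, quotients, extensions and arbitrary coproducts, hence a hereditary torsion class. Conversely, if $(\mathcal{T},\mathcal{T}^\perp)$ is a hereditary torsion pair and $\mathbf{p}\subseteq\mathbf{q}$ with $\mathbf{p}\in Z_\mathcal{T}$, the quotient $R/\mathbf{q}$ of $R/\mathbf{p}\in\mathcal{T}$ lies in $\mathcal{T}$, so $Z_\mathcal{T}$ is sp-stable.

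Next, I would verify $Z_{\mathcal{T}_Z}=Z$: since $\Supp(R/\mathbf{p})=\text{V}(\mathbf{p})$, one has $\mathbf{p}\in Z_{\mathcal{T}_Z}$ iff $\text{V}(\mathbf{p})\subseteq Z$, and the sp-stability of $Z$ makes this equivalent to $\mathbf{p}\in Z$.

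The real work lies in the identity $\mathcal{T}_{Z_\mathcal{T}}=\mathcal{T}$. For the inclusion $\mathcal{T}\subseteq \mathcal{T}_{Z_\mathcal{T}}$: given $M\in\mathcal{T}$ and $\mathbf{p}\in \Supp(M)$, I would choose $m\in M$ whose image in $M_\mathbf{p}$ is nonzero, which is equivalent to $\ann(m)\subseteq\mathbf{p}$. Then $Rm\cong R/\ann(m)\in\mathcal{T}$ by heredity. Noetherianity provides a prime $\mathbf{q}$ minimal over $\ann(m)$ and contained in $\mathbf{p}$ (since $\sqrt{\ann(m)}$ is a finite intersection of minimal primes contained in the prime $\mathbf{p}$, one of them must lie in $\mathbf{p}$). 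Because $R/\mathbf{q}$ is a quotient of $Rm\in\mathcal{T}$, it belongs to $\mathcal{T}$, so $\mathbf{q}\in Z_\mathcal{T}$, and sp-stability gives $\mathbf{p}\in Z_\mathcal{T}$. For the reverse inclusion: given $M$ with $\Supp(M)\subseteq Z_\mathcal{T}$, I would write $M=\sum_{x\in M}Rx$; each cyclic $Rx\cong R/\ann(x)$ is finitely generated over a Noetherian ring, hence admits a finite filtration with factors $R/\mathbf{p}_i$ where $\mathbf{p}_i\in\text{V}(\ann(x))\subseteq Z_\mathcal{T}$, and each such $R/\mathbf{p}_i$ lies in $\mathcal{T}$ by definition of $Z_\mathcal{T}$. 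Closure under extensions yields $Rx\in\mathcal{T}$, and then closure under coproducts and quotients gives $M\in\mathcal{T}$.

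The main obstacle is precisely this last inclusion for non-finitely-generated modules: it requires marrying all three closure properties of the torsion class $\mathcal{T}$ (extensions, coproducts, quotients) with the Noetherian prime filtration of the cyclic submodules, in order to reduce the problem to the distinguished family $\{R/\mathbf{p}\,:\,\mathbf{p}\in Z_\mathcal{T}\}$ that encodes the sp-data.
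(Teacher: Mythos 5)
Your proof is correct. The paper states this proposition without proof, as the classical Gabriel classification of hereditary torsion pairs over a commutative Noetherian ring; your argument is the standard one, and its key ingredient (reducing an arbitrary torsion module to the modules $R/\mathbf{p}$ via finite prime filtrations of cyclic submodules) is exactly the technique the paper itself invokes elsewhere, e.g.\ in the proof of Lemma \ref{lem. sutileza} via \cite[Theorem 6.4]{Ma}.
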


In the situation of last proposition, we will denote by $\Gamma_Z$ the torsion radical associated to the torsion pair $(\T_Z,\T_Z^{\perp})$. \\

For simplicity, we will put $\mathcal{G}_Z=\mathcal{G}_{\mathcal{T}_Z}$ and $R_\mathbf{F}=R_Z$ to denote the Giraud subcategory of $R$-Mod and the ring of quotients associated to the hereditary torsion pair $(\T_Z,\T_Z^{\perp})$. The $R$-(resp. $R_Z$-)modules in $\G_Z$ will be called \emph{$Z$-closed}. It seems to be folklore that $R_Z$ is a commutative ring in this case, however we give a proof taken from \cite{Ana}.


\begin{lemma}\label{lem. R_F is a commutative ring}
If $R$ is a commutative ring and $Z\subseteq \Spec(R)$ is a sp-subset, then $R_Z$ is a commutative ring.
\end{lemma}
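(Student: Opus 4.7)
The plan is to proceed in two stages: first to show that the image of $R$ in $R_Z$ is central, and then to leverage this to conclude commutativity of $R_Z$ itself. The setup: let $\mathbf{F}$ be the Gabriel topology associated to $(\T_Z,\T_Z^{\perp})$, so that $R_Z=R_{\mathbf{F}}$, and write $\iota:R\flecha R_Z$ for the canonical ring homomorphism (what was denoted $\mu=\mu_R$ in the text). Since $t(R)$ is an ideal of the commutative ring $R$, the quotient $\bar{R}:=R/t(R)$ is itself a commutative ring, and $\iota$ factors as $R\epic \bar{R}\monic R_Z$ where the second map is an injective ring homomorphism with cokernel $R_Z/\bar{R}\in\T_Z$. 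Moreover, $R_Z$ lies in the Giraud subcategory $\G_Z$ and is, in particular, $\T_Z$-torsionfree.

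For the first stage, fix $r\in R$ and consider the additive map $\mu_r:R_Z\flecha R_Z$, $a\mapsto \iota(r)a-a\iota(r)$. Both left multiplication and right multiplication by $\iota(r)$ on $R_Z$ are $R$-linear endomorphisms, by associativity together with the compatibility of the $R$-module structure on $R_Z$ with the ring homomorphism $\iota$, so $\mu_r$ is itself $R$-linear. For any $u\in\bar{R}$, writing $u=\iota(s)$ with $s\in R$, the commutativity of $R$ gives $\mu_r(u)=\iota(r)\iota(s)-\iota(s)\iota(r)=\iota(rs-sr)=0$. Thus $\mu_r$ vanishes on $\bar{R}$ and induces a well-defined $R$-linear map $\bar{\mu}_r:R_Z/\bar{R}\flecha R_Z$. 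Since the source lies in $\T_Z$ and the target in $\T_Z^{\perp}$, we have $\bar{\mu}_r=0$, whence $\mu_r=0$ and $\iota(r)$ is central in $R_Z$.

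The second stage is obtained by repeating the same pattern one level up. Given $a\in R_Z$ arbitrary, consider the commutator map $\nu_a:R_Z\flecha R_Z$, $b\mapsto ab-ba$. Centrality of $\iota(R)$ just established yields the $R$-linearity of $\nu_a$: for $r\in R$, $a(rb)=a\iota(r)b=\iota(r)ab=r\cdot(ab)$ and analogously $(rb)a=r\cdot(ba)$, so $\nu_a(rb)=r\nu_a(b)$. In addition, $\nu_a$ vanishes on $\bar{R}$ because $\bar{R}=\iota(R)$ is central in $R_Z$. The same torsion-torsionfree argument as before forces $\nu_a=0$, so $ab=ba$ for all $a,b\in R_Z$.

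The only subtle point requiring care is the compatibility between the $R$-module structure on $R_Z$ coming from its presentation as $\varinjlim_{\mathbf{a}\in\mathbf{F}}\Hom_R(\mathbf{a},\bar{R})$ and the left/right multiplications by $\iota(R)$ used in the definitions of $\mu_r$ and $\nu_a$; commutativity of $R$ makes these actions coincide, essentially because an $R$-linear map $f:\mathbf{a}\flecha \bar{R}$ may be scaled either by pre-composition with multiplication by $r$ on $\mathbf{a}$ or by post-composition with multiplication by $r$ on $\bar{R}$, and these agree. Once this identification is recorded, the whole argument is a formal consequence of the single observation that any $R$-linear endomorphism of $R_Z$ vanishing on $\bar{R}$ must factor through $R_Z/\bar{R}\in\T_Z$ and is therefore zero by torsionfreeness of $R_Z$.
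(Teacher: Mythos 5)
Your proof is correct, and it takes a genuinely different route from the one in the text, even though both share the same two-stage skeleton (first the centrality of the image of $R$ in $R_Z$, then commutativity of $R_Z$) and rest on the same ultimate fact, namely the torsionfreeness of $R_Z$. The text works inside the calculus of fractions: its first step is an explicit computation with a representative $\alpha:\mathbf{a}\rightarrow R/t(R)$ of $x$, and its second step produces a common denominator ideal $\mathbf{b}=\mathbf{b}_x\cap\mathbf{b}_y$ in the Gabriel topology with $\mathbf{b}(xy-yx)=0$, so that $xy-yx$ is a torsion element of the torsionfree module $R_Z$ and hence zero. You replace both computations by a single formal principle: an $R$-linear endomorphism of $R_Z$ vanishing on $\bar{R}=\Imagen(\mu_R)$ factors through $\Coker(\mu_R)\in\T_Z$ and lands in $R_Z\in\T_Z^{\perp}$, hence is zero; applied to the commutator maps $\mu_r$ and $\nu_a$, this yields the two stages. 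Your version buys conceptual economy and avoids all manipulation of representatives, at the price of having to make explicit (as you do in your final paragraph) that the $R$-module structure on $R_Z=\varinjlim_{\mathbf{a}\in\mathbf{F}}\Hom_R(\mathbf{a},R/t(R))$ coincides with multiplication by elements of $\iota(R)$, so that the commutators are indeed $R$-linear — a compatibility that the computational approach carries along automatically in its formulas. Since both the membership $\Coker(\mu_R)\in\T_Z$ and the torsionfreeness of $R_Z$ are recorded in the preliminaries on quotient categories, your argument is complete.
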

\begin{proof}
We start showing that if $x\in R_Z$ is represented by the morphism $\alpha:\mathbf{a} \flecha \frac{R}{t(R)}$, with $\mathbf{a}$ in the associated Gabriel topology $\mathbf{F}$, then $\mathbf{a}.x=x.\mathbf{a}$ in $R_Z$. They are represented by:
\begin{align*}
\mathbf{a}.x: R \xymatrix{ \ar[r]^{\rho_{\mathbf{a}} \hspace{0.2 cm}} & R \ar[r]^{\alpha\hspace{0.2 cm}} & \frac{R}{t(R)}}\\
x.\mathbf{a}: \mathbf{a} \xymatrix{ \ar[r]^{\alpha \hspace{0.4 cm}} & \frac{R}{t(R)} \ar[r]^{\rho_{\mathbf{a}}} & \frac{R}{t(R)}}
\end{align*}
The first one takes $b \rightsquigarrow \alpha(ba)$, while the second one takes $b \rightsquigarrow \alpha(b)a$. But $\alpha(ba)=\alpha(ab)=a\alpha(b)$, since $\alpha$ is morphism in $R$-Mod. The equality $\alpha(b)\tilde{a}=\tilde{a}\alpha(b)$ holds in $\frac{R}{t(R)}$ because this is a commutative ring. \\

Now, let $x,y \in R_Z$. We then have ideals $\mathbf{a}_x$ and $\mathbf{a}_y$ in the Gabriel topology such that $\mathbf{a}_{x}x\subseteq \frac{R}{t(R)}$ and $\mathbf{a}_{y}y\subseteq \frac{R}{t(R)}$. We now take:
\begin{align*}
\mathbf{b}_x=\{a \in \mathbf{a}_x: ax\in \tilde{ \mathbf{a}}_y:=\frac{ \mathbf{a}_y}{t( \mathbf{a}_y)}\} \\ \mathbf{b}_y=\{a \in \mathbf{a}_y: ay\in \tilde{ \mathbf{a}}_x:=\frac{ \mathbf{a}_x}{t( \mathbf{a}_x)}\} 
\end{align*}
Then $\mathbf{b}_x$ and $\mathbf{b}_y$ are ideals in the Gabriel topology, and hence, $\mathbf{b}:=\mathbf{b}_x \cap \mathbf{b}_y$ is an ideal in the Gabriel topology. Now we have:
$$\mathbf{b}xy=(\mathbf{b}x)y=y(\mathbf{b}x)=(y\mathbf{b})x=\mathbf{b}yx$$ 
Then $\rho_{xy-yx}:\mathbf{b} \flecha \frac{R}{t(R)}$ vanishes.
\end{proof}

\chapter{Previous results}\label{chapter result previous}

Since the heart of a t-structure is an abelian category, it is natural to ask when such a category is the nicest possible; which in category theory means: when is it a Grothendieck category? or, in the most ambitious case,  when is it a module category?. In this work, we will deal with this kind of questions, for the heart of the Hapel-Reiten-Smal\o \ t-structure; as well as for the heart of the compactly generated t-structures in the derived category of a commutative Noetherian ring. Several works have dealt with the first question, but not with the latter one. However, Alonso, Jerem\'ias and Saor\'in in \cite{AJS} characterized the compactly generated t-structures in the derived category of the category of modules over a commutative Noetherian ring. They described such t-structures in terms of filtrations by supports of the spectrum of such ring. This is the reason why this chapter will be divided in three sections. The first one will have the previous results for the Happel-Reiten-Smal\o \ t-structure and in the second one we will recollect some results from \cite{AJS}. In the third one we will collect some questions which will be tackled in this work.


\section{Happel-Reiten-Smal\o}
\subsection{Grothendieck case}\label{remark tilting theorem} 
All throughout this subsection, $\mathcal{A}$ is an abelian category such that there is a classical 1-tilting object $V$ of $\mathcal{A}$ with the property of that any object of $\mathcal{A}$ embeds in an object of $\Gen(V)$. In such case, if $R=\End_{\mathcal{A}}(V)$, $H_V=\Hom_{\mathcal{A}}(V,?):\mathcal{A} \flecha \Mod R^{op}$, $H^{'}_{V}=\Ext^{1}_{\mathcal{A}}(V,?):\mathcal{A} \flecha \Mod R^{op}$, $T_V:\Mod R^{op} \flecha \mathcal{A}$ the left adjoint to $H_V$, and $T^{'}_{V}:\Mod R^{op} \flecha \mathcal{A}$ the first left derived functor of $T_V$. Set $\T=\Ker(H^{'}_{V})$, $\F=\Ker(H_V)$, $\mathcal{X}=\Ker(T_V)$ and $\mathcal{Y}=\Ker(T^{'}_{V})$. Then $\mathbf{t}=(\T,\F)$ is a torsion pair in $\mathcal{A}$ with $\T=\Gen(V)$, and $\mathbf{t}^{'}=(\mathcal{X,Y})$ is a faithful torsion pair in $\Mod R$ (see \cite[Theorem 3.2]{CF}). The next result from \cite[Corollary 2.4 ]{CGM}, tells us that our goal for faithful torsion pairs in a module category is equivalent to study when such an abelian category $\mathcal{A}$ is a Grothendieck category or a module category.

\begin{proposition}
$\mathcal{A}$ is equivalent to $\mathcal{H}_{\mathbf{t}^{'}}$, where $\mathbf{t}^{'}$ is as above.
\end{proposition}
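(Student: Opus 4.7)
The plan is to construct an explicit equivalence $\Phi:\mathcal{A}\to\mathcal{H}_{\mathbf{t}'}$ by matching torsion decompositions in both categories via the tilting counter-equivalences. On the $\mathcal{A}$-side, every $A$ sits in a short exact sequence $0\to t(A)\to A\to (1{:}t)(A)\to 0$; on the $\mathcal{H}_{\mathbf{t}'}$-side, every object $M$ sits in a short exact sequence $0\to H^{-1}(M)[1]\to M\to H^{0}(M)[0]\to 0$ in the heart (this is the canonical torsion sequence of the tilted torsion pair $(\mathcal{Y}[1],\mathcal{X}[0])$ in $\mathcal{H}_{\mathbf{t}'}$). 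The tilting theorem gives mutually inverse equivalences $H_V:\mathcal{T}\iso\mathcal{Y}$, $T_V:\mathcal{Y}\iso\mathcal{T}$ and $H'_V:\mathcal{F}\iso\mathcal{X}$, $T'_V:\mathcal{X}\iso\mathcal{F}$, which put the possible outer terms of the two kinds of sequences in bijection; I want to upgrade this to an equivalence of extensions.

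The technical heart of the proof is the production, for $T\in\mathcal{T}$ and $F\in\mathcal{F}$, of a natural isomorphism
$$\Psi_{F,T}:\Ext^{1}_{\mathcal{A}}(F,T)\iso \Ext^{1}_{\mathcal{H}_{\mathbf{t}'}}\bigl(H'_V(F)[0],H_V(T)[1]\bigr)=\Hom_{\D(R)}\bigl(H'_V(F),H_V(T)[2]\bigr).$$
To build $\Psi_{F,T}$, I would use the cogenerating hypothesis on $\mathrm{Gen}(V)$: fix a monomorphism $F\hookrightarrow T^{0}$ with $T^{0}\in\mathcal{T}$ and apply the long exact sequence of $\Hom_{\mathcal{A}}(V,-)$ to $0\to F\to T^{0}\to T^{0}/F\to 0$. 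Using $H_V(F)=0=H'_V(T^{0})$, one extracts a short exact sequence $0\to H_V(T^{0})\to H_V(T^{0}/F)\to H'_V(F)\to 0$ in $\Mod R$, i.e.\ a length-two $\mathcal{Y}$-presentation of $H'_V(F)$. Since $H_V$ restricts to an equivalence on $\mathcal{T}$, extensions of $F$ by $T$ in $\mathcal{A}$ correspond bijectively to maps of presentations into $H_V(T)$ modulo null-homotopies, which is precisely what computes $\Hom_{\D(R)}(H'_V(F),H_V(T)[2])$.

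With $\Psi_{F,T}$ in hand, define $\Phi(A)$ as the (unique up to isomorphism) object of $\mathcal{H}_{\mathbf{t}'}$ realizing the extension class $\Psi_{(1{:}t)(A),t(A)}([A])$; naturality of $\Psi_{-,-}$ upgrades this to an additive functor. Fullness and faithfulness follow by the Five Lemma applied to the long exact $\Hom$-sequences associated to the two canonical torsion decompositions: the outer terms are matched by the tilting equivalences, while $\Hom_{\mathcal{A}}(\mathcal{F},\mathcal{T})=0=\Hom_{\mathcal{H}_{\mathbf{t}'}}(\mathcal{X}[0],\mathcal{Y}[1])$ closes the diagram. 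Essential surjectivity is then immediate: given $M\in\mathcal{H}_{\mathbf{t}'}$, put $T=T_V(H^{-1}(M))\in\mathcal{T}$ and $F=T'_V(H^{0}(M))\in\mathcal{F}$, pull the extension class of $M$ back through $\Psi_{F,T}^{-1}$, and take $A$ to be the resulting extension of $F$ by $T$ in $\mathcal{A}$.

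The principal obstacle is the construction and naturality of $\Psi_{F,T}$. Because $\mathcal{A}$ is not assumed to have enough projectives or injectives (nor even to admit a well-defined derived category in general), the comparison between the Yoneda $\Ext^{1}_{\mathcal{A}}(F,T)$ and the derived morphism group $\Hom_{\D(R)}(H'_V(F),H_V(T)[2])$ cannot be obtained from any off-the-shelf derived equivalence. The embedding hypothesis must be used repeatedly to produce ``injective-like'' coresolutions by objects of $\mathcal{T}$, where $H_V$ is an equivalence; only through such coresolutions can Yoneda extensions in $\mathcal{A}$ be converted into honest morphisms in $\D(R)$.
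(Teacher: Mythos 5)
Your object-level picture (torsion part of $A$ $\leftrightarrow$ $H^{-1}[1]$, torsion-free part $\leftrightarrow$ $H^{0}[0]$, extension classes matched by some $\Psi_{F,T}:\Ext^{1}_{\mathcal{A}}(F,T)\cong\Hom_{\D(R)}(H'_V(F),H_V(T)[2])$) is the right heuristic, but the argument has a genuine gap, and it is hidden in one explicitly false assertion: you claim $\Hom_{\mathcal{A}}(\mathcal{F},\mathcal{T})=0=\Hom_{\mathcal{H}_{\mathbf{t}^{'}}}(\mathcal{X}[0],\mathcal{Y}[1])$. For a torsion pair only $\Hom_{\mathcal{A}}(\mathcal{T},\mathcal{F})=0$ holds, and on the heart side $\Hom_{\mathcal{H}_{\mathbf{t}^{'}}}(X[0],Y[1])=\Hom_{\D(R)}(X,Y[1])=\Ext^{1}_{R}(X,Y)$, which is nonzero in general. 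These nonvanishing groups are exactly what breaks your construction: a morphism $f:A\to A'$ in $\mathcal{A}$ is \emph{not} determined by the induced pair $(t(f),\overline{f})$ together with compatibility of extension classes — the fibre of $\Hom_{\mathcal{A}}(A,A')\to\{\text{compatible pairs}\}$ is a coset of the image of $\Hom_{\mathcal{A}}((1{:}t)(A),t(A'))$. Hence ``define $\Phi(A)$ as an object realizing the class $\Psi([A])$; naturality of $\Psi$ upgrades this to a functor'' does not produce an assignment on morphisms at all, and the Five Lemma step cannot even be set up, since it presupposes a functor already defined on arrows. Moreover, full faithfulness would in any case require a further natural isomorphism $\Hom_{\mathcal{A}}(F,T)\cong\Ext^{1}_{R}(H'_V(F),H_V(T))=\Hom_{\mathcal{H}_{\mathbf{t}^{'}}}(H'_V(F)[0],H_V(T)[1])$, compatible with $\Psi$ and with the connecting maps of both canonical sequences; your proposal never constructs it. There is also a secondary problem inside the construction of $\Psi_{F,T}$ itself: a two-term coresolution $0\to H_V(T^{0})\to H_V(T^{0}/F)\to H'_V(F)\to 0$ by objects of $\mathcal{Y}$ does not compute $\Hom_{\D(R)}(H'_V(F),H_V(T)[2])$ without additional acyclicity statements (vanishing of the relevant higher $\Ext_{R}$ on $\mathcal{Y}$ against $H_V(T)$), and one must check independence of the chosen embedding $F\hookrightarrow T^{0}$ and naturality in both variables.

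For comparison: the thesis does not prove this proposition at all; it quotes it from \cite[Corollary 2.4]{CGM}, where the equivalence is obtained from the Colpi--Fuller tilting machinery — one produces an honest functor (exploiting that the heart of the faithful torsion pair carries a tilting object with endomorphism ring $R$ inducing the same pair $(\mathcal{X},\mathcal{Y})$, together with the tilting theorem), rather than assembling an equivalence out of object-wise extension data. If you want to salvage your route, the functor has to come first — for instance from a realization functor of the HRS t-structure, or from a derived equivalence when the ambient hypotheses allow one — and only afterwards does one verify that it induces your comparison isomorphisms on the various $\Hom$ and $\Ext$ groups; those isomorphisms alone cannot be glued into an equivalence of categories.
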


\vspace{0.3 cm}

The importance of the previous result is the appearance of a tilting object $V$. This fact was exploited by Colpi, Gregorio and Mantese in \cite{CGM}. The next theorem is an example of some results (see \cite[Theorem]{CGM}).

\begin{theorem}\label{teo. motived CGM}
The following conditions are equivalent:
\begin{enumerate}
\item[1)] $\mathcal{H}_{\mathbf{t}^{'}}$ is a Grothendieck category;

\item[2)] for any direct system $(X_{\lambda})$ in $\mathcal{H}_{\mathbf{t}^{'}}$ the canonical map
\begin{center}
$\varinjlim{H_{V}(X_{\lambda})} \flecha H_{V}(\varinjlim_{\mathcal{H}_{\mathbf{t}^{'}}}{X_{\lambda}})$
\end{center}
is a monomorphism in $\Mod R$;

\item[3)] the functor $H_V$ preserves direct limits.
\end{enumerate}
If $\mathcal{Y}$ is closed under direct limits, then the previous conditions are equivalent to:
\begin{enumerate}
\item[4)] the functor $T_{V} \circ H_{V}$ preserves direct limits.
\end{enumerate}
\end{theorem}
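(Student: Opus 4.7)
The plan is to push everything into the heart $\mathcal{H}_{\te^{'}}$ via the equivalence $\mathcal{A}\simeq \mathcal{H}_{\te^{'}}$ of the preceding proposition, and then to invoke Theorem \ref{caracterizacion AB5}, which already characterizes when the heart of an HRS t-structure is AB5. Under the tilting equivalence $\mathbf{R}H_V : \D(\mathcal{A}) \simeq \D(\Mod R)$ the object $V$ corresponds to the stalk $R[1]\in\mathcal{H}_{\te^{'}}$, because $H_V(V)=R$ and $H_V^{'}(V)=0$. Consequently $H_V$ gets identified, under the equivalence $\mathcal{A}\simeq \mathcal{H}_{\te^{'}}$, with the classical homological functor $H^{-1}:\mathcal{H}_{\te^{'}}\flecha \Mod R$, and analogously $H_V^{'}$ with $H^{0}$. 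Moreover, the tilting theorem plus the left exactness of $H_V$ give $H_V(A)\in\mathcal{Y}$ for every $A\in \mathcal{A}$; and the heart $\mathcal{H}_{\te^{'}}$ has a generator, by a standard fact for HRS t-structures associated with faithful torsion pairs in module categories.

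For the first block of equivalences, $3)\Longrightarrow 2)$ is trivial, since an isomorphism is a monomorphism. For $2)\Longrightarrow 1)$ the crucial intermediate step I would carry out first is to deduce that $\mathcal{Y}$ is closed under direct limits: given a direct system $(Y_\lambda)$ in $\mathcal{Y}$, each stalk $Y_\lambda[1]$ lies in $\mathcal{H}_{\te^{'}}$, so if $L$ is their direct limit there, condition 2) produces a monomorphism $\varinjlim Y_\lambda = \varinjlim H^{-1}(Y_\lambda[1])\monic H^{-1}(L)\in\mathcal{Y}$, and torsion-free classes being closed under subobjects forces $\varinjlim Y_\lambda \in\mathcal{Y}$. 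Once $\mathcal{Y}$ is closed under direct limits, both conditions of 2) in Theorem \ref{caracterizacion AB5} hold, hence $\mathcal{H}_{\te^{'}}$ is AB5; with a generator this gives Grothendieck. Conversely, $1)\Longrightarrow 3)$ is immediate from the same Theorem \ref{caracterizacion AB5}: a Grothendieck (and hence AB5) heart forces every classical $H^m$ to preserve direct limits, and via the identification of the first paragraph this is the claim $H_V$ preserves direct limits.

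For the supplementary equivalence, now assuming $\mathcal{Y}$ is closed under direct limits, $3)\Longrightarrow 4)$ is clear since the left adjoint $T_V$ preserves all colimits. For $4)\Longrightarrow 3)$, I would take a direct system $(A_\lambda)$ in $\mathcal{A}$ with direct limit $A$ and look at the canonical morphism $\phi:\varinjlim H_V(A_\lambda)\flecha H_V(A)$, whose domain and codomain both lie in $\mathcal{Y}$ thanks to the closure hypothesis. Applying $T_V$ and using that $T_V$ commutes with direct limits, condition 4) gives that $T_V(\phi)$ is an isomorphism. But the tilting theorem supplies mutually inverse equivalences $T_V:\mathcal{Y}\simeq \mathcal{T}$ and $H_V:\mathcal{T}\simeq \mathcal{Y}$, so $T_V$ is conservative on $\mathcal{Y}$; hence $\phi$ itself is an isomorphism, i.e., $H_V$ preserves direct limits.

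The main obstacle is the delicate step in $2)\Longrightarrow 1)$ which squeezes closure of $\mathcal{Y}$ under direct limits out of a condition formulated only in terms of monomorphisms; once that closure is in hand, the rest is a matter of combining the tilting dictionary between $(\T,\F)$ and $(\mathcal{X},\mathcal{Y})$ with the AB5 criterion of Theorem \ref{caracterizacion AB5}. A secondary and more routine point is the existence of a generator in $\mathcal{H}_{\te^{'}}$, needed to upgrade AB5 to the Grothendieck condition.
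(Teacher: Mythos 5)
Your argument is essentially correct, but there is nothing in the paper to compare it with: this statement sits in the chapter of previous results and is quoted from \cite{CGM} without proof, so the thesis gives no internal argument of its own. What you do is re-derive the \cite{CGM} theorem from the thesis's later machinery: you translate $H_V$ and $H'_V$ into the classical functors $H^{-1}$ and $H^{0}$ on $\mathcal{H}_{\mathbf{t}^{'}}$ (via $V\leftrightarrow R[1]$, so that $\Hom_{\D(R)}(R[1],M)\cong H^{-1}(M)$ and $\Hom_{\D(R)}(R[1],M[1])\cong H^{0}(M)$), extract closure of $\mathcal{Y}$ under direct limits from the monomorphism condition applied to the stalks $Y_\lambda[1]$, and then quote Theorem \ref{caracterizacion AB5}; the supplementary equivalence with $T_V\circ H_V$ is handled by the adjunction and the counter-equivalence $T_V:\mathcal{Y}\rightleftarrows\T:H_V$ of the tilting theorem. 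This is legitimate and non-circular, since the Chapter 3 results you invoke (Theorem \ref{caracterizacion AB5}, Proposition \ref{description of stalk}, Lemma \ref{generador de Ht}) are proved in the thesis using only two auxiliary lemmas of \cite{CGM}, not this theorem; by contrast, the original proof in \cite{CGM} works directly with the pair $(T_V,H_V)$ and the tilting theorem, without any general AB5 criterion. Your route is less elementary but exhibits the statement as a special case of the thesis's general criterion, which is precisely how the thesis later recovers and extends \cite{CGM} and \cite{CG}. Two points to tighten: (i) the existence of a generator of $\mathcal{H}_{\mathbf{t}^{'}}$ is not an unconditional ``standard fact''; in the thesis it comes from Lemma \ref{generador de Ht} (or Corollary \ref{ejemplos b) y c)}) and requires $\mathcal{Y}$ closed under direct limits together with $\mathcal{Y}$ being generating (which holds since $R\in\mathcal{Y}$ by faithfulness) -- this is harmless only because you use it after that closure has been established, and you should say so; (ii) the appeal to a derived equivalence $\mathbf{R}H_V:\D(\mathcal{A})\simeq\D(\Mod R)$ is neither needed nor justified at this level of generality, and the identification $H_V\leftrightarrow H^{-1}$, $H'_V\leftrightarrow H^{0}$ should be justified directly through the abelian equivalence $\mathcal{A}\simeq\mathcal{H}_{\mathbf{t}^{'}}$ of \cite{CGM}, as you in fact sketch.
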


\vspace{0.3 cm}

The next result gives a necessary condition for our goal (see \cite[Proposition 3.8]{CGM}).

\begin{proposition}
Let $R$ be a ring and let $\te=(\T,\F)$ be a faithful torsion pair in $R\text{-Mod}$. If $\mathcal{H}_{\mathbf{t}}$ is a Grothendieck category then $\mathbf{t}=(\mathcal{T,F})$ is a cotilting torsion pair.
\end{proposition}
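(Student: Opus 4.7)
My plan is to construct an explicit 1-cotilting $R$-module $Q$ such that $\F=\Cogen(Q)$. Since $\te=(\T,\F)$ is faithful, $R\in\F$ and hence the stalk complex $R[1]$ is an object of $\Ht$. Because $\Ht$ is assumed to be Grothendieck, it has an injective cogenerator $E$, and I set $Q:=H^{-1}(E)\in\F$.

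The first step is to establish that $\Ext^{1}_{R}(F,Q)=0$ for every $F\in\F$. I would apply the cohomological functor $\Hom_{\D(R)}(F[1],?)$ to the canonical triangle in $\D(R)$
$$H^{-1}(E)[1]\longrightarrow E\longrightarrow H^{0}(E)[0]\longrightarrow H^{-1}(E)[2].$$
Using that $F\in\F$ and $H^{0}(E)\in\T$, one gets $\Hom_{\D(R)}(F[1],H^{0}(E)[j])=0$ for $j=-1,0$, and the injectivity of $E$ in $\Ht$ (together with the identification $\Ext^{1}_{\Ht}(X,Y)=\Hom_{\D(R)}(X,Y[1])$ noted in Remark \ref{properties t-structure}.5) yields $\Hom_{\D(R)}(F[1],E[1])=0$. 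The long exact sequence then gives simultaneously the vanishing $\Ext^{1}_{R}(F,Q)=0$ and a natural isomorphism $\Hom_{R}(F,Q)\cong\Hom_{\Ht}(F[1],E)$.

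The second step is to use this isomorphism, together with the cogenerator property of $E$ in $\Ht$, to show $\F\subseteq\Cogen(Q)$. For a non-zero $x\in F$, the associated morphism $R[1]\to F[1]$ in $\Ht$ is non-zero, so the cogenerator property produces $g\in\Hom_{\Ht}(F[1],E)$ whose pre-composition with $R[1]\to F[1]$ is non-zero; translating $g$ to an $R$-linear $\varphi:F\to Q$ via the natural isomorphism, a routine chase shows $\varphi(x)\neq 0$. Assembling these gives a monomorphism $F\hookrightarrow Q^{I}$ in $R$-Mod, so $F\in\Cogen(Q)$. Since $Q\in\F$ and $\F$ is closed under subobjects and products, we have $\Cogen(Q)\subseteq\F$, and therefore $\F=\Cogen(Q)$.

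Finally, to conclude that $Q$ is 1-cotilting, i.e.\ $\Cogen(Q)=\Ker(\Ext^{1}_{R}(?,Q))$, note that the inclusion $\Cogen(Q)=\F\subseteq\Ker(\Ext^{1}_{R}(?,Q))$ follows from Step 1. The reverse inclusion $\Ker(\Ext^{1}_{R}(?,Q))\subseteq\Cogen(Q)$ is where I expect the main obstacle: given $M$ with $\Ext^{1}_{R}(M,Q)=0$ one must show $t(M)=0$. The standard route is to exhibit a copresentation $0\to Q_{1}\to Q_{0}\to J\to 0$ with $Q_{i}\in\Prod(Q)$ and $J$ an injective cogenerator of $R$-Mod; then a diagram chase on $\Hom_{R}(M,?)$ produces an embedding $M\hookrightarrow Q_{0}^{\Hom_{R}(M,J)}$ forcing $M\in\Cogen(Q)=\F$. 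Producing such a copresentation requires carefully choosing $E$ so that it dominates the injective envelope of $R[1]$ in $\Ht$ and relating the structure of injective objects of $\Ht$ with injective $R$-modules via the torsion decomposition $0\to t(J)\to J\to J/t(J)\to 0$; this is the technical heart of the argument.
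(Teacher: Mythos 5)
Your Steps 1 and 2 are correct: the vanishing $\Hom_{\D(R)}(F[1],H^{0}(E)[j])$ for $j=-1,0$, the isomorphism $\Hom_{R}(F,Q)\cong\Hom_{\Ht}(F[1],E)$, and the cogenerator chase do give $\F=\Cogen(Q)\subseteq\Ker(\Ext^{1}_{R}(?,Q))$ (this is close in spirit to what the paper does; in fact, in the paper's treatment the injective cogenerator $E$ is shown to \emph{be} a stalk $Q[1]$ with $Q\in\F$, because $\F[1]$ cogenerates $\Ht$ and is closed under quotients, but your weaker choice $Q=H^{-1}(E)$ suffices for these two steps). The genuine gap is Step 3, which you yourself flag and do not carry out: the inclusion $\Ker(\Ext^{1}_{R}(?,Q))\subseteq\Cogen(Q)$ is exactly the content of the cotilting condition, and your proposed route --- producing a copresentation $0\to Q_{1}\to Q_{0}\to J\to 0$ with $Q_{i}\in\Prod(Q)$ and $J$ an injective cogenerator of $R$-Mod --- is not established and is dangerously circular, since the existence of such a copresentation is itself one of the standard characterizations of a $1$-cotilting module (the dual of Proposition \ref{description tilting object}), i.e.\ essentially what you are trying to prove. "Choosing $E$ to dominate the injective envelope of $R[1]$" does not by itself yield it.

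The way this obstacle is actually overcome (see the implication $1),2)\Rightarrow 3)$ in Proposition \ref{proposition clave}, which covers the faithful case since $R\in\F$ makes $\F$ a generating class) avoids any copresentation. Given $Z$ with $\Ext^{1}_{R}(Z,Q)=0$, one uses the generating condition of $\F$ to pick an epimorphism $F\twoheadrightarrow Z$ with $F\in\F$, forms the pullback along $t(Z)\hookrightarrow Z$, and compares the long exact $\Ext(?,Q)$-sequences of the resulting row and column: this gives that $\Ext^{2}_{R}(Z/t(Z),Q)\to\Ext^{2}_{R}(Z,Q)$ is a monomorphism, hence that $\Ext^{1}_{R}(Z,Q)\to\Ext^{1}_{R}(t(Z),Q)$ is an epimorphism, so $\Ext^{1}_{R}(t(Z),Q)=0$. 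But $\Ext^{1}_{R}(t(Z),Q)\cong\Hom_{\Ht}(t(Z)[0],Q[1])$, and since $Q[1]$ (equivalently your $E$) is a cogenerator of $\Ht$ and $t(Z)[0]\in\Ht$, this forces $t(Z)=0$, i.e.\ $Z\in\F=\Cogen(Q)$. You should replace your Step 3 by this argument (or supply an independent construction of the $\Prod(Q)$-copresentation that does not presuppose cotilting); as written, the proposal proves only $\F=\Cogen(Q)\subseteq\Ker(\Ext^{1}_{R}(?,Q))$, not that $\te$ is cotilting.
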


\vspace{0.3 cm}

Subsequently, Colpi and Gregory showed that the necessary condition in the previous proposition is really a characterization (see \cite{CG} and also \cite[Theorem 6.2]{Maa}).

\begin{theorem}
Let $R$ be a ring and let $\te=(\T,\F)$ be a faithful torsion pair in $R$-Mod. Then, $\Ht$ is a Grothendieck category if, and only if, $\te=(\T,\F)$ is a cotilting torsion pair.
\end{theorem}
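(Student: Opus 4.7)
The ``only if'' direction is exactly the Proposition stated just before this theorem. For the converse, assume that $\mathbf{t}=(\T,\F)$ is a cotilting torsion pair in $R\Mode$, witnessed by a $1$-cotilting module $Q$ with $\F=\Cogen(Q)=\Ker(\Ext^1_R(?,Q))$ and $\T=\Ker(\Hom_R(?,Q))$. My strategy is to verify separately the two defining ingredients of a Grothendieck category for $\Ht$: exactness of direct limits (AB5) and the existence of a generator.

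For AB5 I would apply Theorem~\ref{caracterizacion AB5}, reducing the task to two checks: (i) $\F$ is closed under direct limits in $R\Mode$; and (ii) for every direct system $(M_i)_{i\in I}$ in $\Ht$ the canonical morphism $\varinjlim H^{-1}(M_i)\to H^{-1}(\varinjlim_{\Ht}M_i)$ is a monomorphism. Point~(i) follows from Bazzoni's theorem that a $1$-cotilting module is pure-injective, so $\Cogen(Q)$ is a definable class and in particular closed under direct limits in $R\Mode$. Point~(ii) is the technical heart of the argument: I would represent each $M_i\in\Ht$ by a two-term complex $F_i^{-1}\to F_i^0$ with torsionfree outer terms, describe $\varinjlim_{\Ht}M_i$ as a torsion-truncation of the termwise direct limit of such representations, and then use the vanishing $\Ext^1_R(T,F)=0$ for every $T\in\T$ and $F\in\F$ (a direct translation of $\F=\Ker(\Ext^1_R(?,Q))$) to identify and annihilate the kernel of the induced map on $H^{-1}$.

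For the existence of a generator, I would build an object $G\in\Ht$ from the cotilting data. Each $M\in\Ht$ sits in a short exact sequence $0\to H^{-1}(M)[1]\to M\to H^0(M)[0]\to 0$, so it is enough to produce generating morphisms onto both stalks. Since $Q\in\F$, the object $Q[1]$ lies in $\Ht$; coupled with the cotilting ``duality'' $\mathbf{R}\Hom_R(?,Q)$, which relates $\Ht$ to a full subcategory of $\End_R(Q)^{op}\Mode$, this supplies enough maps into $F[1]$ for $F\in\F$. Degree-$0$ stalks $T[0]$ with $T\in\T$ are reached by the image $\tilde H(R[0])$ of the free module under the cohomological functor $\tilde H:\D(R)\to\Ht$. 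The candidate is then $G:=Q[1]\oplus\tilde H(R[0])$, possibly refined by finitely many correcting summands, and the above short exact sequence is used to stitch together generating maps onto $M$ from coproducts of $G$.

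The main obstacle is condition~(ii) in the AB5 verification: the interaction between direct limits in $R\Mode$ and in $\Ht$ (which involves a torsion-truncation layered on top of the pointwise direct limit) is where the full strength of the cotilting hypothesis is genuinely used, and where the proof of Colpi-Gregorio does its most delicate work.
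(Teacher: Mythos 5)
Your ``only if'' direction is handled correctly: it is exactly the preceding proposition of Colpi--Gregorio--Mantese, and that is also how the paper frames the theorem. The problems are in the ``if'' direction. First, the vanishing you call ``a direct translation'' of the cotilting hypothesis, namely $\Ext^1_R(T,F)=0$ for all $T\in \T$ and $F\in \F$, is false: it would say that the torsion pair is \emph{split}, whereas $\F=\Ker(\Ext^1_R(?,Q))$ only says $\Ext^1_R(F,Q)=0$ for $F\in\F$, a statement about maps into the single module $Q$. For instance, $(\text{torsion},\text{torsion-free})$ in $\mathbb{Z}\text{-Mod}$ is a faithful cotilting torsion pair, yet $\Ext^1_{\mathbb{Z}}(\mathbb{Z}/p\mathbb{Z},\mathbb{Z})\neq 0$. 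Second, even with a correct vanishing statement, your plan for condition (ii) of Theorem \ref{caracterizacion AB5} assumes that the two-term representations $F_i^{-1}\to F_i^0$ with entries in $\F$ can be chosen compatibly along an arbitrary direct system in $\Ht$, so that one may take termwise direct limits; this lifting to $\C(R)$ is not automatic and is precisely the delicate point. The paper (proof of Theorem \ref{Grothendieck characterization} under condition b)) must first reduce to a continuous system indexed by an ordinal and then build, by transfinite induction with pushouts, a compatible system of embeddings $0\to M_\alpha\to F_\alpha[1]\to F'_\alpha[1]\to 0$ before Proposition \ref{description of stalk} can be applied. Note also that the torsionfree two-term representation uses only faithfulness ($\F$ generating), not cotilting; what the cotilting hypothesis genuinely buys (via pure-injectivity of $Q$, your point (i), as in Lemma \ref{lemma Bazzoni}) is the closure of $\F$ under direct limits, nothing resembling a splitting.

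The generator argument also fails as stated. Since the pair is faithful, $t(R)=0$, so $\tilde{H}(R[0])=\tau_{\mathcal{U}}(R[0])=t(R)[0]=0$ and your candidate collapses to $Q[1]$. But for any modules $F\in\F$ and $T\in\T$ one has $\Hom_{\Ht}(F[1],T[0])=\Hom_{\D(R)}(F,T[-1])=0$, so no coproduct of copies of $Q[1]$ (or of any object of $\F[1]$) admits a nonzero morphism to a torsion stalk $T[0]$: $Q[1]$ is an injective cogenerator of $\Ht$ (this is how it appears in Proposition \ref{proposition clave}), never a generator unless $\T=0$. The paper's generator is of a different nature: using $\F=\varinjlim\F$ one produces $V$ with $\T=\Pres(V)$ (Lemma \ref{lema presentados de V}) and then takes the set of two-term complexes $W^{(m)}\to S^{0}$ with $W=G/t(G)$ and $H^{0}$ a finite coproduct of copies of $V$, shown to generate $\Ht$ by a direct-limit argument (Lemma \ref{generador de Ht}). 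Both halves of your ``if'' direction therefore need to be reworked along these lines, or deduced directly from the corollary to Proposition \ref{proposition clave}, which is the paper's own route to this theorem.
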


\subsection{Modular case}
All throughout this section, $R$ is a ring and $\te=(\T,\F)$ is a torsion pair in $R\Mode$. We next give the main result of Hoshino, Kato and Miyachi (see \cite[Theorem 3.8]{HKM}, and definition \ref{def. HKM torsion pair} for the terminology used here). 

\begin{theorem}\label{teo. HKM principal}
Let $\xymatrix{P^{\bullet}:=\cdots \ar[r] & 0 \ar[r] & Q \ar[r]^{d} & P \ar[r] & 0 \ar[r] & \cdots}$ be a complex of finitely generated projective $R$-modules concentrated in degrees -1 and 0. If $P^{\bullet}$ is an HKM complex, then the functor
$$\Hom_{\D(R)}(P^{\bullet},?): \Ht \flecha \Mod \End_{\D(R)}(P^{\bullet})^{op}$$
is an equivalence, where $\te=(\mathcal{X}(P^{\bullet}),\mathcal{Y}(P^{\bullet}))$.
\end{theorem}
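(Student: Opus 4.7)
The strategy is to apply the Gabriel-Mitchell theorem (Theorem~\ref{teo. Gabriel-Michell}) by identifying $P^{\bullet}$ as a progenerator of $\Ht$. Since $\Ht$ is AB3 (its coproducts being computed by applying the cohomological functor $\tilde{H}:\D(R)\flecha \Ht$ to $\D(R)$-coproducts), this splits into four verifications: $P^{\bullet}\in \Ht$; $P^{\bullet}$ is projective in $\Ht$; $P^{\bullet}$ is compact in $\Ht$; and $P^{\bullet}$ generates $\Ht$. For the first, it suffices to show $\Coker(d)\in\mathcal{X}(P^{\bullet})$ and $\Ker(d)\in \mathcal{Y}(P^{\bullet})$. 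Because $P^{\bullet}$ is a bounded complex of projectives, the relevant $\Hom_{\D(R)}$-groups can be computed in $\mathcal{H}(R)$, and the required vanishings can be extracted by applying $\Hom_{\D(R)}(P^{\bullet},?)$ to the canonical triangle $\Ker(d)[1]\flecha P^{\bullet}\flecha \Coker(d)[0]\flecha \Ker(d)[2]$ and invoking the HKM hypothesis that $(\mathcal{X}(P^{\bullet}),\mathcal{Y}(P^{\bullet}))$ is a torsion pair (so in particular $\Hom_R(T,F)=0$ for $T\in\T$ and $F\in\F$).

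For projectivity in $\Ht$, I would use $\Ext^{1}_{\Ht}(P^{\bullet},M)=\Hom_{\D(R)}(P^{\bullet},M[1])$ and the canonical short exact sequence $0\flecha H^{-1}(M)[1]\flecha M \flecha H^{0}(M)[0]\flecha 0$ in $\Ht$ (for any $M\in\Ht$), whose associated long exact Ext sequence reduces the problem to $\Hom_{\D(R)}(P^{\bullet},T[1])=0$ for $T\in\T$ (exactly the definition of $T\in\mathcal{X}(P^{\bullet})$) and $\Hom_{\D(R)}(P^{\bullet},F[2])=0$ for $F\in\F$ (immediate by degree since $P^{\bullet}$ is concentrated in $[-1,0]$). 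For compactness in $\Ht$: $P^{\bullet}$ is compact in $\D(R)$ as a bounded complex of finitely generated projectives; combining this with the aforementioned description of $\Ht$-coproducts and with the projectivity just established, one checks that $\Hom_{\Ht}(P^{\bullet},?)$ preserves coproducts.

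The delicate step, which I expect to be the main obstacle, is showing $P^{\bullet}$ generates $\Ht$. Given $M\in\Ht$, I would set $I=\Hom_{\D(R)}(P^{\bullet},M)$ and complete the evaluation morphism to a triangle
$$\xymatrix{(P^{\bullet})^{(I)} \ar[r]^{\quad \text{ev}} & M \ar[r] & N \ar[r] & (P^{\bullet})^{(I)}[1]}$$
aiming to show $\tilde{H}(N)=0$, equivalently that $\text{ev}$ is epic in $\Ht$. Applying $\Hom_{\D(R)}(P^{\bullet},?)$ to the triangle, $\Hom_{\D(R)}(P^{\bullet},\text{ev})$ is surjective by construction; combined with the projectivity of $P^{\bullet}$ and with $(P^{\bullet})^{(I)},M\in\Ht$, one controls the position of $N$ in the t-structure and obtains the vanishing of $\Hom_{\D(R)}(P^{\bullet},N)$ and $\Hom_{\D(R)}(P^{\bullet},N[1])$. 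The HKM hypothesis enters crucially here: it guarantees that these vanishings detect respectively the torsion-free and torsion parts of $\tilde{H}(N)$, forcing $\tilde{H}(N)=0$. With $P^{\bullet}$ then established as a progenerator, Gabriel-Mitchell yields an equivalence $\Hom_{\Ht}(P^{\bullet},?):\Ht \flecha \Mod \End_{\Ht}(P^{\bullet})^{op}$; since $P^{\bullet}\in\Ht$, $\End_{\Ht}(P^{\bullet})=\End_{\D(R)}(P^{\bullet})$ and the functor coincides with the restriction of $\Hom_{\D(R)}(P^{\bullet},?)$, giving exactly the stated equivalence.
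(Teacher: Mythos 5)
Your Gabriel--Mitchell strategy founders at the very first step: it is simply not true that an HKM complex $P^{\bullet}$ belongs to $\Ht$. Membership would require $H^{-1}(P^{\bullet})=\Ker(d)\in\mathcal{Y}(P^{\bullet})$, i.e. $\Hom_{R}(H^{0}(P^{\bullet}),\Ker(d))=0$, and nothing in the HKM hypothesis controls the torsion part $t(\Ker(d))$ (note also that $H^{0}(P^{\bullet})\in\mathcal{X}(P^{\bullet})$ is not automatic either; it appears as a separate condition in proposition \ref{pro. HKM complex}). The paper does not prove this theorem at all --- it is quoted from Hoshino--Kato--Miyachi --- but it explicitly warns, in the remark following the statement and in example \ref{exam. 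HKM torsion}, that an HKM complex need not lie in the heart: for the path algebra of the linear quiver $A_n$ and the right constituent pair of the TTF triple attached to $\mathbf{a}=Re_nR$, any HKM complex defining $\te$ fails to be in $\Ht$, because if it were it would be a progenerator and one would get $\add(\Ker(d))=\add(S_n)$, which is impossible since $S_n$ embeds in no projective. Consequently your verifications of projectivity, compactness and generation, as well as the final identification $\End_{\Ht}(P^{\bullet})=\End_{\D(R)}(P^{\bullet})$, all rest on an object that is not in the category, so the argument as written does not prove the statement.

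The repair, which is exactly the paper's point of view in proposition \ref{pro. HKM complex}, is to replace $P^{\bullet}$ by the complex $G:=(\cdots\to 0\to t(\Ker(d))\to Q\to P\to 0\to\cdots)$ concentrated in degrees $-2,-1,0$. Then $H^{-1}(G)=\Ker(d)/t(\Ker(d))$, so $G$ does lie in $\Ht$ once one knows $H^{-1}(G)\in\F$ and $H^{0}(G)\in\T$, and the triangle $t(\Ker(d))[1]\to P^{\bullet}\to G\to t(\Ker(d))[2]$ yields natural isomorphisms $\Hom_{\Ht}(G,?)\cong\Hom_{\D(R)}(P^{\bullet},?)_{|\Ht}$ and $\End_{\D(R)}(G)\cong\End_{\D(R)}(P^{\bullet})$, because $\Hom_{\D(R)}(t(\Ker(d))[k],Y)=0$ for $Y\in\Ht$ and $\Hom_{\D(R)}(P^{\bullet},t(\Ker(d))[k])=0$ for $k=1,2$. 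It is $G$, not $P^{\bullet}$, that must be shown to be a progenerator, and that is where the real content lies: your sketch that the vanishing of $\Hom_{\D(R)}(P^{\bullet},N)$ and $\Hom_{\D(R)}(P^{\bullet},N[1])$ ``detects'' $\tilde{H}(N)=0$ already presupposes $\T=\Gen(H^{0}(P^{\bullet}))$ (and the second vanishing is automatic for objects of $\Ht$, so it detects nothing), while generation genuinely requires the kind of conditions isolated in theorem \ref{teo. Ht is a module category} --- the reject condition and the existence of the morphism $h$ onto $R/t(R)$ --- none of which follow from the bare fact that $(\mathcal{X}(P^{\bullet}),\mathcal{Y}(P^{\bullet}))$ is a torsion pair by the kind of formal triangle chase you propose.
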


\begin{remark}\rm{
If $P^{\bullet}$ is an HKM complex, then it need not be in $\Ht$ (see example \ref{exam. HKM torsion}).}
\end{remark}

On the other hand, Colpi, Gregorio and Mantese gave the next sufficient conditions for $\Ht$ to be a module category (see \cite[Corollary 5.3]{CGM}).

\begin{proposition}
If $(\T,\F)$ is a faithful torsion pair generated by a classical 1-tilting module $V\in R$-Mod, then $\Ht$ is equivalent to $\End(V)^{op} \Mode$. In such case, $V[0]$ is a progenerator of $\Ht$.
\end{proposition}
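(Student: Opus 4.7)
The strategy is to realise the desired equivalence as a derived one via Rickard's theorem (cited in the excerpt), and then to show that the resulting derived equivalence carries the HRS t-structure to the canonical t-structure on $\D(S)$, where $S=\End_R(V)^{op}$.

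First, I would verify that $V[0]$ is a classical tilting object of $\D(R)$. Compactness follows because, being a classical $1$-tilting module, $V$ is finitely presented and admits a projective resolution of length at most one by finitely generated projectives (Proposition \ref{description tilting object}), so $V[0]$ is quasi-isomorphic to a bounded complex of finitely generated projectives. The vanishing $\Hom_{\D(R)}(V[0],V[0][i])=\Ext^i_R(V,V)=0$ for $i\neq 0$ holds because $V\in\T=\Gen(V)\subseteq\Ker(\Ext^1_R(V,?))$ and higher $\Ext$ vanishes by the dimension bound. For the generation axiom, apply $\mathbf{R}\Hom_R(-,X)$ to the triangle $R\to V^0\to V^1\to R[1]$ associated with the exact sequence of Proposition \ref{description tilting object} with $V^0,V^1\in\Add(V)$; by compactness of $V$ and closure of the vanishing under direct summands, $\mathbf{R}\Hom_R(V,X)=0$ forces $\mathbf{R}\Hom_R(V^i,X)=0$ for $i=0,1$, and hence $X\cong\mathbf{R}\Hom_R(R,X)=0$. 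Rickard's theorem then yields a triangulated equivalence $\Phi:=\mathbf{R}\Hom_R(V,-):\D(R)\to\D(S)$ with quasi-inverse $V\otimes_S^{\mathbf{L}}-$, sending $V[0]$ to $S[0]$.

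Next, I would show that $\Phi$ identifies $\Ht$ with $S$-Mod, viewed inside $\D(S)$ as the canonical heart. Given $M\in\Ht$, the defining triangle
$$H^{-1}(M)[1]\to M\to H^0(M)[0]\to H^{-1}(M)[2]$$
shows that $\Phi(M)$ is an extension of $\Phi(H^0(M)[0])$ by $\Phi(H^{-1}(M)[1])$. Using the length bound on the projective resolution of $V$ together with $\Ext^1_R(V,H^0(M))=0$ (since $H^0(M)\in\T$) and $\Hom_R(V,H^{-1}(M))=0$ (since $H^{-1}(M)\in\F$), both terms reduce to stalk complexes concentrated in degree zero of $\D(S)$, equal to $\Hom_R(V,H^0(M))$ and $\Ext^1_R(V,H^{-1}(M))$ respectively. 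Hence $\Phi(M)\in S$-Mod. Conversely, for $N\in S$-Mod regarded as $N[0]\in\D(S)$, the quasi-inverse gives $\Phi^{-1}(N[0])=V\otimes_S^{\mathbf{L}}N$, whose cohomology vanishes outside degrees $-1$ and $0$ because $V$ admits a projective resolution of length at most one as a right $S$-module (this is part of the classical tilting theorem). Moreover $H^0=V\otimes_S N\in\Gen(V)=\T$ and $H^{-1}=\Tor_1^S(V,N)\in\F$, both being standard consequences of the Brenner--Butler tilting theorem.

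It follows that $\Phi$ restricts to an equivalence of abelian categories $\Ht\simeq S$-Mod, carrying $V[0]$ to $S[0]$, the canonical progenerator of $S$-Mod. Therefore $V[0]$ is a progenerator of $\Ht$ and $\Ht\simeq\End_R(V)^{op}$-Mod, as claimed. The main technical step will be the identification of the two t-structures, which rests on the $\Ext$ and $\Tor$ vanishings provided by the classical $1$-tilting hypothesis; the projective-dimension bounds on $V$ on each side, and the Brenner--Butler facts that $V\otimes_S N\in\T$ and $\Tor_1^S(V,N)\in\F$, are the crucial inputs supplied by tilting theory.
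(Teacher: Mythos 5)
Your proof is correct, but it takes a genuinely different route from the one the thesis uses. The thesis states this proposition without proof in Chapter 2 (it is quoted from \cite[Corollary 5.3]{CGM}), and the argument it actually supplies for the same conclusion is the implication 1) $\Longrightarrow$ 2) of Proposition \ref{prop. V[0] progenerator}, which works entirely inside $\Ht$: one checks directly that $V[0]$ is projective there (using $\Ext^{1}_{\Ht}(V[0],T[0])\cong\Ext^{1}_{R}(V,T)=0$ for $T\in\T$ and $\Ext^{1}_{\Ht}(V[0],F[1])\cong\Ext^{2}_{R}(V,F)=0$ for $F\in\F$, the latter via the cosyzygy argument), that it generates $\Ht$ (via the proof of Theorem \ref{Grothendieck characterization} under condition c), and that it is compact (via self-smallness of the classical tilting module), and then invokes the Gabriel--Mitchell theorem, so that $\Ht\simeq\End_{\Ht}(V[0])^{op}\text{-Mod}\cong\End_R(V)^{op}\text{-Mod}$. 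Your route instead realises the equivalence as the restriction of Happel's derived equivalence $\mathbf{R}\Hom_R(V,-):\D(R)\to\D(S)$ and matches the Happel--Reiten--Smal\o\ t-structure with the canonical one on $\D(S)$; this is sound, and it buys the explicit quasi-inverse $V\otimes^{\mathbf{L}}_S-$ together with a conceptual explanation of why the heart is a module category. The price is that you must import the Brenner--Butler/Miyashita package (projective dimension of $V$ over $S$ at most one, $V\otimes_SN\in\T$, $\Tor_1^{S}(V,N)\in\F$), which the intrinsic argument avoids --- and avoiding it is precisely what lets the thesis extend the statement to non-classical $1$-tilting objects of arbitrary Grothendieck categories, where $V[0]$ is still a projective generator though no longer compact. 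Note finally that, like the thesis, you never use the faithfulness hypothesis; it is indeed superfluous for this implication.
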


An AB3 abelian category having a progenerator is a module category (see theorem \ref{teo. Gabriel-Michell}). Therefore, it is natural to enquire about the properties that such a progenerator satisfies. Colpi, Mantese and Tonolo give a characterization of this fact for faithful torsion pairs (see \cite[Theorem 6.1]{CMT}). 

\begin{definition}\rm{
Let $\mathcal{A}$ be an abelian category and $V$ an object of $\mathcal{A}$. The class of objects in $\mathcal{A}$ which admit a finite filtration with consecutive factors in $\overline{\Gen}(V)$ will be denote by $\overline{\overline{\Gen}}(V)$. }
\end{definition}

\begin{theorem}
Let $\te=(\T,\F)$ be a faithful torsion pair in $R$-Mod. The heart $\Ht$ is equivalent to a module category if, and only if, the following assertions holds: 
\begin{enumerate}
\item[1)] $\T=\Gen(V)$ , where $V$ is a classical 1-tilting $R/\ann_{R}(V)$-module;
\item[2)] $_RV$ admits a presentation of the form: 
$$\xymatrix{0 \ar[r] & \Omega \ar[r] & R_{1} \ar[r]^{f} & R_{0} \ar[r] & V \ar[r] & 0}$$
with $R_1$ and $R_0$ finitely generated projective $R$-modules such that:
\begin{enumerate}
\item[a)] any map $R_1 \flecha V$ extends to a map $R_{0} \flecha V$;
\item[b)] there exists a map $R_{1}^{(\alpha)} \xymatrix{\ar[r]^{g} &} R$ such that the cokernel of its restriction to $\Omega^{(\alpha)}$ is in $\overline{\overline{\Gen}}(V)$.
\end{enumerate}
\end{enumerate}
In such case, the complex $G:=\xymatrix{\cdots \ar[r] & 0 \ar[r] & R_{1} \ar[r]^{f} & R_0 \ar[r] & 0 \ar[r] & \cdots}$, concentrated in degrees 1- and 0, is a progenerator of the heart $\Ht$. Therefore, $\Ht$ is equivalent to $S\Mode$, where $S=\End_{\Ht}(G)^{op}$.
\end{theorem}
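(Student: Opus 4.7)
The plan is to invoke the Gabriel--Mitchell theorem~\ref{teo. Gabriel-Michell}: since the heart $\Ht$ is already known to be an AB3 abelian category, proving that it is a module category is equivalent to exhibiting a progenerator in it. The complex $G$ in the statement is the natural candidate, its shape being dictated by the tilting/HKM philosophy that a progenerator of the heart should be a two-term complex of finitely generated projectives. Accordingly, the proof splits into the two halves that link the structural conditions on $(V,R_0,R_1,f,\Omega,\alpha,g)$ with the progenerator property of the associated $G$.

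For the sufficiency, I would start by checking that $G\in\Ht$. The cohomology in degree zero is $V\in\Gen(V)=\T$, so the issue is $H^{-1}(G)=\Omega\in\F$; this is extracted from condition~2.a) by applying $\Hom_R(?,V)$ to the four-term sequence of 2) and exploiting the projectivity of $R_0,R_1$ together with the classical 1-tilting property of $V$ modulo $\ann_R(V)$, which gives $\Hom_R(\T,\Omega)=0$ and hence $\Omega\in\F$. Compactness of $G$ is inherited from that of the stalks $R_0[0],R_1[-1]$ in $\D(R)$ combined with the description of $\Ht$ as a full subcategory of $\D(R)$. Projectivity of $G$ in $\Ht$ reduces, via the formula $\Ext^{1}_{\Ht}(G,N)=\Hom_{\D(R)}(G,N[1])$, to a direct cochain-level computation using the two-term projective representation of both $G$ and $N$. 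The crucial step is that $G$ generates $\Ht$: here condition~2.b) enters. Given $M\in\Ht$, one uses $V\cong H^0(G)$ to cover the torsion part $H^{0}(M)\in\T$, and then uses the map $g\colon R_1^{(\alpha)}\to R$ together with the $\overline{\overline{\Gen}}(V)$-property of $\coker(g_{|\Omega^{(\alpha)}})$ to produce a cover of the stalk $R[0]$ in $\Ht$ (which lies in the heart because $R\in\F$ by faithfulness). These two building blocks, combined with a standard filtration argument, yield an epimorphism $G^{(\beta)}\epic M$ inside $\Ht$.

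For the necessity, assume that $\Ht$ is a module category, so that by Mitchell it admits a progenerator $P$. Since by the Colpi--Gregorio theorem the torsion pair $\te$ is already known to be cotilting and $\Ht$ Grothendieck, standard techniques (compactness plus truncation and replacement by a bounded complex of finitely generated projectives, using the two-term description of $\Ht$ from example~\ref{example t-structure}) let us replace $P$ by a complex of the form $R_1\xrightarrow{f}R_0$ concentrated in degrees $-1,0$. Setting $V:=H^0(P)$, the generation of $\Ht$ by $P$ forces $\T=\Gen(V)$, and projectivity of $P$ in $\Ht$ translates (modulo $\ann_R(V)$) into the classical 1-tilting axioms for $V$, giving 1). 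Condition~2.a) is then a reformulation of the vanishing $\Ext^{1}_{\Ht}(P,F[1])=0$ for every $F\in\F$. Finally, condition~2.b) is obtained by applying generation of $\Ht$ by $P$ to the stalk $R[0]\in\Ht$: an epimorphism $P^{(\alpha)}\epic R[0]$ in the heart yields, at the level of two-term complexes, both the map $g$ of the statement and the required $\overline{\overline{\Gen}}(V)$-property of its cokernel when restricted to $\Omega^{(\alpha)}$.

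The main obstacle lies in the generation step of the sufficiency, namely in constructing an epimorphism $G^{(\beta)}\epic M$ inside $\Ht$ rather than merely in $\D(R)$. Producing this map requires choosing coefficients compatible with the abelian structure of the heart, which is strictly stronger than having a quasi-isomorphism: the iterated class $\overline{\overline{\Gen}}(V)$ that appears in 2.b), rather than the naive $\overline{\Gen}(V)$, is precisely what is needed to carry out the two-step filtration argument---first cover the torsion part, then the torsion-free part---at the level of $\Ht$.
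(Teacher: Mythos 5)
Note first that the thesis does not actually prove this statement: it is quoted as a previous result from \cite[Theorem 6.1]{CMT}. The strategy you propose — Gabriel--Mitchell (theorem \ref{teo. Gabriel-Michell}) plus an analysis of a two-term complex of finitely generated projectives, with generation of $\Ht$ split into covering the torsion stalks through $V$ and covering the torsion-free side through the stalk of $R$ — is the same one the thesis itself follows for its generalization to arbitrary torsion pairs (theorem \ref{teo. Ht is a module category} and the supporting lemmas \ref{lem. homology 0 module}, \ref{lem. progenerator of Ht}, \ref{lem. stalk generates}, \ref{lem. Q/X f.g.}). So the architecture is the right one; the problem is that, as written, two of your key steps are false.

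First, since $\te$ is faithful we have $R\in \F$, so the stalk of $R$ lying in $\Ht$ is $R[1]$, not $R[0]$; the object $R[0]$ is not in the heart at all (it would require $R\in\T$), and your justification ``$R[0]$ lies in the heart because $R\in\F$'' is exactly backwards. Both occurrences — the cover of ``$R[0]$'' in the sufficiency and the epimorphism $P^{(\alpha)}\epic$ ``$R[0]$'' in the necessity — must be replaced by $R[1]$; with that correction the step does work (every $F\in\F$ sits in an exact sequence $0\rightarrow F'[1]\rightarrow R^{(\alpha)}[1]\rightarrow F[1]\rightarrow 0$ in $\Ht$ with $F'\in\F$, because projectives are torsion-free by faithfulness), and condition 2.b) is precisely what makes $R[1]=\frac{R}{t(R)}[1]$ lie in $\Gen_{\Ht}(G)$, as in lemma \ref{lem. stalk generates}. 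Second, your identification of condition 2.a) in the necessity direction fails: $\Ext^{1}_{\Ht}(P,F[1])=\Hom_{\D(R)}(P,F[2])$ is automatically zero for a two-term complex of projectives concentrated in degrees $-1,0$, so it carries no information and cannot ``reformulate'' 2.a). Condition 2.a) is instead the translation of $\Ext^{1}_{\Ht}(P,V[0])=\Hom_{\D(R)}(P,V[1])=0$, i.e. projectivity with respect to torsion stalks, computed at the cochain level: a morphism $P\rightarrow V[1]$ is an $R$-map $R_1\rightarrow V$, and it vanishes in $\D(R)$ exactly when it extends along $f$ to $R_0$ (and one must then argue that the case $T=V$ suffices for all $T\in\T=\Gen(V)$, using the tilting hypothesis). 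Two smaller points: $\Omega\in\F$ is not ``extracted from 2.a)'' — it is immediate from faithfulness, since $\Omega$ is a submodule of the projective, hence torsion-free, module $R_1$; and the replacement, in the necessity direction, of an abstract progenerator by a two-term complex of finitely generated projectives, which you dismiss as ``standard techniques'', is where the substantive work lies (in the thesis's general version it occupies lemmas \ref{lem. progenerator of Ht} and \ref{lem. Q/X f.g.}, showing $H^{0}$ of the progenerator is finitely presented and the middle term can be taken finitely generated), so it needs a genuine argument rather than a gloss.
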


One of the main consequences of the last theorem is the following corollary, that was also proved by Colpi, Mantese and Tonolo in \cite{CMT}(see \cite[Corollary 6.3]{CMT}). 

\begin{corollary}\label{cor. tiltilng implies cotilting}
Let $\te=(\T,\F)$ be a faithful classical tilting torsion pair. Then $\te$ is a cotilting torsion pair.
\end{corollary}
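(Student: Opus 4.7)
The plan is to chain together two results from the preceding exposition, using the faithfulness and classicality assumptions to invoke each in turn. The statement asserts that for a tilting torsion pair $\te=(\Gen(V),\Ker(\Hom_R(V,?)))$ with $V$ a classical $1$-tilting module and $R\in\F$, the pair $\te$ is automatically cotilting.

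First I would unpack what ``faithful classical tilting'' means: $V$ is a classical $1$-tilting $R$-module (in particular finitely presented) and $R$ lies in $\F=\Ker(\Hom_R(V,?))$. With this in hand, I would invoke the proposition (from the modular case subsection) stating that for a faithful torsion pair generated by a classical $1$-tilting module, $V[0]$ is a progenerator of $\Ht$ and $\Ht$ is equivalent to $\End_R(V)^{\mathrm{op}}\Mode$. This gives us, \emph{gratis}, that $\Ht$ is a module category.

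Next I would observe the trivial but crucial fact that every module category is a Grothendieck category, so in particular $\Ht$ is Grothendieck. Finally I would apply the Colpi--Gregorio characterization appearing in the Grothendieck case subsection: for a faithful torsion pair $\te$ in $R\Mode$, the heart $\Ht$ is a Grothendieck category if and only if $\te$ is a cotilting torsion pair. Combining these two steps, $\te$ must be cotilting, which is precisely the claim.

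The main obstacle is essentially nonexistent once one has identified the right two theorems to chain: the verification reduces to checking that the hypotheses line up, namely that faithfulness is used twice (once to apply the modular-case proposition giving the progenerator $V[0]$, and once to apply Colpi--Gregorio reading off the cotilting condition from the Grothendieck condition), and that classicality of the tilting module is what guarantees the heart is not merely Grothendieck but actually a module category. No further computation or construction of the cotilting object is needed — its existence is purely deduced from the characterization, without having to exhibit it explicitly.
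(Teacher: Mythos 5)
Your chain of implications is correct and the hypotheses do line up: the quoted CGM proposition (a faithful torsion pair generated by a classical $1$-tilting module has $V[0]$ as a progenerator, so $\Ht\simeq \End_R(V)^{op}\Mode$) gives that $\Ht$ is a module category, hence Grothendieck, and the quoted Colpi--Gregorio theorem for faithful torsion pairs then forces $\te$ to be cotilting. Be aware, though, that the paper itself does not prove this corollary at all: it sits in the ``previous results'' chapter, cited from Colpi--Mantese--Tonolo and framed as a consequence of their characterization theorem, so there is no in-paper proof to match. Your assembly of the two Chapter 2 black boxes is a legitimate alternative, but it is worth contrasting it with the route the thesis actually develops later, which recovers and extends this statement without ever using that the heart is a module category: faithfulness gives $R\in\F$, so $\F$ is a generating class, and classicality of $V$ (finitely presented, equivalently self-small) gives that $\F=\Ker(\Hom_R(V,?))$ is closed under direct limits, whence Proposition \ref{proposition clave} (together with its corollary extending \cite[Corollary 6.3]{CMT}) shows that $\te$ is even a \emph{strong} cotilting torsion pair. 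That approach buys a stronger conclusion and works for $1$-tilting objects in arbitrary Grothendieck categories, and it does not lean on the Colpi--Gregorio theorem, which is only quoted without proof in Chapter 2 and is in effect reproved in greater generality by Proposition \ref{proposition clave}; your argument is shorter but depends entirely on those two external results.
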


\begin{definition}\rm{
A ring $R$ is called \emph{left poised}\index{ring! left poised} if the compact left $R$-modules are finitely generated.}
\end{definition}

The work of \cite{HKM}, \cite{CGM} and \cite{CMT}, does not cover all the possible cases in which $\Ht$ is a module category, specially if the torsion pair is not faithful. Another angle not completely covered is the relationship between the modular condition of $\Ht$ and the fact that $\te$ be an HKM torsion pair. The work of Mantese and Tonolo (\cite{MT}) gave a definitive answer for semiperfect left poised ring.



\begin{theorem}
Let $R$ be a ring and let $\te=(\T,\F)$ be a torsion pair in $R$-Mod, the following assertions holds:
\begin{enumerate}
\item[1)] If $\te=(\T,\F)$ is faithful, then $\Ht$ is equivalent to a module category if, and only if, $\te$ is an HKM torsion pair.

\item[2)] If $R$ is left poised and semiperfect, then $\Ht$ is equivalent to a module category if, and only if, $\te$ is an HKM torsion pair.  
\end{enumerate}
\end{theorem}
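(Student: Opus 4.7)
The proof splits along both statements, and for each along the ``if'' and ``only if'' directions.

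The ``if'' direction is immediate in both (1) and (2) from Theorem \ref{teo. HKM principal}: if $\te=(\mathcal{X}(P^{\bullet}),\mathcal{Y}(P^{\bullet}))$ for an HKM complex $P^{\bullet}$, then $\Hom_{\D(R)}(P^{\bullet},?):\Ht\iso \Mod\End_{\D(R)}(P^{\bullet})^{op}$ is an equivalence, so $\Ht$ is a module category.

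For the ``only if'' direction of (1), I would assume $\te$ faithful and $\Ht$ a module category. By the Colpi-Mantese-Tonolo theorem, $\Ht$ admits a progenerator of the form $G=(R_{1}\xrightarrow{f}R_{0})$ concentrated in degrees $-1,0$, with $R_{0},R_{1}$ finitely generated projective. The plan is to verify that $G$ is an HKM complex whose associated pair is exactly $\te$, by showing $(\mathcal{X}(G),\mathcal{Y}(G))=(\T,\F)$. The inclusions $\T\subseteq\mathcal{X}(G)$ and $\F\subseteq\mathcal{Y}(G)$ are standard t-structure vanishings: $\Hom_{\D(R)}(G,T[1])=\Ext^{1}_{\Ht}(G,T[0])=0$ by projectivity of $G$ in $\Ht$, and $\Hom_{\D(R)}(G,F[0])=\Hom_{\D(R)}(G,F[1][-1])=0$ because both $G$ and $F[1]$ lie in $\Ht$. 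For the reverse inclusions I would apply $\Hom_{\D(R)}(G,-)$ to the canonical triangle $t(M)\flecha M\flecha M/t(M)\flecha t(M)[1]$; the vanishings $\Hom_{\D(R)}(G,t(M)[2])=0$ and $\Hom_{\D(R)}(G,(M/t(M))[-1])=0$ (which hold because $G$ is a 2-term complex of projectives in degrees $-1,0$) combined with the generator property of $G$ in $\Ht$ force $M/t(M)=0$ when $M\in\mathcal{X}(G)$ and $t(M)=0$ when $M\in\mathcal{Y}(G)$.

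For the ``only if'' direction of (2), assume $R$ left poised and semiperfect, with $\Ht$ a module category and progenerator $G$. Since $G\in\Ht$, it is quasi-isomorphic in $\D(R)$ to a 2-term complex $(Q\flecha P)$ in degrees $-1,0$. The main task is to refine this representative so that $Q$ and $P$ are finitely generated projective $R$-modules; once achieved, the argument of (1) shows verbatim that $G$ is an HKM complex for $\te$. Semiperfectness is to be used to replace $P$ by the projective cover of $H^{0}(G)$ (keeping $P$ projective and modifying $Q$ accordingly), and the left-poised hypothesis is to be used to translate the compact-generator property of $G$ in $\Ht$ into finite generation of these modules. The main obstacle lies precisely in this transfer: one has to exploit the explicit form of $\Hom_{\D(R)}(G,-)$ on stalk complexes in $\T$ and $\F$ (which read off, respectively, $H^{0}(G)$ and the kernel of the differential) together with the compactness of $G$ in $\Ht$, in order to conclude finite generation of both $P$ and $Q$.
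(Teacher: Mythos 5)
This theorem is quoted in the paper from \cite{MT} without proof (it sits in the ``Previous results'' chapter), so the natural benchmark is the machinery the paper itself develops in Chapter 4, namely theorem \ref{teo. Ht is a module category}, proposition \ref{pro. HKM complex} and corollary \ref{cor. HKM=Prog. complex}. Your part 1) is correct and is essentially that route: the ``if'' direction is \cite[Theorem 3.8]{HKM}, and for the ``only if'' direction the Colpi--Mantese--Tonolo theorem hands you a two-term progenerator $G=(R_1\flecha R_0)$ of finitely generated projectives, after which your verification that $\mathcal{X}(G)=\T$ and $\mathcal{Y}(G)=\F$ (projectivity of $G$ for the inclusions $\T\subseteq\mathcal{X}(G)$, $\F\subseteq\mathcal{Y}(G)$; the vanishing of $\Hom_{\D(R)}(G,?[k])$ for $k=-1,2$ plus the generator property for the reverse inclusions) is sound and matches corollary \ref{cor. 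HKM=Prog. complex}.

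Part 2) has a genuine gap at its first step. An object of $\Ht$ is \emph{not} in general quasi-isomorphic to a two-term complex $(Q\flecha P)$ of projectives in degrees $-1,0$; the general representative is the three-term complex $X\monic Q\flecha P$ in degrees $-2,-1,0$ with only $Q,P$ projective, and no amount of ``refining'' removes the degree $-2$ term. The paper's example \ref{exam. HKM torsion} kills your strategy outright: for $R=KQ_n$ the path algebra of the linear $A_n$ quiver (left Artinian, hence semiperfect and left poised) and $\te$ the right constituent pair of the TTF triple of $\mathbf{a}=Re_nR$, the pair $\te$ \emph{is} HKM and $\Ht$ \emph{is} a module category, yet the progenerator is $R(1-e_n)[0]\oplus S_n[1]$, whose $H^{-1}=S_n$ does not embed in any projective $R$-module; hence no representative $(Q\flecha P)$ with $Q$ projective and $\Ker(d)\cong S_n$ exists, and the argument of part 1) cannot be run ``verbatim''. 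The correct relationship, which is where the real work lies, is the one in proposition \ref{pro. HKM complex}: the HKM complex $P^{\bullet}=(Q\flecha P)$ and the progenerator $G$ differ by a triangle $t(\Ker(d))[1]\flecha P^{\bullet}\flecha G\flecha +$, so one must \emph{construct} a two-term complex of finitely generated projectives whose truncation by the torsion part of $\Ker(d)$ recovers a progenerator, rather than identify the progenerator with such a complex. Semiperfectness enters through projective covers and the idempotent-ideal analysis of lemma \ref{lem. clave semiperfect rings} and corollary \ref{cor. caract. rings semiperfect}, and left poisedness through the finite generation of the compact data; your sketch does not reach this construction.
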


\section{T-structures given by filtrations}\label{sec. commutative Noetherian rings}
In \cite{AJS}, Alonso, Jerem\'ias and Saor\'in classify all the compactly generated t-structures in the derived category of $R$. In this section, we collect the results in \cite{AJS} which are most useful for our purposes. For each sp-subset $Z$ of $\Spec(R)$ (see section \ref{sec. Commutative algebra} for the terminology) and for each $R$-module $N$ let us denote by $\Gamma_{Z}(N)$ the biggest submodule of $N$ whose support is contained in $Z$. The functor $\Gamma_{Z}:R\text{-Mod} \flecha R\text{-Mod}$ is the associated torsion radical, which is left exact. We start with the following theorem which is useful for our purposes (see \cite[Theorem 5.6]{AJSo3} and also \cite[Theorem 1.8]{AJS}).


\begin{theorem}\label{teo. Right derived AJS}
Let $Z \subseteq \Spec(R)$ be a sp-subset and $M\in \D(R)$. The canonical map $\mathbf{R}\Gamma_{Z}(M) \flecha M$ is an isomorphism if, and only if, $\Supp(H^{j}(M))\subseteq Z$, for every $j\in \mathbb{Z}$.
\end{theorem}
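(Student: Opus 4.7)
For the $(\Rightarrow)$ direction, I would first show that $H^j(\mathbf{R}\Gamma_Z(X)) \in \T_Z$ for every $X \in \D(R)$ and every $j$. Taking a homotopically injective resolution $X \to I$, we have $\mathbf{R}\Gamma_Z(X) = \Gamma_Z(I)$, each of whose terms lies in the hereditary torsion class $\T_Z$ of proposition \ref{prop. sp-subsets versus torsion pairs}. Since $\T_Z$ is closed under subquotients, so is every cohomology module. The hypothesis that $\mathbf{R}\Gamma_Z(M) \to M$ is an isomorphism then transports this support condition to $M$, yielding $\Supp(H^j(M)) \subseteq Z$ for every $j$.

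For the $(\Leftarrow)$ direction, the key technical point is the vanishing $R^k\Gamma_Z(N) = 0$ for $k > 0$ whenever $N$ is an $R$-module with $\Supp(N) \subseteq Z$. I would exploit the classification of injective modules over a commutative Noetherian ring: every injective is a direct sum of indecomposables $E(R/\mathbf{p})$, and, since $\mathbf{p}$ is the unique associated prime of every nonzero submodule of $E(R/\mathbf{p})$, one checks $\Gamma_Z(E(R/\mathbf{p})) = E(R/\mathbf{p})$ if $\mathbf{p} \in Z$ and $\Gamma_Z(E(R/\mathbf{p})) = 0$ otherwise. From a minimal injective resolution $0 \to N \to I^0 \to I^1 \to \cdots$, an induction shows that each cokernel $\Coker(I^{n-1} \to I^n)$ stays in $\T_Z$ (closure of $\T_Z$ under quotients), so each $I^n$ is a direct sum of $E(R/\mathbf{p})$'s with $\mathbf{p} \in Z$. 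Hence $\Gamma_Z(I) = I$, giving $\mathbf{R}\Gamma_Z(N) \cong N$.

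Next I would upgrade this from modules to complexes with cohomology in $\T_Z$ via the hypercohomology spectral sequence
\[
E_2^{p,q} = R^p\Gamma_Z(H^q(M)) \Longrightarrow H^{p+q}(\mathbf{R}\Gamma_Z(M)).
\]
The vanishing just established collapses the $E_2$-page to the row $p = 0$, yielding $H^n(\mathbf{R}\Gamma_Z(M)) \cong H^n(M)$ compatibly with the canonical map. This settles the cohomologically bounded case; an alternative route is induction on the length of the cohomology using truncation triangles and the five lemma.

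The main obstacle will be extending this to unbounded complexes, where the spectral sequence is not automatically convergent. My plan is to write a general $M \in \D_Z(R) := \{X \in \D(R) : \Supp(H^j(X)) \subseteq Z \text{ for all } j\}$ as a suitable homotopy limit of its truncations $\tau^{\leq n}M$, each of which is cohomologically bounded and still lies in $\D_Z(R)$. Provided $\mathbf{R}\Gamma_Z$ commutes with such limits, which is guaranteed by the Spaltenstein-style machinery of homotopically injective resolutions in a Grothendieck category, the natural map $\mathbf{R}\Gamma_Z(M) \to M$ arises as a limit of isomorphisms and is therefore itself an isomorphism.
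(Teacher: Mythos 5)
The paper does not actually prove this statement: it is imported from the literature (the surrounding text cites \cite[Theorem 5.6]{AJSo3} and \cite[Theorem 1.8]{AJS}), so there is no internal proof to measure yours against, and your proposal has to stand on its own. Its first three steps do: the ``only if'' direction, the computation $\Gamma_Z(E(R/\mathbf{p}))=E(R/\mathbf{p})$ or $0$ according to whether $\mathbf{p}\in Z$, the torsionness of the minimal injective resolution of a module in $\T_Z$ (using that a hereditary torsion class is closed under injective envelopes as well as quotients), and the passage to complexes in $\D^{+}(R)$ via the convergent hypercohomology spectral sequence or truncation triangles are all correct.

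The gap is in the unbounded reduction. A minor slip first: the truncations $\tau^{\leq n}M$ form a \emph{direct} system whose homotopy \emph{colimit} is $M$, and they are only bounded above, not cohomologically bounded; the homotopy limit presentation uses $\tau^{\geq -n}M$ instead. The serious problem is the commutation you invoke: $\mathbf{R}\Gamma_Z$ does \emph{not} commute with homotopy limits. Already at the level of modules, $\Gamma_Z$ fails to commute with infinite products of objects of $\T_Z$: for $Z=V(\mathbf{p})$ with $\mathbf{p}$ not nilpotent, the product $\prod_{n\geq 1}R/\mathbf{p}^{n}$ is not $Z$-torsion although each factor is, so $\mathbf{R}\Gamma_Z(\prod_n R/\mathbf{p}^n)\not\cong \prod_n \mathbf{R}\Gamma_Z(R/\mathbf{p}^n)$; the class of complexes on which the counit is an isomorphism is a localizing, not a colocalizing, subcategory. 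Spaltenstein's machinery guarantees that homotopically injective resolutions exist; it gives no compatibility of $\mathbf{R}\Gamma_Z$ with holims. What the standard arguments use is the opposite compatibility: $\mathbf{R}\Gamma_Z$ commutes with coproducts and Milnor colimits, which for closed $Z=V(\mathbf{a})$ follows from computing it by the stable Koszul (\v{C}ech) complex --- a bounded complex of flat modules --- and for a general sp-subset by writing $Z$ as a directed union of closed subsets; one then shows that every complex with cohomology supported in $Z$ lies in the localizing subcategory generated by the stalks $T[0]$ with $T\in\T_Z$. Even on that route the bounded-above complexes need separate care (e.g.\ a bound on the cohomological dimension of $\mathbf{R}\Gamma_Z$), since the spectral sequence need not converge there. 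As written, the final step of your proposal does not go through.
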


\begin{definition}\rm{
A \emph{filtration}\index{filtration} by supports of $\Spec(R)$ is a decreasing map \linebreak $\phi:\mathbb{Z} \flecha \mathcal{P}(\Spec(R))$ such that $\phi(i)\subseteq \Spec(R)$ is a sp-subset for each $i\in \mathbb{Z}$. Similarly, to abbreviate, we will refer to a filtration by supports of $\Spec(R)$ simply by a \emph{sp-filtration}\index{sp-filtration} of $\Spec(R)$. }
\end{definition}

\begin{notation}\rm{
Let $\phi$ be a sp-filtration of $\Spec(R)$. For each integer, we will denote by $(\T_{k},\F_{k})$ the hereditary torsion pair $(\T_{\phi(k)},\F_{\phi(k)})$.} 
\end{notation}

In \cite{AJS}, Alonso, Jerem\'ias and Saor\'in associated to each sp-filtration \linebreak $\phi:\mathbb{Z} \flecha P(\Spec(R))$ the t-structure $(\mathcal{U}_{\phi},\mathcal{U}_{\phi}^{\perp}[1])$, where:

$$\mathcal{U}_{\phi}:=\text{aisle}<R/\mathbf{p}[-i];i\in \mathbb{Z}\text{ and }\mathbf{p}\in \phi(i)>$$

The following shows the compatibility of these aisles with respect to localization in a multiplicative closed subset of $R$ (see \cite[Proposition 2.9]{AJS}).

\begin{proposition}\label{prop. Localization AJS}
Let $S\subseteq R$ be a multiplicative closed subset. Given a sp-filtration $\phi: \text{\Z} \flecha P(\Spec(R))$ let us denote by $\mathcal{W}_{\phi}$ the right orthogonal of $\mathcal{U}_{\phi}$ in $\D(R)$. Then $(S^{-1}\mathcal{U}_{\phi}, S^{-1} \mathcal{W}_{\phi}[1])$ is a t-structure on $\D(S^{-1}R)$, furthermore $S^{-1}\mathcal{U}_{\phi}=\mathcal{U}_{\phi} \cap \D(S^{-1}R)$ and $S^{-1}\mathcal{W}_{\phi}=\mathcal{W}_{\phi} \cap \D(S^{-1}R)$. Moreover $S^{-1}\mathcal{U}_{\phi}$ is the aisle of $\D(S^{-1}R)$ associated to the sp-filtration $\phi_{S}:\mathbb{Z} \flecha P(\Spec(S^{-1}R))$ defined by $\phi_{S}(i):=\phi(i)\cap \Spec(S^{-1}R)$, for each $i\in$ \Z.\\

Here and all throughout the manuscript, for each multiplicative subset $S\subset R$, we identify $\Spec(S^{-1}R)$ with $\{\p\in \Spec(R):\p\cap S= \emptyset\}$.
\end{proposition}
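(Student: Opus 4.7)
The main idea is to exploit the adjoint pair formed by the localization functor $S^{-1}(?):\D(R)\to \D(S^{-1}R)$, which is exact because $S^{-1}R$ is $R$-flat, and the restriction of scalars $f_{*}:\D(S^{-1}R)\to \D(R)$, which is fully faithful. Identifying $\D(S^{-1}R)$ with its essential image in $\D(R)$, one has $X\cong S^{-1}X$ for every $X\in \D(S^{-1}R)$ via the unit of the adjunction. The strategy is first to show that $S^{-1}$ preserves the aisle $\mathcal{U}_\phi$, then to identify $S^{-1}\mathcal{U}_\phi$ with the aisle $\mathcal{U}_{\phi_S}$ intrinsically generated inside $\D(S^{-1}R)$, and finally to read off the t-structure together with the identifications of aisles and co-aisles.

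For each generator $R/\mathbf{p}[-i]$ of $\mathcal{U}_\phi$ (with $\mathbf{p}\in \phi(i)$), the complex $S^{-1}(R/\mathbf{p})[-i]$ is either zero (if $\mathbf{p}\cap S\neq \emptyset$) or isomorphic to $(S^{-1}R)/\mathbf{q}[-i]$ with $\mathbf{q}=\mathbf{p}S^{-1}R\in \phi_S(i)$. Viewed as an $R$-module, $S^{-1}(R/\mathbf{p})$ has support consisting of primes of $R$ that contain $\mathbf{p}$ and are disjoint from $S$, all lying in $\phi(i)$ by the sp-property. Writing $S^{-1}(R/\mathbf{p})$ as a directed union of its finitely generated $R$-submodules, each admitting a finite filtration with slices $R/\mathbf{q}'$ for $\mathbf{q}'\in \phi(i)$ (by Noetherianity), and using that $\mathcal{U}_\phi$ is closed under coproducts, extensions and positive shifts (hence under Milnor colimits), I get $S^{-1}(R/\mathbf{p})[-i]\in \mathcal{U}_\phi$. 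Since these aisle operations are preserved by the exact functor $S^{-1}$, this extends to $S^{-1}\mathcal{U}_\phi\subseteq \mathcal{U}_\phi$; combined with $X\cong S^{-1}X$ on $\D(S^{-1}R)$, it yields $S^{-1}\mathcal{U}_\phi=\mathcal{U}_\phi\cap \D(S^{-1}R)$.

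The aisle $\mathcal{U}_{\phi_S}$ is, by definition, generated in $\D(S^{-1}R)$ by the complexes $(S^{-1}R)/\mathbf{q}[-i]=S^{-1}(R/\mathbf{p})[-i]$ for $\mathbf{q}=\mathbf{p}S^{-1}R$ with $\mathbf{p}\in \phi(i)$ and $\mathbf{p}\cap S=\emptyset$, which all lie in $S^{-1}\mathcal{U}_\phi$. In the reverse direction, $S^{-1}\mathcal{U}_\phi$ is itself closed in $\D(S^{-1}R)$ under coproducts, extensions and positive shifts: closure under coproducts and shifts is immediate from exactness of $S^{-1}$, and for extensions, given a triangle $A\to B\to C\to A[1]$ in $\D(S^{-1}R)$ with $A,C\in S^{-1}\mathcal{U}_\phi\subseteq \mathcal{U}_\phi$, the middle term $B$ belongs to $\mathcal{U}_\phi\cap \D(S^{-1}R)=S^{-1}\mathcal{U}_\phi$. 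By the characterization of the aisle as the smallest such subcategory containing its generators \cite[Proposition 3.2]{AJSo2}, I conclude $\mathcal{U}_{\phi_S}=S^{-1}\mathcal{U}_\phi$. Since $\mathcal{U}_{\phi_S}$ is an aisle by construction, this automatically makes $(S^{-1}\mathcal{U}_\phi,(S^{-1}\mathcal{U}_\phi)^{\perp}[1])$ a t-structure on $\D(S^{-1}R)$, and the adjunction yields $(S^{-1}\mathcal{U}_\phi)^{\perp}=\mathcal{W}_\phi\cap \D(S^{-1}R)$: for $Y\in \D(S^{-1}R)$ and $U\in \mathcal{U}_\phi$ one has $\Hom_{\D(S^{-1}R)}(S^{-1}U,Y)=\Hom_{\D(R)}(U,Y)$, so $Y$ is right-orthogonal to $S^{-1}\mathcal{U}_\phi$ iff it is right-orthogonal to $\mathcal{U}_\phi$ in $\D(R)$.

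The one remaining point, and the one I expect to be the main technical hurdle, is the equality $S^{-1}\mathcal{W}_\phi=\mathcal{W}_\phi\cap \D(S^{-1}R)$. The inclusion $\supseteq$ is immediate from $Y\cong S^{-1}Y$ on $\D(S^{-1}R)$. The other inclusion reduces to showing $S^{-1}W\in \mathcal{W}_\phi$ for every $W\in \mathcal{W}_\phi$, i.e., that the idempotent $f_{*}\circ S^{-1}:\D(R)\to \D(R)$, a Bousfield-type localization which preserves the aisle $\mathcal{U}_\phi$, also preserves its co-aisle. I plan to derive this by applying $S^{-1}$ to the $\mathcal{U}_\phi$-truncation triangle of $S^{-1}W$ in $\D(R)$, and invoking the uniqueness of truncation in the freshly established t-structure on $\D(S^{-1}R)$ to force the co-aisle component to coincide with $S^{-1}W$ itself, placing it in $\mathcal{W}_\phi\cap \D(S^{-1}R)$.
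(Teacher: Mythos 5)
First, note that the paper does not actually prove this statement: it is quoted verbatim from \cite[Proposition 2.9]{AJS} as background, so there is no in-paper proof to compare yours against. Judged on its own terms, your treatment of the aisle is essentially correct: the reduction to the generators $R/\p[-i]$, the filtration/support argument showing $S^{-1}(R/\p)[-i]\in\mathcal{U}_\phi$, the "smallest cocomplete pre-aisle" device from \cite[Proposition 3.2]{AJSo2} giving $S^{-1}\mathcal{U}_\phi\subseteq\mathcal{U}_\phi$, and the adjunction computation identifying $(S^{-1}\mathcal{U}_\phi)^{\perp}$ with $\mathcal{W}_\phi\cap\D(S^{-1}R)$ all work. (Two small points: you should run the pre-aisle argument once more on the $\D(R)$ side to get the inclusion $S^{-1}\mathcal{U}_\phi\subseteq\mathcal{U}_{\phi_S}$, which you assert but do not derive; and a directed union of submodules is a transfinite extension rather than a Milnor colimit, though aisles are closed under both.)

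The genuine gap is exactly where you locate it, and your proposed fix does not close it. To apply "uniqueness of truncation" to the triangle $S^{-1}U'\to S^{-1}W\to S^{-1}W'\to$ obtained by localizing the $\mathcal{U}_\phi$-truncation of $S^{-1}W$, you must already know that $S^{-1}W'$ lies in the co-aisle of the new t-structure, i.e.\ in $\mathcal{W}_\phi\cap\D(S^{-1}R)$ — but that is precisely the statement "$S^{-1}$ preserves $\mathcal{W}_\phi$" you are trying to prove, so the argument is circular. The same circularity appears if you instead try to test $S^{-1}W$ against the generators: the adjunction converts $\Hom_{\D(S^{-1}R)}(S^{-1}(R/\p)[-i+k],S^{-1}W)$ back into $\Hom_{\D(R)}(R/\p[-i+k],S^{-1}W)$, not into a Hom group landing in $W$. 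The missing ingredient is a flat base-change isomorphism: since $R$ is Noetherian, $R/\p$ admits a resolution by finitely generated projective $R$-modules, and since $S^{-1}R$ is flat one gets
$$S^{-1}\bigl(\mathbf{R}\Hom_R(R/\p,W)\bigr)\cong \mathbf{R}\Hom_R(R/\p,S^{-1}W),$$
whence $\Hom_{\D(R)}(R/\p[-i+k],W)=0$ for all $k\geq 0$ implies the same vanishing for $S^{-1}W$ (localization is exact, so it commutes with taking homology of the Hom-complex). As $\mathcal{W}_\phi=\mathcal{U}_\phi^{\perp}$ is characterized by exactly these vanishing conditions (definition \ref{t-structure compactly generated}), this yields $S^{-1}\mathcal{W}_\phi\subseteq\mathcal{W}_\phi$ and completes the proof; it is also the one place where the Noetherian hypothesis is indispensable.
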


\begin{remark}\rm{
We are interested in the case that $S=R\setminus \mathbf{p}$, for some $\mathbf{p}\in \Spec(R)$. In such case, if $\mathcal{W}$ is any of the classes $\mathcal{U}_{\phi}$ or $\mathcal{W}_{\phi}$, then it follows from the previous proposition that a complex $X\in \D(R)$ is in $\mathcal{W}$ if, and only if, $X_{\mathbf{p}}$ is in $\mathcal{W}$ for any $\mathbf{p} \in \Spec(R)$.}
\end{remark}

The following is a main result in \cite{AJS} (see \cite[Theorem 3.10]{AJS}).

\begin{theorem}\label{teo. main AJS}
Let $R$ be a commutative Noetherian ring and $(\mathcal{U},\mathcal{W}[1])$ be a t-structure on $\D(R)$. The following assertions are equivalent:
\begin{enumerate}
\item[1)] $\mathcal{U}$ is compactly generated;
\item[2)] there exists a sp-filtration $\phi:\mathbb{Z} \flecha \mathcal{P}(\Spec(R))$ such that $\mathcal{U}=\mathcal{U}_{\phi}$.
\end{enumerate}  
In such case, the corresponding t-structure $(\mathcal{U}_{\phi},\mathcal{U}_{\phi}^{\perp}[1])$ is described in terms of the sp-filtration by:
$$\mathcal{U}_{\phi}=\{X \in \D(R): \Supp(H^{j}(X)) \subseteq \phi(j), \text{ for all }j\in \text{ \Z} \}$$ $$\mathcal{U}_{\phi}^{\perp}=\{Y \in \D(R): \mathbf{R}\Gamma_{\phi(j)}(Y) \in \D^{>j}(R), \text{ for all }j\in \text{ \Z} \} $$
\end{theorem}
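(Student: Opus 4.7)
The plan is to prove the equivalence by first establishing the support-based description of both $\mathcal{U}_\phi$ and $\mathcal{U}_\phi^\perp$ (the ``in such case'' part), since this description is the bridge between the generator-theoretic statement (1) and the filtration-theoretic statement (2).

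\textbf{Step 1 (support description of $\mathcal{U}_\phi$).} First I would show
$$\mathcal{U}_\phi=\{X\in\D(R):\Supp(H^j(X))\subseteq\phi(j)\text{ for all }j\in\Z\}.$$
The inclusion $\subseteq$ is routine: the generators $R/\mathbf{p}[-i]$ with $\mathbf{p}\in\phi(i)$ satisfy this condition (using that $\phi(i)$ is sp-closed and that $\phi$ is decreasing, so positive shifts stay inside), and the right-hand class is closed under coproducts and extensions, hence contains the aisle. The inclusion $\supseteq$ is the delicate one: given $X$ with $\Supp(H^j(X))\subseteq\phi(j)$ for all $j$, I would filter $X$ by the Beilinson--Bernstein--Deligne truncations $\tau^{\leq n}X$ (with respect to the canonical t-structure), reducing inductively to the case of stalk complexes $M[-j]$ with $\Supp(M)\subseteq\phi(j)$; then, since $R$ is Noetherian, every such $M$ admits a finite filtration whose factors are quotients of shifted copies of $R/\mathbf{p}$ with $\mathbf{p}\in\phi(j)$, and any module supported in $\phi(j)$ is a direct limit of such finitely generated ones, so $M[-j]$ lies in $\mathcal{U}_\phi$. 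For $\mathcal{U}_\phi^\perp$: by adjunction and the fact that $R/\mathbf{p}[-i]\in\mathcal{U}_\phi$ for all $\mathbf{p}\in\phi(i)$, membership in the right orthogonal is equivalent to $\Hom_{\D(R)}(T[-i+k],Y)=0$ for every $k\geq 0$, every $i\in\Z$ and every $T$ with $\Supp(T)\subseteq\phi(i)$; by Theorem \ref{teo. Right derived AJS} this vanishing for all such $T$ is equivalent to $H^{i-k}(\mathbf{R}\Gamma_{\phi(i)}(Y))=0$ for all $k\geq 0$, i.e.\ $\mathbf{R}\Gamma_{\phi(i)}(Y)\in\D^{>i}(R)$.

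\textbf{Step 2 ($2)\Rightarrow 1)$).} I would replace each generator $R/\mathbf{p}[-i]$ by a compact one. Since $R$ is Noetherian, write $\mathbf{p}=(a_1,\dots,a_n)$ and take the Koszul complex $K(\mathbf{p})=K(a_1,\dots,a_n)$, which is a perfect (hence compact) complex in $\D(R)$. All its cohomologies are supported inside $V(\mathbf{p})\subseteq\phi(i)$, so by Step 1 we have $K(\mathbf{p})[-i]\in\mathcal{U}_\phi$. Conversely, I would check that the set $\{K(\mathbf{p})[-i]:i\in\Z,\ \mathbf{p}\in\phi(i)\}$ generates the same aisle as $\{R/\mathbf{p}[-i]\}$: using the support description of Step 1 applied to the compactly generated t-structure associated with the Koszul set, one shows its right orthogonal coincides with $\mathcal{U}_\phi^\perp$, because a complex $Y$ is Koszul-acyclic against $K(\mathbf{p})[-i+k]$ for all $k\geq 0$ iff $\mathbf{R}\Gamma_{V(\mathbf{p})}(Y)\in\D^{>i}(R)$, and varying $\mathbf{p}\in\phi(i)$ recovers the condition on $\mathbf{R}\Gamma_{\phi(i)}(Y)$.

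\textbf{Step 3 ($1)\Rightarrow 2)$).} Define
$$\phi(i):=\bigcup_{X\in\mathcal{U}}\Supp(H^i(X)).$$
Each $\phi(i)$ is a union of supports of modules, hence sp-closed; and since $\mathcal{U}[1]\subseteq\mathcal{U}$, whenever $X\in\mathcal{U}$ we also have $X[1]\in\mathcal{U}$ with $H^{i-1}(X[1])=H^i(X)$, forcing $\phi(i)\subseteq\phi(i-1)$, so $\phi$ is a sp-filtration. The inclusion $\mathcal{U}\subseteq\mathcal{U}_\phi$ is immediate from Step 1. For $\mathcal{U}_\phi\subseteq\mathcal{U}$ it suffices to show that every compact generator of $\mathcal{U}_\phi$ lies in $\mathcal{U}$; fixing a compact set of generators $\mathcal{S}$ of $\mathcal{U}$ (hypothesis 1) and a prime $\mathbf{p}\in\phi(i)$, there exists $X\in\mathcal{U}$ with $\mathbf{p}\in\Supp(H^i(X))$, hence, after localizing at $\mathbf{p}$ (using Proposition \ref{prop. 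Localization AJS}) and reducing modulo $\mathbf{p}R_\mathbf{p}$, the Koszul complex $K(\mathbf{p})[-i]$ appears as an iterated cone/shift of compact objects derived from $X$; this is exactly the argument that produces $K(\mathbf{p})[-i]\in\mathcal{U}$.

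\textbf{Main obstacle.} The hard step is clearly the final one in Step 3: showing $\mathcal{U}_\phi\subseteq\mathcal{U}$, i.e.\ that the abstract membership $\mathbf{p}\in\Supp(H^i(X))$ for some $X\in\mathcal{U}$ forces the concrete Koszul generator $K(\mathbf{p})[-i]$ to lie in $\mathcal{U}$. This requires combining the compactness of the generators of $\mathcal{U}$, the stability of $\mathcal{U}$ under the localization of Proposition \ref{prop. Localization AJS}, and the Noetherian structure of $R$ (to use finite generation of $\mathbf{p}$ and to make Koszul approximations work). The rest of the argument is a careful bookkeeping of supports and truncations.
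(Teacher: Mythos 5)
First, note that the thesis does not actually prove Theorem \ref{teo. main AJS}: it is quoted verbatim from Alonso--Jerem\'ias--Saor\'in \cite[Theorem 3.10]{AJS} as background in Chapter 2, so there is no internal proof to compare yours against. Judged as a reconstruction of the argument of \cite{AJS}, your proposal correctly identifies the overall architecture (establish the support descriptions first, pass from the generators $R/\mathbf{p}[-i]$ to Koszul complexes to get compactness, and extract an sp-filtration from the homology supports of a compactly generated aisle), and your derivation of the description of $\mathcal{U}_{\phi}^{\perp}$ from Theorem \ref{teo. Right derived AJS} is sound. But the two genuinely hard points are left unproved.

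The first gap is in Step 1. The reduction of the inclusion $\supseteq$ to stalk complexes ``by filtering $X$ by the truncations $\tau^{\leq n}X$'' only works when the homology of $X$ is suitably bounded: an aisle is closed under Milnor colimits of towers, so $X\cong \mathrm{Mcolim}\,\tau^{\leq n}X$ reduces you to the bounded-above case, but a bounded-above complex with homology in infinitely many negative degrees is approximated by its bounded truncations only through an inverse system, and aisles are not closed under homotopy limits. Handling this unbounded case is exactly where \cite{AJS} need their machinery on $\mathbf{R}\Gamma_Z$ and the telescope presentation of derived torsion; it cannot be dispatched by the inductive d\'evissage you describe. The second gap is the one you flag yourself at the end of Step 3: the claim that $\mathbf{p}\in \Supp(H^{i}(X))$ for some $X$ in a compactly generated aisle $\mathcal{U}$ forces $R/\mathbf{p}[-i]$ (equivalently $K(\mathbf{p})[-i]$) into $\mathcal{U}$ is the crux of the whole theorem, and asserting that the Koszul complex ``appears as an iterated cone/shift of compact objects derived from $X$'' is not an argument --- aisles are not closed under passing to subquotients of homology, so genuine input (localization at $\mathbf{p}$ via Proposition \ref{prop. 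Localization AJS}, reduction to the residue field over the local ring, and the interplay with $\mathbf{R}\Gamma_{V(\mathbf{p})}$) is required and is missing. Until these two points are supplied, what you have is a correct roadmap of \cite[Theorem 3.10]{AJS}, not a proof.
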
 

\begin{notation}\rm{
For each sp-filtration $\phi$, we will denoted by $\Hp$ the heart associated to $\mathcal{U}_{\phi}$. Furthermore, we will denoted by $\tau^{\leq}_{\phi}$ the left truncation functor associated to the aisle $\mathcal{U}_{\phi}$ and by $\tau^{>}_{\phi}$ the right truncation functor. Moreover, we will denoted by $\phi_{\leq k}$ the sp-filtration given by:
\begin{enumerate}
\item[1)] $\phi_{\leq k}(j):=\phi(j)$, \hspace{0.5cm} if  \hspace{0.4 cm}$j\leq k$;
\item[2)] $\phi_{\leq k}(j):=\emptyset$, \hspace{0.9cm} if \hspace{0.4 cm}$j> k.$  
\end{enumerate} }
\end{notation}

\begin{definition}\label{def. finite sp-filtration}\rm{
Let $\phi:\mathbb{Z} \flecha \mathcal{P}(\Spec(R))$ be a sp-filtration. Let $n \leq s$ be integers. We say that $\phi$ is \emph{determined in the interval} $[n,s]$ if $\phi(j)=\phi(n)$ for all $j \leq n$, $\phi(n) \supsetneq \phi(n+1)$ and $\phi(s) \supsetneq \phi(s+1)=\emptyset$. \\

If the sp-filtration $\phi:\mathbb{Z} \flecha \mathcal{P}(\Spec(R))$ is determined in the interval $[n,s] \subseteq \mathbb{Z}$ we say that $\phi$ is \emph{finite of length}\index{filtration! length} $L(\phi):=n-s+1$. Note that, for any finite sp-filtration $\phi$ we have that $L(\phi)\geq 1$. } 
\end{definition}

We finish the section by recalling the following result of \cite{AJS} which is very useful for induction arguments (see \cite[Corollary 5.6]{AJS}).

\begin{proposition}\label{prop. coro AJS}
Let $\phi:\mathbb{Z} \flecha \mathcal{P}(\Spec(R))$ be a finite sp-filtration determined in the interval $[n,s] \subseteq \mathbb{Z}$. Then for each $X\in \D(R)$ there is a triangle of the form:

$$\xymatrix{H^{s}(\tau^{\leq}_{\phi}(X))[-s] \ar[r] & \tau^{>}_{\phi_{\leq s-1}}(X) \ar[r] & \tau^{>}_{\phi}(X) \ar[r]^{\hspace{0.5 cm}+} & }$$ 
\end{proposition}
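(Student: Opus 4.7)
Set $A:=\tau^{\leq}_{\phi}(X)$. Since $\phi(j)=\emptyset$ for $j>s$, the characterization of $\mathcal{U}_{\phi}$ from Theorem \ref{teo. main AJS} forces $H^{j}(A)=0$ for $j>s$, so $A\in \D^{\leq s}(R)$. My starting point is the canonical truncation triangle at $s-1$:
$$\tau^{\leq s-1}(A)\flecha A\flecha H^{s}(A)[-s]\flecha \tau^{\leq s-1}(A)[1].$$
I will paste this with the defining triangle $A\flecha X\flecha \tau^{>}_{\phi}(X)\flecha A[1]$ of the $\phi$-truncation via the octahedral axiom applied to the composition $\tau^{\leq s-1}(A)\flecha A\flecha X$. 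This produces a triangle
$$H^{s}(A)[-s]\flecha C\flecha \tau^{>}_{\phi}(X)\flecha H^{s}(A)[-s+1],$$
where $C$ is the cone of $\tau^{\leq s-1}(A)\flecha X$.

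The remaining task is to identify $C$ with $\tau^{>}_{\phi_{\leq s-1}}(X)$, i.e.\ to recognise $\tau^{\leq s-1}(A)\flecha X\flecha C$ as the truncation triangle attached to the aisle $\mathcal{U}_{\phi_{\leq s-1}}$. For the left term, $\tau^{\leq s-1}(A)\in \mathcal{U}_{\phi_{\leq s-1}}$ is immediate: when $j\leq s-1$ one has $H^{j}(\tau^{\leq s-1}(A))=H^{j}(A)$, whose support lies in $\phi(j)=\phi_{\leq s-1}(j)$ because $A\in \mathcal{U}_{\phi}$; when $j\geq s$ both the cohomology and $\phi_{\leq s-1}(j)$ vanish. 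For the right term I need $C\in \mathcal{W}_{\phi_{\leq s-1}}:=\mathcal{U}_{\phi_{\leq s-1}}^{\perp}[1]$, and since this co-aisle is closed under extensions it suffices to place the two outer terms of the octahedral triangle inside it.

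That $\tau^{>}_{\phi}(X)\in \mathcal{W}_{\phi}\subseteq \mathcal{W}_{\phi_{\leq s-1}}$ is automatic from the inclusion $\mathcal{U}_{\phi_{\leq s-1}}\subseteq \mathcal{U}_{\phi}$. The genuine point---and the only real obstacle---is to check that the stalk $H^{s}(A)[-s]$ lies in $\mathcal{W}_{\phi_{\leq s-1}}$, equivalently that $H^{s}(A)[-s-1]\in \mathcal{U}_{\phi_{\leq s-1}}^{\perp}$. I will use the description $\mathcal{U}_{\phi_{\leq s-1}}^{\perp}=\{Y:\mathbf{R}\Gamma_{\phi_{\leq s-1}(j)}(Y)\in \D^{>j}(R) \text{ for all } j\}$ from Theorem \ref{teo. main AJS}. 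Since local cohomology of an $R$-module vanishes in negative degrees, $\mathbf{R}\Gamma_{Z}(H^{s}(A)[-s-1])$ has cohomology concentrated in degrees $\geq s+1$ for any sp-subset $Z$; for $j\leq s-1$ this already lies in $\D^{>j}(R)$, while for $j\geq s$ we have $\phi_{\leq s-1}(j)=\emptyset$ and so $\mathbf{R}\Gamma_{\emptyset}=0$. Once this is in hand, the triangle produced by the octahedral axiom is precisely the one asserted, since $H^{s}(A)=H^{s}(\tau^{\leq}_{\phi}(X))$.
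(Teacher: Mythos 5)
Your argument is sound in substance, and it is worth noting that the paper does not prove this proposition at all: it is stated as a quotation of \cite[Corollary 5.6]{AJS}, so there is no internal proof to compare against. Your route --- pasting the canonical truncation triangle $\tau^{\leq s-1}(A)\flecha A\flecha H^{s}(A)[-s]$ (valid because $A=\tau^{\leq}_{\phi}(X)$ lies in $\D^{\leq s}(R)$, since $\Supp(H^{j}(A))\subseteq\phi(j)=\emptyset$ for $j>s$) against $A\flecha X\flecha \tau^{>}_{\phi}(X)$ via the octahedron, and then recognising the cone of $\tau^{\leq s-1}(A)\flecha X$ as $\tau^{>}_{\phi_{\leq s-1}}(X)$ by checking both halves of the decomposition --- is exactly the kind of argument one expects to lie behind the AJS statement, and the individual verifications (aisle membership of $\tau^{\leq s-1}(A)$ via theorem \ref{teo. main AJS}, closure of the right orthogonal under extensions, the local-cohomology computation for the stalk) are all correct.

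One piece of bookkeeping needs fixing. For $\tau^{\leq s-1}(A)\flecha X\flecha C$ to be the truncation triangle of the t-structure $(\mathcal{U}_{\phi_{\leq s-1}},\mathcal{U}_{\phi_{\leq s-1}}^{\perp}[1])$, the third vertex must lie in $\mathcal{U}_{\phi_{\leq s-1}}^{\perp}$, \emph{not} in the co-aisle $\mathcal{U}_{\phi_{\leq s-1}}^{\perp}[1]$ that you call $\mathcal{W}_{\phi_{\leq s-1}}$: the decomposition triangle of a t-structure has outer terms in $\mathcal{U}$ and $\mathcal{U}^{\perp}$ (compare $\tau^{\geq 1}$ for the canonical t-structure). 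Since $\mathcal{U}^{\perp}\subseteq\mathcal{U}^{\perp}[1]$ and the inclusion is in general strict, proving $H^{s}(A)[-s-1]\in\mathcal{U}_{\phi_{\leq s-1}}^{\perp}$ establishes something weaker than what the uniqueness of the truncation triangle requires. Fortunately your computation repairs itself with no new input: $\mathbf{R}\Gamma_{Z}(H^{s}(A)[-s])$ has cohomology concentrated in degrees $\geq s$, hence lies in $\D^{>j}(R)$ for every $j\leq s-1$, while $\phi_{\leq s-1}(j)=\emptyset$ for $j\geq s$; so $H^{s}(A)[-s]$ itself belongs to $\mathcal{U}_{\phi_{\leq s-1}}^{\perp}$, and $\tau^{>}_{\phi}(X)\in\mathcal{U}_{\phi}^{\perp}\subseteq\mathcal{U}_{\phi_{\leq s-1}}^{\perp}$. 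With that adjustment the identification $C\cong\tau^{>}_{\phi_{\leq s-1}}(X)$, and hence the asserted triangle, follows.
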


\section{Questions tackled in this work}\label{sec. Questions}
In this section we bring together some questions that arise naturally from the previous results and some other questions for the compactly generated t-structures on $\D(R)$, where $R$ is a commutative Noetherian ring. We work on all of them in this thesis.\\

For the t-structure of Happel-Reiten-Smal\o:
\begin{enumerate}
\item[1)] Given a Grothendieck category $\G$ and a torsion pair in $\G$, $\te=(\T,\F)$, when is $\Ht$ a Grothendieck category?;

\item[2)] Given an associative ring with unity $R$ and a torsion pair $\te=(\T,\F)$ in $R$-Mod, when is $\Ht$ a module category?

\item[3)] Suppose that $\Ht$ is a module category. Is $\te$ an HKM torsion pair?
\end{enumerate}

For the first question, we got a partial answer which embraces a big amount of torsion pairs. However, we get, for the second question, an answer which covers all possible cases. On the other hand, we given a negative answer for the third question. \\

We tackle questions correspondents of 1 and 2 above for the compactly generated t-structures in the derived category of a commutative Noetherian ring $R$

\begin{enumerate}
\item[1$^{'}$)] Given a sp-filtration $\phi$ (finite, left bounded or arbitrary) of $\Spec(R)$, when is $\Hp$ a Grothendieck category?\\

Being more ambitious, we also ask:

\item[2$^{'}$)] Given a sp-filtration $\phi$ of $\Spec(R)$, when is $\Hp$ a module category?
\end{enumerate}

For the first question, we get a positive answer for the sp-filtrations which are left bounded. For the second question, we get an answer which covers all possible cases.

\chapter{Direct limits in the heart } 
In a large part of this work, we study when the heart of the t-structure of Happel-Reiten-Smal\o\hspace{0.1cm}  is a Grothendieck category or a module category. For the Grothendieck case, it is clear that the main difficulty lies in understanding when the heart is an AB5 abelian category. For this reason it is necessary to study in detail the direct limits in the heart and try to develop techniques that can be exploited in the hearts of arbitrary t-structures, as discussed in Chapter 5. \newline

All throughout this chapter, $\mathcal{G}$ is a Grothendieck category and $(\mathcal{D},?[1])$ is a triangulated category and we will fix a t-structure $(\mathcal{U},\mathcal{U}^{\perp}[1])$ in $\mathcal{D}$ and $\mathcal{H}:=\mathcal{U} \cap \mathcal{U}^{\perp}[1]$ will be its heart. We will denote by $\xymatrix{\tilde{H}:\mathcal{D} \ar[r] & \mathcal{H}}$ either of the naturally isomorphic functors $\tau_{\mathcal{U}} \hspace{0.05 cm}\circ \hspace{0.05 cm} \tau^{\mathcal{U}^{\perp}[1]}$ or $\tau^{\mathcal{U}^{\perp}[1]} \hspace{0.05 cm} \circ \tau_{\mathcal{U}}$ (see remarks \ref{properties t-structure}). This is in order to avoid confusion, in the case that $\mathcal{D}=\D(\mathcal{G})$, with the classical cohomological functor $\xymatrix{H=H^{0}:\D(\mathcal{G}) \ar[r] & \mathcal{G}}$.

\section{Direct limit properties} 
We start by giving some properties of the heart $\mathcal{H}$.

\begin{lemma}\label{lemma de adjunctions}
The following assertions hold:
\begin{enumerate}
\item[1)] If $X$ is an object of $\mathcal{U}$, the $\tilde{H}(X)\cong \tau^{\mathcal{U}^{\perp}}(X[-1])[1]$ and the assignment $X \rightsquigarrow \tilde{H}(X)$ defines an additive functor $\xymatrix{L:\mathcal{U} \ar[r] & \mathcal{H}}$ which is left adjoint to the inclusion $\xymatrix{j:\mathcal{H} \hspace{0.05cm}\ar@{^(->}[r] & \mathcal{U}}$.
\item[2)] If $Y$ is an object of $\mathcal{U}^{\perp}[1]$, then $\tilde{H}(Y)\cong \tau_{\mathcal{U}}(Y)$ and the assignment $Y \rightsquigarrow \tilde{H}(Y)$ defines an additive functor $\xymatrix{R: \mathcal{U}^{\perp}[1] \ar[r] & \mathcal{H}}$ which is right adjoint to the inclusion $\xymatrix{j:\mathcal{H} \hspace{0.05 cm}\ar@{^(->}[r] & \mathcal{U}^{\perp}[1]}.$
\end{enumerate}
\end{lemma}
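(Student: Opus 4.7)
The plan is to prove both assertions in parallel, exploiting the symmetry given by the two equivalent descriptions $\tilde{H}=\tau_{\mathcal{U}}\circ\tau^{\mathcal{U}^{\perp}[1]}=\tau^{\mathcal{U}^{\perp}[1]}\circ\tau_{\mathcal{U}}$ from Remarks \ref{properties t-structure}(6). In each part, the key identification of $\tilde{H}$ reduces to the observation that one of the two composed truncations acts as the identity on the full subcategory of $\mathcal{D}$ in question, while the remaining term automatically lies in the heart.

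For part (1), since $\tau_{\mathcal{U}}$ restricts to the identity on $\mathcal{U}$, one has $\tilde{H}(X)\cong\tau^{\mathcal{U}^{\perp}[1]}(X)$ for $X\in\mathcal{U}$. To pass from $\tau^{\mathcal{U}^{\perp}[1]}(X)$ to $\tau^{\mathcal{U}^{\perp}}(X[-1])[1]$, I would shift by $[1]$ the truncation triangle of $X[-1]$ for the original t-structure, obtaining
\[\tau_{\mathcal{U}}(X[-1])[1]\longrightarrow X\longrightarrow \tau^{\mathcal{U}^{\perp}}(X[-1])[1]\longrightarrow \tau_{\mathcal{U}}(X[-1])[2].\]
Since the left term lies in $\mathcal{U}[1]$ and the third in $\mathcal{U}^{\perp}[1]$, the uniqueness of the truncation triangle for the shifted t-structure $(\mathcal{U}[1],\mathcal{U}^{\perp}[1])$ of Remark \ref{properties t-structure}(3) identifies the two. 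For the adjunction, I would apply $\Hom_{\mathcal{D}}(-,M)$ to this triangle, for $M\in\mathcal{H}=\mathcal{U}\cap\mathcal{U}^{\perp}[1]$: the flanking Hom groups $\Hom(\tau_{\mathcal{U}}(X[-1])[k],M)\cong \Hom(\tau_{\mathcal{U}}(X[-1])[k-1],M[-1])$, for $k=1,2$, both vanish because $\tau_{\mathcal{U}}(X[-1])[k-1]\in\mathcal{U}$ (invoking $\mathcal{U}[1]\subseteq\mathcal{U}$ when $k=2$) while $M[-1]\in\mathcal{U}^{\perp}$. The resulting isomorphism $\Hom_{\mathcal{H}}(L(X),M)\cong\Hom_{\mathcal{U}}(X,j(M))$, with unit the middle arrow $X\to L(X)$, is natural in both arguments; additivity and functoriality of $L$ are inherited from those of $\tau^{\mathcal{U}^{\perp}}$ and of the shift.

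Part (2) follows by the dual argument. Since $\tau^{\mathcal{U}^{\perp}[1]}$ is the identity on $\mathcal{U}^{\perp}[1]$, we get $\tilde{H}(Y)=\tau_{\mathcal{U}}(Y)$ for $Y\in\mathcal{U}^{\perp}[1]$; that $\tau_{\mathcal{U}}(Y)$ already lies in $\mathcal{H}$ follows from the rotated truncation triangle $\tau^{\mathcal{U}^{\perp}}(Y)[-1]\to \tau_{\mathcal{U}}(Y)\to Y\to \tau^{\mathcal{U}^{\perp}}(Y)$, because $Y\in\mathcal{U}^{\perp}[1]$, one has $\tau^{\mathcal{U}^{\perp}}(Y)[-1]\in\mathcal{U}^{\perp}[-1]\subseteq\mathcal{U}^{\perp}\subseteq\mathcal{U}^{\perp}[1]$ (all consequences of $\mathcal{U}[1]\subseteq\mathcal{U}$), and $\mathcal{U}^{\perp}[1]$ is closed under extensions as the co-aisle of $(\mathcal{U}[1],\mathcal{U}^{\perp}[1])$. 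The adjunction $\Hom_{\mathcal{H}}(M,R(Y))\cong \Hom_{\mathcal{U}^{\perp}[1]}(j(M),Y)$ then reduces to the adjunction between $\tau_{\mathcal{U}}$ and the inclusion $\mathcal{U}\hookrightarrow\mathcal{D}$ applied to $M\in\mathcal{H}\subseteq\mathcal{U}$. The main obstacle I foresee is the notational bookkeeping of the two different shifts of the t-structure that appear implicitly---keeping clear which truncation triangle belongs to $(\mathcal{U},\mathcal{U}^{\perp}[1])$ and which to $(\mathcal{U}[1],\mathcal{U}^{\perp}[1])$---but once this identification is organized, every step reduces to routine manipulations with truncation triangles and the defining orthogonality $\Hom_{\mathcal{D}}(\mathcal{U},\mathcal{U}^{\perp})=0$.
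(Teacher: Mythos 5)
Your proof is correct and takes essentially the same route as the paper: the identification $\tilde{H}(X)\cong\tau^{\mathcal{U}^{\perp}}(X[-1])[1]$ via the shifted truncation triangle is exactly the paper's argument, and your adjunction isomorphisms are the paper's chain of adjunction/shift isomorphisms unwound into the underlying orthogonality computation (the paper proves the adjunction in part 1 and declares part 2 dual, while you treat part 2 directly, but the content is identical). The only cosmetic slip is writing the shifted t-structure as $(\mathcal{U}[1],\mathcal{U}^{\perp}[1])$ instead of $(\mathcal{U}[1],\mathcal{U}^{\perp}[2])$ in the paper's convention, which does not affect the argument.
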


\begin{proof}
Note that a sequence of morphisms 
$$\xymatrix{U \ar[r] & X[-1] \ar[r]^{\hspace{0.4 cm}g} & V \ar[r]^{h\hspace{0.2 cm}} & U[1]}$$
is a distinguished triangle if, and only if, the sequence
$$\xymatrix{U[1] \ar[r] & X \ar[r]^{g[1]\hspace{0.2cm}} & V[1] \ar[r]^{h[1]} & U[2]}$$
is so. Taking $X,U\in \mathcal{U}$ and $V\in \mathcal{U}^{\perp}$ we obtain that $\tau^{\mathcal{U}^{\perp}[1]}(X)=\tau^{\mathcal{U}^{\perp}}(X[-1])[1]$, and then the isomorphism $\tilde{H}(X)\cong \tau^{\mathcal{U}^{\perp}}(X[-1])[1]$ follows from the definition of $\tilde{H}$. On the other hand, if $Y\in \mathcal{U}^{\perp}[1]$ then, by definition of $\tilde{H}$, we get $\tilde{H}(Y)\cong \tau_{\mathcal{U}}(Y).$ \newline

The part of assertion 1 relative to the adjunction is dual to that of assertion 2 (see remarks \ref{properties t-structure}). We then prove the adjunction of assertion 1, which follows directly from the following chain of isomorphism, using the fact that 
$\xymatrix{\tau^{\mathcal{U}^{\perp}}:\mathcal{T} \ar[r] & \mathcal{U}^{\perp}}$ is left adjoint to the inclusion $\xymatrix{j_{\mathcal{U^{\perp}}}:\mathcal{U}^{\perp} \hspace{0.1cm}\ar@{^(->}[r]& \mathcal{T}}$: 
\begin{small}
$$\Hom_{\mathcal{U}}(U,j(Z))\cong \Hom_{\mathcal{T}}(U[-1],j(Z)[-1]) \cong \Hom_{\mathcal{U}^{\perp}}(\tau^{\mathcal{U}^{\perp}}(U[-1]),j(Z)[-1]) \cong \Hom_{\mathcal{H}}(L(U),Z)$$
\end{small}
\end{proof}

\vspace{0.3 cm}

The following result tells us that, in general, the heart is not very pathological

\begin{proposition}\label{AB3 t-structure}
Let $\mathcal{D}$ be a triangulated category which has coproducts (resp. products) and let $(\mathcal{U,U}^{\perp}[1])$ be a t-structure in $\mathcal{D}$. Then the heart $\mathcal{H}$ is an AB3 (resp. AB3*) abelian category.
\end{proposition}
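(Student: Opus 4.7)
The plan is to reduce everything to the adjunctions provided by Lemma \ref{lemma de adjunctions} and to the basic fact, already recalled in remark (5) after Definition \ref{definition t-structure}, that $\mathcal{H}$ is abelian in any triangulated category. By the self-duality of the notion of t-structure (remark (4) after Definition \ref{definition t-structure}), it is enough to handle the AB3 case: the AB3$^{*}$ statement follows by passing to $\mathcal{D}^{op}$, where products become coproducts and the roles of $\mathcal{U}$ and $\mathcal{U}^\perp[1]$ are swapped.

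Assume then that $\mathcal{D}$ has coproducts and fix a family $\{M_i\}_{i\in I}$ in $\mathcal{H}$. The first step is to observe that $\mathcal{U}$ is closed under arbitrary coproducts taken in $\mathcal{D}$. Indeed, for every $Y\in \mathcal{U}^{\perp}$, one has
$$\Hom_{\mathcal{D}}\bigl(\coprod_{i\in I}M_i,Y\bigr)\cong \prod_{i\in I}\Hom_{\mathcal{D}}(M_i,Y)=0,$$
so $\coprod_{i\in I}M_i\in {}^{\perp}(\mathcal{U}^{\perp})=\mathcal{U}$. In particular, this object is also the coproduct of the $M_i$ in the full subcategory $\mathcal{U}$.

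The second step is to apply the left adjoint $L:\mathcal{U}\flecha \mathcal{H}$ of the inclusion $j:\mathcal{H}\monic \mathcal{U}$ given by Lemma \ref{lemma de adjunctions}(1). Since left adjoints preserve colimits (in particular coproducts), the object
$$\widetilde{H}\bigl(\coprod_{i\in I}M_i\bigr)\;=\;L\bigl(\coprod_{i\in I}M_i\bigr)$$
is a coproduct of $\{L(M_i)\}_{i\in I}=\{M_i\}_{i\in I}$ in $\mathcal{H}$. Concretely, using that $j$ is fully faithful, for every $N\in \mathcal{H}$ we have natural bijections
$$\Hom_{\mathcal{H}}\bigl(L(\coprod_i M_i),N\bigr)\cong \Hom_{\mathcal{U}}\bigl(\coprod_i M_i,j(N)\bigr)\cong \prod_{i}\Hom_{\mathcal{U}}(M_i,j(N))\cong \prod_{i}\Hom_{\mathcal{H}}(M_i,N),$$
which is precisely the universal property of the coproduct in $\mathcal{H}$. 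Therefore $\mathcal{H}$ has arbitrary coproducts and is AB3. There is no real obstacle here; the only delicate point is verifying that coproducts in $\mathcal{D}$ land in $\mathcal{U}$, which is immediate from the orthogonality description of the aisle, and the rest is a formal adjunction argument. The AB3$^{*}$ case is treated symmetrically, replacing $L$ with the right adjoint $R:\mathcal{U}^{\perp}[1]\flecha \mathcal{H}$ of Lemma \ref{lemma de adjunctions}(2) and using that $\mathcal{U}^{\perp}[1]$ is closed under products in $\mathcal{D}$.
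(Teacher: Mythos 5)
Your proof is correct and follows essentially the same route as the paper: both arguments observe that $\mathcal{U}={}^{\perp}(\mathcal{U}^{\perp})$ is closed under coproducts in $\mathcal{D}$ and then transport these coproducts into $\mathcal{H}$ via the left adjoint $L:\mathcal{U}\flecha\mathcal{H}$ of Lemma \ref{lemma de adjunctions}, using that left adjoints preserve coproducts and that the counit $L\circ j\flecha 1_{\mathcal{H}}$ is an isomorphism. The only difference is cosmetic: you spell out the closure of $\mathcal{U}$ under coproducts and the universal-property computation, which the paper leaves implicit.
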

\begin{proof}
It is a known fact and very easy to prove that if $\xymatrix{L:\mathcal{C} \ar[r] & \mathcal{C}^{'}}$ is a left adjoint functor and $\mathcal{C}$ has coproducts, then for each family $(C_i)_{i\in I}$ of objects of $\mathcal{C}$, the family $(L(C_i))_{i\in I}$ has a coproduct in $\mathcal{C}^{'}$ and one has an isomorphism $\underset{i\in I}{\coprod} L(C_i) \cong L(\underset{i \in I}{\coprod} C_i)$. \newline

Let now $(Z_i)_{i \in I}$ be a family of objects of $\mathcal{H}$. Since the counit $\xymatrix{L \circ j \ar[r] & 1_{\mathcal{H}}}$ is an isomorphism, we obtain that $(L \circ j)(Z_i)\cong Z_i$ for all $i\in I$, therefore, from the previous paragraph we get that the family $(Z_i)_{i\in I}$ has a coproduct in $\mathcal{H}$. The statement about products is dual to the one for coproducts.
\end{proof}

\begin{definition}\rm{
Let us assume that $\mathcal{D}$ has coproducts (resp. products) and that coproducts (resp. products) of triangles are triangles. The t-structure $(\mathcal{U},\mathcal{U}^{\perp}[1])$ is called \emph{smashing} \index{t-structure ! smashing}(resp. \emph{co-smashing}\index{t-structure ! co-smashing }) when $\mathcal{U}^{\perp}$ (resp. $\mathcal{U}$) is closed under taking coproducts (resp. products) in $\mathcal{D}$. Equivalently, when the left (resp. right) truncation functor $\xymatrix{\tau_{\mathcal{U}}: \mathcal{D} \ar[r] & \mathcal{U}}$ (resp. $\xymatrix{\tau^{\mathcal{U}^{\perp}}:\mathcal{D} \ar[r] & \mathcal{U}^{\perp}}$) preserver coproducts (resp. products).  }
\end{definition}

\begin{examples}\rm{
\begin{enumerate}
\item[1)] Every compactly generated t-structure in a triangulated category with coproducts is smashing.
\item[2)] If $\mathcal{T}\subseteq R$-Mod is a TTF class, then $(\mathcal{U}_{\mathbf{t}},\mathcal{U}_{\mathbf{t}}^{\perp}[1])$ is a co-smashing t-structure in $\D(R)$, where $\mathbf{t}$ is the right constituent pair of the TTF triple $(^{\perp}\mathcal{T}, \mathcal{T} ,\mathcal{T}^{\perp})$.
\end{enumerate}}
\end{examples}

\begin{proposition}\label{sufficient AB4}
Let $\mathcal{D}$ be a triangulated category that has coproducts (resp. products) and suppose that coproducts (resp. products) of triangles are triangles in $\D$. If $\mathcal{H}$ is closed under taking coproducts (resp. products) in $\D$, then $\mathcal{H}$ is an AB4 (resp. AB4*) abelian category. In particular, that happens when $(\mathcal{U},\mathcal{U}^{\perp}[1])$ is a smashing (resp. co-smashing) t-structure.
\end{proposition}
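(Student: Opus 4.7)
The plan is to reduce the AB4 condition to the behaviour of coproducts of triangles. Recall from Proposition~\ref{AB3 t-structure} that $\mathcal{H}$ is already AB3 (resp.\ AB3*), so that the coproduct (resp.\ product) in $\mathcal{H}$ of a family $(X_i)_{i\in I}$ is given by $L(\coprod_{i\in I}X_i)$ (resp.\ $R(\prod_{i\in I}X_i)$), where $\coprod$ and $\prod$ are computed in $\mathcal{D}$. The key observation is that if $\mathcal{H}$ is closed under coproducts in $\mathcal{D}$, then each $\coprod_{i\in I}X_i$ already lies in $\mathcal{H}$, so the counit $L\circ j \iso 1_{\mathcal{H}}$ forces the coproduct in $\mathcal{H}$ to coincide with the coproduct in $\mathcal{D}$.

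To verify AB4, I would take an arbitrary family $\{0 \to X_i \to Y_i \to Z_i \to 0\}_{i\in I}$ of short exact sequences in $\mathcal{H}$, which by the description of short exact sequences in the heart (see remarks \ref{properties t-structure}.5) corresponds to a family of triangles
$$\xymatrix{X_i \ar[r] & Y_i \ar[r] & Z_i \ar[r] & X_i[1]}$$
in $\mathcal{D}$ with all three outer terms in $\mathcal{H}$. By hypothesis, the coproduct of these triangles is again a triangle
$$\xymatrix{\coprod_{i\in I}X_i \ar[r] & \coprod_{i\in I}Y_i \ar[r] & \coprod_{i\in I}Z_i \ar[r] & (\coprod_{i\in I}X_i)[1]}$$
and the closure of $\mathcal{H}$ under coproducts in $\mathcal{D}$ ensures that each of the three coproducts belongs to $\mathcal{H}$. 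Hence this triangle encodes a short exact sequence in $\mathcal{H}$, which, by the first paragraph, is exactly the coproduct in $\mathcal{H}$ of the initial short exact sequences. This proves that the coproduct functor $\coprod:[I,\mathcal{H}]\to\mathcal{H}$ is exact, and therefore that $\mathcal{H}$ is AB4. The AB4* case is dual.

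Finally, to get the last assertion, I would note that the inclusion $\mathcal{U}\hookrightarrow\mathcal{D}$ has the right adjoint $\tau_{\mathcal{U}}$, so it is a left adjoint and hence preserves coproducts; in particular $\mathcal{U}$ is closed under coproducts in $\mathcal{D}$. The smashing hypothesis means precisely that $\mathcal{U}^{\perp}$ (equivalently $\mathcal{U}^{\perp}[1]$) is also closed under coproducts in $\mathcal{D}$. Consequently $\mathcal{H}=\mathcal{U}\cap\mathcal{U}^{\perp}[1]$ is closed under coproducts, and the first part applies. The co-smashing case is dual. The only real subtlety in the argument is making sure that the coproducts computed in $\mathcal{H}$ agree with those computed in $\mathcal{D}$; once this identification is made, exactness is a direct translation of the fact that coproducts of triangles are triangles.
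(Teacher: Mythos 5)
Your proposal is correct and follows essentially the same route as the paper: invoke Proposition \ref{AB3 t-structure} for AB3, realize a family of short exact sequences in $\mathcal{H}$ as triangles in $\mathcal{D}$, use that coproducts of triangles are triangles together with the closure of $\mathcal{H}$ under coproducts to obtain the coproduct short exact sequence, and deduce the smashing case from the standard fact that the aisle $\mathcal{U}$ is closed under coproducts in $\mathcal{D}$. Your extra remark identifying coproducts in $\mathcal{H}$ with those in $\mathcal{D}$ via the counit $L\circ j\cong 1_{\mathcal{H}}$ only makes explicit what the paper uses implicitly.
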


\begin{proof}
The proposition \ref{AB3 t-structure} shows that $\mathcal{H}$ is an AB3 (resp. AB3*) abelian category. Note that $\mathcal{U}$ (resp. $\mathcal{U}^{\perp}$) is closed under taking coproducts (resp. products) in $\mathcal{D}$. Then the final assertion follows automatically from the first part of the proposition and from the definition of smashing (resp. co-smashing) t-structure.  We just do the AB4 case since the AB4* one is dual. Let $(\xymatrix{0 \ar[r] & X_i \ar[r] & Y_i \ar[r] & Z_i \ar[r] & 0})_{i\in I}$ be a family of short exact sequences in $\mathcal{H}$. Accoding to \cite{BBD}, they come from triangles in $\mathcal{D}$. Then the exactness of coproducts in $\mathcal{D}$ gives a triangle
$$\xymatrix{\underset{i\in I}{\coprod}X_i \ar[r] &\underset{i \in I}{\coprod} Y_i \ar[r] &\underset{i \in I}{\coprod} Z_i \ar[r]^{\hspace{0.6 cm}+} &}$$  
where the three terms are in $\mathcal{H}$ since $\mathcal{H}$ is closed under taking coproducts in $\D$. We then get the desired short exact sequence in $\mathcal{H}$: 
$$\xymatrix{0 \ar[r] & \underset{i\in I}{\coprod}X_i \ar[r] &\underset{i \in I}{\coprod} Y_i \ar[r] &\underset{i \in I}{\coprod} Z_i \ar[r] & 0}$$ 
\end{proof}

\begin{example}\label{exam. Ht AB4}\rm{
Let $\te=(\T,\F)$ be a torsion pair in $\G$ and let $(\mathcal{U}_{\te},\mathcal{U}_{\te}^{\perp}[1])$ be the associated Happel-Reiten-Smal\o \ t-structure in $\D(\G)$. The heart $\Ht$ is an AB4 abelian category. }
\end{example}
\begin{proof}
From \cite[Exercises V.1]{S}, we obtain that $\F$ is closed under taking coproducts in $\G$. On the other hand, the classical cohomological functors $H^{m}:\D(\G) \flecha \G$ preserve coproducts since $\G$ is in particular an AB4 abelian category. It follows that if $(M_i)_{i \in I}$ is a family of objects in $\Ht$, then $\underset{i \in I}{\coprod}M_i$ is in $\Ht$ and, hence the example follows from the previous proposition.
\end{proof}

\begin{definition}\label{datum}\rm{
Let $\mathcal{X}$ be any full subcategory of $\mathcal{D}$. A \emph{cohomological datum}\index{cohomological datum} in $\mathcal{D}$ with respect to $\mathcal{X}$ is a pair $(H,r)$ consisting of a cohomological functor $\xymatrix{H:\mathcal{D} \ar[r] & \mathcal{A}}$, where $\mathcal{A}$ is an abelian category, and $r\in $ \Z $\cup \{+ \infty\}$ such that the family of functor \linebreak $(\xymatrix{H_{|_{\mathcal{X}}}^k:\mathcal{X} \hspace{0.05cm} \ar@{^(->}[r] & \mathcal{D} \ar[r]^{H^{k}} & \mathcal{A}})_{k<r}$ is conservative. That is, if $X\in \mathcal{X}$ and $H^{k}(X)=0,$ for all $k<r$, then $X=0$.}
\end{definition}

The following is an useful result inspired by \cite[Theorem 3.7]{CGM} (see also theorem \ref{teo. motived CGM}). It gives a sufficient condition for the heart to be an AB5 abelian category which, as we will see later, is sometimes also necessary.

\begin{proposition}\label{sufficient AB5 general}
Suppose that $\mathcal{D}$ has coproducts and let $(\xymatrix{H:\mathcal{D} \ar[r] & \mathcal{A}},r)$ be a cohomological datum in $\mathcal{D}$ with respect to the heart $\mathcal{H}=\mathcal{U} \cap \mathcal{U}^{\perp}[1]$. Suppose that $I$ is a small category such that $I$-colimits exist and are exact in $\mathcal{A}$. If, for each diagram $\xymatrix{X: I \ar[r] & \mathcal{H}}$ and each integer $k<r$, the canonical morphism 
$$\xymatrix{colim\hspace{0.02 cm} H^{k}(X_i) \ar[r] & H^{k}(colim_{\mathcal{H}} (X_i))}$$
is an isomorphism, then $I$-colimits are exact in $\mathcal{H}$.
\end{proposition}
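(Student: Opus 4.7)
The goal is to prove that $I$-colimits are exact in $\mathcal{H}$. Let $0\to X\to Y\to Z\to 0$ be a short exact sequence in $[I,\mathcal{H}]$. Since $\mathcal{H}$ is AB3 by Proposition~\ref{AB3 t-structure} (hence cocomplete, as it is abelian), $\mathrm{colim}_{\mathcal{H}}$ exists and, being a left adjoint to the constant diagram functor, preserves cokernels. Thus the sequence $\mathrm{colim}_{\mathcal{H}}X\to\mathrm{colim}_{\mathcal{H}}Y\to\mathrm{colim}_{\mathcal{H}}Z\to 0$ is already exact, and the whole issue is to show that $f\colon\mathrm{colim}_{\mathcal{H}}X\to\mathrm{colim}_{\mathcal{H}}Y$ is a monomorphism. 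Writing $K:=\Ker(f)\in\mathcal{H}$, the conservativity of $(H^{k}_{|\mathcal{H}})_{k<r}$ reduces the problem to showing $H^{k}(K)=0$ for every $k<r$.

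The strategy is to produce and compare two long exact sequences in $\mathcal{A}$. On one hand, I form the cone $C:=\Cone(f)$ in $\mathcal{D}$, with defining triangle $(\alpha):\mathrm{colim}_{\mathcal{H}}X\xrightarrow{f}\mathrm{colim}_{\mathcal{H}}Y\to C\to\mathrm{colim}_{\mathcal{H}}X[1]$. Applying the heart cohomology $\tilde{H}$ to $(\alpha)$ and using that its outer terms lie in $\mathcal{H}$ yields $\tilde{H}^{-1}(C)=K$, $\tilde{H}^{0}(C)=\Coker(f)=\mathrm{colim}_{\mathcal{H}}Z$ and $\tilde{H}^{j}(C)=0$ otherwise, so $C$ fits in a canonical truncation triangle $(\beta):C\xrightarrow{\rho}\mathrm{colim}_{\mathcal{H}}Z\to K[2]\to C[1]$. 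On the other hand, applying $H^{k}$ pointwise to the triangles $X_{i}\to Y_{i}\to Z_{i}\to X_{i}[1]$ and then the exact $I$-colimit in $\mathcal{A}$ gives a long exact sequence whose terms, by the hypothesis, are canonically identified with $H^{k}(\mathrm{colim}_{\mathcal{H}}W)$ for $W\in\{X,Y,Z\}$ in the range $k<r$. By TR3 each pointwise triangle maps into $(\alpha)$ through the canonical morphisms into the respective colimits, producing a morphism of long exact sequences; the Five Lemma, applied in the admissible range, forces $\mathrm{colim}_{i}H^{k}(Z_{i})\xrightarrow{\sim}H^{k}(C)$, and composing with the hypothesis isomorphism shows that $H^{k}(\rho)$ is an isomorphism for every $k$ with $k+1<r$.

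To conclude, I apply $H^{k}$ to the triangle $(\beta)$ to obtain
$$H^{k}(C)\xrightarrow{H^{k}(\rho)}H^{k}(\mathrm{colim}_{\mathcal{H}}Z)\to H^{k+2}(K)\to H^{k+1}(C)\xrightarrow{H^{k+1}(\rho)}H^{k+1}(\mathrm{colim}_{\mathcal{H}}Z),$$
which forces $H^{k+2}(K)=0$ whenever both of the flanking vertical maps are isomorphisms, i.e.\ whenever $k+2\leq r-1$. As $k$ runs over all integers this yields $H^{j}(K)=0$ for every $j<r$, and conservativity gives $K=0$, as required. The main technical obstacle will be the compatibility of the morphism of long exact sequences with $\rho$: the lifts $Z_{i}\to C$ provided by TR3 are not canonical, so one must verify -- via the naturality of $\tilde{H}$ applied to any morphism of triangles -- that the universal map $Z_{i}\to\mathrm{colim}_{\mathcal{H}}Z$ factors as $Z_{i}\to C\xrightarrow{\rho}\mathrm{colim}_{\mathcal{H}}Z$, so that the Five Lemma conclusion really furnishes an isomorphism $H^{k}(\rho)$ rather than merely an abstract isomorphism between $H^{k}(C)$ and $H^{k}(\mathrm{colim}_{\mathcal{H}}Z)$.
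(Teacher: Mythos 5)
Your overall strategy is the paper's: reduce exactness of $I$-colimits to the vanishing of a kernel object detected by the conservative family $(H^{k})_{k<r}$, and obtain that vanishing by comparing the colimit of the pointwise long exact sequences with a long exact sequence attached to the colimit sequence, via the five lemma. The implementation of the comparison, however, has a genuine gap. Your middle column is built from TR3 completions $h_i\colon Z_i\to C=\Cone(f)$. Two such completions differ by $\psi\circ\partial_i$, where $\partial_i\colon Z_i\to X_i[1]$ is the connecting morphism of the $i$-th triangle and $\psi\colon X_i[1]\to C$ is arbitrary, and $H^{k}(\psi\circ\partial_i)$ has no reason to vanish (already $\Hom_{\D}(X_i[1],K[1])\cong\Hom_{\mathcal{H}}(X_i,K)$ embeds into $\Hom_{\D}(X_i[1],C)$, so $\psi$ can be nonzero). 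Worse, since cones are not functorial in a triangulated category, the $h_i$ need not commute with the transition morphisms $Z_i\to Z_j$; hence the maps $H^{k}(h_i)$ need not form a cocone, and the arrow $\mathrm{colim}\,H^{k}(Z_i)\to H^{k}(C)$ that you feed into the five lemma is simply not defined. The obstacle you do flag, namely whether $\rho\circ h_i$ equals the canonical map $Z_i\to\mathrm{colim}_{\mathcal{H}}Z_i$, is in fact harmless (both are morphisms of $\mathcal{H}$ with the same composite with the epimorphism $Y_i\to Z_i$), but it is not the real problem.

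The repair is exactly what the paper does: replace the cone by the image. Put $L:=\Imagen(f)$ in $\mathcal{H}$ and compare each pointwise short exact sequence with $0\to L\to\mathrm{colim}_{\mathcal{H}}Y_i\to\mathrm{colim}_{\mathcal{H}}Z_i\to 0$ through the triple of canonical morphisms $(p\circ\mathrm{can},\mathrm{can},\mathrm{can})$, where $p\colon\mathrm{colim}_{\mathcal{H}}X_i\twoheadrightarrow L$. This is a morphism of short exact sequences of $\mathcal{H}$, functorial in $i$, and it induces a morphism of the associated triangles including the connecting maps, by naturality of the extension class in $\Hom_{\D}(Z,X[1])=\Ext^{1}_{\mathcal{H}}(Z,X)$; so the comparison diagram of long exact sequences is well defined after taking colimits. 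The five lemma then shows that the composite $\mathrm{colim}\,H^{k}(X_i)\to H^{k}(\mathrm{colim}_{\mathcal{H}}X_i)\to H^{k}(L)$ is an isomorphism for $k<r$, hence so is $H^{k}(p)$; the conservativity hypothesis applied to $\Ker(p)$ gives that $p$ is an isomorphism and $f$ is a monomorphism, and your triangle $(\beta)$ is not needed.
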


\begin{proof}
By proposition \ref{AB3 t-structure}, we know that $\mathcal{H}$ is AB3 or equivalently cocomplete. Let us consider an $I$-diagram $\xymatrix{0 \ar[r] & X_i \ar[r] & Y_i \ar[r] & Z_i \ar[r] & 0}$ of short exact sequences in $\mathcal{H}$. For each $i\in I$, we use the cohomological condition of $H$ and get the following exact sequence in $\mathcal{A}$:
$$\xymatrix{\cdots \ar[r] & H^{n}(X_i) \ar[r] & H^{n}(Y_i) \ar[r] & H^{n}(Z_i) \ar[r] & H^{n+1}(X_i) \ar[r] & \cdots }$$

Now using the fact that $I$-colimits are exact in $\mathcal{A}$, we obtain the next exact sequence in $\mathcal{A}$
\begin{small}
$$\xymatrix{\cdots \ar[r] & colim \hspace{0.02 cm} H^{n}(X_i) \ar[r] & colim \hspace{0.02 cm} H^{n}(Y_i) \ar[r] & colim \hspace{0.02 cm} H^{n}(Z_i) \ar[r] & colim \hspace{0.02 cm} H^{n+1}(X_i) \ar[r] & \cdots }$$
\end{small}
On the other hand, by right exactness of colimits, we then get an exact sequence in $\mathcal{H}$:
$$\xymatrix{colim_{\mathcal{H}}(X_i) \ar[r]^{f} & colim_{\mathcal{H}}(Y_i) \ar[r]^{g} & colim_{\mathcal{H}}(Z_i) \ar[r] & 0}$$

We put $L:=Im(f)$ and then consider the two induced short exact sequences in $\mathcal{H}$:

\begin{small}
$$\xymatrix{0 \ar[r] & L \ar[r] & colim_{\mathcal{H}}(Y_i) \ar[r]^{g} & colim_{\mathcal{H}}(Z_i) \ar[r] & 0 \\0 \ar[r] & W \ar[r] & colim_{\mathcal{H}}(X_i) \ar[r]^{\hspace{0.6 cm}p} & L \ar[r] & 0 }$$
\end{small}

We view all given short exact sequences in $\mathcal{H}$ as triangles in $\mathcal{D}$ and using again the cohomological condition of $H$, we get the following commutative diagram in $\mathcal{A}$ with exact columns:
\begin{small}
$$\xymatrix @d {colim \hspace{0.02 cm} H^{k-1}(Y_i) \ar[r] \ar[d]_{\hspace{0.4 cm}\sim} & colim \hspace{0.02 cm} H^{k-1}(Z_i) \ar[r] \ar[d]_{ \hspace{0.4 cm}\sim } & colim \hspace{0.02 cm} H^{k}(X_i) \ar[r] \ar[d] & colim \hspace{0.02 cm} H^{k}(Y_i) \ar[r] \ar[d]_{\hspace{0.4 cm}\sim} & colim \hspace{0.02 cm} H^{k}(Z_i)  \ar[d]_{\hspace{0.4 cm} \sim} \\ H^{k-1}(colim_{\mathcal{H}}(Y_i)) \ar[r] & H^{k-1}(colim_{\mathcal{H}}(Z_i)) \ar[r] & H^{k}(L) \ar[r] & H^{k}(colim_{\mathcal{H}}(Y_i)) \ar[r] & H^{k}(colim_{\mathcal{H}}(Z_i)) &}$$
\end{small}
where the row arrow $\xymatrix{colim \hspace{0.02cm} H^{k}(X_i) \ar[r] & H^{k}(L)}$ is the composition: 
$$\xymatrix{colim \hspace{0.02 cm} H^{k}(X_i) \ar[r] & H^{k}(colim_{\mathcal{H}}(X_i)) \ar[r]^{\hspace{0.8 cm}H^{k}(p)} & H^{k}(L)}$$
for each $k\in \mathbb{Z}$. Further for $k<r$, in principle all the vertical arrows except the central one are isomorphisms. Then also the central one is an isomorphism, which implies that $H^{k}(p)$ is an isomorphisms since, by hypothesis, the canonical morphism 
$$\xymatrix{colim \hspace{0.02 cm} H^{k}(X_i) \ar[r] & H^{k}(colim_{\mathcal{H}}(X_i))}$$
is an isomorphism. We then get that $H^{k}(W)=0$, for all $k<r$, which implies that $W=0$ due to definition \ref{datum}. Therefore $p$ is an isomorphism.
\end{proof}

\vspace{0.3 cm}

Taking into account that we want to understand direct limits in the heart, we will use the following version of last proposition.

\begin{corollary}\label{All functor homology}
Let $\mathcal{G}$ be a Grothendieck category and $(\mathcal{U},\mathcal{U}^{\perp}[1])$ be a t-structure on $\D(\mathcal{G})$. If the classical homological functors $\xymatrix{H^{m}:\mathcal{H} \ar[r] & \mathcal{G}}$ preserve direct limits, for all $m\in $\Z, then $\mathcal{H}$ is AB5.
\end{corollary}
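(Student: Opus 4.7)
The plan is to apply Proposition \ref{sufficient AB5 general} with $\mathcal{D}=\D(\mathcal{G})$, $\mathcal{A}=\mathcal{G}$, $H=H^{0}:\D(\mathcal{G})\flecha \mathcal{G}$ the classical $0$-th cohomology functor, and $r=+\infty$. Since $\mathcal{G}$ is Grothendieck, $\D(\mathcal{G})$ is a category in the strict sense and has coproducts, so the setup of the proposition is available. Moreover, the index categories $I$ we need are the upward directed sets, and for these $I$-colimits (i.e.\ direct limits) are exact in $\mathcal{G}$ precisely because $\mathcal{G}$ is AB5.

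The first step is to check that $(H^{0},+\infty)$ is a cohomological datum with respect to $\mathcal{H}$ in the sense of Definition \ref{datum}. The functor $H^{0}$ is cohomological by construction, and the requirement to verify is that the family $(H^{k}_{|\mathcal{H}})_{k\in\mathbb{Z}}$ is conservative. This is immediate: if $M\in\mathcal{H}\subseteq\D(\mathcal{G})$ satisfies $H^{k}(M)=0$ for every $k\in\mathbb{Z}$, then $M$ is a complex with vanishing homology in a derived category of a Grothendieck category, hence $M\cong 0$ in $\D(\mathcal{G})$.

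Next, the hypothesis of the corollary says that for every integer $m$ the functor $H^{m}:\mathcal{H}\flecha\mathcal{G}$ preserves direct limits. This is exactly the condition required by Proposition \ref{sufficient AB5 general}: for each upward directed system $(X_{i})$ in $\mathcal{H}$ and each $k<r=+\infty$, the canonical morphism $\varinjlim H^{k}(X_{i})\flecha H^{k}(\varinjlim_{\mathcal{H}}X_{i})$ is an isomorphism. Applying the proposition then yields that direct limits are exact in $\mathcal{H}$. Combined with Proposition \ref{AB3 t-structure}, which gives that $\mathcal{H}$ is AB3, we conclude that $\mathcal{H}$ is AB5.

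There is no substantial obstacle; the whole corollary is essentially a specialization of Proposition \ref{sufficient AB5 general}. The only point that deserves care is the verification that $(H^{0},+\infty)$ is a cohomological datum, which rests on the standard fact that in $\D(\mathcal{G})$ a complex with vanishing classical cohomology in all degrees is isomorphic to zero. Everything else is a direct translation of hypotheses.
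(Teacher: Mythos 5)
Your proposal is correct and follows exactly the paper's own argument: specialize Proposition \ref{sufficient AB5 general} to $H=H^{0}$ with $r=\infty$, note that $(H^{0},\infty)$ is a cohomological datum with respect to $\mathcal{H}$ because a complex in $\D(\mathcal{G})$ with vanishing homology in all degrees is zero, and combine the exactness of direct limits with the AB3 property from Proposition \ref{AB3 t-structure}. The only difference is that you spell out the verification of the cohomological datum, which the paper leaves implicit.
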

\begin{proof}
It is clear that $\D(\mathcal{G})$ has coproducts and coproducts of triangles are triangles. By proposition \ref{AB3 t-structure} we know that $\mathcal{H}$ is an AB3 category. The result follows from the previous proposition, bearing in mind that $\xymatrix{(H^{0}:\D(\mathcal{G}) \ar[r] & \mathcal{G}}, \infty)$ is a cohomological datum with respect to $\mathcal{H}$.  
\end{proof}

\begin{example}\label{example datum}\rm{
Let $\D$ have coproducts, let $(\mathcal{U},\mathcal{U}^{\perp}[1])$ be a compactly generated t-structure in $\D$ generated by a set $\mathcal{S}$ of compact objects. Then the functor 
$$\xymatrix{H:=\underset{S \in \mathcal{S}}{\coprod} \Hom_{\mathcal{T}}(S,?):\mathcal{D} \ar[r] & Ab}$$
is a cohomological functor. Moreover, the pair $(H,1)$ is a cohomological datum with respect to heart $\mathcal{H}=\mathcal{U} \cap \mathcal{U}^{\perp}[1].$}
\end{example}

\begin{lemma}
Suppose that $\mathcal{D}$ has coproducts and that $(\mathcal{U},\mathcal{U}^{\perp}[1])$ is a compactly generated t-structure in $\mathcal{D}.$ Then $\mathcal{U}^{\perp}$ is closed under taking Milnor colimits.
\end{lemma}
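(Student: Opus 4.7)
My plan is to exploit the defining triangle of the Milnor colimit
$$\xymatrix{\underset{i\geq 0}{\coprod}X_i \ar[rr]^{1-\text{shift}} & & \underset{i\geq 0}{\coprod}X_i \ar[r] & McolimX_n \ar[r]^{+} & }$$
and test membership in $\mathcal{U}^{\perp}$ against a fixed set $\mathcal{S}\subset\mathcal{U}$ of compact generators. Concretely, I would show that $\Hom_{\mathcal{D}}(S[k],McolimX_n)=0$ for every $S\in\mathcal{S}$ and every $k\geq 0$. As a first preparatory step, I would observe that any compactly generated t-structure is smashing, so $\mathcal{U}^{\perp}$ is closed under coproducts; hence $\coprod_{i\geq 0}X_i\in\mathcal{U}^{\perp}$.

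Next I would apply the cohomological functor $\Hom_{\mathcal{D}}(S[k],?)$ to the defining triangle to obtain the long exact sequence
$$\Hom_{\mathcal{D}}(S[k],\coprod X_i)\longrightarrow \Hom_{\mathcal{D}}(S[k],McolimX_n)\longrightarrow \Hom_{\mathcal{D}}(S[k-1],\coprod X_i)\stackrel{(1-\text{shift})_*}{\longrightarrow}\Hom_{\mathcal{D}}(S[k-1],\coprod X_i),$$
using the identification $\Hom_{\mathcal{D}}(S[k],(\coprod X_i)[1])\cong\Hom_{\mathcal{D}}(S[k-1],\coprod X_i)$. For $k\geq 1$ both outer groups vanish since $k-1\geq 0$ and the coproduct is in $\mathcal{U}^{\perp}$, so $\Hom_{\mathcal{D}}(S[k],McolimX_n)=0$ immediately.

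The only delicate case is $k=0$, and this is where I expect the main (mild) obstacle: the group $\Hom_{\mathcal{D}}(S[-1],\coprod X_i)$ is not forced to be zero. Here I would use compactness of $S$ to identify $\Hom_{\mathcal{D}}(S[-1],\coprod X_i)=\coprod_i\Hom_{\mathcal{D}}(S[-1],X_i)$, so that $(1-\text{shift})_*$ becomes the classical colimit-defining morphism associated to the direct system $(\Hom_{\mathcal{D}}(S[-1],X_i))_{i\geq 0}$ in $\mathrm{Ab}$. A routine diagonal argument shows this map is injective on a coproduct of abelian groups indexed by $\mathbb{N}$ (its cokernel is the direct limit). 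Combined with $\Hom_{\mathcal{D}}(S,\coprod X_i)=0$, the exact sequence squeezes $\Hom_{\mathcal{D}}(S,McolimX_n)$ between $0$ and an injection, whence it vanishes. Since this holds for all $S\in\mathcal{S}$ and all $k\geq 0$, we conclude $McolimX_n\in\mathcal{U}^{\perp}$.
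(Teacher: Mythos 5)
Your proof is correct and follows essentially the same route as the paper: both rest on the Milnor-colimit triangle, compactness of the generators to pull $\Hom$ through the coproduct, and the injectivity of $(1-\text{shift})_*$ on an $\mathbb{N}$-indexed coproduct of abelian groups. The only cosmetic difference is that the paper packages the argument as the isomorphism $\Hom_{\mathcal{D}}(C,Mcolim X_n)\cong\varinjlim\Hom_{\mathcal{D}}(C,X_n)$ for every compact $C$ (which it reuses later in Theorem \ref{teo. H compactly general}), whereas you only extract the vanishing needed for the lemma, splitting off the cases $k\geq 1$ via the smashing property.
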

\begin{proof}
Let $\mathcal{D}^{c}$ be the full subcategory of compact objects and take $C\in \mathcal{D}^c$. We claim that, for any diagram in $\mathcal{D}$ of the form:

$$\xymatrix{(\ast) & X_{0} \ar[r]^{f_1} & X_1 \ar[r]^{f_2} & X_2 \ar[r]^{f_3} & \cdots}$$
we have an isomorphism $\Hom_{\mathcal{D}}(C,Mcolim(X_n)) \cong \varinjlim{\Hom_{\mathcal{D}}(C,X_n)}$. To see that, let us consider the triangle 
$$\xymatrix{\underset{n \geq 0}{\coprod} X_n \ar[rr]^{1-shift} & & \underset{n \geq 0}{\coprod} X_n \ar[r] & M colim(X_n) \ar[r]^{\hspace{1 cm}+} &}$$
We have that the following diagram is commutative:
$$\xymatrix{\underset{n \geq 0}{\coprod} \Hom_{\mathcal{D}} (C,X_n) \ar[d]^{\wr}\ar[rr]^{1-(shift)_{\ast}} && \underset{n \geq 0}{\coprod} \Hom_{\mathcal{D}}(C,X_n) \ar[d]^{\wr}\\ \Hom_{\mathcal{D}}(C, \underset{n \geq 0}{\coprod} X_n) \ar[rr]^{(1-shift)_{\ast}} && \Hom_{\mathcal{D}}(C, \underset{n \geq 0}{\coprod} X_n) }$$

It is a known fact and very easy to prove that $1-(shift)_{\ast}$ is a monomorphism in $Ab$. Therefore, we get an exact sequence in $Ab$ of the form:
\begin{small}
$$\xymatrix{\cdots \ar[r]^{0 \hspace{1.3 cm}} & \Hom_{\mathcal{D}}(C,\underset{n \geq 0}{\coprod} X_n) \ar[r] & \Hom_{\mathcal{D}}(C, \underset{n \geq 0}{\coprod} X_n) \ar[r] & \Hom_{\mathcal{D}}(C,M colim(X_n)) \ar@(d,u)[dl]_{0} \\ &&\Hom_{\mathcal{D}}(C[-1], \underset{n \geq 0}{\coprod} X_n) \ar[r] & \Hom_{\mathcal{D}}(C[-1],\underset{n \geq 0}{\coprod} X_n) \ar[r] & \cdots}$$
\end{small}
This proves the claim since $Coker(1-(shift)_{\ast})=\varinjlim{\Hom_{\mathcal{D}}(C,X_n)}$. Now if all the $X_n$ are in $\mathcal{U}^{\perp}$, then for each $C\in \mathcal{U}\cap \mathcal{D}^{c}$ and each $k\geq 0$, we have $\Hom_{\mathcal{D}}(C[k],Mcolim(X_n))\cong \varinjlim \Hom_{\mathcal{D}}(C[k],X_n)=0.$ This shows that $Mcolim (X_n) \in \mathcal{U}^{\perp}$ since the t-structure is compactly generated (see definition \ref{t-structure compactly generated}).
\end{proof}

\vspace{0.3 cm}

In Chapter 5 we will see that if $R$ is a Noetherian commutative ring, then a lot of compactly generated t-structures in $\D(R)$ have an AB5 heart. For an arbitrary compactly generated t-structure, the following is our most general result.


\begin{theorem}\label{teo. H compactly general}
Suppose that $\mathcal{D}$ has coproducts and let $(\mathcal{U}, \mathcal{U}^{\perp}[1])$ be a compactly generated t-structure in $\mathcal{D}.$ Countable direct limits are exact in $\mathcal{H}=\mathcal{U}\cap \mathcal{U}^{\perp}[1]$.
\end{theorem}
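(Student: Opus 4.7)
The plan is to identify the direct limit of a countable directed system in $\mathcal{H}$ with the Milnor colimit in $\mathcal{D}$, and then exploit the exactness of coproducts in $\mathcal{H}$ together with the snake lemma. First note that, since the t-structure is compactly generated it is smashing, so $\mathcal{U}^\perp$ is closed under coproducts. Together with the obvious closure of $\mathcal{U}$ under coproducts, this shows $\mathcal{H}=\mathcal{U}\cap \mathcal{U}^\perp[1]$ is closed under coproducts in $\mathcal{D}$, and hence that $\mathcal{H}$ is AB4 by proposition \ref{sufficient AB4}. For any countable directed system $[(X_n)_{n\geq 0},(f_n)]$ in $\mathcal{H}$, the colimit-defining morphism (see definition \ref{def. colimit-defining morphism}) is thus a morphism $1-\text{shift}:\coprod_{n\geq 0}X_n\flecha \coprod_{n\geq 0}X_n$ in $\mathcal{H}$, and by remark \ref{rem. cokerl=limite directo} we have $\varinjlim_{\mathcal{H}}X_n\cong \text{Coker}(1-\text{shift})$.

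\textbf{Lifting to the Milnor colimit.} Next, I would complete $1-\text{shift}$ to a triangle
$$\xymatrix{\coprod_{n\geq 0}X_n \ar[rr]^{1-\text{shift}} && \coprod_{n\geq 0}X_n \ar[r] & \text{Mcolim}(X_n) \ar[r]^{\hspace{0.8cm}+} & }$$
in $\mathcal{D}$ and show that $\text{Mcolim}(X_n)\in \mathcal{H}$. For the aisle side, each $X_n\in \mathcal{U}$ gives $\coprod X_n\in \mathcal{U}$, and closure of $\mathcal{U}$ under extensions yields $\text{Mcolim}(X_n)\in \mathcal{U}$. For the co-aisle side, the key input is the immediately preceding lemma: $\mathcal{U}^\perp$ is closed under Milnor colimits. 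Since $X_n\in \mathcal{H}\subseteq \mathcal{U}^\perp[1]$, we have $X_n[-1]\in \mathcal{U}^\perp$, so $\text{Mcolim}(X_n[-1])\in \mathcal{U}^\perp$. The shift functor preserves Milnor colimits (being triangulated and commuting with coproducts), hence $\text{Mcolim}(X_n)\cong \text{Mcolim}(X_n[-1])[1]\in \mathcal{U}^\perp[1]$.

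\textbf{Extracting a short exact sequence.} Now apply the cohomological functor $\tilde{H}:\mathcal{D}\flecha \mathcal{H}$ to the Milnor triangle. A routine check using lemma \ref{lemma de adjunctions} shows that $\tilde{H}(A[k])=0$ for every $A\in \mathcal{H}$ and every $k\neq 0$. Since all three terms of the triangle lie in $\mathcal{H}$ and $\tilde{H}$ restricted to $\mathcal{H}$ is the identity, the long exact sequence collapses to
$$\xymatrix{0 \ar[r] & \coprod_{n\geq 0}X_n \ar[rr]^{1-\text{shift}} & & \coprod_{n\geq 0}X_n \ar[r] & \text{Mcolim}(X_n) \ar[r] & 0}$$
in $\mathcal{H}$. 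This simultaneously identifies $\varinjlim_{\mathcal{H}}X_n$ with $\text{Mcolim}(X_n)$ and shows that the colimit-defining morphism $1-\text{shift}$ is a monomorphism in $\mathcal{H}$ for any countable directed system.

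\textbf{Exactness via the snake lemma.} Finally, given a countable direct system of short exact sequences $\xymatrix{0 \ar[r] & A_n \ar[r] & B_n \ar[r] & C_n \ar[r] & 0}$ in $\mathcal{H}$, the AB4 property produces a short exact sequence $\xymatrix{0 \ar[r] & \coprod A_n \ar[r] & \coprod B_n \ar[r] & \coprod C_n \ar[r] & 0}$. Stacking two copies of this sequence with the three colimit-defining morphisms as vertical arrows, the fact that each such vertical arrow is a monomorphism (by the previous step) allows the snake lemma to produce an exact sequence
$$\xymatrix{0 \ar[r] & \varinjlim A_n \ar[r] & \varinjlim B_n \ar[r] & \varinjlim C_n \ar[r] & 0}$$
in $\mathcal{H}$, as desired. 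The main obstacle in the whole argument is the second step, where the identification of the direct limit with the Milnor colimit relies crucially on the compact-generation hypothesis (through the previous lemma) rather than just the smashing property; without this closure of $\mathcal{U}^\perp$ under Milnor colimits one cannot place $\text{Mcolim}(X_n)$ inside the co-aisle of the shifted t-structure.
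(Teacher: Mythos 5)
Your argument is correct in substance and shares the paper's key input --- the closure of $\mathcal{U}^{\perp}$ under Milnor colimits, which places $\text{Mcolim}(X_n)$ in $\mathcal{H}$ and identifies it with $\varinjlim_{\mathcal{H}}X_n$ --- but the two proofs diverge after that point. The paper feeds the compact generators into the cohomological datum $(\underset{S\in\mathcal{S}}{\coprod}\Hom_{\mathcal{D}}(S,?),1)$ of example \ref{example datum}, checks via the Hom-computation in the preceding lemma that the associated functors $H^{k}$ commute with these direct limits, and then invokes proposition \ref{sufficient AB5 general}. You instead read off from the Milnor triangle that $1-\text{shift}$ is a monomorphism in $\mathcal{H}$ and conclude by AB4 plus the snake lemma; this is more elementary and bypasses proposition \ref{sufficient AB5 general} and the cohomological-datum machinery entirely, at the price of being tied to sequential systems. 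Two small points to tidy up. First, the theorem concerns arbitrary countable directed sets, not just $\mathbb{N}$: the paper opens with a cofinality argument (via \cite[Lemma 1.6]{AR}) reducing any countable directed set to $\mathbb{N}$, and you need that reduction too, since your monomorphism criterion and snake-lemma diagram only make sense for sequential systems. Second, $1-\text{shift}$ is not literally the colimit-defining morphism of definition \ref{def. colimit-defining morphism}, which is indexed by all pairs $i\leq j$ and is essentially never a monomorphism; you are tacitly using the standard fact that for a sequential system in a cocomplete additive category the cokernel of $1-\text{shift}$ also computes the colimit. That fact is true and the paper uses it as well, but it deserves a sentence of justification in your write-up.
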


\begin{proof}
Let $I$ be a countable directed set. Then there is an ascending chain of finite directed subposets $I_{0} \subset I_1 \subset I_2 \dots $ such that $I=\underset{n \in \mathbb{N}}{\cup} I_n$ (see \cite[Lemma 1.6]{AR}). If we put $i_{n}=max\{I_n\}$, for all $n\in \mathbb{N}$, we clearly have $i_n \leq i_{n+1}$, for all $n$ and $J=\{i_0,i_1,\dots, i_n, \dots\}$ is a cofinal subset of $I$ which is isomorphic to $\mathbb{N}$ as an ordered set. For any category $\mathcal{C}$ with direct limits, the next diagram is commutative
$$\xymatrix{[J,\mathcal{C}] \ar[r]^{colim_{J}} & colim_{J} \mathcal{C}\\ [I,\mathcal{C}] \ar[u]^{restriction} \ar[ur]_{colim_{I}}}$$

We can the assume that $I=\mathbb{N}$. But the previous lemma tells us that if $\xymatrix{X:\mathbb{N} \ar[r] & \mathcal{H}}$ \linebreak $(n \rightsquigarrow X_n)$ is any diagram, then the triangle
$$\xymatrix{\underset{n \geq 0}{\coprod} X_n \ar[rr]^{1-shift} && \underset{n \geq 0}{\coprod}X_n \ar[r] & Mcolim(X_n) \ar[r]^{\hspace{1cm}+} & }$$
lives in $\mathcal{H}$ and, hence, it is an exact sequence in this abelian category. Therefore $Mcolim(X_n)\cong \varinjlim_{\mathcal{H}}{X_n}.$ If now $\mathcal{S}$ is any set of compact generators of the aisle $\mathcal{U}$ and we take the cohomological functor $\xymatrix{H:= \underset{S\in \mathcal{S}}{\coprod}\Hom_{\mathcal{D}}(S,?):\mathcal{D} \ar[r] & Ab}$, then by  the proof of previous lemma, the induced map $\xymatrix{\varinjlim{H^{k}(X_n)} \ar[r] & H^{k}(\varinjlim_{\mathcal{H}}{X_n})}$ is an isomorphism, for each $k\in $\Z, because in Ab we have an isomorphism 
$$\underset{S \in \mathcal{S}}{\coprod} \varinjlim \Hom_{\mathcal{D}}(S,X_n) \cong \varinjlim \underset{S\in \mathcal{S}}{\coprod} \Hom_{\mathcal{D}}(S,X_n)$$
The result now follows from proposition \ref{sufficient AB5 general} and example \ref{example datum}.
\end{proof}

\section{The t-structure of Happel-Reiten-Smal\o}\label{section AB4 Ht}
In this section we study in detail, the direct limits of the heart associated to the t-structure of Happel-Reiten-Smal\o. See Chapter 2 for previous results in this direction. Our strategy of attack is to focus on the stalks of the heart, and this is a posteriori the key of the problem. \newline

All troughout this section $\mathcal{G}$ is a Grothendieck category and $\mathbf{t}=(\mathcal{T,F})$ is a torsion pair in $\mathcal{G}$. From example \ref{exam. Ht AB4}, we know that $\mathcal{H}_{\mathbf{t}}$ is an AB4 category. On the other hand, $\mathcal{T}$ is closed under taking direct limits in $\mathcal{G}$, while $\mathcal{F}$ need not be so. 
In the rest of the chapter unadorned direct limits mean direct limits in $\mathcal{G}$, while we will denote by $\varinjlim_{\mathcal{H}_{\mathbf{t}}}$ the direct limit in $\mathcal{H}_{\mathbf{t}}$.

\begin{lemma}\label{exactness of H}
Let $\mathbf{t}=(\mathcal{T,F})$ be a torsion pair in the Grothendieck category $\mathcal{G}$, let $(\mathcal{U}_{\mathbf{t}},\mathcal{U}_{\mathbf{t}}^{\perp}[1])$ be its associated t-structure and let $\mathcal{H}_{\mathbf{t}}$ be its heart. The functor $\xymatrix{H^{0}:\mathcal{H}_{\mathbf{t}} \ar[r] & \mathcal{G}}$ is right exact while the functor $\xymatrix{H^{-1}:\mathcal{H}_{\mathbf{t}} \ar[r] & \mathcal{G}}$ is left exact. Both of them preserve coproducts.
\end{lemma}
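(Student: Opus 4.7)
The plan is to deduce both statements from the long exact homology sequence associated to the Happel-Reiten-Smalø t-structure, together with the fact that objects in $\mathcal{H}_{\mathbf{t}}$ have classical homology concentrated in degrees $-1$ and $0$.

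First I would treat exactness. Given a short exact sequence $0 \to L \to M \to N \to 0$ in $\mathcal{H}_{\mathbf{t}}$, by \cite{BBD} it is induced by a triangle in $\D(\mathcal{G})$. Applying the classical homology functors $H^{m}:\D(\mathcal{G})\flecha \mathcal{G}$, which form a cohomological $\delta$-functor, I obtain the long exact sequence
\[
\cdots \flecha H^{m}(L)\flecha H^{m}(M)\flecha H^{m}(N)\flecha H^{m+1}(L)\flecha \cdots
\]
Since $L,M,N\in \mathcal{H}_{\mathbf{t}}$, their classical homologies vanish outside degrees $-1$ and $0$, so the long sequence collapses to
\[
0\flecha H^{-1}(L)\flecha H^{-1}(M)\flecha H^{-1}(N)\flecha H^{0}(L)\flecha H^{0}(M)\flecha H^{0}(N)\flecha 0.
\]
From this, left exactness of $H^{-1}$ and right exactness of $H^{0}$ on $\mathcal{H}_{\mathbf{t}}$ are immediate, and there is no further calculation to do.

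Next I would handle the preservation of coproducts. The key observation is that, as noted in example \ref{exam. Ht AB4}, if $(M_i)_{i\in I}$ is a family in $\mathcal{H}_{\mathbf{t}}$, then its coproduct computed in $\D(\mathcal{G})$ already lies in $\mathcal{H}_{\mathbf{t}}$: the class $\T$ is automatically closed under coproducts in $\mathcal{G}$, and $\F$ is closed under coproducts in $\mathcal{G}$ because $\mathcal{G}$ is AB4 and $\F$ is a torsionfree class. Consequently, the coproduct in $\mathcal{H}_{\mathbf{t}}$ coincides with the one in $\D(\mathcal{G})$. Since the classical homology functors $H^{m}:\D(\mathcal{G})\flecha \mathcal{G}$ commute with coproducts (this is a standard consequence of $\mathcal{G}$ being AB4, using that coproducts of triangles are triangles in $\D(\mathcal{G})$), we obtain canonical isomorphisms
\[
H^{m}\bigl(\underset{i\in I}{\coprod}\nolimits_{\mathcal{H}_{\mathbf{t}}} M_i\bigr)\;\cong\; H^{m}\bigl(\underset{i\in I}{\coprod}\nolimits_{\D(\mathcal{G})} M_i\bigr)\;\cong\;\underset{i\in I}{\coprod}\, H^{m}(M_i)
\]
for $m=-1,0$, which is exactly the statement that $H^{-1}$ and $H^{0}$ preserve coproducts.

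No part of the argument is really an obstacle; the only subtlety worth checking explicitly is that $\F$ is closed under coproducts in $\mathcal{G}$, but this is already recorded (via \cite{S}) in the justification of example \ref{exam. Ht AB4}. Thus both assertions reduce to the long exact sequence of classical homology plus the fact that the inclusion $\mathcal{H}_{\mathbf{t}}\hookrightarrow \D(\mathcal{G})$ preserves the coproducts of its objects.
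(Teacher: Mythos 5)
Your proof is correct and follows essentially the same route as the paper: the vanishing of $H^{k}$ on $\Ht$ for $k\neq -1,0$ plus the long exact sequence of homologies of the triangle underlying a short exact sequence gives the (left/right) exactness, and the preservation of coproducts follows because coproducts in $\Ht$ are computed as in $\D(\G)$ and $\G$ is AB4. The paper states this more tersely, but the content is identical.
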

\begin{proof}
The functor $H^{k}$ vanishes on $\mathcal{H}_{\mathbf{t}}$, for each $k\ne -1,0$. By applying now the long exact sequence of homologies to any short exact sequence in $\mathcal{H}_{\mathbf{t}}$ the right (resp. left) exactness of $H^{0}$ (resp. $H^{-1}$) follows immediately. Since coproducts in $\mathcal{H}_{\mathbf{t}}$ are calculated as in $\D(\mathcal{G})$, the fact that $H^{0}$ and $H^{-1}$ preserve coproducts is clear, because in particular $\mathcal{G}$ is an AB4 abelian category.
\end{proof}
\vspace{0.3cm}

The following is the crucial result for our purposes.

\begin{proposition}\label{description of stalk}
Let $\mathbf{t}=(\mathcal{T,F})$ be a torsion pair in the Grothendieck category $\mathcal{G}$ and let $\mathcal{H}_{\mathbf{t}}$ be the heart of the associated t-structure in $\D(\mathcal{G})$. The following assertions hold:
\begin{enumerate}
\item[1)] If $(M_i)_{i\in I}$ is a direct system in $\mathcal{H}_{\mathbf{t}}$, then the induced morphism 
$$\xymatrix{\varinjlim{H^{k}(M_i)} \ar[r] & H^{k}(\varinjlim_{\mathcal{H}_{\mathbf{t}}}{M_i})}$$
is an epimorphism, for $k=-1$, and an isomorphism, for $k \ne -1$.

\item[2)] If $(F_i)_{i\in I}$ is a direct system in $\mathcal{F}$, there is an isomorphism in $\mathcal{H}_{\mathbf{t}}$
$$\xymatrix{(\frac{\varinjlim{F_i}}{t(\varinjlim{F_i})})[1] \cong \limite (F_{i}[1]) }$$

\item[3)] If $(T_i)_{i \in I}$ is a direct system in $\mathcal{T}$, then the following conditions hold true:
\begin{enumerate}
\item[a)] The induced morphism $\xymatrix{\limite (T_i[0]) \ar[r] & (\varinjlim{T_i})[0]}$ is an isomorphism in $\mathcal{H}_{\mathbf{t}}$;
\item[b)] The kernel of the colimit-defining morphism $\xymatrix{f:\underset{i\leq j}{\coprod}T_{ij} \ar[r] & \underset{i\in I}{\coprod}T_i}$ is in $\mathcal{T}$. 
\end{enumerate}
\end{enumerate}
\end{proposition}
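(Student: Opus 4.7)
My strategy will be to prove the assertions in the order 3.b), 3.a), 1, 2, since the harder parts cascade from the technical lemma 3.b). Since $\T$ is closed under direct limits in $\G$ and, by the AB5 axiom, $\varinjlim$ commutes with kernels in $\G$, one has $\Ker f=\varinjlim_J \Ker f_J$ taken over finite directed subposets $J\subset I$, so it suffices to treat the case of a finite $J$. For such a $J$ with maximum element $m$, I plan to exhibit a contracting homotopy witnessing that $\Ker f_J$ is a direct summand of $\underset{i\le j}{\coprod}T_{ij}$. Explicitly, let $\epsilon:\underset{i\in J}{\coprod}T_i\to T_m$ be the canonical map determined by $\epsilon\circ\lambda_i=u_{im}$, let $s:T_m\to\underset{i\in J}{\coprod}T_i$ be the inclusion into the $m$-summand, and define $h:\underset{i\in J}{\coprod}T_i\to\underset{i\le j}{\coprod}T_{ij}$ by $h\circ\lambda_i=\lambda_{im}$. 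A direct computation using $f_J\circ\lambda_{ij}=\lambda_i-\lambda_j\circ u_{ij}$ yields the identities $\epsilon\circ s=1_{T_m}$ and $s\circ\epsilon+f_J\circ h=1_{\coprod T_i}$, forcing the decomposition $\underset{i\le j}{\coprod}T_{ij}=\Ker f_J\oplus \Imagen(h\circ f_J)$. Hence $\Ker f_J$ is a direct summand of a $\T$-object, and therefore lies in $\T$.

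With 3.b) in hand, part 3.a) follows from the cone construction: $C:=\Cone(f[0])$ in $\D(\G)$ has classical cohomologies $H^{-1}(C)=\Ker f$ and $H^0(C)=\varinjlim T_i$, both in $\T$, and $C\in \mathcal{U}_\te$. Since $\Coker_{\Ht}(f[0])=\tilde{H}(C)$, and for $X\in \mathcal{U}_\te$ the functor $\tilde{H}$ replaces $H^{-1}(X)$ by its torsion-free quotient $H^{-1}(X)/t(H^{-1}(X))$ while preserving $H^0(X)$, we deduce $H^{-1}(\tilde{H}(C))=\Ker f/t(\Ker f)=0$ (using 3.b)) and $H^0(\tilde{H}(C))=\varinjlim T_i$, so $\varinjlim_{\Ht}(T_i[0])=(\varinjlim T_i)[0]$. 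For part 1, the cases $k\ne -1,0$ are trivial because $H^k$ vanishes on $\Ht$, and the case $k=0$ follows from Lemma \ref{exactness of H}, since $H^0:\Ht\to \G$ is right exact and preserves coproducts, hence preserves all colimits. For the epimorphism at $k=-1$, let $\hat f:\underset{i\le j}{\coprod} M_{ij}\to \underset{i\in I}{\coprod} M_i$ be the colimit-defining morphism in $\Ht$ and set $C:=\Cone(\hat f)\in \mathcal{U}_\te$; the classical long exact sequence attached to this triangle yields a short exact sequence $0\to \varinjlim H^{-1}(M_i)\to H^{-1}(C)\to \Ker(H^0(\hat f))\to 0$, and by part 3.b) applied to the direct system $(H^0(M_i))_{i\in I}$ in $\T$, the rightmost term lies in $\T$. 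Applying the right-exact torsion-free functor $(1:t):\G\to \F$ produces a surjection $(1:t)(\varinjlim H^{-1}(M_i))\twoheadrightarrow (1:t)(H^{-1}(C))=H^{-1}(\varinjlim_{\Ht}M_i)$, which composed with the canonical quotient $\varinjlim H^{-1}(M_i)\twoheadrightarrow (1:t)(\varinjlim H^{-1}(M_i))$ produces the desired epimorphism.

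Part 2 is handled cleanly via the universal property: for $N\in \Ht$ and $F_i\in \F$, the canonical triangle $H^{-1}(N)[1]\to N\to H^0(N)[0]\to H^{-1}(N)[2]$ in $\D(\G)$ together with the vanishing of $\Hom_{\D(\G)}(F_i[0],H^0(N)[-1])$ (coming from the canonical t-structure orthogonality) gives an isomorphism $\Hom_{\Ht}(F_i[1],N)\cong \Hom_\G(F_i,H^{-1}(N))$. Since $H^{-1}(N)\in \F$ and $(\T,\F)$ is a torsion pair, every morphism $F=\varinjlim F_i \to H^{-1}(N)$ factors uniquely through $F/t(F)$, whence $\Hom_\Ht((F/t(F))[1],N)\cong \Hom_\G(F,H^{-1}(N))\cong \varprojlim_i \Hom_\G(F_i,H^{-1}(N))\cong \varprojlim_i \Hom_\Ht(F_i[1],N)$, exhibiting $(F/t(F))[1]$ as the direct limit $\varinjlim_{\Ht}(F_i[1])$ in $\Ht$. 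The central obstacle is the splitting argument underlying 3.b); once $\Ker f\in \T$ is secured, the remaining parts reduce to a careful bookkeeping of the HRS truncation $\tilde{H}$, the classical cohomological functors, and the right-exactness of $(1:t)$.
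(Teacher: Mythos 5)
Your proofs of 3.b), 3.a) and 2) are correct, and your route there is genuinely different from the paper's: in the paper 3.b) is deduced from 3.a) and assertion 1 (whose key epimorphism is imported from \cite[Lemma 3.5]{CGM}), whereas you obtain 3.b) directly by writing the colimit-defining morphism as the filtered union of its restrictions $f_J$ to finite directed subposets and splitting $\Ker(f_J)$ off by an explicit homotopy, and you prove 2) by a clean representability argument; this makes those parts self-contained. (In the splitting argument you should also record $\epsilon\circ f_J=0$, which is what makes $h\circ f_J$ idempotent, but that is the same one-line computation.)

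The gap is in assertion 1 for $k=-1$. From the exact sequence $0\to \varinjlim H^{-1}(M_i)\to H^{-1}(C)\to \Ker(H^{0}(\hat f))\to 0$, whose right-hand term lies in $\T$ by your 3.b), you conclude that applying the ``right-exact'' functor $(1:t)$ yields a surjection $(1:t)(\varinjlim H^{-1}(M_i))\to (1:t)(H^{-1}(C))$. This inference is invalid: $(1:t):\G\flecha\F$ is right exact only with cokernels computed in $\F$ (i.e. up to a torsion quotient), it is not right exact as an endofunctor of $\G$, and a monomorphism with torsion cokernel need not become an epimorphism after passing to torsion-free quotients. For the usual torsion pair in $\mathbb{Z}$-Mod, $0\to 2\mathbb{Z}\to\mathbb{Z}\to\mathbb{Z}/2\mathbb{Z}\to 0$ has torsion cokernel, yet $(1:t)(2\mathbb{Z})=2\mathbb{Z}\to(1:t)(\mathbb{Z})=\mathbb{Z}$ is not surjective. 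What you actually need is the equality $H^{-1}(C)=\Imagen\big(\varinjlim H^{-1}(M_i)\to H^{-1}(C)\big)+t(H^{-1}(C))$, and that is precisely the non-formal content of the assertion: in the paper it is supplied by \cite[Lemma 3.5]{CGM} (the canonical epimorphism $\coprod M_i\to\limite M_i$ induces an epimorphism on $H^{-1}$), whose proof uses the specific structure of the colimit-defining morphism and not merely the membership $\Ker(H^{0}(\hat f))\in\T$. Nor can a purely formal torsion/torsion-free bookkeeping (e.g. a snake-lemma argument in $\Ht$) close it: there exist epimorphisms $T[0]\to F[1]$ in $\Ht$ with $F\neq 0$, for instance the class of $0\to\mathbb{Z}\to\mathbb{Q}\to\mathbb{Q}/\mathbb{Z}\to 0$ for the pair (divisible, reduced) in $\mathbb{Z}$-Mod. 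So this single step must be replaced by a genuine argument along the lines of \cite[Lemma 3.5]{CGM}; the rest of your proposal stands.
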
 
\begin{proof}
1) It essentially follows from \cite[Corollary 3.6]{CGM}, however we give a proof for the sake of completeness. By lemma \ref{exactness of H}, we know that $\xymatrix{H^{0}:\Ht \ar[r] & \mathcal{G}}$ preserves colimits, in particular direct limits. Let $\xymatrix{f:\underset{i\leq j}{\coprod}M_{ij} \ar[r] & \underset{i\in I}{\coprod}M_i}$ be the associated colimit-defining morphism and denote by $W$ the image of $f$ in $\Ht$.  Applying the exact sequence of homology to the exact sequence $\xymatrix{0 \ar[r] & W \ar[r] & \underset{i \in I }{\coprod} M_i \ar[r]^{p\hspace{0.4cm}} & \limite{M_i} \ar[r] & 0}$, we have the next sequence exact in $\mathcal{G}$
$$\xymatrix{0 \ar[r] & H^{-1}(W) \ar[r] & H^{-1}(\underset{i\in I}{\coprod}M_i) \ar[r]^{H^{-1}(p)\hspace{0.3cm}} & H^{-1}(\limite{M_i}) \ar@(d,u)[dlll] \\ H^{0}(W) \ar[r] & H^{0}(\underset{i\in I}{\coprod}M_i) \ar[r] & H^{0}(\limite{M_i}) \ar[r] &0 &}$$
By \cite[Lemma 3.5]{CGM}, we know that $H^{-1}(p)$ is an epimorphism. But this morphism is the composition
$$\xymatrix{\underset{i \in I}{\coprod} H^{-1}(M_i) \ar@{>>}[r] & \varinjlim{H^{-1}(M_i)} \ar[r]^{can \hspace{0.2 cm}} & H^{-1}(\limite{M_i})}.$$
Therefore the second arrow is also an epimorphism. \\

In order to prove the remaining assertions, we first consider any direct system $(M_i)_{i\in I}$ in $\Ht$ and the associated triangle given by the colimit-defining morphism in $\D(\mathcal{G})$:
$$\xymatrix{\underset{i \leq j}{\coprod}M_{ij} \ar[r]^{f} & \underset{i \in I}{\coprod}M_i \ar[r]^{\hspace{0.2 cm}q} & Z \ar[r]^{\hspace{0.1 cm}+} & } $$
We claim that if $\xymatrix{v:Z\ar[r] & L:=\limite(M_i)}$ is a morphism fitting in a triangle \linebreak $\xymatrix{N[1] \ar[r] & Z \ar[r]^{v} & L \ar[r]^{+} & }$ with $N\in \Ht$, then $H^{-1}(v)$ induces an isomorphism \linebreak $\xymatrix{H^{-1}(Z)/t(H^{-1}(Z)) \ar[r]^{\hspace{1cm}\sim} & H^{-1}(L)}$. In fact, by \cite{BBD} we have a diagram:
$$\xymatrix{&&N[1] \ar[d] & \\ \underset{i \leq j}{\coprod}M_{ij} \ar[r]^{f} & \underset{i \in I}{\coprod}M_i \ar[r]^{\hspace{0.2 cm}q} & Z \ar[r]^{\hspace{0.1 cm}+} \ar[d]^{v}& \\ && L \ar[d]^{+}& \\ &&&}$$

where the row and the column are triangles in $\D(\G)$, $N\cong Ker_{\Ht}(f)$, and $Coker_{\Ht}(f)=L=\limite(M_i)$. From the sequence of homologies applied to the column triangle, we get an exact sequence
$$\xymatrix{0 \ar[r] & H^{0}(N) \ar[r] & H^{-1}(Z) \ar[r]^{g} & H^{-1}(L) \ar[r] &0 }$$
It then follows that $H^{0}(N)\cong t(H^{-1}(Z))$ and $H^{-1}(Z)/t(H^{-1}(Z)) \cong H^{-1}(L).$ We now pass to prove the remaining assertions. \\

2) Note that, by assertion 1, we have $H^{0}(\limite{F_i[1]})=0$. On the other hand, when taking $M_i=F_i[1]$ in the last paragraph, the complex $Z$ can be identified with $Cone(f)[1]$, where $\xymatrix{f:\underset{i\leq j}{\coprod}F_{ij} \ar[r] & \underset{i\in I}{\coprod}{F_i}}$ is the colimit-defining morphism. Then we have $H^{-1}(Z)\cong H^{0}(Cone(f))\cong \varinjlim{F_i}$, which, by the last paragraph, implies that $\limite{F_i[1]}\cong \frac{\varinjlim{F_i}}{t(\varinjlim{F_i})}[1]$. \\

3) a) From assertion 1) we get that $\xymatrix{0=\varinjlim{H^{-1}(T_i[0])} \ar[r] & H^{-1}(\limite{(T_i[0])})}$ is an epimorphism. In particular we have an isomorphism $\limite{(T_i[0])}\cong H^{0}(\limite{(T_i[0])})[0]$. But, by lemma \ref{exactness of H}, $H^{0}$ preserves direct limits and then the right term of this isomorphism is $(\varinjlim{T_i})[0].$

b) Let us consider the induced triangle $\xymatrix{\underset{i\leq j}{\coprod}T_{ij} \ar[r]^{f} & \underset{i \in I}{\coprod}T_{i} \ar[r]^{\hspace{0.2cm}q}& Z \ar[r]^{+} &}$ in $\D(\mathcal{G})$. Without loss of generality, we identify $Z$ with the cone of $f$ in $\mathcal{C(G)}$, so that $H^{-1}(Z)=Ker(f)$. But then, by our already proved claim made after proving assertion 1 and by the previous paragraph, we get an isomorphism 
\begin{center}
$\frac{Ker(f)}{t(Ker(f))} \iso H^{-1}(\limite{T_i[0]})=0$.
\end{center}
\end{proof}

\vspace{0.3 cm}

The next lemma is an important tool in order to achieve our goals.

\begin{lemma}\label{clave para AB5}
Let us assume that $\Ht$ is an AB5 abelian category and let
$$\xymatrix{ 0 \ar[r] & F_i \ar[r] & F_{i}^{'} \ar[r]^{w_i} & T_i \ar[r] & T_i^{'} \ar[r] &0 }$$
be a direct system of exact sequences in $\mathcal{G}$, with $F_i,F_{i}^{'}\in \mathcal{F}$ and $T_i,T_{i}^{'}\in \mathcal{T}$ for all $i\in I.$ Then the induced morphism $\xymatrix{w:=\varinjlim{w_i}:\varinjlim{F_{i}^{'}} \ar[r] & \varinjlim{T_i}}$ vanishes on $t(\varinjlim{F_{i}^{'}}).$
\end{lemma}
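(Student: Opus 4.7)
The plan is to package each four-term exact sequence into a short exact sequence in $\Ht$ whose outer terms are stalks, and then read off the needed information from the connecting morphism of the direct limit sequence. For each $i\in I$, I would form the two-term complex $M_i:=[\cdots \to 0\to F_i'\xrightarrow{w_i}T_i\to 0\to\cdots]$ with $F_i'$ placed in degree $-1$. Because $\Ker(w_i)=F_i\in\F$ and $\Coker(w_i)=T_i'\in\T$, the complex $M_i$ lies in $\Ht$. Moreover $M_i$ is isomorphic in $\D(\G)$ to the cone of the morphism $w_i\colon F_i'[0]\to T_i[0]$, so rotating the associated triangle yields
$$T_i[0]\longrightarrow M_i\longrightarrow F_i'[1]\xrightarrow{\;-w_i[1]\;} T_i[1],$$
whose first three terms all lie in $\Ht$. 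This construction is functorial in $i$, so we get a directed system of short exact sequences $0\to T_i[0]\to M_i\to F_i'[1]\to 0$ in $\Ht$.

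Next I would use the AB5 hypothesis to take the direct limit of this system in $\Ht$, which is again a short exact sequence. Proposition \ref{description of stalk}(2) and (3.a) identify its outer terms as $(\varinjlim T_i)[0]$ and $\bigl(\varinjlim F_i'/t(\varinjlim F_i')\bigr)[1]$ respectively, so the corresponding triangle in $\D(\G)$ takes the form
$$(\varinjlim T_i)[0]\longrightarrow \limite M_i \longrightarrow \bigl(\varinjlim F_i'/t(\varinjlim F_i')\bigr)[1]\xrightarrow{\;\delta[1]\;}(\varinjlim T_i)[1]$$
for some morphism $\delta\colon \varinjlim F_i'/t(\varinjlim F_i')\to \varinjlim T_i$ in $\G$.

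The crucial step is the identification of $\delta$ via the coprojections. Each coprojection $M_i\to \limite M_i$ extends to a morphism of triangles from the $i$-th triangle into the limit triangle; shifting its rightmost square down by one gives a commutative square in $\G$
$$\xymatrix{F_i' \ar[r]^{-w_i} \ar[d]_{\alpha_i} & T_i \ar[d]^{\beta_i} \\ \varinjlim F_i'/t(\varinjlim F_i') \ar[r]^{\quad \delta} & \varinjlim T_i,}$$
where $\beta_i$ is the canonical direct-limit map and, by tracking the isomorphism of Proposition \ref{description of stalk}(2), $\alpha_i$ is the composition of the canonical map $F_i'\to \varinjlim F_i'$ with the quotient $q\colon \varinjlim F_i'\to \varinjlim F_i'/t(\varinjlim F_i')$. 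Taking the direct limit of this square in $\G$ (which is exact) collapses the right vertical arrow to the identity on $\varinjlim T_i$ and the top arrow to $-w$, yielding the equality $-w=\delta\circ q$ in $\G$. Since $q$ visibly vanishes on $t(\varinjlim F_i')$, so does $w$, which is the required conclusion.

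The main obstacle is the careful identification of $\alpha_i$: it requires unwinding the proof of Proposition \ref{description of stalk}(2) and checking that, under the isomorphism $\limite(F_i'[1])\cong \bigl(\varinjlim F_i'/t(\varinjlim F_i')\bigr)[1]$, the canonical coprojection restricts on $H^{-1}$ to the composition $F_i'\to \varinjlim F_i'\twoheadrightarrow \varinjlim F_i'/t(\varinjlim F_i')$. Once this compatibility is verified, the argument proceeds purely formally from the AB5 hypothesis and does not require any a priori information on whether $\F$ is closed under direct limits, so it may safely be used as an intermediate step toward Theorem \ref{caracterizacion AB5}.
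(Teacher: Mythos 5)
Your argument is correct, and it takes a genuinely different route from the paper. The paper's proof factors each $w_i$ through its image $K_i=\Imagen(w_i)$, uses a pullback along $t(K_i)\monic K_i$ and a pushout along $K_i\epic K_i/t(K_i)$ to produce two families of short exact sequences of stalks in $\Ht$, takes their AB5 direct limits, compares them (via explicit commutative diagrams) with the direct limits computed in $\G$, and finally applies the snake lemma to show $t(\iota)$ is an isomorphism, i.e.\ $t(\varinjlim F_i')\cong t(\varinjlim F_i)\subseteq \Ker(w)$. You instead encode the two middle terms as the two-term complex $M_i=[F_i'\xrightarrow{w_i}T_i]\in\Ht$, rotate the cone triangle to get the short exact sequence $0\to T_i[0]\to M_i\to F_i'[1]\to 0$ in $\Ht$, take the AB5 limit, identify the outer terms by Proposition \ref{description of stalk}(2) and (3.a), and read off from the connecting morphism that $w$ factors through $\varinjlim F_i'/t(\varinjlim F_i')$. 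The step you flag as the main obstacle is indeed the only delicate point, and it checks out: the cocone map $F_j'[1]\to\limite F_i'[1]$ is a morphism between shifted stalks, hence determined by $H^{-1}$, and unwinding the construction of cokernels in $\Ht$ used in the proof of Proposition \ref{description of stalk}(2) (the composite $F_j'[1]\to\coprod F_i'[1]\to Z\to\limite F_i'[1]$, whose $H^{-1}$ is the canonical map to $\varinjlim F_i'$ followed by the torsion-free quotient) gives exactly your $\alpha_j$; the compatibility of the connecting maps with a morphism of short exact sequences in the heart is the naturality of $\Ext^1_{\Ht}(-,-)\cong\Hom_{\D(\G)}(-,-[1])$ from \cite{BBD}. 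What your approach buys is a shorter, more conceptual argument that avoids the pullback/pushout constructions and the snake-lemma comparison; what the paper's approach buys is that it stays entirely inside elementary manipulations of modules/objects of $\G$ and stalk sequences, never needing to identify cocone maps through the isomorphisms of Proposition \ref{description of stalk} at the level of morphisms. Both proofs correctly use only the AB5 hypothesis, so either can be used as the input to Theorem \ref{caracterizacion AB5}.
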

\begin{proof}
Put $K_i:=Im(w_i)$ and consider the following pullback diagram and pushout diagram, respectively:
$$\xymatrix{0 \ar[r] & F_i \ar[r] \ar@{=}[d] & *+<0.9em>{\tilde{F}_{i}} \ar[r] \ar@{^(->}[d] \pushoutcorner & *+<0.9em>{t(K_i)} \ar@{^(->}[d] \ar[r] & 0 \\ 0 \ar[r] & F_i \ar[r] & F_{i}^{'} \ar[r] & K_i \ar[r] & 0}$$
$$\xymatrix{0 \ar[r] & K_i \ar[r] \ar@{>>}[d] & T_i \ar[r] \ar[d] & T_{i}^{'} \ar[r] \ar@{=}[d]& 0 \\ 0 \ar[r] & \frac{K_i}{t(K_i)} \ar[r] & \tilde{T}_{i} \ar[r] \pullbackcorner & T_{i}^{'} \ar[r]& 0}$$
The top row of the first diagram and the bottom row of the second one give exact sequences in $\Ht$
$$\xymatrix{0 \ar[r] & t(K_i)[0] \ar[r] & F_{i}[1] \ar[r] & \tilde{F}_{i}[1] \ar[r] & 0\\ 0 \ar[r] & \tilde{T}_{i}[0] \ar[r] & T_{i}^{'}[0] \ar[r] & \frac{K_i}{t(K_i)}[1] \ar[r] & 0}$$ 
which give rise to direct systems of short exact sequences in $\Ht$. Using now the AB5 condition of $\Ht$ and proposition \ref{description of stalk}, we get exact sequences in $\Ht$:
$$\xymatrix{0 \ar[r] & (\varinjlim{t(K_i)})[0] \ar[r] & \frac{\varinjlim{F_{i}}}{t(\varinjlim{F_i})}[1] \ar[r] & \frac{\varinjlim{\tilde{F}_{i}}}{t(\varinjlim{\tilde{F}_{i}})}[1] \ar[r] & 0\\ 0 \ar[r] & (\varinjlim{\tilde{T}_{i}})[0] \ar[r] & (\varinjlim{T_{i}^{'}})[0] \ar[r] &  \frac{\varinjlim{K_i/t(K_i)}}{t(\varinjlim{K_i/t(K_i)})}[1] \ar[r] & 0}$$
which necessarily come from exact sequences in $\G$:
$$\xymatrix{0 \ar[r]  & \frac{\varinjlim{F_{i}}}{t(\varinjlim{F_i})} \ar[r] & \frac{\varinjlim{\tilde{F}_{i}}}{t(\varinjlim{\tilde{F}_{i}})}\ar[r] & \varinjlim{t(K_i)} \ar[r] & 0 & (\ast)\\ 0 \ar[r]   &  \frac{\varinjlim{K_i/t(K_i)}}{t(\varinjlim{K_i/t(K_i)})} \ar[r]&  \varinjlim{\tilde{T}_{i}}\ar[r]& \varinjlim{T_{i}^{'}}  \ar[r] & 0 & (\ast \ast)}$$
These sequences are obviously induced from applying the direct limit functor to the row of the first diagram and the bottom row of the second diagram above, respectively. Due to the fact that $\G$ is a Grothendieck category, we then have the following commutative diagrams in $\G$:
$$\xymatrix{0 \ar[r] & \varinjlim{F_i} \ar[r] \ar@{>>}[d] & \varinjlim{\tilde{F}_{i}} \ar[r] \ar@{>>}[d] & \varinjlim{t(K_i)} \ar[r] \ar@{=}[d] &0 \\ 0 \ar[r] & \frac{\varinjlim{F_{i}}}{t(\varinjlim{F_i})} \ar[r] & \frac{\varinjlim{\tilde{F}_{i}}}{t(\varinjlim{\tilde{F}_{i}})}\ar[r] & \varinjlim{t(K_i)} \ar[r] & 0 & (\ast) }$$
$$\xymatrix{0 \ar[r] & \varinjlim{K_i/t(K_i)} \ar@{>>}[d]  \ar[r]&  \varinjlim{\tilde{T}_{i}} \ar[r] \ar@{=}[d]&  \varinjlim{T_{i}^{'}} \ar@{=}[d]\ar[r] & 0 \\ 0 \ar[r] &  \frac{\varinjlim{K_i/t(K_i)}}{t(\varinjlim{K_i/t(K_i)})} \ar[r] &\varinjlim{\tilde{T}_{i}}  \ar[r]& \varinjlim{T_{i}^{'}} \ar[r] & 0 & (\ast \ast)}$$
Thus $t(\varinjlim{F_i})\cong t(\varinjlim{\tilde{F}_{i}})$ and $\varinjlim{K_i/t(K_i)}\in \mathcal{F}$. On the other hand, from the first diagram of this proof, we get the following exact sequence in $\G$
$$\xymatrix{0 \ar[r] &\varinjlim{\tilde{F}_{i}} \ar[r]^{\iota} & \varinjlim{F_{i}^{'} }\ar[r] & \varinjlim{K_i/t(K_i)} \ar[r] & 0}$$
Using snake lemma, we get an exact sequence $\xymatrix{0 \ar[r] & \Ker((1:t)(\iota)) \ar[r] & \Coker(t(\iota)) \ar[r]^{0} & \varinjlim \frac{K_i}{t(K_i)}},$ where the last arrow is zero since $\Coker(t(\iota))\in \T$ and $\varinjlim \frac{K_i}{t(K_i)}\in \F$. But then $\Coker(t(\iota))\in \T \cap \F=0.$ Consequently $t(\iota)$ is an isomorphism, and thus $t(\varinjlim{\tilde{F_{i}}}) \cong t(\varinjlim{F_{i}^{'}}).$  The result follows from the fact that $\Ker(w)=\varinjlim{F_i}$ and $t(\varinjlim{F_i})\cong t(\varinjlim{F_{i}^{'}})$.
\end{proof}

\vspace{0.3cm}

The following is the first main result of the chapter.

\begin{theorem}\label{caracterizacion AB5}
Let $\G$ be a Grothendieck category, let $\mathbf{t}=(\mathcal{T,F})$ be a torsion pair in $\G$, let $(\mathcal{U}_{\mathbf{t}},\mathcal{U}_{\mathbf{t}}^{\perp}[1])$ be its associated t-structure in $\D(\G)$ and let $\Ht=\mathcal{U}_{\mathbf{t}}\cap \mathcal{U}_{\mathbf{t}}^{\perp}[1]$ be the heart. The following assertions are equivalent:
\begin{enumerate}
\item[1)] $\Ht$ is an AB5 abelian category;
\item[2)] $\mathcal{F}$ is closed under taking direct limits in $\G$ and if $(M_i)_{i_\in I}$ is a direct system in $\Ht$, with 
$$\xymatrix{\underset{i \leq j}{\coprod} M_{ij} \ar[r]^{f} & \underset{i \in I}{\coprod}M_i \ar[r] & Z \ar[r]^{+}&}$$
the triangle in $\D(\G)$ afforded by the associated colimit-defining morphism, then the composition $\xymatrix{\varinjlim{H^{-1}(M_i)} \ar[r] & H^{-1}(Z) \ar@{>>}[r]^{can \hspace{1.2cm}} & H^{-1}(Z)/t(H^{-1}(Z))}$ is a monomorphism;
\item[3)] $\mathcal{F}$ is closed under taking direct limits in $\G$ and, for each direct system $(M_i)_{i\in I},$ the canonical morphism $\xymatrix{\varinjlim{H^{-1}(M_i)} \ar[r] & H^{-1}(\limite {M_i})}$ is a monomorphism;
\item[4)] The classical cohomological functors $\xymatrix{H^{m}:\Ht \ar[r] & \G}$ preserve direct limits, for all $m\in \mathbb{Z}$.
\end{enumerate}
\end{theorem}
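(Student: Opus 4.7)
The plan is to run the cycle $(4) \Rightarrow (1) \Rightarrow (3) \Leftrightarrow (2)$ and $(3) \Rightarrow (4)$. The implication $(4) \Rightarrow (1)$ is already Corollary \ref{All functor homology} specialized to the Happel-Reiten-Smal\o \ t-structure, so I would start by invoking it. The equivalence $(2) \Leftrightarrow (3)$ is purely formal: the proof of Proposition \ref{description of stalk} (specifically the claim established just after assertion (1)) shows that $H^{-1}(\varinjlim_{\Ht} M_i) \cong H^{-1}(Z)/t(H^{-1}(Z))$, and that under this identification the canonical morphism $\varinjlim H^{-1}(M_i) \to H^{-1}(\varinjlim_{\Ht} M_i)$ is exactly the composite $\varinjlim H^{-1}(M_i) \to H^{-1}(Z) \twoheadrightarrow H^{-1}(Z)/t(H^{-1}(Z))$ appearing in condition (2).

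To handle $(3) \Rightarrow (4)$, I would check the three cases for $H^m$ separately. For $m \ne -1, 0$ the functor $H^m$ vanishes on $\Ht$ and trivially preserves direct limits. For $m = 0$, Lemma \ref{exactness of H} provides right exactness and coproduct preservation, which is enough for an additive functor between AB3 abelian categories to preserve direct limits, since these are computed as cokernels of colimit-defining morphisms between coproducts. For $m = -1$, assertion (1) of Proposition \ref{description of stalk} supplies a natural epimorphism $\varinjlim H^{-1}(M_i) \twoheadrightarrow H^{-1}(\varinjlim_{\Ht} M_i)$, which together with the monomorphism hypothesis in (3) yields an isomorphism. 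I would also note that once $\F$ is closed under direct limits, the monomorphism condition in (3) is automatic, because $\varinjlim H^{-1}(M_i)$ then lies in $\F$ and admits no nonzero morphism to the torsion part of $H^{-1}(Z)$.

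The hard part, where I expect the main obstacle to lie, is the implication $(1) \Rightarrow (3)$, which I would reduce to showing that AB5 of $\Ht$ forces $\F$ to be closed under direct limits in $\G$. My plan here is to exploit Lemma \ref{clave para AB5} via a cocone-into-the-limit trick. Suppose for contradiction that $(F_i)_{i \in I}$ is a direct system in $\F$ whose direct limit $F := \varinjlim F_i$ has nonzero torsion subobject $T := t(F)$. Set $G_i := \Ker(F_i \to F/T)$; each $G_i$ lies in $\F$ as a subobject of $F_i$, and the exactness of direct limits in $\G$ yields $\varinjlim G_i = \Ker(F \to F/T) = T$. The canonical cocone morphisms $c_i \colon G_i \to T$, obtained by restricting the codomain of $G_i \hookrightarrow F_i \to F$ to $T$, assemble into a direct system of four-term exact sequences
\begin{equation*}
0 \to \Ker(c_i) \to G_i \xrightarrow{c_i} T \to T/c_i(G_i) \to 0,
\end{equation*}
with the left two terms in $\F$ and the right two in $\T$, fitting the hypotheses of Lemma \ref{clave para AB5}. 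That lemma then forces $\varinjlim c_i$ to vanish on $t(\varinjlim G_i) = t(T) = T$; but the universal property of the colimit identifies $\varinjlim c_i \colon T \to T$ with $\mathrm{id}_T$, so $T = 0$, contradicting the assumption $T \ne 0$. Hence $\F$ is closed under direct limits, closing the cycle.
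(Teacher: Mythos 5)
Your cycle breaks at the implication $(1)\Rightarrow(3)$: you only extract from AB5 that $\F$ is closed under direct limits, and you dismiss the monomorphism half of (3) with the remark that it is ``automatic'' once $\F$ is closed under direct limits. That remark is false in general, and its justification reverses the torsion-pair axiom: what vanishes is $\Hom_{\G}(\T,\F)$, not $\Hom_{\G}(\F,\T)$, so an object of $\F$ may well map nontrivially into $t(H^{-1}(Z))$ (think of the divisible/reduced torsion pair on abelian groups and $\mathbb{Z}\subset\mathbb{Q}$). Concretely, the kernel of the composite $\varinjlim H^{-1}(M_i)\to H^{-1}(Z)\to H^{-1}(Z)/t(H^{-1}(Z))$ is a torsion-free subobject of $t(H^{-1}(Z))$; such a subobject is forced to vanish only when $\te$ is hereditary (which is exactly why the condition is automatic in case a) of Theorem \ref{Grothendieck characterization}, but not in general). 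Indeed, if your remark were true, Theorem \ref{caracterizacion AB5} would collapse to ``$\Ht$ is AB5 $\Longleftrightarrow$ $\F=\varinjlim\F$'' with no hypotheses, rendering conditions a)--c) and the long proof of 3) $\Rightarrow$ 2) in Theorem \ref{Grothendieck characterization} superfluous and essentially answering open question 1 of Chapter 6; that should have been a warning sign.

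The gap is repairable, and the repair is precisely where AB5 of $\Ht$ must be used a second time (this is the paper's first step in $1)\Rightarrow 2)$): apply the AB5 condition of $\Ht$ to the direct system of short exact sequences $0\to H^{-1}(M_i)[1]\to M_i\to H^{0}(M_i)[0]\to 0$ and use parts 2) and 3) of Proposition \ref{description of stalk} to obtain an exact sequence $0\to \frac{\varinjlim H^{-1}(M_i)}{t(\varinjlim H^{-1}(M_i))}[1]\to \limite M_i \to (\varinjlim H^{0}(M_i))[0]\to 0$ in $\Ht$; taking homology identifies $H^{-1}(\limite M_i)$ with $\frac{\varinjlim H^{-1}(M_i)}{t(\varinjlim H^{-1}(M_i))}$ and the canonical map with the projection, so its kernel is exactly $t(\varinjlim H^{-1}(M_i))$, which is then killed by the closure of $\F$ under direct limits. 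That closure you do prove correctly, and by what is in substance the paper's own argument: your $G_i=\Ker(F_i\to F/T)$ is the paper's $\Ker(\gamma_i)$, and the appeal to Lemma \ref{clave para AB5} is the same. With this insertion the cycle closes; your treatments of $(4)\Rightarrow(1)$, $(2)\Leftrightarrow(3)$ and $(3)\Rightarrow(4)$ are correct and agree with the paper.
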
 
\begin{proof}
1) $\Longrightarrow $ 2) Let $(M_i)_{i \in I}$ be a direct system in $\Ht$. We then get an induced direct system of short exact sequences in $\Ht$
$$\xymatrix{0 \ar[r] & H^{-1}(M_i)[1] \ar[r] & M_i \ar[r] & H^{0}(M_i)[0] \ar[r] & 0}$$
From the AB5 condition of $\Ht$ and proposition \ref{description of stalk} we get an exact sequences in $\Ht$
$$\xymatrix{0 \ar[r] & \frac{\varinjlim{H^{-1}(M_i)}}{t(\varinjlim{H^{-1}(M_i)})}[1] \ar[r] & \limite{M_i} \ar[r] & (\varinjlim{H^{0}(M_i)[0]}) \ar[r] & 0}$$

which gives, by taking homologies, an isomorphism $\frac{\varinjlim{H^{-1}(M_i)}}{t(\varinjlim{H^{-1}(M_i)})} \iso H^{-1}(\limite M_i)$. This together with proposition \ref{description of stalk} and its proof, give that the canonical morphism $\xymatrix{\varinjlim{H^{-1}(M_i)} \ar[r] & H^{-1}(\limite{M_i})}$ induces an isomorphism: 
\begin{center}
$\frac{\varinjlim{H^{-1}(M_i)}}{t(\varinjlim{H^{-1}(M_i)})} \iso H^{-1}(\limite M_i)\cong H^{-1}(Z)/t(H^{-1}(Z))$
\end{center}
Therefore the proof of this implication is reduced to prove that $\mathcal{F}$ is closed under taking direct limits. \\

Let $(F_i)_{i\in I}$ be any direct system in $\mathcal{F}$. For each $j\in I$, let us denote by $\gamma_{j}$ the composition $\xymatrix{F_j \ar[r] & \varinjlim{F_i} \ar@{>>}[r]^{p \hspace{0.15cm}} & \frac{\varinjlim{F_i}}{t(\varinjlim{F_i})}}$, where the morphisms are the obvious ones. We consider the following commutative diagram:
$$\xymatrix{0 \ar[r] & \text{Ker}(\gamma_{j}) \ar[r] \ar@{-->}[d] & F_j \ar[r] \ar[d]^{\iota_j}& *+<0.8em> {\text{Im}(\gamma_{j})} \ar[r] \ar@{^(->}[d] & 0 \\ 0 \ar[r] & t(\varinjlim{F_i}) \ar[r] & \varinjlim{F_i} \ar@{>>}[r]^{p} & \frac{\varinjlim{F_i}}{t(\varinjlim{F_i})} \ar[r] & 0  }$$
where the $\iota_j$ are the canonical morphisms into the direct limit. Using now the AB5 condition of $\G$, we get the following commutative diagram:
$$\xymatrix{0 \ar[r] & \varinjlim \text{Ker}(\gamma_{j}) \ar[r] \ar@{^(->}[d] & \varinjlim F_j \ar[r] \ar@{=}[d]& *+<0.8em> { \varinjlim \text{Im}(\gamma_{j})} \ar[r] \ar@{^(->}[d] & 0 \\ 0 \ar[r] & t(\varinjlim{F_i}) \ar[r] & \varinjlim{F_i} \ar@{>>}[r]^{p} & \frac{\varinjlim{F_i}}{t(\varinjlim{F_i})} \ar[r] & 0  }$$
This implies that the vertical morphisms are isomorphisms and therefore, $(\text{Ker}(\gamma_{j}))_{j \in I}$ is a direct system in $\mathcal{F}$ such that $\varinjlim \text{Ker}(\gamma_{j})=t(\varinjlim{F_i})$ is in $\mathcal{T}$. Put $T:=t(\varinjlim{F_i})$ and, for each $i\in I$, consider the canonical map $\xymatrix{u_i:\text{Ker}(\gamma_j) \ar[r] & T}$ into the direct limit. We then get a direct system of exact sequences in $\G$:
$$\xymatrix{0 \ar[r] & \Ker(u_i) \ar[r] & \Ker(\gamma_i) \ar[r]^{ \hspace{0.5cm}u_i} & T \ar[r] & \Coker(u_i) \ar[r] & 0}$$
From lemma \ref{clave para AB5} we then get that the map $\xymatrix{u:=\varinjlim{u_i}:\varinjlim{\Ker(\gamma_i)} \ar[r] & T}$ vanishes on $t(\varinjlim{\Ker(\gamma_i)})$. This implies that $u=0$ since $\varinjlim{\Ker(\gamma_i)}=T$ is in $\mathcal{T}$. But $u$ is an isomorphism by definition of the direct limit. It then follows that $T=0$, so that $\varinjlim{F_i}\in \mathcal{F}$ as desired. \\

2) $\Longleftrightarrow$ 3) $\Longrightarrow $ 4) They follow directly from proposition \ref{description of stalk}, its proof and the fact that all complexes in $\Ht$ have homology concentrated in degrees -1 and 0. \\

4) $\Longrightarrow$ 1) It follows from corollary \ref{All functor homology}.
\end{proof}

\vspace{0.3 cm}

In order to give conditions for $\Ht$ to be a Grothendieck category, we need some auxiliary lemmas.

\begin{lemma}\label{limites directos en C}
Suppose that $\mathcal{F}$ is closed taking direct limits in $\G$. Let $(M_i)_{i\in I}$ be a direct system in $\C(\G)$, where $M_i$ is a complex concentrated in degrees -1,0, for all $i\in I$. If $M_i\in \Ht$, for all $i\in I$, then the canonical morphism
\begin{center}
$\xymatrix{\limite M_i \ar[r] & \varinjlim_{\C(\G)}{M_i}}$
\end{center}
is an isomorphism in $\D(\G).$
\end{lemma}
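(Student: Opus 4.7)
The plan is to show that $L:=\varinjlim_{\C(\G)} M_i$, with the direct limit taken degreewise in the category of complexes, already lies in $\Ht$ and that the canonical comparison morphism from $\limite M_i$ to $L$ is an isomorphism in $\D(\G)$.

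First I would verify $L\in\Ht$. Since direct limits in $\C(\G)$ are computed degreewise, $L^{k}=0$ for $k\neq -1,0$. Because $\G$ is AB5, the classical homology functors $H^{k}:\C(\G)\longrightarrow\G$ commute with direct limits; hence $H^{-1}(L)=\varinjlim H^{-1}(M_i)$ lies in $\F$ by the standing hypothesis, and $H^{0}(L)=\varinjlim H^{0}(M_i)$ lies in $\T$ because torsion classes are always closed under direct limits. The structural chain maps $M_i\longrightarrow L$ in $\C(\G)$ localize to a compatible family of morphisms in $\Ht\subseteq\D(\G)$, and the universal property of $\limite$ then supplies the canonical comparison morphism $\varphi:\limite M_i\longrightarrow L$ whose invertibility is our target.

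To prove that $\varphi$ is an isomorphism in $\D(\G)$, I would show $H^{k}(\varphi)$ is an isomorphism for $k=-1,0$; since both ends of $\varphi$ belong to $\Ht$, and thus have homology concentrated in those two degrees, this suffices. For each such $k$, the composition
$$\varinjlim H^{k}(M_i)\xrightarrow{\ \alpha_k\ } H^{k}(\limite M_i)\xrightarrow{H^{k}(\varphi)} H^{k}(L)=\varinjlim H^{k}(M_i)$$
is the identity of $\varinjlim H^{k}(M_i)$: indeed, the factorization $M_i\longrightarrow\limite M_i\longrightarrow L$ recovers the structural map $M_i\longrightarrow L$ of the degreewise direct limit, so after applying $H^{k}$ and passing to the direct limit the composition must agree with the canonical comparison $\varinjlim H^{k}(M_i)\longrightarrow H^{k}(L)=\varinjlim H^{k}(M_i)$, which is the identity. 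By proposition \ref{description of stalk}(1), $\alpha_k$ is an isomorphism when $k=0$ and only an epimorphism when $k=-1$; in the latter case the left inverse $H^{-1}(\varphi)$ makes $\alpha_{-1}$ simultaneously monic and epic, hence an isomorphism. Therefore $H^{k}(\varphi)$ is an isomorphism in both degrees, and $\varphi$ itself is an isomorphism in $\D(\G)$. The only delicate point — and the place where the hypothesis on $\F$ is actually used — is the verification that $L$ lies in $\Ht$; without it, $H^{-1}(L)$ would no longer be torsion-free and the retraction argument above would collapse.
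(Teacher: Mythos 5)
Your proof is correct and follows essentially the same route as the paper: both first check that $\varinjlim_{\C(\G)}M_i$ lies in $\Ht$ (using the hypothesis on $\F$ and AB5), then factor the canonical comparison on homology through $H^{k}(\limite M_i)$, use exactness of $\varinjlim$ in $\G$ to see the composite is an isomorphism, and invoke proposition \ref{description of stalk}(1) to upgrade the first arrow (epi in degree $-1$, iso elsewhere) to an isomorphism, forcing $H^{k}$ of the comparison map to be invertible in all degrees. No gaps; the argument matches the paper's in substance and order.
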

\begin{proof}
The hypothesis on $\mathcal{F}$ together with the AB5 condition of $\G$, imply that $\varinjlim_{\C(\G)}{M_i} \in \Ht$ and, hence, there is a canonical morphism $\xymatrix{g:\limite{M_i} \ar[r] & \varinjlim_{\C(\G)}{M_i}}$ as indicated in the statement. We then get a composition of morphism in $\G$:
\begin{center}
$\xymatrix{\varinjlim{H^{k}(M_i)} \ar[r] & H^{k}(\limite M_i) \ar[rr]^{H^{k}(g)} & & H^{k}(\varinjlim_{\C(\G)}{M_i})}$
\end{center}
for each $k\in \mathbb{Z}$. But all complexes involved have homology concentrated in degrees -1,0 and, by exactness of $\varinjlim$ in $\G$, we know that the last composition of morphisms is an isomorphism. So the first arrow is a monomorphism for each $k\in \mathbb{Z}$. It follows from proposition \ref{description of stalk} that both arrows in the composition are isomorphisms, for all $k \in \mathbb{Z}$. Therefore $g$ is an isomorphism in $\D(\G)$.
\end{proof} 

\begin{remark}\rm{
Note that if $G$ is a generator of $\Ht$, then for each $T\in \mathcal{T}$ there is a sequence in $\Ht$ of the form
$$\xymatrix{G^{(\beta)} \ar[r] & G^{(\alpha)} \ar[r] & T[0] \ar[r] &0}$$
Using the fact that $\xymatrix{H^{0}:\Ht \ar[r] & \G} $ is a right exact functor that commutes with coproducts, we have $\mathcal{T}=\text{Pres}(H^{0}(G))$. This property appears in other situations, as the following result shows.}
\end{remark}

\begin{lemma}\label{lema presentados de V}
Let $\mathbf{t}=(\T,\F)$ be a torsion pair such that $\F$ is closed under taking direct limits in $\G$. Then there is an object $V$ such that $\T=\text{Pres}(V).$ Moreover, the torsion pairs such that $\F$ is closed under taking direct limits in $\G$ form a set.
\end{lemma}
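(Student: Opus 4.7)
My plan is to construct $V$ as the direct sum of a set of ``small'' torsion objects, specifically those torsion objects that appear as subquotients of finite powers of a fixed generator of $\G$. First I would show that the hypothesis on $\F$ forces the torsion radical $t:\G\flecha\G$ to commute with direct limits: applying $\varinjlim$ (exact in $\G$) to the short exact sequences $0 \flecha t(X_i)\flecha X_i\flecha (1:t)(X_i)\flecha 0$ produces a short exact sequence whose outer terms lie in $\T$ and $\F$ respectively (using the hypothesis and the fact that $\T$ is always closed under direct limits), so by uniqueness of the torsion decomposition $\varinjlim t(X_i)\cong t(\varinjlim X_i)$.

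Next, fixing a generator $U$ of $\G$, I would let $\mathcal{X}$ denote the class of isomorphism classes of subquotients $L/K$ with $K\subseteq L\subseteq U^{n}$ and $n<\omega$. Since $\G$ is locally small, subobjects of each $U^{n}$ form a set, so $\mathcal{X}$ is a set. Put $\mathcal{Y}:=\mathcal{X}\cap \T$ and $V:=\bigoplus_{Y\in\mathcal{Y}}Y\in\T$. For each $T\in \T$ and each finite subset $A$ of $\Hom_\G(U,T)$, define $T_A:=\sum_{f\in A}\Imagen(f)\subseteq T$. As $A$ runs over finite subsets (directed by inclusion) one has $\varinjlim T_A=T$, since $U$ generates $\G$. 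Applying Step~1 and using $T\in \T$, I get $T=t(T)=\varinjlim t(T_A)$, and each $t(T_A)$, being a subobject of $T_A$ (a quotient of $U^{|A|}$), lies in $\mathcal{Y}$.

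The main technical step is to turn this direct limit presentation into a $V$-presentation. Writing $T$ as the cokernel of the colimit-defining morphism $g:\bigoplus_{A\leq B}t(T_A)_{AB}\flecha \bigoplus_A t(T_A)$ (see Definition~\ref{def. colimit-defining morphism}), both source and target embed as direct summands of copies of $V$, since each $t(T_A)\in\mathcal{Y}$ is a distinguished summand of $V$. Writing $V^{(I)}=\bigl(\bigoplus_A t(T_A)\bigr)\oplus W$, the complement $W$ is a summand of $V^{(I)}\in\T$, hence $W\in\T=\Gen(V)$ (the equality $\T=\Gen(V)$ being a consequence of Step~2), so I can pick an epimorphism $\phi:V^{(C)}\epic W$. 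Combining $g$ with $\phi$ yields a morphism $V^{(J)}\oplus V^{(C)}\flecha V^{(I)}$ with cokernel isomorphic to $T\oplus 0=T$, establishing $T\in \Pres(V)$; the reverse inclusion $\Pres(V)\subseteq \T$ is immediate since $V\in \T$ and $\T$ is closed under quotients and coproducts. The obstacle here is purely bookkeeping: one must be careful that the extension of $g$ by zero on the complementary summands, together with $\phi$ surjecting onto $W$, produces exactly $T$ (not $T$ plus leftover summands).

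Finally, for the ``moreover'' statement, I would observe that the construction gives a well-defined assignment $\te\rightsquigarrow \mathcal{Y}_\te\in 2^{\mathcal{X}}$, and this assignment is injective: $\mathcal{Y}_\te$ determines $V_\te$ up to isomorphism, which determines $\T_\te=\Pres(V_\te)$, which determines $\te=(\T_\te,\T_\te^{\perp})$. Since $2^{\mathcal{X}}$ is a set, the class of torsion pairs in $\G$ whose torsion-free class is closed under direct limits is itself a set.
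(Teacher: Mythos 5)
Your proposal is correct and follows essentially the same route as the paper: both express each $T\in\T$ as a directed union of torsion subquotients of finite powers of a generator (using AB5 and the closure of $\F$ under direct limits to kill $\varinjlim U_i/t(U_i)\in\T\cap\F=0$), take $V$ to be the coproduct of a set of representatives of such small torsion objects to get $\T=\Pres(V)$, and deduce the ``moreover'' part from the injectivity of $\te\rightsquigarrow\mathcal{Y}_\te$. Your only additions are to isolate the commutation of the torsion radical with direct limits as a preliminary step and to spell out the bookkeeping (the complement $W$ and $\Gen(V)=\T$) that the paper leaves implicit in the passage from a direct limit of objects of $\mathcal{S}$ to a $V$-presentation.
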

\begin{proof}
Let us fix a generator $G$ of $\G$. Then each object of $\G$ is a directed union of those of its subobjects which are isomorphic to quotients $G^{(n)}/X$, where $n$ is a natural number and $X$ is a subobject of $G^{(n)}$. Let now take $T\in \T$ and express it as a directed union $T=\underset{i\in I}{\bigcup}U_i$, where $U_i\cong G^{(n_i)}/X_i$, for some integer $n_i>0$ and some subobject $X_i$ of $G^{(n_i)}$. We now get an $I$-directed system of exact sequences 
$$\xymatrix{0 \ar[r] & t(U_i) \ar[r] & U_i \ar[r] & U_i/t(U_i) \ar[r] &0}$$
Due to the AB5 condition of $\G$, after taking direct limits, we get an exact sequence
$$\xymatrix{0 \ar[r] & \varinjlim{t(U_i)} \ar[r] & T \ar[r] & \varinjlim{\frac{U_i}{t(U_i)}} \ar[r] &0}$$  
It follows that $\varinjlim{\frac{U_i}{t(U_i)}}\in \mathcal{T} \cap \mathcal{F}=0$ since $\F$ is closed under taking direct limits in $\F$. Then we have $T=\underset{i \in I}{\bigcup} t(U_i)$. The objects $T^{'}\in \T$ which are isomorphic to subobjects of quotients $G^{(n)}/X$ form a skeletally small subcategory (see definition \ref{def. locally small}). We take a set $\mathcal{S}$ of its representatives, up to isomorphism, and put $V=\underset{S\in \mathcal{S}}{\coprod}S$. The previous paragraph shows that each $T\in \T$ is isomorphic to a direct limit of objects in $\mathcal{S}$, from which we get that $\T \subseteq \Pres(V).$ The other inclusion is clear. \\

For the final statement, note that the last paragraph shows that the assignment $\mathbf{t} \rightsquigarrow S$ gives an injective map from the class of torsion pairs $\mathbf{t}$ such that $\F=\varinjlim{\F}$ to the set of parts of $\underset{n\in \mathbb{N}, X<G^{(n)}}{\bigcup } S(G^{(n)}/X)$, where $S(M)$ denotes the set of subobjects of $M$, for each object $M$.
\end{proof}

\begin{lemma}\label{generador de Ht}
Let $\mathbf{t}=(\T,\F)$ be a torsion pair such that each object of $\Ht$ is isomorphic in $\D(\G)$ to a complex $M$ such that $M^{k}=0$, for $k\neq -1,0$, and $M^{-1}\in \F$ (e.g. if $\mathbf{t}$ is a hereditary torsion pair). If $\F$ is closed under taking direct limits in $\G$, then $\Ht$ has a generator.
\end{lemma}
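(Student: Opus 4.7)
The plan is to invoke the results already established in this chapter and construct an explicit generator of $\Ht$.

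By theorem \ref{caracterizacion AB5}, the hypothesis that $\F$ is closed under direct limits in $\G$ forces $\Ht$ to be AB5, hence cocomplete. By lemma \ref{lema presentados de V}, there exists an object $V\in \G$ with $\T=\Pres(V)$. Fix a generator $G$ of $\G$ and observe that $G/t(G)\in \F$, so that both $V[0]$ and $(G/t(G))[1]$ lie in $\Ht$. I propose that
$$G_{\Ht}:= V[0]\oplus (G/t(G))[1]$$
is a generator of $\Ht$.

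The first step is to verify that for every nonzero $M\in \Ht$ there exists a nonzero morphism $G_{\Ht}\to M$ in $\Ht$. Since $M\in \Ht$, we have $H^{-1}(M)\in \F$, $H^{0}(M)\in \T$, and a canonical short exact sequence $0\to H^{-1}(M)[1]\to M\to H^{0}(M)[0]\to 0$ in $\Ht$. If $H^{-1}(M)\neq 0$, then a nonzero morphism $G\to H^{-1}(M)$ in $\G$ (available because $G$ generates $\G$) factors through $G/t(G)$ since $H^{-1}(M)\in \F$; shifting and composing with the monomorphism $H^{-1}(M)[1]\hookrightarrow M$ in $\Ht$ then produces a nonzero morphism $(G/t(G))[1]\to M$. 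If instead $H^{-1}(M)=0$, then $M\cong H^{0}(M)[0]$ with $H^{0}(M)\in \Pres(V)\setminus\{0\}$; choosing a nonzero morphism $V\to H^{0}(M)$ (as a component of any epimorphism $V^{(I)}\twoheadrightarrow H^{0}(M)$) produces a nonzero morphism $V[0]\to M$ in $\Ht$.

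The main obstacle is then to upgrade this weak generating property into the genuine condition that every $M\in \Ht$ be a quotient in $\Ht$ of some coproduct $G_{\Ht}^{(I)}$. Here the hypothesis that $M$ admits a representative $M^{-1}\xrightarrow{d} M^{0}$ with $M^{-1}\in \F$ becomes essential: together with the closure of $\F$ under direct limits and the AB5 property of $\Ht$, it allows one to express $M$ as a direct limit in $\Ht$ of small subcomplexes $A\to B$ lying in $\Ht$, where $A\leq M^{-1}$ is a small subobject (automatically in $\F$) and $B\leq M^{0}$ is chosen so that $B/d(A)\in \T$. Each such small subcomplex is covered by a finite coproduct of copies of $G_{\Ht}$ using the two cases above applied to the stalks of its short exact sequence, and invoking lemma \ref{limites directos en C} together with proposition \ref{description of stalk} assembles these covers into the desired epimorphism $G_{\Ht}^{(I)}\twoheadrightarrow M$ via a direct limit argument in the AB5 category $\Ht$.
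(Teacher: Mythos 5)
There is a genuine gap, and it lies exactly where you locate the ``main obstacle'': the object $G_{\Ht}=V[0]\oplus (G/t(G))[1]$ need not generate $\Ht$, and the final assembling step cannot work as described. The problem is the degree-$0$ part. Write $W=G/t(G)$ and let $M\in\Ht$ with canonical sequence $0\to H^{-1}(M)[1]\to M\to H^{0}(M)[0]\to 0$. Any morphism $f:V^{(I)}[0]\oplus W^{(J)}[1]\to M$ satisfies $H^{0}(f)=H^{0}(\alpha)$, where $\alpha$ is the restriction to $V^{(I)}[0]$; and since $f$ epic in $\Ht$ forces $H^{0}(f)$ to be epic, one needs an epimorphism $V^{(I)}\to H^{0}(M)$ that \emph{lifts} to a morphism $V^{(I)}[0]\to M$ in $\D(\G)$. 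Applying $\Hom_{\D(\G)}(V^{(I)}[0],?)$ to the triangle $H^{-1}(M)[1]\to M\to H^{0}(M)[0]\xrightarrow{w} H^{-1}(M)[2]$ shows that a morphism $g:V^{(I)}\to H^{0}(M)$ lifts if and only if $w\circ g[0]=0$ in $\Ext^{2}_{\G}(V^{(I)},H^{-1}(M))=\Ext^{1}_{\Ht}(V^{(I)}[0],H^{-1}(M)[1])$. The hypotheses of the lemma give no control whatsoever over this obstruction, so in general the liftable morphisms do not jointly cover $H^{0}(M)$ and $M\notin\Gen_{\Ht}(G_{\Ht})$. This is not a technicality: the paper's later proposition \ref{prop. progenerator V[0]+Y[1]} characterizes precisely when a generator of the form $V[0]\oplus Y[1]$ exists, and its condition 2.b) demands that $\Ext^{2}$ of $V$ vanish on $\F$ --- an extra hypothesis absent here. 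Your reduction of $M$ to small subcomplexes does not repair this, because covering each small subcomplex ``using the two cases above applied to the stalks of its short exact sequence'' runs into the very same lifting obstruction; the two cases only yield the weak generating property, which, as you note, is not enough.

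The paper's proof sidesteps the obstruction by \emph{not} splitting the generator into stalks: it takes as generating family the set $\mathcal{S}$ of genuine two-term complexes $S^{-1}\to S^{0}$ with $S^{-1}=W^{(m)}$ and $H^{0}(S)=V^{(n)}$, i.e.\ it keeps the (possibly nonsplit) extensions of $V^{(n)}[0]$ by quotients of $W^{(m)}[1]$ as generators, and then realizes an arbitrary $M$ (after replacing $M^{-1}$ by $W^{(J)}$) as a direct limit in $\Ht$ of such subcomplexes, using lemma \ref{limites directos en C} and proposition \ref{description of stalk} as you propose for the limit step. So your direct-limit strategy is sound and close to the paper's, but the generating objects must be the two-term subcomplexes themselves rather than $V[0]\oplus W[1]$.
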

\begin{proof}
All throughout the proof, we fix a generator $G$ of $\G$ and, using the previous lemma, an object $V$ such that $\T=\Pres(V).$ We also put $W:=G/t(G)$, so that $\F\subseteq Gen(W)$. Let $\mathcal{S}$ denote the class of chain complexes 
$$\xymatrix{\cdots \ar[r] & 0 \ar[r] & S^{-1} \ar[r] & S^{0} \ar[r] & 0 \ar[r] & \cdots}$$
such that $S^{-1}=W^{(m)}$ and $H^{0}(S)=V^{(n)}$, for some $m,n\in \mathbb{N}.$ The class $\mathcal{S}$ is clearly in bijection with that of exact sequences in $\G$
$$\xymatrix{0 \ar[r] & W^{(m)}/K \ar[r] & S^{0} \ar[r] & V^{(n)} \ar[r] & 0}$$
where $m,n \in \mathbb{N}$ and $K$ is a subobject of $W^{(n)}.$ Up to isomorphism, these exact sequences form a set since $\G$ is locally small. Therefore the image of $\mathcal{S}$ by the canonical functor $\xymatrix{q: \C(\G) \ar[r] & \D(\G)}$ is skeletally small in $\Ht$. The goal is to see that $\Ht=\Gen_{\Ht}(q(\mathcal{S}))$, which will end the proof. For simplicity, put $\mathcal{S}=q(\mathcal{S})$. \\

Let now $M\in \Ht$ be any object, which we assume to be a complex of the form $\xymatrix{ \cdots \ar[r] & 0 \ar[r] & M^{-1} \ar[r]^{d} & M^{0} \ar[r] & 0 \ar[r] & \cdots }$ with $M^{-1}\in \mathcal{F}$. We take an epimorphism $\xymatrix{p:W^{(J)} \ar@{>>}[r] & M^{-1}}$ and consider the following commutative diagram 
$$\xymatrix{\tilde{M}:= &\cdots \ar[r] & 0 \ar[r] & W^{(J)} \ar[r]^{d \circ p}  \ar[d]^{p} & M^{0} \ar[r] \ar@{=}[d]  & 0 \ar[r] & \cdots \\ & \cdots \ar[r] & 0 \ar[r] & M^{-1} \ar[r]^{d} & M^{0} \ar[r] & 0 \ar[r] & \cdots}$$
It is clear that $\tilde{M}\in \Ht$. Moreover, we have an obvious exact sequence in the category $\C(\G)$ of chain complexes
$$\xymatrix{0 \ar[r] & \Ker(p)[1] \ar[r] & \tilde{M} \ar[r] & M \ar[r] & 0}$$
This sequence gives a triangle in $\D(\G)$ with its three terms in $\Ht$ and, hence it is an exact sequence in this last category. Our goal is reduced to show that $\tilde{M}\in \Gen_{\Ht}(\mathcal{S}).$ \\

Let then consider any object $\tilde{M}\in \Ht$ represented by a complex 
$$\xymatrix{\cdots \ar[r] & 0 \ar[r] & W^{(J)} \ar[r]^{g} & \tilde{M}^{0} \ar[r] & 0 \ar[r] & \cdots}$$
for some set $J$. For each finite subset $F\subseteq J$, we get the following commutative with exact rows in $\G$, whose bottom left and upper right squares are cartesian:
$$\xymatrix{0 \ar[r] & U_{F} \ar[r] \ar@{=}[d] & W^{(F)} \ar[r] \ar@{=}[d] & \tilde{M}_{F}^{0} \ar[r] \ar[d] \pushoutcorner & *+<1em>{t(X_F)} \ar@{^(->}[d] \ar[r] & 0\\ 0 \ar[r] & *+<1em>{U_F} \ar@{^(->}[d] \ar[r] \pushoutcorner & *+<1em> {W^{(F)}} \ar[r] \ar@{^(->}[d] & \tilde{M}^{0} \ar[r] \ar@{=}[d] & X_F \ar[r] \ar[d] & 0\\ 
0 \ar[r] & H^{-1}(\tilde{M}) \ar[r] & W^{(J)} \ar[r] & \tilde{M}^{0} \ar[r] & H^{0}(\tilde{M}) \ar[r] & 0}$$
This yields a vertical exact sequence in $\C(\G)$, where the first term is an object of $\Ht$ 
$$\xymatrix{\tilde{M}_{F}:=&\cdots \ar[r] & 0 \ar[r] & W^{(F)} \ar[r] \ar@{=}[d] & *+<1.1em>{\tilde{M}_{F}^{0}} \ar[r] \ar@{^(->}[d] &0 \ar[r] & \cdots \\ \tilde{N}_F:=& \cdots \ar[r] & 0 \ar[r] & W^{(F)} \ar[r] \ar[d]& \tilde{M}^{0} \ar[r] \ar@{>>}[d] & 0  \ar[r] & \cdots \\ &\cdots \ar[r] &0 \ar[r] & 0 \ar[r] & (1:t)(X_F) \ar[r] & 0 \ar[r] & \cdots}$$

It is clear that, when $F$ varies on the set of finite subsets of $J$, we obtain a direct system of exact sequences in $\C(\G)$, such that $\varinjlim_{\C(\G)}{\tilde{N}_{F}}=\tilde{M}$. Due to the AB5 condition of $\G$, one has the following exact sequence in $\C(\G)$

$$\xymatrix{0 \ar[r] & \varinjlim_{\C(\G)}{\tilde{M}_F} \ar[r]^{\rho \hspace{0.6 cm}} & \varinjlim_{\C(\G)}\tilde{N}_{F}=\tilde{M} \ar[r] &  \varinjlim{(1:t)(X_F)}[0] \ar[r] & 0}$$

Using the fact that $\varinjlim{\F}=\F$, from lemma \ref{limites directos en C} we get that $\limite{\tilde{M}_{F}}\cong \varinjlim_{\C(\G)}{\tilde{M}_F}$. Therefore from \cite{BBD} we have the next triangle in $\D(\G)$:

$$\xymatrix{ \Ker_{\Ht}(\rho)[1] \ar[r] & \varinjlim{(1:t)(X_F)}[0] \ar[r] & \Coker_{\Ht}(\rho) \ar[r]^{\hspace{0.8cm}+} & }$$

By applying the exact sequence of homologies to this triangle, we readily get that $\Ker_{\Ht}(\rho)=0=\Coker_{\Ht}(\rho)$, so that $\rho$ is a quasi-isomorphism. It follows that, we have an epimorphism $\xymatrix{\underset{F\subseteq J}{\coprod}\tilde{M}_F \ar[r] & \varinjlim_{\Ht}\tilde{M}_{F}=\tilde{M}}$. Hence the proof is further reduced to consider a complex 
$$\xymatrix{N:=\cdots \ar[r] & 0 \ar[r] & W^{(m)} \ar[r]^{\hspace{0.1cm}\delta} & N^{0} \ar[r] & 0 \ar[r] & \cdots}$$ with $m\in \mathbb{N}$ and $H^{0}(N)\in \T$, and to prove that it is an epimorphic image in $\Ht$ of a coproduct of objects of $\mathcal{S}$. We can fix an epimorphism 
$\xymatrix{\pi: V^{(K)} \ar[r] & H^{0}(N)}$ with kernel in $\T$ since $\T=\Pres(V).$ Taking now the pullback of $\pi$ along the projection $\xymatrix{N^{0} \ar@{>>}[r] & H^{0}(N)}$, we can assume without loss of generality that $H^{0}(N)=V^{(K)}$, for some set $K$. Assuming this condition, for each subset $F\subseteq K$, consider the pullback of the inclusion $\xymatrix{V^{(F)} \hspace{0.08cm} \ar@{^(->}[r] & V^{(K)}}$ along the projection $\xymatrix{N^{0} \ar@{>>}[r] & H^{0}(N)}=V^{(K)}$. We then obtain a complex $\xymatrix{N_F:=\cdots \ar[r] & 0 \ar[r] & W^{(m)} \ar[r]& N^{0}_{F} \ar[r] & 0 \ar[r] & \cdots}$ with $H^{0}(N_F)=V^{(F)}$. With an argument used earlier in this proof, we see that, when $F$ varies in the set of finite subsets of $K$, we obtain a direct system $(N_F)$ in $\mathcal{C}(\G)$ such that $\varinjlim_{\C(\G)}N_F=M$ and all $N_F$ are in $\Ht$. Again, from lemma \ref{limites directos en C}, we have that $\limite{N_F} \cong \varinjlim_{\C(\G)}{N_F}=N$. But each $N_F$ is isomorphic to an object in $\mathcal{S}$, and the proof is finished.
\end{proof}

\vspace{0.3cm}

Now we give a wide class of torsion pairs for which the heart is a Grothendieck category.

\begin{theorem}\label{Grothendieck characterization}
Let $\G$ be a Grothendieck category and let $\mathbf{t}=(\T,\F)$ be a torsion pair in $\G$ satisfying, at least, one the following conditions:
\begin{enumerate}
\item[a)] $\mathbf{t}$ is hereditary.
\item[b)] Each object of $\Ht$ is isomorphic in $\D(\G)$ to a complex $F$ such that $F^{k}=0$, for $k\neq -1,0$, and $F^{k}\in \F$, for $k=-1,0.$
\item[c)] Each object of $\Ht$ is isomorphic in $\D(\G)$ to a complex $T$ such that $T^{k}=0$, for $k\neq -1,0$ and $T^{k}\in \T$, for $k=-1,0$.
\end{enumerate}
The following assertions are equivalent: 
\begin{enumerate}
\item[1)] The heart $\Ht$ is a Grothendieck category;
\item[2)] $\Ht$ is an AB5 abelian category;
\item[3)] $\F$ is closed under taking direct limits in $\G$.
\end{enumerate}
\end{theorem}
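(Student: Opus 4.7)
I would prove the cycle $1) \Rightarrow 2) \Rightarrow 3) \Rightarrow 1)$. The first implication is immediate since every Grothendieck category is AB5 by definition. The implication $2) \Rightarrow 3)$ is precisely the content of $1) \Rightarrow 2)$ in Theorem \ref{caracterizacion AB5}, which says that whenever $\Ht$ is AB5, $\F$ is closed under direct limits in $\G$. The substantial work lies in the converse $3) \Rightarrow 1)$, which splits into establishing both the AB5 condition and the existence of a generator for $\Ht$.

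\textbf{AB5 from 3).} The first step is to derive the AB5 condition using only the hypothesis that $\F$ is closed under direct limits in $\G$, independently of which of (a), (b) or (c) holds. Given a direct system $(M_i)_{i\in I}$ in $\Ht$, I would represent each $M_i$ by a 2-term complex concentrated in degrees $-1, 0$, as guaranteed by Example \ref{example t-structure}(2), and lift the direct system to $\C(\G)$. Lemma \ref{limites directos en C} then provides a canonical isomorphism $\limite M_i \iso \varinjlim_{\C(\G)} M_i$ in $\D(\G)$. Since direct limits in $\C(\G)$ are computed componentwise and $\G$ is AB5, the functor $H^{-1}$ (being the kernel of the differential) satisfies $\varinjlim H^{-1}(M_i) \iso H^{-1}(\varinjlim_{\C(\G)} M_i)$. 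Composing these isomorphisms, the canonical comparison map $\varinjlim H^{-1}(M_i) \flecha H^{-1}(\limite M_i)$ is an isomorphism, in particular a monomorphism. Invoking Theorem \ref{caracterizacion AB5} yields that $\Ht$ is AB5.

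\textbf{Generator from 3).} Next I would produce a generator of $\Ht$, where the conditions (a), (b), (c) finally enter. Under (a) or (b), Lemma \ref{generador de Ht} applies directly, recalling that hereditariness of $\mathbf{t}$ guarantees that each object of $\Ht$ is isomorphic in $\D(\G)$ to a complex whose $-1$ term lies in $\F$. For case (c), I would adapt the proof of Lemma \ref{generador de Ht} by replacing the role of $W = G/t(G)$ with an object $V$ such that $\T = \Pres(V)$, whose existence is granted by Lemma \ref{lema presentados de V}. The distinguished set $\mathcal{S}$ would now consist of 2-term complexes $V^{(m)} \flecha S^0$ with $H^0(S) = V^{(n)}$, and the same argument strategy, namely writing each object of $\Ht$ as an epimorphic image of truncations indexed by finite subsets of a covering set and then refining these truncations to elements of $\mathcal{S}$, shows that $\Ht = \Gen_{\Ht}(\mathcal{S})$.

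\textbf{Main obstacle.} The chief technical obstacle is the adaptation for case (c). The proof of Lemma \ref{generador de Ht} exploits that $\F \subseteq \Gen(W)$ and that $\T$ (rather than $\F$) is always closed under direct limits. In case (c) the roles of $\F$ and $\T$ in the two positions of the representing complex are reversed, so one must carefully verify that the pullback/pushout reductions and the direct-limit passages in the original argument still produce objects in $\Gen_{\Ht}(\mathcal{S})$, now with respect to the modified set built from $V$. Once this is settled, combining the AB5 property with the existence of a generator gives that $\Ht$ is a Grothendieck category, closing the cycle.
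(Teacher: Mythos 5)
The cycle $1)\Rightarrow 2)\Rightarrow 3)$ is fine and matches the paper, and your treatment of the generator under (a) and (b) via Lemma \ref{generador de Ht} is essentially the paper's. But there is a genuine gap in your "AB5 from 3)" step, and it is located exactly at the sentence where you "lift the direct system to $\C(\G)$".

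Example \ref{example t-structure}(2) lets you replace each individual object $M_i$ of $\Ht$ by a two-term complex, but it does not let you replace a whole diagram $I\flecha\Ht$ by a diagram $I\flecha\C(\G)$: the transition morphisms of the system are morphisms in $\D(\G)$ (fractions), they need not be realizable as chain maps between your chosen representatives, and even when each one is, making the entire diagram commute strictly in $\C(\G)$ is an additional coherence obstruction. Lemma \ref{limites directos en C} takes as \emph{hypothesis} a direct system already living in $\C(\G)$; it does not produce one. This is precisely why the paper cannot argue uniformly and instead proves $3)\Rightarrow 2)$ by three different mechanisms: in case (a) it never lifts to $\C(\G)$ at all, but shows directly that $\Ker(\varphi_{-1})$ embeds in $t(H^{-1}(Z))$ and hence lies in $\T\cap\F=0$; in case (b) it reduces by transfinite induction to continuous well-ordered systems and builds, step by step, a compatible system of embeddings $M_\alpha\monic F_\alpha[1]$; in case (c) it first produces the generator $V[0]$ and uses it to construct a \emph{functorial} two-term complex $C(M)=(\xi(M)\flecha\Psi(M))$, and it is only this functoriality that legitimizes applying Lemma \ref{limites directos en C} to the system $(C(M_i))$.

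A further warning sign that your uniform argument cannot work: if "$\F$ closed under direct limits $\Rightarrow$ $\Ht$ is AB5" held for an arbitrary torsion pair, independently of (a), (b), (c), you would have resolved the first problem in the paper's list of open questions (Chapter 6, Question 1), where exactly this implication for general $\te$ is stated as unknown. Your plan for the generator in case (c) is also heavier than necessary: rather than adapting Lemma \ref{generador de Ht}, the paper dualizes the embedding argument of case (b) to show $\T[0]$ generates $\Ht$ and then concludes from Lemma \ref{lema presentados de V} that the single stalk $V[0]$, with $\T=\Pres(V)$, is already a generator.
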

\begin{proof}
Each object $M\in \Ht$ is isomorphic in $\D(\G)$ to a complex 
$$\xymatrix{\cdots \ar[r] & 0 \ar[r] & E^{-1} \ar[r] & E^{0} \ar[r]^{p} & Y \ar[r] & 0 \ar[r] & \cdots }$$
concentrated in degrees -1,0,1, such that $p$ is an epimorphism and the $E^{k}$ are injective objects. Then, by a standard argument, we can assume without loss of generality that the inclusion $\xymatrix{H^{-1}(M) \hspace{0.08cm}\ar@{^(->}[r] & E^{-1}}$ is the injective envelope. But then $E^{-1}\in \mathcal{F}$ (see \cite[Proposition VI.3.2]{S}) when $\te$ is hereditary. From lemma \ref{generador de Ht}, we get that $\Ht$ has a generator in the cases a) and b), when $\varinjlim \F=\F$. \\


2) $\Longrightarrow$ 3) It follows from theorem \ref{caracterizacion AB5}. \\

 2) $\Longleftrightarrow $ 1) For the cases a) and b) it follows from the first paragraph of this proof. In the other case, the proof of the implication 3) $\Longrightarrow$ 2) for the case c) will show that $\Ht$ has a generator of the form $V[0]$, form some $V\in \T$.  \\

3) $\Longrightarrow$ 2) It is enough to check that assertion 3 of theorem \ref{caracterizacion AB5} holds. Suposse first that $\mathbf{t}$ is a hereditary torsion pair. Let $(M_i)_{i\in I}$ be a direct system in $\Ht.$ With the terminology theorem \ref{caracterizacion AB5} and its proof, we have the following commutative diagram 
$$\xymatrix{0 \ar[r] & \varinjlim{H^{-1}(M_i)} \ar[r] \ar[d]^{\varphi} & H^{-1}(Z) \ar[r] \ar@{>>}[d] & H^{0}(\Ker(f)) \ar[r] \ar[d]& 0\\ 0 \ar[r] & H^{-1}(\limite{M_i}) \ar[r]^{\sim \hspace{0.45 cm}} & H^{-1}(Z)/t(H^{-1}(Z)) \ar[r] & 0 \ar[r] & 0}$$
Hence $\Ker(\varphi)$ is an isomorphic to a subobject of $t(H^{-1}(Z)),$ so $\Ker(\varphi)\in \mathcal{T} \cap \mathcal{F}=0$, since $\varinjlim{\F}=\F.$ \\

Suppose now that condition $b)$ holds. We claim that, in that case, each object of $\Ht$ is isomorphic to a subobject of an object in $\F[1]$. Indeed, if $M$ is isomorphic to the mentioned complex $F$, then its differential $\xymatrix{d:F^{-1} \ar[r] & F^{0}}$ gives a triangle in $\D(\G)$
$$\xymatrix{F^{-1}[0] \ar[r]^{\hspace{0.1cm}d[0]} & F^{0}[0] \ar[r] & M \ar[r]^{\hspace{0.1cm}+} &}$$
hence, we have an exact sequence in $\Ht$
$$\xymatrix{0 \ar[r] & M \ar[r] & F^{-1}[1] \ar[r]^{\hspace{0.1cm}d[1]} & F^{0}[1] \ar[r] & 0}$$

Let $(M_i)_{i\in I}$ be a direct system of $\Ht$. By traditional arguments (see, e.g. \cite[Corollary 1.7]{AR}), it is not restrictive to assume that the directed set $I$ is an ordinal and that the given direct system in $\Ht$ is continuous (smooth in the terminology of \cite{AR}). So we start with a direct system $(M_\alpha)_{\alpha < \lambda}$ in $\Ht$, where $\lambda$ is a limit ordinal and $M_\beta= \varinjlim_{\alpha < \beta}{M_\alpha}$, whenever $\beta$ is a limit ordinal such that $\beta < \lambda$. Now, by transfinite induction, we can define a $\lambda$-direct system of short exact sequences in $\Ht$

$$\xymatrix{0 \ar[r] & M_\alpha \ar[r] & F_{\alpha}[1] \ar[r] & F^{'}_{\alpha}[1] \ar[r] &0}$$
with $F_\alpha,F_{\alpha}^{'}\in \F$ for all $\alpha < \lambda$. Indeed, suppose that $\beta=\alpha +1$ is nonlimit and that the sequence has been defined for $\alpha$. Then the sequence for $\beta$ is the bottom one of the following commutative diagram, which exists since $\F[1]$ is closed under taking quotients in $\Ht$, and where the upper left and lower right squares are bicartesian. In the diagram $F$ and $F^{'}$ denote objects of $\F$ and $u_{\alpha +1}$ any monomorphism into an object of $\F[1]$: 
$$\xymatrix{0 \ar[r] & M_\alpha \pushoutcorner \ar[r] \ar[d] & F_{\alpha}[1] \ar[r] \ar[d] & F_{\alpha}^{'}[1] \ar[r] \ar@{=}[d] & 0\\ 0 \ar[r] & M_{\alpha+1} \ar[r] \ar@{=}[d]& N_\alpha \pullbackcorner \pushoutcorner \ar[r] \ar[d]^{u_{\alpha +1}} & F_{\alpha}^{'}[1] \ar[r] \ar[d] & 0 \\ 0 \ar[r] & M_{\alpha +1} \ar[r] & F_{\alpha+1}[1] \ar[r] & F^{'}_{\alpha +1}[1] \pullbackcorner \ar[r] & 0}$$

Suppose now that $\beta$ is a limit ordinal and that the sequence has been defined for all ordinals $\alpha < \beta$. Using proposition \ref{description of stalk} and the fact that $\F$ is closed under taking direct limits, we get an exact sequence in $\Ht$
$$\xymatrix{\varinjlim_{\alpha < \beta}{M_{\alpha}} \ar[r]^{g \hspace{0.35cm}} & (\varinjlim_{\alpha < \beta}{F_\alpha})[1] \ar[r] & (\varinjlim_{\alpha < \beta}{F_{\alpha}^{'}})[1] \ar[r] & 0}$$

Recall that, by the continuity of the direct system, we have $M_{\beta}=\varinjlim_{\alpha < \beta}{M_\alpha}.$ We denote by $W$ the image of $g$ in $\Ht$. We then get the following commutative diagram with exact rows. 

$$\xymatrix{ 0 \ar[r] & \varinjlim_{\alpha < \beta}{H^{-1}(M_\alpha)} \ar[r] \ar[d] & \varinjlim{F_\alpha} \ar[r] \ar[d]^{\wr}& \varinjlim{F_{\alpha}^{'}} \ar[r] \ar[d]^{\wr}& \varinjlim{H^{0}(M_\alpha)} \ar[r] \ar[d]&0 \\ 0 \ar[r] & H^{-1}(W) \ar[r] & H^{-1}(\limite{F_\alpha[1]}) \ar[r] & H^{-1}(\limite{F_{\alpha}^{'}[1]}) \ar[r] & H^{0}(W) \ar[r] & 0 }$$

All the vertical arrows are then isomorphisms since so are the two central ones. But the left vertical arrow is the composition $\xymatrix{ \varinjlim_{\alpha < \beta}{H^{-1}(M_{\alpha})} \ar[r] & H^{-1}(\varinjlim_{\alpha < \beta }{M_{\alpha}}) \ar[rr]^{\hspace{0.9cm}H^{-1}(p)}  & & H^{-1}(W)}$, where $\xymatrix{p:\varinjlim_{\alpha < \beta}{M_\alpha} \ar[r] & W}$ is the obvious epimorphism in $\Ht$. It follows that the canonical map $\xymatrix{\varinjlim_{\alpha < \beta}{H^{-1}(M_\alpha)} \ar[r] & H^{-1}(\varinjlim_{\alpha < \beta}{M_\alpha})}$ is a monomorphism. Then it is an isomorphism due to proposition \ref{description of stalk}(1) and, by this same proposition and the isomorphic condition of the right vertical arrow in the above diagram, we conclude that $H^{k}(p)$ is an isomorphism, for all $k \in $ \Z. Then $p$ is an isomorphism in $\Ht$ and the desired short exact sequence for $\beta$ is defined. \\

The argument of the previous paragraph, when applied to $\lambda$ instead of $\beta$, shows that the induced morphism $\xymatrix{\varinjlim_{\alpha < \lambda}{H^{-1}(M_\alpha)} \ar[r] & H^{-1}(\varinjlim_{\alpha < \lambda}{M_\alpha})}$ is a monomorphism. \\

Finally, suppose that condition c) holds. An argument dual to the one used for condition b), shows that, $\T[0]$ generates $\Ht$. On the other hand, by lemma \ref{lema presentados de V}, we have an object $V\in \T$ such that $\T=\Pres(V)$. We easily derive that, for each $T\in \T$, the kernel of the canonical epimorphism $V^{(\Hom_\G(V,T))} \epic T$ is in $\T$, so that the induced morphism $V^{(\Hom_\G(V,T))}[0] \epic T[0]$ is an epimorphism in $\Ht$. Therefore $V[0]$ is a generator of $\Ht$. \\

Note that the assignment $M\rightsquigarrow \Psi(M):=V[0]^{(\Hom_{\Ht}(V[0],M))}$ is functorial. Indeed if $\xymatrix{f:M \ar[r] & N}$ is a morphism in $\Ht$, we define $\xymatrix{\Psi(f):V[0]^{(\Hom_{\Ht}(V[0],M))} \ar[r] & V[0]^{(\Hom_{\Ht}(V[0],N))}}$ using the universal property of the coproduct in $\Ht$. By definition, $\Psi(f)$ the unique morphism in $\Ht$ such that $\Psi(f) \circ \iota_{\alpha}^{M}=\iota^{N}_{f \circ \alpha}$, where $\xymatrix{\iota_{\alpha}^{M}:V[0] \ar[r] & V[0]^{(\Hom_{\Ht}(V[0], M))}}$ is the $\alpha$-injection into the coproduct, where $\alpha \in \Hom_{\Ht}(V[0],M),$ and similarly for $\iota^{N}_{f \circ \alpha}.$ \\

The functor $\xymatrix{\Psi:\Ht \ar[r] & \Ht}$ comes with natural transformation $p:\Psi \epic \text{id}_{\Ht}$ which is epimorphic. Note that $\xi(M):=\Ker(p_M)=T_M[0],$ for some $T_M\in \T$, since $\T[0]$ is closed under taking subobjects in $\Ht.$ We then get functors $\Psi,\xi:\Ht \flecha \T\cong \T[0] \monic \Ht$, together with an exact sequence of functors $\xymatrix{0 \ar[r] & \xi \hspace{0.07 cm}\ar@{^(->}[r]^{\mu} & \Psi \ar@{>>}[r]^{p} & \text{id}_{\Ht} \ar[r] & 0}$. In particular, if $f:L \flecha M$ is a morphism in $\Ht$, we have a commutative diagram in $\T\cong \T[0]$

$$\xymatrix{\xi(L) \ar[r]^{\mu_L} \ar[d]^{\xi(f)} & \Psi(L) \ar[d]^{\Psi(f)} \\ \xi(M) \ar[r]^{\mu_M} & \Psi(M)}$$

But note that if $M$ is an object of $\Ht$, then, viewing $\xi(M)$ and $\Psi(M)$ as objects of $\T$, the complex $C(M):=\cdots \flecha 0 \flecha \xi(M) \flecha \Psi(M) \flecha 0 \flecha \cdots$ ($\Psi(M)$ in degree 0) is isomorphic to $M$ in $\D(\G)$. The diagram above tells us that the assignment $M \rightsquigarrow C(M)$ gives a functor $C:\Ht \flecha \C(\G)$ such that if $q:\C(\G) \flecha \D(\G)$ is the canonical functor, then the composition $\xymatrix{\Ht \ar[r]^{C} & \C(\G) \ar[r]^{q} & \D(\G)}$ is naturally isomorphic to the inclusion $\Ht \monic D(\G).$ \\

Suppose now that $(M_i)_{i \in I}$ is a direct system in $\Ht$. Then $(C(M_i)_{i\in I})$ is a direct system in $\C(\G)$. By lemma \ref{limites directos en C}, we know that $\varinjlim_{\C(\G)}(C(M_i))\cong \limite{C(M_i)} \cong \limite{M_i}$. Then we get an isomorphism  $\varinjlim{H^{-1}(M_i)}=H^{-1}(\varinjlim_{\C(\G)}{C(M_i)}) \iso H^{-1}(\limite{M_i})$, as desired.
\end{proof}

\begin{definition}\rm{
Let $\mathcal{A}$ be an abelian category. A class $\mathcal{C}\subseteq \text{Ob}(\mathcal{A})$ will be called a \emph{generating}\index{class! generating} (resp. \emph{cogenerating}\index{class! cogenerating}) \emph{class} of $\mathcal{A}$ when, for each object $X$ of $\G$, there is an epimorphism $C \epic X$ (resp. monomorphism $X \monic C$), for some $C\in \mathcal{C}.$ }
\end{definition}
\vspace{0.3cm}

On a first look, conditions b) and c) of last theorem seem to be rarely satisfied. Our next corollary shows that it is not so.

\begin{corollary}\label{ejemplos b) y c)}
Let $\mathbf{t}=(\T,\F)$ be a torsion pair such that either $\F$ is generating or $\T$ is cogenerating. The heart $\Ht$ is a Grothendieck category if, and only if, $\F$ is closed under taking direct limits in $\G$.
\end{corollary}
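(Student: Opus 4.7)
The plan is to reduce this corollary to Theorem~\ref{Grothendieck characterization} by showing that when $\F$ is a generating class condition (b) of that theorem holds, while when $\T$ is a cogenerating class condition (c) holds. The two cases are formally dual, and both reduce to a standard pullback/pushout replacement for the two-term complex representing an object of $\Ht$.

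Suppose first that $\F$ is generating, and let $M\in\Ht$, represented as in Example~\ref{example t-structure} by a complex $0\to M^{-1} \xrightarrow{d} M^{0} \to 0$ concentrated in degrees $-1,0$ with $H^{-1}(M)\in\F$ and $H^{0}(M)\in\T$. First I would pick an epimorphism $p\colon F^{0} \twoheadrightarrow M^{0}$ with $F^{0}\in\F$ and form the pullback of $d$ along $p$:
\[
\xymatrix{\tilde{M}^{-1} \ar[r]^{\tilde{d}} \ar[d]_{q} & F^{0} \ar[d]^{p} \\ M^{-1} \ar[r]^{d} & M^{0}.}
\]
The induced chain map $\tilde{M}:=(\tilde{M}^{-1}\xrightarrow{\tilde{d}} F^{0}) \to M$ is a quasi-isomorphism: the pullback property gives $\Ker(\tilde{d})\cong\Ker(d)=H^{-1}(M)$, and because $p$ is epic one has $\Imagen(\tilde{d})=p^{-1}(\Imagen(d))$, so the induced map on cokernels $\Coker(\tilde{d}) \to H^{0}(M)$ is an isomorphism. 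The key step is to verify that $\tilde{M}^{-1}$ lies in $\F$: since $\Imagen(\tilde{d})$ is a subobject of $F^{0}\in\F$, it belongs to $\F$ (torsion-free classes are closed under subobjects), and then the short exact sequence
\[
0 \to H^{-1}(\tilde{M}) \to \tilde{M}^{-1} \to \Imagen(\tilde{d}) \to 0
\]
together with $H^{-1}(\tilde{M})\cong H^{-1}(M)\in\F$ and the closure of $\F$ under extensions give $\tilde{M}^{-1}\in\F$. Therefore $M$ is isomorphic in $\D(\G)$ to a complex whose two nonzero terms lie in $\F$, which is condition (b).

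The cogenerating case is dual: one chooses a monomorphism $M^{-1}\hookrightarrow T^{-1}$ with $T^{-1}\in\T$, forms the pushout of $d$ along it to obtain a quasi-isomorphic complex $(T^{-1}\to\tilde{M}^{0})$, and proves that $\tilde{M}^{0}\in\T$ using that $\T$ is closed under quotients (so the image of the new differential, being a quotient of $T^{-1}$, lies in $\T$) and under extensions (combining this image with $H^{0}(M)\in\T$). This yields condition (c). I do not anticipate any serious obstacle, as the closure properties of $\F$ as a torsion-free class and of $\T$ as a torsion class match exactly with what the construction requires to land the replacement complex inside $\F$ or $\T$ respectively.
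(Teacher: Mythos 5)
Your argument is correct, and the two halves sit differently with respect to the paper. For the case where $\F$ is generating, your pullback of $d$ along an epimorphism $F^{0}\twoheadrightarrow M^{0}$, followed by the extension argument $0\to H^{-1}\to \tilde{M}^{-1}\to \Imagen(\tilde d)\to 0$ inside $\F$, is exactly the paper's proof. For the case where $\T$ is cogenerating, however, the paper does not dualize your construction: it represents $M$ by a complex of injectives $E^{-1}\to E^{0}\xrightarrow{p} Y$ concentrated in degrees $-1,0,1$, observes that injectives lie in $\T$ (they split off objects of the cogenerating class $\T$), and then replaces the complex by $E^{-1}\to \Ker(p)$, checking $\Ker(p)\in\T$ via the extension $0\to\Imagen(d)\to\Ker(p)\to H^{0}(M)\to 0$. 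Your route instead pushes $d$ out along a monomorphism $M^{-1}\hookrightarrow T^{-1}$ with $T^{-1}\in\T$; since the pushout along a monomorphism is bicartesian, homology is preserved, and closure of $\T$ under quotients and extensions puts the new degree-$0$ term in $\T$, giving condition (c) of Theorem \ref{Grothendieck characterization}. Both are valid; yours is the literal dual of the $\F$-case and only needs the two-term representation of objects of $\Ht$, whereas the paper's version reuses the injective three-term representation it already needs elsewhere (e.g.\ for the hereditary case) and gets the truncation almost for free. Note also that, as in the paper, once condition (b) or (c) is verified the full ``if and only if'' of the corollary is delivered by Theorem \ref{Grothendieck characterization} itself (equivalently, the ``only if'' direction is already Theorem \ref{caracterizacion AB5}), so your reduction does cover both implications.
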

\begin{proof}
We assume that $\F$ is closed under taking direct limits, because, by theorem \ref{caracterizacion AB5}, we only need to prove the ''if'' part of the statement. \\

Suppose first that $\F$ is a generating class and let $M \in \Ht$ be any object, which we represent by a complex $\xymatrix{\cdots \ar[r] & 0 \ar[r]& \ar[r]^{d} M^{-1} & M^{0} \ar[r] & 0 \ar[r] & \cdots }$. By fixing an epimorphism $p:F^{0} \epic M^{0}$ and taking the pullback of this morphism along $d$, we may and shall assume that $M^{0}=F^{0}\in \F.$ But then $\Imagen(d)$ is in $\F$, which implies that $M^{-1}\in \F$ since we have an exact sequence $\xymatrix{0 \ar[r] & H^{-1}(M) \hspace{0.05cm}\ar@{^(->}[r] & M^{-1} \ar[r]^{\overline{d} \hspace{0.15cm}}& \Imagen(d) \ar[r] & 0}$, where the outer nonzero terms are in $\F$. Then condition b) of the theorem \ref{Grothendieck characterization} holds.  \\

Suppose that $\T$ is a cogenerating class. Then the injective objects of $\G$ are in $\T$. Moreover each object $M$ of $\Ht$ is isomorphic in $\D(\G)$ to a complex
$$\xymatrix{\cdots \ar[r] & 0 \ar[r] & E^{-1} \ar[r]^{d} & E^{0} \ar[r]^{p} & Y \ar[r] & 0 \ar[r] & \cdots}$$
concentrated in degrees -1,0,1, such that $p$ is an epimorphism and the $E^{k}$ are injectives objects. The result follows from the fact that such a complex is isomorphic to the complex  
$$\xymatrix{\cdots \ar[r] & 0 \ar[r] & E^{-1} \ar[r] & \Ker(p)\ar[r] & 0 \ar[r] & \cdots}$$
and $\Ker(p)\in \T$. Indeed, in the exact sequence $\xymatrix{0 \ar[r] & \Imagen(d) \ar[r] & \Ker(p) \ar[r] & H^{0}(M) \ar[r] & 0}$, the outer nonzero terms are in $\T$.
\end{proof}

\vspace{0.3cm}

The classical examples of torsion pairs in a module category whose torsion class (resp. torsionfree class) is cogenerating (resp. generating) are the cotilting (resp. tilting) torsion pairs. For this reason the corollary above leads us to study those torsion pairs and, in fact, to discover new things about them.

\section{Tilting and cotilting torsion pairs}
All throughout this section $\G$ is a Grothendieck category. We refer the reader to subsection \ref{section:torsion pair} for the definition of a (co)tilting object in an abelian category.

\begin{definition}\rm{
Let $\mathcal{A}$ be an AB3 (resp. AB3*) abelian category. Two 1-tilting (resp. 1-cotilting) objects of $\mathcal{A}$ are said to be \emph{equivalent} when their associated torsion pairs coincide.}
\end{definition}

\begin{remark}\rm{
Recall that idempotents split in any abelian category. As a consequence, two 1-tilting objects $V$ and $V^{'}$ are equivalent if, and only if, $\Add(V)=\Add(V^{'})$. Similarly, two 1-cotilting objects $Q$ and $Q^{'}$ are equivalent if, and only if, $\Prod(Q)=\Prod(Q^{'}).$}
\end{remark}
\vspace{0.3cm}

A consequence of the results in the previous section is the following:
\begin{proposition}\label{prop. V[0] progenerator}
Let $\G$ be a Grothendieck category and let $\te=(\T,\F)$ be a torsion pair in $\G$. Consider the following assertions:
\begin{enumerate}
\item[1)] $\te$ is a classical tilting torsion pair;
\item[2)] $\T$ is a cogenerating class and $\Ht$ is a module category;
\item[3)] $\T$ is a cogenerating class and $\Ht$ is a Grothendieck category with a projective generator;
\item[4)] $\te$ is a tilting torsion pair such that $\Ht$ is an AB5 abelian category;
\item[5)] $\te$ is a tilting torsion pair such that $\F$ is closed under taking direct limits in $\G$.
\end{enumerate}
Then the implications 1) $\Longleftrightarrow$ 2) $\Longrightarrow$ 3) $\Longleftrightarrow$ 4) $\Longleftrightarrow$ 5) hold.
\end{proposition}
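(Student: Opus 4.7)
The plan is to arrange the proof around three clusters: the equivalence 4) $\Leftrightarrow$ 5), the chain 1) $\Leftrightarrow$ 2) realised through the stalk $V[0]$ as a progenerator of $\Ht$, and the bridge 2) $\Rightarrow$ 3) together with 3) $\Leftrightarrow$ 4), realised through $V:=H^{0}(P)$ for a suitable projective generator $P$ of $\Ht$.

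For 4) $\Leftrightarrow$ 5), every tilting torsion pair satisfies $\overline{\Gen}(V)=\G$, so the torsion class $\T$ is a cogenerating class. Corollary \ref{ejemplos b) y c)} then gives that $\Ht$ is a Grothendieck category if and only if $\F$ is closed under direct limits, while theorem \ref{caracterizacion AB5} forces the same property on $\F$ under the sole AB5 hypothesis; together these give both implications and, incidentally, that the Grothendieck property holds in either case. The implication 2) $\Rightarrow$ 3) is immediate. For 1) $\Rightarrow$ 2), I would show that, given a classical 1-tilting object $V$ generating $\te$, the stalk $V[0]$ is a progenerator of $\Ht$: generation of $\Ht$ by $V[0]$ follows from the equality $\T=\Pres(V)$ combined with the argument of theorem \ref{Grothendieck characterization}(c), which shows that $\T[0]$ generates $\Ht$ whenever $\T$ is cogenerating; compactness of $V[0]$ in $\Ht$ follows from self-smallness of $V$ together with the fact that coproducts in $\Ht$ are computed in $\D(\G)$ and commute with $H^{0}$ (lemma \ref{exactness of H}); and with projectivity in hand, Gabriel--Mitchell (theorem \ref{teo. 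Gabriel-Michell}) yields 2). The converses 2) $\Rightarrow$ 1) and 3) $\Rightarrow$ 4) are obtained uniformly by setting $V:=H^{0}(P)$ for a (pro)generator $P$ of $\Ht$: right-exactness and coproduct-preservation of $H^{0}$ (lemma \ref{exactness of H}) yield $\T=\Gen(V)$; the projectivity of $P$ translates into $\T\subseteq\Ker(\Ext^{1}_{\G}(V,?))$, which combined with the cogenerating condition on $\T$ (which forces $\overline{\Gen}(V)=\G$) and lemma \ref{lem. quasi-tilting} promotes $V$ to a 1-tilting object of $\G$; and when $P$ is additionally compact in $\Ht$ (the 2) $\Rightarrow$ 1) case), compactness of $V[0]$ is inherited and yields self-smallness of $V$.

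The main obstacle will be proving that the stalk $V[0]$ is projective in $\Ht$ whenever $V$ is a classical 1-tilting object of $\G$, equivalently that $\Hom_{\D(\G)}(V,M[1])=0$ for every $M\in\Ht$. Applying $\Hom_{\D(\G)}(V,?)$ to the canonical triangle $H^{-1}(M)[1]\flecha M\flecha H^{0}(M)[0]\flecha H^{-1}(M)[2]$ and using $\Ext^{1}_{\G}(V,H^{0}(M))=0$ (from the 1-tilting hypothesis applied to $H^{0}(M)\in\T$), the problem reduces to controlling the image of $\Ext^{2}_{\G}(V,H^{-1}(M))$ under a connecting morphism. I would approach this by combining the two-term projective-style resolution of $V$ afforded by proposition \ref{description tilting object}(2.a) with the $\Add(V)$-coresolution of a generator of $\G$ provided by proposition \ref{description tilting object}(2.c), so as to rewrite $\Hom_{\D(\G)}(V,M[1])$ explicitly as a cohomology group of a small complex whose vanishing is extracted from the tilting identity $\Gen(V)=\Ker(\Ext^{1}_{\G}(V,?))$ applied termwise to the truncation components of $M$.
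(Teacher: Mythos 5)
Your reduction of the projectivity of $V[0]$ in $\Ht$ to the vanishing of $\Ext^{1}_{\G}(V,T)$ for $T\in \T$ (immediate from the tilting identity) and of $\Ext^{2}_{\G}(V,F)=\Hom_{\D(\G)}(V[0],F[2])$ for $F\in \F$ is correct, but the tools you propose for the second vanishing do not exist in this setting: proposition \ref{description tilting object} is stated for an AB4 abelian category \emph{with a projective generator}, and both ingredients you invoke --- the two-term resolution $0\to P^{-1}\to P^{0}\to V\to 0$ of 2.a) and the $\Add(V)$-coresolution of a projective generator in 2.c) --- presuppose projective objects. An arbitrary Grothendieck category $\G$ need have no nonzero projectives at all, so at its central step your plan has no proof. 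The fix is the paper's short dimension-shifting argument along injective envelopes, which \emph{do} exist in $\G$: for any object $M$, the injective envelope $E(M)$ lies in $\Ker(\Ext^{1}_{\G}(V,?))=\Gen(V)=\T$, hence so does the cosyzygy $\Omega^{-1}(M)=E(M)/M$ (a quotient of a torsion object), and the long exact sequence gives $\Ext^{2}_{\G}(V,M)\cong \Ext^{1}_{\G}(V,\Omega^{-1}(M))=0$ for every $M$, in particular on $\F$.

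The rest of your architecture is essentially that of the paper and, with the above repair, goes through. Two smaller remarks. First, your treatment of 3) $\Longrightarrow$ 4) and 2) $\Longrightarrow$ 1) via $V:=H^{0}(P)$ and lemma \ref{lem. quasi-tilting} is a genuine (and viable) variant: the paper instead shows that a projective generator of $\Ht$ is forced to be a stalk $V[0]$, using that $\T[0]$ generates $\Ht$ and is closed under subobjects, and then proves $\Ker(\Ext^{1}_{\G}(V,?))\subseteq \T$ directly from $\Ext^{2}_{\G}(V,?)=0$ and the generator property of $V[0]$; your route needs you to justify the asserted translation ``$P$ projective in $\Ht$ implies $\T\subseteq \Ker(\Ext^{1}_{\G}(V,?))$'', which follows from applying $\Hom_{\D(\G)}(?,T[1])$ to the triangle $H^{-1}(P)[1]\to P\to H^{0}(P)[0]\to H^{-1}(P)[2]$ and the vanishing of negative Ext groups. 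Second, ``compactness of $V[0]$ follows from self-smallness of $V$'' is not quite enough by itself; as in the paper, you also need that $V[0]$ is a projective generator of $\Ht$ (a morphism from $V[0]$ into a coproduct is lifted through a coproduct of copies of $V[0]$ and then factored through a finite subcoproduct by self-smallness), and similarly in 2) $\Longrightarrow$ 1) what compactness of the progenerator $P$ actually yields, via the natural isomorphism $\Hom_{\G}(V,?)\cong \Hom_{\D(\G)}(P,?[0])$, is the self-smallness of $V$.
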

\begin{proof}
Let $V$ be any 1-tilting object and $\te=(\T,\F)$ be its associated torsion pair. For each $M\in \G$, its injective envelope $E(M)$ and its first cosyzygy $\Omega^{-1}(M)=\frac{E(M)}{M}$ are in $\T:=\Gen(V)=\Ker(\Ext^{1}_{\G}(V,?))$. It follows that $\Ext^{2}_{\G}(V,M)\cong \Ext^{1}_{\G}(V, \Omega^{-1}(M))=0$. Thus, for each $F\in \F$, we get that $\Ext^{1}_{\Ht}(V[0],F[1])=\Ext^{2}_{\G}(V,F)=0$. On the other hand, we have $\Ext^{1}_{\Ht}(V[0], T[0])\cong \Ext^{1}_{\G}(V,T)=0$, for all $T\in \T$. It follows that $V[0]$ is a projective object of $\Ht$ since we have the following exact sequence in $\Ht$
$$\xymatrix{0 \ar[r] & H^{-1}(N)[1] \ar[r] & N \ar[r] & H^{0}(N)[0] \ar[r] & 0}$$
for each $N\in \Ht$. On the other hand, by the proof of theorem \ref{Grothendieck characterization} under its condition c), we know that $V[0]$ is a generator of $\Ht$.\\

1) $\Longrightarrow$ 2) Let us assume that $V$ is classical 1-tilting. We then have an isomorphism 
\begin{center}
$\Hom_{\Ht}(V[0],V[0])^{(I)}\cong \Hom_{\G}(V,V)^{(I)}\cong \Hom_{\G}(V,V^{(I)})\cong \Hom_{\Ht}(V[0],V[0]^{(I)}), \ \ \ (\ast)$
\end{center}
for each set $I$. That is, $V[0]$ is a self-small object of $\Ht$ and, since it is a projective generator, it is easily seen that $V[0]$ is a compact object of $\Ht$. It follows that $V[0]$ is a compact object of $\Ht$ and, hence, that $V[0]$ is a progenerator of $\Ht$. Then $\Ht$ is a module category (see theorem \ref{teo. Gabriel-Michell}).\\

3) $\Longrightarrow$ 4) By the proofs of corollary \ref{ejemplos b) y c)} and theorem \ref{Grothendieck characterization}, we know that $\T[0]$ generates $\Ht$. If $G$ is a projective generator of $\Ht$, then we have a split exact sequence in $\Ht$ of the form 
$$\xymatrix{0 \ar[r] & T^{'}[0] \ar[r] & T[0] \ar[r] & G \ar[r] & 0}$$
where $T,T^{'}\in \T$. Then we necessarily have that $G=V[0]$, where $V\in \T$, because $\T[0]$ is closed under taking subobjects in $\Ht$. It easily follows from this that $\T=\Pres(V)=\Gen(V)$ and, hence, that $\F=\Ker(\Hom_{\G}(V,?))$. From the projectivity of $V[0]$ in $\Ht$ we get that $0=\Ext^{1}_{\Ht}(V[0],T[0])=\Ext^{1}_{\G}(V,T),$ for each $T\in \T$. Therefore we get that $\Gen(V)\subseteq \Ker(\Ext^{1}_{\G}(V,?))$. On the other hand, the injective objects of $\G$ are in $\T$ since $\T$ is a cogenerating class. Hence, $\Ext^{2}_{\G}(V,M)=\Ext^{1}_{\G}(V,\Omega^{-1}(M))=0$, for each object $M\in \G$. In particular, if $M\in \Ker(\Ext^{1}_{\G}(V,?))$ and we apply the exact sequence of Ext to the canonical sequence $0 \flecha t(M) \flecha M \flecha (1:t)(M) \flecha 0$, we get that $0=\Ext^{1}_{G}(V,(1:t)(M))=\Hom_{\Ht}(V[0],(1:t)(M)[1])$. This implies that $(1:t)(M)=0$ since $V[0]$ is a generator of $\Ht$. It follows that $M\in \T$ so that $\T=\Gen(V)=\Ker(\Ext^{1}_{\G}(V,?))$. Then $\te$ is a tilting torsion pair.  \\

2) $\Longrightarrow$ 1) By the argument in the implication 3) $\Longrightarrow$ 4), we can assume that $\te$ is a tilting torsion pair induced by a 1-tilting object $V$ such that $V[0]$ is a progenerator of $\Ht$. From the fact that $V[0]$ is compact in $\Ht$ we derive that the isomorphism $(\ast)$ above still holds. Then $V$ is self-small and, hence, $V$ is a classical 1-tilting object.\\

The implication 2) $\Longrightarrow$ 3) is clear and 4) $\Longrightarrow$ 5) follows from theorem \ref{caracterizacion AB5}. \\

5) $\Longrightarrow$ 3) That $\T=\Ker(\Ext^{1}_{\G}(V,?))$ is a cogenerating class is clear since it contains all injective objects. The fact that $\Ht$ is a Grothendieck category follows then from corollary \ref{ejemplos b) y c)}. Finally, by the first paragraph of this proof, we know that $V[0]$ is a projective generator of $\Ht$. 
\end{proof}

\vspace{0.3 cm}

In a first look to last proposition, we wonder if the converse of the implication \linebreak 2) $\Longrightarrow $ 3) is satisfied. The following example shows that the converse is no true.

\begin{example}\rm{
A projective generator $P$ of $\G$ is always a 1-tilting object. However $P$ is self-small if, and only if, it is compact. Therefore if $\G$ has a projective generator but is not a module category, then the trivial torsion pair $\te=(\G,0)$ satisfies assertion 5, but not assertion 2 of last proposition.}
\end{example}

\vspace{0.3 cm}

However, the following is a natural question whose answer seems to be unknown.

\begin{question}
Let $R$ be a ring and $V$ be a 1-tilting $R$-module such that $\Ker(\Hom_{R}(V,?))$ is closed under taking direct limits in $R$-Mod. Is $V$ equivalent to a classical 1-tilting module?. Note that a 1-tilting $R$-module is classical if, and only if, it is finitely presented (cf. \cite[Proposition 1.3]{CT}).
\end{question}

\vspace{0.3cm}

In the case ``cotilting'', we have to introduce some definitions and notations. Indeed, if $I$ is any set, then the product functor $\prod:\G^{I}=[I,\G]\flecha \G$ is left exact, but need not be right exact. We shall denote by $\prod^{1}:=\prod^{1}_{i\in I}:G^{I} \flecha \G$ its first right derived functor. Given a family $(X_i)_{i \in I}$, we have that $\prod^{1}_{i \in I} X_i$ is the cokernel of the canonical morphism $\prod_{i \in I}E(X_i) \flecha \prod_{i\in I}\frac{E(X_i)}{X_i}$.

\begin{remark}\rm{
Note that if $\G$ is AB4*, then $\prod^{1}$ is the zero functor. All Grothendieck categories with enough projectives are AB4*, but the converse is not true and there exist AB4* Grothendieck categories with no nonzero projective objects (see \cite[Theorem 4.1]{Roo}).}
\end{remark}

\begin{definition}\rm{
An object $Q$ of the Grothendieck category $\G$ will be called \emph{strong 1-cotilting}\index{object! strong 1-cotilting} when it is 1-cotilting and $\prod^{1}_{i \in I}Q$ is in $\F:=\Cogen(Q)$, for each set $I$. The corresponding torsion pair is called a \emph{strong cotilting torsion pair}\index{torsion pair! strong cotilting }.}
\end{definition}

Let $\G$ be a locally finitely presented Grothendieck category (see definition \ref{def. locally fp G}) in this paragraph. An exact sequence $\xymatrix{0 \ar[r] & X \ar[r]^{u} & Y \ar[r]^{p} & Z \ar[r] & 0}$ is called \emph{pure-exact}\index{exact sequence! pure-exact} when it is kept exact when applying the functor $\Hom_{\G}(U,?)$, for every finitely presented object $U$. An object $E$ of $\G$ is \emph{pure-injective}\index{object! pure-injective} when $\Hom_{\G}(?,E)$ preserves the exactness of all pure-exact sequences (see, e.g. \cite{CB} or \cite{Pr} for details).

\begin{lemma}\label{lemma Bazzoni}
Let $Q$ be a 1-cotilting object of $\G$. The following assertions hold:
\begin{enumerate}
\item[1)] If $\G$ is AB4* then $Q$ is strong 1-cotilting and the class $\F:=\Cogen(Q)$ is generating.
\item[2)] If $\G$ is locally finitely presented, then $Q$ is a pure-injective object and $\F$ is closed under taking direct limits in $\G$. In particular, the equivalence classes of 1-cotilting objects form a set.
\item[3)] If there exists a strong 1-cotilting object $Q^{'}$ which is equivalent to $Q$, then $Q$ is itself strong 1-cotilting.
\end{enumerate}
\end{lemma}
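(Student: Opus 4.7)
The plan is to address the three parts separately, using as main input the structural characterization of $1$-cotilting objects (the dual of Proposition \ref{description tilting object}) together with Bazzoni-type purity arguments for part (2).

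For (1), the strong $1$-cotilting condition is trivial, since AB4* forces $\prod^{1}=0$, so $\prod^{1}_{i\in I}Q=0\in\F$ automatically. For the fact that $\F$ is generating, I would first invoke the dual of Proposition \ref{description tilting object}: in an AB4 Grothendieck category with an injective cogenerator $E$, any $1$-cotilting object $Q$ fits into an exact sequence $0\to Q^{1}\to Q^{0}\to E\to 0$ with $Q^{0},Q^{1}\in\Prod(Q)\subseteq\F$. Given any object $X$, pick a monomorphism $j:X\monic E^{I}$ (using that $E$ is a cogenerator). The AB4* hypothesis keeps the $I$-th power of the resolution exact, yielding $0\to (Q^{1})^{I}\to (Q^{0})^{I}\to E^{I}\to 0$. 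Taking the pullback of $(Q^{0})^{I}\epic E^{I}$ along $j$ produces an epimorphism $P\epic X$ whose source $P$ is a subobject of $(Q^{0})^{I}\in\F$; closure of $\F$ under subobjects then gives $P\in\F$.

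For (2), this is essentially Bazzoni's theorem transferred to the locally finitely presented Grothendieck setting. The plan is to prove pure-injectivity of $Q$ via the standard cardinality argument of Bazzoni: consider the pure-injective hull $\iota:Q\monic\bar{Q}$, show that $\bar{Q}/Q$ and $\bar{Q}$ are both in $\F$ using that $\F=\Ker(\Ext^{1}_{\G}(?,Q))$ together with the fact that pure-exact sequences are preserved under $\Hom_{\G}(?,Q)$ in sufficiently well-behaved cases, and then conclude that $\iota$ splits by an $\Ext^{1}$ vanishing argument; the details require a transfinite construction and a cardinal estimate that replaces Bazzoni's $\kappa^{+}$-argument. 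Once $Q$ is pure-injective, closure of $\F$ under direct limits follows: in a locally finitely presented Grothendieck category every direct limit $\varinjlim F_{i}$ of a system in $\F$ is a pure quotient of $\coprod F_{i}$, and since $Q$ is pure-injective the functor $\Hom_{\G}(?,Q)$ preserves the corresponding pure-exact sequence, forcing $\Ext^{1}_{\G}(\varinjlim F_{i},Q)=0$. The final "set" assertion then follows because a subcategory of $\G$ closed under products, extensions, subobjects and direct limits in a locally finitely presented Grothendieck category is determined by its intersection with the (essentially small) subcategory of finitely presented objects; since equivalence classes of $1$-cotilting objects correspond bijectively to their torsionfree classes, there are at most a set of them.

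For (3), the plan is short: equivalence of $Q$ and $Q'$ means $\Prod(Q)=\Prod(Q')$, so there exist $Q''\in\G$ and $n\geq 1$ with $Q\oplus Q''\cong (Q')^{n}$. Using additivity of the derived functor $\prod^{1}$ on constant diagrams, one computes
$$\prod^{1}_{i\in I}Q\oplus \prod^{1}_{i\in I}Q''\cong\prod^{1}_{i\in I}(Q\oplus Q'')\cong\prod^{1}_{i\in I}(Q')^{n}\cong\bigl(\prod^{1}_{i\in I}Q'\bigr)^{n}.$$
By hypothesis $\prod^{1}_{i\in I}Q'\in\F=\Cogen(Q')=\Cogen(Q)$, and $\F$ is closed under products and subobjects, so $\prod^{1}_{i\in I}Q\in\F$.

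The principal obstacle is part (2): adapting Bazzoni's pure-injectivity proof from $R$-Mod to an arbitrary locally finitely presented Grothendieck category requires a careful cardinality/approximation argument and some care with the purity theory in this abstract setting. Parts (1) and (3) are direct consequences of the cotilting structural sequence and additivity, respectively, once part (2)'s framework is in place.
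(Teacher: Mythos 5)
Your part (1) is fine and matches the paper's argument (dual of Proposition \ref{description tilting object} plus a pullback). The problems are in parts (2) and (3).

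In part (3) you assert that $\Prod(Q)=\Prod(Q')$ yields $Q\oplus Q''\cong (Q')^{n}$ for some \emph{finite} $n$. That is false: $\Prod$ allows arbitrary set-indexed products, so all you get is $Q\oplus Q''\cong (Q')^{J}$ for some set $J$. This is not a cosmetic slip, because your next step relies on additivity of $\prod^{1}$ to pull the power outside, and additivity only gives commutation with \emph{finite} products; there is no reason that $\prod^{1}_{i\in I}\bigl((Q')^{J}\bigr)\cong\bigl(\prod^{1}_{i\in I}Q'\bigr)^{J}$ when $\G$ is not AB4*. The entire content of the paper's proof of (3) is to overcome exactly this point: after reducing (by additivity, correctly used only for direct summands) to showing that $Q^{J}$ is strong whenever $Q$ is, it compares the defining sequences for $\prod^{1}_{i\in I}(Q^{J})$ and $\prod^{1}_{(i,j)\in I\times J}Q$ via $E(Q^{J})\monic E(Q)^{J}$ and left exactness of products, obtaining only a \emph{monomorphism} $\prod^{1}_{i\in I}(Q^{J})\monic\prod^{1}_{(i,j)}Q\in\F$, which suffices since $\F$ is closed under subobjects. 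Your argument as written does not close this gap.

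In part (2) you have not really given a proof. The route you sketch (pure-injective hull $Q\monic\bar{Q}$, show $\bar{Q}/Q\in\F$ and split by an $\Ext^{1}$-vanishing) is not Bazzoni's argument and stalls immediately: there is no a priori reason that $\bar{Q}/Q$ lies in $\Cogen(Q)=\Ker(\Ext^{1}_{\G}(?,Q))$ — if that were easy, Bazzoni's theorem would be trivial even for modules. The actual proof uses the criterion that $Q$ is pure-injective iff every morphism $Q^{(S)}\flecha Q$ extends to $Q^{S}$, and the hard step is her Proposition 2.5/Lemma 2.7: for an almost-disjoint family $(A_{\beta})_{\beta\in\lambda^{\aleph_{0}}}$ of subsets of $\lambda$, the images of the $M^{A_{\beta}}$ in $M^{\lambda}/M^{(\lambda)}$ have direct sum. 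Verifying this in a locally finitely presented Grothendieck category is precisely what the paper does, via the modular law (AB5), a notion of support of a finitely generated subobject of $M^{\lambda}$, and directed-union arguments; none of this appears in your sketch, and you concede as much. Finally, your justification of the ``set'' claim (that a torsionfree class closed under direct limits is determined by its finitely presented members) is not something you can quote; the paper instead derives it from Lemma \ref{lema presentados de V}, which produces for each such torsion pair a canonical subset of subobjects of the quotients $G^{(n)}/X$ of a fixed generator.
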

\begin{proof}
1) That $Q$ is strong 1-cotilting is straightforward since $\prod^{1}$ vanishes when $\G$ is AB4*. In order to prove that $\F$ is generating, it is enough to prove that all injective objects of $\G$ are homomorphic image of objects in $\F$. Indeed, if that is the case and $U$ is any object of $\G$, then fixing an epimorphism $p:F \epic E(U)$, with $F\in \F$, and pulling it back along the inclution $U\monic E(U)$, we obtain an epimorphism $F^{'} \epic U$, for some $F^{'}\in \F.$ But, by the dual of proposition \ref{description tilting object}, we get that each injective cogenerator $E$ is an homomorphic image of an object in $\F:=Cogen(Q)$. \\

2) We follow Bazzoni's argument (see \cite{B}) and see that it also works in our context. First of all, note that an object $Y$ of $\G$ is pure-injective if, and only if, for every set $S$, each morphism $f:Y^{(S)} \flecha Y$ extends to $Y^{S}$ (cf. \cite[Theorem 1]{CB}, \cite[Theorem 5.2]{Pr}). Then lemmas 2.1, 2.3 and 2.4, together with corollary 2.2 of [op.cit] are valid here. We next consider proposition 2.5 in Bazzoni's paper. For it to work in our situation, we just need to check that if $\lambda$ is an infinite cardinal and $(A_{\beta})_{\beta \in \lambda^{\aleph_{0}}}$ is a family of $\lambda^{\aleph_{0}}$ subsets of $\lambda$ such that $A_{\alpha}\cap A_{\beta}$ is finite, for all $\alpha\neq \beta$, then the images of the compositions $M^{A_{\beta}} \monic M^{\lambda} \xymatrix{\ar@{>>}[r]^{pr \hspace{0.5cm}} & \frac{M^{\lambda}}{M^{(\lambda)}}}$ form a family $(Y_{\beta})_{\beta \in \lambda^{\aleph_{0}}}$ of subobjects of $\frac{M^{\lambda}}{M^{(\lambda)}}$ which have direct sum. Note that this amounts to prove that, for each $\beta \in \lambda^{\aleph_{0}}$, we have $(M^{(\lambda)}+ M^{A_{\beta}}) \cap (M^{(\lambda)} + \underset{\gamma \neq \beta}{\sum}M^{A_{\gamma}})=M^{(\lambda)}$. By the modular law, which is a consequence of the AB5 condition, we need to prove that $M^{(\lambda)}+ [(M^{(\lambda)}+M^{A_\beta}) \cap  (\underset{\gamma \neq \beta}{\sum}M^{A_{\gamma}})]=M^{(\lambda)}$. That is, we need to prove that $[(M^{(\lambda)}+M^{A_\beta}) \cap  (\underset{\gamma \neq \beta}{\sum}M^{A_{\gamma}})] \subseteq M^{(\lambda)}.$ \\

For simplicity, call an object $X$ of $\G$ finitely generated when it is homomorphic image of a finitely presented one. Clearly, $\Hom_{\G}(X,?)$ preserves direct unions of subobjects in that case. Due to the locally finitely presented condition of $\G$, each object of this category is a directed union of finitely generated subobjects. Our task reduces to prove that if $X$ is a finitely generated subobject of $[(M^{(\lambda)}+M^{A_\beta}) \cap  (\underset{\gamma \neq \beta}{\sum}M^{A_{\gamma}})]$, then $X\subseteq M^{(\lambda)}$. To do that, we denote by $\Supp(X)$ the set of $\alpha \in \lambda$ such that the composition $X \monic M^{\lambda} \xymatrix{\ar[r]^{\pi_{\alpha}} &} M$ is nonzero, where $\pi_{\alpha}:M^{\lambda} \epic M$ is the $\alpha$-projection, for each $\alpha \in \lambda$. Bearing in mind that $\underset{\gamma \neq \beta}{\sum}M^{A_{\gamma}}=\underset{F}{\bigcup}(\underset{\gamma \in F}{\sum}M^{A_{\gamma}})$, with $F$ varying on the set of finite subsets of $\lambda \setminus \{\beta\} ,$ the AB5  condition (see \cite[V.1]{S}) gives:

$$X=X \cap \underset{\gamma \neq \beta}{\sum}M^{A_{\gamma}}=X \cap [\underset{F}{\bigcup}(\underset{\gamma \in F}{\sum}M^{A_{\gamma}})]=\underset{F}{\bigcup}[X \cap (\underset{\gamma \in F}{\sum}M^{A_{\gamma}})] $$
and the finitely generated condition of $X$ implies that $X=X \cap \underset{\gamma \neq \beta}{\sum}M^{A_{\gamma}}\subseteq \underset{\gamma \in F}{\sum}M^{A_{\gamma}}$, for some $F \subset \lambda \setminus \{\beta\}$ finite. As a consequence, we have $\Supp(X) \subseteq \underset{\gamma \in F}{\bigcup}A_{\gamma}$.\\

On the other hand, exactness of direct limits gives that $M^{(\lambda)}+M^{A_{\beta}}=\underset{F^{'} \subset \lambda \text{, finite}}{\bigcup}[M^{(F^{'})} + M^{A_{\beta}}]$ and, again by the AB5 condition and the finitely generated condition of $X$, we get that $X\subset M^{(F^{'})} + M^{A_{\beta}},$ for some finite subset $F^{'}\subset \lambda$. This implies that $\Supp(X)\subseteq F^{'} \cup A_{\beta}$. Together with the conclusion of the previous paragraph, we then get that $\Supp(X) \subseteq (F^{'} \cup A_{\beta}) \cap ( \underset{\gamma \in F}{\bigcup}A_{\gamma})$, and so $\Supp(X)$ is a finite set and $X \subseteq M^{(\lambda)}.$ \\

The previous two paragraphs show that proposition 2.5 and corollary 2.6 of \cite{B} go on in our context. To complete Bazzoni's argument in our situation, it remains to check the truth of her lemma 2.7. This amount to prove that if $0\neq X \subset \frac{M^{\lambda}}{M^{(\lambda)}}$ is a finitely generated subobject, then there exists a morphism $f:\frac{M^{\lambda}}{M^{(\lambda)}} \flecha Q$ such that $f(X)\neq 0.$ Indeed, take the subobject $\tilde{X}$ of $M^{\lambda}$ such that $X=\frac{\tilde{X}}{M^{(\lambda)}}$. Then $\Supp(\tilde{X})$ is an infinite subset of $\lambda$, and this allows us to fix a subset $A\subseteq \Supp(\tilde{X})$ such that $|A|=\aleph_{0}.$ If now $p:M^{\lambda} \epic M^{A}$ is the canonical projection, then we get an induced morphism $\overline{p}:\frac{M^{\lambda}}{M^{(\lambda)}} \flecha \frac{M^{A}}{M^{(A)}}$ such that $\overline{p}(X)\neq 0.$ Since $\frac{M^{A}}{M^{(A)}}\in \Cogen(Q)$ we get a morphism $h:\frac{M^{A}}{M^{(A)}} \flecha Q$ such that $h(\overline{p}(X))\neq 0.$ We take $f=h \circ \overline{p}$ and have $f(X)\neq 0$, as desired. Therefore $Q$ is pure-injective. \\

Finally, if $(F_i)_{i\in I}$ is a direct system in $\F=\Ker(\Ext^{1}_{\G}(?,Q))$ then the induced sequence $\xymatrix{0 \ar[r] & K \ar[r] & \underset{i \in I}{\coprod} F_i \ar[r]^{p} & \varinjlim{F_i} \ar[r] & 0}$ is pure-exact. The fact that $\F=\varinjlim{\F}$ follows, as in module categories, by applying to this sequence the long exact sequence of $\Ext(?,Q).$ Indeed, we get the following monomorphism $\Ext^{1}_{\G}(\varinjlim{F_i},Q) \monic \Ext^{1}_{\G}(\coprod{F_i},Q)=0$, this implies that $\varinjlim{F_i}\in \F$.  Moreover, equivalence classes of 1-cotilting objects are in bijection with the cotilting torsion pairs. Now apply lemma \ref{lema presentados de V}.\\

3) For any set $I$, the functor $\prod^{1}:[I,\G] \flecha \G$ is additive. This and the fact that $\F=\Cogen(Q)$ is closed under direct summands imply that the class of objects $X$ such that $\prod^{1}X\in \F$ is closed under taking direct summands. This reduces the proof to cheek that if $Q$ is strong 1-cotilting, then $Q^{J}$ is strong 1-cotilting, for every set $J$. To do that, for such a set $J$, we consider the following commutative diagram, where the upper right square is bicartesian and where the vertical sequences are split exacts.
$$\xymatrix{0 \ar[r] & Q^{J} \ar@{=}[d] \ar[r] & *+<1em>{E(Q^{J})} \pushoutcorner \ar[r] \ar@{^(->}[d] & \frac{E(Q^{J})}{Q^{J}} \ar[r] \ar@{^(->}[d] & 0 \\ 0 \ar[r] & Q^{J} \ar[r] & E(Q)^{J} \ar@{>>}[d]\ar[r] &  \hspace{0.3 cm}\frac{E(Q)^{J}}{Q^{J}} \pullbackcorner \ar[r] \ar@{>>}[d] & 0 \\ & & E \ar@{=}[r] & E }$$ 
For each set $I$, the product functor $\prod:[I,G] \flecha \G$ preserves pullbacks since it is left exact. It also preserves split short exact sequences. It follows that the central square of the following commutative diagram is bicartesian since the cokernels of its two vertical arrows are isomorphic:

$$\xymatrix{0 \ar[r] & (Q^{J})^{I} \ar@{=}[d] \ar[r] & *+<1em>{(E(Q^{J}))^{I}} \pushoutcorner \ar[r] \ar@{^(->}[d] & (\frac{E(Q^{J})}{Q^{J}})^{I} \ar[r] \ar@{^(->}[d] & N \ar[r] \ar[d]^{u} & 0 \\ 0 \ar[r] & (Q^{J})^{I} \ar[r] & (E(Q)^{J})^{I} \ar[r] &  \hspace{0.55 cm}(\frac{E(Q)^{J}}{Q^{J}})^{I} \pullbackcorner \ar[r]  & N^{'} \ar[r] &0 }$$ 

Then $u$ is an isomorphism, which allows us to put $N^{'}=N$ and $u=1_N$. Our goal is to prove that $N$ is in $\F.$ But the lower row of the last diagram fits in a new commutative diagram with exact rows, where the two left vertical arrows are isomorphism:

$$\xymatrix{0 \ar[r] & (Q^{J})^{I} \ar[d]^{\wr} \ar[r] & (E(Q)^{J})^{I}  \ar[r] \ar[d]^{\wr} & (\frac{E(Q)^{J}}{Q^{J}})^{I} \ar[r] \ar[d] & N \ar[r] \ar[d]^{v} & 0  \\ 0 \ar[r] & Q^{J \times I} \ar[r] & E(Q)^{J \times I} \ar[r] &  (\frac{E(Q)}{Q})^{J \times I}  \ar[r]  & F \ar[r] &0 }$$ 
Due to the left exactness of the product functor, the second vertical arrow from right to left is a monomorphism. It the follows that $v$ is also a monomorphism. But $F$ is in $\F$, because $Q$ is a strong 1-cotilting. We then get that $N\in \F$, as desired. 
 \end{proof}

\begin{proposition}\label{proposition clave}
Let $\G$ be a Grothendieck category and let $\mathbf{t}=(\T,\F)$ be a torsion pair in $\G$ such that $\F$ is a generating class. Consider the following assertions:
\begin{enumerate}
\item[1)] The heart $\Ht$ is a Grothendieck category;
\item[2)] $\F$ is closed under taking direct limits in $\G$;
\item[3)] $\te$ is a (strong) cotilting torsion pair.
\end{enumerate}
Then the implications 1) $\Longleftrightarrow$ 2) $\Longrightarrow$ 3) hold. When $\G$ is locally finitely presented, all assertions are equivalent.
\end{proposition}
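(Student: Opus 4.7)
The plan is to handle each implication in turn.

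For $1) \Longleftrightarrow 2)$, I would invoke corollary \ref{ejemplos b) y c)} directly: the standing hypothesis that $\F$ is generating places us in exactly the situation of that corollary, which asserts this equivalence. No additional work is needed.

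For $2) \Longrightarrow 3)$, I propose to extract a cotilting object from an injective cogenerator of the heart. Since $\Ht$ is Grothendieck by $1)$, it admits an injective cogenerator $E$. From the proof of theorem \ref{Grothendieck characterization} under condition b) (applicable here because $\F$ is generating, cf.\ corollary \ref{ejemplos b) y c)}), each object of $\Ht$ embeds into some $F[1]$ with $F\in \F$. Applied to $E$, the resulting monomorphism $E\hookrightarrow F[1]$ splits by injectivity of $E$, so $E$ is a direct summand of $F[1]$ and hence $E\cong Q[1]$ with $Q:=H^{-1}(E)\in \F$. To verify that $Q$ is $1$-cotilting I would first check $\Cogen(Q)=\F$. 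The inclusion $\Cogen(Q)\subseteq \F$ follows from closure of $\F$ under products and subobjects. For the converse, given $F\in \F$, the object $F[1]$ embeds in $\Ht$ into some $Q[1]^{I}$; because $\F$ is closed under products in $\G$, the complex $(Q^{I})[1]$ is already in $\Ht$, and a direct Yoneda computation (splitting via the canonical triangle $H^{-1}(M)[1]\to M\to H^{0}(M)[0]\to$ and using that $\Ext^{1}_{\G}(T,?)$ preserves products) shows that this coincides with the product in $\Ht$ of the copies of $Q[1]$; taking $H^{-1}$ then recovers an embedding $F\hookrightarrow Q^{I}$. The inclusion $\F \subseteq \Ker(\Ext^{1}_{\G}(?,Q))$ is immediate from $\Ext^{1}_{\G}(F,Q)\cong \Ext^{1}_{\Ht}(F[1],Q[1])=0$, by injectivity of $Q[1]$. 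For the reverse inclusion I would exploit the cogenerator property of $Q[1]$: any nonzero $T\in \T$ yields $\Ext^{1}_{\G}(T,Q)=\Hom_{\Ht}(T[0],Q[1])\neq 0$, and combining this with the long exact sequence of $\Ext$ associated to $0\to t(X)\to X\to X/t(X)\to 0$ should force $t(X)=0$ whenever $\Ext^{1}_{\G}(X,Q)=0$.

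For $3) \Longrightarrow 2)$ under the locally finitely presented hypothesis on $\G$, the argument is immediate from lemma \ref{lemma Bazzoni}(2), which guarantees that $\F=\Cogen(Q)$ is closed under direct limits whenever $Q$ is $1$-cotilting and $\G$ is locally finitely presented.

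The main obstacle I foresee lies in the last step of $2) \Longrightarrow 3)$, namely showing $\Ker(\Ext^{1}_{\G}(?,Q))\subseteq \F$. The long exact sequence only yields an injection $\Ext^{1}_{\G}(t(X),Q)\hookrightarrow \Ext^{2}_{\G}(X/t(X),Q)$, so one must establish that $\Ext^{2}_{\G}(F,Q)$ vanishes for every $F\in \F$. This does not follow merely from injectivity of $Q[1]$ in $\Ht$, since the comparison map $\Ext^{2}_{\Ht}(M,N)\to \Hom_{\D(\G)}(M,N[2])$ is only an injection in general. The resolution likely requires either showing that the constructed $Q$ has injective dimension at most one in $\G$, or a finer exploitation of the specific injective-cogenerator structure of $Q[1]$ inside $\Ht$. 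The parenthetical \emph{(strong)} in assertion $3)$ is then presumably recovered either automatically when $\G$ has stronger exactness (e.g.\ $\text{AB}4^{*}$, via lemma \ref{lemma Bazzoni}(1)) or by invoking lemma \ref{lemma Bazzoni}(3) once a strong representative is identified in the equivalence class of $Q$.
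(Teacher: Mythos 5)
Your overall strategy---identify an injective cogenerator of $\Ht$ as $Q[1]$ with $Q\in\F$ and prove that $Q$ is $1$-cotilting---is indeed the paper's strategy, and you handle $1)\Longleftrightarrow 2)$ and $3)\Longrightarrow 2)$ exactly as the paper does. But the step you yourself flag as the main obstacle, namely $\Ker(\Ext^{1}_{\G}(?,Q))\subseteq\F$, is a genuine gap, and the paper does not close it along either of the lines you anticipate: it never proves $\Ext^{2}_{\G}(F,Q)=0$, nor that $Q$ has injective dimension at most one. Instead it uses the generating hypothesis on $\F$ a second time. Given $Z$ with $\Ext^{1}_{\G}(Z,Q)=0$, choose an epimorphism $p\colon F\to Z$ with $F\in\F$ and pull it back along $t(Z)\to Z$ to obtain $\tilde F\leq F$ with $F/\tilde F\cong Z/t(Z)$. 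Since $\Ext^{1}_{\G}(?,Q)$ vanishes on $\F$, hence on $\Ker(p)$ and on $\tilde F$, the long exact sequences for $0\to\Ker(p)\to F\to Z\to 0$ and $0\to\tilde F\to F\to Z/t(Z)\to 0$ show that both $\Ext^{2}_{\G}(Z,Q)$ and $\Ext^{2}_{\G}(Z/t(Z),Q)$ embed into $\Ext^{2}_{\G}(F,Q)$, compatibly with the factorization $F\to Z\to Z/t(Z)$; hence $\alpha\colon\Ext^{2}_{\G}(Z/t(Z),Q)\to\Ext^{2}_{\G}(Z,Q)$ is a monomorphism. Feeding this into $\Ext^{1}_{\G}(Z,Q)\to\Ext^{1}_{\G}(t(Z),Q)\to\Ext^{2}_{\G}(Z/t(Z),Q)\xrightarrow{\alpha}\Ext^{2}_{\G}(Z,Q)$ yields $\Ext^{1}_{\G}(t(Z),Q)=0$, i.e.\ $\Hom_{\Ht}(t(Z)[0],Q[1])=0$, and then $t(Z)=0$ because $Q[1]$ cogenerates $\Ht$. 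No vanishing of any $\Ext^{2}$ group is ever needed.

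There is a second, smaller but real, flaw in your identification of the product of copies of $Q[1]$ in $\Ht$ with $Q^{I}[1]$: the functor $\Ext^{1}_{\G}(T,?)$ does \emph{not} preserve products in a general Grothendieck category (only in an AB4* one), so the Yoneda computation you sketch does not go through. The paper's proof opens precisely with this point: by two applications of the octahedral axiom it shows that $P:=\prod_{\Ht}F_{i}[1]$ satisfies $H^{-1}(P)=\prod F_{i}$ and $H^{0}(P)\cong t(\prod^{1}F_{i})$, where $\prod^{1}$ is the first right derived functor of the product. The identification $\prod_{\Ht}Q[1]\cong Q^{I}[1]$ then follows not from a universal-property computation but from the observation that a product of copies of the injective cogenerator $Q[1]$ is injective in $\Ht$ and therefore lies in $\F[1]$ (because $\F[1]$ cogenerates $\Ht$ and is closed under quotients), which forces $t(\prod^{1}_{i\in I}Q)=0$. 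This last vanishing is also exactly what establishes the \emph{strong} cotilting condition $\prod^{1}_{i\in I}Q\in\F$, which your proposal leaves open; it comes for free here rather than via lemma \ref{lemma Bazzoni}.
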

\begin{proof}
Let $(F_i)_{i\in I}$ be a family in $\F$. Note that, in order to calculate its product $\underset{\D(\G)}{\prod}F_{i}[1]$ in $\D(\G)$, we first replace each $F_i$ by an injective resolution, which we assume to be the minimal one, and then take products in $\C(\G).$ When $\G$ is not AB4*, the resulting complex can have nonzero homology in degrees greater than or equal to zero 0. However $\underset{\D(\G)}{\prod}F_{i}[1]$ is $\mathcal{U}_{\mathbf{t}}^{\perp}[1]$ and, using lemma \ref{lemma de adjunctions}, we easily see that $P:=\underset{\Ht}{\prod}F_{i}[1]=\tau_{\mathcal{U}}(\underset{\D(\G)}{\prod}F_{i}[1])$. Using the octahedral axiom, we have the diagram
$$\xymatrix{\tau_{\mathcal{U}}(\tau^{\leq 0}(\underset{\D(\G)}{\prod}F_{i}[1])) \ar@{=}[d] \ar[r] & \tau^{\leq 0}(\underset{\D(\G)}{\prod}F_{i}[1]) \ar[r] \ar[d] & \tau^{\mathcal{U}^{\perp}}(\tau^{\leq 0}(\underset{\D(\G)}{\prod}F_{i}[1])) \ar[r]^{\hspace{1.6cm}+} \ar[d] & \\ \tau_{\mathcal{U}}(\tau^{\leq 0}(\underset{\D(\G)}{\prod}F_{i}[1])) \ar[r] & \underset{\D(\G)}{\prod}F_{i}[1] \ar[r] \ar[d]& Z \ar[d] \ar[r]^{+} & \\ & \tau^{>0}(\underset{\D(\G)}{\prod}F_{i}[1]) \ar@{=}[r] \ar[d]^(.65){+}&  \tau^{>0}(\underset{\D(\G)}{\prod}F_{i}[1]) \ar[d]^(.65){+}\\ &&&}$$
where $\tau^{\leq 0}$ and $\tau^{>0}$ denote the left and right truncation functors with respect to the canonical t-structure $(\D^{\leq 0}(\G),\D^{\geq 0}(\G))$, respectively. It follows that $Z\in \mathcal{U}_{\mathbf{t}}^{\perp}$, since $\D^{>0}(\G)\subseteq \mathcal{U}_{\mathbf{t}}^{\perp}.$ Using once again octahedral axiom, we get the diagram
$$\xymatrix{\tau_{\mathcal{U}}(\tau^{\leq 0}(\underset{\D(\G)}{\prod}F_{i}[1])) \ar@{=}[d] \ar[r] & \tau_{\mathcal{U}}(\underset{\D(\G)}{\prod}F_{i}[1]) \ar[r] \ar[d] & Z^{'} \ar[r]^{+} \ar[d] & \\ \tau_{\mathcal{U}}(\tau^{\leq 0}(\underset{\D(\G)}{\prod}F_{i}[1])) \ar[r] & \underset{\D(\G)}{\prod}F_{i}[1] \ar[r] \ar[d]& Z \ar[d] \ar[r]^{+} & \\ & \tau^{\mathcal{U}^{\perp}}(\underset{\D(\G)}{\prod}F_{i}[1]) \ar@{=}[r] \ar[d]^(.65){+}&  \tau^{\mathcal{U}^{\perp}}(\underset{\D(\G)}{\prod}F_{i}[1]) \ar[d]^(.65){+}\\ &&&}$$

From the first horizontal triangle, we get that $Z^{'}\in \mathcal{U}_{\mathbf{t}}$. On the other hand, from the second vertical triangle and the fact that $Z\in \mathcal{U}_{\mathbf{t}}^{\perp}$ we get that $Z^{'}\in \mathcal{U}_{\te}^{\perp}$ and, hence that $Z^{'}=0$. It follows that $P\cong \tau_{\mathcal{U}}(\tau^{\leq 0}(\underset{\D(\G)}{\prod}F_{i}[1]))$. But $\tau^{\leq 0}(\underset{\D(\G)}{\prod}F_{i}[1])$ is quasi-isomorphic to the complex 
$$\xymatrix{Y:=\cdots \ar[r] & 0 \ar[r] & \underset{i \in I}{\prod}E(F_i) \ar[r]^{\text{can}} & \underset{i \in I}{\prod}\frac{E(F_i)}{F_i} \ar[r] & 0 \ar[r] & \cdots}$$  
concentrated in degrees -1 and 0. Note that $H^{0}(Y)=\prod^{1}F_i$. By pulling back the complex $Y$ along the inclusion $t(\prod^{1}F_i) \monic \prod^{1}F_i$, we obtain a new complex $Y^{'}$ which is in $\Ht$ and fits into a triangle $\xymatrix{Y^{'} \ar[r] & Y \ar[r] & (1:t)(\prod^{1}F_i)[0] \ar[r]^{\hspace{1.3 cm}+} & }$. It follows that $Y^{'}\cong \tau_\mathcal{U}(Y)\cong P$ and, hence, that $H^{-1}(P)=\underset{i \in I}{\prod}F_i$ and $H^{0}(P)\cong t(\prod^{1}_{i\in I}F_i)$.\\


1) $\Longleftrightarrow$ 2) is a direct consequence of corollary \ref{ejemplos b) y c)}. \\
1),2) $\Longrightarrow$ 3) By the proofs of corollary \ref{ejemplos b) y c)} and theorem \ref{Grothendieck characterization}, we know that $\F[1]$ cogenerates $\Ht.$ Then each injective object $I$ of $\Ht$ is a direct summand of an object in $\F[1]$, and hence $I$ is itself in $\F[1]$ since this class is closed under taking quotients. 
In particular, any injective cogenerator of $\Ht$ is of the form $Q[1]$, for some $Q\in \F$. Fixing such a $Q$, we get that $Q[1]^{S}$ is an injective cogenerator of $\Ht$, for each set $S$. This in turn implies that $Q[1]^{S}\in \F[1]$. By the initial paragraph of this proof, we then get that $t(\prod^{1}_{s\in S}Q)=0$, which implies that the proof is reduced to show that $\F=\Cogen(Q)=\Ker \Ext^{1}_{\G}(?,Q).$ We also get that $Q[1]^{S}\cong Q^{S}[1]$, for each set $S$. From this last isomorphism we immediately deduce that $\F=\Copres(Q)=\Cogen(Q).$ \\

On the other hand, the injectivity of $Q[1]$ in $\Ht$ implies that $\Ext^{1}_{\G}(F,Q)\cong \Ext^{1}_{\Ht}(F[1],Q[1])=0$, for each $F\in \F$. From this equality we derive that $\Cogen(Q)=\F \subseteq \Ker^{1}_{\G}(?,Q).$ Let now $Z$ be any object in $\Ker(\Ext^{1}_{\G}(?,Q))$. The generating condition of $\F$ gives us an epimorphism $p:F \epic Z$, with $F\in \F$. Putting $F^{'}:=\Ker(p),$ we then get the following commutative diagram, where the upper right square is bicartesian:
$$\xymatrix{0 \ar[r] &F^{'} \ar[r] \ar@{=}[d]& \tilde{F} \pushoutcorner \ar[r] \ar@{^(->}[d] & t(Z) \ar[r] \ar@{^(->}[d]& 0 \\ 0 \ar[r] &F^{'} \ar[r] & F   \ar[r]^{p} \ar@{>>}[d] & \pullbackcorner Z \ar[r] \ar@{>>}[d]& 0 & (\ast) \\ && \frac{Z}{t(Z)} \ar@{=}[r] & \frac{Z}{t(Z)} }$$

If we apply the long exact sequence of $\Ext(?,Q)$ to the central row and the central column of the last diagram, we get the following commutative diagram with exact rows:

$$\xymatrix{0 \ar[r] & \Ext^{2}_{\G}(\frac{Z}{t(Z)},Q) \ar[r] \ar[d]^{\alpha}& \Ext^{2}_{\G}(F,Q) \ar[r] \ar@{=}[d]& \Ext^{2}_{\G}(\tilde{F},Q) \ar[d] \\ 0 \ar[r] & \Ext^{2}_{\G}(Z,Q) \ar[r] & \Ext^{2}_{\G}(F,Q) \ar[r] & \Ext^{2}_{\G}(F^{'},Q) }$$

It follows that $\alpha:\Ext^{2}_{\G}(\frac{Z}{t(Z)},Q) \flecha \Ext^{2}_{\G}(Z,Q)$ is a monomorphism. If we now apply the long exact sequence of $\Ext$ to the right column of the diagram $(\ast)$ above, we get that the canonical morphism $\Ext^{1}_{\G}(Z,Q) \flecha \Ext^{1}_{\G}(t(Z),Q)$ is an epimorphism. This implies that $\Ext^{1}_{\G}(t(Z),Q)=0$ due to the choice of $Z$. It follows from this that $\Hom_{\Ht}(t(Z)[0],Q[1])=0$, which implies that $t(Z)=0$ since $Q[1]$ is a cogenerator of $\Ht$. We then get $\Ker(\Ext^{1}_{\G}(?,Q)) \subseteq \F =\Cogen(Q)$ and, hence, this last inclusion is an equality.\\

3) $\Longrightarrow$ 2) (assuming that $\G$ is locally finitely presented). It follows directly from lemma \ref{lemma Bazzoni}.
\end{proof}

\begin{remark}\rm{
\begin{enumerate}
\item [1)] When $\G$ is locally finitely presented, by proposition \ref{proposition clave} and lemma \ref{lemma Bazzoni}(3), we know that if $Q$ is a 1-cotilting object such that $\F=\Cogen(Q)$ is a generating class of $\G$, then $Q$ is strong 1-cotilting.
\item [2)] When $\G$ is AB4*, it follows from lemma \ref{lemma Bazzoni} and proposition \ref{proposition clave} that the following assertions are equivalent for a torsion pair $\te=(\T,\F)$:
\begin{enumerate}
\item[(a)] $\F$ is generating and closed under taking direct limits in $\G$;
\item[(b)] $\te$ is a cotilting torsion pair such that $\F$ is closed under taking direct limits in $\G$.
\end{enumerate}
\end{enumerate} }
\end{remark}

The following direct consequence of proposition \ref{proposition clave} extends \cite[Corollary 6.3]{CMT} (see also corollary \ref{cor. tiltilng implies cotilting}).

\begin{corollary}
Let $V$ be a 1-tilting object such that $\F=\Ker(\Hom_{\G}(V,?))$ is closed under taking direct limits in $\G$ (e.g., when $V$ is self-small). If $\F$ is a generating class, then the torsion pair $\mathbf{t}=(\Gen(V),\Ker(\Hom_{\G}(V,?)))$ is strong cotilting.
\end{corollary}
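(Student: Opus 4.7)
The plan is to reduce the statement to a direct invocation of Proposition \ref{proposition clave}. First, since $V$ is a 1-tilting object of the Grothendieck category $\G$, Remark \ref{quasi-tilting torsion pair} tells us that the pair $\mathbf{t} = (\Gen(V), \Ker(\Hom_\G(V,?)))$ is indeed a torsion pair in $\G$, with torsion class $\T = \Gen(V) = \Ker(\Ext^1_\G(V,?))$ and torsion-free class $\F = \Ker(\Hom_\G(V,?))$. So nothing needs to be checked on that front, and we are really just verifying that the hypotheses of Proposition \ref{proposition clave} are in force.

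Second, by hypothesis, $\F$ is a generating class of $\G$ and is closed under taking direct limits in $\G$. These are exactly conditions of Proposition \ref{proposition clave} together with its assertion $2)$. Applying the implication $2)\Longrightarrow 3)$ of that proposition, we conclude that $\mathbf{t}$ is a (strong) cotilting torsion pair, which is precisely the desired conclusion. I note that the ``strong'' part is genuinely obtained: the proof of Proposition \ref{proposition clave} shows that any injective cogenerator of $\Ht$ has the form $Q[1]$ with $Q\in \F$, and applying this to $Q[1]^S\cong Q^S[1]$ for arbitrary sets $S$ one derives $t(\prod^1_{s\in S}Q)=0$, i.e. $\prod^1_{s\in S}Q\in \F$, which is the strong-cotilting condition for $Q$.

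For the parenthetical ``e.g., when $V$ is self-small'', the justification is the following: if $V$ is a self-small 1-tilting object, then $V$ is classical 1-tilting, and in the module-category situation this is equivalent to $V$ being finitely presented (see the remark following the definition of classical 1-tilting). In that case the functor $\Hom_R(V,?)$ preserves direct limits, and hence $\F = \Ker(\Hom_R(V,?))$ is automatically closed under direct limits in $R\Mode$, so the main hypothesis of the corollary is satisfied for free.

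There is no real obstacle here: the corollary is a reformulation in the tilting vocabulary of the general fact (Proposition \ref{proposition clave}) that a torsion pair whose torsion-free class is both generating and closed under direct limits must be a strong cotilting torsion pair. All the hard work has been done in establishing Proposition \ref{proposition clave}; the only conceptual point is to recognize that the tilting hypothesis on $V$ guarantees that the associated pair $(\Gen(V), \Ker(\Hom_\G(V,?)))$ is a torsion pair to which that proposition can be applied.
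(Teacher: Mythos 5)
Your proof is correct and takes essentially the same route as the paper: the corollary is stated there as a direct consequence of Proposition \ref{proposition clave}, whose implication 2) $\Longrightarrow$ 3), applied to the tilting torsion pair $(\Gen(V),\Ker(\Hom_{\G}(V,?)))$ with $\F$ generating and closed under direct limits, yields the strong cotilting conclusion. (Only a minor remark: your justification of the parenthetical ``e.g., when $V$ is self-small'' via finite presentation is specific to module categories; in an arbitrary Grothendieck category one should instead invoke the implication 1) $\Longrightarrow$ 5) of Proposition \ref{prop. V[0] progenerator}.)
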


We now make explicit what proposition \ref{proposition clave} says in case $\G$ is locally finitely presented and AB4*. In fact, the next corollary extends the main result of \cite{CG} (see \cite[Theorem 6.2]{Maa}).

\begin{corollary}
Let $\G$ be locally finitely presented and AB4* and let $\te=(\T,\F)$ be a torsion pair in $\G$. The following assertions are equivalent:
\begin{enumerate}
\item[1)] $\F$ is a generating class and the heart $\Ht$ is a Grothendieck category;
\item[2)] $\F$ is a generating class closed under taking direct limits in $\G$;
\item[3)] $\te$ is a cotilting torsion pair.
\end{enumerate}
\end{corollary}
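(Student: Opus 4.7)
The plan is to assemble this corollary directly from Proposition \ref{proposition clave} and Lemma \ref{lemma Bazzoni}, checking that the hypotheses on $\G$ (locally finitely presented and AB4*) line up with what each of those results requires. The corollary is essentially a repackaging: the nontrivial direction $3) \Longrightarrow 2)$ is the only one where we must actively invoke both halves of Lemma \ref{lemma Bazzoni}.

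First I would prove $1) \Longleftrightarrow 2)$. Since $\F$ is assumed to be a generating class in both assertions, the equivalence is an immediate application of the equivalence $1) \Longleftrightarrow 2)$ in Proposition \ref{proposition clave}, which requires no extra hypothesis beyond what we have. Next, for $2) \Longrightarrow 3)$, I would apply the implication $2) \Longrightarrow 3)$ of Proposition \ref{proposition clave} (note that this is the implication which, in that proposition, needs $\G$ to be locally finitely presented only in one direction; here the direction $2) \Longrightarrow 3)$ is available in general, and in fact gives us that $\te$ is strong cotilting).

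The key step is $3) \Longrightarrow 2)$. Assume $\te=(\T,\F)$ is a cotilting torsion pair, so $\F=\Cogen(Q)=\Ker(\Ext^{1}_{\G}(?,Q))$ for some $1$-cotilting object $Q$. Since $\G$ is AB4*, Lemma \ref{lemma Bazzoni}(1) applies and gives that $\F$ is a generating class of $\G$. On the other hand, since $\G$ is locally finitely presented, Lemma \ref{lemma Bazzoni}(2) applies to $Q$ and yields that $\F$ is closed under taking direct limits in $\G$. These two facts together give assertion $2)$.

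I do not expect a serious obstacle: everything needed has been already established, and the two hypotheses on $\G$ are precisely those demanded by the respective parts of Lemma \ref{lemma Bazzoni}. The only point to be careful about is not to confuse the two different sufficient conditions in Lemma \ref{lemma Bazzoni} for ``$\F$ generating'' (which needs AB4*) and ``$\F=\varinjlim\F$'' (which needs locally finitely presented); both are invoked simultaneously in the implication $3) \Longrightarrow 2)$, and it is exactly the conjunction of the two hypotheses on $\G$ that allows this corollary to close the cycle of equivalences.
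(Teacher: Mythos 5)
Your proposal is correct and follows exactly the route the paper intends: the paper states this corollary without a separate proof precisely because it is the combination of Proposition \ref{proposition clave} (for $1)\Longleftrightarrow 2)\Longrightarrow 3)$, using that $\F$ is generating in both 1) and 2)) with the two parts of Lemma \ref{lemma Bazzoni} (AB4* giving that $\Cogen(Q)$ is generating, and local finite presentability giving closure under direct limits) for $3)\Longrightarrow 2)$. Your attention to which hypothesis on $\G$ feeds which part of Lemma \ref{lemma Bazzoni} is exactly the point, and nothing further is needed.
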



Recall that a Grothendieck category is called \emph{locally noetherian}\index{category! locally noetherian} when it has a set of noetherian generators. The following result extends \cite[Theorem A]{BK} (see lemma \ref{lemma Bazzoni}).

\begin{corollary}
Let $\G$ be a locally finitely presented Grothendieck category which is locally noetherian and denote by $fp(\G)$ its full subcategory of finitely presented (=noetherian) objects. There is a one-to-one correspondence between:
\begin{enumerate}
\item[1)] The torsion pairs $(\mathcal{X,Y})$ of $fp(\G)$ such that $\mathcal{Y}$ contains a set of generators;
\item[2)] The equivalence classes of 1-cotilting objects $Q$ of $\G$ such that $\Cogen(Q)$ is a generating class. \\

When, in addition, $\G$ is an AB4* category, they are also in bijection with 
\item[3)] The equivalence classes of 1-cotilting objects of $\G$.
\end{enumerate}
The map form 1 to 2 takes $(\mathcal{X,Y})$ to the equivalence class $[Q]$, where $Q$ is a 1-cotilting object such that $\Cogen(Q)=\{F\in \text{Ob}(\G): \Hom_{\G}(X,F)=0, \text{ for all }X\in \mathcal{X}\}$. The map from 2 to 1 takes $[Q]$ to $(\Ker(\Hom_{\G}(?,Q))\cap fp(\G), \Cogen(Q) \cap fp(\G))$.
\end{corollary}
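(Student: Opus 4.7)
The plan is to establish the correspondence via two explicit maps, verify they are well-defined, and then check they are mutually inverse; the AB4{*} case will follow immediately from Lemma \ref{lemma Bazzoni}(1), since under AB4{*} every 1-cotilting object $Q$ automatically satisfies that $\Cogen(Q)$ is generating, collapsing conditions 2 and 3.

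For the map from 1 to 2, starting with a torsion pair $(\mathcal{X},\mathcal{Y})$ in $fp(\G)$ such that $\mathcal{Y}$ contains a set of generators, I will define $\mathcal{F}:=\{F\in \G : \Hom_\G(X,F)=0 \text{ for all }X\in\mathcal{X}\}$. A short verification shows that $\mathcal{F}$ is closed under subobjects, products and extensions (hence a torsionfree class in $\G$); moreover, since every $X\in\mathcal{X}$ is finitely presented, the canonical isomorphism $\Hom_\G(X,\varinjlim F_i)\cong\varinjlim\Hom_\G(X,F_i)$ guarantees that $\mathcal{F}$ is closed under direct limits. The hypothesis on $\mathcal{Y}$ ensures $\mathcal{F}$ is generating. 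Then Proposition \ref{proposition clave} applies directly and yields that $\te=({}^{\perp}\mathcal{F},\mathcal{F})$ is a (strong) cotilting torsion pair, whence the equivalence class $[Q]$ of an associated 1-cotilting object.

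For the map from 2 to 1, given $[Q]$ with $\Cogen(Q)$ generating, Lemma \ref{lemma Bazzoni}(2) tells us that $\mathcal{F}:=\Cogen(Q)$ is closed under direct limits. Using the locally noetherian hypothesis, every $M\in fp(\G)$ is noetherian, so its torsion subobject $t(M)$ and quotient $M/t(M)$ remain in $fp(\G)$; this shows that $({}^{\perp}\mathcal{F}\cap fp(\G),\mathcal{F}\cap fp(\G))$ is a torsion pair in $fp(\G)$. To see $\mathcal{Y}:=\mathcal{F}\cap fp(\G)$ contains a generating set, I will take any $M\in fp(\G)$ and an epimorphism $F\epic M$ with $F\in \mathcal{F}$; expressing $F$ as the directed union of its noetherian subobjects $F_i$ and using the noetherian condition of $M$, some $F_i\epic M$, and $F_i\in \mathcal{F}$ since $\mathcal{F}$ is closed under subobjects.

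The two compositions are then mutually inverse. For $(\mathcal{X},\mathcal{Y})\rightsquigarrow[Q]\rightsquigarrow(\mathcal{X}',\mathcal{Y}')$, the equality $\mathcal{Y}'=\mathcal{Y}$ follows by decomposing any $F\in\mathcal{F}\cap fp(\G)$ in the torsion pair of $fp(\G)$ and noting that the torsion subobject $t_{\mathcal{X}}(F)\to F$ is zero since $F\in\mathcal{F}$; similarly for $\mathcal{X}'=\mathcal{X}$. The main obstacle, and the step that requires real care, is the reverse composition $[Q]\rightsquigarrow(\mathcal{X},\mathcal{Y})\rightsquigarrow[Q']$: one must verify $\Cogen(Q')=\Cogen(Q)$, i.e.\ that $\mathcal{F}'=\{F : \Hom_\G(X,F)=0\ \forall X\in\mathcal{X}\}$ coincides with $\Cogen(Q)$. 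The inclusion $\Cogen(Q)\subseteq\mathcal{F}'$ is immediate. For the reverse inclusion, I will take $F\in\mathcal{F}'$ and write it as the directed union of its noetherian subobjects $F_i$; applying the torsion decomposition of $F_i$ in $fp(\G)$ gives $t(F_i)\in\mathcal{X}$, and the composition $t(F_i)\monic F_i\monic F$ must vanish (as $F\in\mathcal{F}'$), forcing $t(F_i)=0$ and hence $F_i\in\mathcal{Y}\subseteq\Cogen(Q)$. Since $\Cogen(Q)$ is closed under direct limits by Lemma \ref{lemma Bazzoni}(2), we conclude $F\in\Cogen(Q)$. This last step, where the interplay between the noetherian hypothesis and closure of $\Cogen(Q)$ under direct limits becomes essential, is what I expect to be the delicate point and it is precisely where the assumption that $\G$ is locally noetherian (rather than merely locally finitely presented) is used.
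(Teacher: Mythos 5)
Your proof is correct, and it reaches the same destination through the same two pillars the paper uses (Proposition \ref{proposition clave} for producing the cotilting object, and Lemma \ref{lemma Bazzoni} for closure of $\Cogen(Q)$ under direct limits and for the AB4* collapse of 2) and 3)). Where you diverge is in how the dictionary between torsion pairs of $fp(\G)$ and torsion pairs of $\G$ is set up. The paper invokes Crawley-Boevey's result (\cite[Lemma 4.4]{CB}) that the torsion pair of $\G$ generated by $\mathcal{X}$ is $(\varinjlim\mathcal{X},\varinjlim\mathcal{Y})$, with $\varinjlim\mathcal{Y}=\{F:\Hom_\G(X,F)=0 \text{ for all } X\in\mathcal{X}\}$; once this is in hand, the mutual-inverse check is a one-line observation that a torsion pair contained componentwise in another must equal it. You instead verify everything by hand: that $\mathcal{X}^{\perp}$ is a torsionfree class closed under direct limits (via finite presentability of the objects of $\mathcal{X}$), that the restriction of $(\T,\F)$ to $fp(\G)$ is a torsion pair (via the noetherian condition), and the two compositions via directed unions of noetherian subobjects. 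Your route is more self-contained and makes visible exactly where local noetherianity enters (the torsion subobject of a noetherian object stays finitely presented, and every object is the directed union of its noetherian subobjects), at the cost of a longer mutual-inverse verification; the paper's route is shorter and its key lemma holds in the broader locally finitely presented setting, but hides the mechanism inside the citation. Both are valid; no gaps.
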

\begin{proof}
By lemma \ref{lemma Bazzoni}, when $\G$ is AB4*, the classes in 2) and 3) are the same. We then prove the bijection between 1) and 2). Given a torsion pair $(\mathcal{X,Y})$ in $fp(\G)$ as in 1), by \cite[Lemma 4.4]{CB}, we know that the torsion pair in $\G$ generated by $\mathcal{X}$ is $\mathbf{t}=(\T,\F)=(\varinjlim{\mathcal{X}},\varinjlim{\mathcal{Y}})$. Thus, $\F$ is a generating class that is closed under direct limits. It follows from proposition \ref{proposition clave} that $\mathbf{t}$ is a cotilting torsion pair. We then get a 1-cotilting object $Q$, uniquely determined up to equivalence, such that $\Cogen(Q)=\varinjlim{\mathcal{Y}}=\{F\in \G : \Hom_{\G}(X,F)=0, \text{ for all }X\in \mathcal{X}\}$. \\

Suppose now that $Q$ is any 1-cotilting object and its associated torsion pair $\mathbf{t}=(\T,\F)$ has the property that $\F$ is a generating class. Then $(\mathcal{X,Y}):=(\T\cap fp(\G), \F \cap fp(\G))$ is a torsion pair in $fp(\G)$. We claim that $\mathcal{Y}$ contains a set of generators. Indeed, by hypothesis $\F$ contains a generator $G$ of $\G$. By the locally noetherian condition of $\G$, we know that $G$ is the union of its noetherian (=finitely presented) subobjects. Then the finitely presented subobjects of $G$ form a set of generators of $\G$ which is in $\mathcal{Y}$, thus settling our claim. \\

On the other hand, in the situation of last paragraph, we have that $(\varinjlim{\mathcal{X}},\varinjlim{\mathcal{Y}})$ is a torsion pair in $\G$ such that $\varinjlim{\mathcal{X}}\subseteq \T$ and $\varinjlim{\mathcal{Y}}\subseteq \F.$ Then these inclusions are equalities and, hence, $\te$ is the image of $(\mathcal{X,Y})$ by the map from 1 to 2 defined in the first paragraph of this proof. That the two maps, from 1 to 2 and from 2 to 1, are mutually inverse is then a straightforward consequence of this.
\end{proof}

\begin{example}\rm{
The category $Qcoh(X)$ of quasi-coherent sheaves over quasi-compact scheme $X$ which is locally Noetherian, is a locally finitely presented Grothedieck category which is locally Noetherian (see \cite[I.6.9.12]{Gr2}).}
\end{example}

\chapter{$\Ht$ as a module category}

In this chapter, given any torsion pair $\mathbf{t}$ in a module category in $R$-Mod, we give necessary and sufficient conditions for the heart $\Ht$ to be a module category and, simultaneously, compare this property with that of $\te$ being an HKM torsion pair. \\

All throughout this section, $R$ will be a ring and $\te$ will be a torsion pair in $R\text{-Mod}$. Recall that each complex of $\Ht$ is quasi-isomorphic to a complex (see examples  \ref{example t-structure}(2)) $\xymatrix{\cdots \ar[r] & 0 \ar[r] & X \ar[r]^{j} & Q \ar[r]^{d} & P \ar[r] & 0 \ar[r] & \cdots}$, concentrated -2,-1,0, such that $j$ is a monomorphism and $P,Q$ are projective modules. 
 
\section{Progenerator}
 
\begin{lemma}\label{lem. homology 0 module}
The following assertions hold:
\begin{enumerate}
\item[1)] For each $G\in \D^{\leq 0}(R)$ and for each $R$-module $M$, there is an isomorphism \linebreak $\Hom_{R}(H^{0}(G),M) \iso \Hom_{\D(R)}(G,M[0])$, which is natural in $M$.

\item[2)] If $V$ is a $R$-module in $\T$, such that $\Hom_{R}(V,?)$ preserves direct limits of objects in $\T$ and $\varinjlim{\F}=\F$, then $V$ is a finitely presented $R$-module.

\item[3)] Let $G:= \xymatrix{\cdots \ar[r] & 0 \ar[r] & X \ar[r]^{j} & Q \ar[r]^{d} & P \ar[r] & 0 \ar[r] & \cdots}$, be a complex of $R$-modules with $P$ in degree 0, where $P$ and $Q$ are projective and $j$ is a monomorphism, and let $M$ be any $R$-module. When we view $X$ as a submodule of $Q$ and $j$ as the inclusion, there are natural in $M$ exact sequences of abelian groups:
\begin{enumerate}
\item[a)] $\xymatrix{\Hom_{R}(P,M) \ar[r] & \Hom_R(Q/X,M) \ar[r] & \Hom_{\D(R)}(G,M[1]) \ar[r] & 0}$
\item[b)] $\xymatrix{\Hom_R(Q,M) \ar[r] & \Hom_{R}(X,M) \ar[r] & \Hom_{\D(R)}(G,M[2]) \ar[r] &0}$
\end{enumerate}
\end{enumerate}
\end{lemma}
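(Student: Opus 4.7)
The plan is to treat the three parts independently.

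For assertion 1, I will use the canonical t-structure $(\D^{\leq 0}(R),\D^{\geq 0}(R))$ on $\D(R)$ (see example \ref{example t-structure}(1)). Since $G\in \D^{\leq 0}(R)$ has homology concentrated below degree $0$ with $H^0(G)$ on top, the right truncation gives an isomorphism $\tau^{\geq 0}(G)\cong H^0(G)[0]$ in $\D(R)$. As $M[0]\in \D^{\geq 0}(R)$, the adjunction between the inclusion $\D^{\geq 0}(R)\hookrightarrow \D(R)$ and the right truncation functor $\tau^{\geq 0}$ produces a natural isomorphism
$$\Hom_{\D(R)}(G,M[0])\cong \Hom_{\D(R)}(\tau^{\geq 0}(G),M[0])\cong \Hom_{\D(R)}(H^0(G)[0],M[0])\cong \Hom_R(H^0(G),M),$$
which is manifestly natural in $M$.

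For assertion 2, it suffices to show that $\Hom_R(V,?)$ preserves every direct limit, since this is equivalent to $V$ being finitely presented. Let $(M_i)_{i\in I}$ be a direct system with $M=\varinjlim M_i$. The canonical short exact sequences $0\flecha t(M_i)\flecha M_i\flecha (1:t)(M_i)\flecha 0$ form a direct system; because $\T$ is closed under direct limits (as a torsion class in the AB5 category $R\Mode$) and $\F$ is so by hypothesis, the limit sequence has outer terms in $\T$ and $\F$, whence $t(M)\cong \varinjlim t(M_i)$. Since $V\in \T$, any homomorphism from $V$ to a module factors through its torsion submodule, so
$$\Hom_R(V,M)=\Hom_R(V,t(M))=\Hom_R(V,\varinjlim t(M_i))\cong \varinjlim \Hom_R(V,t(M_i))=\varinjlim \Hom_R(V,M_i),$$
where the middle isomorphism uses the hypothesis that $\Hom_R(V,?)$ preserves direct limits of objects in $\T$.

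For assertion 3, I first replace $G$ by a quasi-isomorphic two-term complex. Set $\tilde{G}:=[Q/X\xrightarrow{\bar{d}}P]$ with $P$ in degree $0$ and $Q/X$ in degree $-1$, where $\bar{d}$ is induced by $d$. The obvious chain map $G\flecha \tilde{G}$ (identity on $P$, canonical projection on $Q$, zero on $X$) has kernel $\cdots\flecha 0\flecha X\xrightarrow{\text{id}} X\flecha 0\flecha \cdots$ concentrated in degrees $-2,-1$, which is contractible; hence $G\cong \tilde{G}$ in $\D(R)$. Next, $\tilde{G}$ is the mapping cone of $\bar{d}$, so there is a distinguished triangle
$$(Q/X)[0]\xrightarrow{\bar{d}}P[0]\flecha \tilde{G}\flecha (Q/X)[1].$$
Applying $\Hom_{\D(R)}(?,M[k])$ and using that $P$ projective yields $\Hom_{\D(R)}(P[0],M[k])=0$ for $k\neq 0$ and $\Hom_{\D(R)}(P[0],M[0])=\Hom_R(P,M)$, together with $\Hom_{\D(R)}((Q/X)[0],M[k])=\Ext^k_R(Q/X,M)$, the long exact sequence delivers an exact sequence
$$\Hom_R(P,M)\flecha \Hom_R(Q/X,M)\flecha \Hom_{\D(R)}(G,M[1])\flecha 0$$
and a natural isomorphism $\Hom_{\D(R)}(G,M[2])\cong \Ext^1_R(Q/X,M)$; this is (a). For (b), I apply $\Hom_R(?,M)$ to $0\flecha X\flecha Q\flecha Q/X\flecha 0$ and use $\Ext^1_R(Q,M)=0$ to obtain $\Ext^1_R(Q/X,M)\cong \Coker(\Hom_R(Q,M)\flecha \Hom_R(X,M))$, which, substituted into the previous isomorphism, yields (b). No serious obstacle is anticipated; the delicate point is the construction of the quasi-isomorphism $G\cong \tilde{G}$ and the identification of $\tilde{G}$ as a mapping cone, after which everything reduces to standard long exact sequences.
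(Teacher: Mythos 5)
Your proof is correct and follows essentially the same route as the paper: part 1 is the canonical-truncation argument (the paper applies $\Hom_{\D(R)}(?,M[0])$ to the triangle $\tau^{\leq -1}(G)\flecha G \flecha H^0(G)[0]$, which is the same as your adjunction), part 2 is the identical chain of isomorphisms via $t(\varinjlim M_i)\cong\varinjlim t(M_i)$, and part 3 uses the same triangle $(Q/X)[0]\flecha P[0]\flecha G$. The only cosmetic difference is in 3(b), where the paper represents a map $G\flecha M[2]$ directly by a homomorphism $X\flecha M$ modulo those factoring through $j$, while you obtain the same sequence from the triangle via $\Hom_{\D(R)}(G,M[2])\cong\Ext^1_R(Q/X,M)$; both are fine.
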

\begin{proof}
1) For each $G\in \D^{\leq 0}(R)$, we have the following triangle in $\D(R)$:

$$\xymatrix{\tau^{\leq -1}(G) \ar[r] & G \ar[r] & H^{0}(G)[0] \ar[r]^{\hspace{0.6 cm}+} &}$$

Applying the cohomological functor $\Hom_{\D(R)}(?,M[0])$ to the previous triangle, we obtain the following exact sequence:

$$\xymatrix{ \Hom_{\D(R)}(\tau^{\leq -1}(G)[1],M[0])=0 \ar[r] & \Hom_{R}(H^{0}(G),M) \ar[r] & \Hom_{\D(R)}(G,M[0]) \ar[d] \\ & & \Hom_{\D(R)}(\tau^{\leq -1}(G),M[0])=0}$$

2) Let now $(M_i)_{i \in I}$ be any direct system in $R$-Mod. Note that $\varinjlim{t(M_{i})}\cong t(\varinjlim{M_i})$ since $\varinjlim{\F}=\F$. We now have isomorphisms:

$$\xymatrix{\varinjlim \Hom_{R}(V,M_i)  & \varinjlim{\Hom_{R}(V,t(M_i))} \ar[l]_{\sim} \ar[r]^{\sim \hspace{2.1cm}}& \Hom_{R}(V,\varinjlim{t(M_i)})=\Hom_{R}(V,t(\varinjlim{M_i})) \ar@<11ex>[d]^{\wr}\\ && \hspace{4 cm} \Hom_{R}(V,\varinjlim{M_{i}})}$$

3) We have the following triangle in $\D(R)$:
$$\xymatrix{Q/X[0] \ar[r] & P[0] \ar[r] & G \ar[r]^{+} &  }$$
Applying the cohomological functor $\Hom_{\D(R)}(?,M)$ and looking at the corresponding long exact sequences, we obtain 3.a). On the other hand, one easily sees that a morphism $G \flecha M[2]$ in $\D(R)$ is represented by an $R$-homomorphism $f:X \flecha M$. The former morphism is the zero morphism in $\D(R)$ precisely when $f$ factors through $j$. Then the exact sequence in 3.b) follows immediately.
\end{proof}

\begin{lemma}\label{lem. progenerator of Ht}
If $G$ is a progenerator of $\Ht$, then the following assertions hold, where \linebreak $V:=H^{0}(G):$
\begin{enumerate}
\item[1)] $\T=\Gen(V)=\Pres(V)$, and hence $\F=\Ker(\Hom_{R}(V,?))$;

\item[2)] $V$ is a finitely presented $R$-module;

\item[3)] $V$ is a classical quasi-tilting $R$-module.
\end{enumerate}
\end{lemma}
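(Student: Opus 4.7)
\medskip

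\noindent\textbf{Plan of proof.} The strategy is to exploit, in turn, each of the three properties that make $G$ a progenerator of $\Ht$: generating, compactness (hence finite presentation in $\Ht$), and projectivity. All along I would use the short exact sequence in $\Ht$
\[
0 \longrightarrow H^{-1}(G)[1] \longrightarrow G \longrightarrow V[0] \longrightarrow 0,
\]
obtained by truncation (valid since $H^{-1}(G)\in\F$ and $H^{0}(G)=V\in\T$), together with the adjunction-type isomorphism of Lemma \ref{lem. homology 0 module}(1), namely $\Hom_{R}(V,M)\cong \Hom_{\D(R)}(G,M[0])$ for every $R$-module $M$.

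\medskip

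\noindent\textbf{Step 1 (assertion 1).} Since $V=H^{0}(G)\in\T$ and $\T$ is closed under coproducts and quotients, $\Gen(V)\subseteq\T$. Conversely, fix $T\in\T$. Because $T[0]\in\Ht$ and $G$ generates $\Ht$, there is an epimorphism $G^{(I)}\epic T[0]$ in $\Ht$; its kernel $N$ in $\Ht$ admits, by generation, an epimorphism $G^{(J)}\epic N$. Splicing yields an exact sequence $G^{(J)}\to G^{(I)}\to T[0]\to 0$ in $\Ht$, and applying the right exact functor $H^{0}$ (Lemma \ref{exactness of H}) produces an exact sequence $V^{(J)}\to V^{(I)}\to T\to 0$ in $R\text{-Mod}$. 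This gives simultaneously $\T=\Gen(V)$ and $\T=\Pres(V)$. The identity $\F=\Ker(\Hom_{R}(V,?))$ follows automatically from $\T=\Gen(V)$ by the standard torsion-pair duality.

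\medskip

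\noindent\textbf{Step 2 (assertion 2).} Because $G$ is a progenerator, Gabriel--Mitchell (Theorem \ref{teo. Gabriel-Michell}) tells us $\Ht$ is a module category, hence in particular a Grothendieck (thus AB5) category. Applying Theorem \ref{caracterizacion AB5} we deduce that $\F$ is closed under direct limits in $R\text{-Mod}$. By Lemma \ref{lem. homology 0 module}(2) it now suffices to verify that $\Hom_{R}(V,?)$ commutes with direct limits of objects in $\T$. For a direct system $(T_i)$ in $\T$, Proposition \ref{description of stalk}(3a) gives $\limite T_i[0]\cong (\varinjlim T_i)[0]$, and since $G$ is compact in $\Ht$ (a compact generator in a module category is finitely presented there), the chain
\[
\varinjlim \Hom_{R}(V,T_i)\cong\varinjlim \Hom_{\Ht}(G,T_i[0])\iso \Hom_{\Ht}(G,\limite T_i[0])\cong \Hom_{R}(V,\varinjlim T_i)
\]
produces the desired isomorphism. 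Hence $V$ is finitely presented; in $R\text{-Mod}$ this is equivalent to $V$ being self-small.

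\medskip

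\noindent\textbf{Step 3 (assertion 3).} In view of Step 2, only the vanishing $\Gen(V)\subseteq\Ker(\Ext^{1}_{R}(V,?))$ remains, since then Lemma \ref{lem. quasi-tilting} upgrades $V$ to quasi-tilting, and self-smallness makes it classical. For $T\in\T$, apply $\Hom_{\Ht}(?,T[0])$ to the displayed short exact sequence and use $\Ext^{1}_{\Ht}(G,T[0])=0$ (projectivity of $G$) and
\[
\Hom_{\Ht}(H^{-1}(G)[1],T[0])=\Hom_{\D(R)}(H^{-1}(G),T[-1])=0,
\]
to obtain $\Ext^{1}_{R}(V,T)\cong\Ext^{1}_{\Ht}(V[0],T[0])=0$. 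This finishes the proof.

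\medskip

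The main subtlety is Step 2: one has to translate the categorical compactness of $G$ inside $\Ht$ into the preservation-of-direct-limits property of $\Hom_{R}(V,?)$ \emph{on the subclass $\T$}, and this requires both the identification of direct limits of stalks in $\Ht$ (Proposition \ref{description of stalk}) and the prior knowledge that $\F$ is closed under direct limits, so that Lemma \ref{lem. homology 0 module}(2) can be brought to bear.
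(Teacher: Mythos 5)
Your proof is correct and takes essentially the same route as the paper's: assertion 1 comes from the right exactness and coproduct-preservation of $H^{0}$ applied to a presentation of $T[0]$ by copies of $G$, and assertion 2 from the AB5 property giving $\varinjlim\F=\F$ (theorem \ref{caracterizacion AB5}), proposition \ref{description of stalk}(3a), the finite presentation of $G$ in $\Ht$, and lemma \ref{lem. homology 0 module}. The only divergence is assertion 3, where the paper simply cites \cite{MT} and \cite{CDT1} while you give a self-contained computation of $\Ext^{1}_{R}(V,T)=0$ for $T\in\T$ from the projectivity of $G$ and lemma \ref{lem. quasi-tilting}; this is valid, with the small caveat that your claim ``finitely presented is equivalent to self-small in $R$-Mod'' is an overstatement --- you only need (and only have) that finitely generated implies self-small, which is all that classicality requires.
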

\begin{proof}
By hypothesis $\Ht$ is a module category, in particular it is an AB5 category. Then from theorem \ref{caracterizacion AB5} we get that $\F$ is closed under taking direct limits in $R \text{- Mod}$. On the other hand, by lemma \ref{exactness of H}, the functor $H^{0}:\Ht \flecha R\text{-Mod}$ is right exact and preserves coproducts. When applied to an exact sequence $G^{(I)} \flecha G^{(J)} \flecha T[0] \flecha 0$  in $\Ht$, we get that $T\in \Pres(V)$, for each $T\in \T$. We then get that $\T=\Pres(V)$, and assertion 1) follows from \cite[Proposition 2.2]{MT}. \\

From proposition \ref{description of stalk}, we have that $\limite{T_{i}[0]} \cong (\varinjlim{T_i}[0])$, for each direct system $(T_i)_{i \in I}$ in $\T$. By the previous lemma, we then get that $\Hom_{R}(V,?)$ preserves direct limits of objects in $\T$ since $G$ is a finitely presented object of $\Ht$. Using again the previous lemma, we get that $V$ is a finitely presented $R$-module. Finally, assertion 3 follows from \cite[Proposition 2.4]{MT}, from assertions 1 and 2 and from \cite[Proposition 2.1]{CDT1}.
\end{proof}

\vspace{0.3 cm}

The following result is inspired by \cite[Proposition 5.9]{CMT}.

\begin{lemma}\label{lem. stalk generates}
Let $\xymatrix{G:=\cdots \ar[r] & 0 \ar[r] & X \ar[r]^{j} & Q \ar[r]^{d} & P \ar[r] & 0 \ar[r] & \cdots}$, be a complex of $R$-modules with $P$ in degree 0, where $P$ and $Q$ are projective and $j$ is a monomorphism. \\
If $G$ is a projective object of $\Ht$ such that $\T=\Gen(V)=\Pres(V)$, where $V:=H^{0}(G)$, then the following assertions hold:

\begin{enumerate}
\item[1)] $\frac{M}{t(M)}[1]\in \Gen_{\Ht}(G)$, for each $M \in \overline{\Gen}(V)$;
\item[2)] $\frac{R}{t(R)}[1] \in \Gen_{\Ht}(G)$ if, and only if, there exists a set $I$ and a morphism \linebreak $h:(\frac{Q}{X})^{(I)} \flecha \frac{R}{t(R)}$ such that the cokernel of the restriction of $h$ to $(H^{-1}(G))^{(I)}$, belongs to $\overline{\Gen}(V)$.
\end{enumerate}
\end{lemma}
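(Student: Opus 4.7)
The central tool will be a concrete criterion for when a morphism $\bar\phi : G^{(I)} \to M[1]$ in $\Ht$ (for $M \in \F$) is an epimorphism. Representing $G$ in $\Ht$ by the quasi-isomorphic two-term complex $[Q/X \xrightarrow{\tilde d} P]$ in degrees $-1, 0$ (with $\Ker(\tilde d) = H^{-1}(G) \in \F$ and $\Coker(\tilde d) = V$), such a morphism is represented, up to homotopy, by a family $\phi = (\phi_i) : (Q/X)^{(I)} \to M$. Computing its mapping cone in $\D(R)$ and applying the standard HRS-truncation, the cokernel of $\bar\phi$ in $\Ht$ equals $(N/t(N))[1]$, where $N := P^{(I)} \sqcup_{(Q/X)^{(I)}} M$ is the pushout of $\tilde d^{(I)}$ and $\phi$ and sits in a short exact sequence
$$0 \to \Coker(\phi|_{H^{-1}(G)^{(I)}}) \to N \to V^{(I)} \to 0.$$
Since $V^{(I)} \in \T$ and $\T$ is closed under extensions, $\bar\phi$ is an epimorphism in $\Ht$ if and only if $N \in \T$, equivalently if and only if $\Coker(\phi|_{H^{-1}(G)^{(I)}}) \in \T$.

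For part 1, I would replace $M$ by $M/t(M)$ to reduce to the case $M \in \F \cap \overline{\Gen}(V)$. Fixing an embedding $M \hookrightarrow L$ with $L \in \Gen(V) = \Pres(V)$, take the universal index set $I = \Hom_R(Q/X, M)$ with the tautological family $\phi_\alpha = \alpha$. The plan is to show that the trace $M_0 := \sum_\alpha \alpha(H^{-1}(G)) \subseteq M$ satisfies $M/M_0 \in \T$ by ruling out nonzero morphisms $f : M/M_0 \to F$ with $F \in \F$: composing $M \hookrightarrow L$ with a $V$-presentation of $L$ and using the surjectivity $\Hom_R(P, T) \to \Hom_R(Q/X, T)$ for $T \in \T$ (deduced from $\Ext^{1}_\Ht(G, T[0]) = 0$ via lemma \ref{lem. homology 0 module}(3)), one produces enough morphisms $Q/X \to M$ whose restrictions to $H^{-1}(G)$ hit any element witnessing a nonzero $f$, forcing $f = 0$.

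For part 2, the direction $(\Longleftarrow)$ is immediate from the criterion, since any epimorphism $\bar h : G^{(I)} \twoheadrightarrow (R/t(R))[1]$ in $\Ht$ yields $\Coker(\tilde h|_{H^{-1}(G)^{(I)}}) \in \T \subseteq \overline{\Gen}(V)$. For $(\Longrightarrow)$, given $h : (Q/X)^{(I)} \to R/t(R)$ with $C := \Coker(\tilde h|_{H^{-1}(G)^{(I)}}) \in \overline{\Gen}(V)$, the cokernel of $\bar h$ in $\Ht$ is $(N/t(N))[1]$; the exact sequence $0 \to C \to N \to V^{(I)} \to 0$ combined with the fact that $N/t(N) \in \F$ forces $t(N)/t(C)$ to surject onto $V^{(I)}$, yielding the identification $N/t(N) \cong C/t(C) \in \F \cap \overline{\Gen}(V)$, so part 1 gives $(N/t(N))[1] \in \Gen_\Ht(G)$. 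Combining an epimorphism $G^{(J)} \twoheadrightarrow (N/t(N))[1]$, lifted along the quotient map $(R/t(R))[1] \twoheadrightarrow (N/t(N))[1]$ via the projectivity of $G$ in $\Ht$, with the obvious epimorphism $G^{(I)} \twoheadrightarrow \Imagen(\bar h)$, one obtains an epimorphism $G^{(I \cup J)} \twoheadrightarrow (R/t(R))[1]$ in $\Ht$.

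The main obstacle is the verification in part 1 that $M/M_0 \in \T$. Since $\Ext^{2}_R(V, M) \cong \Ext^{1}_R(\Imagen(d), M)$ need not vanish on $M \in \F$, the map $\Hom_R(Q/X, M) \to \Hom_R(H^{-1}(G), M)$ fails to be surjective in general, so one cannot simply extend arbitrary morphisms $H^{-1}(G) \to M$ to $Q/X \to M$. The embedding $M \hookrightarrow L \in \Gen(V)$ must be used in an essential way, in combination with $\T = \Pres(V)$ and the projectivity of $G$, to generate enough extendable maps; this interplay is the technical crux of the proof.
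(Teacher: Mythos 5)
The decisive gap is assertion 1, which you never actually prove. Your criterion reduces it to producing a family $\phi$ with $N\in\T$ (for which you propose to show $M/M_{0}\in\T$, $M_{0}$ being the trace of $H^{-1}(G)$ under all maps $Q/X\flecha M$), and you then explicitly leave this verification as ``the main obstacle'', offering only a plan. But this is exactly the nontrivial content of the statement, and nothing in the sketch (extending maps $Q/X\flecha T$ to $P$, embedding $M$ into some $L\in\Gen(V)$) is carried far enough to force it. The paper's proof of assertion 1 sidesteps this computation entirely: since $\overline{\Gen}(V)$ is closed under quotients, and using $\T=\Gen(V)=\Pres(V)$, it invokes \cite[Lemma 5.6]{CMT} to get an exact sequence $0\flecha M/t(M)\flecha T\flecha V^{(I)}\flecha 0$ with $T\in\T$, which rotates to a short exact sequence $0\flecha T[0]\flecha V^{(I)}[0]\flecha \frac{M}{t(M)}[1]\flecha 0$ in $\Ht$; combined with the epimorphism $G^{(I)}\epic V^{(I)}[0]$ this gives assertion 1 in a few lines. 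Since your treatment of assertion 2 relies on assertion 1 to generate $(N/t(N))[1]$, the whole argument rests on this unproven step, and closing it along your lines would essentially amount to reproving the quasi-tilting lemma you are implicitly using.

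Two further slips are worth flagging, though both are repairable. Your criterion is only half correct: $\bar\phi$ is an epimorphism in $\Ht$ iff $N\in\T$, but since $\mathbf{t}$ is not assumed hereditary, $\T$ need not be closed under submodules, so $N\in\T$ does \emph{not} imply $\Coker(\phi|_{H^{-1}(G)^{(I)}})\in\T$; in the direction ``epimorphism $\Rightarrow$ existence of $h$'' the correct conclusion is only that this cokernel embeds into a module of $\T$ (in the paper, into $H^{0}(K)$ where $K$ is the kernel of the epimorphism), hence lies in $\overline{\Gen}(V)$ --- which is all the lemma asserts, so the error is local. Similarly, the claimed identification $N/t(N)\cong C/t(C)$ is unjustified, because $C\cap t(N)$ may be strictly larger than $t(C)$; what your computation actually yields is $N=C+t(N)$, so that $N/t(N)$ is a quotient of $C$ and therefore lies in $\overline{\Gen}(V)\cap\F$, which is enough to feed into assertion 1. (The paper instead uses \cite[Lemma 5.6]{CMT} once more to see that the middle term of $0\flecha Z\flecha H^{0}(M)\flecha V^{(I)}\flecha 0$ is in $\overline{\Gen}(V)$, identifies $\Coker_{\Ht}(\bar h)$ with $\frac{H^{0}(M)}{t(H^{0}(M))}[1]$ via the octahedron, and concludes by projectivity of $G$, exactly as in your last step.)
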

\begin{proof}
Assertion 1 essentially follows from \cite[Lemma 5.8 and Proposition 5.9]{CMT}, but, for the sake of completeness, we give a short proof. It is clear that $\overline{Gen}(V)$ is closed under quotients, so that $\frac{M}{t(M)}\in \overline{\Gen}(V)$, for each $M\in \overline{\Gen}(V)$. Then there is an exact sequence of the form (see \cite[Lemma 5.6]{CMT}):

$$0 \flecha \frac{M}{t(M)} \flecha T \flecha V^{(I)} \flecha 0$$
for some set $I$, where $T\in \T$. Thus, we have the following exact sequence in $\Ht$:

$$\xymatrix{0 \ar[r] & T[0] \ar[r] & V^{(I)}[0] \ar[r] & \frac{M}{t(M)}[1] \ar[r] & 0}$$

Now the assertion follows from the fact that we have an epimorphism $G^{(I)} \epic V^{(I)}[0]$ in $\Ht.$ \\

 2) For the implication $\Longrightarrow$, note that, by hypothesis, there is an exact sequence of the form 
 $$0 \flecha K \flecha G^{(I)} \xymatrix{\ar@{>>}[r]^{p} &} \frac{R}{t(R)}[1] \flecha 0$$
in $\Ht$, for some set $I$. It is easy to see that $p$ is represented by an $R$-homomorphism $p^{-1}:(\frac{Q}{X})^{(I)} \flecha \frac{R}{t(R)}$. Moreover, we have that $H^{-1}(p)$ coincides with the restriction of $p^{-1}$ to $(H^{-1}(G))^{(I)}$. Now, we consider the long exact sequence associated to the above triangle:
$$\xymatrix{0 \ar[r] & H^{-1}(K) \ar[r] & H^{-1}(G)^{(I)} \ar[rr]^{\hspace{0.5 cm}H^{-1}(p)} && \frac{R}{t(R)} \ar[r] & H^{0}(K) \ar[r] & V^{(I)} \ar[r] & 0}$$

The result follows by putting $h=p^{-1}$ since $H^{0}(K)\in \T$. \\

For the implication $\Longleftarrow$, suppose that there exist a set $I$ and $h\in \Hom_{R}((\frac{Q}{X})^{(I)}, \frac{R}{t(R)})$, such that the cokernel of restriction of $h$ to $(H^{-1}(G))^{(I)}$, which we denote by $Z$, belongs to $\overline{\Gen}(V)$. Clearly, we can extend $h$ to a morphism from $G^{(I)}$ to $\frac{R}{t(R)}[1]$ in $\D(R)$, which we denote by $\overline{h}$. We now complete $\overline{h}$ to a triangle in $\D(R)$ and get:

$$\xymatrix{M \ar[r] & G^{(I)} \ar[r]^{\overline{h}\hspace{0.3 cm}} & \frac{R}{t(R)}[1] \ar[r]^{\hspace{0.45 cm}+} & }$$

Using the long exact sequence of homologies, we then obtain that $H^{-1}(M)$ is a submodule of $H^{-1}(G)^{(I)}$ and hence $H^{-1}(M)\in \F$. Moreover, we also get an exact sequence

$$\xymatrix{0 \ar[r] & Z \ar[r] & H^{0}(M) \ar[r] & V^{(I)} \ar[r] & 0}$$

By \cite[Lemma 5.6]{CMT}, we know that $H^{0}(M)\in \overline{\Gen}(V)$ and then, by assertion 1, we also get that $\frac{H^{0}(M)}{t(H^{0}(M))}[1]\in \Gen_{\Ht}(G)$. Consider now the following commutative diagram:

$$\xymatrix{&&& &\\ &&& t(H^{0}(M))[1] \ar[ur]^{+} \ar[dd]\\ & N \ar[rru] \ar[rd] &&\\ H^{-1}(M)[2] \ar[rr] \ar[ru]&& M[1] \ar[r] \ar[rd] & H^{0}(M)[1] \ar[d] \ar[r]^{\hspace{0.6 cm}+} & \\ &&&\frac{H^{0}(M)}{t(H^{0}(M))}[1] \ar[dr]^(0.6){+} \ar[d]^(0.55){+} \\&&&& }$$

Note that $N\in \Ht[1]$, since $H^{-1}(M)\in \F$. Thus, by \cite{BBD}, we get that $\Coker_{\Ht}(\overline{h}) \cong \frac{H^{0}(M)}{t(H^{0}(M))}[1]$. Then we have the following diagram with exact row in $\Ht$:

$$\xymatrix{&&G^{(J)} \ar@{>>}[d]^(0.35){p} \ar@{-->}[dl]_(0.45){p'} \\ G^{(I)} \ar[r]^{\overline{h} \hspace{0.2 cm}} & \frac{R}{t(R)}[1] \ar[r] & \frac{H^{0}(M)}{t(H^{0}(M))}[1] \ar[r] & 0}$$

where $p$ is an epimorphism and $p'$ is obtained by the projectivity of $G^{(J)}$ in $\Ht$. It follows that $(\overline{h} \ \ p'):G^{(I)} \coprod G^{(J)} \flecha \frac{R}{t(R)}[1]$ is also an epimorphism in $\Ht$.
\end{proof}

\vspace{0.3 cm}

We are now able to give a general criterion for $\Ht$ to be a module category.

\begin{theorem}\label{teo. Ht is a module category}
The heart $\Ht$ is a module category if, and only if, there is a chain complex of $R$-modules 

$$\xymatrix{G:=\cdots \ar[r] & 0 \ar[r] & X \ar[r]^{j} & Q \ar[r]^{d} & P \ar[r] & 0 \ar[r] & \cdots }$$

with $P$ in degree 0, satisfying the following properties, where $V:=H^{0}(G)$:

\begin{enumerate}
\item[1)] $\T=\Pres(V)\subseteq \Ker(\Ext^{1}_{R}(V,?))$;

\item[2)] $Q$ and $P$ are finitely generated projective $R$-modules and $j$ is a monomorphism such that $H^{-1}(G)\in \F$;

\item[3)] $H^{-1}(G)\subseteq \Rej_{\T}(\frac{Q}{X})$;

\item[4)] $\Ext^{1}_{R}(\Coker(j),?)$ vanishes on $\F$;

\item[5)] there is a morphism $h:(\frac{Q}{X})^{(I)} \flecha \frac{R}{t(R)}$, for some set $I$, such that the cokernel of its restriction to $(H^{-1}(G))^{(I)}$ is in $\overline{\Gen}(V)$.
\end{enumerate}
\end{theorem}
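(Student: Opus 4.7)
The plan is to prove both implications. For the forward direction, suppose $\Ht$ is a module category. By Mitchell's theorem \ref{teo. Gabriel-Michell}, it has a progenerator $G^*$, and lemma \ref{lem. progenerator of Ht} shows that $V := H^0(G^*)$ is a finitely presented classical quasi-tilting $R$-module with $\T = \Pres(V)$. Lemma \ref{lem. quasi-tilting} then yields property 1. I would next argue that, up to quasi-isomorphism, $G^*$ admits a three-term representative $G: X \xrightarrow{j} Q \xrightarrow{d} P$ with $P, Q$ finitely generated projective and $j$ a monomorphism, exploiting that $V$ is finitely presented and that every object of $\Ht$ admits a representative of this three-term form with projective middle and right terms and monic $j$. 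This gives property 2.

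Properties 3 and 4 then follow from the projectivity of $G$ in $\Ht$ combined with lemma \ref{lem. homology 0 module}(3). Projectivity gives $\Hom_{\D(R)}(G, M[1]) = 0$ for all $M \in \Ht$. Taking $M = T[0]$ with $T \in \T$, lemma \ref{lem. homology 0 module}(3a) forces every $R$-homomorphism $Q/X \to T$ to factor through the map $Q/X \to P$ induced by $d$; since this induced map vanishes on $H^{-1}(G) = \Ker(d)/X$, property 3 follows. Taking $M = F[1]$ with $F \in \F$, lemma \ref{lem. homology 0 module}(3b) together with the long exact $\Ext$-sequence applied to $0 \to X \to Q \to \Coker(j) \to 0$ translates the vanishing into $\Ext^1_R(\Coker(j), F) = 0$, yielding property 4. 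For property 5, the fact that $G$ generates $\Ht$ gives $\frac{R}{t(R)}[1] \in \Gen_\Ht(G)$, and lemma \ref{lem. stalk generates}(2) then provides the required morphism $h$.

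For the backward direction, suppose a complex $G$ with properties 1--5 exists. First $G \in \Ht$: property 2 gives $H^{-1}(G) \in \F$, and property 1 gives $V := H^0(G) \in \Pres(V) = \T$. Since $G$ is a bounded complex of finitely generated projectives, it is compact in $\D(R)$, and compact in $\Ht$ as coproducts in $\Ht$ coincide with those in $\D(R)$ (example \ref{exam. Ht AB4}). For projectivity, the canonical exact sequence $0 \to H^{-1}(M)[1] \to M \to H^0(M)[0] \to 0$ in $\Ht$ reduces $\Ext^1_\Ht(G, M) = 0$ to $\Hom_{\D(R)}(G, T[1]) = 0$ for $T \in \T$ and $\Hom_{\D(R)}(G, F[2]) = 0$ for $F \in \F$. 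The first follows from property 3 (every map $Q/X \to T$ vanishes on $H^{-1}(G)$, hence factors through $\Imagen(d) \subseteq P$) combined with property 1 (the sequence $0 \to \Imagen(d) \to P \to V \to 0$ together with $\Ext^1_R(V,T) = 0$ lets the factored map extend to $P$); the second follows from lemma \ref{lem. homology 0 module}(3b) and property 4.

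It remains to show $G$ generates $\Ht$. Using the projectivity of $G$ and the same canonical short exact sequence, this reduces to showing $T[0] \in \Gen_\Ht(G)$ for $T \in \T$ and $F[1] \in \Gen_\Ht(G)$ for $F \in \F$. The first is immediate from the canonical epimorphism $G \twoheadrightarrow V[0]$ and $\T = \Pres(V)$. For the second, lemma \ref{lem. stalk generates}(2) with property 5 gives $\frac{R}{t(R)}[1] \in \Gen_\Ht(G)$; any $F \in \F$ is an epimorphic image of $(R/t(R))^{(I)}$ in $R$-Mod (since $t(R)^{(I)}$ maps to zero in any torsion-free module), and shifting this epimorphism by $[1]$ (which is exact on $\F$) yields $(\frac{R}{t(R)}[1])^{(I)} \twoheadrightarrow F[1]$ in $\Ht$, giving $F[1] \in \Gen_\Ht(G)$. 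Mitchell's theorem then concludes that $\Ht$ is a module category. The main technical obstacle I anticipate is constructing the finitely generated projective representative in the forward direction.
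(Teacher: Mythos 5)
Your overall architecture matches the paper's (reduce projectivity to the vanishing of $\Hom_{\D(R)}(G,T[1])$ and $\Hom_{\D(R)}(G,F[2])$ via lemma \ref{lem. homology 0 module}, reduce the generator condition to generating $\frac{R}{t(R)}[1]$ via lemma \ref{lem. stalk generates}), but there are two genuine problems. The first is the step you yourself flag and then skip: in the ``only if'' direction you must produce a representative of the progenerator in which $Q$ (equivalently $\Coker(j)=Q/X$) is \emph{finitely generated}, and this does not follow from $V=H^{0}(G)$ being finitely presented together with the generic three-term representative of objects of $\Ht$. Finite presentation of $V$ only controls $P$ and $\Imagen(d)$; the finite generation of $Q/X$ is exactly the content of lemma \ref{lem. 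Q/X f.g.}, whose proof is not formal: it writes $Q/X$ as a direct limit of finitely generated subcomplexes $G_{\lambda}$, uses that $\F$ is closed under direct limits (via theorem \ref{caracterizacion AB5}, since $\Ht$ is AB5) and lemma \ref{limites directos en C} to identify $\varinjlim_{\Ht}G_{\lambda}$ with $G$, and then factors $1_{G}$ through a finite stage using that $G$ is a finitely presented object of $\Ht$. Without this argument (or an equivalent one) condition 2 is unproved, so the forward implication is incomplete.

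The second problem is in your ``if'' direction: the claim that $G$ is compact in $\D(R)$ because it is ``a bounded complex of finitely generated projectives'' is false in general, since the term $X$ in degree $-2$ is only assumed to be a submodule of $Q$ with $H^{-1}(G)\in\F$; it need be neither projective nor finitely generated (and the quasi-isomorphic two-term complex $Q/X\flecha P$ has a non-projective term). Compactness must instead be checked inside $\Ht$, as the paper does: using the AB4 property of $\Ht$ (example \ref{exam. Ht AB4}) and the stalk filtration $0\flecha H^{-1}(M)[1]\flecha M\flecha H^{0}(M)[0]\flecha 0$, one reduces to showing that $\Hom_{\D(R)}(G,?[0])$ and $\Hom_{\D(R)}(G,?[1])$ commute with coproducts of modules in $\T$ and in $\F$ respectively, which follows from lemma \ref{lem. homology 0 module} because $V$, $P$ and $Q/X$ are finitely generated. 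Your projectivity and generation arguments in this direction are essentially the paper's and are fine, but the compactness step as written would fail and needs to be replaced by this argument.
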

\begin{proof}
Let us assume that $G$ is a complex of the form:

$$\xymatrix{\cdots \ar[r] & 0 \ar[r] & X \ar[r]^{j} & Q \ar[r]^{d} & P \ar[r] & 0 \ar[r] & \cdots & (\ast)}$$
with $P$ in degree 0, such that $G\in \Ht$, where $P$ and $Q$ are projective $R$-modules and $j$ is a monomorphism. By lemma \ref{lem. progenerator of Ht}, if $G$ is a progenerator of $\Ht$, then $V:=H^{0}(G)$ is finitely presented. This allows us, for both implications in the proof, to assume that $P$ is a finitely generated projective $R$-module. \\

We claim that $G$ is a projective object in $\Ht$ if, and only if, $\T\subseteq \Ker(\Ext^{1}_{R}(V,?))$ and conditions 3 and 4 hold. Indeed each object $M\in \Ht$ fits into an exact sequence in this category:

$$\xymatrix{0 \ar[r] & H^{-1}(M)[1] \ar[r] & M \ar[r] & H^{0}(M)[0] \ar[r] & 0 & (\ast \ast)}$$

Then $G$ is projective in $\Ht$ if, and only if, $0=\Ext^{1}_{\Ht}(G,T[0])=\Hom_{\D(R)}(G,T[1])$ and $0=\Ext^{1}_{\Ht}(G,F[1])=\Hom_{\D(R)}(G,F[2]),$ for all $T\in \T$ and $F\in \F$. Since each morphism $G \flecha T[1]$ in $\D(R)$ is a morphism in $\mathcal{H}(R)$, the first equality holds if, and only if, the map $\Hom_{R}(P,T) \xymatrix{ \ar[r]^{\overline{d}^{*} \hspace{1.1 cm}}& \Hom_{R}(\frac{Q}{X},T)}$ is surjective, where $\overline{d}:\frac{Q}{X} \flecha P$ is the obvious $R$-homomorphism. But, in turn, this last condition is equivalent to the sum of the following two conditions, for each $T\in \T$:

\begin{enumerate}
\item[i)] Each $R$-homomorphism $f:\frac{Q}{X} \flecha T$ vanishes on $H^{-1}(G)=\frac{\Ker(d)}{X}$; 
\item[ii)] Each morphism $g:\Imagen(\overline{d})=\Imagen(d) \flecha T$ extends to $P$.
\end{enumerate}

Condition i) is equivalent to saying that $H^{-1}(G)\subseteq \Rej_{\T}(\frac{Q}{X})$. On the other hand, from the exact sequence 

$$\Hom_{R}(P,T) \flecha \Hom_{R}(\Imagen(d),T) \flecha \Ext^{1}_{R}(H^{0}(G),T) \flecha 0$$
the condition ii) above is equivalent to saying that $\Ext^{1}_{R}(H^{0}(G),T)=0$, for all $T\in \T.$ Now, from lemma \ref{lem. homology 0 module}, we obtain that the equality $\Hom_{\D(R)}(G,F[2])=0$ holds when each $R$-homomorphism $g:X \flecha F$ extends to $Q$, for all $F\in \F$. This is clearly equivalent to condition 4 in the list since we have the following exact sequence:

$$\Hom_{R}(Q,F) \flecha \Hom_{R}(X,F) \flecha \Ext^{1}_{R}(\Coker(j),F) \flecha 0$$

Suppose that $G$ is projective in $\Ht$ or its equivalent conditions mentioned in the previous paragraph. From example \ref{exam. Ht AB4} we know that $\Ht$ is AB4. Applying this fact to any family of exact sequences as $(\ast \ast)$, we see that $G$ is compact object of $\Ht$ if, and only if, the canonical morphisms
\begin{center}
\hspace{1.8 cm}$\xymatrix{\underset{i\in I}{\coprod} \Hom_{\D(R)}(G,T_i[0]) \ar[r] & \Hom_{\D(R)}(G,(\underset{i \in I}{\coprod}T_i)[0])} \newline \xymatrix{\underset{i \in I}{\coprod} \Hom_{\D(R)}(G,F_i[1]) \ar[r] & \Hom_{\D(R)}(G, (\underset{i\in I}{\coprod}F_i)[1])} $
\end{center}
are isomorphisms, for all families $(T_i)$ in $\T$ and $(F_i)$ in $\F$. By lemma \ref{lem. homology 0 module}(1), we easily get that the first of these morphisms is an isomorphism precisely when $V$ is a compact object of $\T$. Similarly, by the assertion 2) of such lemma, we have that the second centered homomorphism is an isomorphism whenever $P$ and $Q/X$ are finitely generated modules. Therefore, if $G$ satisfies the conditions 1-4 of the list, then $G$ is a compact projective object of $\Ht$. \\

Suppose now that these last conditions hold. Then, due to the canonical sequence ($\ast \ast$), we know that $G$ is a generator if, and only if, each $M\in \T[0] \cup \F[1]$ is generated by $G$. Note that we have an epimorphism $q:G \epic V[0]$ in $\Ht$, which implies that $\Gen_{\Ht}(V[0]) \subseteq \Gen_{\Ht}(G)$. But the equality $\T=\Gen(V)=\Pres(V)$ easily gives that $\T[0]\subseteq \Gen_{\Ht}(V[0])$. On the other hand, each $F\in \F$ gives rise to an exact sequence $0 \flecha F^{'} \monic (\frac{R}{t(R)})^{(\alpha)} \epic F \flecha 0$ in $R$-Mod which, in turn, yields an exact sequence in $\Ht$:

$$0 \flecha F^{'}[1] \flecha (\frac{R}{t(R)})^{(\alpha)}[1] \flecha F[1] \flecha 0$$
Thus, $G$ generates $\Ht$ if, and only if, it generates $\frac{R}{t(R)}[1]$. By lemma \ref{lem. stalk generates}, this equivalent to condition 5 in the list. \\

Note that the ``if'' part of the proof follows from the previous paragraphs. By lemma \ref{lem. progenerator of Ht} and lemma \ref{lem. stalk generates}, in order to prove the ``only if'' part, we only need to prove that if $\Ht$ is a module category and $G$ is a complex like ($\ast$) which is a progenerator of $\Ht$, then $G$ can be represented by a complex as indicated in the statement satisfying the properties 1-5. \\

Lemma \ref{lem. Q/X f.g.} below shows that $\frac{Q}{X}$ is finitely generated, which allows us to replace $Q$ by an appropriate finitely generated direct summand $Q^{'}$ such that the composition \linebreak $p^{'}: Q^{'} \monic Q \xymatrix{ \ar@{>>}[r] &} \frac{Q}{X}$ is surjective. Then the following complex 
$$\xymatrix{ \cdots \ar[r] & 0 \ar[r] & \Ker(p^{'}) \ar[r] & Q^{'} \ar[r]^{d|_{Q^{'}}} & P \ar[r] & 0 \ar[r] & \cdots}$$ 
is quasi-isomorphic to the complex $(\ast)$ and satisfies all conditions 1-5 in the list.
\end{proof}

\begin{lemma}\label{lem. Q/X f.g.}
Let $\xymatrix{G:=\cdots \ar[r] & 0 \ar[r] & X \ar[r]^{j} & Q \ar[r]^{d} & P \ar[r] & 0 \ar[r] & \cdots}$ be a complex concentrated in degrees -2,-1,0 such that $j$ is a monomorphism, $Q$ and $P$ are projective and $P$ is finitely generated. Suppose that the complex represents a progenerator of $\Ht$. Then $\Coker(j)$ is a finitely generated $R$-module.
\end{lemma}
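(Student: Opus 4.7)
The plan is to exploit the compactness of $G$ in the module category $\Ht$ together with the projectivity of $G$ there. Set $K:=Q/X$ and let $\bar{d}:K\flecha P$ denote the morphism induced by $d$. The strategy is to extract two complementary properties of $\Hom_R(K,?)$: a direct-limit preservation statement on the class $\F$ coming from the compactness of $G$, and a factorisation statement for morphisms into $\T$ coming from the projectivity of $G$. These will be combined through the canonical $\T/\F$ decomposition of an arbitrary module to show that $K$ is finitely generated.

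First I would record the easy preliminaries. By Lemma \ref{lem. progenerator of Ht}, $V:=H^{0}(G)$ is finitely presented, and since $P$ is finitely generated, the exact sequence $0\flecha \Imagen(\bar{d})\flecha P\flecha V\flecha 0$ forces $\Imagen(\bar{d})$ to be finitely generated. Because $G$ is projective in $\Ht$, the projectivity analysis that opens the proof of Theorem \ref{teo. Ht is a module category}, which does not depend on the present lemma, yields $H^{-1}(G)\subseteq \Rej_{\T}(K)$; equivalently, every morphism $K\flecha T$ with $T\in \T$ annihilates $H^{-1}(G)$ and hence factors through $\Imagen(\bar{d})$.

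Next I would prove that $\Hom_R(K,?)$ preserves direct limits of systems lying in $\F$. Since $G$ is compact and projective in the module category $\Ht$, the functor $\Hom_{\Ht}(G,?)$ is exact and preserves coproducts, hence it preserves direct limits. Given a direct system $(F_i)$ in $\F$, Theorem \ref{caracterizacion AB5} ensures that $\F$ is closed under direct limits in $R$-Mod, and Proposition \ref{description of stalk}(2) identifies $\limite F_i[1]\cong (\varinjlim F_i)[1]$. Combining this with the exact sequence $\Hom_R(P,F)\flecha \Hom_R(K,F)\flecha \Hom_{\D(R)}(G,F[1])\flecha 0$ of Lemma \ref{lem. homology 0 module}(3.a) and the fact that $\Hom_R(P,?)$ preserves direct limits (as $P$ is finitely generated), a short diagram chase comparing cokernels gives the isomorphism $\varinjlim \Hom_R(K,F_i)\iso \Hom_R(K,\varinjlim F_i)$. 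In particular, any morphism from $K$ into a directed union of submodules lying in $\F$ factors through one of the constituents.

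The main argument then runs as follows: it suffices to prove that for every $R$-module $M=\bigcup_\lambda M_\lambda$ written as a directed union of submodules, any morphism $f:K\flecha M$ factors through some $M_\lambda$. Given such an $f$, the induced map $\bar{f}:K\flecha M/t(M)$ takes values in the directed union $\bigcup_\lambda M_\lambda/t(M_\lambda)\subseteq M/t(M)$, which is a directed union in $\F$, so the previous paragraph provides an index $\lambda_0$ with $\bar{f}(K)\subseteq M_{\lambda_0}/t(M_{\lambda_0})$, i.e.\ $f(K)\subseteq M_{\lambda_0}+t(M)$. Composing with the projection $M\flecha M/M_{\lambda_0}$, the resulting map has image in the $\T$-module $(M_{\lambda_0}+t(M))/M_{\lambda_0}$, so by the first paragraph the composition annihilates $H^{-1}(G)$, giving $f(H^{-1}(G))\subseteq M_{\lambda_0}$. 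Then $f(K)/f(H^{-1}(G))$ is a quotient of the finitely generated module $\Imagen(\bar{d})$, so it is finitely generated, and writing $f(K)=f(H^{-1}(G))+N$ with $N$ finitely generated, one picks $\lambda_1$ with $N\subseteq M_{\lambda_1}$ and any $\mu\geq \lambda_0,\lambda_1$ to conclude $f(K)\subseteq M_\mu$.

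The main technical obstacle is the middle step: transferring the finitely presented behaviour of $G$ inside $\Ht$ into direct-limit preservation of $\Hom_R(K,?)$ on $\F$ via the two-term presentation of $G$. Once that is in place, the $\T/\F$ decomposition of a directed union and the finite generation of $\Imagen(\bar{d})$ reduce the finite generation of $K$ to a straightforward diagram chase.
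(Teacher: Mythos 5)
Your argument is correct, but it takes a genuinely different route from the paper's. The paper's proof stays inside $\Ht$: it writes $Q/X$ as a directed union of finitely generated submodules $A_\lambda$, all containing a fixed finitely generated one mapping onto $\Imagen(d)$, forms the subcomplexes $G_\lambda=(A_\lambda\to P)$ in $\Ht$, uses Lemma \ref{limites directos en C} to identify $G\cong\varinjlim_{\Ht}G_\lambda$, and then the finite presentation (compactness) of $G$ in $\Ht$ forces $1_G$ to factor through some $G_\mu$, after which an $H^{-1}$-comparison gives $A_\mu\cong Q/X$; notably it never invokes the projectivity of $G$, only its compactness together with $\varinjlim\F=\F$. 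You instead argue at the level of the module $K=Q/X$ and the functor $\Hom_R(K,?)$, splitting along the torsion pair: compactness of $G$, fed through the natural sequence of Lemma \ref{lem. homology 0 module}(3.a), shows that a map from $K$ into a directed union of submodules lying in $\F$ lands in a constituent, while projectivity of $G$ (the condition $H^{-1}(G)\subseteq\Rej_{\T}(Q/X)$ extracted from the proof of Theorem \ref{teo. Ht is a module category}, which indeed does not depend on this lemma) controls the torsion part; applying the resulting factorization statement to $1_K$ on the union of the finitely generated submodules of $K$ finishes. Your approach uses an extra ingredient (projectivity) that the paper's proof avoids, but in exchange it bypasses Lemma \ref{limites directos en C} and the construction of subcomplexes, and it isolates a reusable property of $\Hom_R(Q/X,?)$ on directed unions. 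Two small points to tighten: from the comparison of the two right-exact sequences your outer isomorphisms only yield, by the four lemma, surjectivity of $\varinjlim\Hom_R(K,F_i)\to\Hom_R(K,\varinjlim F_i)$ (for injectivity one would extend the sequence one step to the left, e.g. via $\Hom_{\D(R)}(G,F[0])\cong\Hom_R(V,F)=0$ for $F\in\F$), but surjectivity is all your argument uses; and the constituents of the directed union exhausting $M/t(M)$ should be the images $(M_\lambda+t(M))/t(M)$ rather than $M_\lambda/t(M_\lambda)$ (the maps $M_\lambda/t(M_\lambda)\to M/t(M)$ need not be injective since $\te$ is not assumed hereditary), which is in fact what your subsequent ``i.e.\ $f(K)\subseteq M_{\lambda_0}+t(M)$'' already uses.
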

\begin{proof}
As customary, we view $j$ as an inclusion. We identify $G$ with complex

$$\xymatrix{\cdots \ar[r] & 0 \ar[r] & \frac{Q}{X} \ar[r]^{\overline{d}} & P \ar[r] & 0 \ar[r] & \cdots}$$

By lemma \ref{lem. progenerator of Ht}, we know that $V:=H^{0}(G)$ is a finitely presented $R$-module. Then $\Imagen(d)=\Imagen(\overline{d})$ is a finitely generated submodule of $P$ and we can select a finitely generated submodule $A^{'} < \frac{Q}{X}$ such that $\overline{d}(A^{'})=\Imagen(d).$ We fix a direct system $(A_{\lambda})_{\lambda \in \Lambda}$ of finitely generated submodules of $\frac{Q}{X}$, such that $A^{'}< A_{\lambda}$, for all $\lambda \in \Lambda$, and $\varinjlim{A_{\lambda}}=\frac{Q}{X}$. For each $\lambda$, we denote $G_{\lambda}$ to the following complex:

$$\xymatrix{G_{\lambda}:=\cdots \ar[r] & 0 \ar[r] & A_{\lambda} \ar[r]^{\hspace{0.1 cm}\overline{d}|_{A_{\lambda}}} & P \ar[r] & 0 \ar[r] & \cdots}$$

It is clear that $(G_{\lambda})_{\lambda \in \Lambda}$ is a direct system in $\mathcal{C}(R)$ and in $\Ht$, and that we have $\varinjlim_{\mathcal{C}(R)}G_{\lambda}\cong G$. On the other hand, from lemma \ref{lem. progenerator of Ht} we get that $\F=\Ker(\Hom_{R}(V,?))$ is closed under direct limits. Thus, from lemma \ref{limites directos en C}, we have that $\limite{G_{\lambda}} \cong\varinjlim_{\mathcal{C}(R)}{G_{\lambda}} = G$. But, since $G$ is a finitely presented object of $\Ht$, the identify map $1_{G}: G \flecha G$ factors in this category in the form $\xymatrix{G \ar[r]^{f} & G_{\mu} \ar[r]^{\iota_\mu} & G}$, for some $\mu \in \Lambda$. It follows that $H^{-1}(\iota_{\mu})$ is an epimorphism and, therefore, it is an isomorphism. We then get a commutative diagram with exact rows:

$$\xymatrix{0 \ar[r] & H^{-1}(G_{\mu}) \ar[r] \ar[d]_{H^{-1}(\iota_{\mu})}^{\wr} & A_{\mu} \ar[r]  \ar@{^(->}[d]^{\iota_{\mu}^{-1}} & \overline{d}(A_{\mu}) \ar@{=}[d] \ar[r] & 0 \\ 0 \ar[r] & H^{-1}(G) \ar[r] & \frac{Q}{X} \ar[r]^{\overline{d} \  \ } & \Imagen(d) \ar[r] & 0}$$

This show that $A_{\mu}\cong \frac{Q}{X}$ is a finitely generated $R$-module.
\end{proof}

\vspace{0.3 cm}

We refer the reader to section \ref{section:torsion pair} for the definition of HKM torsion pair. Every HKM complex gives a HKM torsion pair, whose associated heart is a module category (see \cite[Theorem 3.8]{HKM}). However, the complex HKM need not belong to the heart (see example \ref{exam. HKM torsion} below). Then, it comes the question of how should be a progenerator for such a heart. Our next result in this section gives a criterion for a torsion pair to be HKM:

\begin{proposition}\label{pro. HKM complex}
Let $\xymatrix{P^{\bullet}:=\cdots \ar[r] & 0 \ar[r] & Q \ar[r]^{d} & P \ar[r] & 0 \ar[r] & \cdots}$ be a complex of finitely generated projective modules concentrated in degrees -1 and 0, where $V:=H^{0}(P^{\bullet})$, and let $\te=(\T,\F)$ be a torsion pair in $R$-Mod. The following assertions are equivalent:

\begin{enumerate}

\item[1)] $P^{\bullet}$ is an HKM complex such that $\te$ is its associated HKM torsion pair;

\item[2)] $V\in \mathcal{X}(P^{\bullet})$ and the complex $\xymatrix{G:=\cdots \ar[r] & 0 \ar[r] & t(\Ker(d)) \ar[r]^{\hspace{0.7 cm}j} & Q \ar[r]^{d \hspace{0.1 cm}} & P \ar[r] & 0 \ar[r] & \cdots }$, concentrated in degrees -2,-1,0, is a progenerator of $\Ht$;

\item[3)] The following conditions hold:

\begin{enumerate}

\item[a)] $\T=\Pres(V)\subseteq \Ker(\Ext^{1}_{R}(V,?))$ and $\mathcal{X}(P^{\bullet})\subseteq \T$;

\item[b)] $\Ker(d) \subseteq \Rej_{\T}(Q)$;

\item[c)] There is a homomorphism $h:Q^{(I)} \flecha \frac{R}{t(R)}$, for some set $I$, such that \linebreak $\Coker(h_{| \Ker(d)^{(I)}})\in \overline{\Gen}(V).$
\end{enumerate}
\end{enumerate}
\end{proposition}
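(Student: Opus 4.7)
My plan is to prove the circular chain $(3)\Rightarrow (1)\Rightarrow (2)\Rightarrow (3)$, using Theorem \ref{teo. Ht is a module category} applied to $G$ (viewed as a complex with $X=t(\Ker(d))$ in degree $-2$) together with a direct analysis of $\mathcal{X}(P^\bullet)$ and $\mathcal{Y}(P^\bullet)$. The preliminary identifications, obtained from lemma \ref{lem. homology 0 module}, are that $\mathcal{Y}(P^\bullet)=\Ker(\Hom_R(V,?))$ and that $M\in\mathcal{X}(P^\bullet)$ if and only if both $\Ker(d)\subseteq\Rej_M(Q)$ and $M\in\Ker(\Ext^1_R(V,?))$, the latter coming from the presentation $0\to\Imagen(d)\to P\to V\to 0$, which makes the extendibility of any $\Imagen(d)\to M$ along $d\colon Q\to P$ equivalent to the vanishing of $\Ext^1_R(V,M)$.

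For $(3)\Rightarrow (1)$, conditions (3a) and (3b) force $\T\subseteq\mathcal{X}(P^\bullet)$ by the description above, the reverse inclusion is the second part of (3a), and $\T=\Gen(V)$ gives $\mathcal{Y}(P^\bullet)=\F$; hence $(\mathcal{X}(P^\bullet),\mathcal{Y}(P^\bullet))=\te$ is a torsion pair and $P^\bullet$ is HKM. For $(1)\Rightarrow (2)$, the $\Hom$-orthogonality of the torsion pair $(\mathcal{X}(P^\bullet),\mathcal{Y}(P^\bullet))=(\T,\F)$ applied to the canonical surjection $V\twoheadrightarrow V/t(V)$, with $V/t(V)\in\F=\mathcal{Y}(P^\bullet)$, forces $V=t(V)\in\T$; thus $G\in\Ht$, since $H^{-1}(G)=\Ker(d)/t(\Ker(d))\in\F$ is automatic. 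A torsion-trace argument then identifies $\T=\mathcal{X}(P^\bullet)=\Gen(V)$: for $M\in\mathcal{X}(P^\bullet)$, the decomposition $\text{tr}_V(M)\hookrightarrow M\twoheadrightarrow M/\text{tr}_V(M)$ has $\text{tr}_V(M)\in\Gen(V)\subseteq\mathcal{X}(P^\bullet)$ and $M/\text{tr}_V(M)\in\Ker(\Hom_R(V,?))=\mathcal{Y}(P^\bullet)$, and the orthogonality forces $M/\text{tr}_V(M)=0$. With $\T=\Gen(V)$ and $V$ finitely presented (as $\Coker(d)$ with $Q,P$ finitely generated projective), Lemma \ref{lem. quasi-tilting} together with Lemma \ref{lem. progenerator of Ht} (applied to the progenerator of $\Ht$ furnished by Theorem \ref{teo. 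HKM principal}) yields $\Gen(V)=\Pres(V)\subseteq\Ker(\Ext^1_R(V,?))$. The five conditions of Theorem \ref{teo. Ht is a module category} for $G$ are then verified directly: condition (4) is automatic because $\Hom_R(t(\Ker(d)),F)=0$ for $F\in\F$ kills $\Ext^1_R(Q/t(\Ker(d)),F)$ via the projective presentation $0\to t(\Ker(d))\to Q\to Q/t(\Ker(d))\to 0$, while (3) and (5) arise from the explicit structure of $G$ together with the identifications above.

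For $(2)\Rightarrow (3)$, Theorem \ref{teo. Ht is a module category} applied in reverse to $G$ yields its five conditions, and Lemma \ref{lem. progenerator of Ht} provides the first half of (3a), $\T=\Pres(V)\subseteq\Ker(\Ext^1_R(V,?))$. Condition (3b) follows from the hypothesis $V\in\mathcal{X}(P^\bullet)$, equivalently $\Ker(d)\subseteq\Rej_V(Q)$, by lifting any $f\colon Q\to T$ (with $T\in\T=\Pres(V)$) along an epimorphism $V^{(I)}\twoheadrightarrow T$ through the projective $Q$; since each component of the lift vanishes on $\Ker(d)$, so does $f$. Condition (3c) translates from the theorem's condition (5) because any $h\colon Q^{(I)}\to R/t(R)$ automatically kills $t(\Ker(d))^{(I)}$ (as $t(\Ker(d))\in\T$ and $R/t(R)\in\F$) and therefore factors through $(Q/t(\Ker(d)))^{(I)}$ with the same cokernel on the kernel-quotients. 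The inclusion $\mathcal{X}(P^\bullet)\subseteq\T$ in (3a) is again obtained from the torsion-trace argument, once $\mathcal{X}(P^\bullet)\cap\F=0$ is established.

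The main obstacle I anticipate is the verification of $\mathcal{X}(P^\bullet)\cap\F=0$ in $(2)\Rightarrow (3)$, since the hypothesis $V\in\mathcal{X}(P^\bullet)$ alone does not immediately yield the full torsion-pair orthogonality. The cleanest route will be to take $F\in\mathcal{X}(P^\bullet)\cap\F$, use the generating property of $G$ in $\Ht$ to obtain an epimorphism $G^{(I)}\twoheadrightarrow F[1]$ whose $H^{-1}$-component gives a surjection $(\Ker(d)/t(\Ker(d)))^{(I)}\twoheadrightarrow F$, and then combine this with the factorization provided by condition (3c) through $R/t(R)$ to realize $F$ as a quotient of an object of $\overline{\Gen}(V)$; the condition $F\in\F=\Ker(\Hom_R(V,?))$ together with this factorization will then force $F=0$.
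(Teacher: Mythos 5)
Your preliminary identifications are sound (the description of $\mathcal{Y}(P^{\bullet})$ and of $\mathcal{X}(P^{\bullet})$ via $\Rej$ and $\Ext^{1}_{R}(V,?)$, the implication $3)\Rightarrow 1)$, the treatment of condition 4 and of 3.b), and the passage between condition 5 of theorem \ref{teo. Ht is a module category} and 3.c) are all correct), but the two places where the real work lies are not closed. In $(1)\Rightarrow(2)$ you need condition 1 of theorem \ref{teo. Ht is a module category}, i.e.\ $\T=\Pres(V)\subseteq\Ker(\Ext^{1}_{R}(V,?))$, and you propose to get it from lemma \ref{lem. progenerator of Ht} applied to ``the progenerator furnished by theorem \ref{teo. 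HKM principal}''. That lemma, applied to an abstract progenerator $G_{0}$, only yields $\T=\Pres(V_{0})=\Gen(V_{0})$ for $V_{0}=H^{0}(G_{0})$, and nothing identifies $V_{0}$ with $V=H^{0}(P^{\bullet})$; knowing $\Gen(V)=\Gen(V_{0})=\T$ does not give $\Pres(V)=\T$, and lemma \ref{lem. quasi-tilting} cannot be invoked because its hypothesis ($V$ quasi-tilting, or $\Gen(V)=\Pres(V)$) is precisely what is missing. (This step is repairable, e.g.\ by checking that the kernel of the trace epimorphism $V^{(\Lambda)}\twoheadrightarrow T$ again lies in $\mathcal{X}(P^{\bullet})=\T$ using your own characterization, but that argument is not in your text.) Worse, condition 5 of theorem \ref{teo. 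Ht is a module category} is dismissed as ``arising from the explicit structure of $G$''; it is the generation condition for $\frac{R}{t(R)}[1]$, and I do not see how to produce the required $h$ without first knowing that $G$ generates $\Ht$ --- which is exactly what this implication is supposed to establish.

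The missing idea, which the paper uses and which closes both gaps at once, is the triangle $t(\Ker(d))[1]\flecha P^{\bullet}\flecha G\xrightarrow{+}$ coming from the exact sequence of complexes. It gives a natural isomorphism $\Hom_{\Ht}(G,?)\cong\Hom_{\D(R)}(P^{\bullet},?)_{|\Ht}$ and a monomorphism $\Hom_{\D(R)}(G,?[1])\monic\Hom_{\D(R)}(P^{\bullet},?[1])$, plus an epimorphism $\Hom_{R}(t(\Ker(d)),?)\epic\Hom_{\D(R)}(G,?[2])$. From these, under hypothesis 1), $G$ is immediately a compact generator (theorem \ref{teo. HKM principal} makes the first functor faithful and coproduct-preserving) and projective ($\Ext^{1}_{\Ht}(G,T[0])$ embeds in $\Hom_{\D(R)}(P^{\bullet},T[1])=0$ and $\Ext^{1}_{\Ht}(G,F[1])$ is a quotient of $\Hom_{R}(t(\Ker(d)),F)=0$); then lemma \ref{lem. progenerator of Ht} applies to $G$ itself, with $H^{0}(G)=V$, and conditions 1 and 5 come for free. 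The same device settles the obstacle you flag in $(2)\Rightarrow(3)$: for $F\in\mathcal{X}(P^{\bullet})\cap\F$ one has $\Hom_{\Ht}(G,F[1])\monic\Hom_{\D(R)}(P^{\bullet},F[1])=0$, so $F[1]=0$ since $G$ generates. Your proposed substitute does not work as sketched: an epimorphism $G^{(I)}\epic F[1]$ in $\Ht$ only yields that $\Coker(H^{-1}(\rho))$ embeds in $H^{0}(\Ker_{\Ht}(\rho))\in\T$, not a surjection $(\Ker(d)/t(\Ker(d)))^{(I)}\epic F$; and even granting $F\in\overline{\Gen}(V)$, the conditions $F\in\overline{\Gen}(V)$ and $\Hom_{R}(V,F)=0$ do not force $F=0$ (take $R=\mathbb{Z}$, $V=\mathbb{Q}$, $F=\mathbb{Z}\subseteq\mathbb{Q}$). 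So as written the proof has genuine gaps at both of these points.
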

\begin{proof}
Note first that we have an exact sequence $0 \flecha t(\Ker(d))[1] \flecha P^{\bullet} \flecha G^{'} \flecha 0$ in $\mathcal{C}(R)$, where $G^{'}$ is quasi-isomorphic to $G$. We then get a triangle in $\D(R)$:

$$\xymatrix{t(\Ker(d))[1] \ar[r] & P^{\bullet} \ar[r] & G \ar[r]^{\hspace{0.1 cm}+} & }$$

In particular, we get a natural isomorphism 
$$\Hom_{\Ht}(G,?)=\Hom_{\D(R)}(G,?)_{| \Ht} \iso \Hom_{\D(R)}(P^{\bullet},?)_{|\Ht}$$ of functors $\Ht \flecha $Ab since $\Hom_{\D(R)}(t(\Ker(d))[k],?)$ vanishes on $\Ht$, for $k=1,2.$ Also, we have an exact sequence of functors $R$-Mod$ \flecha $Ab:

$$\xymatrix{0 \ar[r] & \Hom_{\D(R)}(G,?[1]) \ar[r] & \Hom_{\D(R)}(P^{\bullet},?[1]) \ar[r] & \Hom_{R}(t(\Ker(d)),?) \ar@{>>}[d] \\ &&& \Hom_{\D(R)}(G,?[2])}$$
since $P^{\bullet}$ is a complex of projective $R$-modules concentrated in degrees -1 and 0. \\

1) $\Longrightarrow$ 2) From \cite[Theorem 3.8]{HKM} (see also theorem \ref{teo. HKM principal}), we know that 
$$\Hom_{\D(R)}(P^{\bullet},?)_{|\Ht}:\Ht \flecha \text{End}_{\D(R)}(P^{\bullet})\text{-Mod}$$ 
is an equivalence of categories, hence the functor $\Hom_{\Ht}(G,?)\cong \Hom_{\D(R)}(P^{\bullet},?)_{|\Ht}:\Ht \flecha $Ab is faithful, i.e., $G$ is a generator of $\Ht$. Moreover, this functor preserves coproducts since $P^{\bullet}$ is a compact object of $\D(R)$. Thus, $G$ is a compact object of $\Ht$. On the other hand, from the initial comments of this proof and the fact that $\T=\mathcal{X}(P^{\bullet})=\Ker(\Hom_{\D(R)}(P^{\bullet}, ?[1])_{| R\text{-Mod}})$, we get a monomorphism $\Ext^{1}_{\Ht}(G,T[0])=\Hom_{\D(R)}(G,T[1]) \monic \Hom_{\D(R)}(P^{\bullet}, T[1])=0$, for each $T\in \T$. Furthermore, $\Ext^{1}_{\Ht}(G,F[1])\linebreak =\Hom_{\D(R)}(G,F[2])$ is a homomorphic image of $\Hom_{R}(t(\Ker(d)),F)=0$, for all $F\in \F$. We conclude that $G$ is a projective object, and hence a progenerator of $\Ht$ since $\Ext^{1}_{\Ht}(G,?)$ vanishes on $\T[0]$ and on $\F[1]$. \\

2) $\Longrightarrow$ 1) As in the initial argument we get a natural isomorphism of functors \linebreak $R\Mode \flecha$Ab given by:
$$\Hom_{\D(R)}(G, ?[0]) \iso \Hom_{\D(R)}(P^{\bullet}, ?[0])$$

From lemma \ref{lem. homology 0 module}, we obtain that $\mathcal{Y}(P^{\bullet})$ consists of the modules $F$ such that \linebreak $\Hom_{R}(H^{0}(P^{\bullet}),F)\cong \Hom_{\D(R)}(G,F[0])\cong \Hom_{\D(R)}(P^{\bullet},F[0])=0$. But theorem \ref{teo. Ht is a module category} and its proof tell us that $H^{0}(G)=V$ generates $\T$, so that we have $\mathcal{Y}(P^{\bullet})=\F$. On the other hand, if $F\in \mathcal{X}(P^{\bullet}) \cap \mathcal{Y}(P^{\bullet})=\mathcal{X}(P^{\bullet}) \cap \F$ then, again, our initial comments in this proof show that $\Hom_{\D(R)}(G,F[1])=0$. But this implies that $F[1]=0$, and hence $F=0$, since $G$ is a generator of $\Ht$. Now, assertion 1 follows from \cite[Theorem 2.10]{HKM} and the fact that $\mathcal{Y}(P^{\bullet})=\F$. \\

1), 2) $\Longrightarrow$ 3) From theorem \ref{teo. Ht is a module category} and its proof we know that the complex $G$ satisfies conditions 1-5 of such theorem. In particular, we get condition 3.a). As for 3.c), note that $\Hom_{R}(t(\Ker(d)),?)$ vanishes on $\F$, so that we have isomorphisms of functors:

$$\xymatrix{ \Hom_{R}(\Coker(j)^{(J)},?)_{| \F } \ar[r]^{  \hspace{0.6 cm}\sim} &  \Hom_{R}(Q^{(J)},?)_{|\F} \\ \Hom_{R}(H^{-1}(G),?)_{| \F} \ar[r]^{\sim \hspace{0.2 cm}} &  \Hom_{R}(\Ker(d)^{(J)},?)_{|\F} }$$
Then condition 5 of theorem \ref{teo. Ht is a module category} is exactly our condition 3.c) in this case. Finally, any homomorphism $f:Q \flecha T$, with $T\in \T$, gives a morphism $P^{\bullet} \flecha T[1]$ in $\D(R)$. But this is the zero morphism since $\T=\mathcal{X}(P^{\bullet})$. This implies that $f$ factors through $d$, so that $f(\Ker(d))=0$ and hence the condition 3.b) holds. \\

3) $\Longrightarrow$ 2) We show that $G$ satisfies the conditions 1-5 of theorem \ref{teo. Ht is a module category}. It clearly satisfies conditions 1) and 2) of that theorem. Moreover, arguing as in the proof of 1),2) $\Longrightarrow$ 3), also condition 5 in that theorem holds. Hence, in order to finish the proof it is enough to show that $H^{-1}(G)\subseteq \Rej_{\T}(\Coker(j))$ and $\Ext^{1}_{R}(\Coker(j),?)$ vanishes on $\F$. This last condition follows by applying the long exact sequence of $\Ext^{1}_R(?,F)$, with $F\in \F$, to the exact sequence
$$\xymatrix{0 \ar[r] & t(\Ker(d)) \ar[r]^{\hspace{0.7 cm}j} & Q \ar[r] & \Coker(j) \ar[r] & 0 }$$

Finally, we consider an $R$-homomorphism $g:\frac{Q}{t(\Ker(d))} =\Coker(j)\flecha T$ with $T\in\T$. By hypothesis $\xymatrix{Q \ar[r]^{g \hspace{0.05 cm}\circ \hspace{0.05 cm} q} & T}$ vanishes on $\Ker(d)$, where $q$ is the canonical epimorphism. From the following commutative diagram, we obtain that the composition $\xymatrix{H^{-1}(G) \ar@{^(->}[r] & \frac{Q}{t(\Ker(d))} \ar[r]^{\hspace{0.5 cm}g} & T}$ is the zero morphism, and hence the proof is finished.

$$\xymatrix{ 0 \ar[r] & t(\Ker(d)) \ar[r] \ar@{=}[d] & \Ker(d) \ar@{>>}[r] \ar@{^(->}[d] & H^{-1}(G) \ar[r] \ar@{^(->}[d] & 0 \\ 0 \ar[r] & t(\Ker(d)) \ar[r] & Q \ar[r]^{q \hspace{0.3 cm}} \ar[d]^{g \hspace{0.05 cm}\circ \hspace{0.05 cm} q} & \frac{Q}{t(\Ker(d))} \ar[r] \ar[d]^{g} & 0 \\ && T \ar@{=}[r] & T}$$ 
\end{proof}

\vspace{0.3 cm}

Using later results in this manuscript, we can show that HKM complexes need not be in the heart.

\begin{example}\label{exam. HKM torsion}\rm{
Let $Q_n:1 \flecha 2 \flecha \cdots \flecha n$ ($n >1$) be the Dynkin quiver of type $\mathbf{A}$ and let $R=KQ_n$ the corresponding path algebra, where $K$ is any field. \linebreak If $\mathbf{a}=Re_nR$ and $\te$ is the right constituent pair of the TTF triple associated to $\mathbf{a}$, then $\te$ is an HKM torsion pair in $R$-Mod such that its HKM complex associated doesn't belong to $\Ht$.}
\end{example}

\begin{proof}
Note that $\mathbf{a}=Re_n$, so that $e_n R(1-e_n)=0$ and 
$$R \cong \begin{pmatrix} e_nRe_n & 0 \\ (1-e_n)Re_n & (1-e_n)R(1-e_n)\end{pmatrix}\cong \begin{pmatrix} K & 0 \\ E & KQ_{n-1} \end{pmatrix}$$
where $E\cong KQ_{n-1}e_{n-1}$ is an injective-projective $KQ_{n-1}$-module. By \cite[Theorem 3.1]{NS2}, we know that the associated TTF-triple is left split. Note that $t(\mathbf{a})=\{x\in \mathbf{a} : e_nx=0\}=Je_n$, where $J$ is the Jacobson radical of $R$. It follows that $\frac{\mathbf{a}}{t(\mathbf{a})}=S_n$ is the simple $R$-module corresponding to the vertex $n$. By example \ref{exam. R/a+a as progenerator}, we know that $G^{'}=\frac{R}{\mathbf{a}}[0] \oplus \frac{\mathbf{a}}{t(\mathbf{a})}[1]\cong R(1-e_n)[0]\oplus S_n[1]$ is a progenerator of $\Ht$. \\


On the other hand, from \cite[Theorem 2.12]{MT} we know that $\te$ is an HKM torsion pair. Let now $P^{\bullet}:=\cdots \flecha 0 \flecha P^{-1} \xymatrix{\ar[r]^{f} &} P^{0} \flecha 0 \flecha \cdots$ be an HKM complex whose associated torsion pair is $\te$ and suppose that $P^{\bullet}\in \Ht$. From the previous proposition we would get a triangle in $\D(R)$ of the form: 

$$ t(\Ker(f))[1] \flecha P^{\bullet} \xymatrix{\ar[r]^{\rho} &  G \ar[r]^{+} & }$$
where $G$ is a progenerator of $\Ht$. By the explicit construction of cokernels in $\Ht$ (see \cite{BBD}), we would deduce that $\rho$ is an isomorphism, and hence $P^{\bullet}$ is a progenerator of $\Ht$. Then we would have $\text{add}_{\D(R)}(P^{\bullet})=\text{add}_{\D(R)}(G)=\text{add}_{\D(R)}(G^{'})$, and this would imply that $\text{add}_{R\text{-Mod}}(\Ker(f))=\text{add}_{R\text{-Mod}}(H^{-1}(P^{\bullet}))=\text{add}_{R\text{-Mod}}(H^{-1}(G))=\text{add}_{R\text{-Mod}}(H^{-1}(G^{'}))=\text{add}_{R\text{-Mod}}(S_n).$ This is absurd since $S_n$ is not isomorphic to a submodule of a projective $R$-module.
\end{proof}

\vspace{0.3 cm}

The following result tells us the conditions that an HKM complex must satisfy in order to belong to $\Ht$, where $\te$ is the torsion pair associated to such complex.

\begin{corollary}\label{cor. HKM=Prog. complex}
Let $\te=(\T,\F)$ be a torsion pair in $R$-Mod and let 
$$\xymatrix{P^{\bullet}:=\cdots \ar[r] & 0 \ar[r] & Q \ar[r]^{d} &  P \ar[r] & 0 \ar[r] & \cdots}$$ 
be a complex of finitely generated projective $R$-modules concentrated in degrees -1 and 0. The following assertions are equivalent, where $V:=H^{0}(P^{\bullet})$:
\begin{enumerate}
\item[1)] $P^{\bullet}$ is a progenerator of $\Ht$;
\item[2)] The following conditions hold:
\begin{enumerate}
\item[a)] $\T=\Pres(V)\subseteq \Ker(\Ext^{1}_{R}(V,?))$; 
\item[b)] $\Ker(d)\subseteq \Rej_{\T}(Q)$ and $\Ker(d)\in \F$;
\item[c)] There is a homomorphism $h:Q^{(I)} \flecha \frac{R}{t(R)}$, for some set $I$, such that \linebreak 
$\Coker(h_{|\Ker(d)^{(I)}})\in \overline{\Gen}(V)$.
\end{enumerate}
\end{enumerate}
In this case $P^{\bullet}$ is a classical tilting complex and an HKM complex whose associated torsion pair is $\te$.
\end{corollary}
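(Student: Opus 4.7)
The equivalence $(1)\Longleftrightarrow (2)$ should follow by specializing Theorem \ref{teo. Ht is a module category} to the case $X=0$. Indeed, setting $X=0$ and $j=0$ in the complex $G$ of that theorem, $j$ is trivially a monomorphism, $\Coker(j)=Q$ is projective so $\Ext^{1}_R(\Coker(j),?)$ vanishes and condition $(4)$ of the theorem becomes automatic, $Q/X=Q$, and $H^{-1}(G)=\Ker(d)$. Under these identifications, conditions $(1)$, $(3)$ and $(5)$ of Theorem \ref{teo. Ht is a module category} become precisely our $(2a)$, the first half of $(2b)$, and $(2c)$; while condition $(2)$ of the theorem reduces to $\Ker(d)\in \F$, which is the remaining half of $(2b)$. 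Hence $(1)\Longleftrightarrow (2)$ is immediate.

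For the concluding assertion I would first derive the HKM property via Proposition \ref{pro. HKM complex}. Assuming $(2)$, the hypothesis $\Ker(d)\in \F$ forces $t(\Ker(d))=0$, so the three-term complex $G$ appearing in Proposition \ref{pro. HKM complex}$(2)$ coincides with $P^{\bullet}$ itself. It therefore suffices to verify $V\in \mathcal{X}(P^{\bullet})$, and I shall prove the stronger inclusion $\T\subseteq \mathcal{X}(P^{\bullet})$. By Lemma \ref{lem. homology 0 module}$(3.a)$, a module $M$ belongs to $\mathcal{X}(P^{\bullet})$ precisely when $d^{*}:\Hom_R(P,M)\flecha \Hom_R(Q,M)$ is surjective. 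Given $T\in \T$, factor $d$ as $\iota\circ \pi$ with $\pi:Q\epic \Imagen(d)$ and $\iota:\Imagen(d)\monic P$, so that $d^{*}=\pi^{*}\circ \iota^{*}$. Then $\iota^{*}:\Hom_R(P,T)\flecha \Hom_R(\Imagen(d),T)$ is surjective by applying $\Hom_R(?,T)$ to $0\flecha \Imagen(d)\flecha P\flecha V\flecha 0$ together with $\Ext^{1}_R(V,T)=0$ from $(2a)$, while $\pi^{*}:\Hom_R(\Imagen(d),T)\iso \Hom_R(Q,T)$ is an isomorphism because the inclusion $\Ker(d)\subseteq \Rej_{\T}(Q)$ from $(2b)$ makes the connecting map $\Hom_R(Q,T)\flecha \Hom_R(\Ker(d),T)$ zero in the long exact sequence of $0\flecha \Ker(d)\flecha Q\flecha \Imagen(d)\flecha 0$. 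Thus $d^{*}$ is surjective, $T\in \mathcal{X}(P^{\bullet})$, and since $V\in \T$, the implication $(2)\Longrightarrow (1)$ of Proposition \ref{pro. HKM complex} shows that $P^{\bullet}$ is an HKM complex whose associated torsion pair is $\te$.

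For the classical tilting claim, $P^{\bullet}$ is compact in $\D(R)$ as a bounded complex of finitely generated projective modules. To see $\Hom_{\D(R)}(P^{\bullet},P^{\bullet}[i])=0$ for $i\neq 0$: when $|i|\geq 2$ the complexes $P^{\bullet}$ and $P^{\bullet}[i]$ are supported in disjoint degrees and every chain map is zero; for $i=1$ we have $\Ext^{1}_{\Ht}(P^{\bullet},P^{\bullet})=0$ by projectivity of $P^{\bullet}$ in $\Ht$; for $i=-1$ one has $P^{\bullet}\in \mathcal{U}_{\te}$ and $P^{\bullet}[-1]\in \mathcal{U}_{\te}^{\perp}$, so the hom-group vanishes. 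Finally, if $\Hom_{\D(R)}(P^{\bullet}[i],X)=0$ for all $i\in \mathbb{Z}$, then projectivity of $P^{\bullet}$ in $\Ht$ collapses the spectral sequence $\Ext^{p}_{\Ht}(P^{\bullet},\tilde{H}^{q}(X))\Rightarrow \Hom_{\D(R)}(P^{\bullet},X[p+q])$ to give $\Hom_{\Ht}(P^{\bullet},\tilde{H}^{n}(X))=0$ for every $n$; since $P^{\bullet}$ generates $\Ht$ this forces $\tilde{H}^{n}(X)=0$ for all $n$, and the non-degeneracy of the HRS t-structure then yields $X=0$. The main obstacle I anticipate is the bridge $\T\subseteq \mathcal{X}(P^{\bullet})$ needed to invoke Proposition \ref{pro. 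HKM complex} from the hypotheses phrased via Theorem \ref{teo. Ht is a module category}; the generation step in the tilting verification might also require a more elementary devissage if one wishes to avoid the spectral sequence.
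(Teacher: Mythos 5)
Your treatment of the equivalence $1)\Longleftrightarrow 2)$ (specializing Theorem \ref{teo. Ht is a module category} to $X=0$), your verification that $\T\subseteq\mathcal{X}(P^{\bullet})$ and hence the HKM property via Proposition \ref{pro. HKM complex}, and your exceptionality argument all agree in substance with the paper's proof and are correct; the paper merely runs the tilting and HKM claims in the opposite order and, for $V\in\mathcal{X}(P^{\bullet})$, quotes the projectivity of $P^{\bullet}$ in $\Ht$ (giving $\Hom_{\D(R)}(P^{\bullet},T[1])=\Ext^{1}_{\Ht}(P^{\bullet},T[0])=0$ for $T\in\T$) instead of your factorization $d=\iota\circ\pi$, which amounts to the same computation.

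The genuine gap is in the generation step of the tilting claim. The spectral sequence you invoke is problematic on two counts: its $E_2$-page should consist of the groups $\Hom_{\D(R)}(P^{\bullet},\tilde{H}^{q}(X)[p])$ rather than Yoneda groups $\Ext^{p}_{\Ht}(P^{\bullet},\tilde{H}^{q}(X))$ (these agree only for $p\leq 1$), and, more seriously, for an arbitrary unbounded complex $X$ such a t-structure spectral sequence is not available off the shelf and its (conditional) convergence is exactly what would have to be proved; nothing in the paper's toolkit supplies it. The repair is the elementary d\'evissage the paper uses, and it exploits a vanishing stronger than projectivity: since $P^{\bullet}$ is a two-term complex of projectives in degrees $-1,0$, the functor $\Hom_{\D(R)}(P^{\bullet},?[k])$ is identically zero on $R$-modules for every $k\notin\{0,1\}$. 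Given $\Hom_{\D(R)}(P^{\bullet}[n],M)=0$ for all $n$, one passes to the ordinary truncations $X_n:=\tau^{>-2}\tau^{\leq 0}(M[n])$, for which $\Hom_{\D(R)}(P^{\bullet},X_n)\cong\Hom_{\D(R)}(P^{\bullet},M[n])=0$ by that vanishing, and the triangle $H^{-n-1}(M)[1]\flecha X_n \flecha H^{-n}(M)[0]\flecha H^{-n-1}(M)[2]$ then yields $\Hom_{\D(R)}(P^{\bullet},H^{-n}(M)[0])=0=\Hom_{\D(R)}(P^{\bullet},H^{-n-1}(M)[1])$ for all $n$. So one is reduced to showing that a module $N$ with $\Hom_{\D(R)}(P^{\bullet},N[k])=0$ for $k=0,1$ is zero: by Lemma \ref{lem. homology 0 module}(1) and condition 2.a) one gets $\Hom_{R}(V,N)=0$, hence $N\in\F$, hence $N[1]\in\Ht$, hence $N=0$ because $P^{\bullet}$ generates $\Ht$. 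This gives $H^{j}(M)=0$ for all $j$, so $M=0$, with no appeal to a spectral sequence or to non-degeneracy of the HRS t-structure.
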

\begin{proof}
1 $\Longleftrightarrow$ 2) follows directly from theorem \ref{teo. Ht is a module category}. On the other hand, to see that $P^{\bullet}$ is an \emph{exceptional object} of $\D(R)$, i.e., $\Hom_{\D(R)}(P^{\bullet},P^{\bullet}[k])=0$, for each integer $k\neq 0$, it is enough to show that $\Hom_{\D(R)}(P^{\bullet},P^{\bullet}[1])=0=\Hom_{\D(R)}(P^{\bullet},P^{\bullet}[-1])$, since $P^{\bullet}$ is a complex of projective $R$-modules concentrated in degrees -1 and 0. But this follows from the projective condition of $P^{\bullet}$ in $\Ht$ and the fact that $P^{\bullet}[-1]\in \mathcal{U}_{\mathbf{t}}^{\perp}$. \\

Now, let $M$ be a complex of $R$-modules such that $\Hom_{\D(R)}(P^{\bullet}[n],M)=0$, for all $n\in $ \Z. We put $X_{n}:=\tau^{>-2} \circ \tau^{\leq 0}(M[n])$, for each $n\in $ \Z, and we consider the following triangle in $\D(R)$:

$$\xymatrix{\tau^{\leq -2}(M[n])=\tau^{\leq -2}(\tau^{\leq 0}(M[n])) \ar[r] & \tau^{\leq 0}(M[n]) \ar[r] & \tau^{>-2}(\tau^{\leq 0}(M)[n])=X_n \ar[r] & \tau^{\leq -2}(M[n])[1]}$$

Thus, we clearly have isomorphisms

$$\xymatrix{\Hom_{\D(R)}(P^{\bullet},X_{n})=\Hom_{\D(R)}(P^{\bullet}, \tau^{>-2}(\tau^{\leq 0}(M[n]))) & \Hom_{\D(R)}(P^{\bullet}, \tau^{\leq 0}(M[n])) \ar[l]_{\hspace{2.65 cm}\sim} \ar[d]^(0.46){\wr}\\  & \Hom_{\D(R)}(P^{\bullet}, M[n])}$$
This show that $\Hom_{\D(R)}(P^{\bullet},X_{n})=0$, for all $n \in $ \Z. But $X_{n}$ fits in a triangle

$$\xymatrix{H^{-n-1}(M)[1] \ar[r] & X_{n} \ar[r] & H^{-n}(M)[0] \ar[r]^{\hspace{0.85 cm}+} &}$$

The fact that $\Hom_{\D(R)}(P^{\bullet}, ?[k]):R$-Mod$\flecha$ Ab is the zero functor for $k=-1,2$ implies that $\Hom_{\D(R)}(P^{\bullet},H^{-n-1}(M)[1])=0=\Hom_{\D(R)}(P^{\bullet}, H^{-n}(M)[0])$, for all $n \in $ \Z. The proof is hence reduced to cheek that if $N$ is a left $R$-module such that \linebreak $\Hom_{\D(R)}(P^{\bullet},N[k])=0$, for $k=0,1$, then $N=0$. Indeed, from lemma \ref{lem. homology 0 module} we get that $\Hom_{R}(H^{0}(P^{\bullet}),N)=0$. By 2.a), this implies that $N\in \F$. But then $N[1]$ is an object of $\Ht$, which implies that $N=0$ since $P^{\bullet}$ is a progenerator of this category. Therefore $P^{\bullet}$ is a classical tilting complex since it is an exceptional compact generator of $\D(R)$ (see \cite{Ri}). Finally, the complex $P^{\bullet}$ satisfies condition 2 of proposition \ref{pro. HKM complex}, and, hence, it is an HKM complex. 
\end{proof}

\begin{definition}\label{def. progenerator which is classical tilting complex}\rm{
We shall say that $\Ht$ \emph{has a progenerator which is a classical tilting complex}\index{complex! progenerator classical tilting} when it has a progenerator $P^{\bullet}$ as in corollary \ref{cor. HKM=Prog. complex}.}
\end{definition}

\section{The case of a hereditary torsion pair}

Suppose now that $\te=(\T,\F)$ is hereditary. We will show that the conditions making its heart a module category give more precise information than in the general case. Recall that $\te$ is called \emph{bounded}\index{torsion pair! bounded} when its associated Gabriel topology has a basis consisting of two-sided ideals (\cite[Chapter VI]{S}). Equivalently, when $R/\text{ann}_{R}(T)\in \T$, for each finitely generated $R$-module $T\in \T$.

\begin{examples}\label{exam. bounded torsion pair}\rm{

\begin{enumerate}
\item[1)] If $R$ is a commutative ring, then every hereditary torsion pair is bounded, since every ideal of $R$ is two-sided.

\item[2)] If $\te$ is the right constituent pair of a TTF triple in $R$-Mod, then $\te$ is bounded (see \cite[Proposition VI.6.12]{S}).

\end{enumerate}}
\end{examples}

\begin{theorem}\label{teo. Ht module c. with t here.}
Let $\te=(\T,\F)$ be a hereditary torsion pair in $R$-Mod and let 
$$\progen$$
be a complex of $R$-modules which is in $\Ht$, where $P$ and $Q$ are finitely generated projective. The following assertions are equivalent, where $V:=H^{0}(G)$:
\begin{enumerate}
\item[1)] $G$ is a progenerator of $\Ht$;

\item[2)] The following conditions are satisfied:
\begin{enumerate}
\item[a)] $\T=\Pres(V)\subseteq \Ker(\Ext^{1}_{R}(V,?))$;

\item[b)] $H^{-1}(G)\subseteq \Rej_{\T}(\Coker(j))$;

\item[c)] $\Ext^{1}_{R}(\Coker(j),?)$ vanishes on $\F$;

\item[d)] If $\mathbf{b}=\text{ann}_{R}(V/t(R)V)$, then $\mathbf{b}/t(R)$ is an idempotent ideal of $R/t(R)$ (which is finitely generated on the left) and $R/\mathbf{b}$ is in $\T$;

\item[e)] there is a morphism $h:\Coker(j)^{(J)} \flecha \frac{\mathbf{b}}{t(R)}$, for some set $J$, such that $h_{|H^{-1}(G)^{(J)}}:H^{-1}(G)^{(J)} \flecha \frac{\mathbf{b}}{t(R)}$ is an epimorphism.
\end{enumerate}
When $\te$ is bounded, the assertions are also equivalent to:

\item[3)] There is an idempotent ideal $\mathbf{a}$ of $R$, which is finitely generated on the left, such that:

\begin{enumerate}
\item[a)] $\add(V)=\add(R/\mathbf{a})$ and $\te$ is the right constituent torsion pair of the TTF triple defined by $\mathbf{a}$;

\item[b)] $\Ker(d) \subseteq \Imagen(j)+\mathbf{a}Q$;

\item[c)] $\Ext^{1}_{R}(\Coker(j),?)$ vanishes on $\F$:

\item[d)] There is a morphism $h:\Coker(j)^{(J)} \flecha \frac{\mathbf{a}}{t(\mathbf{a})}$, for some set $J$, such that $h_{|H^{-1}(G)^{(J)}}:H^{-1}(G)^{(J)} \flecha \frac{\mathbf{a}}{t(\mathbf{a})}$ is an epimorphism.
\end{enumerate}
\end{enumerate}
\end{theorem}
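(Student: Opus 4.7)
The equivalence (1) $\Leftrightarrow$ (2) will be obtained by specializing theorem \ref{teo. Ht is a module category} to the hereditary setting. Conditions 1, 2, 3, 4 of that general theorem correspond respectively to 2.a), to the standing hypothesis on the shape of $G$ (which forces $H^{-1}(G) \in \F$ via $G \in \Ht$), to 2.b), and to 2.c). The substance of (1) $\Leftrightarrow$ (2) therefore reduces to proving that, assuming 2.a)-2.c) and hereditariness of $\te$, condition 5 of the general theorem is equivalent to the conjunction 2.d) + 2.e). The first key reduction is the identity $\overline{\Gen}(V)=\T$ in the hereditary case: every subobject of a $V$-generated module lies in $\T$ because $\T$ is closed under subobjects and $\T=\Gen(V)$ by 2.a), and the reverse inclusion is trivial. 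Consequently condition 5 rewrites as the existence of a morphism $h:(Q/X)^{(I)}\flecha R/t(R)$ with cokernel of $h_{|H^{-1}(G)^{(I)}}$ lying in $\T$.

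To bridge this reformulation with 2.d) + 2.e), I will identify $\Imagen(h_{|H^{-1}(G)^{(I)}})$ with a left ideal $\mathfrak{I}/t(R)$ of $R/t(R)$, so that the cokernel condition becomes $R/\mathfrak{I}\in\T$. The ideal $\mathbf{b}=\ann_R(V/t(R)V)$ is precisely designed so that $V/t(R)V$ is an $R/\mathbf{b}$-module while also being $V$-generated, hence an element of $\T$ expressible as a quotient of a power of $V$. The direction 2.d) + 2.e) $\Rightarrow$ condition 5 is immediate: the map supplied by 2.e), regarded as landing in $R/t(R)$, has cokernel $R/\mathbf{b}\in\T=\overline{\Gen}(V)$. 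For the converse, given $h$ as in the rewritten condition 5, I will show that $\mathfrak{I}\subseteq\mathbf{b}$ by exploiting that any $R$-homomorphism from a $V$-presented module to $R/t(R)$ must land in the annihilator modulo $t(R)$ of every quotient of $V$ killed by $t(R)$; the idempotency of $\mathbf{b}/t(R)$ in $R/t(R)$ then emerges by combining $R/\mathbf{b}\in\T=\Pres(V)$, the definition of $\mathbf{b}$, and the classical quasi-tilting nature of $V$ furnished by lemma \ref{lem. progenerator of Ht}; simultaneously the maximality argument upgrades the image to all of $\mathbf{b}/t(R)$, yielding 2.e).

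For (2) $\Leftrightarrow$ (3) under the boundedness hypothesis, the strategy is to lift $\mathbf{b}/t(R)$ to a two-sided idempotent ideal $\mathbf{a}$ of $R$ whose associated TTF triple has $\te$ as right constituent pair. Boundedness of $\te$ means its Gabriel topology has a basis of two-sided ideals, and the idempotent $\mathbf{b}/t(R)$ of 2.d) together with $R/\mathbf{b}\in\T$ permits the extraction of such an $\mathbf{a}$; the identity $\add(V)=\add(R/\mathbf{a})$ then follows from $\T=\Pres(V)=\Pres(R/\mathbf{a})$ together with the classical quasi-tilting nature of $V$ via \cite[Proposition 2.4]{MT}. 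Inside this TTF framework one has $\Rej_\T(M)=\mathbf{a}M$ for every $R$-module $M$, which translates 2.b) into 3.b) through the equivalence $H^{-1}(G)\subseteq\Rej_\T(Q/X)\Leftrightarrow \Ker(d)\subseteq\Imagen(j)+\mathbf{a}Q$; moreover $\mathbf{b}/t(R)$ gets identified with $\mathbf{a}/t(\mathbf{a})$ inside $R/t(R)$, converting 2.e) into 3.d), while 2.c) and 3.c) coincide verbatim.

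The main obstacle is the fine identification of $\mathbf{b}$ within the rewritten condition 5 of the general theorem: proving that every homomorphism $h$ verifying that reformulated condition necessarily has image contained in $\mathbf{b}/t(R)$, and that the idempotency of $\mathbf{b}/t(R)$ is forced, requires a tight interplay between hereditariness of $\te$, the $V$-presented structure of $\T$, and the classical quasi-tilting nature of $V$ inherited from the modular assumption on $\Ht$. Equally delicate is the verification in the bounded case that the TTF triple structure is genuinely available, i.e., that $\T$ is closed under products; this should be forced by the combination of 2.d) and the boundedness hypothesis, but making this precise is the technical heart of (2) $\Leftrightarrow$ (3).
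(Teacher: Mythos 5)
Your overall architecture (specialize Theorem \ref{teo. Ht is a module category}; use $\overline{\Gen}(V)=\T$ in the hereditary case to rewrite condition 5; translate to assertion 3 in the bounded case via $\Rej_{\T}(M)=\mathbf{a}M$ with $\mathbf{a}=\ann_R(V)$) matches the paper's strategy, and your easy directions (2.d)+2.e) $\Rightarrow$ condition 5, and 3) $\Rightarrow$ 1)) are fine. But there is a genuine gap in the hard implication 1) $\Rightarrow$ 2): you never actually derive 2.d) and 2.e) from the rewritten condition 5. Your stated principle that ``any $R$-homomorphism from a $V$-presented module to $R/t(R)$ must land in the annihilator modulo $t(R)$ of every quotient of $V$ killed by $t(R)$'' is inapplicable here: the relevant domains, $(\frac{Q}{X})^{(I)}$ and especially $H^{-1}(G)^{(I)}\in\F$, are not $V$-presented (the latter is torsion-free), so no containment of the image $\mathfrak{I}/t(R)$ in $\mathbf{b}/t(R)$ follows from it. Likewise, the idempotency of $\mathbf{b}/t(R)$ does not ``emerge'' from quasi-tiltingness plus $R/\mathbf{b}\in\T$; the paper obtains it structurally: using condition 3 of Theorem \ref{teo. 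Ht is a module category} (the reject condition) one shows $\Hom_{\overline{R}}(\Imagen(h'),T)=0$ for all $T\in\T$, hence $\T\cap\overline{R}\Mode$ is a TTF class in $\overline{R}\Mode$ (with $\overline{R}=R/t(R)$), and then the classical correspondence between TTF classes and idempotent two-sided ideals identifies the defining ideal as $\overline{\mathbf{b}}=\ann_{\overline{R}}(V/t(R)V)=\Imagen(h')$. Your ``maximality argument'' is not a substitute for this identification, and without it neither $\mathfrak{I}=\mathbf{b}$ nor $\overline{\mathbf{b}}^2=\overline{\mathbf{b}}$ is established.

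Two further pieces of 2.d)/2.e) are missing entirely. First, the parenthetical (but genuine) claim that $\mathbf{b}/t(R)$ is finitely generated on the left: the paper proves it by showing $\overline{V}=V/t(R)V$ is finitely presented over $\overline{R}$, that $R/\mathbf{b}$ is a split quotient of $\overline{V}^{(n)}$ (being projective over $R/\mathbf{b}$), hence finitely presented over $\overline{R}$. Second, the construction of the morphism required by 2.e): the $h$ furnished by condition 5 lands in $\overline{R}$, not in $\mathbf{b}/t(R)$, so you cannot simply corestrict; the paper composes $h^{(n)}$ with an epimorphism $\overline{R}^{(n)}\epic\overline{\mathbf{b}}$ (available only after finite generation is known) and uses the idempotency $\overline{\mathbf{b}}\cdot\overline{\mathbf{b}}=\overline{\mathbf{b}}$ to check that the restriction to $H^{-1}(G)^{(J)}$ is still surjective. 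Finally, for 2) $\Leftrightarrow$ 3) you identify the wrong ``technical heart'': closure of $\T$ under products is not proved separately; boundedness gives $R/\ann_R(V)\in\T$, a cited result of Stenstr\"om gives that $\ann_R(V)$ is idempotent, and the TTF structure follows at once, after which $\mathbf{b}=\ann_R(V)+t(R)$ and $\mathbf{b}/t(R)\cong\mathbf{a}/t(\mathbf{a})$ convert 2.e) into 3.d) as you indicate.
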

\begin{proof}
1) $\Longrightarrow$ 2) The complex $G$ satisfies all properties 1-5 of theorem \ref{teo. Ht is a module category}. In particular, properties 2.a) 2.b) and 2.c) are automatic. To check properties 2.d) and 2.e), we proceed in several steps. \\

Step 1: $(\T \cap \overline{R}\text{-Mod},\F)$ is the right constituent torsion pair of a TTF triple in $\overline{R}$-Mod, where $\overline{R}:=\frac{R}{t(R)}$. Indeed, by property 5 of theorem \ref{teo. Ht is a module category}, there is a morphism $h:(Q/X)^{(I)} \flecha \overline{R}$ such that if $h^{'}=h_{|H^{-1}(G)^{(I)}}$, then $\Coker(h^{'}) \in \overline{\Gen}(V)$. But in this case $\overline{\Gen}(V)=\Gen(V)=\T$ since $\te$ is hereditary. Thus, if we put $\overline{\mathbf{b}}=\frac{\mathbf{b}}{t(R)}=\Imagen(h^{'})$, then $R/\mathbf{b}\in \T$. \\

We claim that a $\overline{R}$-module $T$ is in $\T$ if, and only if, $\Hom_{\overline{R}}(\overline{\mathbf{b}},T)=0$. This will imply that $\mathcal{T} \cap \overline{R}$-Mod is also a torsionfree class in $\overline{R}$-Mod, since $\Ker(\Hom_{\overline{R}}(\overline{\mathbf{b}},?))$ is closed under subobjects, products and extensions. Let $f:\overline{\mathbf{b}} \flecha T$ be any morphism, where $T\in \T$. We then get a pushout commutative diagram
$$\xymatrix{& H^{-1}(G)^{(I)} \hspace{0.04 cm}\ar@{^(->}[r] \ar[d]^(0.4){h^{'}} & (\frac{Q}{X})^{(I)} \ar[d]^(0.4){h}\\0 \ar[r] & \overline{\mathbf{b}} \hspace{0.04 cm} \ar@{^(->}[r]^{j} \ar[d]^{f} & \overline{R} \ar[r] \ar[d]^(0.4){g^{'}} & R/\mathbf{b} \ar[r] \ar@{=}[d]& 0 \\ 0 \ar[r] & T \ar[r]^{\lambda} & T^{'} \ar[r] \pullbackcorner & R/\mathbf{b} \ar[r] & 0}$$
Note that $T^{'}\in \T$, and hence by property 3) of theorem \ref{teo. Ht is a module category}, we get $g^{'}\hspace{0.03 cm} \circ \hspace{0.03 cm} h_{|H^{-1}(G)^{(I)}}=0$. But $g^{'}\hspace{0.03 cm} \circ \hspace{0.03 cm} h_{|H^{-1}(G)^{(I)}}$ is equal to the composition $\xymatrix{H^{-1}(G)^{(I)} \ar[r]^{\hspace{0.75 cm}h^{'}} & \overline{\mathbf{b}} \ar[r]^{f \hspace{0.1 cm}} & T \ar[r]^{\lambda  \hspace{0.15 cm}} & T^{'}}$, which is then the zero map. This implies that $f=0$ since $h^{'}$ is an epimorphism and $\lambda$ is a monomorphism. This show the `only if' part of our claim. For the `if' part, suppose that $\Hom_{\overline{R}}(\overline{\mathbf{b}},T)=0$ and fix an epimorphism $q:\overline{R}^{(J)}\epic T$. Then $q(\overline{\mathbf{b}}^{(J)})=0$, which gives an induced epimorphism $\overline{q}:\frac{\overline{R}^{(J)}}{\overline{\mathbf{b}}^{(J)}}\cong (\frac{R}{\mathbf{b}})^{(J)} \epic T$. It follows that $T\in \T$, which settles our claim.\\

Step 2: The two-sided idempotent ideal of $\overline{R}$ which defines the TTF triple in $\overline{R}$-Mod is $\overline{\mathbf{b}^{'}}=\mathbf{b}^{'}/t(R)$, where $\mathbf{b}^{'}=\ann_{R}(V/t(R)V)$. Let $\overline{\mathbf{b}^{'}}=\mathbf{b}^{'}/t(R)$ be the two-sided idempotent ideal of $\overline{R}$ which defines the TTF triple mentioned above. We then know (see \cite[Section VI.8]{S}) that $\Gen(\overline{\mathbf{b}^{'}})=\{\overline{\mathbf{b}^{'}}C:C\in \overline{R}\text{-Mod}\}=\{C\in \overline{R}\text{-Mod}: \Hom_{R}(C,T)=0, \text{ for all }T\in \T\cap \overline{R}\text{-Mod}\}$ and $\T\cap \overline{R}\text{-Mod}=\{T\in \overline{R}\text{-Mod}:\overline{\mathbf{b}^{'}}T=0\}=\Gen(R/ \mathbf{b^{'}})$. In particular, for the ideal $\mathbf{b}$ of $R$ given in the first step, we have that $\overline{\mathbf{b}^{'}}\overline{\mathbf{b}}=\overline{\mathbf{b}}$ and $\overline{\mathbf{b}^{'}}\frac{R}{\mathbf{b}}=0$. From the first equality we get that $\overline{\mathbf{b}}\subseteq \overline{\mathbf{b}^{'}}$, since $\overline{\mathbf{b}^{'}}$ is two-sided, while from the second one we get $\mathbf{b}^{'}\subseteq \mathbf{b}$, and hence $\overline{\mathbf{b}^{'}}\subseteq \overline{\mathbf{b}}$. It follows that $\overline{\mathbf{b}^{'}}= \overline{\mathbf{b}}$. We then get that $\Gen(V/t(R)V)=\T \cap \overline{R}$-Mod$=\Gen(R/\mathbf{b})$, from which we deduce that $\mathbf{b}^{'}=\mathbf{b}=\ann_{R}(V/t(R)V)=\ann_{R}(R/\mathbf{b})$, since $\ann_{\overline{R}}(R/\mathbf{b})=\mathbf{b}/t(R).$ \\

Step 3: Verification of properties 2.d) and 2.e). Except for the finite generation of $\overline{\mathbf{b}}$, property 2.d) follows immediately from the previous steps. Note that we get a natural isomorphism $\Hom_{\overline{R}}(V/t(R)V,?) \iso \Hom_{R}(V,?)_{|\overline{R}\text{-Mod}}$, thus, $\overline{V}=V/t(R)V$ is a finitely presented $\overline{R}$-module. On the other hand, $R/\mathbf{b}$ is finitely generated and we have an epimorphism $\overline{V}^{(n)} \epic R/\mathbf{b}$. This epimorphism splits since both its domain and codomain are annihilated by $\mathbf{b}$ and $R/\mathbf{b}$ is projective in $R/\mathbf{b}$-Mod. Therefore, $R/\mathbf{b}$ is finitely presented as a left $\overline{R}$-module, which is equivalent to say that $\overline{b}$ is finitely generated as a left ideal of $\overline{R}$. Let us fix an epimorphism $\pi: \overline{R}^{(n)} \epic \overline{\mathbf{b}}$. Using the canonical map $h:(Q/X)^{(I)} \flecha \overline{R}$ (see step 1), we obtain a morphism $g:[(Q/X)^{(I)}]^{(n)} \xymatrix{\ar[r]^{h^{(n)}} &} \overline{R}^{(n)} \xymatrix{\ar@{>>}[r]^{\pi}&} \overline{\mathbf{b}}$. Put $Y=H^{-1}(G)$, we have
\begin{center}
$g([Y^{(I)}]^{(n)})=\pi(\Imagen(h^{'})^{(n)})=\pi(\overline{\mathbf{b}}^{(n)})=\pi(\overline{\mathbf{b}}\overline{R}^{(n)})=\overline{\mathbf{b}}.\overline{\mathbf{b}}=\overline{\mathbf{b}}$,
\end{center}
which proves 2.e).\\

2) $\Longrightarrow$ 1) The complex $G$ satisfies all properties 1-4 of theorem \ref{teo. Ht is a module category}. Moreover, if $h:\Coker(j)^{(J)} \flecha \overline{\mathbf{b}}$ is the homomorphism given in 2.e), then $h$ is an epimorphism and the composition $\Coker(j)^{(J)} \flecha \overline{\mathbf{b}} \monic \overline{R}$ has $R/\mathbf{b}$ as its cokernel. By property 2.d), this cokernel is in $\T=\Gen(V)$.\\

We assume in the rest of the proof that $\te$ is bounded.\\

1), 2) $\Longrightarrow$ 3) We know that $\T=\Gen(V)\subseteq \Ker(\Ext^{1}_{R}(V,?))$ and that $_{R}V$ is finitely presented. Note that $\mathbf{a}:=\ann_{R}(V)$ annihilates all modules in $\T$, hence, if $\mathbf{b}$ is a ideal on the left of $R$ such that $R/\mathbf{b}\in \T$, then $\mathbf{a} \subseteq \ann_{R}(R/\mathbf{b}) \subseteq \mathbf{b}$. Moreover, the bounded condition of $\te$ implies that $R/\mathbf{a}\in \T$. By \cite[Proposition VI.6.12]{S}, we know that $\mathbf{a}$ is idempotent, so that $\te$ is the right constituent torsion pair of the TTF triple defined by $\mathbf{a}$. This allows to identify $\T$ with $R/\mathbf{a}$-Mod and, using that also $\T=\Gen(V)\subseteq \Ker(\Ext^{1}_{R}(V,?)),$ we conclude that $\add(V)=\add(R/\mathbf{a}).$ We then get condition 3.a). We also get that $R/\mathbf{a}$ is a finitely presented $R$-module, so that $\mathbf{a}$ is finitely generated on the left. \\

Note that condition 3.c) holds and $\Rej_{\T}(M)=\mathbf{a}M$, for each $R$-module $M.$ Then condition 3.b) is equivalent to saying that $H^{-1}(G)\subseteq \Rej_{\T}(\Coker(j))$. Finally, following the proof of the implication 1) $\Longrightarrow$ 2), we see that the ideal $\mathbf{b}$ obtained in assertion 2 is identified by the properties that $\overline{\mathbf{b}}=\mathbf{b}/t(R)$ is idempotent and a $\overline{R}$-module $T$ is in $\T$ if, and only if, $\mathbf{b}T=0$. Then we have $\mathbf{b}=\mathbf{a}+t(R)$ and so condition 3.d) follows by using the isomorphism $\frac{\mathbf{b}}{t(R)}=\frac{\mathbf{a}+t(R)}{t(R)}\cong \frac{\mathbf{a}}{\mathbf{a} \cap t(R)}=\frac{\mathbf{a}}{t(\mathbf{a})}$. \\

3) $\Longrightarrow$ 1) Due to the fact that $\Rej_{\T}(M)=\mathbf{a}M$, for each $R$-module $M$, it is easily verified that $G$ satisfies all conditions 1-5 of theorem \ref{teo. Ht is a module category}.
\end{proof}

\begin{corollary}\label{cor. hereditary + fateful ---> right constituent}
If $\te=(\T,\F)$ is a faithful hereditary torsion pair such that its heart $\Ht$ is a module category, then $\te$ is the right constituent pair of a TTF triple in $R$-Mod defined by an idempotent ideal $\mathbf{a}$ which is finitely generated on the left.
\end{corollary}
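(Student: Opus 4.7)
Since $\Ht$ is a module category and $\te=(\T,\F)$ is hereditary, I would start by applying Theorem \ref{teo. Ht module c. with t here.} to obtain a progenerator $G$ of $\Ht$ of the form indicated there, satisfying the five conditions 2.a)--2.e). The faithful hypothesis, namely $R\in\F$, forces $t(R)=0$, so that $\overline{R}:=R/t(R)=R$. Consequently the ideal $\mathbf{b}=\ann_R(V/t(R)V)$ appearing in condition 2.d) is simply $\mathbf{b}=\ann_R(V)$, and condition 2.d) then reads: $\mathbf{b}$ is an idempotent two-sided ideal of $R$, finitely generated as a left ideal, and $R/\mathbf{b}\in\T$.

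Next I would identify $\te$ with the right constituent pair of the TTF triple defined by $\mathbf{b}$. Setting $\T':=\{M\in R\Mode : \mathbf{b}M=0\}$, the inclusion $\T'\subseteq\T$ is immediate: every $M\in\T'$ is a quotient of $(R/\mathbf{b})^{(I)}$, which lies in $\T$ because $R/\mathbf{b}\in\T$ and $\T$ is closed under coproducts and epimorphic images. For the reverse inclusion I would use Lemma \ref{lem. progenerator of Ht}, which gives $\T=\Gen(V)$. Since $\mathbf{b}=\ann_R(V)$ annihilates $V$, every epimorphism $V^{(I)}\epic T$ shows $\mathbf{b}T=0$, so $T\in\T'$. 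Hence $\T=\T'$, which means $\T$ is a TTF class and $\te$ is the right constituent pair of the TTF triple $({}^{\perp}\T,\T,\F)$ determined by the idempotent two-sided ideal $\mathbf{b}$.

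Finally, the finite generation of $\mathbf{b}$ on the left is already part of condition 2.d) of Theorem \ref{teo. Ht module c. with t here.} (transported to our setting through $\overline{R}=R$), so no extra work is required.

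The whole argument is essentially a specialization of the theorem to the faithful case; the main (but very mild) obstacle is simply checking the equality $\T=\{M:\mathbf{b}M=0\}$, which boils down to exploiting $\mathbf{b}V=0$ and the presentation $\T=\Gen(V)$ provided by the modular condition of $\Ht$.
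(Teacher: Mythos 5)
Your argument is correct and follows essentially the same route as the paper: specialize Theorem \ref{teo. Ht module c. with t here.} using $t(R)=0$ to get that $\mathbf{b}=\ann_R(V)$ is idempotent, finitely generated on the left, with $R/\mathbf{b}\in\T$, and then use $\T=\Gen(V)=\Pres(V)$ from Lemma \ref{lem. progenerator of Ht} to identify $\T$ with the modules annihilated by $\mathbf{b}$. The only difference is cosmetic: you spell out the equality $\T=\{M:\mathbf{b}M=0\}$ directly, where the paper refers back to the proof of the implication 1),2) $\Longrightarrow$ 3) of that theorem.
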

\begin{proof}
By hypothesis $t(R)=0$. From theorem \ref{teo. Ht module c. with t here.}, we have that, if $G$ is a progenerator of $G$, then $R/\mathbf{b}\in \T$, where $\mathbf{b}=\ann_{R}(V/t(R)V)=\ann_{R}(V)$ is an idempotent ideal of $R$ which is finitely generated on the left, and $V:=H^{0}(G)$. The result follows using the fact that $\T=\Gen(V)=\Pres(V)$ and following the proof 1), 2) $\Longrightarrow$ 3) of theorem \ref{teo. Ht module c. with t here.}. 
\end{proof}

\begin{corollary}\label{cor. here commutative ring}
Let $R$ be a commutative ring and let $\te=(\T,\F)$ be a hereditary torsion pair in $R$-Mod. The heart $\Ht$ is a module category if, and only if, $\te$ is (left or right) constituent pair of a centrally split TTF triple in $R$-Mod. In that case $\Ht$ is equivalent to $R$-Mod.
\end{corollary}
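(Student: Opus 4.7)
The plan is to combine Theorem~\ref{teo. Ht module c. with t here.} with a Nakayama argument. Since $R$ is commutative, every hereditary torsion pair in $R$-Mod is bounded (Example~\ref{exam. bounded torsion pair}(1)), so condition 3 of that theorem is available to us.

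For the ``only if'' direction, I would start from the datum produced by Theorem~\ref{teo. Ht module c. with t here.}: an idempotent ideal $\mathbf{a}$, finitely generated on the left, such that $\te$ is the right constituent pair of the TTF triple defined by $\mathbf{a}$. The task is to upgrade this TTF to a centrally split one. The key step is to apply Nakayama's lemma to the finitely generated $R$-module $\mathbf{a}$, using the identity $\mathbf{a}\cdot\mathbf{a}=\mathbf{a}$: it produces $x\in\mathbf{a}$ with $(1-x)\mathbf{a}=0$, whence $x=x^{2}$ and $\mathbf{a}=Rx$. Since $R$ is commutative, $x$ is a central idempotent, so the associated TTF triple is centrally split by the characterization recalled in Section~\ref{section:torsion pair}.

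For the ``if'' direction, suppose $\te$ is the right (or, after replacing $e$ by $1-e$, the left) constituent pair of a centrally split TTF triple, given by a central idempotent $e$. Then $\T=\{M:eM=0\}$ and $\F=\{M:(1-e)M=0\}$, and the ring decomposition $R\cong Re\times R(1-e)$ yields an equivalence $R\Mode\simeq Re\Mode\times R(1-e)\Mode$ under which $\F$ and $\T$ correspond to the two factors. In particular $\Hom_{R}(\F,\T)=0$, so the differential of any complex in $\Ht$ vanishes and each object of $\Ht$ decomposes canonically as $F[1]\oplus T[0]$ with $F\in\F$ and $T\in\T$. This gives the equivalence $\Ht\simeq \F\times\T\cong R\Mode$, which simultaneously shows that $\Ht$ is a module category and identifies it with $R\Mode$.

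The main (and rather mild) obstacle is the identification step in the ``only if'' direction; everything else amounts to unwinding the definitions of centrally split TTF and of the heart $\Ht$.
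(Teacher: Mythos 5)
Your proposal is correct, and its first half coincides with the paper's argument: the paper also feeds the boundedness of $\te$ (example \ref{exam. bounded torsion pair}) into theorem \ref{teo. Ht module c. with t here.} to obtain a finitely generated idempotent ideal $\mathbf{a}$ defining the TTF triple, and then invokes the fact that such an ideal in a commutative ring is generated by an idempotent, citing the proof of \cite[Lemma VI.8.6]{S} — your Nakayama computation ($x\in\mathbf{a}$ with $(1-x)\mathbf{a}=0$, hence $x=x^2$ and $\mathbf{a}=Rx$) is exactly the content of that citation, so here you are only inlining the reference. Where you genuinely diverge is in the converse and in the identification $\Ht\simeq R\Mode$: the paper routes this through corollary \ref{cor. left split} (a forward reference), which in turn rests on corollary \ref{cor. a projective} and example \ref{exam. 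R/a+a as progenerator}, i.e.\ on exhibiting $\frac{R}{\mathbf{a}}[0]\oplus\frac{\mathbf{a}}{t(\mathbf{a})}[1]$ as a progenerator and computing its endomorphism ring. Your argument instead uses the ring decomposition $R\cong Re\times R(1-e)$ to split $\Ht$ directly as $\F[1]\times\T[0]$; this is more elementary and self-contained, at the cost of working only in the centrally split case, whereas the paper's machinery also yields the left-split (non-central) case of corollary \ref{cor. left split}. One phrasing to repair: ``the differential of any complex in $\Ht$ vanishes'' is false for an arbitrary representative (e.g.\ an acyclic complex with nonzero differential represents $0$ in $\Ht$); what you actually need, and what is true, is that the canonical sequence $0\to H^{-1}(M)[1]\to M\to H^{0}(M)[0]\to 0$ splits because $\Ext^{1}_{\Ht}(T[0],F[1])=\Ext^{2}_{R}(T,F)=0$ when $T$ and $F$ live over the two distinct ring factors, and likewise $\Hom_{\Ht}(F[1],T[0])=0=\Hom_{\Ht}(T[0],F[1])$, giving $\Ht\simeq\F\times\T\cong Re\Mode\times R(1-e)\Mode\cong R\Mode$.
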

\begin{proof}
Since $\te$ is bounded (see examples \ref{exam. bounded torsion pair}), last theorem says that $\te$ is the right constituent torsion pair of a TTF triple in $R$-Mod defined by an idempotent ideal $\mathbf{a}$ which is finitely generated. But each finitely generated idempotent ideal of a commutative ring is generated by an idempotent element (see the proof of \cite[Lemma VI.8.6]{S}). Then the TTF triple is centrally split. Moreover, by Corollary \ref{cor. left split} below, we have that $\Ht$ is equivalent to $R$-Mod.
\end{proof}

\subsection{The right constituent of a TTF triple}

Among the hereditary torsion pairs we have as a particular case the right constituent torsion pair of a TTF triple. For this reason it is natural to ask if in this case the conditions in theorem \ref{teo. Ht module c. with t here.} are simplified. Furthermore, as shown in theorem \ref{teo. Ht module c. with t here.} and corollary \ref{cor. hereditary + fateful ---> right constituent}, hereditary torsion pairs which are the right constituent of a TTF triple appear quite naturally when studying the modular condition of the heart. In this subsection we fix an idempotent ideal $\mathbf{a}$ of $R$ and its associated TTF triple $(\mathcal{C,T,F})$ and want to study when the pair $\te=(\T,\F)$ has the property that its heart $\Ht$ is a module category. When this is the case, by theorem \ref{teo. Ht module c. with t here.}, we know that $\mathbf{a}$ is finitely generated on the left. We start with the HKM condition.

\begin{corollary}
Let $\mathbf{a}$ be an idempotent ideal of $R$ which is finitely generated on the left, let $\te=(\T,\F)$ be the right constituent torsion pair of the associated TTF triple and let 
$\xymatrix{P^{\bullet}:=\cdots \ar[r] & 0 \ar[r] & Q \ar[r]^{d} & P \ar[r] & 0 \ar[r] & \cdots}$ be a complex of finitely generated projective modules concentrated in degrees -1 and 0. The following assertions are equivalent:
\begin{enumerate}
\item[1)] $P^{\bullet}$ is an HKM complex such that $\te$ is its associated HKM torsion pair;
\item[2)] The following conditions hold:
\begin{enumerate}
\item[a)] $\mathcal{X}(P^{\bullet}) \subseteq \T$;

\item[b)] $H^{0}(P^{\bullet})$ is a progenerator of $R/\mathbf{a}$-Mod;

\item[c)] $\Ker(d)\subseteq \mathbf{a}Q$;

\item[d)] there is a homomorphism $p:Q^{(I)} \epic \mathbf{a}/t(\mathbf{a})$, for some set $I$, such that its restriction to $\Ker(d)^{(I)}$ is an epimorphism.
\end{enumerate}
\end{enumerate}
\end{corollary}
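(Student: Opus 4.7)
The plan is to deduce the corollary from the equivalence $1 \Longleftrightarrow 3$ of proposition \ref{pro. HKM complex}, showing that, in our setting, each of its conditions 3.a), 3.b), 3.c) translates, respectively, into (a)+(b), (c), and (d) of assertion 2 of the new corollary. Since $\te$ is the right constituent of the TTF triple defined by $\mathbf{a}$, we have $\T = \{M \in R\text{-Mod} : \mathbf{a}M = 0\} = R/\mathbf{a}\text{-Mod}$, $\Rej_\T(M) = \mathbf{a}M$ for every $R$-module $M$, and, by the hereditary character of $\te$, $\overline{\Gen}(V) = \Gen(V) = \T$; moreover $V = \Coker(d)$ is finitely presented as an $R$-module since $P$ and $Q$ are. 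With these identifications, 3.b) of proposition \ref{pro. HKM complex} is exactly 2.c), and the inclusion $\mathcal{X}(P^\bullet) \subseteq \T$ appearing in 3.a) is exactly 2.a).

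Next I would argue that the remaining part of 3.a), namely $\T = \Pres(V) \subseteq \Ker(\Ext^{1}_R(V,?))$, is equivalent to 2.b). The equality $\T = \Pres(V)$ says that the finitely presented $R/\mathbf{a}$-module $V$ generates $R/\mathbf{a}\text{-Mod}$. A direct calculation using $\mathbf{a} = \mathbf{a}^2$ shows that, for $M, N \in R/\mathbf{a}\text{-Mod}$, any extension $0 \to N \to E \to M \to 0$ in $R\text{-Mod}$ has $E \in R/\mathbf{a}\text{-Mod}$, since $\mathbf{a}E \subseteq N$ and therefore $\mathbf{a}E = \mathbf{a}^{2}E \subseteq \mathbf{a}N = 0$. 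Consequently $\Ext^{1}_R(V,T) = \Ext^{1}_{R/\mathbf{a}}(V,T)$ for every $T \in \T$, so $\T \subseteq \Ker(\Ext^{1}_R(V,?))$ is equivalent to the projectivity of $V$ in $R/\mathbf{a}\text{-Mod}$. Together with the generating condition, this gives exactly that $V$ is a progenerator of $R/\mathbf{a}\text{-Mod}$, i.e., 2.b).

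The hard step is the equivalence $3.c) \Longleftrightarrow 2.d)$. The direction $2.d) \Longrightarrow 3.c)$ is straightforward: composing $p$ with the inclusion $\mathbf{a}/t(\mathbf{a}) \hookrightarrow R/t(R)$ yields $h$ whose cokernel on $\Ker(d)^{(I)}$ equals $R/(\mathbf{a}+t(R))$, a quotient of $R/\mathbf{a}$ and hence in $\T$. For the converse, the key observation is that $t(R) = \{r \in R : \mathbf{a}r = 0\}$ is a two-sided ideal of $R$, so $R/t(R)$ carries a natural $R$-bimodule (indeed ring) structure in which $\mathbf{a}/t(\mathbf{a}) = \mathbf{a} \cdot R/t(R)$ is a left ideal. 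Given $h: Q^{(I)} \to R/t(R)$ as in 3.c), the hypothesis that $\Coker(h|_{\Ker(d)^{(I)}}) \in \T$ amounts to $\mathbf{a}/t(\mathbf{a}) \subseteq h(\Ker(d)^{(I)})$. Using the right action, for each $\bar{a} \in \mathbf{a}/t(\mathbf{a})$ the assignment $q \mapsto h(q)\bar{a}$ defines a \emph{left} $R$-linear map $Q^{(I)} \to \mathbf{a}/t(\mathbf{a})$. Fixing a finite generating set $a_1,\dots,a_n$ of $\mathbf{a}$ as a left ideal, I would assemble the maps $h\cdot \bar{a}_k$ into a single morphism $p: Q^{(I \times \{1,\dots,n\})} \to \mathbf{a}/t(\mathbf{a})$; its image equals $h(Q^{(I)})\cdot(\mathbf{a}/t(\mathbf{a})) \supseteq (\mathbf{a}/t(\mathbf{a}))^{2} = \mathbf{a}/t(\mathbf{a})$ (using $\mathbf{a}=\mathbf{a}^{2}$), and the same computation restricted to $\Ker(d)^{(I)}$ shows that $p|_{\Ker(d)^{(I\times\{1,\dots,n\})}}$ is also surjective, yielding 2.d).

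The main obstacle is precisely the construction of $p$ from $h$ in the last step, where it is crucial to pass from left to right multiplication to preserve $R$-linearity — this is what forces the use of both the finite generation of $\mathbf{a}$ as a left ideal and the bimodule structure of $R/t(R)$. Once this is done, the three translations above combined with proposition \ref{pro. HKM complex} give the desired equivalence $1 \Longleftrightarrow 2$.
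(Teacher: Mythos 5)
Your proof is correct, and the overall frame (reduce everything to proposition \ref{pro. HKM complex}) is the paper's; in particular your 2) $\Longrightarrow$ 1) — compose $p$ with the monomorphism $\mathbf{a}/t(\mathbf{a})\to R/t(R)$ induced by the inclusion $\mathbf{a}\subseteq R$ and note that the resulting cokernel is $R/(\mathbf{a}+t(R))\in\T$ — is exactly what the paper does, as are your identifications $\Rej_{\T}(M)=\mathbf{a}M$, $\overline{\Gen}(V)=\Gen(V)=\T$ (heredity), and $\Ext^{1}_{R}(V,T)\cong\Ext^{1}_{R/\mathbf{a}}(V,T)$ via $\mathbf{a}=\mathbf{a}^{2}$. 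Where you genuinely diverge is in 1) $\Longrightarrow$ 2): the paper passes through assertion 2 of proposition \ref{pro. HKM complex} to get the progenerator $G$ of $\Ht$ with $t(\Ker(d))$ in degree $-2$, notes that $\te$ is bounded, and then invokes theorem \ref{teo. Ht module c. with t here.}, whose condition 3.d) delivers the surjection onto $\mathbf{a}/t(\mathbf{a})$; you instead manufacture it directly from condition 3.c) of proposition \ref{pro. HKM complex} by right-multiplying $h$ by a finite left generating set $a_{1},\dots,a_{n}$ of $\mathbf{a}$ inside the ring $R/t(R)$ (legitimate because $t(R)$, being the right annihilator of $\mathbf{a}$, is two-sided) and using $\sum_{k}\overline{\mathbf{a}}\,\overline{a}_{k}=\overline{\mathbf{a}^{2}}=\overline{\mathbf{a}}$. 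This is sound — one small imprecision is that the image of the assembled $p$ is $\sum_{k}h(Q^{(I)})\overline{a}_{k}$ rather than literally $h(Q^{(I)})\cdot\overline{\mathbf{a}}$, but the containment you actually need, on the restriction to $\Ker(d)^{(I\times\{1,\dots,n\})}$, only uses $\overline{\mathbf{a}}\subseteq h(\Ker(d)^{(I)})$ and $\mathbf{a}^{2}=\mathbf{a}$, so the conclusion stands. Your computation is in essence the one hidden in Step 3 of the proof of theorem \ref{teo. Ht module c. with t here.} (the epimorphism $\pi:\overline{R}^{(n)}\to\overline{\mathbf{b}}$ composed with $h^{(n)}$), so what your route buys is a self-contained proof of the corollary that bypasses that heavier theorem, at the price of re-deriving locally the step it would have supplied.
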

\begin{proof}
By hypothesis, we have that $\Rej_{\T}(Q)=\mathbf{a}Q$, and hence condition 2.c) coincides with condition 3.b) of proposition \ref{pro. HKM complex}.\\

2) $\Longrightarrow$ 1) Let us consider the epimorphism $p:Q^{(I)} \epic \mathbf{a}/t(\mathbf{a})$ given by our property 2.d). If $\overline{j}:\mathbf{a}/(t(\mathbf{a})) \flecha R/t(R)$ is the homomorphism induced by the inclusion \linebreak $j:\mathbf{a} \monic R$, then the composition 
$$\xymatrix{h: Q^{(I)} \ar@{>>}[r]^{p} & \mathbf{a}/t(\mathbf{a}) \ar[r]^{\overline{j}} & R/t(R)}$$
satisfies the condition 3.c) of the proposition \ref{pro. HKM complex}. Indeed we have
$\Imagen(h_{|\Ker(d)^{(I)}})=\Imagen(h)=\Imagen(\overline{j})=\frac{\mathbf{a}+t(R)}{t(R)}$, so that the cokernel of $h_{|\Ker(d)^{(I)}}$ is isomorphic to $\frac{R}{\mathbf{a}+t(R)},$ which is a module in $\T=R/\mathbf{a}-$Mod. The implication follows from proposition \ref{pro. HKM complex} since $\Ext^{1}_{R}(H^{0}(P),T)\cong \Ext^{1}_{R/\mathbf{a}}(H^{0}(P),T)=0$, for each $T\in \T=R/\mathbf{a}-$Mod. \\

1) $\Longrightarrow$ 2) From proposition \ref{pro. HKM complex}, we have that the complex  

$$\xymatrix{G:=\cdots \ar[r] & 0 \ar[r] & t(\Ker(d)) \hspace{0.03cm} \ar@{^(->}[r]^{\hspace{0.65 cm}j} & Q \ar[r]^{d} & P \ar[r] & 0 \ar[r] & \cdots }$$ 
concentrated in degrees -2,1,0, is a progenerator of $\Ht$. On the other hand, since $\T$ is a TTF class, we know that $\te$ is bounded (see examples \ref{exam. bounded torsion pair}). By theorem \ref{teo. Ht module c. with t here.}, we get that $H^{0}(G)=H^{0}(P^{\bullet})$ is a progenerator of $R/\mathbf{a}$-Mod, and there is a morphism $h:(\frac{Q}{t(\Ker(d))})^{(I)} \flecha \mathbf{a}/t(\mathbf{a})$, for some set $I$, such that $h_{|H^{-1}(G)^{(I)}}:H^{-1}(G)^{(I)} \flecha \mathbf{a}/t(\mathbf{a})$ is an epimorphism. It is easy to see that the composition 
$$\xymatrix{p:Q^{(I)} \ar@{>>}[r]^{q \hspace{0.5 cm}} & (\frac{Q}{t(\Ker(d))})^{(I)} \ar[r]^{\hspace{0.3 cm}h} & \mathbf{a}/(t(\mathbf{a}))}$$
is a homomorphism that satisfies the condition 2.d), where $q:Q^{(I)} \epic (\frac{Q}{t(\Ker(d))})^{(I)}$ is the canonical map. Condition 2.a) follows the definition of HKM complex.
\end{proof}

\begin{definition}\rm{
Let $R$ be a ring. We shall denoted by $V(R)$ the \emph{additive monoid}\index{additive monoid} whose elements are the isoclasses of finitely generated projective $R$-modules, where $|P|+ |Q|=|P \oplus Q|$. }
\end{definition}

\begin{remark}\rm{
For each two-sided ideal $\mathbf{a}$ of the ring $R$, we have an obvious morphism of monoids $V(R) \flecha V(R/\mathbf{a})$ taking $|P|  \rightsquigarrow |P/\mathbf{a}P|$. This morphism need not be surjective. However, the class of rings $R$ for which it is surjective, independently of $\mathbf{a}$, is very large and includes the so-called exchange rings (see \cite{A}). In particular, it includes all semiperfect and, more generally, semiregular rings (i.e. those rings $R$ such that $R/J(R)$ is Von Neumann regular and idempotents lift module $J(R)$, where $J(R)$ denotes the Jacobson radical). }
\end{remark}

\vspace{0.3 cm}

Our next result, very important in the sequel, shows that the conditions for $\Ht$ to be a module category get rather simplified if we assume that the monoid morphism $V(R) \flecha V(R/\mathbf{a})$ is surjective.

\begin{theorem}\label{teo. progenerator Imd=aP}
Let $\mathbf{a}$ be an idempotent ideal of the ring $R$ which is finitely generated on the left, and let $(\mathcal{C,T,F})$ be the associated TTF triple. Consider the following assertions for $\te=(\T,\F):$
\begin{enumerate}
\item[1)] There is a finitely generated projective $R$-module $P$ satisfying the following conditions:
\begin{enumerate}
\item[a)] $P/\mathbf{a}P$ is a (pro)generator of $R/\mathbf{a}-Mod$;
\item[b)] There is an exact sequence $\xymatrix{0 \ar[r] & F \ar[r] & C \ar[r]^{q \hspace{0.1 cm}} & \mathbf{a}P \ar[r] & 0}$ in $R$-Mod, where $F\in \F$ and $C$ is a finitely generated module which is in $\C \cap \Ker(\Ext^{1}_{R}(?,\F))$ and generates $\C \cap \F$.
\end{enumerate}
\item[2)] The heart $\Ht$ is a module category. 

\end{enumerate}

Then 1) implies 2) and, in such a case, if $j:\mathbf{a}P \monic P$ is the inclusion, then the complex concentrated in degrees -1,0

$$\xymatrix{G:=\cdots \ar[r] & 0 \ar[r] & C \oplus \frac{C}{t(C)} \ar[rr]^{ \hspace{0.5 cm}(j \hspace{0.01 cm}\circ \hspace{0.01cm} q \ \ 0)}& & P \ar[r] & 0 \ar[r] & \cdots}$$
is a progenerator of $\Ht$. \\

When the monoid morphism $V(R) \flecha V(R/\mathbf{a})$ is surjective, the implication \linebreak 2) $\Longrightarrow$ 1) is also true.
\end{theorem}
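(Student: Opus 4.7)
My plan is to use Theorem~\ref{teo. Ht module c. with t here.}(3) as the main tool in both directions, which applies because $\te$ is bounded (being the right constituent of a TTF triple; see examples~\ref{exam. bounded torsion pair}). For (1)$\Longrightarrow$(2), since $C\oplus C/t(C)$ need not be projective, I first replace the proposed complex $G$ by a quasi-isomorphic complex of the shape required by that theorem: fix an epimorphism $\pi\colon Q\twoheadrightarrow C\oplus C/t(C)$ from a finitely generated projective $Q$ (available because $C$ is finitely generated), put $X:=\Ker(\pi)$, and form
$$G'':=\xymatrix{\cdots \ar[r] & 0 \ar[r] & X \ar[r]^{\iota} & Q \ar[r]^{d\pi} & P \ar[r] & 0 \ar[r] & \cdots},$$
concentrated in degrees $-2,-1,0$. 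Diagram chases give $H^{0}(G'')=P/\mathbf{a}P$ and $H^{-1}(G'')\cong F\oplus C/t(C)\in \F$, so $G''\in \Ht$ and $G''\cong G$ in $\D(R)$. I then verify the conditions (a)--(d) of Theorem~\ref{teo. Ht module c. with t here.}(3) for $G''$: (a) is exactly hypothesis (1.a); (b) reduces via $\pi$ to $F\oplus C/t(C)\subseteq \mathbf{a}(C\oplus C/t(C))$, which holds because idempotency of $\mathbf{a}$ gives $\mathbf{a}C=C$; (c) reduces to $\Ext^{1}_{R}(C\oplus C/t(C),\F)=0$, where the $C$-part is hypothesis (1.b) and the $C/t(C)$-part follows from the long exact $\Ext$-sequence of $0\to t(C)\to C\to C/t(C)\to 0$, using $\Hom_{R}(t(C),\F)=0$; and (d) uses that $\mathbf{a}/t(\mathbf{a})\in \C\cap \F$, so the generating hypothesis on $C$ yields an epimorphism $C^{(J)}\twoheadrightarrow \mathbf{a}/t(\mathbf{a})$ which, since $\mathbf{a}/t(\mathbf{a})\in \F$, factors through $(C/t(C))^{(J)}$; projecting onto the second summand of $C\oplus C/t(C)$ and precomposing with $\pi$ produces the required $h$.

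For (2)$\Longrightarrow$(1), assume surjectivity of $V(R)\to V(R/\mathbf{a})$. Theorem~\ref{teo. Ht module c. with t here.}(3) provides a progenerator $G'\colon X'\xrightarrow{j'} Q'\xrightarrow{d'} P'$ of $\Ht$ whose zeroth cohomology $V':=H^{0}(G')=\Coker(d')$ is a progenerator of $R/\mathbf{a}\Mode$. The surjectivity hypothesis lifts $V'$ to a finitely generated projective $R$-module $P$ with $P/\mathbf{a}P\cong V'$, which gives condition (1.a). Using projectivity of $P$, I lift the composite epimorphism $P\twoheadrightarrow P/\mathbf{a}P\cong V'\cong P'/\Imagen(d')$ to a morphism $\tilde{f}\colon P\to P'$; since $V'\in \T$, one has $\tilde{f}(\mathbf{a}P)\subseteq \mathbf{a}P'\subseteq \Imagen(d')$. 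Form the pullback $\tilde{C}:=\{(m,q)\in \mathbf{a}P\times Q': \tilde{f}(m)=d'(q)\}$ and set $C$ to be its quotient by the embedded copy $\{(0,j'(x')):x'\in X'\}$. The projection $\tilde{C}\to \mathbf{a}P$ is surjective with kernel $\Ker(d')$, and quotienting by $X'$ leaves kernel $\Ker(d')/X'=H^{-1}(G')$, yielding a short exact sequence $0\to H^{-1}(G')\to C\to \mathbf{a}P\to 0$ with $F:=H^{-1}(G')\in \F$.

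The main obstacle is checking the three remaining properties required of $C$ by (1.b). The key tool is an alternative description of $C$ as a submodule fitting into $0\to C\to \mathbf{a}P\oplus Q'/X'\to \Imagen(d')\to 0$: finite generation of $C$ will follow because $\mathbf{a}$ is finitely generated on the left (so $V'$ is finitely presented and $\Imagen(d')$ is finitely generated), so all terms on the right are finitely generated; the vanishing $\Ext^{1}_{R}(C,\F)=0$ should be extracted from condition~(c) of Theorem~\ref{teo. Ht module c. with t here.}(3) applied to $G'$ (which gives $\Ext^{1}_{R}(Q'/X',\F)=0$), together with projectivity of $P$ and a long exact sequence argument on the above sequence; and $C\in \C$ should follow from analysing the TTF-components of its neighbours. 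I expect the hardest step to be the generating property: my plan is to use condition~(d) of Theorem~\ref{teo. Ht module c. with t here.}(3), whose morphism $h$ expresses $\mathbf{a}/t(\mathbf{a})$ as a quotient of sums of $H^{-1}(G')$, combined with the observation that every object of $\C\cap \F$ is a quotient of a sum of copies of $\mathbf{a}/t(\mathbf{a})$ (because $\mathbf{a}M=M$ for $M\in\C$ and $t(\mathbf{a})M=0$ for $M\in\F$), in order to upgrade these coverings to one by sums of $C$ via the defining sequence.
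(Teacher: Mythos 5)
Your implication 1) $\Longrightarrow$ 2) is essentially the paper's own argument: you replace the two-term complex by a quasi-isomorphic complex with finitely generated projective middle term and verify conditions 3.a)--3.d) of theorem \ref{teo. Ht module c. with t here.} (the paper covers $C$ and $C/t(C)$ by two separate projective covers instead of one, which is immaterial), and your verifications there are correct.

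The implication 2) $\Longrightarrow$ 1) has a genuine gap. You construct $C$ as a module-level pullback $\tilde{C}=\mathbf{a}P\times_{P'}Q'$ modulo $X'$, taken from the \emph{original} progenerator $G'$, and you still owe exactly the four properties that make $C$ usable: finite generation, $C\in\C\cap\Ker(\Ext^{1}_{R}(?,\F))$, and generation of $\C\cap\F$; none of your sketches closes them. Finite generation does not follow from $0\flecha C\flecha \mathbf{a}P\oplus Q'/X'\flecha \Imagen(d')\flecha 0$: the kernel of an epimorphism onto a finitely generated module is finitely generated only when that quotient is finitely \emph{presented}, and $\Imagen(d')$ is only known to be finitely generated (its kernel $\Ker(d')$, like $X'$ and $H^{-1}(G')$, need not be finitely generated; no Noetherian hypothesis is in force). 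Membership in $\C$ cannot be read off the neighbours either: $\C$ is a torsion class, not closed under submodules, and in $0\flecha H^{-1}(G')\flecha C\flecha\mathbf{a}P\flecha 0$ the kernel lies in $\F$; moreover $Q'/X'\in\C$ is itself unclear for $G'$, since the argument $Q'=X'+\mathbf{a}Q'$ needs $\Imagen(d')=\mathbf{a}\Imagen(d')$, which is only guaranteed once one has arranged $\Imagen(d)=\mathbf{a}P$. The $\Ext$-vanishing is equally problematic: your sequence only traps $\Ext^{1}_{R}(C,F)$ between $\Ext^{1}_{R}(\mathbf{a}P\oplus Q'/X',F)$ and $\Ext^{2}_{R}(\Imagen(d'),F)$, and neither $\Ext^{1}_{R}(\mathbf{a}P,?)_{|\F}$ (a summand of copies of $\Ext^{2}_{R}(R/\mathbf{a},?)_{|\F}$, whose vanishing is an \emph{extra} hypothesis in example \ref{exam. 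R/a+a as progenerator}, not automatic) nor the latter group is controlled. Finally, for the generating property the arrows point the wrong way: condition 3.d) exhibits $\mathbf{a}/t(\mathbf{a})$ as an image of copies of $H^{-1}(G')$, which is a submodule of $C$, and the natural comparison map goes $C\flecha Q'/X'$, so no epimorphism from copies of $C$ is in sight.

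The missing idea, and what the paper does, is to transport the \emph{whole} progenerator complex rather than a single syzygy: using the lift $\tilde{f}:P\flecha P'$, one replaces $G'$ by a quasi-isomorphic complex $X\flecha Q\flecha P$ with $Q$ finitely generated projective, $j$ a monomorphism and $\Imagen(d)=\mathbf{a}P$. Since this new complex is again a progenerator of $\Ht$, conditions 3.a)--3.d) of theorem \ref{teo. Ht module c. with t here.} apply to \emph{it}, and $C:=Q/X$ then inherits everything at once: it is finitely generated because $Q$ is; $\mathbf{a}P=\mathbf{a}^{2}P=d(\mathbf{a}Q)$ together with 3.b) gives $Q=X+\mathbf{a}Q$, i.e. $C\in\C$; condition 3.c) is literally $\Ext^{1}_{R}(C,?)_{|\F}=0$; and 3.d) gives $\mathbf{a}/t(\mathbf{a})\in\Gen(C)$, hence $C$ generates $\C\cap\F$, with the required sequence being $0\flecha H^{-1}(G)\flecha Q/X\flecha\mathbf{a}P\flecha 0$.
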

\begin{proof}
1) $\Longrightarrow$ 2) Fix an exact sequence $\xymatrix{0 \ar[r] & F \ar[r] & C \ar[r]^{q} & \mathbf{a}P \ar[r] & 0}$ as indicated in condition 1.b). Taking $F^{'}\in \F$ arbitrary, applying the functor $\Hom(?,F^{'})$ to the sequence $\xymatrix{0 \ar[r] & t(C) \hspace{0.02 cm} \ar@{^(->}[r] & C \ar@{>>}[r]^{\text{pr.} \hspace{0.5 cm}} & C/t(C) \ar[r] & 0}$ and using condition 1.b), we get the following exact sequence:
$$\xymatrix{\cdots \ar[r] & \Hom_{R}(t(C),F^{'})=0 \ar[r] & \Ext^{1}_{R}(C/t(C),F^{'}) \ar[r] & \Ext^{1}_{R}(C,F)=0 \ar[r] & \cdots}$$
This show that $\Ext^{1}_{R}(C/t(C),?)_{|\F}=0$. But then any epimorphism $(\frac{R}{t(R)})^{n} \epic \frac{C}{t(C)}$ splits, which implies that $U:=C/t(C)$ is a finitely generated projective $R/t(R)$-module which is in $\C$. Moreover it generates $\C\cap \F$ since so does $C$. \\

Let $\pi_{U}:Q_{U} \epic U$ and $\pi_{C}:Q_{C} \epic C$ be two epimorphisms from finitely generated projective modules, whose respective kernels are denoted by $K_{U}$ and $K_{C}$. We will prove that the following complex $G$, concentrated in degrees -2,-1,0 and which is clearly quasi-isomorphic to the one in the statement, is a progenerator of $\Ht$:

$$\xymatrix{\cdots \ar[r] & 0 \ar[r] & K_U \oplus K_C \hspace{0.03 cm} \ar@{^(->}[r]^{\hspace{0.05 cm}j} & Q_U \oplus Q_C \ar[rr]^{\hspace{0.6 cm}(0 \ \ j \hspace{0.01 cm} \circ \hspace{0.01 cm}q \hspace{0.01 cm} \circ \pi_{C})} & & P \ar[r] & 0 \ar[r] & \cdots}$$

Note that $\Imagen((0 \ \ j \hspace{0.01 cm} \circ \hspace{0.01 cm}q \hspace{0.01 cm} \circ \pi_{C}))\cong \Imagen(j \hspace{0.01 cm} \circ \hspace{0.01 cm}q \hspace{0.01 cm} \circ \pi_{C})=\mathbf{a}P$, and hence $H^{0}(G)=P/\mathbf{a}P\in \T$. On the other hand, we consider the following commutative diagram with exact rows:
$$\xymatrix{0 \ar[r] & \Ker(q \hspace{0.01 cm} \circ \pi_{C} ) \ar[r] \ar@{>>}[d] & Q \ar[r]^{q \hspace{0.01 cm} \circ \hspace{0.01 cm}\pi_{C}} \ar@{>>}[d]^{\pi_{C}} & \mathbf{a}P \ar@{=}[d] \ar[r] & 0 \\ 0 \ar[r] & F \ar[r] & C \ar[r]^{q \hspace{0.15 cm}} & \mathbf{a}P \ar[r] & 0}$$
Using snake lemma, we obtain the exact sequence $\xymatrix{0 \ar[r] & K_C \ar[r] & \Ker(q \hspace{0.01cm} \circ \hspace{0.01 cm} \pi_{C}) \ar[r] & F \ar[r] & 0}$. It then follows that 

$$H^{-1}(G)=\frac{\Ker((0 \ \ j \hspace{0.01 cm} \circ \hspace{0.01 cm}q \hspace{0.01 cm} \circ \pi_{C}))}{K_{U}\oplus K_C}=\frac{Q_U\oplus \Ker(q \hspace{0.01cm} \circ \hspace{0.01 cm} \pi_{C})}{K_U \oplus K_C} \cong U \oplus F$$
which is in $\F$. This proves that $G$ is an object of $\Ht$. We next check all conditions 3.a)-3.d) of theorem \ref{teo. Ht module c. with t here.}. Clearly, condition 3.a) in that theorem is a consequence of our condition 1.a). Note next that $(K_U \oplus K_C)+ \mathbf{a}(Q_{U}\oplus Q_C)=Q_U \oplus Q_C$ because $U\cong \frac{Q_U}{K_U}$ and $C\cong \frac{Q_C}{K_C}$ are both in $\C=\{X \in R\text{-Mod}: \mathbf{a}X=X\}.$ In particular, condition 3.b) of the mentioned theorem is automatic, as so is condition 3.c) since $\Ext^{1}_{R}(U,?)$ and $\Ext^{1}_{R}(C,?)$ vanish on $\F$, and $\Coker(j) = \frac{Q_U\oplus Q_C}{K_U \oplus K_C}\cong U \oplus C$. \\

Using the fact that $U$ generates $\C \cap \F$, fix an epimorphism $p:U^{(J)} \epic \mathbf{a}/t(\mathbf{a})$. Identifying $\Coker(j)=U \oplus C$, we clearly have that $(p \ \ 0):\Coker(j)^{(J)}=U^{(J)} \oplus C^{(J)} \flecha \mathbf{a}/t(\mathbf{a})$ is a homomorphism whose restriction to $H^{-1}(G)^{(J)}=U^{(J)} \oplus F^{(J)}$ is an epimorphism. Then also condition 3.d) of theorem \ref{teo. Ht module c. with t here.} holds. \\

2) $\Longrightarrow$ 1) (assuming that the monoid morphism $V(R) \flecha V(R/\mathbf{a})$ is surjective). Let $\xymatrix{G:=\cdots \ar[r] & 0 \ar[r] & X^{'} \ar[r]^{j^{'}} & Q^{'} \ar[r]^{d^{'}} & P^{'} \ar[r] & 0 \ar[r] & \cdots}$ be a progenerator of $\Ht$. By theorem \ref{teo. Ht module c. with t here.}, we know that $\add(H^{0}(G)) =\add(R/\mathbf{a})$ and our extra hypothesis gives a finitely generated projective $R$-module $P$ such that $P/\mathbf{a}P\cong H^{0}(G)$, so that condition 1.a) holds. Now, due to the projective condition of $P$, we get the following commutative diagram with exact rows:

$$\xymatrix{0 \ar[r] & \mathbf{a}P \ar[r] \ar@{-->}[d] & P\ar[r] \ar@{-->}[d] & P/\mathbf{a}P \ar[r] \ar[d]^{\wr} & 0 \\ 0 \ar[r] & \Imagen(d^{'}) \ar[r] & P^{'} \ar[r] & H^{0}(G) \ar[r] & 0 }$$

After suitable taking pullbacks, we see that we can represent $G$ by a chain complex 

$$\xymatrix{G:=\cdots \ar[r] & 0 \ar[r] & X \ar[r]^{j} & Q \ar[r]^{d} & P \ar[r] & 0 \ar[r] & \cdots}$$

where $Q$ is finitely generated projective, $j$ is a monomorphism, and $\Imagen(d)=\mathbf{a}P$. Then $G$ satisfies all conditions 3.a)-3.d) of theorem \ref{teo. Ht module c. with t here.}. Note that $\mathbf{a}P=\mathbf{a}^{2}P=\mathbf{a}\Imagen(d)=d(\mathbf{a}Q),$ which implies that $Q=\Ker(d)+ \mathbf{a}Q$ and, by condition 3.b) of the mentioned theorem, that $Q=X+\mathbf{a}Q$. That is, the module $Q/X$ is in $\mathcal{C}$ and, by condition 3.c) of that theorem, we also have that $Q/X\in \Ker(\Ext^{1}_{R}(?,\F))$. The exact sequence needed for our condition 1.b) is then $\xymatrix{0 \ar[r] & H^{-1}(G) \ar@{^(->}[r] & Q/X \ar[r]^{\overline{d}} & \mathbf{a}P \ar[r] & 0}$ since $\mathcal{C}\cap \F$ is generated by $\mathbf{a}/t(\mathbf{a})$  and $\mathbf{a}/t(\mathbf{a})\in \Gen(Q/X)$ (see condition 3.d in theorem \ref{teo. Ht module c. with t here.}). 
\end{proof}

\vspace{0.3 cm}

We now give some applications of last theorem.

\begin{corollary}\label{cor. Trace of projective HKM torsion pair}
Let $Q$ be a finitely generated projective $R$-module and let us consider the hereditary torsion pair $\te=(\T,\F)$, where $\T=\Ker(\Hom_{R}(Q,?))$. If the trace of $Q$ in $R$ is finitely generated on the left, then $\te$ is an HKM torsion pair and $\Ht$ is a module category.
\end{corollary}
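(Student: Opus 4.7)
The plan is to exhibit an explicit HKM complex of finitely generated projective $R$-modules whose associated torsion pair is $\te$, which by Theorem~\ref{teo. HKM principal} will immediately yield that $\Ht$ is a module category. By Example~\ref{exam. TTF projective module}, the trace $\mathbf{a}:=\mathrm{tr}_R(Q)$ is an idempotent two-sided ideal of $R$ and $\te=(\T,\F)$ is the right constituent pair of the associated TTF triple, so $\T=\{M:\mathbf{a}M=0\}$ and $\F=\{M:\Gamma_{\T}(M)=0\}$. Using that $\mathbf{a}=\sum_{f\in\Hom_R(Q,R)}\Imagen(f)$ is finitely generated on the left, one extracts finitely many $f_1,\ldots,f_n\in\Hom_R(Q,R)$ whose images sum to $\mathbf{a}$, producing a surjection $\phi:=(f_1,\ldots,f_n):Q^n\twoheadrightarrow\mathbf{a}\subseteq R$.

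I would then consider the complex of finitely generated projective $R$-modules concentrated in degrees $-1$ and $0$,
$$P^{\bullet}:=(Q^n\xrightarrow{\phi}R)\oplus Q[1],$$
that is, $P^{-1}=Q^n\oplus Q$, $P^0=R$, with differential $d(r,q)=\phi(r)$. Since $P^{\bullet}$ is a bounded complex of projectives, $\Hom_{\D(R)}(P^{\bullet},M[0])$ equals the kernel of the map $d^*:M=\Hom_R(R,M)\to\Hom_R(P^{-1},M)$ sending $m\mapsto\bigl((r,q)\mapsto\phi(r)m\bigr)$, and this kernel is exactly $\{m\in M:\mathbf{a}m=0\}=\Gamma_{\T}(M)$; hence $\mathcal{Y}(P^{\bullet})=\F$. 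Similarly, $\Hom_{\D(R)}(P^{\bullet},M[1])=\Hom_R(P^{-1},M)/\Imagen(d^*)$, and a pair $(\psi_1,\psi_2)\in\Hom_R(Q^n,M)\oplus\Hom_R(Q,M)$ lies in $\Imagen(d^*)$ iff there is $m\in M$ with $\psi_1(r)=\phi(r)m$ and $\psi_2\equiv 0$. Requiring every such $\psi_2$ to vanish forces $\Hom_R(Q,M)=0$, i.e., $M\in\T$, and in that case $\Hom_R(Q^n,M)=0$ as well, so the factorization of $\psi_1$ is automatic (with $m=0$). Thus $\mathcal{X}(P^{\bullet})=\T$.

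This proves $(\mathcal{X}(P^{\bullet}),\mathcal{Y}(P^{\bullet}))=(\T,\F)=\te$, so by Definition~\ref{def. HKM torsion pair} the complex $P^{\bullet}$ is HKM and $\te$ is an HKM torsion pair, while Theorem~\ref{teo. HKM principal} then provides the equivalence $\Hom_{\D(R)}(P^{\bullet},-)_{|\Ht}:\Ht\xrightarrow{\sim}\Mod\End_{\D(R)}(P^{\bullet})^{op}$, showing $\Ht$ is a module category. The subtle point I expect to be the main obstacle is the insufficiency of the naive candidate $Q^n\xrightarrow{\phi}R$ on its own: without further input, $\mathcal{X}$ of that two-term complex can strictly contain $\T$ (for example when $Q\notin\F$, which does occur over path algebras such as $kA_2$ with $Q=Re_1$). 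Adjoining the stalk summand $Q[1]$ is the crucial device that enforces $\Hom_R(Q,M)=0$ on every $M\in\mathcal{X}(P^{\bullet})$, which is precisely the defining property of $\T$.
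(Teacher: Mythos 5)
Your proof is correct, and it takes a more direct route than the paper does. The paper's own argument first verifies condition 1 of Theorem \ref{teo. progenerator Imd=aP} (with $P=R$ and a suitable $C$ built from $Q^{(n)}\twoheadrightarrow\mathbf{a}$) to produce an explicit three-term progenerator of $\Ht$, and only then deduces the HKM property by checking condition 2 of Proposition \ref{pro. HKM complex} for the two-term complex $Q^{(n)}\oplus Q^{(n)}\xrightarrow{(j\circ p\ \ 0)}R$ --- essentially the same complex as yours, with $Q^{(n)}$ rather than a single copy of $Q$ in the zero-differential summand (immaterial, since $\Hom_R(Q,M)=0$ iff $\Hom_R(Q^{(n)},M)=0$). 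You instead compute $\mathcal{X}(P^{\bullet})$ and $\mathcal{Y}(P^{\bullet})$ straight from Definition \ref{def. HKM torsion pair}: your cochain-complex calculation of $\Hom_{\D(R)}(P^{\bullet},M[i])$ for $i=0,1$ is right, the identification $\Ker(d^*)=\{m:\mathbf{a}m=0\}=\Gamma_{\T}(M)$ is right (using that $\mathbf{a}$ is two-sided), and the observation that the extra summand $Q[1]$ forces $\Hom_R(Q,M)=0$ on $\mathcal{X}(P^{\bullet})$ is exactly the right point. Since $(\T,\F)$ is already known to be a torsion pair, $P^{\bullet}$ is HKM and Theorem \ref{teo. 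HKM principal} gives the module-category conclusion. What your route buys is economy: it bypasses the machinery of Theorem \ref{teo. progenerator Imd=aP} entirely. What the paper's route buys is an explicit progenerator lying \emph{inside} $\Ht$ (the three-term complex with $t(\Ker(p))$ in degree $-2$), which is exploited elsewhere in the chapter, e.g.\ in Example \ref{exam. HKM torsion} and Corollary \ref{cor. HKM=Prog. complex}; your argument yields the equivalence $\Ht\simeq\Mod\End_{\D(R)}(P^{\bullet})^{op}$ without exhibiting such an object.
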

\begin{proof}
We will check assertion 1 of theorem \ref{teo. progenerator Imd=aP} for the suitable choices. We have that $\T$ fits into a TTF triple (see example \ref{exam. TTF projective module}) $(\mathcal{C},\T,\F)$, where $\mathcal{C}=\Gen(Q)=\{T\in R\text{-Mod}: \mathbf{a}T=T\}$, where $\mathbf{a}=tr_{Q}(R)$. In particular, we have an obvious epimorphism $p:Q^{(n)}\epic \mathbf{a}$ for some integer $n>0$. Now, we consider the following short exact sequence

$$\xymatrix{0 \ar[r] & t(\Ker(p)) \hspace{0.02 cm} \ar@{^(->}[r] & Q^{(n)} \ar@{>>}[r]^{\pi_{C}} & C \ar[r] & 0}$$
where $C:=\frac{Q^{n}}{t(\Ker(p))}$. For each $F\in\F$, if applying the functor $\Hom_{R}(?,F)$ to the previous sequence, we obtain the following exact sequence
$$\xymatrix{\cdots \ar[r] & \Hom_{R}(t(\Ker(p)),F)=0 \ar[r] & \Ext^{1}_{R}(C,F) \ar[r] & \Ext^{1}_{R}(Q,F)=0 \ar[r] & \cdots}$$

Hence, $\Ext^{1}_{R}(C,?)$ vanishes on $\F$. By taking $P=R$, the short exact sequence given by the property 1.b) in theorem \ref{teo. progenerator Imd=aP} is the lowest row of the following commutative diagram, since $\Ker(q)\cong \frac{\Ker(p)}{t(\Ker(p))}\in\F$:

$$\xymatrix{& t(\Ker(p)) \ar@{=}[r] \ar@{^(->}[d]& t(\Ker(p)) \ar@{^(->}[d] \\ 
0 \ar[r] & \Ker(p) \ar@{>>}[d] \ar[r] & Q^{n} \ar@{>>}[r]^{p}  \ar@{>>}[d]^{\pi_{C}}& \mathbf{a} \ar@{=}[d]\ar[r] &0 \\ 0 \ar[r] & \Ker(q)  \ar[r] & C \ar[r]^{q} \ar@{-->>}[ru]^{q} & \mathbf{a} \ar[r] &  0}$$





By theorem \ref{teo. progenerator Imd=aP} the complex
$$\xymatrix{\cdots \ar[r] & 0 \ar[r] & \frac{Q^{(n)}}{t(\Ker(p))}\oplus \frac{Q^{(n)}}{t(Q^{(n)})} \ar[rr]^{\hspace{1.1 cm}(j\circ q \hspace{0.1 cm} 0)} & & R \ar[r] & 0 \ar[r] & \cdots}$$
is a progenerator of $\Ht$ since $\frac{C}{t(C)}\cong \frac{Q^{(n)}}{t(Q^{(n)})}$. This complex is clearly quasi-isomorphic to 
$$\xymatrix{\cdots \ar[r] & 0 \ar[r] & t(\Ker(p)) \oplus t(Q)^{(n)} \hspace{0.02 cm} \ar@{^(->}[r] & Q^{(n)} \oplus Q^{(n)}\ar[rr]^{\hspace{0.95 cm}( j \hspace{0.01 cm} \circ \hspace{0.01 cm} p\ \ 0 )} & & R \ar[r] & 0 \ar[r] & \cdots}$$

Since that $R/\mathbf{a} \in \T=\Ker(\Hom_{R}(Q,?))$, we have that the complex 
$$\xymatrix{P^{\bullet}:=\cdots \ar[r] & 0 \ar[r] & Q^{(n)} \oplus Q^{(n)} \ar[rr]^{\hspace{0.9 cm}(j \circ p \ \ 0)} && R \ar[r] & 0 \ar[r] & \cdots}$$
satisfies the condition 2 of the proposition \ref{pro. HKM complex}.      
\end{proof}

\vspace{0.3 cm}

The following result is folklore. We include a short proof for the convenience of the reader. We refer the reader to \cite[p. 124 and Chapter 11]{Ka} for the definition of projective cover and semiperfect rings. 

\begin{lemma}\label{lem. clave semiperfect rings}
Let $R$ be a ring and let $\mathbf{a}$ be an idempotent ideal. The following assertions hold:
\begin{enumerate}
\item[1)] If $p:P \epic M$ is a projective cover and $\mathbf{a}M=M$, then $\mathbf{a}P=P$;
\item[2)] Suppose that $R$ is semiperfect and let $\{e_1, \dots ,e_{n}\}$ be a family of primitive orthogonal idempotents such that $\underset{1 \leq i \leq n}{\sum} e_{i} =1$. If $\mathbf{a}$ is finitely generated on the left, then there is an idempotent element $e\in R$ (which is a sum of $e_{i}$'s) such that $\mathbf{a}=ReR$. 
\end{enumerate}
\end{lemma}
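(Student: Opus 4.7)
For assertion 1, I would invoke the characterization of a projective cover $p:P\epic M$ as an epimorphism whose kernel is \emph{superfluous} in $P$, i.e.\ $N+\Ker(p)=P$ forces $N=P$ for every submodule $N\leq P$. The plan is essentially one line: since $p(\mathbf{a}P)=\mathbf{a}\,p(P)=\mathbf{a}M=M$, the submodule $\mathbf{a}P$ already surjects onto $M$, so $\mathbf{a}P+\Ker(p)=P$, and then superfluity of $\Ker(p)$ gives $\mathbf{a}P=P$.

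For assertion 2, the strategy is to bootstrap via part 1, taking advantage of the good behaviour of projective covers over a semiperfect ring. Because $R$ is semiperfect, every finitely generated left $R$-module admits a projective cover by a finitely generated projective, and every finitely generated projective over $R$ is, by Krull--Schmidt, a finite direct sum of copies of the indecomposables $Re_i$. Since $\mathbf{a}$ is finitely generated on the left, I would therefore fix a projective cover $p:Q\epic\mathbf{a}$ with $Q\cong Re_{i_1}\oplus\cdots\oplus Re_{i_k}$. Idempotency of $\mathbf{a}$ gives $\mathbf{a}\cdot p(Q)=\mathbf{a}\mathbf{a}=\mathbf{a}=p(Q)$, so part 1 yields $\mathbf{a}Q=Q$. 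Decomposing along the summands, this forces $Re_{i_j}=\mathbf{a}Re_{i_j}=\mathbf{a}e_{i_j}$ for each $j$, and in particular $e_{i_j}\in\mathbf{a}$.

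Let $e$ be the sum of the \emph{distinct} idempotents $e_{i_j}$ appearing above; by orthogonality $e$ is an idempotent, and $e\in\mathbf{a}$, so the inclusion $ReR\subseteq\mathbf{a}$ is immediate from the two-sided nature of $\mathbf{a}$. The reverse inclusion is the main remaining point and the only real subtlety I foresee. Writing $a_j:=p(e_{i_j})\in\mathbf{a}$, left $R$-linearity of $p$ together with the identity $e_{i_j}\cdot e_{i_j}=e_{i_j}$ in $Re_{i_j}$ gives $a_j=p(e_{i_j}\cdot e_{i_j})=e_{i_j}\,p(e_{i_j})=e_{i_j}a_j$, whence $a_j\in e_{i_j}R$. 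Since each $e_{i_j}$ is a summand of $e$, one has $e_{i_j}=e_{i_j}e$, so $a_j\in e_{i_j}eR\subseteq ReR$. Because the $a_j$ generate $\mathbf{a}$ as a left ideal (they are the images under the surjection $p$ of a generating set of $Q$), I conclude $\mathbf{a}=\sum_{j}Ra_j\subseteq ReR$, which finishes the proof.
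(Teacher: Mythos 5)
Your proof is correct and follows essentially the same route as the paper: part 1 is the identical superfluous-kernel argument, and part 2 likewise takes a finitely generated projective cover of $\mathbf{a}$, applies part 1 to get $\mathbf{a}P=P$, decomposes $P$ into copies of the $Re_i$ by the semiperfect/Krull--Schmidt structure, and uses that the image of any map $Re_i\flecha R$ lies in $e_iR\subseteq Re_iR$. The only cosmetic difference is that the paper first partitions the $e_i$ according to membership in $\mathbf{a}$ and takes $e$ to be the sum of all $e_j\in\mathbf{a}$, whereas you take $e$ to be the sum of the idempotents actually occurring in the cover and check both inclusions directly; both yield a valid choice of $e$.
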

\begin{proof}
1) We have an equality $\Ker(p)+\mathbf{a}P=P$ since $p(\mathbf{a}P)=\mathbf{a}M=M$. We then get $P=\mathbf{a}P$ because $\Ker(p)$ is a superfluous submodule of $P$. \\

2) After reordering the primitive idempotents, we can fix $1\leq m \leq n$ such that $e_{i}\notin \mathbf{a}$, for $i\leq m$, while $e_{i}\in \mathbf{a}$, for $i>m$. Then we have $\mathbf{a}=(\underset{1 \leq i \leq m}{\oplus} \mathbf{a}e_i) \oplus (\underset{m < j \leq n}{\oplus}R{e_{j}})$, where the inclusion $\mathbf{a}e_{i} \subset Re_{i}$ is strict, for each $i=1,\dots,m$.\\

Let now $p:P \epic \mathbf{a}$ be the projective cover. By part 1), we know that $\mathbf{a}P=P$. But we know that $P$ is finite direct sum of copies of the $Re_i$. It follows that $P=\underset{m < j \leq n}{\oplus}Re_j^{(m_j)}$, for some nonnegative integers $m_j$, because if there were an index $1 \leq i \leq m$ such that $Re_i$ is a summand of $P$, we would have $Re_i=\mathbf{a}e_i$ and would get a contradiction. But any morphism $Re_j \flecha R$ has image contained in $Re_{j}R$. We then get that $\mathbf{a}\subseteq \underset{m < j \leq n}{\sum}Re_jR$. The converse inclusion is obvious by the way we chose the $e_i$'s. Putting $e=\underset{m < j \leq n}{\sum}e_j$, we have $ReR=\underset{m<j\leq n}{\sum}Re_jR=\mathbf{a}$.
\end{proof}

\vspace{0.3 cm}

For semiperfect rings, we have the following result.

\begin{corollary}\label{cor. caract. rings semiperfect}
Let $R$ be a semiperfect ring, let $\{e_1,\dots,e_n\}$ be a complete family of primitive orthogonal idempotents, and let $\te=(\T,\F)$ be the right constituent torsion pair of a TTF triple in $R$-Mod. The following assertions are equivalent:
\begin{enumerate}
\item[1)] The heart $\Ht$ is a module category;
\item[2)] $\te$ is an HKM torsion pair;
\item[3)] There is an idempotent element $e\in R$ (which is a sum of $e_i$'s) such that $ReR$ is finitely generated on the left and $ReR$ is the idempotent ideal which defines the TTF triple.
\end{enumerate}
\end{corollary}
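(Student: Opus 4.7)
The plan is to close the triangle of implications $2)\Rightarrow 1)\Rightarrow 3)\Rightarrow 2)$, each step relying on a previous result.

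For $2)\Rightarrow 1)$, I would simply invoke Theorem \ref{teo. HKM principal}: if $\te$ is an HKM torsion pair with associated HKM complex $P^{\bullet}$, then $\Hom_{\D(R)}(P^{\bullet},?):\Ht\to \Mod\End_{\D(R)}(P^{\bullet})^{op}$ is an equivalence, so $\Ht$ is automatically a module category. This implication needs no semiperfectness hypothesis.

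For $1)\Rightarrow 3)$, since $\te$ is already assumed to be the right constituent of a TTF triple, it is hereditary and bounded (examples \ref{exam. bounded torsion pair}). Apply Theorem \ref{teo. Ht module c. with t here.} (equivalence $1)\Leftrightarrow 3)$ there): from the modular condition of $\Ht$ I obtain an idempotent ideal $\mathbf{a}$, finitely generated on the left, defining the TTF triple. Since $R$ is semiperfect, Lemma \ref{lem. clave semiperfect rings}(2) applies and produces an idempotent $e\in R$ which is a sum of some of the $e_i$'s and satisfies $\mathbf{a}=ReR$, giving condition $3)$.

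For $3)\Rightarrow 2)$, the key observation is that $Re$ is a finitely generated projective $R$-module whose trace in $R$ is precisely $ReR$. Moreover, the torsion class of the right constituent pair of the TTF triple defined by $ReR$ is
\[
\{T\in R\text{-Mod} : ReR\cdot T = 0\} = \{T : eT=0\} = \Ker(\Hom_R(Re,?)),
\]
where I used $\Hom_R(Re,T)\cong eT$ and the fact that $eT=0$ forces $(ReR)T=0$ (and vice versa). Since $ReR$ is assumed to be finitely generated on the left, Corollary \ref{cor. Trace of projective HKM torsion pair} applied to $Q=Re$ immediately gives that $\te$ is an HKM torsion pair, yielding $2)$.

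The only non-routine verification is the identification of the torsion class with $\Ker(\Hom_R(Re,?))$ in the last step, but this is a standard manipulation using idempotents. No step should present a real obstacle since all the heavy lifting has been done in Theorem \ref{teo. Ht module c. with t here.}, Corollary \ref{cor. Trace of projective HKM torsion pair}, and Lemma \ref{lem. clave semiperfect rings}.
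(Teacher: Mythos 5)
Your proof is correct and follows exactly the paper's argument: $2)\Rightarrow 1)$ via the Hoshino--Kato--Miyachi theorem, $1)\Rightarrow 3)$ via theorem \ref{teo. Ht module c. with t here.} combined with lemma \ref{lem. clave semiperfect rings}(2), and $3)\Rightarrow 2)$ via corollary \ref{cor. Trace of projective HKM torsion pair} applied to $Q=Re$, whose trace in $R$ is $ReR$. The explicit identification of the torsion class with $\Ker(\Hom_R(Re,?))$ is a correct (and welcome) elaboration of a step the paper leaves implicit via example \ref{exam. TTF projective module}.
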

 \begin{proof}
 1) $\Longrightarrow$ 3) By theorem \ref{teo. Ht module c. with t here.}, we know that $\mathbf{a}$ is finitely generated on the left. Then assertion 3) follows from lemma \ref{lem. clave semiperfect rings}. \\
 2) $\Longrightarrow$ 1) It follows from \cite[Theorem 3.8]{HKM}. \\
 3) $\Longrightarrow$ 2) is a direct consequence of corollary \ref{cor. Trace of projective HKM torsion pair} since $ReR$ is the trace of $Re$ in $R$.
 \end{proof}

\begin{lemma}\label{lem. ImB<t(R)V}
Let $V$ be a classical quasi-tilting $R$-module such that $\Gen(V)$ is closed under taking submodules and let $t(R)$ be the trace of $V$ in $R$. An endomorphism $\beta$ of $V$ satisfies that $\Imagen(\beta)\subseteq t(R)V$, if and only if, it factors through a (finitely generated) projective $R$-module.
\end{lemma}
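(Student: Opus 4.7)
The lemma is an equivalence and the two implications require different ingredients.

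For the ``if'' direction, suppose $\beta = g \circ f$ with $f \colon V \to P$ and $g \colon P \to V$ for some projective $P$. Writing $P$ as a direct summand of some $R^{(I)}$ and composing $f, g$ with the inclusion and the splitting projection, we may assume $P = R^{(I)}$. Then $f$ is determined by a family $(f_i)_{i \in I}$ with $f_i \in V^{*} := \Hom_R(V, R)$, and $g$ by a family $(v_i)_{i \in I}$ of elements of $V$. For each $u \in V$ the expression $\beta(u) = \sum_{i \in I} f_i(u) v_i$ is a finite sum (since $f(u) \in R^{(I)}$ has finite support), and each scalar $f_i(u)$ lies in $\Imagen(f_i) \subseteq t(R)$; hence $\beta(u) \in t(R)V$. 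Note this argument works for arbitrary projective $P$, not only finitely generated ones.

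For the converse, assume $\Imagen(\beta) \subseteq t(R) V$. The plan is to exhibit a $V$-generated covering of $t(R)V$ through which $\beta$ lifts, and then exploit self-smallness to extract a \emph{finite} factorisation. Concretely, using $t(R) = \sum_{\phi \in V^{*}} \Imagen(\phi)$, define
$$\pi \colon V^{(V^{*} \times V)} \twoheadrightarrow t(R) V, \qquad u \text{ placed in component } (\phi, w) \longmapsto \phi(u) w,$$
which is surjective. Let $K = \Ker(\pi)$. Since $V^{(V^{*} \times V)} \in \Gen(V)$ and $\Gen(V)$ is closed under submodules, $K \in \Gen(V)$; the quasi-tilting hypothesis, via Lemma \ref{lem. quasi-tilting}, then gives $\Ext^{1}_R(V, K) = 0$, so the induced map $\Hom_R(V, V^{(V^{*} \times V)}) \to \Hom_R(V, t(R)V)$ is surjective. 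We can therefore lift $\beta \colon V \to t(R)V$ to some $\tilde\beta \colon V \to V^{(V^{*} \times V)}$.

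The self-small condition on $V$ then yields a canonical isomorphism $\Hom_R(V, V^{(V^{*} \times V)}) \cong S^{(V^{*} \times V)}$ with $S = \End_R(V)$, so $\tilde\beta$ is supported on finitely many components, say $(\phi_1, w_1), \ldots, (\phi_n, w_n)$, with corresponding endomorphisms $s_1, \ldots, s_n \in S$. Composing with $\pi$ gives
$$\beta(u) = \pi(\tilde\beta(u)) = \sum_{i=1}^{n} \phi_i(s_i(u)) \, w_i = \sum_{i=1}^n (\phi_i \circ s_i)(u)\, w_i,$$
which displays $\beta$ as the composition $V \xrightarrow{(\phi_i \circ s_i)_i} R^{n} \xrightarrow{(w_i)_i} V$ through the finitely generated free, hence finitely generated projective, module $R^{n}$. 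The conceptual key is recognising that closure of $\Gen(V)$ under submodules forces $K \in \Gen(V)$ and thereby activates the $\Ext^{1}$-vanishing from the quasi-tilting property; once the lift $\tilde\beta$ exists, self-smallness mechanically delivers the finiteness needed to land in a \emph{finitely generated} projective. Beyond this observation I anticipate no serious obstacles, only the bookkeeping of the presentation of $t(R)V$.
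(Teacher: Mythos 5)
Your proof is correct and follows essentially the same route as the paper's: the same covering $V^{(V^{*}\times V)}\twoheadrightarrow t(R)V$ sending $u$ in component $(\phi,w)$ to $\phi(u)w$, the closure of $\Gen(V)$ under submodules to place its kernel in $\Gen(V)\subseteq\Ker(\Ext^{1}_{R}(V,?))$, and the resulting lift of $\beta$ through this covering. The only (harmless) difference is that the paper stops once $\beta$ is seen to factor through the free module $R^{(V)}$, whereas you additionally invoke self-smallness of the classical quasi-tilting module $V$ to cut the factorization down to a finitely generated free module, which makes the parenthetical ``finitely generated'' explicit.
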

\begin{proof}
We only need to prove the ``only if'' part of the statement, because the ``if'' part is clear. Now, from lemma \ref{lem. quasi-tilting}, we know that $\Gen(V)$ is a torsion class. Furthermore, by hypothesis $\te=(\Gen(V), \Ker(\Hom_{R}(V,?)))$ is a hereditary torsion pair. Let $q:V^{(\Hom_{R}(V,R))} \epic t(R)=tr_{V}(R)$, $i:t(R) \monic R$, $\pi:R^{(V)}\epic V$ and $j:t(R)V \monic V$ be the canonical morphisms and let $\pi^{'}:t(R)^{(V)} \epic t(R)V$ be the epimorphism given by the restriction of $\pi$ to $t(R)^{(V)}$. We have a commutative diagram 

$$\xymatrix{V^{(\Hom_{R}(V,R) \times V)} \ar[rr]^{\hspace{0.9 cm}q^{(V)}} \ar@{>>}[rrd]^(0.55){\rho}&& t(R)^{(V)} \ar[r]^{\hspace{0.3 cm}i^{(V)}} \ar@{>>}[d]^(0.4){\pi^{'}} & R^{(V)} \ar@{>>}[d]^(0.41){\pi}\\ & & t(R)V \hspace{0.03cm} \ar@{^(->}[r]^{\hspace{0.2 cm}j} & V}$$

where $\rho:=\pi^{'} \circ q^{(V)}$. Note that, due to the hereditary condition of $\te$, we know that $\Ker(\rho)\in \T \subseteq \Ker(\Ext^{1}_{R}(V,?)).$ \\

Suppose that $\Imagen({\beta})\subseteq t(R)V$. In such case, we have a factorization $j \circ \overline{\beta}=\beta$, for some $\overline{\beta}\in \Hom_{R}(V,t(R)V)$. Taking now the pullback of $\rho$ along $\overline{\beta}$, we have the following commutative diagram

$$\xymatrix{0 \ar[r] & \Ker(\rho) \ar[r] \ar@{=}[d] & Z \ar[d] \ar[r] \pushoutcorner & V \ar[r] \ar[d]^{\overline{\beta}} & 0 \\ 0 \ar[r] & \Ker(\rho) \ar[r] & V^{(\Hom_{R}(V,R)\times V)} \ar[r]^{\hspace{0.7 cm}\rho} & t(R)V \ar[r] & 0}$$
Due to the fact that $\Ker(\rho)\in \Ker(\Ext^{1}_{R}(V,?))$, we have that $\overline{\beta}$ factors throughout $\rho$. Fix a morphism $\gamma:V \flecha V^{(\Hom_{R}(V,R)\times V)}$ such that $\overline{\beta}=\rho \circ \gamma$. Then we have:
$$\beta=j \circ \overline{\beta}=j \circ \rho \circ \gamma = \pi \circ i^{(V)} \circ q^{(V)} \circ \gamma$$
so that $\beta$ factors through $R^{(V)}$.
\end{proof}

\vspace{0.3 cm}

As a consequence of theorem \ref{teo. Ht module c. with t here.} and corollary \ref{cor. caract. rings semiperfect}, we get some significative classes of rings for which we can identify all the hereditary torsion pairs whose heart is a module category.

\begin{corollary}
Let $\te=(\T,\F)$ be a hereditary torsion pair in $R$ and let $\Ht$ be its heart. The following assertions hold:
\begin{enumerate}
\item[1)] If $R$ is a local ring and $\Ht$ is a module category, then $\te$ is either $(R\text{-Mod},0)$ or $(0,R\text{-Mod})$;

\item[2)] When $R$ is right perfect, $\Ht$ is a module category if, and only if, there is an idempotent element $e\in R$ such that $\T=\{T\in \text{R}-Mod: eT=0\}$ and $ReR$ is finitely generated on the left;

\item[3)] If $R$ is left Artinian (e.g. an Artin algebra), then $\Ht$ is always a module category.
\end{enumerate}
\end{corollary}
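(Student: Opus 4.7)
My plan is to invoke theorem \ref{teo. Ht module c. with t here.} in each of the three cases and extract the consequences from the specific ring-theoretic structure at hand. In all cases the ``if'' direction is immediate: for (2), corollary \ref{cor. caract. rings semiperfect} applies since $\T=\{T:eT=0\}$ is exactly the TTF right constituent class associated with the idempotent two-sided ideal $ReR$; for (1) the trivial torsion pairs give hearts equivalent to $R\text{-Mod}$. So the work lies in the ``only if'' directions.

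For the local case, I will first note that $R$ is semiperfect, so theorem \ref{teo. Ht module c. with t here.} produces an ideal $\mathbf{b}=\ann_R(V/t(R)V)$ such that $\bar{\mathbf{b}}:=\mathbf{b}/t(R)$ is a finitely generated idempotent left ideal of $\bar R:=R/t(R)$ with $R/\mathbf{b}\in\T$. If $t(R)=R$, then $R\in\T$ immediately gives $\te=(R\text{-Mod},0)$. Otherwise $\bar R$ is local, and a Nakayama argument (any finitely generated left ideal of a local ring contained in the Jacobson radical is zero, and any other equals the whole ring) forces $\bar{\mathbf{b}}\in\{0,\bar R\}$. The option $\bar{\mathbf{b}}=0$ would yield $R/t(R)\in\T\cap\F=0$, contradicting $t(R)\neq R$. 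The option $\bar{\mathbf{b}}=\bar R$ yields $V=t(R)V$; writing $V=P/\Imagen(d)$ via a progenerator complex gives $P=\Imagen(d)+t(R)P$, and since $t(R)\subseteq J(R)$, Nakayama's lemma applied to the finitely generated projective $P$ forces $V=0$, hence $\te=(0,R\text{-Mod})$.

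For the right perfect case, given $\Ht$ a module category with $\te$ hereditary, I will use theorem \ref{teo. Ht module c. with t here.} together with lemma \ref{lem. clave semiperfect rings}(2) to write $\bar{\mathbf{b}}=\bar R\bar e\bar R$ for an idempotent $\bar e\in\bar R$, which lifts to an idempotent $e\in R$ by the semiperfect condition. The crucial step will be to show that $\te$ is actually the right constituent of the TTF triple in $R\text{-Mod}$ defined by $\mathbf{a}:=ReR$; this is where the right T-nilpotence of $J(R)$ must enter, to bridge the TTF structure on $\bar R\text{-Mod}$ (given by Step 1 of the proof of theorem \ref{teo. Ht module c. with t here.}) to one on $R\text{-Mod}$. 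Once this is done, corollary \ref{cor. caract. rings semiperfect} delivers the characterization. The left Artinian case then follows immediately: such a ring is right perfect and left Noetherian, so $ReR$ is automatically finitely generated on the left for every idempotent $e$, and every hereditary torsion theory in a left Artinian ring is TTF (being determined by the finite set of simple modules it contains and hence closed under arbitrary products) with defining idempotent ideal of the form $ReR$ by lemma \ref{lem. clave semiperfect rings}(2).

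The main obstacle will be the ``only if'' direction of (2): upgrading the idempotent two-sided ideal of $\bar R$ produced by theorem \ref{teo. Ht module c. with t here.} to a TTF-defining idempotent two-sided ideal of $R$ itself. Theorem \ref{teo. Ht module c. with t here.} naturally lives modulo $t(R)$, and the bridge back to $R$ requires invoking the structural features of right perfect rings—in particular the T-nilpotence of $J(R)$—to control the behaviour of the torsion radical. Once this bridge is in place, the local and left Artinian cases reduce to direct calculations within the framework already established.
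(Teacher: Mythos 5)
Your treatment of part (1) is sound and in fact streamlines the paper's argument: once the machinery of theorem \ref{teo. Ht module c. with t here.} forces $\mathbf{b}=R$, i.e. $V=t(R)V$, you finish with Nakayama's lemma ($t(R)\subseteq J(R)$ when $t(R)\neq R$, and $V$ is finitely generated by lemma \ref{lem. progenerator of Ht}), whereas the paper goes through lemma \ref{lem. ImB<t(R)V} to deduce that $V$ is projective, hence free, hence $0$ or $R^{(\alpha)}$. Both routes are correct and your case split ($t(R)=R$ versus $t(R)\neq R$) is consistent with the paper's dichotomy. Your sketch of part (3) also lands in the right place, but the parenthetical ``hence closed under arbitrary products'' is an assertion, not a proof: the clean reason a hereditary torsion class over a left Artinian ring is a TTF class is that its Gabriel filter has a smallest element by the descending chain condition on left ideals; this is exactly the external fact the paper cites from Stenstr\"om.

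The genuine gap is in the ``only if'' direction of part (2). You correctly isolate the crucial step --- showing that $\te$ is the right constituent of a TTF triple in $R\text{-Mod}$ --- but you do not carry out the proposed ``bridge'' from the TTF structure on $\overline{R}\text{-Mod}$ supplied by theorem \ref{teo. Ht module c. with t here.} back up to $R\text{-Mod}$; you only announce that the T-nilpotence of $J(R)$ ``must enter.'' This is precisely the step the paper never has to perform: over a right perfect ring \emph{every} hereditary torsion class is automatically a TTF class (Stenstr\"om, Corollary VIII.6.3), with no input whatsoever from the hypothesis that $\Ht$ is a module category, and the defining idempotent ideal is then of the form $ReR$ (Corollary VIII.6.4). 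With that in hand, $\te$ is in particular bounded, and theorem \ref{teo. Ht module c. with t here.} yields that $ReR$ is finitely generated on the left, after which corollary \ref{cor. caract. rings semiperfect} closes the loop. As written, your proposal defers the essential content of part (2) to an unproved claim; you should either supply the T-nilpotence argument in full or, more efficiently, invoke the structural fact about right perfect rings directly, which makes the detour through $\overline{R}$ unnecessary.
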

\begin{proof}
1) By theorem \ref{teo. Ht is a module category}, we have a finitely presented $R$-module $V$ such that \linebreak $\T=\Gen(V)\subseteq \Ker(\Ext^{1}_{R}(V,?))$. Using theorem \ref{teo. Ht module c. with t here.} and its proof, we get that \linebreak $\te^{'}=(\T \cap \overline{R}\text{-Mod},\F)$ is the right constituent of a TTF triple in $\overline{R}$-Mod defined by an idempotent ideal $\overline{\mathbf{b}}=\mathbf{b}/t(R)$ of $\overline{R}:=R/t(R)$ which is finitely generated on the left, where $\mathbf{b}=\ann_{R}(V/t(R)V)$. Since $\overline{R}$ is also a local ring, and hence semiperfect, lemma \ref{lem. clave semiperfect rings} says that $\overline{\mathbf{b}}=\overline{R}\overline{e}\overline{R}$, for some idempotent element $\overline{e}\in \overline{R}$, which is necessarily equal to $\overline{1}$ or 0. The fact that $\overline{R}\in \F$ implies that $\overline{e}=1$, so that $\overline{\mathbf{b}}=\overline{R}$ and $\mathbf{b}=R=\ann_{R}(V/t(R)V)$. It follows that $V=t(R)V$ and, by lemma \ref{lem. ImB<t(R)V}, we have that the identity morphism $1_V:V \flecha V$ factors through a projective $R$-module. Hence $V$ is projective. But all finitely generated projective modules over a local ring are free. Then we have $V=0$ or $V=R^{(\alpha)}$, so that either $\te=(0,R\text{-Mod})$ or $\te=(R\text{-Mod},0)$. \\

2) Assume now that $R$ is right perfect and $\Ht$ is a module category. By \cite[Corollary VIII.6.3]{S}, we know that $\te$ is the right constituent torsion pair of a TTF triple and, in particular $\te$ is bounded. By theorem \ref{teo. Ht module c. with t here.}, the associated idempotent ideal is finitely generated on the left and, by \cite[Corollary VIII.6.4]{S}, we know that it is of the form $ReR$. Conversely, if $e\in R$ is idempotent and $ReR$ is finitely generated on the left and $\T=\{T \in R\text{-Mod}:eT=0\}$, then corollary \ref{cor. caract. rings semiperfect} says that $\Ht$ is a module category.\\

3) From \cite[Examples VI.8.2]{S} we know that $\te$ is the right constituent torsion pair of a TTF triple and, since all left ideals are finitely generated, the result follows from lemma \ref{lem. clave semiperfect rings} and corollary \ref{cor. caract. rings semiperfect}.
\end{proof}
 
\vspace{0.3 cm}

Another consequence of theorem \ref{teo. progenerator Imd=aP} is the following.

\begin{corollary}\label{cor. a in F}
Let $\mathbf{a}$ be an idempotent ideal of $R$, which is finitely generated on the left and let $\te=(\T,\F)$ be the right constituent torsion pair of the associated TTF triple in $R$-Mod. Suppose that $\mathbf{a}\in \F$ and that the monoid morphism $V(R) \flecha V(R/\mathbf{a})$ is surjective. Consider the following assertions:
\begin{enumerate}
\item[1)] $\Ht$ is a module category;

\item[2)] There is an epimorphism $M \epic \mathbf{a}$, where $M$ is a finitely generated projective $R/t(R)$-module which is in $\C$.

\item[3)] $\mathbf{a}$ is the trace of some finitely generated projective left $R$-module;

\item[4)] $\te$ is an HKM torsion pair;

\item[5)] $\Ht$ has a progenerator which is a classical tilting complex.
\end{enumerate} 
Then the implications 5) $\Longrightarrow$ 4) and 3) $\Longrightarrow$ 4) $\Longrightarrow$ 1) $\Longleftrightarrow$ 2) hold true. When the monoid morphism $V(R) \flecha V(R/t(R))$ is also surjective, all assertions are equivalent.
\end{corollary}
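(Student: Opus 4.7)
The plan is to establish each listed implication by chaining prior results, then close the loop under the extra hypothesis. The implications $5)\Longrightarrow 4)$, $3)\Longrightarrow 4)$, and $4)\Longrightarrow 1)$ are direct consequences of the preceding material. Specifically, $5)\Longrightarrow 4)$ is the ``moreover'' assertion of corollary \ref{cor. HKM=Prog. complex}: any progenerator of $\Ht$ that is a classical tilting complex is, in particular, an HKM complex whose associated torsion pair is $\te$. The implication $3)\Longrightarrow 4)$ is a direct application of corollary \ref{cor. Trace of projective HKM torsion pair}, since $\mathbf{a}$ is finitely generated on the left by the standing hypothesis on the ideal defining the TTF triple. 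Finally, $4)\Longrightarrow 1)$ is theorem \ref{teo. HKM principal} of Hoshino-Kato-Miyachi.

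For the equivalence $1)\Longleftrightarrow 2)$, I would apply theorem \ref{teo. progenerator Imd=aP}. From $1)$, that theorem gives an exact sequence $0\to F\to C\to \mathbf{a}P\to 0$ as in its condition 1, and its proof shows $U:=C/t(C)$ is finitely generated projective over $R/t(R)$, lies in $\C\cap\F$, and generates $\C\cap\F$. Since the present hypothesis $\mathbf{a}\in\F$ places $\mathbf{a}$ in $\C\cap\F$, and $\mathbf{a}$ is finitely generated on the left, some power $M:=U^k$ surjects onto $\mathbf{a}$, establishing $2)$. Conversely, given $M\epic\mathbf{a}$ as in $2)$, pick a finitely generated projective $R$-module $P$ with $P/\mathbf{a}P$ a progenerator of $R/\mathbf{a}\text{-Mod}$ (available from the monoid surjectivity assumed in theorem \ref{teo. progenerator Imd=aP}, which holds under our extra hypothesis by composing $V(R)\to V(R/t(R))\to V(R/\mathbf{a})$); exhibit $\mathbf{a}P$ as a direct summand of $\mathbf{a}^n$ via a decomposition of $R^n$ as $P\oplus P'$, giving an epimorphism $M^n\epic \mathbf{a}P$; and take $C:=M^n$. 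The required properties of $C$ are easily verified: $C\in\C$ by closure under coproducts and summands; the inclusion $\F\subseteq R/t(R)\text{-Mod}$ (forced by $\Hom_R(t(R),F)=0$ for $F\in\F$) together with the $R/t(R)$-projectivity of $M$ gives $\Ext^1_R(C,F)\cong \Ext^1_{R/t(R)}(C,F)=0$; and $C$ generates $\C\cap\F$ because $\mathbf{a}\in \C\cap\F$ already generates $\Gen(\mathbf{a})=\C$.

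Under the extra hypothesis that $V(R)\to V(R/t(R))$ is surjective, it remains to produce $1)\Longrightarrow 3)$ (which then yields $1)\Longrightarrow 4)\Longrightarrow 5)$ by the earlier implications, once 5) is reconstructed from a trace witness). Starting from the module $M$ supplied by $2)$, use the extra hypothesis to lift $M$ to a finitely generated projective $R$-module $\bar M$ with $\bar M/t(R)\bar M\cong M$. The composition $\bar M\epic M\epic \mathbf{a}\hookrightarrow R$ has image $\mathbf{a}$, so $\mathbf{a}\subseteq \text{tr}_R(\bar M)$. Using $\mathbf{a}\cap t(R)=0$ (since $\mathbf{a}\in\F$ forces $t(\mathbf{a})=0$, hence $t(R)\mathbf{a}=0$ by the argument that $t(R)\mathbf{a}$ is a $\T$-submodule of $\mathbf{a}$), the reverse containment only yields $\text{tr}_R(\bar M)\subseteq \mathbf{a}\oplus t(R)$, so the $t(R)$-component can a priori be nonzero. \emph{This is the main obstacle}, and I would resolve it by replacing $\bar M$ with $Q:=\mathbf{a}\bar M$. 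This $Q$ lies in $\C$ (as $\mathbf{a}Q=\mathbf{a}^2\bar M=\mathbf{a}\bar M=Q$), so every $R$-linear $\phi\colon Q\to R$ satisfies $\phi(Q)=\phi(\mathbf{a}Q)=\mathbf{a}\phi(Q)\subseteq\mathbf{a}$, forcing $\text{tr}_R(Q)\subseteq\mathbf{a}$ and hence equality; what needs a separate argument is that $Q=\mathbf{a}\bar M$ is still finitely generated projective over $R$, which I would extract from the fact that $\bar M=\mathbf{a}\bar M+t(R)\bar M$ (since $M\in\C$) combined with $\mathbf{a}t(R)=0$, exhibiting $\mathbf{a}\bar M$ as a direct summand of $\bar M$. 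With $3)$ in hand, applying the construction of corollary \ref{cor. Trace of projective HKM torsion pair} and verifying conditions a)--c) of corollary \ref{cor. HKM=Prog. complex} for the complex $[0\to Q^n\to R\to 0]$ (the map being $j\circ p$ for an epimorphism $p\colon Q^n\epic\mathbf{a}$) completes the proof of $5)$; the crucial condition $\Ker(d)\in\F$ now holds because $Q\in \C\cap\F$.
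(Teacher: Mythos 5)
Your overall architecture matches the paper's: the implications $5)\Rightarrow 4)$, $3)\Rightarrow 4)\Rightarrow 1)$ come from corollaries \ref{cor. HKM=Prog. complex} and \ref{cor. Trace of projective HKM torsion pair}, the equivalence $1)\Leftrightarrow 2)$ from theorem \ref{teo. progenerator Imd=aP} (the paper's $2)\Rightarrow 1)$ is simpler than yours: just take $P=R$ and $C=M$), and the closing step under the extra hypothesis goes through lifting $M$ to a finitely generated projective $\bar M$ and splitting off $\mathbf{a}\bar M$. That last splitting, by the way, needs more than $\mathbf{a}t(R)=0$: you should first note that $\mathbf{a}\bar M$ is a direct summand of $\mathbf{a}^{(m)}$, hence lies in $\F$, while $t(R)\bar M\in\T$, so the intersection is in $\T\cap\F=0$.

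The genuine gap is your witness for assertion $5)$. The complex $G:=[\,0\to Q^n\xrightarrow{\,j\circ p\,}R\to 0\,]$ is in general \emph{not} a progenerator of $\Ht$: since $j$ is injective, $H^{-1}(G)=\Ker(p)$, and condition 2.c) of corollary \ref{cor. HKM=Prog. complex} (equivalently, condition 5 of theorem \ref{teo. Ht is a module category}, which encodes that $G$ generates $\frac{R}{t(R)}[1]$ and hence $\F[1]$) requires a map $h:(Q^n)^{(I)}\to R/t(R)$ whose restriction to $\Ker(p)^{(I)}$ has cokernel in $\T$, i.e.\ whose image essentially covers $\mathbf{a}$ inside $R/t(R)$. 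But $\Ker(p)$ lies in $\F$ and need not lie in $\C$, so there is no reason its copies map onto $\mathbf{a}$. In the extreme case where $\mathbf{a}$ is itself projective one may take $p$ to be an isomorphism; then $\Ker(p)=0$, the cokernel is all of $R/t(R)$, and this lies in $\T$ only if $\mathbf{a}\subseteq t(R)$, i.e.\ $\mathbf{a}=0$ (as $\mathbf{a}\cap t(R)=0$). Concretely, for $R=K\times K$ and $\mathbf{a}=K\times 0$ your complex is quasi-isomorphic to the stalk $(R/\mathbf{a})[0]$, which cannot generate $\F[1]\ni\mathbf{a}[1]\neq 0$. The fix is exactly the doubling that appears throughout the chapter (corollary \ref{cor. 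Trace of projective HKM torsion pair}, example \ref{exam. R/a+a as progenerator}, and the paper's own choice $C=M$ in theorem \ref{teo. progenerator Imd=aP}): use $[\,Q^n\oplus Q^n\xrightarrow{(j\circ p\ \ 0)}R\,]$, so that $H^{-1}$ contains a full copy of $Q^n\in\C\cap\F$ surjecting onto $\mathbf{a}$; this two-term complex of finitely generated projectives is then a progenerator and, by corollary \ref{cor. HKM=Prog. complex}, a classical tilting complex. Two smaller points: the isomorphism $\Ext^1_R(C,F)\cong\Ext^1_{R/t(R)}(C,F)$ is not automatic (argue instead via $\Ext^1_R(R/t(R),F)=0$ using $0\to t(R)\to R\to R/t(R)\to 0$); and the surjectivity of $V(R)\to V(R/\mathbf{a})$ is a standing hypothesis of the corollary, so it neither needs to be, nor can it be, deduced by composing through $V(R/t(R))$ (note $t(R)\not\subseteq\mathbf{a}$ in general).
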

\begin{proof}
The implications 3) $\Longrightarrow$ 4) $\Longrightarrow$ 1) and 5) $\Longrightarrow$ 4) follow from corollaries \ref{cor. Trace of projective HKM torsion pair} and \ref{cor. HKM=Prog. complex} (see definition \ref{def. progenerator which is classical tilting complex}). \\

1) $\Longrightarrow$ 2) Consider the finitely generated projective module $P$ and the exact sequence $\xymatrix{0 \ar[r] & F \ar[r] & C \ar[r] & \mathbf{a}P \ar[r] & 0}$ given by theorem \ref{teo. progenerator Imd=aP}. Note that there is a monomorphism of the form $P \monic R^{(n)}$, for some integer $n$, in particular we get $\mathbf{a}P \monic \mathbf{a}R^{(n)}=(\mathbf{a})^{(n)}$, and hence $\mathbf{a}P\in \F$. It follows that $C\in \F$ since so do $F$ and $\mathbf{a}P$. But, then, the fact that $\Ext^{1}_{R}(C,?)$ vanishes on $\F$, implies that $C$ is a finitely generated projective $R/t(R)$-module. Since $C$ generates $\mathcal{C}\cap \F$ we get an epimorphism $M:=C^{(n)}\epic \mathbf{a}$ as desired. \\

2) $\Longrightarrow$ 1) is a direct consequence of theorem \ref{teo. progenerator Imd=aP}., by taking $P=R$. \\

2) $\Longrightarrow$ 3),5) (Assuming that the monoid map $V(R) \flecha V(R/t(R))$ is surjective). We have a finitely generated projective $R$-module $Q$ such that $M\cong \frac{Q}{t(R)Q}=\frac{Q}{t(Q)}$. But we then get $Q=\mathbf{a}Q + t(Q)$ since $\mathbf{a}M=M$. Moreover, we then have $\mathbf{a}Q\cap t(Q)=0$, because $\mathbf{a}Q$ is in $\F$, and hence $Q=\mathbf{a}Q \oplus t(Q)$. It follows that $M\cong \mathbf{a}Q$ is also projective as an $R$-module. Since $M\cong \mathbf{a}Q\in\Gen(\mathbf{a})$, we obtain that $\Gen(M)=\Gen(\mathbf{a})$, so that $\mathbf{a}=tr_{M}(R)$. This show the assertions 3). \\

On the other hand, by taking $C=M$ in theorem \ref{teo. progenerator Imd=aP}, we know that the complex 

$$\xymatrix{G:=\cdots \ar[r] & 0 \ar[r] & M\oplus M \ar[rr]^{\hspace{0.5 cm}(0 \ \ j \hspace{0.01cm} \circ q)} && R \ar[r] & 0 \ar[r] & \cdots}$$
concentrated in degrees -1 and 0, is a progenerator of $\Ht$, where $q:M \epic \mathbf{a}$ is an epimorphism given by hypothesis and $j:\mathbf{a} \monic R$ is the inclusion.
\end{proof}

\vspace{0.3 cm}

In \cite[Corollary 2.13]{MT} (see also \cite[Lemma 4.1]{CMT}) the authors proved that a faithful (not necessarily hereditary) torsion pair in $R$-Mod has a heart which is a module category if, and only if, it is an HKM torsion pair. Our next result shows that, for hereditary torsion pairs, we can be more precise.

\begin{corollary}\label{cor. faith. here. ---> module}
Let $R$ be a ring and let $\te=(\T,\F)$ be a faithful hereditary torsion pair in $R$-Mod. Consider the following assertions:

\begin{enumerate}
\item[1)] There is a finitely generated projective $R$-module $Q$ such that $\T=\Ker(\Hom_{R}(Q,?))$ and the trace $\mathbf{a}$ of $Q$ in $R$ is finitely generated as a left ideal;

\item[2)] $\Ht$ is a module category;

\item[3)] $\te$ is an HKM torsion pair;

\item[4)] $\Ht$ has a progenerator which is a classical tilting complex;

\item[5)] There is an idempotent ideal $\mathbf{a}$ of $R$ which satisfies the following properties:
\begin{enumerate}
\item[a)] $\te$ is the right constituent torsion pair of the TTF triple defined by $\mathbf{a}$;
\item[b)] there is a progenerator $V$ of $R/\mathbf{a}-$Mod which admits a finitely generated projective presentation $\xymatrix{Q \ar[r]^{d} & P \ar[r] & V \ar[r] & 0 }$ in $R$-Mod satisfying the following two properties:
\begin{enumerate}
\item[i.] $\Ker(d) \subseteq \mathbf{a}Q$;
\item[ii.] there is a morphism $Q^{(J)} \xymatrix{\ar[r]^{h} &} \mathbf{a}$, for some set $J$, such that \linebreak $h_{|\Ker(d)^{(J)}}:\Ker(d)^{(J)} \flecha \mathbf{a}$ is an epimorphism.
\end{enumerate}
\end{enumerate}
\end{enumerate}
Then the implications 1) $\Longrightarrow$ 2) $\Longleftrightarrow$ 3) $\Longleftrightarrow$ 4) $\Longleftrightarrow$ 5) hold true. When the monoid morphism $V(R) \flecha V(R/I)$ is surjective, for all idempotent two-sided ideals $I$ of $R$, all assertions are equivalent.
\end{corollary}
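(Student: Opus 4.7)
The strategy is to reduce everything to Corollary \ref{cor. a in F}, whose hypotheses are trivially met in the faithful hereditary setting. Since $\te$ is faithful, $t(R)=0$, so $R\in\F$ and, by the hereditary condition, every left ideal of $R$ lies in $\F$; in particular, whichever idempotent ideal $\mathbf{a}$ arises below will automatically satisfy $\mathbf{a}\in\F$. The monoid map $V(R)\flecha V(R/t(R))=V(R)$ is moreover the identity. Consequently, once $\te$ is identified as the right constituent of a TTF triple defined by an idempotent ideal $\mathbf{a}$ finitely generated on the left --- which by Corollary \ref{cor. hereditary + fateful ---> right constituent} is guaranteed whenever $\Ht$ is a module category --- Corollary \ref{cor. a in F} yields that all five of its assertions are equivalent, because the needed surjectivity of $V(R)\flecha V(R/t(R))$ is automatic.

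For $1)\Longrightarrow 2)$ (and simultaneously $\Longrightarrow 3)$), Corollary \ref{cor. Trace of projective HKM torsion pair} applies directly: a finitely generated projective $Q$ with trace $\mathbf{a}$ finitely generated on the left produces an HKM torsion pair with $\Ht$ a module category. For $2)\Longleftrightarrow 3)\Longleftrightarrow 4)$: once $\mathbf{a}$ is fixed as above, these correspond verbatim under the identification described to the equivalences $(1)\Longleftrightarrow (4)\Longleftrightarrow (5)$ of Corollary \ref{cor. a in F}. Under the extra hypothesis that $V(R)\flecha V(R/I)$ is surjective for all idempotent two-sided ideals $I$ --- applied with $I=\mathbf{a}$ --- Corollary \ref{cor. a in F} additionally gives its assertion $(3)$: the ideal $\mathbf{a}$ is the trace of some finitely generated projective $R$-module $Q$. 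A standard dual-basis argument then shows that for such $Q$ one has $\Hom_R(Q,T)=0\Longleftrightarrow \mathbf{a}T=0$, so $\Ker(\Hom_R(Q,?))=\T$, yielding our $1)$.

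The equivalence $4)\Longleftrightarrow 5)$ is a direct translation via Corollary \ref{cor. HKM=Prog. complex}, which characterizes progenerators of $\Ht$ that are 2-term complexes of finitely generated projectives. Specialized to the present setting --- where $\T=R/\mathbf{a}$-Mod, $\Rej_\T(Q)=\mathbf{a}Q$, and every projective $R$-module lies in $\F$ --- the conditions of that corollary collapse exactly to those listed in 5.b): $V:=H^{0}$ is a progenerator of $R/\mathbf{a}$-Mod, $\Ker(d)\subseteq \mathbf{a}Q$, and a morphism $h\colon Q^{(J)}\flecha\mathbf{a}$ whose restriction to $\Ker(d)^{(J)}$ is epimorphic. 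The $\Ext$-vanishing clauses become vacuous since $\Coker(j)=Q$ is projective, and $t(\mathbf{a})=\mathbf{a}\cap t(R)=0$ simplifies the target of $h$ to $\mathbf{a}$ itself.

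The main obstacle is to ensure that the progenerator from Theorem \ref{teo. Ht module c. with t here.} can be chosen to be 2-term (that is, with the ``$X$'' term equal to zero), which is the content of $2)\Longrightarrow 4)$. This is not automatic from Theorem \ref{teo. Ht module c. with t here.} alone, but is delivered precisely by Corollary \ref{cor. a in F}'s implication $(1)\Longrightarrow (5)$, whose proof uses the surjectivity of $V(R)\flecha V(R/t(R))$ --- a condition which, under our faithful hypothesis, holds trivially.
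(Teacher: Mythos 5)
Your reduction to Corollary \ref{cor. a in F} rests on a misreading of that corollary's hypotheses, and this creates a genuine gap. Corollary \ref{cor. a in F} has \emph{two} surjectivity conditions in play: the surjectivity of $V(R)\flecha V(R/\mathbf{a})$ is a \emph{standing hypothesis} of the whole corollary, while the surjectivity of $V(R)\flecha V(R/t(R))$ is the \emph{additional} hypothesis under which all five of its assertions become equivalent. In the faithful setting only the second one is automatic (it is the identity map); the first one, $V(R)\flecha V(R/\mathbf{a})$, is exactly the non-trivial condition, and it is used in the proof of the implication $(1)\Longrightarrow(2)$ of that corollary (via Theorem \ref{teo. progenerator Imd=aP}), hence in $(1)\Longrightarrow(3),(4),(5)$. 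So your claim that ``Corollary \ref{cor. a in F} yields that all five of its assertions are equivalent, because the needed surjectivity of $V(R)\flecha V(R/t(R))$ is automatic'' is false, and with it collapses your derivation of the unconditional implications $2)\Longrightarrow 3)$ and $2)\Longrightarrow 4)$ of the present statement. (A sanity check: if your argument were correct, assertion $(3)$ of Corollary \ref{cor. a in F} --- that $\mathbf{a}$ is a trace of a finitely generated projective --- would follow from $\Ht$ being a module category with no extra hypothesis, i.e.\ you would have proved $2)\Longrightarrow 1)$ unconditionally, contradicting the careful formulation of the corollary you are asked to prove.)

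The paper closes exactly this gap by exploiting faithfulness through external inputs rather than through Corollary \ref{cor. a in F}: the equivalence $2)\Longleftrightarrow 3)$ is quoted from \cite[Corollary 2.13]{MT}, and \cite[Theorem 6.1]{CMT} guarantees that for a faithful torsion pair a progenerator of $\Ht$ can be taken to be a two-term complex of finitely generated projectives concentrated in degrees $-1$ and $0$; Corollary \ref{cor. HKM=Prog. complex} then yields $2),3)\Longrightarrow 4)$ and $4)\Longrightarrow 3)$, and Theorem \ref{teo. Ht module c. with t here.} (together with Corollary \ref{cor. hereditary + fateful ---> right constituent}) handles $4)\Longleftrightarrow 5)$. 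Your remaining steps --- $1)\Longrightarrow 2)$ via Corollary \ref{cor. Trace of projective HKM torsion pair}, the translation $4)\Longleftrightarrow 5)$, and $2)\Longrightarrow 1)$ under the extra hypothesis via Corollary \ref{cor. a in F}(3) --- are sound and agree with the paper, but the central passage from $2)$ to $3)$, $4)$ and $5)$ needs to be reargued along the lines above.
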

\begin{proof}
1) $\Longrightarrow$ 2) follows from corollary \ref{cor. Trace of projective HKM torsion pair}. \\

2) $\Longleftrightarrow$ 3) follows from \cite[Corollary 2.13]{MT}. Moreover, from \cite[Theorem 6.1]{CMT}, we obtain that there is a progenerator of $\Ht$ of the form
$$\xymatrix{\cdots \ar[r] & 0 \ar[r] & Q \ar[r]^{d} & P \ar[r] & 0 \ar[r] & \cdots }$$
concentrated in degrees -1 and 0, where $Q,P$ are finitely generated projective $R$-modules. \\

By the argument of the previous paragraph, the implications 2),3) $\Longrightarrow$ 4) and 4) $\Longrightarrow$ 3) follow from corollary \ref{cor. HKM=Prog. complex} (see definition \ref{def. progenerator which is classical tilting complex}). \\

2),4) $\Longrightarrow$ 5) By corollary \ref{cor. hereditary + fateful ---> right constituent}, we know that $\te$ is the right constituent pair of a TTF triple in $R$-Mod defined by an idempotent ideal $\mathbf{a}$ which is finitely generated on the left. Let now fix a classical tilting complex $\xymatrix{G:=\cdots \ar[r] & 0 \ar[r] & Q \ar[r]^{d} & P \ar[r] & 0 \ar[r] & \cdots}$ which is a progenerator of $\Ht$. Assertion 5 follows by applying  theorem \ref{teo. Ht module c. with t here.} to G, since $\mathbf{a}\in \F$ and hence $t(\mathbf{a})=0$.\\

5) $\Longrightarrow$ 2) By hypothesis $\te$ is bounded and $\add(V)=\add(R/\mathbf{a})$. Taking the complex $G:=\xymatrix{\cdots \ar[r] & 0 \ar[r] & Q \ar[r]^{d} & P \ar[r] & \ar[r] & 0 \ar[r] & \cdots}$, concentrated in degrees -1 and 0. We easily check all conditions 3.a)-3.d) of theorem \ref{teo. Ht module c. with t here.}. \\

2) $\Longrightarrow$ 1) (Assuming that $V(R) \flecha V(R/I)$ is surjective, for all idempotent two-sided ideals $I$ of $R$) It is a consequence of corollaries \ref{cor. hereditary + fateful ---> right constituent} and \ref{cor. a in F}.
\end{proof}

\section{Stalks which are progenerators}

It is natural to expect that the ``simplest'' case in which the heart is a module category appears when the progenerator of the heart may be chosen to be a sum of stalk complexes. We already got a first sample of this phenomenon in the proof of proposition \ref{prop. V[0] progenerator} for tilting torsion pairs. Our next result gives criteria for that to happen. \\

Recall that if $M$ and $N$ are $R$-modules, then $\Ext^{1}_{R}(M,N)=\Hom_{D(R)}(M,N[1])$ has a canonical structure of $\text{End}_{R}(N)-\text{End}_{R}(M)-$bimodule given by composition of morphisms in $\mathcal{D}(R)$. But then it has also a structure of $\text{End}_{R}(M)^{\text{op}}-\text{End}_{R}(N)^{\text{op}}-$bimodule, by defining $\alpha^{o} \circ \epsilon \circ f^{o}=f \circ \epsilon \circ \alpha$, for all $\alpha \in \text{End}_{R}(M)$ and $f\in \text{End}_{R}(N).$

\begin{proposition}\label{prop. progenerator V[0]+Y[1]}
The following assertions are equivalent:
\begin{enumerate}
\item[1)] $\Ht$ has a progenerator of the form $V[0]\oplus Y[1]$, where $V\in \T$ and $Y\in \F$;

\item[2)] There are $R$-modules $V$ and $Y$ satisfying the following properties:
\begin{enumerate}
\item[a)] $V$ is finitely presented and $\T=\Pres(V) \subseteq \Ker(\Ext^{1}_{R}(V,?))$;
\item[b)] $\Ext^{2}_{R}(V,?)$ vanishes on $\F$;
\item[c)] $Y$ is a finitely generated projective $R/t(R)$-module which is in $^{\perp}\T$;
\item[d)] For each $F\in \F$, the module $\frac{F/tr_{Y}F}{t(F/tr_{Y}(F))}$ embeds into a module in $\T$, where $tr_{Y}(F)$ denotes the trace of $Y$ in $F$.
\end{enumerate}
\end{enumerate}

In this case $\Ht$ is equivalent to $S$-Mod, where $S=\begin{pmatrix} \text{End}_{R}(Y)^{op} & 0 \\ \Ext^{1}_{R}(V,Y) & \text{End}_{R}(V)^{op}\end{pmatrix}$, when viewing $\Ext^{1}_{R}(V,Y)$ as a $\text{End}_{R}(V)^{op}-\text{End}_{R}(Y)^{op}-$bimodule in the usual way.
\end{proposition}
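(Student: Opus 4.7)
My plan is to verify the Gabriel--Mitchell progenerator conditions for $G = V[0] \oplus Y[1]$ directly in $\Ht$, translating each $\Hom/\Ext$ group in $\Ht$ into one in $R\Mode$ via the identifications $\Hom_{\D(R)}(V[0], N[k]) \cong \Ext^k_R(V, N)$, $\Hom_{\D(R)}(Y[1], F[1+k]) \cong \Ext^k_R(Y, F)$ and $\Hom_{\D(R)}(Y[1], T[k]) \cong \Ext^{k-1}_R(Y, T)$. The final equivalence $\Ht \simeq S\Mode$ will follow from theorem \ref{teo. Gabriel-Michell} after computing $\End_{\Ht}(G)^{op}$, whose four matrix entries are $\End_R(V)$, $\End_R(Y)$, $\Hom_{\Ht}(Y[1], V[0]) = 0$, and $\Hom_{\Ht}(V[0], Y[1]) = \Ext^1_R(V, Y)$ with its canonical bimodule structure.

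For (1) $\Longrightarrow$ (2), I would use the canonical sequence $0 \to H^{-1}(M)[1] \to M \to H^0(M)[0] \to 0$ in $\Ht$ to reduce the vanishing of $\Ext^1_{\Ht}(G, M)$ for every $M \in \Ht$ to the four identities $\Ext^1_R(V, T) = \Ext^2_R(V, F) = \Hom_R(Y, T) = \Ext^1_R(Y, F) = 0$ for $T \in \T$ and $F \in \F$, yielding the containment in (a), condition (b), and $Y \in {}^{\perp}\T$ of (c). Generation of $\T[0]$ by $V[0]$ in $\Ht$ forces $\T = \Pres(V)$, completing (a). Compactness of $V[0]$ and $Y[1]$, combined with lemma \ref{exactness of H} (so $H^{-1}$ and $H^0$ commute with coproducts on $\Ht$) and the fact that $t(R)F = 0$ for every $F \in \F$ (since $t(R)F \in \T \cap \F = 0$), identifies $V$ as finitely presented and $Y$ as a finitely generated projective $R/t(R)$-module. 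To extract (d), I would pick the universal epimorphism $V^{(I_0)}[0] \oplus Y^{(J_0)}[1] \twoheadrightarrow F[1]$ in $\Ht$ with $J_0 = \Hom_R(Y, F)$, apply $H^{-1}$ and $H^0$ to its kernel $K$, and read off the four-term exact sequence $0 \to H^{-1}(K) \to Y^{(J_0)} \to F \to H^0(K) \to V^{(I_0)} \to 0$ in $R\Mode$: the image of $Y^{(J_0)} \to F$ is exactly $\tau := tr_Y(F)$, so $F/\tau \hookrightarrow H^0(K) \in \T$, and passing to the quotient by $t(F/\tau)$ produces the required embedding $(F/\tau)/t(F/\tau) \hookrightarrow H^0(K)/t(F/\tau) \in \T$.

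For (2) $\Longrightarrow$ (1), the crucial new input is $\Ext^1_R(Y, F) = 0$ for every $F \in \F$, which follows because each such $F$ is an $R/t(R)$-module (as $t(R)F \in \T \cap \F = 0$), the exact sequence $0 \to t(R) \to R \to R/t(R) \to 0$ together with $\Hom_R(t(R), F) = 0$ gives $\Ext^1_R(R/t(R), F) = 0$, and the general case follows since $Y$ is a direct summand of $(R/t(R))^n$. Projectivity and compactness of $G$ in $\Ht$ are then immediate from the Hom/Ext dictionary. The generation step is the delicate one: given $F \in \F$, set $\tau := tr_Y(F)$, $\pi : F \twoheadrightarrow F/\tau$, and $F_1 := \pi^{-1}(t(F/\tau)) \subseteq F$, so that the short exact sequences $0 \to \tau \to F_1 \to t(F/\tau) \to 0$ and $0 \to F_1 \to F \to F' \to 0$ (with $F' = (F/\tau)/t(F/\tau)$), after a shift by $[1]$ and a backward rotation of the first, become triangles whose three leading terms all lie in $\Ht$ and hence give short exact sequences in $\Ht$: $0 \to t(F/\tau)[0] \to \tau[1] \to F_1[1] \to 0$ and $0 \to F_1[1] \to F[1] \to F'[1] \to 0$. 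This places $F_1[1] \in \Gen_{\Ht}(Y[1])$ (via the cover $Y^{(\Hom_R(Y, F))}[1] \twoheadrightarrow \tau[1]$); and using (d) to embed $F' \hookrightarrow T \in \T$ and forming the SES $0 \to T[0] \to (T/F')[0] \to F'[1] \to 0$ in $\Ht$ places $F'[1] \in \Gen_{\Ht}(V[0])$. Projectivity of $V[0]$, granted by (b), lifts the cover of $F'[1]$ to one of $F[1]$ along $F[1] \twoheadrightarrow F'[1]$, and combining with $Y^{(J)}[1] \twoheadrightarrow F_1[1] \hookrightarrow F[1]$ produces the required epimorphism $V^{(I)}[0] \oplus Y^{(J)}[1] \twoheadrightarrow F[1]$. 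The hardest part is precisely this rotation game: identifying $F_1$ and verifying that the three leading terms of the rotated triangles all sit in $\Ht$, so that they give genuine short exact sequences in $\Ht$ rather than mere triangles in $\D(R)$.
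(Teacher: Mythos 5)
Your proposal is correct and follows essentially the same route as the paper's proof: projectivity, compactness and generation of $G=V[0]\oplus Y[1]$ are verified through the same $\Hom/\Ext$ dictionary applied to the stalk sequences $0\to H^{-1}(M)[1]\to M\to H^{0}(M)[0]\to 0$, the vanishing $\Ext^{1}_{R}(Y,F)=0$ in 2) $\Longrightarrow$ 1) is obtained exactly as in the paper from $0\to t(R)\to R\to R/t(R)\to 0$, and the final equivalence is the same endomorphism-ring computation. Your two deviations are harmless and, if anything, slightly more direct: you read condition 2.d) off the homology sequence of the kernel $K$ of the canonical epimorphism $V^{(I_0)}[0]\oplus Y^{(J_0)}[1]\twoheadrightarrow F[1]$, getting $F/tr_{Y}(F)\hookrightarrow H^{0}(K)\in\T$ at once, whereas the paper first identifies $\Coker_{\Ht}(g[1])$ with $\frac{F/tr_{Y}F}{t(F/tr_{Y}(F))}[1]$ via the BBD construction and then covers that stalk by copies of $V[0]$; likewise your module-level subobject $F_1=\pi^{-1}(t(F/\tau))$ is precisely $\Imagen_{\Ht}(g[1])$, so your rotation argument reconstructs by hand the paper's exact sequence $0\to\Imagen_{\Ht}(g[1])\to F[1]\to\frac{F/tr_{Y}F}{t(F/tr_{Y}(F))}[1]\to 0$ (and your checks that the rotated triangles have all vertices in $\Ht$ are the right ones). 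The one step you state too loosely is the finite presentation of $V$ in 1) $\Longrightarrow$ 2): coproduct-compactness of $V[0]$ together with $H^{0},H^{-1}$ commuting with coproducts does not by itself give that $V$ is finitely presented over $R$. What is needed is that $V[0]$ is a finitely presented object of $\Ht$ (true because a compact projective object of a module category is finitely generated projective), that $\varinjlim_{\Ht}T_i[0]\cong(\varinjlim T_i)[0]$ for direct systems in $\T$ (proposition \ref{description of stalk}), and that $\F$ is closed under direct limits (theorem \ref{caracterizacion AB5}, since $\Ht$ is AB5), after which the argument of lemma \ref{lem. homology 0 module}(2) applies; this is exactly the content of lemma \ref{lem. progenerator of Ht}, which the paper invokes at this point and which you should cite or reprove rather than attribute to compactness alone.
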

\begin{proof}
By theorem \ref{caracterizacion AB5} and by condition 2.a), all throughout the proof we can assume that $\F$ is closed under taking direct limits in $R$-Mod.\\

1) $\Longrightarrow$ 2) Put $G=V[0]\oplus Y[1]$. By lemma \ref{lem. progenerator of Ht}, we get condition 2.a). On the other hand, the projective condition of $V[0]$ (since $V[0]$ is a direct summand of $G$ in $\Ht$) implies that $0=\Ext^{1}_{\Ht}(V[0],F[1])=\Hom_{\D(R)}(V[0],F[2])=\Ext^{2}_{R}(V,F)$, for all $F\in \F$. Then condition 2.b) also holds. \\

Similarly, the projective condition of $Y[1]$ in $\Ht$ implies that $0=\Ext^{1}_{\Ht}(Y[1],F[1])=\Hom_{\D(R)}(Y[1],F[2])=\Ext^{1}_{R}(Y,F)$, for all $F\in \F$ and that $0=\Ext^{1}_{\Ht}(Y[1],T[0])=\Hom_{\D(R)}(Y[1],T[1])\cong \Hom_{R}(Y,T)$, for all $T\in \T$. Then we have that $Y\in$$^{\perp}\T$. Moreover, if $f:\frac{R}{t(R)}^{(I)} \epic Y$ is any epimorphism, then $f$ is a retraction, which implies that $Y$ is a projective $R/t(R)$-module. The fact that $Y$ is finitely generated as $R/t(R)$-module follows from the compactness of $Y[1]$ in $\Ht$ since $\Hom_{R}(Y,?)_{|\F}\cong \Hom_{\D(R)}(Y[1],?[1])_{|\F}$ preserves coproducts of modules in $\F$. We then get condition 2.c).\\

For each $F\in \F$, let us consider the canonical morphism $g:Y^{(\Hom_{R}(Y,F))} \flecha F$. We then get the morphism $\xymatrix{Y[1]^{(\Hom_{\Ht}(Y[1],F[1]))}\cong Y^{(\Hom_{R}(Y,F))}[1] \ar[r]^{\hspace{2.9 cm}g[1]}& F[1]}$ whose image is the trace of $Y[1]$ in $F[1]$ within the category $\Ht$. By \cite{BBD}, from the following diagram  we obtain that the cokernel of $g[1]$ in $\Ht$ is precisely the stalk complex $\frac{\Coker(g)}{t(\Coker(g))}[1]=\frac{F/tr_{Y}(F)}{t(F/tr_{Y}(F))}[1]$:
$$\xymatrix{&&t(\Coker(g))[1] \ar[d] & \\ Y[1]^{(\Hom_{\Ht}(Y[1],F[1]))}\cong Y^{(\Hom_{R}(Y,F))}[1] \ar[r]^{\hspace{2.9 cm}g[1]}& F[1] \ar[r] & \Coker(g)[1] \ar[r]^{\hspace{0.9cm}+} \ar[d] &\\ & & \frac{\Coker(g)}{t(\Coker(g))}[1] \ar[d]^{+}& \\ & & & }$$
Due to the projectivity of $Y[1]$ in $\Ht$, we have that $\Hom_{R}(Y,\frac{\Coker(g)}{t(\Coker(g))})\cong \Hom_{\Ht}(Y[1],\frac{\Coker(g)}{t(\Coker(g))}[1])$\linebreak$=\Hom_{\Ht}(Y[1],\frac{F[1]}{tr_{Y[1]}(F[1])})=0$. It follows that the canonical morphism \linebreak$q:V[0]^{(\Hom_{\Ht}(V[0],\frac{\Coker(g)}{t(\Coker(g))}[1]))} \flecha \frac{\Coker(g)}{t(\Coker(g))}[1]$ is an epimorphism in $\Ht$, since $V[0]\oplus Y[1]$ is a projective generator of $\Ht$. We necessarily have $\Ker(q)=T[0]$, for some $T\in \T$. Condition 2.d) follows then from the long exact sequence of homologies associated to the triangle 

$$\xymatrix{T[0] \ar[r] & V[0]^{(\Hom_{\Ht}(V[0],\frac{\Coker(g)}{t(\Coker(g))}[1]))} \ar[r]^{\hspace{1.3 cm}q} & \frac{\Coker(g)}{t(\Coker(g))}[1] \ar[r]^{\hspace{0.9 cm}+} & }$$

2) $\Longrightarrow$ 1) From conditions 2.a) and 2.b) we deduce that $\Ext^{1}_{\Ht}(V[0],?)$ vanishes on stalk complexes $T[0]$ and $F[1]$, for each $T\in \T$ and $F\in \F$. Similarly, from condition 2.c) we deduce that $\Ext^{1}_{\Ht}(Y[1],?)$ vanishes on all those stalk complexes. It follows that $G:=V[0] \oplus Y[1]$ is a projective object of $\Ht$. \\

Knowing that $G$ is a projective object, in order to prove that $G$ is generator of $\Ht$, we just need to prove that it generates all complexes $X$, with $X\in \T[0] \cup \F[1]$. Note that from condition 2.a) we get that $V[0]$ generates all stalk complexes $T[0]$ and, hence that $T[0]\in \Gen_{\Ht}(G)$. If now we take $F\in \F$, then the argument in the proof of the other implication shows that the canonical morphism 
$\xymatrix{Y[1]^{(\Hom_{\Ht}(Y[1],F[1]))}\cong Y^{(\Hom_{R}(Y,F))}[1] \ar[r]^{\hspace{2.9 cm}g[1]}& F[1]}$ has as cokernel $F^{'}[1]$, where $F^{'}=\frac{F/tr_{Y}F}{t(F/tr_{Y}(F))}$. By hypothesis we have a monomorphism $F^{'} \monic T$ and, hence, an exact sequence $0 \flecha F^{'} \flecha T \flecha T^{'} \flecha 0$, where $T$ and $T^{'}$ are in $\T$. We then get an exact sequence in $\Ht$:

$$0 \flecha T[0] \flecha T^{'}[0] \flecha F^{'}[1] \flecha 0$$ 
which shows that $F^{'}[1]\in \Gen_{\Ht}(G)$, since $T^{'}[0]\in \Gen_{\Ht}(G)$. But we have an exact sequence in $\Ht$:

$$0 \flecha \Imagen_{\Ht}(g[1]) \flecha F[1] \flecha F^{'}[1] \flecha 0$$

Note that $\Imagen_{\Ht}(g[1])\in \Gen_{\Ht}(Y[1])\subseteq \Gen_{\Ht}(G)$. The projective condition of $G$ in $\Ht$ proves then that also $F[1]\in \Gen_{\Ht}(G)$ and, hence $G$ is a generator of $\Ht$. \\

We finally prove that $G$ is compact in $\Ht$, which is equivalent to proving that $V[0]$ and $Y[1]$ are compact in this category. For each family $(M_i)_{i\in I}$ of objects in $\Ht$, we have a direct system of exact sequences in $\Ht$:

$$0 \flecha H^{-1}(M_i)[1] \flecha M_i \flecha H^{0}(M_i)[0] \flecha 0 \hspace{1 cm} (i\in I)$$ 
Using this and the projectivity of $V[0]$ and $Y[1]$, the task is reduced to check the following facts:
\begin{enumerate}
\item[i)] $\Hom_{R}(Y,?)$ preserves coproducts of modules in $\F$;
\item[ii)] $\Hom_{R}(V,?)$ preserves coproducts of modules in $\T$;
\item[iii)] $\Ext^{1}_{R}(V,?)$ preserves coproducts of modules in $\F$.
\end{enumerate}
Conditions i) and ii) automatically hold since $Y$ and $V$ are finitely generated modules. Condition iii) follows from the fact that $V$ is finitely presented. \\

The final statement of the proposition is clear, because the ring $S=\begin{pmatrix}\End_{R}(Y)^{op} & 0 \\ \Ext^{1}_{R}(V,Y) & \End_{R}(V)^{op}\end{pmatrix}$ is isomorphic to $\End_{\Ht}(V[0]\oplus Y[1])^{op}$.
\end{proof}

\vspace{0.3 cm}

We have now the following consequence of last proposition.

\begin{corollary}\label{cor. progenerator casi tilting}
Let $V$ be an $R$-module and consider the following conditions
\begin{enumerate}
\item[1)] $V$ is a classical 1-tilting module;
\item[2)] $\te=(\Pres(V),\Ker(\Hom_{R}(V,?)))$ is a torsion pair in $R$-Mod and $V[0]$ is a progenerator of $\Ht$;

\item[3)] $V$ is a finitely presented and satisfies the following conditions:

\begin{enumerate}
\item[a)] $\T:=\Pres(V)=\Gen(V)\subseteq \Ker(\Ext^{1}_{R}(V,?))$;
\item[b)] $\Ext^{2}_{R}(V,?)$ vanishes on $\F:=\Ker(\Hom_{R}(V,?))$;
\item[c)] Each module of $\F$ embeds into a module of $T$.
\end{enumerate}
\end{enumerate}
Then the implications 1) $\Longrightarrow$ 2) $\Longleftrightarrow$ 3) hold true. Moreover, when conditions 2 or 3 hold, $\te$ is also a torsion pair in the Grothendieck category $\G:=\overline{\Gen}(V)$, $V$ is a classical 1-tilting object of $\G$ and the canonical functor $\D(\G) \flecha \D(R)$ gives by restriction an equivalence of categories $\Ht(G) \iso \Ht$, where $\Ht(\G)$ is the heart of the torsion pair in $\G$. 
\end{corollary}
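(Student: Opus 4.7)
The plan is to establish the three claims in succession: (i) $\te=(\T,\F)$ restricts to a torsion pair in $\G$; (ii) $V$ is a classical $1$-tilting object of $\G$; and (iii) the triangulated functor $\D(i)\colon \D(\G)\flecha \D(R)$ induced by the exact inclusion $i\colon \G \monic R\Mode$ restricts to an equivalence $\Ht(\G) \iso \Ht$. Since conditions 2) and 3) are already shown equivalent in the corollary, I may assume condition 3) throughout.

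First, condition 3.c) says every $F\in\F$ embeds into a module of $\T\subseteq \Gen(V)\subseteq \overline{\Gen}(V)=\G$, so $\F\subseteq \G$, and clearly $\T\subseteq\G$. Given any $X\in \G$, the canonical sequence $0\flecha t(X) \flecha X \flecha X/t(X) \flecha 0$ therefore lives entirely in $\G$, which proves that $(\T,\F)$ is a torsion pair in $\G$.

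Second, condition 3.a) reads $\T=\Pres(V)\subseteq \Ker(\Ext^{1}_{R}(V,?))$, so by Lemma \ref{lem. quasi-tilting} $V$ is quasi-tilting and, consequently, $V$ is a $1$-tilting object of $\G=\overline{\Gen}(V)$. To see $V$ is self-small in $\G$, I first observe that $\G$ is closed under arbitrary coproducts in $R\Mode$: a coproduct of $V$-generated modules is clearly $V$-generated, and a coproduct of embeddings of $V$-subgenerated modules into $V$-generated ones embeds the coproduct. Hence $V^{(I)}$ computed in $\G$ coincides with $V^{(I)}$ computed in $R\Mode$. Since $\G\monic R\Mode$ is fully faithful and $V$ is finitely presented as an $R$-module (condition 3), one obtains
$$\Hom_{\G}(V,V^{(I)})=\Hom_{R}(V,V^{(I)})\cong \Hom_{R}(V,V)^{(I)}=\Hom_{\G}(V,V)^{(I)},$$
so $V$ is classical $1$-tilting in $\G$.

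Third, because $i\colon \G \monic R\Mode$ is exact, the induced functor $\D(i)\colon \D(\G)\flecha \D(R)$ is triangulated and commutes with homology, so it sends $\Ht(\G)$ into $\Ht$, giving an exact additive functor $F\colon \Ht(\G)\flecha \Ht$ with $F(V[0])=V[0]$ and inducing the identity on $\End_{\G}(V)^{op}=\End_{R}(V)^{op}$. By Proposition \ref{prop. V[0] progenerator} applied to $V$ viewed as a classical tilting object in $\G$, $V[0]$ is a progenerator of $\Ht(\G)$ and this heart is equivalent to $\End_{R}(V)^{op}\Mode$; condition 2) of the corollary gives the analogous statement for $\Ht$. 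Using Lemma \ref{lem. homology 0 module}(1) we have natural isomorphisms
$$\Hom_{\Ht(\G)}(V[0],M)\cong \Hom_{\G}(V,H^{0}(M))=\Hom_{R}(V,H^{0}(M))\cong \Hom_{\Ht}(V[0],F(M))$$
for each $M\in \Ht(\G)$, so $F$ corresponds to the identity functor under the two Gabriel-Mitchell equivalences, and is therefore itself an equivalence.

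The main obstacle I anticipate is precisely this last naturality check: I must ensure that $F$ sits in a commutative triangle with the two equivalences to $\End_{R}(V)^{op}\Mode$, and not merely that an abstract equivalence exists. Everything hinges on the fact that $V$, its coproducts $V^{(I)}$, and the ring $\End_{R}(V)$ are computed the same way in $\G$ as in $R\Mode$, which in turn relies on $\G$ being closed under coproducts in $R\Mode$ and on the containments $\T,\F\subseteq \G$ established in the first step.
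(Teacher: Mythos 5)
Your proposal only addresses the final (``Moreover'') clause: you write that ``conditions 2) and 3) are already shown equivalent in the corollary,'' but the implications 1) $\Longrightarrow$ 2) $\Longleftrightarrow$ 3) are part of what the corollary asserts and must be proved (the paper gets 1) $\Longrightarrow$ 2) from Proposition \ref{prop. V[0] progenerator} and 2) $\Longleftrightarrow$ 3) from Proposition \ref{prop. progenerator V[0]+Y[1]} with $Y=0$). As written, that half of the statement is simply missing. Your steps (i) and (ii) for the final clause are fine: $\F\subseteq\G$ by 3.c), $V$ is $1$-tilting in $\overline{\Gen}(V)$ by Lemma \ref{lem. quasi-tilting}, and self-smallness follows from finite presentation since $\G$ is closed under coproducts in $R\Mode$.

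The real gap is in step (iii). Lemma \ref{lem. homology 0 module}(1) computes $\Hom_{\D(R)}(G,M[0])$ for a complex $G\in\D^{\leq 0}(R)$ mapping \emph{into a stalk}; it says nothing about $\Hom_{\D(\G)}(V[0],M)$ with a stalk as \emph{source} and an arbitrary heart object $M$ as target, and the isomorphism $\Hom_{\Ht(\G)}(V[0],M)\cong\Hom_{\G}(V,H^{0}(M))$ you deduce from it is false: for $M=F[1]$ with $F\in\F$ the left-hand side is $\Ext^{1}_{\G}(V,F)$ while the right-hand side is $0$. The correct comparison uses the exact sequence $0\flecha H^{-1}(M)[1]\flecha M\flecha H^{0}(M)[0]\flecha 0$ and, by projectivity of $V[0]$ in both hearts, reduces to checking that the canonical maps $\Hom_{\G}(V,T)\flecha\Hom_{R}(V,T)$ and $\Ext^{1}_{\G}(V,F)\flecha\Ext^{1}_{R}(V,F)$ are bijective. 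The first is full faithfulness of $\G\monic R\Mode$, but the second is the genuine crux that your argument silently bypasses: one must show that every extension $0\flecha X\flecha M\flecha V\flecha 0$ in $R\Mode$ with $X\in\G$ already lives in $\G$, which the paper does by pushing out along an embedding $X\monic T$ with $T\in\T$ (condition 3.c)) and using $\Ext^{1}_{R}(V,T)=0$ to embed $M$ into $T\oplus V\in\G$. Without this $\Ext^{1}$ comparison the commutativity of your triangle of equivalences is not established.
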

\begin{proof}
1) $\Longrightarrow$ 2) is a particular case of proposition \ref{prop. V[0] progenerator}. \\

2) $\Longrightarrow$ 3) is a direct consequence of proposition \ref{prop. progenerator V[0]+Y[1]} by taking $Y=0$. \\

3) $\Longrightarrow$ 2) From condition 3.a) and lemma \ref{lem. quasi-tilting} we obtain that $\Gen(V)$ is a torsion class and $V$ is a classical quasi-tilting $R$-module. The implication follows from proposition \ref{prop. progenerator V[0]+Y[1]}. \\

Let us prove now the final statement. From lemma \ref{lem. quasi-tilting}, we get that $V$ is a classical 1-tilting object of $\G$. 
On the other hand, the inclusion functor $\G \monic R$-Mod is exact and, hence, extends to a triangulated functor $j:\D(\G) \flecha \D(R)$, which need be neither faithful nor full, but induces by restriction a functor $\overline{j}:\Ht(\G) \flecha \Ht:=\Ht(R\text{-Mod})$. By the argument of the previous paragraph, we have that $\Ht(\G)$ is a module category with $V[0]$ as progenerator (see the proof of proposition \ref{prop. V[0] progenerator}). Furthermore $\Hom_{\Ht(\G)}(V[0],?):\Ht(\G) \flecha S$-Mod is an equivalence of categories, where $S=\End_{R}(V)^{op}$. We claim that, up to natural isomorphism, the following diagram of functors is commutative:

$$\xymatrix{\Ht(\G) \ar[rr]^{\overline{j}} \ar[dr]_{\Hom_{\D(\G)}(V[0],?) \hspace{0.3 cm}}&& \Ht \ar[dl]^{\hspace{0.2cm}\Hom_{\D(R)}(V[0],?)} \\ & S\text{-Mod} & }$$

Due to the projective condition of $V[0]$ both in $\Ht(\G)$ and $\Ht$, we just need to see that the maps induced by the functor $j$:

$$\xymatrix{\Hom_{\G}(V,T)\cong \Hom_{\D(\G)}(V[0],T[0]) \ar[r] & \Hom_{\D(R)}(V[0],T[0])\cong \Hom_{R}(V,T) \\ \Ext^{1}_{\G}(V,F)\cong \Hom_{\D(\G)}(V[0],F[1]) \ar[r] & \Hom_{\D(R)}(V[0],F[1])\cong \Ext^{1}_{R}(V,F)}$$
are isomorphism, for each $T\in \T$ and for each $F\in \F$. The first one is clear and the second one follows from the next paragraph. \\

We show that the canonical map $\varphi:\Ext^{1}_{\G}(V,X) \flecha \Ext^{1}_{R}(V,X)$ is an isomorphism, for each $X \in \G$. It is clearly injective. To prove that it is surjective, \linebreak let $\xymatrix{0 \ar[r] & X \ar[r] & M \ar[r] & V \ar[r] & 0} \ (\ast)$ be an exact sequence in $R$-Mod. We take a monomorphism $u:X \monic T$, with $T\in \T$. By pushing out the sequence $(\ast)$ along the monomorphism $u$ and using the fact that $\Ext^{1}_{R}(V,T)=0$, we get a monomorphism $M\monic T \oplus V$, which implies that $M\in \G$. Then the sequence $(\ast)$ lives in $\G$ and, hence, $\varphi$ is an isomorphism.  \\

By assertion 2, the functor $\Hom_{\Ht}(V[0],?):\Ht \flecha S$-Mod is an equivalence of categories. Therefore $\overline{j}:\Ht(\G) \flecha \Ht$ is an equivalence of categories.   
\end{proof}

\vspace{0.3 cm}

The following is now very natural.

\begin{question}
Let $\te=(\T,\F)$ be a torsion pair in $R$-Mod satisfying the equivalent conditions 2 and 3 of corollary \ref{cor. progenerator casi tilting}. Is $\te$ a classical tilting torsion pair?
\end{question}

\vspace{0.3 cm}

In the following section, we will show that last question has a negative answer.

\begin{corollary}\label{cor. here. torsion pair with V+Y as proge.}
Let us assume that $\te=(\T,\F)$ is a hereditary torsion pair in $R$-Mod. The following assertions are equivalent:
\begin{enumerate}
\item[1)] $\Ht$ has a progenerator of the form $V[0]\oplus Y[1]$, where $V\in \T$ and $Y\in \F$;
\item[2)] There are $R$-modules $V$ and $Y$ satisfying the following properties:
\begin{enumerate}
\item[a)] $V$ is finitely presented and $\T=\Pres(V)\subseteq \Ker(\Ext^{1}_{R}(V,?))$;
\item[b)] $\Ext^{\hspace{0.03 cm}2}_{R}(V,?)$ vanishes on $\F$;
\item[c)] $Y$ is a finitely generated projective $R/t(R)$-module which is in $^{\perp}\T$;
\item[d)] For each $F\in \F$, the module $F/tr_{Y}(F)$ is in $\T$, where $tr_Y(F)$ denotes the trace of $Y$ in $F$.
\end{enumerate}
\end{enumerate}
In this case, if $I=t(R)$ then $\te^{'}=(\T\cap R/I-\text{Mod},\F)$ is a torsion pair in $R/I-$Mod which is the right constituent of a TTF triple in this category and has the property that $\frac{V}{IV}[0]\oplus Y[1]$ is a progenerator of $\mathcal{H}_{\mathbf{t}^{'}}$. Moreover, the forgetful functor $\mathcal{H}_{\mathbf{t}^{'}} \flecha \Ht$ is faithful. It is full if, and only if, each short exact sequence $0 \flecha Y \flecha M \flecha V/IV \flecha 0$ in $R$-Mod satisfies that $IM=0$.
\end{corollary}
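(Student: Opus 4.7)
The first step is to establish $\te'$ as a torsion pair in $R/I\Mode$. Hereditariness of $\te$ gives $I=t(R)\in \T$, and $\Hom_R(I,F)=0$ together with the evaluation $I\to F,\ a\mapsto ax$ implies $IF=0$ for every $F\in \F$, so $\F\subseteq R/I\Mode$; the canonical sequence $0\to t(R)\to R\to R/t(R)\to 0$ shows moreover that $R/I\in \F$. The pair $\te'=(\T\cap R/I\Mode,\F)$ is then a hereditary torsion pair in $R/I\Mode$ (the canonical sequences of $\te$ restricted to $R/I$-modules stay in $R/I\Mode$), and it is faithful since $R/I\in \F$.

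The core step is to verify conditions (a)--(d) of Proposition~\ref{prop. progenerator V[0]+Y[1]} for $(V/IV,Y)$ in the category $R/I\Mode$ with torsion pair $\te'$. Conditions (a), (c), (d) transfer directly. Indeed, $V/IV=R/I\otimes_R V$ is finitely presented over $R/I$, and any finitely generated projective presentation $Q\to P\to V\to 0$ over $R$ base-changes to one of $V/IV$ over $R/I$; this gives $\T'=\Pres_{R/I}(V/IV)$ and, using $\Ext^1_R(V,T)=0$ for $T\in \T'\subseteq \T$, also $\Ext^1_{R/I}(V/IV,T)=0$. The module $Y$ is already finitely generated projective over $R/I$ with $Y\in{}^\perp\T'$, and $t_{R/I}(R/I)=0$ because $R/I\in \F$. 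Finally $F/tr_Y(F)\in \T$ is automatically an $R/I$-module, so it lies in $\T'=\T\cap R/I\Mode$. The real obstacle is condition (b), the vanishing of $\Ext^2_{R/I}(V/IV,F)$ for every $F\in \F$.

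For (b) the plan is to exploit the adjunction isomorphism $\mathbf{R}\Hom_{R/I}(R/I\otimes^{\mathbf{L}}_R V,F)\cong \mathbf{R}\Hom_R(V,F)$ (for $F\in R/I\Mode$) together with the associated hypercohomology spectral sequence
\[
E_2^{p,q}=\Ext^p_{R/I}(\Tor^R_q(R/I,V),F)\;\Longrightarrow\; \Ext^{p+q}_R(V,F).
\]
The crucial fact is that $\Tor^R_1(R/I,V)$ embeds into $I\otimes_R V$, and $I\otimes_R V$ is $V$-generated because $I$ is an epimorphic image of a free $R$-module; by heredity $\Tor^R_1(R/I,V)\in \T\cap R/I\Mode=\T'$, hence $\Hom_{R/I}(\Tor^R_1(R/I,V),F)=0$. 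This kills the only potentially nonzero incoming differential to $E_2^{2,0}=\Ext^2_{R/I}(V/IV,F)$, and outgoing differentials vanish for degree reasons, so $E_\infty^{2,0}=\Ext^2_{R/I}(V/IV,F)$ is a subquotient of the abutment $\Ext^2_R(V,F)=0$. Proposition~\ref{prop. progenerator V[0]+Y[1]} then yields $V/IV[0]\oplus Y[1]$ as a progenerator of $\mathcal{H}_{\te'}$; this makes $\mathcal{H}_{\te'}$ a module category, and Corollary~\ref{cor. hereditary + fateful ---> right constituent} applied to the faithful hereditary $\te'$ delivers the TTF-triple structure.

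For the forgetful functor $\iota:\mathcal{H}_{\te'}\to \Ht$, the restriction-of-scalars $\iota_*:\D(R/I)\to \D(R)$ is exact and sends $\mathcal{H}_{\te'}$ into $\Ht$, inducing an exact additive functor between the two hearts. Faithfulness is immediate: if $\iota M\cong 0$ in $\D(R)$ then $\iota H^i(M)=H^i(\iota M)=0$ for each $i$, so $H^i(M)=0$ in $R/I\Mode$ by faithfulness of $\iota_*$ on modules, and $M\cong 0$; an exact additive functor killing no nonzero object is faithful. For fullness, $\iota$ preserves coproducts and $G'$ is a projective generator, so a diagram chase using a projective presentation $G'^{(J_1)}\to G'^{(J_0)}\to M\to 0$ reduces the question to surjectivity of $\End_{\mathcal{H}_{\te'}}(G')\to \End_{\Ht}(\iota G')$. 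Decomposing $G'=V/IV[0]\oplus Y[1]$ makes every entry in this endomorphism-ring comparison automatically an equality except for $\Ext^1_{R/I}(V/IV,Y)\to \Ext^1_R(V/IV,Y)$, which is always injective and surjective precisely when every $R$-linear extension $0\to Y\to M\to V/IV\to 0$ is in fact $R/I$-linear, i.e.\ $IM=0$. The last point in the argument is that, because $Y$ generates $\F=\Gen(Y)$, pulling back an arbitrary extension $0\to F\to M\to V/IV\to 0$ along an epimorphism $Y^{(S)}\to F$ shows that the stated condition for $Y$ propagates to every $F\in \F$, which is what makes the diagram-chase reduction globally successful.
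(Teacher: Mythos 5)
Your overall strategy coincides with the paper's (verify the hypotheses of Proposition \ref{prop. progenerator V[0]+Y[1]} for the pair $(V/IV,\,Y)$ over $R/I$, then analyse the forgetful functor), but two of your sub-arguments are genuinely different and both are correct. For condition (b) you use the change-of-rings spectral sequence and the observation that $\Tor^{R}_{1}(R/I,V)$ embeds in $I\otimes_R V\in\Gen(V)=\T$, hence lies in $\T\cap R/I\text{-Mod}$ by heredity; the five-term sequence then gives $\Ext^{2}_{R/I}(V/IV,F)\hookrightarrow \Ext^{2}_{R}(V,F)=0$ at once. The paper instead argues via cosyzygies (which agree over $R$ and $R/I$ because $\te$ is hereditary) and an explicit monomorphism $\Ext^{1}_{R/I}(\overline{V},M)\to\Ext^{1}_{R}(V,M)$; your route is shorter, the paper's has the advantage that the injectivity of that comparison map is reused later for faithfulness and fullness. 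Your faithfulness argument (the induced functor between hearts is exact and reflects zero, hence is faithful) is cleaner than the paper's generator argument, and deducing the TTF statement from Corollary \ref{cor. hereditary + fateful ---> right constituent} once the progenerator is in hand is a legitimate shortcut.

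The fullness argument, however, has a genuine gap. First, fullness of $\iota:\mathcal{H}_{\mathbf{t}^{'}}\flecha\Ht$ does not reduce to surjectivity of $\End_{\mathcal{H}_{\mathbf{t}^{'}}}(G')\to\End_{\Ht}(\iota G')$: resolving the \emph{source} by copies of $G'$ and using left exactness of $\Hom(?,N)$ reduces the problem to surjectivity of $\Hom_{\mathcal{H}_{\mathbf{t}^{'}}}(G',N)\to\Hom_{\Ht}(\iota G',\iota N)$ for \emph{every} object $N$ (one cannot also resolve the target, because $\iota G'$ is not projective in $\Ht$ — indeed its failure of projectivity is exactly what fullness measures). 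Decomposing $N$ by its canonical sequence, the genuine content is surjectivity of $\Ext^{1}_{R/I}(V/IV,F)\to\Ext^{1}_{R}(V/IV,F)$ for every $F\in\F$, not just $F=Y$. Second, your propagation from $Y$ to arbitrary $F$ rests on the claim $\F=\Gen(Y)$, which is false under the hypotheses: condition (d) only says $F/tr_{Y}(F)\in\T$, and this quotient is typically a nonzero torsion module, so there is in general no epimorphism $Y^{(S)}\epic F$. The missing step is to first apply $\Ext(V/IV,?)$ over both rings to $0\to tr_{Y}(F)\to F\to F/tr_{Y}(F)\to 0$ and use $\Ext^{1}_{R}(V/IV,T)=0=\Ext^{1}_{R/I}(V/IV,T)$ for $T\in\T\cap R/I\text{-Mod}$ to reduce to the case $F=tr_{Y}(F)\in\Gen(Y)$, and only then push extensions forward along an epimorphism $Y^{(S)}\epic F$, where the surjectivity of $\Ext^{1}(V/IV,Y^{(S)})\to\Ext^{1}(V/IV,F)$ is guaranteed by the $\Ext^{2}$-vanishing of condition (b) and the case $Y^{(S)}$ follows from the case $Y$ because $V/IV$ is finitely presented. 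This is exactly the two-step reduction carried out in the paper.
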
 
\begin{proof}
All throughout proof we put $\overline{M}=M/IM$, for each $R$-module $M$. The equivalence of assertions 1 and 2 is a direct consequence of proposition \ref{prop. progenerator V[0]+Y[1]}. From theorem \ref{teo. Ht module c. with t here.} and its proof, we know that $(\T \cap \overline{R}\text{-Mod},\F)$ is the right constituent torsion pair of a TTF triple $(\mathcal{C}_{I},\T_{I},\F_{I})$ in $\overline{R}$-Mod. Moreover, by property 2.c), the class $\Ker(\Hom_{\overline{R}}(Y,?))$ contains $\T\cap \overline{R}$-Mod and is closed under taking quotients. Thus, if $Z\in\Ker(\Hom_{\overline{R}}(Y,?))$ then $F:=Z/t(Z)$ also is in $\Ker(\Hom_{\overline{R}}(Y,?)$, so that $tr_{Y}(F)=0.$ Using property 2.d), we get $F\cong F/tr_{Y}(F)$ is in $\T$ and, hence $F=0$ and $Z\cong t(Z)$. It then follows that the inclusion $\Ker(\Hom_{\overline{R}}(Y,?)) \subseteq \T \cap \overline{R}$-Mod also holds, which implies that $\mathcal{C}_{I}=\Gen(Y)$. If now $\mathbf{a}$ is the two-sided ideal of $R$ given by the equality $\overline{\mathbf{a}}=\frac{\mathbf{a}}{I}=tr_{Y}(\frac{R}{I})$, then $\overline{\mathbf{a}}$ is the idempotent ideal of $\overline{R}$ which defines the TTF triple and, by the proof of theorem \ref{teo. Ht module c. with t here.}, we know that $\mathbf{a}=\ann_{R}(\overline{V})$ and that $\add(\overline{V})=\add(R/\mathbf{a})$, so that $\overline{V}$ is a progenerator of $\frac{R}{\mathbf{a}}-$Mod.\\

Now the $\overline{R}$-modules $\overline{V}$ and $Y$ satisfy the conditions 2.a), 2.c) and 2.d) with respect to the torsion pair $\te^{'}=(\T_{I},\F)$ of $\overline{R}$-Mod. On the other hand, $\te$ and $\te^{'}$ are hereditary torsion pairs in $R$-Mod and $\overline{R}$-Mod, respectively. Then, for each $F\in \F$, the injective envelope $E(F)$ in $R$-Mod is also in $\F$ (see \cite[Proposition VI.3.2]{S}). In particular, we have that $E(F)\in \overline{R}$-Mod, so that $E(F)$ is also the injective envelope of $F$ as a $\overline{R}$-module and, hence, the first cosyzygy $\Omega^{-1}(F)$ is the same in $R$-Mod and $\overline{R}$-Mod. In order to check condition 2.b) for $\overline{V}$, we need to cheek that $\Ext^{1}_{R}(\overline{V},\Omega^{-1}(F))=0$. But using condition 2.b) for $V$, our needed goal will follow from something stronger that we will prove. Namely, that if $p=p_V:V \epic \overline{V}$ is the canonical projection, then the composition

$$\xymatrix{\varphi_M:\Ext^{1}_{\overline{R}}(\overline{V},M) \ar[r]^{\hspace{0.2 cm}\text{can}} & \Ext^{1}_{R}(\overline{V},M) \ar[rr]^{\Ext^{1}(p,M)} && \Ext^{1}_{R}(V,M)}$$
is a monomorphism for all $M\in \overline{R}$-Mod. \\

Let $\xymatrix{0 \ar[r] & M \ar[r]^{j} & N \ar[r]^{q} & \overline{V} \ar[r] & 0}$ be an exact sequence in $\overline{R}$-Mod which represents an element of $\Ker(\varphi_M)$. Then the projection $p:V \epic \overline{V}$ factors through $q$. Fixing a morphism $g:V \flecha N$ such that $q \circ g=p$ and taking into account that $IN=0$, we get a morphism $\overline{g}: \overline{V} \flecha N$ such that $g=\overline{g} \circ p$ and so $p=q \circ g=q \circ \overline{g} \circ p$. It follows that $q$ is a retraction since $p$ is an epimorphism. \\

In order to prove the final assertion, we consider the following composition of morphisms of abelian groups, where $F\in \F$:

$$\xymatrix{\Ext^{1}_{R}(j \circ \rho, F): \Ext^{1}_{R}(V,F) \ar[rr]^{\hspace{1.1 cm}\Ext^{1}_{R}(j,F)} && \Ext^{1}_{R}(t(R)V,F) \ar[rr]^{\Ext^{1}_{R}(\rho,F)\hspace{0.8 cm}} && \Ext^{1}_{R}(V^{(\Hom_{R}(V,R)\times V)},F)}$$
where $q:V^{(\Hom_{R}(V,R))} \flecha t(R)=tr_{V}(R)$, $\pi: R^{(V)} \epic V$ and $j:t(R)V \monic V$ be the canonical morphism and $\pi^{'}:t(R)^{(V)} \epic t(R)V$ is the epimorphism given by the restriction of $\pi$ to $R^{(V)}$ and $\rho:=\pi^{'} \circ q^{(V)}$. We have that $\Ext^{1}_{R}(\rho, F)$ is a monomorphism, because $\Ker(\rho)\in \T$ and hence $\Hom_{R}(\Ker(\rho),F)=0$. But $\Ext^{1}_{R}(j \circ \rho, F)=0$ since $j \circ \rho$ factors through a projective $R$-module (see the proof of lemma \ref{lem. ImB<t(R)V}). We then get that $\Ext^{1}_{R}(j,F)$ is the zero map, for each $F\in \F$. By considering the canonical exact sequence $\xymatrix{0 \ar[r] & t(R)V \hspace{0.03cm}\ar@{^(->}[r]^{\hspace{0.4cm}j} & V \ar@{>>}[r] & \overline{V} \ar[r] & 0}$ and applying to it the long exact sequence of $\Ext^{1}(?,F)$, we get:

$$\xymatrix{0 = \Hom_{R}(t(R)V,F) \ar[r] & \Ext^{1}_{R}(\overline{V},F) \ar[r] & \Ext^{1}_{R}(V,F) \ar[r]^{0 \hspace{0.4 cm}} & \Ext^{1}_{R}(t(R)V,F)}$$
which proves that $\Ext^{1}_{R}(\overline{V},F)\cong \Ext^{1}_{R}(V,F)$, for each $F\in \F$. Moreover, by the two previous paragraphs, we get that the map $\Ext^{1}_{\overline{R}}(\overline{V},F) \xymatrix{\ar[r]^{\text{can}} &} \Ext^{1}_{R}(\overline{V},F)$ is a monomorphism. \\

Let us put $\overline{G}:=\overline{V}[0]\oplus Y[1]$. We claim that the map $\Hom_{\mathcal{H}_{\mathbf{t}^{'}}}(\overline{G},M) \flecha \Hom_{\Ht}(\overline{G},M)$ is injective for all $M\in \mathcal{H}_{\mathbf{t}^{'}}$. Bearing in mind that we have isomorphisms of abelian groups 
$$\Hom_{\mathcal{H}_{\mathbf{t}^{'}}}(Y[1],M)\cong \Hom_{\overline{R}}(Y,H^{-1}(M))=\Hom_{R}(Y,H^{-1}(M))\cong \Hom_{\Ht}(Y[1],M)$$
our task reduces to check that the canonical map $\Hom_{\mathcal{H}_{\mathbf{t}^{'}}}(\overline{V}[0],M) \flecha \Hom_{\Ht}(\overline{V}[0],M)$ is injective. But we have the following commutative diagram with exact rows:

$$\xymatrix{0 \ar[r] & \Ext^{1}_{\overline{R}}(\overline{V},H^{-1}(M)) \ar[r] \ar@{^(->}[d]^{\text{can}} & \Hom_{\mathcal{H}_{\mathbf{t}^{'}}}(\overline{V}[0],M) \ar[r] \ar[d]& \Hom_{\overline{R}}(\overline{V},H^{0}(M)) \ar[r] \ar[d]^{\wr}& \Ext^{2}_{\overline{R}}(\overline{V},F)=0\\ 0 \ar[r] & \Ext^{1}_{R}(\overline{V},H^{-1}(M)) \ar[r] & \Hom_{\Ht}(\overline{V}[0],M) \ar[r] & \Hom_{R}(\overline{V},H^{0}(M)) \ar[r] & \Ext^{2}_{R}(\overline{V},F)}$$
The right vertical arrow is an isomorphism since $H^{0}(M)$ is a $\overline{R}$
-module, and the left vertical arrow is a monomorphism. It then follows that the central vertical arrow is a monomorphism, as desired.\\

Let us fix any object $M\in \mathcal{H}_{\mathbf{t}^{'}}$ and consider the full subcategory $\mathcal{C}_{M}$ of $\mathcal{H}_{\mathbf{t}^{'}}$ consisting of the objects $N$ such that the canonical map $\Hom_{\mathcal{H}_{\mathbf{t}^{'}}}(N,M) \flecha \Hom_{\Ht}(N,M)$ is a monomorphism. This subcategory is closed under taking coproducts and cokernels and, by the previous paragraph, it contains $\overline{G}$. We then have $\mathcal{C}_{M}=\mathcal{H}_{\mathbf{t}^{'}}$ (because $\overline{G}$ generates $\mathcal{H}_{\mathbf{t}^{'}}$) and, since this is true for any $M\in \mathcal{H}_{\mathbf{t}^{'}}$, we conclude that the forgetful functor $\mathcal{H}_{\mathbf{t}^{'}} \flecha \Ht$ is faithful. \\

Finally, note that the argument giving the faithfulness also shows that the functor is full if, and only if, the canonical map $\Ext^{1}_{\overline{R}}(\overline{V},F) \flecha \Ext^{1}_{R}(\overline{V},F)$ is surjective (and hence bijective), for each $F\in \F$. But we have proved that $\Ext^{1}_{R}(\overline{V},F)\cong \Ext^{1}_{R}(V,F)$. Hence, the task reduces to prove that $\varphi_{F}:\Ext^{1}_{\overline{R}}(\overline{V},F) \flecha \Ext^{1}_{R}(V,F)$ is surjective, for all $F\in \F$, knowing that by hypothesis, $\varphi_Y$ is an isomorphism. Note first that, by property 2.d), if $F\in \F$ then we have an exact sequence 
$$0 \flecha tr_{Y}(F) \monic F \epic T \flecha 0$$
in $R/I-$Mod, with $T\in \T$. Then we get the following commutative diagram 

$$\xymatrix{0 \ar[r] & \Hom_{\overline{R}}(\overline{V},T) \ar[r] \ar[d]^{\wr} & \Ext^{1}_{\overline{R}}(\overline{V},tr_{Y}(F)) \ar[r] \ar[d] & \Ext^{1}_{\overline{R}}(\overline{V},F) \ar[r] \ar[d] & 0\\ 0 \ar[r] & \Hom_{R}(V,T) \ar[r] & \Ext^{1}_{R}(V,tr_{Y}(F)) \ar[r] & \Ext^{1}_{R}(V,F) \ar[r] & 0 }$$

The rows are exact since $\Ext^{1}_{\overline{R}}(\overline{V},T)=0=\Ext^{1}_{R}(V,T)$, while the vertical left arrow is clearly an isomorphism. Then we can assume, without loss of generality, that $F=tr_{Y}(F)$ in the sequel. Fix for such $F$ an epimorphism $\delta:Y^{(J)} \epic F$, for some set $J$. Bearing in mind that $\Ext^{2}_{\overline{R}}(\overline{V}.?)$ and $\Ext^{2}_{R}(V,?)$ both vanish on $\F$ and that $_{\overline{R}}\overline{V}$ and $_{R}V$ are finitely presented modules, we get a commutative square, where the horizontal arrows are epimorphisms and the left vertical one is an isomorphism since it is $\varphi_{Y}^{(J)}$:

$$\xymatrix{\Ext^{1}_{\overline{R}}(\overline{V},Y^{(J)}) \cong \Ext^{1}_{\overline{R}}(\overline{V},Y)^{(J)} \ar@{>>}[r] \ar[d]^{\wr} & \Ext^{1}_{\overline{R}}(\overline{V},F) \ar[d] \ar[r] & 0\\ \Ext^{1}_{R}(V,Y^{(J)})\cong \Ext^{1}_{R}(V,Y)^{(J)} \ar@{>>}[r] & \Ext^{1}_{R}(V,F) \ar[r] & 0}$$ 
Then the right vertical arrow is an epimorphism, as desired.
\end{proof}

\begin{proposition}\label{prop. Left constituent hereditary}
Let $\te=(\T,\F)$ be hereditary and suppose that it is the {\bf left} constituent torsion pair of a TTF triple. Then $\Ht$ is a module category if, and only if, there is a finitely generated projective $R$-module $P$ such that $\T=\Gen(P)$. In such case, the following assertions hold:
\begin{enumerate}
\item[1)] $\te$ is HKM if, and only if, there is a finitely generated projective $R$-module $Q^{'}$ such that $\Hom_{R}(Q^{'},P)=0$ and $\add(\frac{Q^{'}}{t(Q^{'})})=\add(\frac{R}{t(R)})$;
\item[2)] $\te$ is the right constituent of a TTF triple in $R$-Mod if, and only if, $P$ is finitely generated over its endomorphism ring.
\end{enumerate}
\end{proposition}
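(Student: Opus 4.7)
Let $\mathbf{a}$ be the idempotent two-sided ideal of $R$ defining the TTF triple whose left constituent is $\te$, so that $\T=\Gen(\mathbf{a})=\{M:\mathbf{a}M=M\}$, $\F=R/\mathbf{a}\text{-Mod}=\{M:\mathbf{a}M=0\}$ and $t(R)=\mathbf{a}$. For $(\Leftarrow)$ of the main equivalence, if $\T=\Gen(P)$ with $P$ finitely generated projective, then example \ref{exam. TTF projective module} makes $\F=\Ker(\Hom_R(P,?))$ a TTF class, and I would apply corollary \ref{cor. here. torsion pair with V+Y as proge.} with $V=P$ and $Y=R/\mathbf{a}=R/t(R)$. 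Conditions (a) and (b) of that corollary are immediate from projectivity of $P$; (c) asks $R/\mathbf{a}\in{}^{\perp}\T$, and any $R$-homomorphism $R/\mathbf{a}\to T$ with $T\in\T$ has image both in $\T$ (by heredity) and annihilated by $\mathbf{a}$, hence in $\T\cap\F=0$; (d) is immediate since $\text{tr}_{R/\mathbf{a}}(F)=F$ for every $F\in\F$. This yields a progenerator $P[0]\oplus(R/\mathbf{a})[1]$ of $\Ht$.

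The crucial step is the converse. Assuming $\Ht$ is a module category, theorem \ref{teo. Ht is a module category} produces a progenerator $G$ whose $V:=H^{0}(G)$ is finitely presented and satisfies $\T=\Pres(V)\subseteq\Ker(\Ext^{1}_R(V,?))$; by heredity $\Gen(V)=\Pres(V)=\T$. I would then show that $V$ is already projective in $R\text{-Mod}$. For any extension $0\to F\to M\to V\to 0$ with $F\in\F$, the submodule $\mathbf{a}M$ satisfies $\mathbf{a}(\mathbf{a}M)=\mathbf{a}^{2}M=\mathbf{a}M$, so $\mathbf{a}M\in\T$; on the other hand $\mathbf{a}M\cap F$ is a subobject of $F\in\F$ lying inside $\mathbf{a}M\in\T$, hence in $\T\cap\F=0$ by heredity. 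Since $\mathbf{a}V=V$, the composition $\mathbf{a}M\hookrightarrow M\twoheadrightarrow V$ is both surjective (because $\mathbf{a}V=V$) and injective (because $\mathbf{a}M\cap F=0$), so it is an isomorphism; consequently $M=\mathbf{a}M\oplus F$ and the sequence splits. Combined with $\Ext^{1}_R(V,\T)=0$ this forces $\Ext^{1}_R(V,?)\equiv 0$, so $V$ is projective, and $P:=V$ satisfies $\T=\Gen(P)$.

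For part (1), the direction $(\Leftarrow)$ proceeds by verifying that $P^{\bullet}:=Q'\xrightarrow{0}P$ in degrees $-1,0$ is an HKM complex inducing $\te$: a direct computation in $\mathcal{H}(R)$ gives $\mathcal{Y}(P^{\bullet})=\Ker(\Hom_R(P,?))=\F$ and $\mathcal{X}(P^{\bullet})=\Ker(\Hom_R(Q',?))$. The hypothesis $\Hom_R(Q',P)=0$ together with projectivity of $Q'$ propagates through the long exact sequence of $\Hom(Q',?)$ to give $\Hom_R(Q',T)=0$ for every $T\in\Gen(P)=\T$; and using $t(Q')=\mathbf{a}Q'$, the identification $\Hom_R(Q',F)=\Hom_{R/\mathbf{a}}(Q'/t(Q'),F)$ together with $R/\mathbf{a}\in\add(Q'/t(Q'))$ forces $F=0$ whenever $\Hom_R(Q',F)=0$, whence $\mathcal{X}(P^{\bullet})=\T$. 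For the converse I would start from an arbitrary HKM complex $A\xrightarrow{d}B$ inducing $\te$ and invoke proposition \ref{pro. HKM complex} to relate it to a progenerator of $\Ht$; using that $H^{0}(A\xrightarrow{d}B)$ and $P$ are $\add$-equivalent finitely generated projective generators of $\T$ (from the main equivalence), I would reshape the complex by replacing $B$ with a module in $\add(P)$ and absorbing the differential into a new left term $Q'$, reading back from $\mathcal{Y}(P^{\bullet})=\F$ and $\mathcal{X}(P^{\bullet})=\T$ exactly the two conditions required of $Q'$.

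For part (2), $\T=\Gen(P)$ is automatically closed under quotients, coproducts and extensions, so $\te$ is the right constituent of a TTF triple if and only if $\T$ is closed under products. Writing $\T=\{M:\mathbf{a}M=M\}$ with $\mathbf{a}=\text{trace}(P,R)=\sum_{f\in\Hom_R(P,R)}\Imagen(f)$, a direct computation shows that $\prod_i M_i\in\T$ for every family in $\T$ is equivalent to $\mathbf{a}$ being finitely generated as a left ideal of $R$; this is in turn the classical characterisation of $P$ being finitely generated as a right module over $\End_R(P)$. The main obstacle of the whole argument lies in $(\Rightarrow)$ of part (1): the reshaping of a general HKM complex into the split form $Q'\xrightarrow{0}P$ is delicate because a priori the differential $d$ may be nonzero, and one must use $\add$-equivalence together with proposition \ref{pro. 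HKM complex} to replace $B$ by $P$ and absorb $d$ while preserving both the HKM property and the induced torsion pair $\te$.
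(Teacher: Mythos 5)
Your treatment of the main equivalence and of part (1)($\Leftarrow$) is correct, and in two places you take a genuinely different route from the paper. For the ``only if'' of the main equivalence, the paper applies lemma \ref{lem. ImB<t(R)V} to $1_V$ (using that $V=t(R)V$) to factor the identity of $V$ through a projective; you instead prove directly that every extension $0\to F\to M\to V\to 0$ with $F\in\F$ splits (via $\mathbf{a}M\in\T$, $\mathbf{a}M\cap F\in\T\cap\F=0$ by heredity, and $\mathbf{a}M\twoheadrightarrow V$), so that $\Ext^1_R(V,\F)=0$, which together with $\T\subseteq\Ker(\Ext^1_R(V,?))$ from theorem \ref{teo. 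Ht is a module category} gives projectivity of $V$. This is more elementary and self-contained. Likewise, for (1)($\Leftarrow$) you verify $\mathcal{X}(P^{\bullet})=\Ker(\Hom_R(Q',?))=\T$ and $\mathcal{Y}(P^{\bullet})=\Ker(\Hom_R(P,?))=\F$ by hand, whereas the paper checks assertion 2 of proposition \ref{pro. HKM complex} against the already known progenerator $P[0]\oplus\frac{R}{t(R)}[1]$; both work, and your checks (lifting along $P^{(I)}\twoheadrightarrow T$ using projectivity and finite generation of $Q'$, and $\Hom_R(Q'/t(Q'),F)=0\Rightarrow F=0$ from $R/t(R)\in\add(Q'/t(Q'))$) are sound. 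Your ($\Leftarrow$) of the main equivalence coincides with the paper's, up to replacing the citation of Stenstr\"om by a direct argument for $R/t(R)\in{}^{\perp}\T$.

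The genuine gap is in (1)($\Rightarrow$), which you only sketch. The pivotal claim that $H^0$ of an arbitrary HKM complex realizing $\te$ is a finitely generated projective generator of $\T$ add-equivalent to $P$ does \emph{not} follow ``from the main equivalence'': that equivalence only produces \emph{some} projective $P$ with $\T=\Gen(P)$, and a finitely presented generator of $\Gen(P)$ need not be projective. The paper closes this by noting that proposition \ref{pro. HKM complex} converts the HKM complex $Q\xrightarrow{d}P'$ into a progenerator $G$ (inserting $t(\Ker(d))$ in degree $-2$) of $\Ht$, that any two progenerators of a module category are add-equivalent, hence $\add_{\Ht}(G)=\add_{\Ht}(P[0]\oplus\frac{R}{t(R)}[1])$, and then applies the additive functors $H^0$ and $H^{-1}$: the first gives projectivity of $V=H^0(G)$ (whence $\Imagen(d)$ and $Q':=\Ker(d)$ are projective and the complex splits), and the second gives exactly the condition $\add(Q'/t(Q'))=\add(R/t(R))$, which your plan never derives — it cannot simply be ``read back'' from $\mathcal{X}(P^{\bullet})=\T$ without a further argument (for instance, that the trace ideal of the projective $R/t(R)$-module $Q'/t(Q')$ must be all of $R/t(R)$). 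You could alternatively obtain projectivity of $V$ from your own splitting lemma together with condition 3.a) of proposition \ref{pro. HKM complex}, but none of this appears in your text, so as written this direction is incomplete.

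A smaller but real error occurs in part (2): closure of $\T=\{M:\mathbf{a}M=M\}$ under products is equivalent to $\mathbf{a}$ being finitely generated as a \emph{right} ideal, not a left ideal (consider the element $(a)_{a\in\mathbf{a}}$ of the product $\mathbf{a}^{\mathbf{a}}$: its expression as $\sum_{k=1}^n b_ky_k$ with $b_k\in\mathbf{a}$ forces $\mathbf{a}=\sum b_kR$, and conversely such a finite right generating set lets one rewrite any tuple coordinatewise). The paper sidesteps this by quoting \cite{CM} directly for the equivalence between ``every product of copies of $P$ lies in $\Gen(P)$'' and ``$P$ is finitely generated over its endomorphism ring''; if you want to route through the trace ideal you must correct the side of the finiteness condition and then still justify its equivalence with the endomorphism-ring condition.
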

\begin{proof}
The ``if'' part: Let $P$ be a finitely generated projective $R$-module such that $\T=\Gen(P)$. We will check that $V=P$ and $Y=\frac{R}{t(R)}$ satisfy conditions 2.a)-2.d) of corollary \ref{cor. here. torsion pair with V+Y as proge.}. All these properties are trivially satisfied, except the fact that $Y\in$$^{\perp}\T$. For that, we consider the TTF triple $(\T,\F,\F^{\perp})$. By \cite[Lemma VI.8.3]{S}, we know that $\T \subseteq \F^{\perp}$. It particular, $Y=R/t(R)\in \F=$$^\perp(\F^{\perp})\subseteq $$^{\perp}\T$. \\

The ``only if'' part: Let $\mathbf{a}$ be the idempotent ideal which defines the TTF triple, so that $\T=\{ T\in R\text{-Mod}: \mathbf{a}T=T\}$. By theorem \ref{teo. Ht is a module category}, we have a progenerator in $\Ht$ of the form:
$$\xymatrix{G:=\cdots \ar[r] & 0 \ar[r] & X \ar[r]^{j} & Q \ar[r]^{d} & P \ar[r] & 0 \ar[r] & \cdots}$$
where $P$ and $Q$ are finitely generated and $\T=\Gen(V)$, where $V=H^{0}(G)$. We then have $\mathbf{a}M=t(M)=tr_{V}(M)$, for each $R$-module $M$. In particular, we have $\mathbf{a}=t(R)=tr_{V}(R)$ and, by applying lemma \ref{lem. ImB<t(R)V} to the identity $1_V:V \flecha V$, we conclude that $V$ is a finitely generated projective module (since $\Imagen(1_V)=V=\mathbf{a}V=t(R)V$).\\

We next prove assertions 1 and 2:

1) If $Q^{'}$ exists, then complex $\xymatrix{P^{\bullet}:=\cdots \ar[r] & 0 \ar[r] & Q^{'} \ar[r]^{0} & P \ar[r] & 0 \ar[r] & \cdots}$, concentrated in degrees -1 and 0, satisfies assertion 2 of proposition \ref{pro. HKM complex} since we know that $P[0]\oplus \frac{R}{t(R)}[1]$ is a progenerator of $\Ht$. Conversely, suppose that $\te$ is HKM and let $\xymatrix{P^{\bullet}:=\cdots \ar[r] & 0 \ar[r] & Q \ar[r]^{d} & P^{'} \ar[r] & 0 \ar[r] & \cdots}$ be an HKM complex whose associated torsion pair is $\te$. Then, by proposition \ref{pro. HKM complex}, we know that the complex
$$\xymatrix{G:= \cdots \ar[r] & 0 \ar[r] & t(\Ker(d)) \ar[r] & Q \ar[r]^{d} & P^{'} \ar[r] & 0 \ar[r] & \cdots}$$
concentrated in degrees -2,-1,0, is a progenerator of $\Ht$. We then have that $\add_{\Ht}(G)=\add_{\Ht}(P[0]\oplus \frac{R}{t(R)}[1])$. In particular, we get that $V:=H^{0}(G)$ is a projective module, which implies that $\Imagen(d)$ is projective and that $\Ker(d)$ is projective too. It follows that, up to isomorphism in the category $\mathcal{C}(R)$, we can rewrite $Q$ as:

$$\xymatrix{\cdots \ar[r] & 0 \ar[r] & t(Q^{'}) \ar[r]^{\begin{pmatrix} 0 \\ \iota \end{pmatrix}\hspace{0.6 cm}} \ar[r] & \Imagen(d)\oplus Q^{'} \ar[rr]^{\begin{pmatrix} 1 & 0 \\ 0 & 0 \end{pmatrix}\hspace{0.35cm}}& & \Imagen(d) \oplus V \ar[r] & 0 \ar[r] & \cdots}$$ 
where $Q^{'}:=\Ker(d)$ and $\iota:t(Q^{'}) \monic Q^{'}$ is the inclusion. Similarly, $P^{\bullet}$ is isomorphic in $\mathcal{C}(R)$ to the complex

$$\xymatrix{\cdots \ar[r] & 0 \ar[r] & \Imagen(d)\oplus Q^{'} \ar[rr]^{\begin{pmatrix} 1 & 0 \\ 0 & 0 \end{pmatrix}\hspace{0.35cm}}& & \Imagen(d) \oplus V \ar[r] & 0 \ar[r] & \cdots }$$ 
Each $R$-homomorphism $g:Q^{'} \flecha V$ give rise to a morphism $P^{\bullet} \flecha V[1]$ which is the zero morphism since $V\in \mathcal{X}(P^{\bullet})$. In particular, we get the following commutative diagram

$$\xymatrix{\Imagen(d)\oplus Q^{'} \ar[d]_{\begin{pmatrix}0 & g \end{pmatrix}}\ar[rr]^{\begin{pmatrix} 1 & 0 \\ 0 & 0 \end{pmatrix}\hspace{0.35cm}}& & \Imagen(d) \oplus V \ar[dll]^{\hspace{0.5 cm}\begin{pmatrix}h_1 & h_2\end{pmatrix}} \\ V && }$$
which shows that $g=0$. Using the projective condition of $Q^{'}$, we deduce that $Q^{'}\in \hspace{0.01 cm} ^{\perp} \T$ and, hence $\Hom_{R}(Q^{'},P)=0$. On the other hand, note that $\add(\frac{Q^{'}}{t(Q^{'})})=\add(H^{-1}(G))=\add(H^{-1}(P[0]\oplus \frac{R}{t(R)}[1]))=\add(\frac{R}{t(R)})$. \\

2) $\te$ is the right constituent pair of a TTF triple if, and only if, $\T=\Gen(P)$ is closed under taking products in $R$-Mod. But this is equivalent to saying that each product of copies of $P$ is in $\Gen(P)$. By \cite[Lemma, Section 1]{CM}, this happens exactly when $P$ is finitely generated over its endomorphism ring.
\end{proof}

\vspace{0.3 cm}

We are ready to answer question 3 of the introduction in the negative.

\begin{corollary}\label{cor. Ht module not HKM}
There are torsion pairs $\te$ in $R$-Mod such that $\Ht$ is a module category, but $\te$ is not an HKM torsion pair.
\end{corollary}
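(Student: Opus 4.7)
The strategy is to produce an explicit torsion pair already available in the paper and then argue, by way of a derived-equivalence obstruction, that it cannot come from any HKM complex. The natural candidate is the torsion pair built in Theorem~\ref{teo. V[0] prig. which is not tilting}: pick a finite dimensional hereditary $K$-algebra $A$ over an algebraically closed field $K$ admitting a classical $1$-tilting left $A$-module $V$ with $\Hom_A(V,A)=0$, and a simple right $A$-module $X$ satisfying $X\otimes_A V=0$. Setting $M:=V\otimes_K X$ and $R:=A\rtimes M$, the theorem yields a non-tilting torsion pair $\te=(\Gen(V),\Ker(\Hom_R(V,?)))$ in $R$-Mod such that $V[0]$ is a progenerator of $\Ht$. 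In particular, $\Ht$ is a module category equivalent to $B\Mode$ with $B:=\End_R(V)^{op}=\End_A(V)^{op}$; this realises (i) the fact that $\Ht$ is a module category.

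To obtain (ii), that $\te$ is not HKM, I would argue by contradiction. Combining Corollary~\ref{cor. HKM=Prog. complex} with Definition~\ref{def. progenerator which is classical tilting complex}, if $\te$ were HKM, then $\Ht$ would admit a progenerator $P^\bullet=(Q\xrightarrow{d}P)$ concentrated in degrees $-1,0$ with $Q,P$ finitely generated projective $R$-modules, and $P^\bullet$ would be a classical tilting complex in $\D(R)$. By Rickard's theorem, such a $P^\bullet$ produces a triangulated equivalence $\D(R)\simeq \D(\End_{\D(R)}(P^\bullet)^{op})$; since $P^\bullet$ is a progenerator of $\Ht\simeq B\Mode$, its endomorphism ring is Morita equivalent to $B$, and hence $\D(R)\simeq \D(B)\simeq \D(A)$, using that $V$ is classical $1$-tilting over $A$.

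The desired contradiction is then one of global dimensions: $A$ is hereditary, so $\text{gl.dim}(A)\leq 1$, whereas $R=A\rtimes M$ has infinite global dimension because $M\neq 0$ and $M^2=0$, which forces any $R$-projective resolution of the $A$-module $A\cong R/M$ (and hence of $V$) to cycle indefinitely through copies of $M$. Since derived equivalences between Artin algebras preserve the finite/infinite dichotomy of global dimension, $\D(R)\simeq \D(A)$ is impossible; therefore $\te$ cannot be HKM, completing the counterexample.

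\textbf{Main obstacle.} The delicate step is not the derived-equivalence argument itself but guaranteeing that the hypotheses of Theorem~\ref{teo. V[0] prig. which is not tilting} are realised by an explicit $A$; one has to choose a Dynkin-type path algebra $A$ together with a classical tilting $V$ having no indecomposable projective summand (so $\Hom_A(V,A)=0$) and a compatible simple right module $X$ with $X\otimes_A V=0$. A backup plan, avoiding Rickard entirely, is to verify the non-HKM condition directly from Corollary~\ref{cor. HKM=Prog. complex}: any hypothetical progenerator $P^\bullet=(Q\to P)$ of $\Ht$ of that shape would be quasi-isomorphic to $V[0]$ (up to Morita), forcing $V$ to have projective dimension $\leq 1$ over $R$; a short computation of the $R$-projective cover $R\epic V$ shows that the kernel contains a nonzero image of $M$, yielding a non-finite resolution and the same contradiction.
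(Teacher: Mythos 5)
Your proof breaks at its central step, and the example you chose cannot be repaired. The inference ``if $\te$ were HKM, then $\Ht$ would admit a progenerator $P^{\bullet}=(Q\to P)$ of finitely generated projectives which is a classical tilting complex, whence $\D(R)\simeq \D(B)$'' reverses Corollary \ref{cor. HKM=Prog. complex}: that corollary says that \emph{if} a two-term complex of finitely generated projectives happens to be a progenerator of $\Ht$, \emph{then} it is a classical tilting complex and an HKM complex. Being HKM gives much less. By Theorem \ref{teo. HKM principal} and Proposition \ref{pro. HKM complex}, an HKM complex $P^{\bullet}$ need not even lie in $\Ht$ (Example \ref{exam. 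HKM torsion}), and the progenerator it produces is the three-term complex $G=(t(\Ker(d))\to Q\to P)$, whose degree $-2$ term is in general not projective; so no tilting complex and no derived equivalence $\D(R)\simeq\D(B)$ come out of HKM-ness. Your backup plan suffers from the same confusion: excluding a two-term projective progenerator only excludes ``$\Ht$ has a progenerator which is a classical tilting complex'' (assertion 5 in Corollary \ref{cor. a in F}), and only the implication 5) $\Longrightarrow$ 4) holds in general, so HKM-ness is not excluded.

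Worse, the trivial-extension torsion pair of Theorem \ref{teo. V[0] prig. which is not tilting} is in fact HKM, so it cannot serve as the counterexample. Take $P^{\bullet}=(R\otimes_A Q'\xrightarrow{1\otimes d'}R\otimes_A P')$, the two-term complex of finitely generated projectives appearing in that proof. Since $\Ker(1\otimes d')\in\add({}_RV)\subseteq\T$, the complex $G$ of Proposition \ref{pro. HKM complex}(2) is exactly the complex shown there to be a progenerator, isomorphic to $V[0]$ in $\D(R)$. Moreover, adjunction gives $\Hom_R(R\otimes_AQ',V)\cong\Hom_A(Q',V)$ and $\Hom_R(R\otimes_AP',V)\cong\Hom_A(P',V)$, under which $(1\otimes d')^{*}$ becomes $d'^{*}$, whose cokernel is $\Ext^{1}_A(V,V)=0$ because $V$ is $1$-tilting over $A$; hence $\Hom_{\D(R)}(P^{\bullet},V[1])=0$, i.e.\ $V\in\mathcal{X}(P^{\bullet})$, and Proposition \ref{pro. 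HKM complex} yields that $\te$ is an HKM torsion pair (non-tilting, but HKM). The paper's proof uses a different example altogether: $R=\End_K(P)$ for an infinite-dimensional $K$-vector space $P$ and the hereditary torsion pair $\te=(\Gen({}_RP),\Ker(\Hom_R(P,?)))$, the left constituent of a TTF triple. There Proposition \ref{prop. Left constituent hereditary} shows at once that $\Ht$ is a module category (since $\T=\Gen(P)$ with $P$ finitely generated projective) and that HKM-ness would require a finitely generated projective $Q'$ with $\Hom_R(Q',P)=0$ and $\add(Q'/t(Q'))=\add(R/t(R))$, which is impossible because $P$ is faithful, so every projective $R$-module embeds in a product of copies of $P$.
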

\begin{proof}
We consider a field $K$, an infinite dimensional $K$-vector space $P$ and view it as left module over $R=\End_{K}(P)$. It is well-know that $P$ is a faithful simple projective $R$-module, so that $\T=\Add($$_{R}P)=\Gen($$_{R}P)$ is closed under taking submodules and, hence, $\te$ is hereditary. By the previous proposition we get that $\Ht$ is a module category. However, the faithful condition of $_{R}P$ implies that each projective $R$-module embeds into a direct product of copies of $P$. Then it does not exist a finitely generated projective $R$-module $Q^{'}$ such that $\Hom_{R}(Q^{'},P)=0$ and $\add(Q^{'}/t(Q^{'}))=\add(R/t(R))$. By the previous proposition, we conclude that $\te$ is not HKM. 
\end{proof}


\vspace{0.3 cm}

Recall that a ring is left \emph{semi-hereditary}\index{ring! semi-hereditary} when its finitely generated left ideals are projective.

\begin{example}\label{exam. R/a+a as progenerator}\rm{
Let $\mathbf{a}$ be an idempotent two-sided ideal of $R$, let $(\mathcal{C},\T,\F)$ be the associated TTF triple in $R$-Mod and let $\te=(\T,\F)$ be its right constituent torsion pair. The following assertions are equivalent:
\begin{enumerate}
\item[1)] $\frac{R}{\mathbf{a}}[0] \oplus \frac{\mathbf{a}}{t(\mathbf{a})}[1]$ is a progenerator of $\Ht$;

\item[2)] $\Ht$ has a progenerator of the form $V[0]\oplus Y[1]$, with $V\in \T$ and $Y\in F$;

\item[3)] $\mathbf{a}$ is finitely generated on the left and $\Ext^{2}_{R}(R/\mathbf{a},?)$ vanishes on $\F$.
\end{enumerate}
In particular, if $R$ is left semi-hereditary and $\te$ is the right constituent pair of a TTF triple in $R$-Mod, then $\Ht$ is a module category, if and only if, the associated idempotent ideal is finitely generated on the left.}
\end{example}
\begin{proof}
1) $\Longrightarrow $ 2) is clear. \\

2) $\Longrightarrow$ 3) By lemma \ref{lem. progenerator of Ht}, we know that $V$ is finitely presented and $\T=\Gen(V) \subseteq \Ker(\Ext^{1}_{R}(V,?))$. On the other hand, by hypothesis we also have that $\T=\{T\in R\text{-Mod}:\mathbf{a}T=0\}\cong R/\mathbf{a}-$Mod. In particular, $V$ is a finitely presented generator of $R/\mathbf{a}-$Mod such that $\Ext^{1}_{R/\mathbf{a}}(V,?)=0$ since the canonical morphism $\Ext^{1}_{R/\mathbf{a}}(V,T) \flecha \Ext^{1}_{R}(V,T)$ is a monomorphism, for each $T\in \T$. Hence, $V$ is a progenerator of $\frac{R}{\mathbf{a}}-$Mod, which implies that $\add_{R\text{-Mod}}(R/\mathbf{a})=\add_{R\text{-Mod}}(V)$. Then $R/\mathbf{a}$ is a finitely presented left $R$-module and, hence, $\mathbf{a}$ is finitely generated as a left ideal. The fact that $\Ext^{2}_{R}(R/\mathbf{a},?)$ vanishes on $\F$ follows from the fact that, by corollary \ref{cor. here. torsion pair with V+Y as proge.}, we know that $\Ext^{2}_{R}(V,?)$ vanishes on $\F$. \\

3) $\Longrightarrow$ 1) We take $V=\frac{R}{\mathbf{a}}$ and $Y=\frac{\mathbf{a}}{t(\mathbf{a})}$. Note that $tr_{Y}(F)=\mathbf{a}F$, for all $F\in\F$ and, hence, conditions 2.a), 2.b) and 2.d) of corollary \ref{cor. here. torsion pair with V+Y as proge.} hold, since $F/tr_{Y}(F)=F/\mathbf{a}F$ is in $\T$, for all $F\in \F$. We just need to prove that $\frac{\mathbf{a}}{t(\mathbf{a})}$ is a projective $R/t(R)$-module since it is clearly in $^{\perp}\T=\mathcal{C}$. Indeed, let $0 \flecha K \hspace{0.03 cm} \xymatrix{\ar@{^(->}[r]^{j} & } Q \xymatrix{\ar[r]^{q}&} \mathbf{a} \flecha 0$ be an exact sequence, with $Q$ a finitely generated projective $R$-module. The canonical projection $p:K \epic K/t(K)$ extends to Q since $\Ext^{1}_{R}(\mathbf{a},\frac{K}{t(K)})\cong \Ext^{2}_{R}(\frac{R}{\mathbf{a}},\frac{K}{t(K)})=0$. We then have a morphism $g:Q \flecha \frac{K}{t(K)}$ such that $g \circ j=p$. Consider now the following commutative diagram
$$\xymatrix{0 \ar[r] & t(K) \ar[r] \ar@{^(->}[d] & K \ar@{>>}[r]^{p} \ar@{^(->}[d]^{j} & \frac{K}{t(K)} \ar[r] \ar[d]^{\iota}& 0\\ 0 \ar[r] & t(Q) \ar[r] & Q \ar[r]^{p^{'}} \ar[ur]^{g} & \frac{Q}{t(Q)} \ar@<0.7ex>[u]^{g^{'}}\ar[r] & 0}$$

where $g^{'}$ is the morphism given by the universal property of cokernels, such that $g=g^{'} \circ p^{'}$. We then have $p= g \circ j = g^{'} \circ p^{'} \circ j= g^{'} \circ \iota \circ p$, hence $\iota$ is a split monomorphism, since $p$ is an epimorphism. But, the cokernel of $\iota$ is $\frac{\frac{Q}{t(Q)}}{\frac{K+t(Q)}{t(Q)}}\cong \frac{Q}{K+t(Q)} \cong \frac{\frac{Q}{K}}{\frac{K+t(Q)}{K}}\cong \frac{\mathbf{a}}{q(t(Q))}$. It follows that this latter one is a projective $R/t(R)$-module, which implies that it is in $\F$ when viewed as an $R$-module. But then $\frac{t(\mathbf{a})}{q(t(Q))}\in \T\cap \F=0$. Therefore we have $q(t(Q))=t(\mathbf{a})$ and $\frac{\mathbf{a}}{q(t(Q))}=\frac{\mathbf{a}}{t(\mathbf{a})}$ is projective as a left $R/t(R)$-module.
\end{proof}

\vspace{0.3 cm}

The following is both a consequence of last example and of corollary \ref{cor. Trace of projective HKM torsion pair}.

\begin{corollary}\label{cor. a projective}
Let $\mathbf{a}$ be an idempotent two-sided ideal of $R$, let $(\mathcal{C},\T,\F)$ be the associated TTF triple in $R$-Mod and let $\te=(\T,\F)$ be its right constituent torsion pair. If $\mathbf{a}$ is a finitely generated projective left $R$-module, then $\Ht$ is a module category.
\end{corollary}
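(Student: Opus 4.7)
The plan is to derive this corollary as an immediate consequence of Corollary \ref{cor. Trace of projective HKM torsion pair}, taking $Q:=\mathbf{a}$, and to note in passing that Example \ref{exam. R/a+a as progenerator} gives an independent second route.

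First I would verify that $Q=\mathbf{a}$ fits the hypotheses of Corollary \ref{cor. Trace of projective HKM torsion pair}. The module $\mathbf{a}$ is finitely generated projective by assumption, so the only point to check is that $\T=\Ker(\Hom_R(\mathbf{a},?))$ and that the trace $tr_{\mathbf{a}}(R)$ is finitely generated on the left. For the first identity, recall that $\T=\{T\in R\text{-Mod}:\mathbf{a}T=0\}$. If $\mathbf{a}T=0$ and $f\in \Hom_R(\mathbf{a},T)$, then for any $x\in \mathbf{a}=\mathbf{a}^{2}$ we can write $x=\sum a_ib_i$ with $a_i,b_i\in \mathbf{a}$, whence $f(x)=\sum a_if(b_i)\in \mathbf{a}T=0$; conversely, if $\Hom_R(\mathbf{a},T)=0$, then for each $t\in T$ the left multiplication $\mathbf{a}\flecha T$, $x\mapsto xt$, must vanish, so $\mathbf{a}T=0$. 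For the trace, the inclusion $\mathbf{a}\hookrightarrow R$ gives $\mathbf{a}\subseteq tr_{\mathbf{a}}(R)$, and, using $\mathbf{a}^{2}=\mathbf{a}$ again, every $f:\mathbf{a}\flecha R$ satisfies $f(\mathbf{a})=f(\mathbf{a}^{2})\subseteq \mathbf{a}f(\mathbf{a})\subseteq \mathbf{a}$, so $tr_{\mathbf{a}}(R)=\mathbf{a}$; this is finitely generated on the left by hypothesis. Corollary \ref{cor. Trace of projective HKM torsion pair} then yields that $\te$ is HKM and $\Ht$ is a module category.

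As a sanity check, I would also record the route via Example \ref{exam. R/a+a as progenerator}: $\mathbf{a}$ is finitely generated on the left, and from the short exact sequence $0\flecha \mathbf{a}\flecha R\flecha R/\mathbf{a}\flecha 0$ combined with the projectivity of both $R$ and $\mathbf{a}$, the standard dimension-shifting argument gives $\Ext^{2}_R(R/\mathbf{a},F)\cong \Ext^{1}_R(\mathbf{a},F)=0$ for every $R$-module $F$, and a fortiori for $F\in \F$. Condition 3 of Example \ref{exam. R/a+a as progenerator} is thus satisfied, so in fact $\tfrac{R}{\mathbf{a}}[0]\oplus \tfrac{\mathbf{a}}{t(\mathbf{a})}[1]$ is a progenerator of $\Ht$.

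There is no real obstacle here: both prior results do all the work. The only genuine checks are the identification $\T=\Ker(\Hom_R(\mathbf{a},?))$ and the computation $tr_{\mathbf{a}}(R)=\mathbf{a}$, each of which hinges on the idempotency $\mathbf{a}^{2}=\mathbf{a}$.
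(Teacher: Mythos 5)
Your proposal is correct and matches the paper exactly: the paper gives no separate proof, simply noting that the corollary is a consequence of Corollary \ref{cor. Trace of projective HKM torsion pair} and Example \ref{exam. R/a+a as progenerator}, which are precisely the two routes you spell out. The verifications you supply (that $\T=\Ker(\Hom_R(\mathbf{a},?))$, that $tr_{\mathbf{a}}(R)=\mathbf{a}$, and the dimension shift for $\Ext^2_R(R/\mathbf{a},?)$) are the correct routine checks left implicit in the paper.
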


\begin{corollary}\label{cor. left split}
Let $\mathbf{a}$ be a two-sided idempotent ideal of $R$ whose associated TTF triple is left split and put $\te=(\T,\F)$. Then $\Ht$ is equivalent to $\frac{R}{t(R)} \times \frac{R}{\mathbf{a}}-$Mod. \\
When the TTF triple is centrally split, $\Ht$ is equivalent to $R$-Mod.
\end{corollary}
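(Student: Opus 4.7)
The strategy is to extract a concrete progenerator of $\Ht$ from the left-split hypothesis and then compute its endomorphism ring. First I would observe that the left-split condition forces the canonical sequence $0\flecha \mathbf{a}\flecha R\flecha R/\mathbf{a}\flecha 0$ to split in $R\Mode$, so $\mathbf{a}$ is a finitely generated projective left $R$-module and $R=\mathbf{a}\oplus B$ with $B\cong R/\mathbf{a}\in \T$. Since $B\subseteq t(R)$ and $t(R)\cap \mathbf{a}\in \T\cap \F=0$, we get $t(R)=B$ and $t(\mathbf{a})=0$; in particular $\mathbf{a}\in \F$ and $R=\mathbf{a}\oplus t(R)$. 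Because $\mathbf{a}$ is projective, the above resolution of $R/\mathbf{a}$ gives $\Ext^{2}_{R}(R/\mathbf{a},?)=0$, so either Example \ref{exam. R/a+a as progenerator} or Corollary \ref{cor. a projective} yields that $G:=(R/\mathbf{a})[0]\oplus \mathbf{a}[1]$ is a progenerator of $\Ht$.

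Next I would apply the final assertion of Proposition \ref{prop. progenerator V[0]+Y[1]} to identify $\Ht$ with $S\Mode$, where
$$S=\begin{pmatrix}\End_{R}(\mathbf{a})^{op} & 0 \\ \Ext^{1}_{R}(R/\mathbf{a},\mathbf{a}) & \End_{R}(R/\mathbf{a})^{op}\end{pmatrix}.$$
The identification $\End_{R}(R/\mathbf{a})^{op}\cong R/\mathbf{a}$ is standard. For the other diagonal entry, the decomposition $R=\mathbf{a}\oplus t(R)$ combined with $\Hom_{R}(\mathbf{a},t(R))=0=\Hom_{R}(t(R),\mathbf{a})$ (from $\mathbf{a}\in \F$ and $t(R)\in \T$) produces a ring decomposition $R\cong \End_{R}(R)^{op}\cong \End_{R}(\mathbf{a})^{op}\times \End_{R}(t(R))^{op}$, under which each factor corresponds to right multiplication by elements of the matching summand. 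This yields both a ring isomorphism $\End_{R}(\mathbf{a})^{op}\cong \mathbf{a}$ and, via $R\cong \mathbf{a}\times t(R)$ as rings, the identification $\mathbf{a}\cong R/t(R)$.

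The main obstacle is the vanishing of $\Ext^{1}_{R}(R/\mathbf{a},\mathbf{a})$. Applying $\Hom_{R}(?,\mathbf{a})$ to the projective resolution of $R/\mathbf{a}$ and using $\Hom_{R}(R/\mathbf{a},\mathbf{a})=0$, this reduces to surjectivity of the restriction map $\mathbf{a}\cong \Hom_{R}(R,\mathbf{a})\flecha \End_{R}(\mathbf{a})$; but the preceding ring decomposition identifies this map with the isomorphism $\mathbf{a}\iso \End_{R}(\mathbf{a})^{op}$ just described. Combining, $S\cong R/t(R)\times R/\mathbf{a}$, proving the first assertion. For the last statement, centrally split implies left split, so $\Ht\cong (R/t(R)\times R/\mathbf{a})\Mode$ by what was just proven; moreover, a centrally split TTF furnishes a central idempotent giving a ring isomorphism $R\cong R/t(R)\times R/\mathbf{a}$, whence $\Ht\cong R\Mode$.
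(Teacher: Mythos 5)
Your opening step is fine (left splitness applied to $R$ itself gives $R=\mathbf{a}\oplus B$ with $B\cong R/\mathbf{a}\in\T$, so $\mathbf{a}$ and $R/\mathbf{a}$ are finitely generated projective), but the very next claim is a genuine error: you assert $t(R)\cap\mathbf{a}\in\T\cap\F=0$, i.e. $t(\mathbf{a})=0$ and $\mathbf{a}\in\F$, and from this $R=\mathbf{a}\oplus t(R)$. Nothing in the left-split hypothesis puts $\mathbf{a}$ in $\F$: $t(\mathbf{a})$ lies in $\T$, but there is no reason it lies in $\F$ (the class $\C$ is not closed under submodules, so you cannot argue via $\C\cap\T=0$ either). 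In fact the paper's own example \ref{exam. HKM torsion} is a counterexample: for $R=KQ_n$ the path algebra of the linear quiver and $\mathbf{a}=Re_nR=Re_n$, the TTF triple is left split, yet $t(\mathbf{a})=Je_n\neq 0$. Consequently your proposed progenerator $(R/\mathbf{a})[0]\oplus\mathbf{a}[1]$ need not even be an object of $\Ht$ (the degree $-1$ homology of a stalk in $\Ht$ must be torsion-free); what example \ref{exam. R/a+a as progenerator} actually provides is $(R/\mathbf{a})[0]\oplus\frac{\mathbf{a}}{t(\mathbf{a})}[1]$. The same false decomposition $R=\mathbf{a}\oplus t(R)$ underlies your ring splitting $R\cong\End_R(\mathbf{a})^{op}\times\End_R(t(R))^{op}$, your identification $\mathbf{a}\cong R/t(R)$ as rings, and your argument for the vanishing of the corner term of $S$, so all of these collapse as written.

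The repair is close to what you have, and is the paper's route: keep $Y=\frac{\mathbf{a}}{t(\mathbf{a})}$ throughout. Since the complement $B$ lies in $\T$ one has $B\subseteq t(R)$, hence $\mathbf{a}+t(R)=R$ and therefore $\frac{\mathbf{a}}{t(\mathbf{a})}=\frac{\mathbf{a}}{\mathbf{a}\cap t(R)}\cong\frac{R}{t(R)}$ -- this weaker isomorphism is all you need, and it survives $t(\mathbf{a})\neq 0$. For the corner term, do not compute: $V=R/\mathbf{a}$ is projective because $\mathbf{a}$ is a direct summand of ${}_RR$, so $\Ext^1_R(V,Y)=0$ for trivial reasons, and proposition \ref{prop. progenerator V[0]+Y[1]} gives $\Ht\cong S\Mode$ with $S\cong\End_R(\frac{R}{t(R)})^{op}\times\End_R(\frac{R}{\mathbf{a}})^{op}\cong\frac{R}{t(R)}\times\frac{R}{\mathbf{a}}$, using only that $t(R)$ and $\mathbf{a}$ are two-sided ideals. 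Your treatment of the centrally split case is then correct as stated: the central idempotent $e$ with $\mathbf{a}=Re$ and $t(R)=R(1-e)$ gives $R\cong Re\times R(1-e)\cong\frac{R}{t(R)}\times\frac{R}{\mathbf{a}}$, whence $\Ht\cong R\Mode$.
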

\begin{proof}
In this case $\mathbf{a}$ is a direct summand of $_{R}R$, whence projective, so that the previous corollary and example \ref{exam. R/a+a as progenerator} apply. Note that then $V=R/\mathbf{a}$ is a projective left $R$-module, which implies that $\Ht$ is equivalent to $S$-Mod, where $S\cong \text{End}_R(\frac{R}{t(R)})^{op} \times \text{End}_{R}(\frac{R}{\mathbf{a}})^{op}\cong \frac{R}{t(R)}\times \frac{R}{\mathbf{a}}$. \\

When the TTF triple is centrally split, we have a central idempotent $e$ such that $\mathbf{a}=Re$ and $T=R(1-e)$, so that $R/\mathbf{a}\cong R(1-e)$ and $R/t(R)\cong Re$. The result in this case follows immediately since we have a ring isomorphism $R\cong Re \times R(1-e)$. 
\end{proof}

\vspace{0.3 cm}

We are now able to give another significative class of rings for which we are able to identify all hereditary torsion pairs whose heart is a module category.

\begin{proposition}
Let $R$ be a left semihereditary ring and let $V$ be a finitely presented quasi-tilting $R$-module whose associated torsion pair $\te=(\Gen(V), \Ker(\Hom_{R}(V,?)))$ is hereditary. The following assertions are equivalent:
\begin{enumerate}
\item[1)] if $\mathbf{a}=\ann_{R}(V/t(R)V)$ then $\mathbf{a}/t(R)$ is an idempotent ideal of $R/t(R)$, which is finitely generated on the left, and there is a monomorphism $R/\mathbf{a} \monic (V/t(R)V)^{(n)}$, for some natural number $n$.

\item[2)] The heart $\Ht$ is a module category.
\end{enumerate}
In this case $\Ht$ is equivalent to $S$-Mod, where $S=\begin{pmatrix} \End_{R}(\frac{\mathbf{a}}{t(R)})^{\text{op}} & 0 \\ \Ext^{1}_{R}(V,\frac{\mathbf{a}}{t(R)}) & \End_{R}(V)^{\text{op}} \end{pmatrix}$.
\end{proposition}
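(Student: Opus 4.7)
The plan is to apply Corollary~\ref{cor. here. torsion pair with V+Y as proge.} with $Y:=\mathbf{a}/t(R)$. First, $Y$ is well-defined because $t(R)\bar V=0$ forces $t(R)\subseteq \ann_R(\bar V)=\mathbf{a}$, where I write $\bar V:=V/t(R)V$ and $\bar R:=R/t(R)$. Second, every $F\in \F$ satisfies $t(R)F=0$: for $x\in F$ the $R$-linear map $t(R)\to F$, $r\mapsto rx$, lies in $\Hom_R(\T,\F)=0$. Hence $\F\subseteq \bar R\text{-Mod}$, and $\text{tr}_Y(F)$ may be computed as if $Y$ were a $\bar R$-module.

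For $(1)\Rightarrow(2)$, I would verify the four conditions of Corollary~\ref{cor. here. torsion pair with V+Y as proge.}. Condition~(a) is the hypothesis. For~(b), $R$ left semihereditary and $V$ finitely presented force $\text{pd}_R V\leq 1$, whence $\Ext^{2}_{R}(V,?)=0$. For~(d): for each $F\in\F$ and $x\in F$, the map $\mathbf{a}\to F$, $b\mapsto bx$, kills $t(R)$ by the preliminary and thus factors through $Y$; hence $\mathbf{a}F\subseteq \text{tr}_Y(F)$, and $F/\text{tr}_Y(F)$ is a quotient of the $R/\mathbf{a}$-module $F/\mathbf{a}F$. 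The embedding $R/\mathbf{a}\hookrightarrow\bar V^n$ in~(1) together with the hereditariness of $\te$ place $R/\mathbf{a}\in\T$, so $F/\text{tr}_Y(F)\in\T$. The ${}^{\perp}\T$ part of~(c) reduces, via the preliminary, to $T\in\T\cap\bar R\text{-Mod}$; such $T$ is a quotient of $\bar V^{(I)}$ and hence annihilated by $\bar{\mathbf{a}}$, so idempotency makes every morphism $Y\to T$ factor through $Y/\bar{\mathbf{a}}Y=0$. The main obstacle is the projectivity of $Y=\bar{\mathbf{a}}$ as a $\bar R$-module. My plan is to adapt the argument of Example~\ref{exam. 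R/a+a as progenerator}~$(3)\Rightarrow(1)$: given any epimorphism $Q\twoheadrightarrow\bar{\mathbf{a}}$ with $Q$ a finitely generated projective $R$-module, extend the canonical projection of its kernel onto its torsionfree quotient using the vanishing of $\Ext^{1}_{R}(\bar{\mathbf{a}},?)|_{\F}$; this vanishing follows by dimension-shifting on $0\to\bar{\mathbf{a}}\to\bar R\to R/\mathbf{a}\to 0$ together with $\Ext^{2}_{R}(R/\mathbf{a},?)|_{\F}=0$, which in turn comes from the embedding $R/\mathbf{a}\hookrightarrow\bar V^n$ and $\text{pd}_R V\leq 1$.

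For $(2)\Rightarrow(1)$, Theorem~\ref{teo. Ht module c. with t here.} (condition~2(d)) applied to any progenerator of $\Ht$ yields that $\bar{\mathbf{a}}$ is idempotent and finitely generated on the left in $\bar R$, and that $R/\mathbf{a}\in\T$. The proof of that theorem moreover shows that $\bar V$ is a progenerator of $R/\mathbf{a}\text{-Mod}$, hence $\add(R/\mathbf{a})=\add(\bar V)$ inside $R$-Mod, so $R/\mathbf{a}$ is a direct summand of $\bar V^n$ for some $n$, providing the monomorphism in~(1). Finally, once $V[0]\oplus Y[1]$ is confirmed as a progenerator of $\Ht$, Proposition~\ref{prop. progenerator V[0]+Y[1]} identifies $\Ht\simeq S\text{-Mod}$ with $S=\End_{\Ht}(V[0]\oplus Y[1])^{\text{op}}$; unpacking gives $\Hom_{\Ht}(Y[1],V[0])=0$ (by the ${}^{\perp}\T$ property), $\Hom_{\Ht}(V[0],Y[1])=\Ext^{1}_{R}(V,Y)$, and diagonal entries $\End_R(V)^{\text{op}}$ and $\End_R(\mathbf{a}/t(R))^{\text{op}}$, recovering the stated $S$.
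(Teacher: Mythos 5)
Your overall strategy coincides with the paper's: both directions run through Corollary \ref{cor. here. torsion pair with V+Y as proge.} with $Y=\mathbf{a}/t(R)$, and the converse through Theorem \ref{teo. Ht module c. with t here.}; your verification of conditions (a), (b), (d), of the ${}^{\perp}\T$ part of (c), of the direction $2)\Rightarrow 1)$, and of the identification of $S$ all agree with the paper. The genuine gap sits exactly at the step you call the main obstacle: the projectivity of $\overline{\mathbf{a}}:=\mathbf{a}/t(R)$ over $\overline{R}:=R/t(R)$. Your route needs $\Ext^{1}_{R}(\overline{\mathbf{a}},?)_{|\F}=0$, which you reduce (correctly, since $\Ext^{1}_{R}(\overline{R},F)$ is a quotient of $\Hom_{R}(t(R),F)=0$) to $\Ext^{2}_{R}(R/\mathbf{a},?)_{|\F}=0$. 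But the claim that this last vanishing "comes from the embedding $R/\mathbf{a}\hookrightarrow\overline{V}^{n}$ and $\mathrm{pd}_{R}V\leq 1$" does not hold up: writing $C$ for the cokernel of that embedding, the relevant piece of the long exact sequence is $\Ext^{2}_{R}(\overline{V}^{n},F)\to\Ext^{2}_{R}(R/\mathbf{a},F)\to\Ext^{3}_{R}(C,F)$, and neither outer term is controlled. Indeed $\overline{V}=V/t(R)V$ is only a quotient of $V$, so even $\Ext^{2}_{R}(\overline{V},F)=0$ would require $\Ext^{1}_{R}(t(R)V,F)=0$, which is not available; and nothing is known about $C$. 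Note also that $R/\mathbf{a}$ need not be finitely presented (only $\mathbf{a}/t(R)$ is finitely generated, not $t(R)$ itself), so semihereditariness gives no bound on its projective dimension.

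The paper closes this step with a short direct argument that uses the semihereditary hypothesis precisely here: since $\overline{\mathbf{a}}$ is finitely generated over $\overline{R}$, choose a finitely generated left ideal $\mathbf{a}'\subseteq\mathbf{a}$ of $R$ whose image in $\overline{R}$ is all of $\overline{\mathbf{a}}$. Then $\mathbf{a}'$ is projective, and $\overline{\mathbf{a}}\cong\mathbf{a}'/(\mathbf{a}'\cap t(R))=\mathbf{a}'/t(\mathbf{a}')$; since the torsion radical is the trace of $V$ and on a projective module this trace equals $t(R)\cdot(-)$, one gets $\overline{\mathbf{a}}\cong\mathbf{a}'/t(R)\mathbf{a}'\cong\overline{R}\otimes_{R}\mathbf{a}'$, which is a finitely generated projective $\overline{R}$-module. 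Replacing your $\Ext$-argument by this observation repairs the proof; the remainder of your proposal is sound.
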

\begin{proof}
1) $\Longrightarrow$ 2) Put $\overline{M}=\frac{M}{t(R)M}$, for each $R$-module $M$. Note that $\T \cap \overline{R}-$Mod$=\Gen(\overline{V})$ and that $\ann_{\overline{R}}(\overline{V})=\overline{\mathbf{a}}$. Now, Let $f:\overline{\mathbf{a}} \flecha T$ be an arbitrary $R$-homomorphism, where $T\in \T$. Note that $ \Imagen(f)\in \T \cap \overline{R}$-Mod and, hence, the induced morphism \linebreak $\overline{f}:\mathbf{a} \flecha \Imagen(f)$ is a morphism in $\overline{R}$-Mod such that $\overline{f}(\overline{\mathbf{a}})=\overline{f}(\overline{\mathbf{a}}^{2})=\overline{\mathbf{a}}\Imagen(f)=0$. This show that $\Hom_{R}(\overline{\mathbf{a}},T)=0$, for each $T\in \T$, thus, $\overline{\mathbf{a}}$ is in $^{\perp}\T$. \\

On the other hand, since $\overline{\mathbf{a}}$ is finitely generated on the left, we have a finitely generated left ideal $\mathbf{a}^{'}$ of $R$ contained in $\mathbf{a}$ such that the canonical composition $\mathbf{a}^{'} \monic \mathbf{a} \epic \overline{\mathbf{a}}$ is an epimorphism. We consider the following commutative diagram:

$$\xymatrix{ 0 \ar[r] & \mathbf{a}^{'} \ar@{^(->}[r] \ar@{>>}[d] & \mathbf{a} \ar[r] \ar@{>>}[d] & \mathbf{a}/\mathbf{a}^{'} \ar[r] \ar[d] & 0 \\ 0 \ar[r] & \overline{\mathbf{a}} \ar@{=}[r] & \overline{\mathbf{a}} \ar[r] & 0 \ar[r] & 0}$$
It is easy to see that $\frac{\mathbf{a}^{'}}{t(\mathbf{a}^{'})}=\frac{\mathbf{a}^{'}}{\mathbf{a}^{'}\cap t(R)} \cong \overline{\mathbf{a}}$. Since $\mathbf{a}^{'}$ is projective, we conclude that $\overline{\mathbf{a}}$ is a finitely generated projective left $\overline{R}$-Mod. Now, note that $V$ and $Y:=\overline{\mathbf{a}}$ satisfy all conditions 2.a)-2.c) of corollary \ref{cor. here. torsion pair with V+Y as proge.}. Moreover if $F\in \F$ then $F/\mathbf{a}F$ is generated by $R/\mathbf{a}$ and, due to our hypotheses, we know that $R/\mathbf{a}\in \T$. Since $\te$ is a hereditary torsion pair, we get that $F/\mathbf{a}F$ is in $\T$. Thus, the property 2.d) of the mentioned corollary holds. Then $\Ht$ is a module category, actually equivalent to $S$-Mod (see proposition \ref{prop. progenerator V[0]+Y[1]}). \\

2) $\Longrightarrow$ 1) Let $\xymatrix{G:=\cdots \ar[r] & 0 \ar[r] & X \ar[r]^{j} & Q \ar[r]^{d} & P \ar[r] & 0 \ar[r] & \cdots}$ be a complex as in theorem \ref{teo. Ht module c. with t here.}, which is then a progenerator of $\Ht$. We know that $\Imagen(d)$ is a finitely generated $R$-module, since $H^{0}(G)$ is a finitely presented $R$-module (see lemma \ref{lem. progenerator of Ht}). From \cite[proposition I.6.9]{S}, we then get $\Imagen(d)$ is a projective $R$-module and, then $G$ is isomorphic to $H^{0}(G)[0]\oplus H^{-1}(G)[1]$ (use the argument of the proof of proposition \ref{prop. Left constituent hereditary}). Putting $V=H^{0}(G)$ and $Y=H^{-1}(G)$ for simplicity, corollary \ref{cor. here. torsion pair with V+Y as proge.} and its proof show that $\te^{'}=(\T\cap \overline{R}\text{-Mod},\F)$ is the right constituent torsion pair of a TTF triple in $\overline{R}$-Mod defined by the idempotent ideal $\overline{\mathbf{a}}=\ann_{\overline{R}}(\overline{V})$, which is finitely generated on the left. Then we have $\overline{\mathbf{a}}=\frac{\mathbf{a}}{t(R)}$, where $\mathbf{a}=\ann_{R}(V/t(R)V)$. Moreover, we have $R/\mathbf{a}-$Mod$=\T \cap \overline{R}$-Mod$=\Gen(\overline{V})$, so that $R/\mathbf{a}\in \add(\overline{V})$.
\end{proof}

\section{When $\T$ is closed under taking products}

Our next result shows that if the torsion class is closed under taking products in $R$-Mod, then classical tilting theory appears quite naturally. Nevertheless, in this section, we will show a way of building torsion pairs $\te$ in $R$-Mod, such that $\Ht$ has a progenerator of the form $V[0]$, although $V$ is not a tilting $R$-module.

\begin{theorem}\label{teo. T closed under products}
Let $\te=(\T,\F)$ be a torsion pair in $R$-Mod. The following assertions are equivalent:
\begin{enumerate}
\item[1)] $\T$ is closed under taking products in $R$-Mod and the heart $\Ht$ is a module category;

\item[2)] $\T=\Gen(V)$, where $V$ is a module which is classical 1-tilting over $R/\ann_{R}(V)$ and admits a finitely generated projective presentation $\xymatrix{Q\ar[r]^{d} & P \ar[r] & V \ar[r] & 0}$ in $R$-Mod and a submodule $X\subseteq \Ker(d)$ such that :
\begin{enumerate}
\item[a)] $\frac{\Ker(d)}{X} \in \F$ and $\Ker(d)\subseteq X + \mathbf{a}Q$, where $\mathbf{a}:=\ann_{R}(V)$;

\item[b)] $\Ext^{1}_{R}(Q/X,?)$ vanishes on $\F$;

\item[c)] There is a $R$-homomorphism $h:(\frac{Q}{X})^{(I)} \flecha R/t(R)$, for some set $I$, such that $h((\frac{\Ker(d)}{X})^{(I)})=\frac{\mathbf{a}+t(R)}{t(R)}$.
\end{enumerate} 
\end{enumerate}
In this case $\te^{'}=(\Gen(V),\F \cap \frac{R}{\mathbf{a}}\text{-Mod})$ is a classical tilting torsion pair in $R/\mathbf{a}-$Mod and the forgetful functor $\mathcal{H}_{\mathbf{t}^{'}} \flecha \Ht$ is faithful. 
\end{theorem}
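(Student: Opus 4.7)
The plan is to prove the equivalence 1) $\Longleftrightarrow$ 2) by using theorem \ref{teo. Ht is a module category} as the backbone, and then to derive the concluding assertions about $\te'$ and the forgetful functor.

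For 1) $\Longrightarrow$ 2), I would apply theorem \ref{teo. Ht is a module category} to obtain a progenerator $G = (\cdots \to 0 \to X \to Q \to P \to 0 \to \cdots)$ of $\Ht$, setting $V := H^0(G)$. From the five properties of that theorem together with lemma \ref{lem. progenerator of Ht}, one directly reads off: $V$ is finitely presented, $Q \to P \to V \to 0$ is a finitely generated projective presentation, $X \subseteq \Ker(d)$, and properties 2(a)--(c) of the current theorem; property (a) uses the identity $\Rej_\T(Q/X) = \mathbf{a}(Q/X)$, and property (c) is obtained by composing condition 5 of theorem \ref{teo. Ht is a module category} with the canonical projection onto $(\mathbf{a}+t(R))/t(R)$. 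The core remaining task is to show $V$ is classical 1-tilting over $R/\mathbf{a}$. Finite presentation over $R/\mathbf{a}$ is automatic. For the tilting identity $\Gen_{R/\mathbf{a}}(V) = \Ker(\Ext^{1}_{R/\mathbf{a}}(V, ?))$, the inclusion $\subseteq$ follows from the canonical monomorphism $\Ext^{1}_{R/\mathbf{a}}(V, -) \hookrightarrow \Ext^{1}_{R}(V, -)$ on $R/\mathbf{a}$-Mod together with $\Gen(V) \subseteq \Ker(\Ext^{1}_{R}(V, ?))$. For the reverse inclusion, given $N \in R/\mathbf{a}$-Mod with $\Ext^{1}_{R/\mathbf{a}}(V, N) = 0$, one verifies that $N/tr_V(N) \in \F$ by the usual lifting argument (any $f: V \to N/tr_V(N)$ lifts along $N \to N/tr_V(N)$ since $tr_V(N) \in \T \subseteq \Ker(\Ext^{1}_{R}(V, ?))$), and then one uses the product-closure of $\T$ to force $N/tr_V(N) = 0$, whence $N \in \Gen(V)$.

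For 2) $\Longrightarrow$ 1), I verify the five properties of theorem \ref{teo. Ht is a module category} for the complex $G := (\cdots \to 0 \to X \hookrightarrow Q \to P \to 0 \to \cdots)$: property 1 comes from $V$ being 1-tilting over $R/\mathbf{a}$ together with $\Gen(V) = \Gen_{R/\mathbf{a}}(V)$; property 2 is (a); property 3 is the inclusion $\Ker(d) \subseteq X + \mathbf{a}Q$ combined with $\Rej_\T(Q/X) = (\mathbf{a}Q + X)/X$; property 4 is (b); property 5 is (c), using that $R/\mathbf{a} \in \Gen(V) \subseteq \overline{\Gen}(V)$. This shows $\Ht$ is a module category with progenerator $G$. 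To see that $\T$ is closed under products in $R$-Mod: since $V$ is 1-tilting over $R/\mathbf{a}$, $\T = \Gen_{R/\mathbf{a}}(V) = \Ker(\Ext^{1}_{R/\mathbf{a}}(V, ?))$ within $R/\mathbf{a}$-Mod, which is closed under products in $R/\mathbf{a}$-Mod (as $\Ext^{1}$ commutes with products in the second variable), and $R/\mathbf{a}$-Mod is itself closed under products in $R$-Mod.

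For the concluding addendum, the torsion pair $\te' = (\Gen(V), \F \cap R/\mathbf{a}\text{-Mod})$ is precisely the tilting torsion pair in $R/\mathbf{a}$-Mod associated to the classical 1-tilting $R/\mathbf{a}$-module $V$, hence is a classical tilting torsion pair by remark \ref{quasi-tilting torsion pair}. For faithfulness of the forgetful functor $\mathcal{H}_{\te'} \to \Ht$, I would adapt the final argument of corollary \ref{cor. here. torsion pair with V+Y as proge.}: noting that $V[0]$ is a progenerator of $\mathcal{H}_{\te'}$ by proposition \ref{prop. V[0] progenerator}, I would first show that the canonical map $\Hom_{\mathcal{H}_{\te'}}(V[0], M) \to \Hom_{\Ht}(V[0], M)$ is injective for every $M \in \mathcal{H}_{\te'}$, by comparing the two five-term exact sequences attached to the canonical triangle $H^{-1}(M)[1] \to M \to H^{0}(M)[0] \to$ and using that $\Ext^{1}_{R/\mathbf{a}}(V, H^{-1}(M)) \hookrightarrow \Ext^{1}_{R}(V, H^{-1}(M))$ is a monomorphism while $\Hom_{R/\mathbf{a}}(V, H^{0}(M)) \cong \Hom_{R}(V, H^{0}(M))$. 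Then, for fixed $M$, the full subcategory $\mathcal{C}_M$ of objects $N \in \mathcal{H}_{\te'}$ such that $\Hom_{\mathcal{H}_{\te'}}(N, M) \to \Hom_{\Ht}(N, M)$ is injective is closed under coproducts and cokernels and contains the progenerator $V[0]$, so $\mathcal{C}_M = \mathcal{H}_{\te'}$. The main obstacle is the forward-direction step forcing $N/tr_V(N) = 0$; unlike the hereditary setting in theorem \ref{teo. Ht module c. with t here.} where boundedness was available, here the product-closure hypothesis must be leveraged more subtly to control the torsionfree residue of an arbitrary $R/\mathbf{a}$-module.
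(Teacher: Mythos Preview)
Your overall architecture matches the paper's: both directions go through theorem \ref{teo. Ht is a module category}, and the faithfulness argument is exactly the one you outline. But there are two genuine gaps.

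The obstacle you flag in 1) $\Longrightarrow$ 2) --- forcing $N/tr_V(N)=0$ for $N\in R/\mathbf{a}\text{-Mod}$ with $\Ext^1_{R/\mathbf{a}}(V,N)=0$ --- is precisely the point where the paper's argument diverges from yours and is cleaner. Rather than attacking $\Ker(\Ext^1_{R/\mathbf{a}}(V,?))$ directly, the paper observes that since $\mathbf{a}=\ann_R(V)$ there is an obvious monomorphism $R/\mathbf{a}\hookrightarrow V^{V}$; product-closure of $\T=\Gen(V)$ gives $V^{V}\in\T$, so $R/\mathbf{a}\in\overline{\Gen}(V)$ and hence $\overline{\Gen}(V)=R/\mathbf{a}\text{-Mod}$. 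Now lemma \ref{lem. progenerator of Ht} already tells you $V$ is classical quasi-tilting, and \cite[Proposition 3.2]{CMT} (or equivalently lemma \ref{lem. quasi-tilting}, which says a quasi-tilting object is 1-tilting in $\overline{\Gen}(V)$) finishes the job. This bypasses your obstacle entirely: you never need to analyse an individual $N$. The equality $\overline{\Gen}(V)=R/\mathbf{a}\text{-Mod}$ also gives $\Rej_\T(M)=\mathbf{a}M$ for every $M$, which is what makes conditions 2.a) and 2.c) fall out; in particular, for 2.c) you need \emph{both} inclusions $\Imagen(h')\supseteq\mathbf{a}\cdot\frac{R}{t(R)}$ (from $\Coker(h')\in\overline{\Gen}(V)=R/\mathbf{a}\text{-Mod}$) and $\Imagen(h')\subseteq h(\mathbf{a}(Q/X)^{(I)})\subseteq\mathbf{a}\cdot\frac{R}{t(R)}$ (from $H^{-1}(G)\subseteq\mathbf{a}(Q/X)$), not a composition with a projection.

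In 2) $\Longrightarrow$ 1), your verification of property 1 of theorem \ref{teo. Ht is a module category} is incomplete: being 1-tilting over $R/\mathbf{a}$ gives $\T\subseteq\Ker(\Ext^1_{R/\mathbf{a}}(V,?))$, but you need $\T\subseteq\Ker(\Ext^1_R(V,?))$. The paper closes this by taking an extension $0\to T\to M\to V\to 0$ in $R\text{-Mod}$ with $T\in\T$; since $\T=\Gen(V)$ is closed under extensions (lemma \ref{lem. quasi-tilting}) the module $M$ lies in $\T\subseteq R/\mathbf{a}\text{-Mod}$, so the sequence lives in $R/\mathbf{a}\text{-Mod}$ and splits there.
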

\begin{proof}
2) $\Longrightarrow$ 1) The classical 1-tilting condition of $V$ over $R/\mathbf{a}$ implies that $\T$ consists of $R/\mathbf{a}$-modules $T$ such that $\Ext^{1}_{R/\mathbf{a}}(V,T)=0$. This class is clearly closed under taking products. We next consider the complex
$$\xymatrix{G:=\cdots \ar[r] & 0 \ar[r] & 0 \ar[r] & X \hspace{0.03cm}\ar@{^(->}[r] & Q \ar[r]^{d} & P \ar[r] & 0 \ar[r] & \cdots}$$
concentrated in degrees -2,-1,0. By condition 2.a) and by the equality $\T=\Gen(V)$, we have that $G\in \Ht$. We shall check that $G$ satisfies all properties 1-5 of theorem \ref{teo. Ht is a module category}. We immediately derive properties 2, 3 and 4 of the mentioned theorem since $\mathbf{a}M\subseteq \Rej_\T(M)$, for all $R$-module $M$. On the other hand, our condition 2.c) implies that: 
$$C:=\Coker(h_{|(H^{-1}(G))^{(I)}})=\frac{\frac{R}{t(R)}}{\frac{\mathbf{a}+t(R)}{t(R)}}\cong \frac{R}{\mathbf{a}+t(R)}$$ 
hence, C is in $R/\mathbf{a}-$Mod. But we have that $R/\mathbf{a}$-Mod$=\overline{\Gen}(V)$ since $R/\mathbf{a}\in \overline{\Gen}(V)$ due to the 1-tilting condition of $V$ over $R/\mathbf{a}$. Then property 5 of theorem \ref{teo. Ht is a module category} holds. It remains to prove that $\T\subseteq \Ker(\Ext^{1}_{R}(V,?))$. Let
$$\xymatrix{0 \ar[r] & T \ar[r] & M \ar[r] & V \ar[r] & 0}$$
be any exact sequence in $R$-Mod, with $T \in \T$. Since $\T=\Gen(V)$ is closed under taking extensions in $R$-Mod, we get that $M\in \T$ and, hence, the exact sequence lives in $R/\mathbf{a}$-Mod. But then it splits since we have that $\T=\Ker(\Ext^{1}_{R/\mathbf{a}}(V,?))$.\\

1) $\Longrightarrow$ 2) Let $\xymatrix{G:=\cdots \ar[r] & 0 \ar[r] & X \hspace{0.03cm}\ar@{^(->}[r] & Q \ar[r]^{d} & P \ar[r] & 0 \ar[r] & \cdots }$ be a complex satisfying properties 1-5 of theorem \ref{teo. Ht is a module category}, which is then a progenerator of $\Ht$, and let us put $V:=H^{0}(G)$ and $\mathbf{a}=\ann_{R}(V)$. There is an obvious monomorphism $R/\mathbf{a} \flecha V^{V}$ and, by hypothesis, we have that $V^{V}\in \T=\Gen(V)$. Hence, we get that $\overline{\Gen}(V)=\frac{R}{\mathbf{a}}$-Mod. Moreover, from lemma \ref{lem. progenerator of Ht} and \cite[Proposition 3.2]{CMT} we get that $V$ is a classical 1-tilting $R/\mathbf{a}$-module. On the other hand, the equality $\overline{\Gen}(V)=\frac{R}{\mathbf{a}}$-Mod implies that $\Rej_{\T}(M)=\Rej_{\frac{R}{\mathbf{a}}\text{-Mod}}(M)=\mathbf{a}M$. Then conditions 2.a) and 2.b) follow directly. \\

Now, we consider the morphism $h:\frac{Q}{X}^{(I)} \flecha \frac{R}{t(R)}$ in property 5 of theorem \ref{teo. Ht is a module category} and put $h^{'}$ the restriction of $h$ to $(\frac{\Ker(d)}{X})^{(I)}$. The fact that $\Coker(h^{'})$ is in $\overline{Gen}(V)=\frac{R}{\mathbf{a}}-$Mod is equivalent to saying that $\mathbf{a}\frac{R}{t(R)} \subseteq \Imagen(h^{'})$. But, by the already proved condition 2.a), we know that $\Imagen(h^{'})\subseteq h(\mathbf{a}(\frac{Q}{X})^{(I)})=\mathbf{a}\Imagen(h) \subseteq \mathbf{a}\frac{R}{t(R)}$. We then get that $\Imagen(h^{'})=\mathbf{a}\frac{R}{t(R)}=\frac{\mathbf{a}+t(R)}{t(R)}$. \\

Finally, it is clear that $\mathbf{t}^{'}=(\T,\F \cap \frac{R}{\mathbf{a}}-\text{Mod})$ is a classical tilting torsion pair of $R/\mathbf{a}-$Mod. If $j:\mathcal{H}_{\mathbf{t}^{'}} \flecha \Ht$ is the forgetful functor then, arguing as in the final part of the proof of corollary \ref{cor. here. torsion pair with V+Y as proge.}, in order to prove that $j$ is faithful, we just need to check that the canonical map $\Hom_{\mathcal{H}_{\mathbf{t}^{'}}}(V[0],M) \flecha \Hom_{\Ht}(V[0],M)$ is injective, for all $M\in \mathcal{H}_{\mathbf{t}^{'}}$. Similar as there, this in turn reduces to check that the canonical map
\begin{center}
$ \Ext^{1}_{R/\mathbf{a}}(V,F)\cong \Hom_{\mathcal{H}_{\mathbf{t}^{'}}}(V[0],F[1]) \flecha \Hom_{\Ht}(V[0],F[1])\cong \Ext^{1}_{R}(V,F)$
\end{center}
is injective, for all $F\in \F \cap \frac{R}{\mathbf{a}}-$Mod. But this is clear.
\end{proof}

\begin{remark}\rm{
Let $A$ be a ring and $V$ be a non-projective classical 1-tilting $A$-module. Then, by \cite{Mi}, we know that $V$ is also a classical tilting right $S$-module, where $S=\End($$_{A}V)^{op}$, such that the canonical algebra morphism $A \flecha \End(V_{S})$ is an isomorphism. Due to the tilting theorem (see the beginning of section \ref{remark tilting theorem}), we then know that $(\Ker(? \otimes_{A} V), \Ker(\text{Tor}_{1}^{\hspace{0.04cm}A}(?,V)))$ is a torsion pair in $\Mod A$. If we had $\Ker(? \otimes_{A} V)=0$ we would have that $\Ker(\text{Tor}_{1}^{\hspace{0.04cm}A}(?,V))=\Mod A$, i.e., $\text{Tor}_{1}^{\hspace{0.04cm}A}(?,V)=0$, and hence $V$ would be a flat left $A$-module. This is a contradiction since finitely presented flat $A$-modules are projective (see \cite[Corollarie 1.3]{L}). We then have a right $A$-module $X \neq 0 $ such that $X \otimes _{A}V=0 \neq \text{Tor}^{\hspace{0.04cm}A}_{1}(X,V).$ Considering an epimorphism $X \epic X^{'}$, with $X^{'}_{A}$ simple, and replacing $X$ by $X^{'}$, we can even choose $X$ to be a simple right $A$-module.}
\end{remark}

\vspace{0.3 cm}

Recall that if $A$ is a ring and $M$ is an $A$-bimodule, then the \emph{trivial extension}\index{trivial extension} of $A$ by M, denoted $A \rtimes M$, is the ring whose underlying $A$-bimodule is $A \oplus M$ and the multiplication is given by $(a,m).(a^{'},m^{'})=(aa^{'},am^{'}+ma^{'})$. The next result is a systematic way of constructing torsion pairs $\te$ in $R$-Mod, such that $\Ht$ has a progenerator of the form $V[0]$, although $V$ is not a tilting $R$-module.

\begin{theorem}\label{teo. V[0] prig. which is not tilting}
Let $A$ be a finite dimensional algebra over an algebraically closed field $K$, let $V$ be a classical 1-tilting left $A$-module such that $\Hom_{A}(V,A)=0$, let $X$ be a simple right $A$-module such that $X \otimes_{A}V=0$ and let us consider the trivial extension $R=A \rtimes M$, where $M=V \otimes_{K}X$. Viewing $V$ as a left $R$-module annihilated by $0 \rtimes M$, the pair $\mathbf{t}=(\Gen(V),\Ker(\Hom_{R}(V,?)))$ is a non-tilting torsion pair in $R$-Mod such that $V[0]$ is a progenerator of $\Ht$.
\end{theorem}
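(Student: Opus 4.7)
The plan is to apply Corollary \ref{cor. progenerator casi tilting}, verifying conditions 3.a), 3.b), 3.c) for $V$ over $R$. Set $I = 0 \rtimes M$, a two-sided ideal with $I^2 = 0$ since $M \otimes_A M = V \otimes_K (X \otimes_A V) \otimes_K X = 0$, so $R/I \cong A$ and $R$-modules killed by $I$ are precisely $A$-modules. Since $V$ has trivial $I$-action, any $V$-generated $R$-module is killed by $I$; inside $A\text{-Mod}$ the class $\Gen_R(V)$ coincides with $\Gen_A(V) = \Pres_A(V)$, giving $\Gen_R(V) = \Pres_R(V) = \T$. For $F \in \F := \Ker(\Hom_R(V,?))$, the submodule $MF$ is a quotient of $M \otimes_A F = V \otimes_K (X \otimes_A F)$, hence lies in $\T$; but $MF \subseteq F^I$ (as $M^2 = 0$) and $F^I \in \F_A := \Ker(\Hom_A(V,?))$ (because $\Hom_R(V,F) = \Hom_A(V,F^I)$), so $MF \in \T \cap \F_A = 0$. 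Thus every $F \in \F$ is killed by $I$, identifying $\F$ with $\F_A$ inside $A\text{-Mod}$.

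For 3.a), given $0 \to T \to E \to V \to 0$ in $R\text{-Mod}$ with $T \in \T$, the right-exact sequence $M \otimes_A T \to M \otimes_A E \to M \otimes_A V = 0$ shows the first arrow is surjective; composing with the $M$-action $M \otimes_A E \to E$ and using that $M \otimes_A T \to T$ is zero (since $MT = 0$), one forces $ME = 0$. The sequence thus lives in $A\text{-Mod}$ and splits because $V$ is 1-tilting over $A$. For 3.c), each $F \in \F \subseteq A\text{-Mod}$ embeds into its $A$-injective envelope, which is in $\T$ again by the tilting condition.

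The main obstacle is 3.b), which I would settle by an explicit $\Ext^2$ computation. Taking the length-one $A$-resolution $0 \to P_1 \xrightarrow{\alpha} P_0 \to V \to 0$, set $Q_i := R \otimes_A P_i$, which are $R$-projective. The map $Q_0 = P_0 \oplus (M \otimes_A P_0) \to V$ has kernel $K_0 = \alpha(P_1) \oplus (M \otimes_A P_0)$, and because $M \otimes_A V = 0$ the natural map $Q_1 \twoheadrightarrow K_0$ is surjective with kernel $K_1 = \Tor^A_1(M,V)$ sitting inside $M \otimes_A P_1$. Using $M = V \otimes_K X$, the $A$-module $M \otimes_A P_1 = V \otimes_K (X \otimes_A P_1)$ is a finite sum of copies of $V$, and the exactness of $V \otimes_K -$ identifies $K_1$ with $V \otimes_K \Tor^A_1(X, V) \in \Add(V) \subseteq \T$. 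From $0 \to K_1 \to Q_1 \to K_0 \to 0$ we get $\Ext^2_R(V, F) \cong \Ext^1_R(K_0, F) \cong \Coker\bigl(\Hom_R(Q_1, F) \to \Hom_R(K_1, F)\bigr)$, and this restriction is zero because any $R$-linear map from $Q_1$ to $F$ vanishes on $M \otimes_A P_1 \supseteq K_1$ (since $IF = 0$). Hence $\Ext^2_R(V, F) = \Hom_A(K_1, F) = 0$ because $K_1 \in \T$ and $F \in \F_A$.

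Corollary \ref{cor. progenerator casi tilting} then delivers that $V[0]$ is a progenerator of $\Ht$. For non-tiltingness: by Proposition \ref{description tilting object}, if $V$ were 1-tilting over $R$ there would be an $R$-monomorphism $R \hookrightarrow V_0$ with $V_0 \in \Add_R(V) \subseteq \Gen_R(V) \subseteq A\text{-Mod}$, forcing $IR = 0$ and contradicting $I = M \neq 0$.
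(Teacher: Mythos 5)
Your proof is correct, and it reaches the conclusion by a route that is noticeably different from the paper's. The paper first establishes a list of facts about $\mathbf{a}=0\rtimes M$ (it is both $\ann_R(V)$ and the trace of $V$ in $R$), proves $\T=\Gen(V)$ is closed under extensions, identifies $\F$ with $\Ker(\Hom_A(V,?))$, builds the finitely generated presentation $R\otimes_A P_1\to R\otimes_A P_0\to V\to 0$ whose second syzygy is $\Tor_1^{A}(M,V)\cong V\otimes_K\Tor_1^{A}(X,V)\in \Add(V)$, and then feeds all of this into Theorem \ref{teo. T closed under products} (using that $\T$ is closed under products and taking $X=\Ker(d)$ there), finally identifying the resulting three-term progenerator with $V[0]$; you instead verify condition 3 of Corollary \ref{cor. progenerator casi tilting} directly, using the same syzygy computation but to prove $\Ext^{2}_R(V,?)_{|\F}=0$ via the observation that any $R$-map $Q_1\to F$ kills $IQ_1=M\otimes_A P_1\supseteq K_1$, so closure of $\T$ under products never enters. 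Your splitting argument for $0\to T\to E\to V\to 0$ and the identification $\F=\Ker(\Hom_A(V,?))$ coincide in substance with the paper's intermediate facts, and your presentation $Q_1\to Q_0\to V\to 0$ also supplies the finite presentability of ${}_RV$ required in condition 3 of the corollary. For non-tiltingness the two arguments genuinely diverge: the paper appeals to the projective dimension of ${}_RV$ being greater than $1$ (which relies on its preliminary remark guaranteeing $\Tor_1^{A}(X,V)\neq 0$), while you use Proposition \ref{description tilting object}, implication 1) $\Longrightarrow$ 2.c), to embed $R$ into a module annihilated by $I$ — an argument that needs no nonvanishing Tor. One small point to tighten: the theorem claims the torsion pair itself is not a tilting torsion pair, whereas your sentence only excludes $V$ from being 1-tilting; but your argument applies verbatim to any hypothetical 1-tilting module $W$ with $\Gen(W)=\T$, since $\Add(W)\subseteq\T$ consists of modules killed by $I$ and Proposition \ref{description tilting object} would again embed $R$ into such a module, so the repair is one line (and, as stated, the paper's projective-dimension remark suffers from the same looseness).
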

\begin{proof}
All throughout the proof, for any two-sided ideal $\mathbf{a}$ of a ring $R$, we view $R/\mathbf{a}$-modules as $R$-modules annihilated by $\mathbf{a}$. Note that if $M$ is any such module and we apply $? \otimes_{R} M :\Mod R \flecha $Ab to the canonical exact sequence:
$$0 \flecha \mathbf{a} \monic R \epic R/\mathbf{a} \flecha 0$$
we obtain the following exact sequence 
\begin{small}
$$\xymatrix{\cdots \ar[r] & \text{Tor}_{1}^{\hspace{0.04 cm} R}(R,M)=0 \ar[r] & \text{Tor}_{1}^{R}(\frac{R}{\mathbf{a}},M ) \ar[r] & \mathbf{a}\otimes_{R}M \ar[r]^{\delta} & R\otimes_{R}M \ar[r] & \frac{R}{\mathbf{a}} \otimes_{R} M \ar[r] & 0}$$
\end{small}
where $\Imagen(\delta)\cong \mathbf{a}M=0$. Hence we get two canonical isomorphisms $R \otimes_{R} M \iso \frac{R}{\mathbf{a}} \otimes_{R} M$ and $\text{Tor}_{1}^{\hspace{0.04cm}R}(\frac{R}{\mathbf{a}},M) \iso \mathbf{a} \otimes_{R} M$. \\

If $\xymatrix{0 \ar[r] & Q^{'} \ar[r]^{u} & P^{'} \ar[r] & V \ar[r] & 0}$ is the minimal projective resolution of $_{A}V$, then the map $1_X \otimes u: X \otimes_{A} Q^{'} \flecha X \otimes_{A}P^{'}$ is the zero map. Indeed the minimality of the resolution implies that the image of the map $1_X \otimes u$ is contained in the image of the map  $1_X \otimes j: X \otimes_A J(A)P^{'} \flecha X \otimes_A P^{'}$, where $J(A)$ is the Jacobson radical of $A$. But we have $X \otimes_A J(A)P^{'}=XJ(A) \otimes_A P^{'}=0$ since $X_A$ is simple. We then get an isomorphism of vector spaces $X\otimes_{A} P^{'} \cong X \otimes_{A} V=0$ and $\text{Tor}_{1}^{\hspace{0.04cm} R}(X,V) \cong X \otimes_{A} Q^{'}$. \\

On the other hand, note that we have an isomorphism $_{A}M \cong _{A}V$ in $A$-Mod because, due to the algebraically closed condition of $K$, the simple right $A$-module $X$ is one-dimensional over $K$. As a right $A$-module, $M_A$ is in $\add(X_{A})$. Moreover, we have \linebreak $M\otimes_{A} V \cong (V \otimes_{K} X) \otimes_{A}V \cong V \otimes_{K} (X \otimes_{A} V)=V \otimes_{K}0=0$. We will prove the following facts:

\begin{enumerate}
\item[i)] $\ann_{R}(V)=0 \rtimes M:=\mathbf{a}$ and this ideal is the trace of $V$ in $R$;

\item[ii)] $\mathbf{a} \otimes_{R} V=0;$

\item[iii)] $\T:=\Gen(V)$ is closed under taking extensions and, hence, it is a torsion class, in $R$-Mod;

\item[iv)] $\Ker(\Hom_{A}(V,?))=\Ker(\Hom_{R}(V,?))$ and, hence, $\te:=(\Gen(V), \Ker(\Hom_{R}(V,?)))$ is a torsion pair in $R$-Mod which lives in $A$-Mod;

\item[v)] There is a finitely generated projective presentation of $_{R}V$

$$\xymatrix{Q \ar[r]^{d} & P \ar@{>>}[r] & V \ar[r] & 0}$$
such that $\Ker(d)$ is a non-projective $R$-module in $\T$. 
\end{enumerate}

Suppose that all these facts have been proved. Then $\te$ is a torsion pair in $R$-Mod whose torsion class is closed under taking products. We claim that $V$ satisfies all conditions of assertion 2 in theorem \ref{teo. T closed under products}, by taking $X=\Ker(d)$. The only nontrivial things to check are conditions 2.b) and 2.c) in that assertion. We consider the exact sequence:

$$0 \flecha \Ker(d) \monic Q \xymatrix{ \ar[r]^{d} &} \Imagen(d) \flecha 0$$
Applying the functor $\Hom_{R}(?,F)$ to the previous exact sequence, for each $F\in \F$, we obtain the following exact sequence (recall that $\Ker(d)\in \T$):
$$\xymatrix{\cdots \ar[r] & \Hom_{R}(\Ker(d),F)=0 \ar[r] & \Ext^{1}_{R}(\Imagen(d),F) \ar[r] & \Ext^{1}_{R}(Q,F)=0 \ar[r] & \cdots}$$
Hence, condition 2.b) holds. On the other hand, $t(R)$ is the trace of $V$ in $R$ and, by fact i), we know that $t(R)=\mathbf{a}$. Then condition 2.c) of assertion 2 in theorem \ref{teo. T closed under products} also holds, simply by taking as $h$ the zero map. Looking at the proof of implication 2) $\Longrightarrow$ 1) in that theorem, we see that the complex

$$\xymatrix{G:=\cdots \ar[r] & 0 \ar[r] & \Ker(d) \ar@{^(->}[r] & Q \ar[r]^{d} & P \ar[r] & 0 \ar[r] & \cdots}$$
concentrated in degrees -2, -1, 0, is a progenerator of $\Ht$. But we have an isomorphism $G\cong V[0]$ in $\Ht$. Therefore $V[0]$ is a progenerator of $\Ht$. Note that this torsion pair $\te$ in $R$-Mod is not tilting, because the projective dimension of $_{R}V$ is greater than 1 and all 1-tilting $R$-modules have projective dimension less than or equal to 1 (see proposition \ref{description tilting object}). We now move to prove the facts i)-v) in the list above:\\

i) By definition of the $R$-module structure on $V$, we have $(a,m)v=av$, for each $(a,m)\in A \rtimes M$ and each $v \in V$. Then $\ann_{R}(V)=\ann_{A}(V) \rtimes M$. But $\ann_{A}(V)=0$ due to the 1-tilting condition of $_{A}V$. On the other hand, if $f:V \flecha R$ is a morphism in $R$-Mod, then we have two $K$-linear maps $g:V \flecha A$ and $h:V\flecha M$ such that $f(v)=(g(v),h(v)) \in A \rtimes M=R$, for all $v \in V$. Direct computation shows that $g$ is a morphism in $A$-Mod, and hence $g=0$ (since $\Hom_{A}(V,A)=0$). Then one immediately sees that $h\in \Hom_{A}(V,M)$ and, since $_{A}M$ is in $\Gen(_{A}V)$, we conclude that $t(R)=0 \rtimes M= \mathbf{a}$. \\

ii) Since $\mathbf{a}^{2}=0$ and $\mathbf{a}V=0$, we have an equality $\mathbf{a} \otimes_{R} V=\mathbf{a}\otimes_{\frac{R}{\mathbf{a}}} V = \mathbf{a} \otimes_{A} V$. But, as a right $A$-module, we have that $\mathbf{a}_{A} \cong M_{A}$. It follows that $\mathbf{a} \otimes_{A} V \cong M \otimes_{A}V=0$.\\

iii) Consider any exact sequence $0 \flecha T \flecha N \xymatrix{\ar[r]^{q} &} T^{'} \flecha 0$ $(\ast)$ in $R$-Mod, with $T,T^{'}\in \Gen(V)=\T$. Taking an epimorphism $p:V^{(I)} \epic T^{'}$, and considering the pullback of $p$ along the $q$, we easily reduce the problem to the case when $T^{'}=V^{(I)}$. We now apply the functor $\frac{R}{\mathbf{a}}\otimes_{R}?:R$-Mod $\flecha \frac{R}{\mathbf{a}}-$Mod to the sequence $(\ast)$, with $T^{'}=V^{(I)}$, and use the fact that, by the initial paragraph of this proof, we have $\text{Tor}_{1}^{\hspace{0.04cm}R}(R/\mathbf{a},V^{(I)}) \cong \mathbf{a} \otimes_{R}V^{(I)}=0$. We then get a commutative diagram with exact rows, where the vertical arrows are the canonical maps:

$$\xymatrix{0 \ar[r] & T \ar[r] \ar[d]^(0.4){\wr}& N \ar[r] \ar[d] & V^{(I)} \ar[r] \ar[d]^(0.4){\wr}& 0 \\ 
0 \ar[r] & \frac{R}{\mathbf{a}} \otimes_{R} T \ar[r] & \frac{R}{\mathbf{a}}\otimes_{R} N \ar[r] & \frac{R}{\mathbf{a}} \otimes_{R} V^{(I)} \ar[r] & 0 }$$   
The central vertical arrow $N \flecha \frac{R}{\mathbf{a}} \otimes_{R} N \cong \frac{N}{\mathbf{a}N}$ is then an isomorphism. This implies that $\mathbf{a}N=0$ and, hence, the sequence $(\ast)$ above lives in $A$-Mod and $N\in \T$. \\

iv) If $F\in \Ker(\Hom_{R}(V,?))$ then $t(R)F=tr_V(R)F=0$. By fact i), we get that $\mathbf{a}F=0$. Then $F$ is an $A$-module, and hence $\Ker(\Hom_{R}(V,?)) \subseteq \Ker(\Hom_{A}(V,?))$. The converse inclusion is obvious. \\

v) The multiplication map $\mu: R\otimes_{A} V \flecha V$ is surjective. Moreover, we have an isomorphism of $K$-vector spaces:
\begin{center}
$R \otimes_{R}V \cong (A \oplus M) \otimes_{A} V \cong V \oplus(M \otimes_{A} V)=V \oplus 0 \cong V$.
\end{center}
Since $V$ is a finite dimensional $K$-vector space we get that $\mu$ is an isomorphism of left $R$-modules. \\

Let $\xymatrix{0 \ar[r] & Q^{'} \ar[r]^{d^{'}} & P^{'} \ar[r] & V \ar[r] & 0}$ be a finitely generated projective presentation of $V$ in $A$-Mod. Applying the functor $R \hspace{0.02  cm} \otimes_{A} ?$ to the previous exact sequence, we obtain the following exact sequence:

$$\xymatrix{\cdots \ar[r] & \text{Tor}_{1}^{\hspace{0.04 cm} A}(R,P^{'})=0 \ar[r] & \text{Tor}_{1}^{\hspace{0.04 cm}A}(R,V) \ar[r] & R \otimes_{A} Q^{'} \ar[rr]^{1\otimes d^{'}} && R \otimes_{A} P^{'} \ar[r] & V \ar[r] & 0}$$


Then we have isomorphisms of $K$-vector spaces:

\begin{center}
$\Ker(1 \otimes d^{'})=\text{Tor}_{1}^{\hspace{0.04 cm} A}(R,V)\cong \text{Tor}_{1}^{\hspace{0.04 cm} A}(A \oplus M,V)\cong \text{Tor}_{1}^{\hspace{0.04 cm} A}(M,V)$.
\end{center}

It is easy to deduce from this fact that $\mathbf{a}\Ker(1 \otimes d^{'})=0$. Then we can view $\Ker(1 \otimes d^{'})$ as a left $A$-module isomorphic to $\text{Tor}_{1}^{\hspace{0.04 cm} A}(M,V)$. Since $M=V\otimes_{K}X$ we get that $\text{Tor}_{1}^{\hspace{0.04 cm} A}(M,V)\cong V \otimes_{K} \text{Tor}_{1}^{\hspace{0.04 cm} A}(X,V)$ which is nonzero and isomorphic to $V \otimes_{K}(X \otimes_{A} Q^{'}) \in \add(_{A}V)$ due to the first paragraph of this proof. It follows that $\Ker(1\otimes d^{'})$ is a nonzero left $R$-module in $\add(_{R}V)=\add(_{A}V)$. The following is a finitely generated projective presentation of $V$ in $R$-Mod:

$$\xymatrix{ R \otimes_{A} Q^{'} \ar[rr]^{1\otimes d^{'}} && R \otimes_{A} P^{'} \ar[r] & V \ar[r] & 0}$$

We finally prove that $W:=\Ker(1 \otimes d^{'})$ is not a projective in $R$-Mod. If it were so, we would have that $W=tr_{V}(R)W$. By fact i), we would get that $\mathbf{a}W=W$, which would imply that $W=0$ since $\mathbf{a}^{2}=0$.
\end{proof}

\section{Some examples}
We now give a few examples which illustrate the results obtained in this chapter. All of them refer to finite dimensional algebras over a field which are given by quivers and relations. We refer the reader to \cite[Chapter II]{ASS} for the terminology that we use.

\begin{example}
Let $K$ be a field and $R$ be the $K$-algebra given by the following
quiver and relations:

$$\xymatrix{ 1 \ar@<1ex>[r]^{\alpha} \ar@<-0.5ex>[r]_{\beta}
& 2  \ar@<1ex>[r]^{\gamma} \ar@<-0.5ex>[r]_{\delta} & 3 } \hspace{2
cm} \alpha \delta\text{, } \beta\gamma \text{ and
}\alpha\gamma \text{-}\beta\delta$$

Let $\mathbf{a}$ be an idempotent ideal of $R$, let
$(\mathcal{C},\mathcal{T},\mathcal{F})$ be the associated TTF triple
in $R-\text{Mod}$ and let $\mathbf{t}=(\mathcal{T},\mathcal{F})$ be
its right constituent torsion pair. The following facts are true:

\begin{enumerate}
\item[1)] If $\mathbf{a}=Re_1R$ then $\mathcal{H}_\mathbf{t}$ has a progenerator which is a sum of stalk complexes and
 $\mathcal{H}_\mathbf{t}\cong K\Gamma-\text{Mod}$, where $\Gamma$ is the quiver $\xymatrix{2
\ar@<1ex>[r] \ar@<-1ex>[r]& 3 \ar[r] & 1}$.  The algebra $R$ is then
tilted of type $\Gamma$.
\item[2)] If $\mathbf{a}=Re_2R$ then $\mathcal{H}_\mathbf{t}$ is
equivalent to $K\times K\times K-\text{Mod}$.
\item[3)] If $\mathbf{a}=R(e_1+e_2)R$ then $\mathcal{H}_\mathbf{t}$ does not have a sum of stalk complexes as a progenerator and
$\mathcal{H}_\mathbf{t}\cong S-\text{Mod}$, where $S$ is the algebra
given by the following quiver and relations

$$\xymatrix{ 2 \ar@<0ex>[rr]^{\mu_2} \ar@<2.2ex>[rr]^{\mu_1} \ar@<-2.2ex>[rr]^{\mu_3}  &&  3  \ar@<1ex>[rr]^{\pi_1} \ar@<-1ex>[rr]_{\pi_2} && 1}$$
\begin{center}

$\mu_1\pi_2=\mu_3\pi_1=0$;

$\mu_1\pi_1=-\mu_2\pi_2$

$\mu_2\pi_1=\mu_3\pi_2$.
\end{center}
The algebras $R$ and $S$ are derived equivalent and, hence, $S$ is
piecewise hereditary (i.e. derived equivalent to a hereditary algebra).
\end{enumerate}
\end{example}
\begin{proof}
Using a classical visualization of modules via diagrams (see, e.g.,
\cite{F}), the indecomposable projective left $R$-modules can be
depicted as:

$$\xymatrix{1 & & & 2 \ar@{-}[ld] \ar@{-}[rd] &  & & 3 \ar@{-}[dr] \ar@{-}[dl] & \\ && _{1}\alpha & & _{1}\beta &  _{2}\gamma \ar@{-}[dr] && _{2}\delta \ar@{-}[dl] &&&\\ &&&&&& _{1}(\alpha \gamma=\beta \delta)&&}$$

By corollary \ref{cor. faith. here. ---> module}, whenever
$\mathbf{t}$ is faithful, we know that if $\mathcal{H}_\mathbf{t}$ is equivalent
to $S-\text{Mod}$, then $S$ and $R$ are derived equivalent. Even
more, in that case $R[1]$ is a tilting object of
$\mathcal{H}_\mathbf{t}$, which implies that $R$ and $S$ are
tilting-equivalent.\\

1) In this case we have  $\mathbf{a}=\text{Soc}(_RR)\cong
S_1^{(4)}\cong Re_1^{(4)}$, so that $\mathbf{a}$ is projective in
$R-\text{Mod}$ and $\mathbf{t}$ is a faithful torsion pair. From corollary \ref{cor. a projective} and example \ref{exam. R/a+a as progenerator}, we obtain that $\frac{R}{\mathbf{a}}\oplus\mathbf{a}[1]$ is a progenerator of
$\mathcal{H}_\mathbf{t}$. It follows that
$G:=\frac{R}{\mathbf{a}}\oplus S_1[1]$ is also a progenerator of
$\mathcal{H}_\mathbf{t}$. \\

We put $A:=\frac{R}{\mathbf{a}}$, which is isomorphic to the
Kronecker algebra: $\xymatrix{2  \ar@<1ex>[r] \ar@<-0.5ex>[r] & 3 }$
and which we also view as a left $R$ module annihilated by
$\mathbf{a}$. Then $\mathcal{H}_\mathbf{t}$ is equivalent to
$S-\text{Mod}$, where
$S=\text{End}_{\mathcal{H}_\mathbf{t}}(G)^{op}\cong\begin{pmatrix}
\text{End}_A(S_1)^{op} & 0\\ \text{Ext}_R^1(A,S_1) & A
\end{pmatrix}\cong\begin{pmatrix}
K & 0\\ \text{Ext}_R^1(A,S_1) & A
\end{pmatrix}$.\\

Note that $A=S_2\oplus\frac{Re_3}{R\alpha\gamma}$ as left
$R$-module, so that we have a vector space decomposition
$\text{Ext}_R^1(A,S_1)\cong\text{Ext}_R^1(S_2,S_1)\oplus\text{Ext}_R^1(\frac{Re_3}{R\alpha\gamma},S_1)$.
Let $\epsilon'$ be the element of $\text{Ext}_R^1(A,S_1)$
represented by the short exact sequence

\begin{center}
 $\xymatrix{0 \ar[r] & S_1 \hspace{0.08cm}\ar@{^(->}[r] &
Re_3\ar[r]&\frac{Re_3}{R\alpha\gamma}\ar[r]& 0}$
\end{center}
The assignment $a\rightsquigarrow a\epsilon'$ gives an isomorphism
of left $A$-modules
$\xymatrix{Ae_3\ar[r]^{\sim\hspace{0.7cm}}&\text{Ext}_R^1(A,S_1)}$,
so
that the algebra $S$ is isomorphic to $\begin{pmatrix} K & 0\\
Ae_3 & A
\end{pmatrix}\cong K\Gamma$, where $\Gamma$ is
the quiver $\xymatrix{2 \ar@<1ex>[r] \ar@<-1ex>[r]& 3 \ar[r] & 1}$.\\

2) We have that $\mathbf{a}=Re_2R=Re_2\oplus Je_3$ is not projective
 and that $R/Re_2R\cong S_1\oplus S_3$ in $R-\text{Mod}$. We then
 get that $\mathcal{F}=\{F\in R-\text{Mod}:$
 $\text{Soc}(F)\in\text{Add}(S_2)\}$, so that $\mathbf{t}$ is not
 faithful. The minimal projective resolution of $Je_3$ is of the form $\xymatrix{0\ar[r]& P_1^{(3)}\ar[r] & P_2^{(2)}\ar[r]&Je_3\ar[r]&
 0}$, where $P_1=S_1$ is simple projective. It follows that

\begin{center}
 $\text{Ext}_R^2(\frac{R}{\mathbf{a}},F)\cong\text{Ext}_R^2(S_1\oplus
 S_3,F)\cong\text{Ext}_R^2(S_3,F)\cong\text{Ext}_R^1(Je_3,F)=0$,
 \end{center}
 for each $F\in\mathcal{F}$. By example \ref{exam. R/a+a as progenerator},
 we know that
 $\frac{R}{\mathbf{a}}[0]\oplus\frac{\mathbf{a}}{t(\mathbf{a})}[1]\cong (S_1\oplus S_3)[0]\oplus S_1^{(3)}[1]$
 is a progenerator of $\mathcal{H}_\mathbf{t}$, which implies that
 $G:=(S_1\oplus S_3)[0]\oplus S_1[1]$ is also a progenerator of
 $\mathcal{H}_\mathbf{t}$. Since we have
 $\text{Ext}_R^1(S_1,S_1)=0=\text{Ext}_R^1(S_3,S_1)$, we get from
 proposition \ref{prop. progenerator V[0]+Y[1]} that
 $\mathcal{H}_\mathbf{t}$ is equivalent to $S-\text{Mod}$, where $S=\text{End}_R(S_1)^{op}\times\text{End}_R(S_1\oplus S_3)^{op}\cong K\times K\times
 K$.\\

3) We have isomorphisms $\mathbf{a}=R(e_1+e_2)R\cong Re_1\oplus
Re_2\oplus Je_3$ and $R/\mathbf{a}\cong S_3$ in $R-\text{Mod}$, and
this implies that $\mathcal{F}=\{F\in R-\text{Mod}:$
 $\text{Soc}(F)\in\text{Add}(S_1\oplus S_2)\}$ and that $\mathcal{F}$ is
 faithful. Since $\text{Ext}_R^2(S_3,S_1)\neq
 0$, this time we do not have a sum of stalk complexes as a
 progenerator of $\mathcal{H}_\mathbf{t}$ (see proposition \ref{prop. progenerator V[0]+Y[1]}). Instead, inspired by the proof of corollary \ref{cor. a in F}, we consider the
 minimal projective resolution of $S_3\cong R/\mathbf{a}$

 \begin{center}
 $\xymatrix{0\ar[r]&P_1^{(3)}\ar[r]&P_2^{(2)}\ar[r]^{d'}&P_3\ar[r]&S_3 \ar[r] & 0}$
 \end{center}
 and take the complex $\xymatrix{G':= \cdots \ar[r] & 0 \ar[r] &  P_2^{(2)}\ar[r]^{d'}&P_3\ar[r] &  0 \ar[r] & \cdots}$, concentrated in degrees $-1$ and $0$. Now the complex $G:=G'\oplus
 P_1[1]\oplus P_2[1]$ satisfies all conditions in assertion 2 of corollary \ref{cor. HKM=Prog. complex}, so that it is a progenerator of
 $\mathcal{H}_\mathbf{t}$ and $\mathcal{H}_\mathbf{t}\cong
 S-\text{Mod}$, where $S=\begin{pmatrix} \text{End}_R(P_1\oplus P_2)^{op} & \text{Hom}_{\mathcal{D}(R)}(P_1[1]\oplus P_2[2],G')
 \\ \text{Hom}_{\mathcal{D}(R)}(G',P_1[1]\oplus P_2[2]) &
 \text{End}_{\mathcal{D}(R)}(G')^{op} \end{pmatrix}$.\\

We clearly have $\text{End}_R(P_1\oplus
P_2)^{op}\cong\begin{pmatrix} \text{End}_R(P_2)^{op} & 0\\
\text{Hom}_R(P_1,P_2) & \text{End}_R(P_1)^{op}\end{pmatrix}\cong \begin{pmatrix} K & 0\\
K^2 & K\end{pmatrix}=:A$, which is isomorphic to the Kronecker
algebra. Moreover, the $0$-homology functor defines an isomorphism
$\xymatrix{\text{End}_{\mathcal{D}(R)}(G')\ar[r]^{\sim\hspace{1.6
cm}}&\text{End}_R(H^0(G'))\cong\text{End}_R(S_3)\cong K}$. On the
other hand, $\text{Hom}_{\mathcal{D}(R)}(G',P_1[1]\oplus P_2[1])$ is
a $2$-dimensional vector space,  where a basis $\{\pi_1,\pi_2\}$ is
induced by the two projections $\xymatrix{P_2^{(2)}\ar@{>>}[r]&
P_2}$. Similarly, $\text{Hom}_{\mathcal{D}(R)}(P_1[1]\oplus
P_2[2],G')$ is a 3-dimensonal vector space with a basis $\{\mu_i:$
$i=1,2,3\}$ induced by the monomorphisms $\xymatrix{P_1=Re_1
\hspace{0.07cm}\ar@{^(->}[r]&P_2^{(2)}}$ which map $e_1$ onto
$(\beta ,0)$, $(\alpha ,-\beta )$ and $(0,\alpha )$, respectively.
Since the multiplication in   $S$ is given by anti-composition of
the entries, we easily get that $\pi_i\mu_j=\mu_j\circ\pi_i=0$, for
all $i,j$. On the other hand, we have:

\begin{center}

 $\mu_3\pi_1=\pi_1\circ\mu_3=0=\pi_2\circ\mu_1=\mu_1\pi_2$

$\mu_1\pi_1=\pi_1\circ\mu_1=-\pi_2\circ\mu_2=-\mu_2\pi_2=\begin{pmatrix}
0 & 0\\ \rho_\beta & 0
\end{pmatrix}:\xymatrix{P_1\oplus P_2\ar[r]&P_1\oplus P_2}$

$\mu_2\pi_1=\pi_1\circ\mu_2=\pi_3\circ\mu_2=\mu_2\pi_3=\begin{pmatrix}
0 & 0\\ \rho_\alpha & 0
\end{pmatrix}:\xymatrix{P_1\oplus P_2\ar[r]&P_1\oplus P_2}$
\end{center}
where $\xymatrix{\rho_x:P_1=Re_1\ar[r] & P_2}$ maps
$a\rightsquigarrow ax$, for each $x\in P_2$. It easily follows that
$S$ is given by quivers and relations as claimed in the statement.
\end{proof}

\vspace{0.3 cm}

Given a finite quiver $Q$ with no oriented cycle, a path $p$ will be
called a \emph{maximal path} when its origin is a source and its
terminus is a sink. We put
$D=\text{Hom}_K(?,KQ)=KQ-\text{mod}^{op}\stackrel{\cong}{\longleftrightarrow}\text{mod}-KQ$
to denote the usual duality between finitely generated left and
right $KQ$-modules.

\begin{example} \label{exem.progenerator V not tilting}
Let $Q$ be a finite connected quiver with no oriented cycles which
is different from $1\rightarrow \dots \rightarrow n$, and let $i\in
Q_0$ be a source. Let us form a new quiver $\hat{Q}$ as follows. We
put $\hat{Q}_0=Q_0$ and the arrows of $\hat{Q}$ are the arrows of
$Q$ plus an arrow $\alpha_p:t(p)\rightarrow i$, for each maximal
path $p$ in $Q$. Given a field $K$, we consider the $K$-algebra $R$
with quiver $\hat{Q}$ and relations:

\begin{enumerate}
\item $\alpha_p\beta =0$, for each  $\beta\in Q_1$ and each maximal path $p$ in $Q$;

\item $p'\alpha_p=q'\alpha_q$, whenever $p'$ and $q'$  are paths in $Q$ such
that $s(p')=s(q')$ and there is a path $\pi:j\rightarrow \dots
\rightarrow s(p')=s(q')$ in $Q$ such that $\pi p'=p$ and $\pi q'=q$.
\end{enumerate}
We identify $KQ-\text{Mod}$ with the full subcategory of
$R-\text{Mod}$ consisting of the $R$-modules annihilated by the
two-sided ideal generated by the $\alpha_p$. Then
$\mathbf{t}=(KQ-\text{Inj},(KQ-\text{Inj})^\perp)$ is a non-tilting
torsion pair in $R-\text{Mod}$ such that $D(KQ)[0]$ is a
progenerator of $\mathcal{H}_\mathbf{t}$.
\end{example}
\begin{proof}
Let $\mathcal{M}$ be the set of maximal path in $Q$ and consider the
paths of $Q$ as the canonical basis $B$ of $KQ$. Its dual basis is
denoted by $B^*$. Consider the assignment
$\sum_{p\in\mathcal{M}}a_p\alpha_p\rightsquigarrow
(\sum_{p\in\mathcal{M}}a_pp^*)\otimes\bar{e}_i$, where $a_p\in
KQe_{t(p)}$ for each $p\in\mathcal{M}$ and
$\bar{e}_i=e_i+e_iJ\in\frac{e_iKQ}{e_iJ}$ is the canonical element.
Here $J=J(KQ)$ is the Jacobson radical,  a basis of which is given
by the paths of length $>0$. This assignment defines an isomorphism
of $KQ$-bimodules

\begin{center}
$\mathbf{a}:=\sum_{p\in\mathcal{M}}R\alpha_pR=\sum_{p\in\mathcal{M}}R\alpha_p\stackrel{\cong}{\longrightarrow}D(KQ)\otimes_K\frac{e_iKQ}{e_iJ}=:M$.
\end{center}
Moreover, it is the restriction of an algebra isomorphism
$R\stackrel{\cong}{\longrightarrow}KQ\rtimes M$ which maps
$e_i\rightsquigarrow (e_i,0)$, $\beta\rightsquigarrow (\beta ,0)$
and   $\alpha_p\rightsquigarrow (0,p^*\otimes\bar{e}_i)$, for all
$i\in Q_0$, all $\beta\in Q_1$ and all $p\in\mathcal{M}$.\\

Our hypotheses on $Q$ guarantee that there is no
projective-injective $KQ$-module. Then $D(KQ)$ is a classical
1-tilting $KQ$-module whose associated torsion pair in
$KQ-\text{Mod}$, namely
$\mathbf{t}=(KQ-\text{Inj},(KQ-\text{Inj})^\perp)$,  is faithful. On
the other hand, the fact that $i$ is a source and $Q$ is connected
implies that $\frac{e_iKQ}{e_iJ}\otimes_{KQ}D(KQ)=0$. Then theorem \ref{teo. V[0] prig. which is not tilting} applies, with
$A=KQ$, $V=D(KQ)$ and $X=\frac{e_iKQ}{e_iJ}$.
\end{proof}

\chapter[Compactly generated t-structures]{Compactly generated t-structures in the derived category of a commutative Noetherian ring}
\section{Properties of $\Hp$}
All throughout this section $R$ is a commutative Noetherian ring and $\phi$ is a sp-filtration of $\Spec(R)$. Furthermore, for each integer number $k$, we will denote by $(\T_{k},\F_{k})$ the hereditary torsion pair $(\T_{\phi(k)},\F_{\phi(k)})$. We will keep the terminology and notation introduced in sections \ref{sec. Commutative algebra} and \ref{sec. commutative Noetherian rings}. In this chapter, we study when $\Hp$ is a Grothendieck category or a module category. We start by studying the stalk complexes in the heart.


\subsection{Stalk complexes in the heart}
Similar to the case of the t-structure of Happel-Reiten-Smal\o, the stalks of $\Hp$ play an important role in the structure of $\Hp$. A first step to verify that $\Hp$ is an AB5 abelian category, would be to guarantee the exactness of direct systems of short exact sequences whose terms are stalks of the heart $\Hp$. In this subsection, we will settle this first obstacle. The following result is standard and is useful for our purposes. We include a short proof for completeness.

\begin{lemma}\label{lem. sutileza}
Let $Z$ be a sp-subset of $\Spec(R)$ and let $\T_{Z}$ be the associate hereditary torsion class in $R\Mode$. For each integer $i\geq 0$ and each $R$-module $M$, the following assertions are equivalent:
\begin{enumerate}
\item[1)] $\Ext^{\hspace{0.03cm}i}_{R}(T,M)=0$, for all $T\in \T_{Z};$
\item[2)] $\Ext^{\hspace{0.03 cm}i}_{R}(R/\mathbf{p},M)=0$, for all $\mathbf{p} \in Z$.
\end{enumerate}
\end{lemma}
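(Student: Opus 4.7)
The implication 1) $\Longrightarrow$ 2) is immediate: for every $\mathbf{p}\in Z$ we have $\Supp(R/\mathbf{p})=V(\mathbf{p})\subseteq Z$ by stability under specialization, whence $R/\mathbf{p}\in\mathcal{T}_Z$ and condition 2) follows by specializing condition 1).

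For the implication 2) $\Longrightarrow$ 1), I would begin by settling the case in which $T$ is finitely generated. Because $R$ is Noetherian and $\Supp(T)\subseteq Z$, the classical theory of associated primes yields a finite filtration $0=T_0\subsetneq T_1\subsetneq\cdots\subsetneq T_n=T$ whose successive quotients are of the form $R/\mathbf{p}_k$ with $\mathbf{p}_k\in\Supp(T)\subseteq Z$. Applying $\Hom_R(?,M)$ to each short exact sequence $0\to T_{k-1}\to T_k\to R/\mathbf{p}_k\to 0$ and looking at the resulting long exact sequence, an induction on $k$ yields $\Ext^{i}_R(T_k,M)=0$; at each step only the hypothesis $\Ext^{i}_R(R/\mathbf{p}_k,M)=0$ is invoked, which is exactly condition 2) for this fixed $i$.

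For an arbitrary $T\in\mathcal{T}_Z$ I would exploit the hereditary character of $\mathcal{T}_Z$ to write $T$ as the union of a smooth well-ordered chain $\{T_\alpha\}_{\alpha\le\kappa}$ of submodules in $\mathcal{T}_Z$ with $T_{\alpha+1}/T_\alpha\cong R/\mathbf{p}_\alpha$ for some $\mathbf{p}_\alpha\in Z$, and proceed by transfinite induction on $\alpha$. Successor ordinals are handled exactly as in the finitely generated case. The main obstacle will be the limit ordinals, since contravariant $\Ext$ does not commute with direct limits in its first variable. I would circumvent this by means of the Eklof-type telescopic short exact sequence
\[
0 \longrightarrow \bigoplus_{\alpha<\beta} T_\alpha \xrightarrow{\;1-\mathrm{shift}\;} \bigoplus_{\alpha<\beta} T_\alpha \longrightarrow T_\beta \longrightarrow 0,
\]
combined with the long exact sequence of $\Ext_R(?,M)$ and the isomorphism $\Ext^{j}_R(\bigoplus_\alpha T_\alpha,M)\cong \prod_\alpha\Ext^{j}_R(T_\alpha,M)$. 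To close the induction at a limit ordinal, I would strengthen the transfinite hypothesis to record vanishing simultaneously in the adjacent lower Ext-degree (which is supplied by the same prime-filtration argument applied to the previously built $T_\alpha$), thereby killing both the $\varprojlim$ and the $\varprojlim^{1}$ contributions and concluding $\Ext^{i}_R(T_\beta,M)=0$.
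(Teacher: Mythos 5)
Your implication 1) $\Longrightarrow$ 2), your finitely generated case via prime filtrations, and your reduction of a general $T\in\T_Z$ to a continuous well-ordered chain with factors $R/\p_\alpha$ all agree with the paper's argument. The gap is in your treatment of the limit ordinals, and it is genuine, for two reasons. First, the telescopic sequence $0 \to \bigoplus_{\alpha<\beta} T_\alpha \xrightarrow{\,1-\mathrm{shift}\,} \bigoplus_{\alpha<\beta} T_\alpha \to T_\beta \to 0$ is only exact when the chain below $\beta$ can be reindexed by $\omega$, i.e. when $\mathrm{cf}(\beta)=\omega$: the one-step shift only imposes successor identifications, and with finite supports an element indexed below a limit ordinal $\gamma<\beta$ can never be identified with its image at $\gamma$. (For the constant chain $T_\alpha=\mathbb{Z}$ over $\beta=\omega\cdot 2$ the cokernel of $1-\mathrm{shift}$ is $\mathbb{Z}^2$, not $\mathbb{Z}$.) Since a non-countably-generated torsion module forces limit stages of uncountable cofinality, the inductive machine cannot even be set up there. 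Second, your device for killing the $\varprojlim^1$ term does not exist: the hypothesis is for the single fixed degree $i$ and says nothing about $\Ext^{i-1}_R(R/\p,M)$, so "the same prime-filtration argument" has no input in degree $i-1$, and the auxiliary vanishing $\Ext^{i-1}_R(T_\alpha,M)=0$ is simply false in general. For instance with $R=\mathbb{Z}$, $Z=\{(p)\}$, $M=\mathbb{Z}(p^\infty)$ and $i=1$, condition 2) holds (and the lemma's conclusion holds, $M$ being injective), yet $\Hom_R(T_\alpha,M)\neq 0$ for every nonzero $T_\alpha\in\T_Z$.

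A correct finish along your lines is available only in the countable-cofinality case: the transition maps $\Ext^{i-1}_R(T_{\alpha+1},M)\to\Ext^{i-1}_R(T_\alpha,M)$ are surjective because their cokernels embed into $\Ext^{i}_R(R/\p_\alpha,M)=0$, so the inverse system is Mittag--Leffler and $\varprojlim^1$ vanishes without any extra vanishing hypothesis; but this does not reach uncountable cofinality. The paper avoids the problem altogether: it first reduces to $i=1$ by dimension shifting (replacing $M$ by the cosyzygy $\Omega^{1-i}(M)$; the case $i=0$ is handled directly by the filtration argument), and then applies Eklof's lemma \cite[3.1.2]{GT} to the transfinite chain. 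The point is that Eklof's lemma is not proved by telescopes or $\varprojlim^1$ computations: given an extension $0\to M\to X\to T\to 0$, one builds by transfinite recursion an increasing, compatible family of splittings over the $T_\alpha$, extending at successors using $\Ext^1_R(T_{\alpha+1}/T_\alpha,M)=0$ and taking unions at limits. You should either reduce to $i=1$ and cite Eklof's lemma as the paper does, or replace your limit-step argument by a recursion of that type.
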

\begin{proof}
Every finitely generated module in $\T_Z$ admits a finite filtration whose factors are isomorphic to modules of the form $R/\mathbf{p}$, with $\p\in Z$ (see \cite[Theorem 6.4]{Ma}). In particular, a module $F$ is in $\T_Z^{\perp}$ if, and only if, $\Hom_R(R/\p,F)=0$, for all $\p\in Z$. This proves the case $i=0$.\\

For $i>0$, we denote by $\Phi_{i}(1)$ and $\Phi_{i}(2)$ the classes of modules $M$ which satisfy condition 1 and 2, respectively. Note that if $i>1$ then, by dimension shifting, we have that $M\in \Phi_{i}(k)$ if, and only if, $\Omega^{1-i}(M)\in \Phi_{1}(k)$, for $k=1,2$. Hence, the proof is reduced to the case $i=1$. Then using the finite filtration mentioned in the previous paragraph, given $T \in \T_Z$, we can construct, by transfinite induction, an ascending transfinite chain $(T_{\alpha})_{\alpha < \lambda}$ of submodules of $T$ , for some ordinal $\lambda$, such that:
\begin{enumerate}
\item[i)] $\underset{\alpha < \lambda}{\bigcup} T_{\alpha}=T;$
\item[ii)] $T_{\alpha}=\underset{\beta < \alpha}{\bigcup} T_{\beta}$, whenever $\alpha$ is a limit ordinal smaller than $\lambda$;
\item[iii)] $\frac{T_{\alpha +1}}{T_{\alpha}}$ isomorphic to $R/\mathbf{p}$, for some $\mathbf{p}\in Z$, whenever $\alpha +1 < \lambda$.
\end{enumerate}
The result is then a direct consequence of Eklof's lemma (see \cite[3.1.2]{GT}).
\end{proof}

\begin{notation}\rm{
For each integer $m$, we will denote by $\T\F_{m}$ the class of $R$-modules $Y$ such that $Y[-m]\in \Hp$. Note that $\T\F_m\subseteq \T_m$.}
\end{notation}

\vspace{0.3 cm}

The following result gives us some information about the stalks of the heart $\Hp$.

\begin{proposition}\label{pro. stalks on the heart}
For each $Y\in \T_{m}$, the following assertions are equivalent:
\begin{enumerate}
\item[1)] $Y\in \T\F_{m}$;
\item[2)] $\Ext^{\hspace{0.04 cm}k-1}_{R}(R/\mathbf{p},Y)=0$, for each integer $k\geq 1$ and for each $\mathbf{p}\in \phi(m+k)$;
\item[3)] $\Ext^{\hspace{0.04 cm}k-1}_{R}(T,Y)=0$, for each integer $k \geq 1$ and each $T\in \T_{m+k}$.
\end{enumerate}
\end{proposition}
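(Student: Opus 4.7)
The plan is to split the equivalence into $2 \Leftrightarrow 3$, which is essentially free from Lemma \ref{lem. sutileza}, and $1 \Leftrightarrow 2$, which is an unwinding of the aisle description of $\mathcal{U}_\phi$.

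First I would note that $2 \Leftrightarrow 3$ is immediate by applying Lemma \ref{lem. sutileza} once for each fixed integer $k \geq 1$ to the sp-subset $Z = \phi(m+k)$ and the integer $i = k-1$: the vanishing of $\Ext^{k-1}_R(R/\mathbf{p}, Y)$ for every $\mathbf{p}\in \phi(m+k)$ is exactly the vanishing of $\Ext^{k-1}_R(T, Y)$ for every $T \in \mathcal{T}_{m+k}$.

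For $1 \Leftrightarrow 2$, I would observe that $Y \in \mathcal{T}_m$ means $\Supp(Y) \subseteq \phi(m)$, and since the complex $Y[-m]$ has homology concentrated in degree $m$ with $\Supp(H^m(Y[-m])) = \Supp(Y) \subseteq \phi(m)$, Theorem \ref{teo. main AJS} already gives $Y[-m] \in \mathcal{U}_\phi$. Hence $Y \in \mathcal{TF}_m$ is equivalent to $Y[-m-1] \in \mathcal{U}_\phi^\perp$. Now the aisle description $\mathcal{U}_\phi = \text{aisle}\langle R/\mathbf{p}[-i] : i\in \mathbb{Z}, \mathbf{p}\in \phi(i)\rangle$ means that a complex $X$ lies in $\mathcal{U}_\phi^\perp$ precisely when $\Hom_{\D(R)}(R/\mathbf{p}[-i][s], X) = 0$ for all $s \geq 0$, all $i \in \mathbb{Z}$ and all $\mathbf{p} \in \phi(i)$.

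Applying this to $X = Y[-m-1]$ and rewriting, the condition becomes $\Hom_{\D(R)}(R/\mathbf{p}, Y[n - m - 1]) = 0$ for all $n \leq i$, all $\mathbf{p}\in \phi(i)$ (where $n := i - s$). Since $Y$ is a module, this is automatic when $n - m - 1 < 0$, so only $n \geq m+1$ matters; writing $k = n - m \geq 1$ the requirement becomes $\Ext^{k-1}_R(R/\mathbf{p}, Y) = 0$ whenever $\mathbf{p} \in \phi(i)$ for some $i \geq m+k$. Because $\phi$ is decreasing, the existence of such an $i$ is equivalent to $\mathbf{p} \in \phi(m+k)$ itself (take $i = m+k$). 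Conversely, given condition 2, for any $i \geq m+k$ and any $\mathbf{p} \in \phi(i)$ one has $\mathbf{p} \in \phi(m+k)$, so $\Ext^{k-1}_R(R/\mathbf{p}, Y) = 0$. Thus condition 2 is exactly the characterization of $Y[-m-1] \in \mathcal{U}_\phi^\perp$.

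The only delicate point is the bookkeeping step that consolidates the parameters $(i,s)$ into a single parameter $k$; this uses essentially only the fact that $\phi$ is a \emph{decreasing} map, but one must be careful not to confuse the direction of the inclusions between $\phi(i)$ and $\phi(m+k)$. Beyond this no real obstacle is expected, since the hard work of Lemma \ref{lem. sutileza} and of the AJS aisle description is already available.
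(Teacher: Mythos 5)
Your proposal is correct and follows essentially the same route as the paper: the equivalence of 2) and 3) is obtained by the same appeal to Lemma \ref{lem. sutileza}, and the equivalence of 1) and 2) is the same unwinding of $Y[-m-1]\in\mathcal{U}_{\phi}^{\perp}$ into Ext-vanishing conditions, with the decreasing character of $\phi$ used to collapse the shift parameter exactly as in the paper's chain of double implications. The only cosmetic difference is that you track the extra shift $s\geq 0$ from the generating set explicitly, while the paper absorbs it at once via the description of $\mathcal{U}_{\phi}^{\perp}$.
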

\begin{proof}
1) $\Longleftrightarrow$ 2) Note that $Y[-m]\in \mathcal{U}_{\phi}$. Bearing in mind that $\phi$ is a decreasing map, we then have the following chain of double implications:\\

$Y\in \T\F_{m} \Longleftrightarrow Y[-m]\in \Hp \Longleftrightarrow Y[-m] \in \mathcal{U}_{\phi}^{\perp}[1] \Longleftrightarrow Y[-m-1]\in \mathcal{U}_{\phi}^{\perp} \Longleftrightarrow  \linebreak \Hom_{\D(R)}(R/\mathbf{p}[-i],Y[-m-1])=0, \text{ for all }i \in \text{ \Z  \ and }\mathbf{p} \in \phi(i) \Longleftrightarrow \Ext_{R}^{\hspace{0.02 cm} i-m-1}(R/\mathbf{p},Y)=0, \text{ for all } i\in \text{ \Z \ and }\mathbf{p}\in \phi(i).$ \\

By the nonexistence of negative extensions between modules, putting $k=i-m$ for $i>m$, we conclude that $Y\in\T\F_{m}$ if, and only if, $\Ext^{\hspace{0.02 cm}k-1}_{R}(R/\mathbf{p},Y)=0$, for all $k>0$ and $\mathbf{p}\in \phi(m+k)$. \\

2) $\Longleftrightarrow$ 3) is a direct consequence of the previous lemma.
\end{proof}

\begin{examples}\label{exam. TF}\rm{
Suppose that the filtration $\phi$ has the property that 
$$\dots \supseteq \phi(0) \supsetneq \phi(1)=\phi(2)=\dots=\emptyset$$
It follows from the previous proposition that $\T\F_{0}=\T_{0}$ and $\T\F_{-1}=\T_{-1}\cap \F_{0}$. }
\end{examples}

\begin{corollary}\label{cor. TF closed cop rod. and lim.}
For each integer $m$, the class $\T\F_{m}$ is closed under taking coproducts, direct limits and kernels of epimorphisms in $R$-Mod.
\end{corollary}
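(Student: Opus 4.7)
The plan is to work from the characterization provided by Proposition~\ref{pro. stalks on the heart}: a module $Y$ belongs to $\T\F_m$ precisely when $Y\in\T_m$ and $\Ext^{k-1}_R(R/\p,Y)=0$ for every $k\geq 1$ and every $\p\in\phi(m+k)$. Two standing facts will carry most of the argument. First, $\T_m=\T_{\phi(m)}$ is a hereditary torsion class in $R\Mode$ and hence is closed under coproducts, subobjects, quotients, extensions and direct limits. Second, since $R$ is commutative Noetherian, each $R/\p$ is finitely presented and in fact admits a resolution by finitely generated projective $R$-modules, so the functors $\Ext^{i}_R(R/\p,?)$ commute with arbitrary coproducts and with direct limits for every $i\geq 0$.

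With these granted, I would first dispatch the coproduct and direct-limit cases in one sweep: for a family (resp. direct system) $(Y_i)_{i\in I}$ in $\T\F_m$, the coproduct (resp. direct limit) lies in $\T_m$ by the first fact, and all the relevant $\Ext$-groups vanish by commuting $\coprod$ (resp. $\varinjlim$) with $\Ext^{k-1}_R(R/\p,?)$ using the second fact. No further ingredient is needed here.

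The technical heart of the proof will be closure under kernels of epimorphisms. Given a short exact sequence $0\to K\to Y\to Y'\to 0$ in $R\Mode$ with $Y,Y'\in\T\F_m$, the membership $K\in\T_m$ is immediate from subobject-closure of $\T_m$. For the Ext-vanishing I will feed the sequence into the long exact sequence of $\Ext_R(R/\p,?)$ and look at the fragment
$$\Ext^{k-2}_R(R/\p,Y')\longrightarrow \Ext^{k-1}_R(R/\p,K)\longrightarrow \Ext^{k-1}_R(R/\p,Y).$$
The right-hand term vanishes by hypothesis on $Y$. The point where the argument could break down is the left-hand term, since the Ext-vanishing supplied by $Y'\in\T\F_m$ is indexed by $\phi(m+k')$ for $k'\geq 1$, not by $\phi(m+k)$. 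The decisive observation --- and the only place where the monotonicity of $\phi$ is used in an essential way --- is the inclusion $\phi(m+k)\subseteq \phi(m+k-1)$, which allows one to re-read $\p\in\phi(m+k)$ as $\p\in\phi(m+(k-1))$ and conclude, for $k\geq 2$, that the left-hand term vanishes too.

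The boundary case $k=1$ has to be handled separately but is essentially free: one only needs $\Hom_R(R/\p,K)=0$ for $\p\in\phi(m+1)$, and this follows at once from the monomorphism $K\hookrightarrow Y$ together with $\Hom_R(R/\p,Y)=0$, which holds because $Y\in\T\F_m$. Assembling these pieces, Proposition~\ref{pro. stalks on the heart} then yields $K\in\T\F_m$, completing the plan.
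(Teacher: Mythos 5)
Your proposal is correct and follows essentially the same route as the paper: both proofs reduce everything to the characterization in Proposition~\ref{pro. stalks on the heart}, handle coproducts and direct limits by commuting $\Ext^i_R(R/\p,?)$ past them (using that $R$ is Noetherian), and settle kernels of epimorphisms via the long exact sequence of $\Ext$ together with the inclusion $\phi(m+k)\subseteq\phi(m+k-1)$. Your explicit treatment of the boundary case $k=1$ is a minor presentational difference (the paper absorbs it into the convention $\Ext^{-1}=0$) and changes nothing of substance.
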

\begin{proof}
Due to the fact that $R$ is a Noetherian ring, for each $\mathbf{p}\in \Spec(R)$, we have that all syzygies of $R/\mathbf{p}$ are finitely presented (=finitely generated) $R$-modules. We then get that the functor $\Ext^{\hspace{0.02 cm}k}_{R}(R/\mathbf{p},?)$ commutes with coproducts and direct limits in $R$-Mod. By proposition \ref{pro. stalks on the heart}, we deduce that $\T\F_{m}$ is closed under taking coproducts and direct limits in $R$-Mod.\\

On the other hand, let now $\xymatrix{0 \ar[r] & X \ar[r] & Y \ar[r] & Z \ar[r] & 0}$ be an exact sequence in $\Mod R$, where $Y$ and $Z$ are in $\T\F_{m}.$ Note that $X\in \T_{m}$, since $(\T_{m},\F_{m})$ is a hereditary torsion pair. Let us fix $k\geq 1$ and $\mathbf{p}\in \phi(m+k)\subseteq\phi(m+k-1)$. If we apply the long exact sequence of $\Ext(R/\mathbf{p},?)$ to the exact sequence above, we get:
$$\xymatrix{0 \ar[r] & \Hom_{R}(R/\mathbf{p},X) \ar[r] & \Hom_{R}(R/\mathbf{p}, Y) \ar[r] & \Hom_{R}(R/\mathbf{p},Z) \ar[r] & \cdots \\
\cdots \ar[r] & \Ext^{k-2}_{R}(R/\mathbf{p},Z) \ar[r] & \Ext^{k-1}_{R}(R/\mathbf{p},X) \ar[r] & \Ext^{k-1}_{R}(R/\mathbf{p},Y) \ar[r] & \cdots}$$
From proposition \ref{pro. stalks on the heart} we then get that $\Ext^{k-2}_{R}(R/\mathbf{p},Z)=0=\Ext^{k-1}_{R}(R/\mathbf{p},Y)$, and hence $\Ext^{k-1}_{R}(R/\mathbf{p},X)=0$. Using again proposition \ref{pro. stalks on the heart}, we obtain that $X \in \T\F_{m}$.
\end{proof}

\begin{proposition}\label{prop. limits stalk}
Let $m$ be any integer, let $(Y_{\lambda})$ be a direct system in $\T\F_{m}$ and \linebreak $g:\coprod Y_{\lambda \mu} \flecha \coprod Y_{\lambda}$ be the colimit-defining morphism. The following assertions hold:
\begin{enumerate}
\item[1)] $\Ker(g)\in \T\F_{m}$;
\item[2)] $\varinjlim_{\Hp}(Y_{\lambda}[-m])=(\varinjlim Y_{\lambda})[-m]$.
\end{enumerate} 
\end{proposition}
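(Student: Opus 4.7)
For part 1, my plan is to exploit the closure properties of $\T\F_m$ established in Corollary \ref{cor. TF closed cop rod. and lim.}. First I would note that $\coprod Y_{\lambda\mu}$ and $\coprod Y_\lambda$ both lie in $\T\F_m$ (closure under coproducts) and that $\varinjlim Y_\lambda = \Coker(g)$ lies in $\T\F_m$ (closure under direct limits). Then, factoring $g = \iota \circ p$ through $\Imagen(g)$, I would apply closure of $\T\F_m$ under kernels of epimorphisms first to the short exact sequence $0 \to \Imagen(g) \to \coprod Y_\lambda \to \varinjlim Y_\lambda \to 0$ to obtain $\Imagen(g) \in \T\F_m$, and then to $0 \to \Ker(g) \to \coprod Y_{\lambda\mu} \to \Imagen(g) \to 0$ to obtain $\Ker(g) \in \T\F_m$.

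For part 2, I would pass to $\D(R)$ and compute the cokernel of $g[-m]$ in $\Hp$ via the cohomological functor $\tilde{H} : \D(R) \to \Hp$. Since every compactly generated t-structure is smashing, coproducts in $\Hp$ of objects of $\Hp$ coincide with those in $\D(R)$; hence the colimit-defining morphism for $(Y_\lambda[-m])$ in $\Hp$ is exactly $g[-m]$ and so
$$\varinjlim_{\Hp}(Y_\lambda[-m]) \; = \; \Coker_{\Hp}(g[-m]) \; = \; \tilde{H}(\Cone(g)[-m]).$$
The octahedral axiom applied to the factorization $g = \iota \circ p$ yields a triangle $\Ker(g)[1] \to \Cone(g) \to \varinjlim Y_\lambda \to \Ker(g)[2]$ in $\D(R)$; shifting by $[-m]$ gives the triangle
$$\Ker(g)[1-m] \; \longrightarrow \; \Cone(g)[-m] \; \longrightarrow \; (\varinjlim Y_\lambda)[-m] \; \longrightarrow \; \Ker(g)[2-m].$$
By part 1 we have $\Ker(g)[-m] \in \Hp$, and using the standard fact that $\tilde{H}(X[k]) = 0$ for $X \in \Hp$ and $k \neq 0$, I would deduce $\tilde{H}(\Ker(g)[1-m]) = 0 = \tilde{H}(\Ker(g)[2-m])$. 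Applying $\tilde{H}$ to the displayed triangle then produces an isomorphism $\tilde{H}(\Cone(g)[-m]) \cong (\varinjlim Y_\lambda)[-m]$, which together with the previous display gives the claimed equality.

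The main subtlety to watch for is that $\Cone(g)[-m]$ typically does \emph{not} itself lie in $\Hp$, since its out-of-heart contribution $\Ker(g)[1-m]$ need not belong to $\Hp$; this is precisely the difference with the Happel--Reiten--Smal\o\ setting, where the cone always sits one degree away from the heart. The key point is that applying $\tilde{H}$ annihilates that contribution thanks to part 1, which explains why the two assertions must be proved together: part 1 supplies exactly the vanishing needed in the long exact sequence for part 2.
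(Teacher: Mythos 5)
Your proof is correct and follows essentially the same route as the paper: part 1 is obtained exactly as in the text by combining Corollary \ref{cor. TF closed cop rod. and lim.} with the factorization of $g$ through its image, and part 2 is the same BBD computation of $\Coker_{\Hp}(g[-m])$ from the truncation triangle $\Ker(g)[1-m]\flecha \Cone(g)[-m]\flecha (\varinjlim Y_\lambda)[-m]\flecha$ of the cone. The only cosmetic difference is that you phrase the final identification via the long exact sequence of $\tilde{H}$ rather than reading it off directly from the explicit construction of cokernels in the heart; the content is identical.
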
 
\begin{proof}
Since $\coprod Y_{\lambda \mu}$, $\coprod Y_{\lambda}$ and $\varinjlim{Y_{\lambda}}$ are in $\T\F_{m}$, it follows corollary \ref{cor. TF closed cop rod. and lim.} that $\Imagen{g}$ and $\Ker(g)$ are also in $\T\F_{m}$. We then consider the following diagram

\begin{footnotesize}
$\xymatrix{&&&&\Ker(g)[-m+1]=H^{m-1}(\Cone(g[-m]))[-m+1] \ar[d]\\ \coprod Y_{\lambda \mu}[-m] \ar[rr]^{g[-m]} & & \coprod Y_{\lambda}[-m] \ar[rr] & & \Cone(g[-m]) \ar[d] \ar[r]^{+} & \\ &&&& \varinjlim{Y_{\lambda}}[-m]=H^{m}(\Cone(g[-m]))[-m] \ar[d]^{+} \\ &&&&&}$
\end{footnotesize}
By the explicit construction of cokernels in $\Hp$ (\cite{BBD}), we deduce that $\varinjlim_{\Hp}(Y_{\lambda}[-m])=\Coker_{\Hp}(g[-m])\cong (\varinjlim{Y_{\lambda}})[-m].$
\end{proof} 

\begin{remark}\rm{
Due to the fact that $R$-Mod is an AB5 abelian category, the previous proposition implies that the direct limits of short exact sequences whose terms are stalks of the heart $\Hp$ (at the same degree) is exact.}
\end{remark}

\begin{remark}\label{rem. F closed under direct limits}\rm{
Since the Gabriel topology associated to $(\T_m,\F_m)$ consists of finitely generated ideals, the class $\F_m$ is closed under taking direct limits, for each integer $m$. Of course, this same property holds for $\T_m$. Then the associated torsion radical (resp. coradical) functor $t_{m}:R \text{-Mod} \flecha R\text{-Mod}$ (resp. $(1:t_{m}): R\text{-Mod} \flecha R\text{-Mod}$) preserves direct limits.}
\end{remark} 

\subsection{$\Hp$ has a generator}
In this subsection we will prove that $\Hp$ always has a generator. In the sequel $\phi$ is a sp-filtration of $\Spec(R)$.

\begin{notation}\rm{
For each $k\in \mathbb{Z}$, we will denoted by $\phi_{\leq k}$ the sp-filtration given by:
\begin{enumerate}
\item[1)] $\phi_{\leq k}(j):=\phi(j)$,\hspace{0.4 cm} if \hspace{0.4 cm} $j \leq k$;
\item[2)] $\phi_{\leq k}(j):=\emptyset$, \hspace{0.7 cm} if \hspace{0.4 cm} $j>k$.
\end{enumerate}}
\end{notation}

\begin{lemma}\label{lem. menor igual dos}
Let $\phi: \mathbb{Z} \flecha \mathcal{P}(\Spec(R))$ be a sp-filtration of $\Spec(R)$. \\

If $X \in \mathcal{H}_{\phi}$ and $k$ is a integer, then $\tau^{\leq k}(X)\in \mathcal{H}_{\phi_{ \leq k+j}}$ for each $j \in \{ 0,1,2\}$.
\end{lemma}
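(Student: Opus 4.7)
The plan is to verify directly the two defining conditions of $\mathcal{H}_{\phi_{\leq k+j}} = \mathcal{U}_{\phi_{\leq k+j}} \cap \mathcal{U}_{\phi_{\leq k+j}}^{\perp}[1]$ for the complex $\tau^{\leq k}(X)$, using the homological descriptions of aisle and co-aisle given by Theorem \ref{teo. main AJS}. The aisle membership is routine: one has $H^{i}(\tau^{\leq k}(X)) = H^{i}(X)$ for $i \leq k$ and $0$ otherwise, and for $i \leq k \leq k+j$ the definition of $\phi_{\leq k+j}$ gives $\phi_{\leq k+j}(i) = \phi(i)$, which already contains $\Supp H^{i}(X)$ by the hypothesis $X \in \mathcal{H}_\phi$.

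The substantive step is the co-aisle condition, namely $\mathbf{R}\Gamma_{\phi_{\leq k+j}(i)}(\tau^{\leq k}(X)[-1]) \in \D^{>i}(R)$ for every $i \in \mathbb{Z}$. For $i > k+j$ this is automatic since $\phi_{\leq k+j}(i) = \emptyset$, so the work lies entirely in the range $i \leq k+j$, where $\phi_{\leq k+j}(i) = \phi(i)$. I will extract the needed information from the canonical truncation triangle
$$\tau^{\leq k}(X)[-1] \longrightarrow X[-1] \longrightarrow \tau^{>k}(X)[-1] \xrightarrow{\;+\;}$$
by applying $\mathbf{R}\Gamma_{\phi(i)}$ and reading off the long exact sequence of cohomologies. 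The middle term lands in $\D^{>i}(R)$ because $X \in \mathcal{H}_\phi \subseteq \mathcal{U}_\phi^{\perp}[1]$ forces $X[-1] \in \mathcal{U}_\phi^{\perp}$. For the right-hand term, I use that $\tau^{>k}(X)[-1]$ is already in $\D^{\geq k+2}(R)$, combined with the fact that $\mathbf{R}\Gamma_{\phi(i)}$ preserves each $\D^{\geq n}(R)$ (since $\Gamma_{Z}$ is a left-exact subfunctor of the identity, and any $K$-injective resolution of an object of $\D^{\geq n}(R)$ may be chosen concentrated in degrees $\geq n$).

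Chasing the exact sequence in cohomology then yields isomorphisms $H^{s}(\mathbf{R}\Gamma_{\phi(i)}(\tau^{\leq k}(X)[-1])) \cong H^{s-1}(\mathbf{R}\Gamma_{\phi(i)}(\tau^{>k}(X)[-1]))$ in the range $s \leq i$, and the second group vanishes as soon as $i \leq k+2$. This is the unique point where the numerical hypothesis is used, and it is precisely what restricts the statement to $j \in \{0,1,2\}$: the inequality $i \leq k+j$ guarantees $i \leq k+2$ and closes the argument, whereas $j = 3$ would allow $i = k+3$ and permit a possibly nonzero cohomology in degree $i$.

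I expect the main obstacle to be conceptual rather than technical: recognising that the truncation triangle, together with the left-exactness of $\Gamma_{\phi(i)}$, shifts the entire problem onto controlling $\mathbf{R}\Gamma_{\phi(i)}(\tau^{>k}(X)[-1])$, and that the two degrees of freedom provided by the shift $[-1]$ and by the preservation of $\D^{\geq n}$ combine to yield exactly the bound $j \leq 2$. Once this observation is in place, no further calculation is required.
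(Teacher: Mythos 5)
Your proof is correct and follows essentially the same route as the paper: both arguments hinge on the truncation triangle linking $\tau^{\leq k}(X)[-1]$, $X[-1]$ and $\tau^{>k}(X)[-1]$, control the middle term by $X\in\mathcal{H}_\phi$ and the outer term by its membership in $\D^{\geq k+2}(R)$, and extract the bound $j\leq 2$ from exactly the same degree count. The only cosmetic difference is that the paper tests the co-aisle condition against the compact generators $R/\mathbf{p}[-m]$ with $\mathbf{p}\in\phi(m)$, whereas you use the equivalent $\mathbf{R}\Gamma_{\phi(i)}$-characterization from Theorem \ref{teo. main AJS}.
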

\begin{proof}
Let us fix $X\in \mathcal{H}_{\phi}$ and let $k$ be a integer. It is clear that $\tau^{\leq k}(X) \in \mathcal{U}_{\phi_{\leq k+j}}$, for each $j$ integer. On the other hand, we consider the next triangle:

$$\xymatrix{\tau^{>k}(X)[-2] \ar[r] & \tau^{\leq k}(X)[-1] \ar[r] & X[-1] \ar[r]^{\hspace{0.55 cm}+}&}$$
Applying the cohomological functor $\Hom_{\D(R)}(R/\mathbf{p}[-m],?)$ to the previous triangle, we get the following exact sequence:
$$\xymatrix{\cdots \ar[r] & \Hom_{\D(R)}(R/\mathbf{p}[-m],\tau^{>k}(X)[-2])\ar[r] & \Hom_{\D(R)}(R/\mathbf{p}[-m],\tau^{\leq k}(X)[-1])  \ar[d] \\ & \cdots & \Hom_{\D(R)}(R/\mathbf{p}[-m],X[-1])\ar[l] }$$
Note that $\Hom_{\D(R)}(R/\mathbf{p}[-m], \tau^{\leq k}(X)[-1])=0$, for each $\mathbf{p}\in \phi(m)$ and $m \leq k+2$, since  $\Hom_{\D(R)}(R/\mathbf{p}[-m],\tau^{>k}(X)[-2])=0$ and $\Hom_{\D(R)}(R/\mathbf{p}[-m],X[-1])=0$, because \linebreak $X\in \mathcal{H}_{\phi}=\mathcal{U}_{\phi} \cap \mathcal{U}_{\phi}^{\perp}[1]$. This shows that $\tau^{\leq k}(X)\in \mathcal{U}_{\phi_{\leq k+j}}^{\perp}[1]$, for each $j\in \{0, 1,2\}$. 
\end{proof}




\begin{lemma}\label{lem. U'-->U-->H}
Let $\xymatrix{U^{'} \ar[r]^{f} & U \ar[r]^{g} & Y \ar[r]^{+} & }$ be a triangle in $\D(R)$, where $U^{'},U\in \mathcal{U}_{\phi}$ and $Y \in \Hp$. Then $\xymatrix{0 \ar[r] & L(U^{'}) \ar[r]^{L(f)} & L(U) \ar[r]^{\hspace{0.15 cm}L(g)} & Y \ar[r] & 0}$ is an exact sequence in $\Hp$, where $L:\mathcal{U}_{\phi} \flecha \Hp$ is the left adjoint of the inclusion $\Hp \monic \mathcal{U}_{\phi}$.
\end{lemma}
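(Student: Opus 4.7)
The plan is to apply the cohomological functor $\tilde H\colon \mathcal{D}(R)\to \Hp$ of Remark \ref{properties t-structure}.6 to the given triangle and read off the desired short exact sequence as a piece of the resulting long exact sequence in $\Hp$. Concretely, from $U'\stackrel{f}{\to} U\stackrel{g}{\to} Y\to U'[1]$ we obtain
\[ \cdots\to \tilde H(Y[-1])\to \tilde H(U')\stackrel{\tilde H(f)}{\longrightarrow} \tilde H(U)\stackrel{\tilde H(g)}{\longrightarrow} \tilde H(Y)\to \tilde H(U'[1])\to\cdots \]

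The main step is to verify that the two outer terms displayed vanish. On the left, since $Y\in \Hp\subseteq \mathcal{U}_\phi^{\perp}[1]$ we have $Y[-1]\in \mathcal{U}_\phi^{\perp}$; a standard adjunction argument then forces $\tau_{\mathcal{U}_\phi}(Y[-1])=0$, and applying the formula $\tilde H=\tau^{\mathcal{U}_\phi^{\perp}[1]}\circ \tau_{\mathcal{U}_\phi}$ of Remark \ref{properties t-structure}.6 gives $\tilde H(Y[-1])=0$. On the right, the aisle is closed under positive shifts, so $U'[1]\in \mathcal{U}_\phi$; by Lemma \ref{lemma de adjunctions}.1 one has $\tilde H(U'[1])\cong \tau^{\mathcal{U}_\phi^{\perp}}(U')[1]$, and $\tau^{\mathcal{U}_\phi^{\perp}}(U')=0$ since $U'\in \mathcal{U}_\phi$, hence $\tilde H(U'[1])=0$. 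Finally, $\tilde H(Y)\cong Y$ because $Y\in \Hp$, and by Lemma \ref{lemma de adjunctions}.1 the restriction $\tilde H|_{\mathcal{U}_\phi}$ is naturally isomorphic to the left adjoint $L$ of the inclusion $\Hp \hookrightarrow \mathcal{U}_\phi$.

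Putting everything together, the displayed long exact sequence collapses to
\[ 0\to L(U')\stackrel{L(f)}{\longrightarrow} L(U)\stackrel{L(g)}{\longrightarrow} Y\to 0 \]
in $\Hp$, which is the desired exact sequence. No essential obstacle is anticipated: the argument reduces to the cohomological nature of $\tilde H$ together with the t-structure axioms (in particular, $\mathcal{U}_\phi[1]\subseteq \mathcal{U}_\phi$). The only point requiring some attention is the identification of $\tilde H(f),\tilde H(g)$ with $L(f),L(g)$ under the natural isomorphism $\tilde H|_{\mathcal{U}_\phi}\cong L$, but this is automatic from naturality.
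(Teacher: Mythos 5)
Your proof is correct, but it takes a different route from the paper's. You run the triangle through the cohomological functor $\tilde H$ of Remarks \ref{properties t-structure}(6) and kill the two outer terms of the resulting long exact sequence: $\tilde H(Y[-1])=0$ because $Y[-1]\in\mathcal{U}_\phi^{\perp}$ forces $\tau_{\mathcal{U}_\phi}(Y[-1])=0$, and $\tilde H(U'[1])\cong\tau^{\mathcal{U}_\phi^{\perp}}(U')[1]=0$ because $U'\in\mathcal{U}_\phi$; the identification $\tilde H|_{\mathcal{U}_\phi}\cong L$ from Lemma \ref{lemma de adjunctions}(1) then yields the short exact sequence, and the compatibility of the maps is automatic from naturality, as you say. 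The paper instead argues inside $\D(R)$ with the octahedral axiom: it completes $f$ and the truncation triangle of $U$ to a commutative octahedron, observes that the third vertex $M$ lies in $\mathcal{U}_\phi^{\perp}[1]$ because $jL(U)$ and $j(Y)$ do, identifies $M\cong\tau^{>}_{\phi}(U'[-1])[1]\cong jL(U')$, and concludes that the relevant triangle has all three vertices in $\Hp$, hence is a short exact sequence there. Your argument is shorter and delegates the diagram chase to the (BBD) fact that $\tilde H$ is cohomological; the paper's argument is more explicit, exhibiting the kernel term concretely as $jL(U')$ together with the actual triangle in $\D(R)$ realizing the sequence, which is the form in which the lemma is used later (e.g.\ in the proof of Proposition \ref{prop. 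H has a generator}). Both are complete proofs of the statement.
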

\begin{proof}
The octahedron axiom and the definition of $L$ (see lemma \ref{lemma de adjunctions}) give a commutative diagram in $\D(R)$:
$$\xymatrix{&&&&\\ &&& M \ar[dd] \ar@{-->}[ur]^{+} & \\ & U^{'} \ar[dr] \ar@{-->}[urr] && \\ \tau^{\leq}_{\phi}(U[-1])[1] \ar[rr] \ar@{-->}[ur] & & U \ar[r] \ar[dr] & jL(U) \ar[r]^{\hspace{0.4 cm}+} \ar[d] & \\ &&& j(Y) \ar[d]^{+} \ar[dr]^{+} & \\ &&&&}$$

From the right vertical triangle we deduce that $M\in \mathcal{U}_{\phi}^{\perp}[1]$, since $jL(U)$ and $j(Y)$ are in $\Hp=\mathcal{U}_{\phi} \cap \mathcal{U}_{\phi}^{\perp}[1]$. This implies that $M\cong \tau^{>}_{\phi}(U^{'}[-1])[1]$, by using the dotted triangle. By definition of $L$, we then get that $M\in \Hp$ and $M\cong jL(U^{'})$. Then the right vertical triangle has its three vertices in $\Hp$, which ends the proof. 
\end{proof}

\vspace{0.3 cm}

The following result, tells us that that heart of every compactly generated t-structure in $\D(R)$ has a generator.

\begin{proposition}\label{prop. H has a generator}
For each $\phi$ sp-filtration of $\Spec(R)$, $\Hp$ has a generator.
\end{proposition}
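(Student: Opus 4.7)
My plan is to adapt the technique of lemma \ref{generador de Ht} to the sp-filtration setting, building a generator of $\Hp$ as a coproduct over a skeletally small class of representatives of complexes. The first, and most delicate, step is to establish a representative theorem: using the AJS descriptions $\mathcal{U}_\phi=\{X:\Supp(H^j(X))\subseteq\phi(j)\text{ for all }j\}$ and $\mathcal{U}_\phi^\perp=\{Y:\mathbf{R}\Gamma_{\phi(j)}(Y)\in\D^{>j}(R)\text{ for all }j\}$ from theorem \ref{teo. main AJS}, together with the Noetherian hypothesis on $R$, I would show that every $M\in\Hp$ is quasi-isomorphic in $\D(R)$ to a complex $M'$ whose entries are drawn from a fixed pool of $R$-modules, for instance injective hulls of finitely generated modules adjusted by the torsion radicals $\Gamma_{\phi(j)}$.

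Once such representatives are in hand, local noetherianity of $R\text{-Mod}$ would drive the second step: the injective hulls of finitely generated modules, together with the admissible submodules appearing in the construction, form a set up to isomorphism, so the class of admissible representatives $\mathcal{S}_\phi$ is skeletally small, and one can form a legitimate object $G:=\underset{S\in\mathcal{S}_\phi}{\coprod}S$ of the AB3 abelian category $\Hp$ (proposition \ref{AB3 t-structure}).

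The third step is to verify that $G$ is a generator: for each $M\in\Hp$ I would pick a representative $M'\in\mathcal{S}_\phi$ with $M'\cong M$ to exhibit a tautological nonzero morphism $M'\monic G$, and then run a diagram chase of the type carried out in lemma \ref{generador de Ht} to upgrade this pointwise nonvanishing to surjectivity of the canonical evaluation $G^{(\Hom_{\Hp}(G,M))}\flecha M$. The key auxiliary input is the vanishing criterion implicit in example \ref{example datum}, namely that an object of $\Hp$ on which $\Hom_{\D(R)}(R/\p[-i],-)$ vanishes for every $i\in\mathbb{Z}$ and $\p\in\phi(i)$ must itself be zero; this is what rules out any residual cokernel.

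The main obstacle is the first step, namely securing a uniform and sufficiently flexible choice of representative that applies to arbitrary sp-filtrations $\phi$; the remaining two steps are routine bookkeeping once it is in place. This is consistent with the fact that in the subsequent theorems of the chapter, where $\phi$ is restricted to be finite or left bounded, the representative theorem becomes substantially more tractable, although only the bare existence of a generator is needed at the present stage.
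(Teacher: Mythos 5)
There is a genuine gap, and it lies exactly where you locate the ``delicate'' step: your representative theorem, as stated, cannot hold. If every object of $\Hp$ were isomorphic in $\D(R)$ to a complex whose entries come from a fixed set-sized pool (injective hulls of finitely generated modules and the admissible submodules occurring in the construction), then $\Hp$ would have only a set of isomorphism classes of objects. That is false in general: $\Hp$ is closed under arbitrary coproducts (proposition \ref{AB3 t-structure}), and it contains stalks $T[-m]$ for every $T\in\T\F_m$, a class closed under arbitrary coproducts (corollary \ref{cor. TF closed cop rod. and lim.}); already for the canonical t-structure ($\phi(j)=\Spec(R)$ for $j\leq 0$, $\phi(j)=\emptyset$ for $j>0$) the heart is $R$-Mod, whose objects have unbounded cardinality and whose injective resolutions involve arbitrary coproducts of indecomposable injectives, not hulls of finitely generated modules. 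So no skeletally small class $\mathcal{S}_\phi$ can contain an isomorphic copy of every $M\in\Hp$, and the coproduct $G=\underset{S\in\mathcal{S}_\phi}{\coprod}S$ you want to form does not exist with the property you need. (Note also that if such an $\mathcal{S}_\phi$ did exist, your third step would be vacuous: each summand injection into a coproduct splits in an additive category, so every $M$ would be a direct summand of $G$ and no diagram chase or conservativity argument via example \ref{example datum} would be required. The entire weight of the proposal rests on the impossible first step.)

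The repair is to drop the demand that the set contain representatives of all objects and instead produce a set of \emph{small} objects of which every object of $\Hp$ is an epimorphic image of a coproduct; this is what the paper does. Concretely, for $X\in\Hp$ one considers the family $\Delta$ of subcomplexes $Y\subseteq X$ in $\C(R)$ with finitely generated components, with $Y\in\mathcal{U}_\phi$ and with $H^k(Y)\monic H^k(X)$ for all $k$; Noetherianity of $R$ and heredity of the torsion pairs $(\T_k,\F_k)$ give $\underset{Y\in\Delta}{\cup}Y^k=X^k$ for every $k$, so the canonical map $\underset{Y\in\Delta}{\coprod}Y\flecha X$ is an epimorphism in $\C(R)$ which is surjective on homology, and its kernel $N$ has all homologies in the classes $\T_k$, hence lies in $\mathcal{U}_\phi$. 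The key structural input is then lemma \ref{lem. U'-->U-->H}: applying the left adjoint $L\colon\mathcal{U}_\phi\flecha\Hp$ of the inclusion to the triangle $N\flecha\underset{Y\in\Delta}{\coprod}Y\flecha X\flecha N[1]$ yields a short exact sequence $0\flecha L(N)\flecha\underset{Y\in\Delta}{\coprod}L(Y)\flecha X\flecha 0$ in $\Hp$. Since complexes with finitely generated components in $\mathcal{U}_\phi$ have only a set of images $L(Y)$ up to isomorphism, their coproduct is a generator. Your instinct to imitate lemma \ref{generador de Ht} and to use the AJS description of theorem \ref{teo. main AJS} is reasonable, but the mechanism there is likewise ``every object is a quotient of a coproduct of small approximations'' (via direct limits of finitely indexed subcomplexes), not ``every object has a small model''; the latter is what breaks.
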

\begin{proof}
Let $\mathcal{U}_{\phi}^{fg}$ be the class of all complexes quasi-isomorphic to a complex $X$ in $\mathcal{U}_{\phi}$, such that $X^{i}$ is a finitely generated $R$-module for each $i\in \mathbb{Z}$. Note that the isoclasses of objects $L(X)$ such that $X\in \mathcal{U}^{fg}$, up to isomorphism in $\Hp$, form a set. We will prove that such set form a set of generators of $\Hp$, so that the corresponding coproduct is a generator of $\Hp$. \\

Let $(X,d)\in \mathcal{H}_{\phi}$ be any object. Our goal is to prove that it is an epimorphic image in $\Hp$ of a (set-indexed) coproduct of objects of the form $L(Y)$, with $Y\in \mathcal{U}_{\phi}^{fg}$. In order to do that, we first consider the set $\Delta$ of all subcomplexes $Y$ of $X$ (in the abelian category $\C(R)$) such that $Y^{k}$ is finitely generated, for all $k\in $ \Z, that $Y\in \mathcal{U}_{\phi}$, when we view $Y$ as an object of $\D(R)$, and that the inclusion $\iota_{Y}:Y \monic X$ induces a monomorphism $H^{k}(Y) \monic H^{k}(X)$, for each $k \in \mathbb{Z}$. \\

We claim that $\underset{Y\in \Delta}{\cup} Y^{k}=X^{k}$, for each $k\in \mathbb{Z}$. Let us fix an integer $k$ and let $z\in X^{k}$. Put $Y^{k}:=Rz$ and $Y^{k+1}:=Rd^{k}(z)$, we get that $\Imagen(d^{k-1}) \cap Y^{k}$ is a finitely generated $R$-module, since it is a submodule of $Y^{k}$ and $R$ is a Noetherian ring. Hence there exists a finitely generated submodule $Y^{k-1}$ of $X^{k-1}$ such that $d^{k-1}(Y^{k-1})=\Imagen(d^{k-1})\cap Y^{k}$. By using this argument in a recursive way, we obtain a subcomplex $Y$ of $X$ concentrated in degrees less or equal to $k+1$, whose homology in degree $k+1$ vanishes, so that $Y\in \D^{\leq k}(R)$, and the homology in degrees less or equal to $k$ is given by:

\begin{align*}
          H^{j}(Y)=\frac{\Ker(d^{j}) \cap Y^{j}}{d^{j-1}(Y^{j-1})}=\frac{\Ker(d^{j})\cap Y^{j}}{\Imagen(d^{j-1})\cap Y^{j}} &= \frac{\Ker(d^{j}) \cap Y^{j}}{\Imagen(d^{j-1})\cap \Ker(d^{j}) \cap Y^{j}} \\
&=    \frac{(\Ker(d^{j})\cap Y^{j})+\Imagen(d^{j-1})}{\Imagen(d^{j-1})}       \\
     &\leq \frac{\Ker(d^{j})}{\Imagen(d^{j-1})}=H^{j}(X)\in \T_{j}  \\
\end{align*}

It follows that $Y\in \Delta$, since $(\T_{j},\F_{j})$ is a hereditary torsion pair, for each integer $j$. Our claim follows due the fact that $z\in Y^{k}$. Consider now the epimorphism \linebreak $p:\underset{Y\in \Delta}{\coprod} Y \flecha X$ in the abelian category $\C(R)$ whose components are the inclusions $\iota_{Y}:Y \monic X$. Now, for each integer $k$ and for each $Y\in \Delta$, we consider the following commutative diagram 
$$\xymatrix{0 \ar[r] & \Ker(d^{k})\cap Y^{k} \ar[r] \ar@{^(->}[d] & Y^{k} \ar[r] \ar@{^(->}[d] & d^{k}(Y^{k}) \ar@{^(->}[d] \ar[r] & 0 \\ 0 \ar[r] & \Ker(d^{k}) \ar[r] & X^{k} \ar[r] & \Imagen(d^{k}) \ar[r] & 0}$$
Taking now the coproduct of the upper rows in the diagrams above, we obtain the following commutative diagram, where $W:=\underset{Y\in \Delta}{\coprod} Y$.
\begin{footnotesize}
$$\xymatrix{0 \ar[r] & (\underset{Y\in \Delta}{\coprod}(\Ker(d^{k})\cap Y^{k}))=\Ker(d^{k}_{W}) \ar[rr] \ar[d]^{\rho_{k}} & &  \underset{Y\in \Delta}{\coprod}Y^{k} \ar[rr] \ar[d]^{p^{k}} & & (\underset{Y\in \Delta}{\coprod}d^{k}(Y^{k}))=\Imagen(d^{k}_{W})\ar[d] \ar[r] & 0 \\ 0 \ar[r] & \Ker(d^{k}) \ar[rr] && X^{k} \ar[rr] & & \Imagen(d^{k}) \ar[r] & 0}$$
\end{footnotesize}

It follows that the canonical morphism $\rho_{k}:\Ker(d^{k}_{W}) \flecha \Ker(d^{k})$ is an epimorphism, since $X^{k}=\underset{Y \in \Delta}{\cup}{Y^{k}}$. Now, we consider the following commutative diagram
$$\xymatrix{0 \ar[r] & \Imagen(d^{k-1}_{W}) \ar[r] \ar[d] & \Ker(d^{k}_{W}) \ar[r] \ar@{>>}[d]^(0.45){\rho_{k}} & H^{k}(W) \ar[r] \ar[d]^(0.45){H^{k}(p)} & 0\\ 0 \ar[r] & \Imagen(d^{k-1}) \ar[r] & \Ker(d^{k}) \ar[r] & H^{k}(X) \ar[r] & 0}$$ 
Using the snake lemma, we get that $H^{k}(p)$ is an epimorphism, for each integer $k$. If $N$ is the kernel of $p$ in $\C(R)$, then we get a triangle $\xymatrix{N \ar[r] & \underset{Y\in \Delta}{\coprod} Y \ar[r]^{\hspace{0.3 cm}p} & X \ar[r]^{+} & }$ in $\D(R)$. From the long exact sequence of homologies of this triangle, we deduce that the sequence

$$\xymatrix{0 \ar[r] & H^{k}(N) \ar[r] & \underset{Y\in \Delta}{\coprod} H^{k}(Y) \ar[r]^{\hspace{0.4 cm}H^{k}(p)} & H^{k}(X) \ar[r] & 0}$$
is exact, for each $k\in \mathbb{Z}$. It follows that $H^{k}(N)$ is in $\T_k$, for each $k\in \mathbb{Z}$, so that $N$ is in $\mathcal{U}_{\phi}$. By applying lemma \ref{lem. U'-->U-->H} and using the fact that a left adjoint functor preserves coproducts, we get a short exact sequence in $\Hp$:

$$\xymatrix{0 \ar[r] & L(N) \ar[r] & \underset{Y\in \Delta}{\coprod} L(Y) \ar[r] & X \ar[r] & 0}$$ 
This ends the proof since $\Delta \subset \mathcal{U}_{\phi}^{fg}$.
\end{proof}

\section{Localization}

In this section, we use proposition \ref{prop. Localization AJS}, for $S=R \setminus \mathbf{p}$, where $\mathbf{p}\in \Spec(R)$. In fact, we show that the condition AB5 of $\Hp$ is a local property.\\


All throughout this chapter, whenever necessary, we view $\Spec(R_{\mathbf{p}})$ as the subset of $\Spec(R)$ consisting of the $\mathbf{q}\in \Spec(R)$ such that $\mathbf{q}\subseteq \mathbf{p}$. The following result shows that hearts of t-structures behave rather well with respect to localization at primes (see proposition \ref{prop. Localization AJS}).

\begin{lemma}\label{lem. first localization}
Let $\phi$ be an sp-filtration of $\Spec(R)$, let $\mathbf{p}$ be a prime ideal of $R$ and let $\phi_{\mathbf{p}}$ denote the sp-filtration of $\Spec(R_{\mathbf{p}})$, defined by $\phi_{\mathbf{p}}(i):=\phi(i) \cap \Spec(R_{\mathbf{p}})$, for each $i\in \mathbb{Z}$. Let $\Hp$ and $\mathcal{H}_{\phi_{\mathbf{p}}}$ denote the hearts of the associated t-structures in $\D(R)$ and $\D(R_{\mathbf{p}})$, respectively. The functors $f^{*}$ and $f_{*}$ induce by restriction an adjoint pair of exact functors $(f^{*}:\Hp \flecha \mathcal{H}_{\phi_{\mathbf{p}}},f_{*}: \mathcal{H}_{\phi_{\mathbf{p}}} \flecha \Hp)$ whose count is a natural isomorphism (equivalently, $f_{*}$ is fully faithful).  
\end{lemma}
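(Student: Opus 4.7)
The plan is to lift the adjoint pair $(f^{*},f_{*})$ from module categories to derived categories, use proposition~\ref{prop. Localization AJS} to show that each adjoint preserves the relevant aisle and co-aisle (and hence restricts to the hearts), and finally verify exactness and the isomorphy of the counit by formal arguments.

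First, I would observe that $f^{*}=R_{\mathbf{p}}\otimes_{R}-$ is exact because $R_{\mathbf{p}}$ is a flat $R$-module, and $f_{*}$ is trivially exact. Hence their termwise extensions send contractible complexes to contractible complexes and preserve quasi-isomorphisms, so they descend to triangulated functors $f^{*}:\D(R)\to\D(R_{\mathbf{p}})$ and $f_{*}:\D(R_{\mathbf{p}})\to\D(R)$, which I continue to denote the same way. Since both are already exact, no derivation is needed, and the module-level adjunction $(f^{*},f_{*})$ lifts (via proposition~\ref{prop. adjoint derived}) to an adjoint pair of triangulated functors.

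Next, I would apply proposition~\ref{prop. Localization AJS} to $S=R\setminus\mathbf{p}$. Its conclusions $S^{-1}\mathcal{U}_{\phi}=\mathcal{U}_{\phi_{\mathbf{p}}}=\mathcal{U}_{\phi}\cap\D(R_{\mathbf{p}})$, together with the analogous equalities for $\mathcal{W}_{\phi}=\mathcal{U}_{\phi}^{\perp}[1]$, give the two key containments. On the one hand, $f^{*}(\mathcal{U}_{\phi})=S^{-1}\mathcal{U}_{\phi}\subseteq\mathcal{U}_{\phi_{\mathbf{p}}}$ and $f^{*}(\mathcal{U}_{\phi}^{\perp}[1])\subseteq\mathcal{U}_{\phi_{\mathbf{p}}}^{\perp}[1]$, so $f^{*}$ preserves aisle and co-aisle, and therefore $f^{*}(\Hp)\subseteq\mathcal{H}_{\phi_{\mathbf{p}}}$. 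On the other hand, viewing an object of $\D(R_{\mathbf{p}})$ inside $\D(R)$ via $f_{*}$ and using $\mathcal{U}_{\phi_{\mathbf{p}}}=\mathcal{U}_{\phi}\cap\D(R_{\mathbf{p}})$ and $\mathcal{W}_{\phi_{\mathbf{p}}}=\mathcal{W}_{\phi}\cap\D(R_{\mathbf{p}})$, I conclude that $f_{*}(\mathcal{U}_{\phi_{\mathbf{p}}})\subseteq\mathcal{U}_{\phi}$ and $f_{*}(\mathcal{U}_{\phi_{\mathbf{p}}}^{\perp}[1])\subseteq\mathcal{U}_{\phi}^{\perp}[1]$, hence $f_{*}(\mathcal{H}_{\phi_{\mathbf{p}}})\subseteq\Hp$. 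By restriction the triangulated adjunction then descends to an adjunction between $\Hp$ and $\mathcal{H}_{\phi_{\mathbf{p}}}$.

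Finally, I would verify exactness and the inversion of the counit. A short exact sequence in $\Hp$ comes from a distinguished triangle in $\D(R)$ whose three vertices lie in $\Hp$; applying the triangulated functor $f^{*}$ yields a triangle in $\D(R_{\mathbf{p}})$ whose three vertices lie in $\mathcal{H}_{\phi_{\mathbf{p}}}$, which by the description of short exact sequences in the heart of a t-structure is precisely a short exact sequence in $\mathcal{H}_{\phi_{\mathbf{p}}}$; the same argument applied to $f_{*}$ shows both restrictions are exact. For the counit, given any $Y\in\D(R_{\mathbf{p}})$ the canonical map $\varepsilon_{Y}\colon f^{*}f_{*}Y=R_{\mathbf{p}}\otimes_{R}Y\to Y$ is an isomorphism because $Y$ is already a complex of $R_{\mathbf{p}}$-modules, so $\varepsilon$ restricted to $\mathcal{H}_{\phi_{\mathbf{p}}}$ is a natural isomorphism, equivalently $f_{*}$ is fully faithful. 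The only real substance in the argument lies in invoking proposition~\ref{prop. Localization AJS}; everything else is formal.
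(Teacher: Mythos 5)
Your proposal is correct and follows essentially the same route as the paper: lift the exact adjoint pair $(f^{*},f_{*})$ to the derived categories, invoke proposition \ref{prop. Localization AJS} to see that both functors preserve the hearts, and conclude exactness from the fact that short exact sequences in a heart are exactly triangles with all three vertices in the heart, with the counit an isomorphism since objects of $\D(R_{\mathbf{p}})$ are already complexes of $R_{\mathbf{p}}$-modules. The paper's proof is merely a more condensed version of the same argument.
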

\begin{proof}
Note that $\mathbf{L}f^{*}=f^{*}: \D(R) \flecha \D(R_\p)$ is left adjoint of $\mathbf{R}f_{*}=f_{*}:\D(R_\p) \flecha \D(R)$ and the counit of the corresponding adjunction is an isomorphism. By proposition \ref{prop. Localization AJS}, we have that $f^{*}(\Hp)\subseteq \mathcal{H}_{\phi_{\mathbf{p}}}$ and $f_{*}(\mathcal{H}_{\phi_{\mathbf{p}}}) \subseteq \Hp$. The result follows immediately from this fact since a short exact sequence in the heart of a t-structure is the same as a triangle in the ambient triangulated category which has its three vertices in the heart. 
\end{proof}

\begin{corollary}\label{cor. localization AB5}
The following assertions are equivalent:
\begin{enumerate}
\item[1)] $\Hp$ is an AB5 abelian category; 
\item[2)] $\mathcal{H}_{\phi_{\mathbf{p}}}$ is an AB5 abelian category, for all $\mathbf{p}\in \Spec(R)$.
\end{enumerate}
\end{corollary}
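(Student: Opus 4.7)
The key input is the preceding Lemma~\ref{lem. first localization}, which provides an adjoint pair $(f^*,f_*)$ of exact functors between $\Hp$ and $\mathcal{H}_{\phi_{\p}}$ with $f_*$ fully faithful; equivalently, the counit $f^*f_* \cong \mathrm{id}$ is a natural isomorphism. Two further observations will be used throughout. First, as a left adjoint, $f^*$ preserves every colimit that exists in $\Hp$; in particular it commutes with direct limits. Second, the family of localizations $\{f^*_{\p}\}_{\p\in \Spec(R)}$ is jointly conservative on $\Hp$: if $X\in \Hp$ satisfies $X_{\p}=0$ for all $\p$, then each classical homology module satisfies $H^k(X)_{\p}=0$ for every $\p$, hence $H^k(X)=0$ for every $k$, and so $X=0$ in $\D(R)$.

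For the implication 1) $\Longrightarrow$ 2), I would start with a direct system $(0\to A_i\to B_i\to C_i\to 0)_{i\in I}$ of short exact sequences in $\mathcal{H}_{\phi_{\p}}$, apply the exact functor $f_*$ to lift it to a direct system of short exact sequences in $\Hp$, and then invoke the AB5 hypothesis in $\Hp$ to obtain exactness of the induced sequence of direct limits there. Applying the exact functor $f^*$ to this sequence, using that $f^*$ commutes with direct limits and the counit isomorphism $f^*f_*\cong \mathrm{id}$ to identify $f^*(\varinjlim_{\Hp} f_* A_i)\cong \varinjlim_{\mathcal{H}_{\phi_{\p}}} A_i$ (and analogously for the $B_i$ and $C_i$), produces the desired exact sequence of direct limits in $\mathcal{H}_{\phi_{\p}}$.

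The implication 2) $\Longrightarrow$ 1) is where conservativity of the localization family does the real work. Starting from a direct system $(0\to A_i\to B_i\to C_i\to 0)_{i\in I}$ in $\Hp$ and forming the induced sequence of direct limits (which exists and is right exact, since $\Hp$ is AB3 by Proposition~\ref{AB3 t-structure}), let $K$ be the kernel in $\Hp$ of the canonical map $\varinjlim_{\Hp} A_i\to \varinjlim_{\Hp} B_i$ and let $N$ be the homology of the sequence at $\varinjlim_{\Hp} B_i$. For each $\p\in\Spec(R)$, applying the exact, colimit-preserving functor $f^*$ transforms the situation into the corresponding direct-limit sequence in $\mathcal{H}_{\phi_{\p}}$, which is exact by the AB5 hypothesis there; hence $f^*K=0=f^*N$ for every $\p$. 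The joint conservativity of the $\{f^*_{\p}\}$ then forces $K=N=0$, and the original sequence of direct limits in $\Hp$ is exact.

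The main obstacle I foresee is the bookkeeping in 2) $\Longrightarrow$ 1): one has to interchange $f^*$ with direct limits formed in $\Hp$ (justified by the adjunction of Lemma~\ref{lem. first localization}) and then transport a module-theoretic conservativity property to the level of the heart. Both steps hinge on the same lemma together with the classical fact that an $R$-module is detected by its localizations at primes, but they must be applied carefully to complexes sitting in $\Hp$.
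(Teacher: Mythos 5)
Your proposal is correct and follows essentially the same route as the paper: the direction 1) $\Longrightarrow$ 2) lifts the direct system via the exact, fully faithful $f_*$, uses AB5 in $\Hp$, and transfers back with the colimit-preserving $f^*$ and the counit isomorphism, while 2) $\Longrightarrow$ 1) localizes the direct-limit sequence at every prime and uses that a complex in $\Hp$ all of whose localizations vanish is acyclic, hence zero. The only cosmetic difference is that you also check vanishing of the homology at $\varinjlim_{\Hp} B_i$, which is already automatic from right exactness of colimits, so only the kernel term actually needs the localization argument.
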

\begin{proof}
1) $\Longrightarrow$ 2) Let us fix a $\mathbf{p}\in \Spec(R)$ and let us consider a direct system 
$$\xymatrix{\epsilon_{\lambda}:= \hspace{0.1 cm}0 \ar[r] & X_{\lambda} \ar[r] & Y_{\lambda} \ar[r] & Z_{\lambda} \ar[r] & 0}$$
of short exact sequences in $\mathcal{H}_{\phi_{\mathbf{p}}}$. By applying the exact functor $f_{*}$ to previous exact sequences, we obtain a direct system $(f_{*}(\epsilon_{\lambda}))$ of short exact sequences in $\Hp$. Taking direct limits in $\Hp$, due to the AB5 condition of this latter category, we get an exact sequence in $\Hp$:
$$\xymatrix{0 \ar[r] & \varinjlim_{\Hp}f_{*}(X_{\lambda}) \ar[r] & \varinjlim_{\Hp}f_{*}(Y_{\lambda}) \ar[r] & \varinjlim_{\Hp}f_{*}(Z_{\lambda}) \ar[r] & 0}$$
But the functor $f^{*}:\Hp \flecha \mathcal{H}_{\phi_{\mathbf{p}}}$ preserves direct limits since it is a left adjoint. This, together with the fact the counit $f^{*} \circ f_{*} \flecha 1_{\mathcal{H}_{\phi_{\mathbf{p}}}}$ is an isomorphism, imply that the sequence
  $$\xymatrix{0 \ar[r] & \varinjlim_{\mathcal{H}_{\phi_\p}}X_{\lambda} \ar[r] & \varinjlim_{\mathcal{H}_{\phi_\p}}Y_{\lambda} \ar[r] & \varinjlim_{\mathcal{H}_{\phi_\p}}Z_{\lambda} \ar[r] & 0}$$
is exact in $\mathcal{H}_{\phi_{\mathbf{p}}}$.\\

2) $\Longrightarrow$ 1) Let us consider a direct system of short exact sequence in $\Hp$
$$\xymatrix{\epsilon_{\lambda}:=\hspace{0.1 cm}0 \ar[r] & X_{\lambda} \ar[r] & Y_{\lambda} \ar[r] & Z_{\lambda} \ar[r] & 0}$$
By right exactness of colimits, we then get an exact sequence in $\Hp$:
$$\xymatrix{ \varinjlim_{\Hp}X_{\lambda} \ar[r]^{h} & \varinjlim_{\Hp}Y_{\lambda} \ar[r]^{g} & \varinjlim_{\Hp}Z_{\lambda} \ar[r] & 0}$$
Consider now any prime ideal $\mathbf{p}$ and the associated ring homomorphism $f:R \flecha R_{\mathbf{p}}$. By the exactness of $f^{*}$, we have a direct system $(f^{*}(\epsilon_{\lambda}))$ of short exact sequences in $\mathcal{H}_{\phi_{\mathbf{p}}}$. Using the fact that this functor preserves direct limits, together with the fact that $\mathcal{H}_{\phi_{\mathbf{p}}}$ is an AB5 abelian category, we get a commutative diagram with exact rows 
$$\xymatrix{ & f^{*}(\varinjlim_{\Hp}X_{\lambda}) \ar[r]^{f^{*}(h)} \ar[d]^{\wr} & f^{*}(\varinjlim_{\Hp}Y_{\lambda}) \ar[r]^{f^{*}(g)} \ar[d]^{\wr} & f^{*}(\varinjlim_{\Hp}Z_{\lambda}) \ar[r] \ar[d]^{\wr} & 0 \\ 0 \ar[r] & \varinjlim_{\mathcal{H}_{\phi_{\mathbf{p}}}} f^{*}(X_{\lambda}) \ar[r] & \varinjlim_{\mathcal{H}_{\phi_{\mathbf{p}}}} f^{*}(Y_{\lambda}) \ar[r] & \varinjlim_{\mathcal{H}_{\phi_{\mathbf{p}}}} f^{*}(Z_{\lambda}) \ar[r] & 0 }$$

where the vertical arrows are isomorphisms. It follows $R_{\mathbf{p}} \otimes_{R} \Ker_{\Hp}(h)=f^{*}(\Ker_{\Hp}(h))=0$ in $\D(R_{\mathbf{p}})$, for all $\mathbf{p}\in \Spec(R)$. This implies that the complex $\Ker_{\Hp}(h)$ is acyclic and, hence, it is the zero object of $\Hp$.
\end{proof}

\begin{corollary}\label{cor. localizando (H) categorías de modulos}
If $\Hp$ is a module category, then $\mathcal{H}_{\phi_{\mathbf{p}}}$ is a module category, for each $\mathbf{p}\in \Spec(R)$.
\end{corollary}
\begin{proof}
Note that both hearts are AB3 abelian categories (see proposition \ref{AB3 t-structure}). Let $P$ be a progenerator of $\Hp$. Fix a prime ideal $\mathbf{p}$ and consider the canonical ring homomorphism $f:R \flecha R_{\mathbf{p}}$. We will prove that $f^{*}(P)$ is a progenerator of $\mathcal{H}_{\phi_{\mathbf{p}}}$. Indeed $f^{*}(P)$ is a projective object of this category since $f^{*}$ is left adjoint of an exact functor, which implies that it preserves projective objects. On the other hand, if $Y$ is any object of $\mathcal{H}_{\phi_{\mathbf{p}}}$, then we have an epimorphism $p:P^{(\Lambda)} \epic f_{*}(Y)$ in $\Hp$, for some set $\Lambda$. Exactness of $f^{*}$ and the fact that the counit $f^{*} \circ f_{*} \flecha 1_{\mathcal{H}_{\phi_{\mathbf{p}}}}$ is an isomorphism imply that $f^{*}(p):f^{*}(P)^{(\Lambda)}\cong f^{*}(P^{(\Lambda)}) \flecha f^{*}f_{*}(Y)\cong Y$ is an epimorphism in $\mathcal{H}_{\phi_{\mathbf{p}}}$. Then $f^{*}(P)$ is a generator of $\mathcal{H}_{\phi_{\mathbf{p}}}$. \\

We finally prove that $f^{*}(P)$ is compact. Let $(Y_{\lambda})_{\lambda \in \Lambda}$ be a family of objects of $\mathcal{H}_{\phi_{\mathbf{p}}}$. From the adjunction $(f^{*},f_{*})$ together with the fact that $f_{*}:\mathcal{H}_{\phi_{\mathbf{p}}} \flecha \Hp$ preserves coproducts, we obtain the following chain of isomorphism:
\begin{eqnarray*}
 \Hom_{\mathcal{H}_{\phi_{\mathbf{p}}}}(f^{*}(P),\coprod_{\lambda \in \Lambda} Y_{\lambda}) \cong \Hom_{\Hp}(P,f_{*}(\coprod_{\lambda \in \Lambda} Y_{\lambda} )) &\cong& \Hom_{\Hp}(P, \coprod_{\lambda \in \Lambda} f_{*}(Y_{\lambda} )) \\ & \cong & \coprod_{\lambda \in \Lambda}  \Hom_{\Hp}(P, f_{*} (Y_{\lambda}) )\\
  &\cong& \coprod_{\lambda \in \Lambda}  \Hom_{\mathcal{H}_{\phi_{\mathbf{p}}}}(f^{*}(P),  Y_{\lambda})
\end{eqnarray*}
Hence $f^{*}(P)$ is a progenerator of $\mathcal{H}_{\phi_{\mathbf{p}}}$.
\end{proof}

\section{Left bounded filtrations}
In some situations, the complexes in $\Hp$ are one or two-sided bounded, and they are then more apt to induction arguments. \\

All throughout this section, $\phi$ is a sp-filtration which is \emph{left bounded}, i.e., there is an integer $n$ such that $\phi(n)=\phi(n-k)$, for all integers $k \geq 0$. In the sequel, $\phi$ will denote the sp-filtration given by 
$$\cdots=\phi(n-1) =\phi(n) \supseteq \phi(n+1) \supseteq \phi(n+2) \supseteq \cdots$$ 

\begin{lemma}\label{lem. bounded heart}
For each $X\in \Hp$, the following assertions holds.
\begin{enumerate}
\item[1)] $H^{j}(X)=0$, for each integer $j<n$.
\item[2)] If $\phi(m)=\emptyset$, for some integer $m$, then $H^{j}(X)=0$, for each $j \geq m$.
\end{enumerate}
\end{lemma}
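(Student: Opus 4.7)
The plan is to derive both assertions directly from the explicit description of the t-structure $(\mathcal{U}_\phi,\mathcal{U}_\phi^\perp[1])$ given by Theorem~\ref{teo. main AJS}, combined with the ``reduction-of-support'' isomorphism of Theorem~\ref{teo. Right derived AJS}. Since $X\in\Hp=\mathcal{U}_\phi\cap\mathcal{U}_\phi^\perp[1]$, two ingredients will be available: (i) the membership $X\in\mathcal{U}_\phi$ gives $\Supp H^{j}(X)\subseteq\phi(j)$ for every $j\in\mathbb{Z}$; and (ii) the membership $X[-1]\in\mathcal{U}_\phi^\perp$, combined with the fact that $\mathbf{R}\Gamma_{\phi(j)}$ commutes with shifts, translates into $\mathbf{R}\Gamma_{\phi(j)}(X)\in\D^{>j-1}(R)$ for every $j\in\mathbb{Z}$.

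I will handle assertion (2) first, since it is immediate and does not even require the left-boundedness hypothesis: the decreasing character of $\phi$ forces $\phi(j)\subseteq\phi(m)=\emptyset$ for every $j\geq m$, so condition (i) yields $H^{j}(X)=0$ for all such $j$.

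For assertion (1), the crucial observation will be that left-boundedness gives $\phi(j)=\phi(n)$ for $j\leq n$, while the decreasing condition gives $\phi(j)\subseteq\phi(n)$ for $j\geq n$; hence \emph{every} $H^{j}(X)$ has support contained in $\phi(n)$. Theorem~\ref{teo. Right derived AJS} will then upgrade this pointwise support condition to a canonical isomorphism $\mathbf{R}\Gamma_{\phi(n)}(X)\xrightarrow{\sim} X$ in $\D(R)$. Specializing (ii) to $j=n$ (which is legitimate precisely because $\phi$ is left bounded at $n$) yields $X\cong\mathbf{R}\Gamma_{\phi(n)}(X)\in\D^{>n-1}(R)$, i.e.\ the desired vanishing $H^{j}(X)=0$ for all $j<n$.

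The only delicate point, and the closest thing to a real obstacle, will be the shift bookkeeping needed to pass from $X[-1]\in\mathcal{U}_\phi^\perp$ to the reformulation in (ii); once the identification $\mathbf{R}\Gamma_{\phi(j)}(X[-1])\cong\mathbf{R}\Gamma_{\phi(j)}(X)[-1]$ is recorded and the resulting index shift from $\D^{>j}(R)$ to $\D^{>j-1}(R)$ is tracked, the rest of the argument is purely formal.
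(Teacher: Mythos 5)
Your proposal is correct and follows essentially the same route as the paper: assertion (2) is the trivial support observation $\Supp H^{j}(X)\subseteq\phi(j)\subseteq\phi(m)=\emptyset$, and assertion (1) combines $H^{k}(X)\in\T_{\phi(n)}$ for all $k$ (by left-boundedness) with Theorem~\ref{teo. Right derived AJS} to get $\mathbf{R}\Gamma_{\phi(n)}(X)\cong X$, and then uses the description of $\mathcal{U}_\phi^{\perp}$ in Theorem~\ref{teo. main AJS} applied to $X[-1]$, specialized at $j=n$, to conclude $X\in\D^{>n-1}(R)$. Your shift bookkeeping $\mathbf{R}\Gamma_{\phi(j)}(X[-1])\cong\mathbf{R}\Gamma_{\phi(j)}(X)[-1]$ is exactly the step the paper leaves implicit, and it is handled correctly.
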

\begin{proof}
1) By the boundedness hypothesis on $\phi$, we have that $H^{k}(X)\in \T_{k}\subseteq \T_{n}$, for all integer $k$. It follows from theorem \ref{teo. Right derived AJS} that $\mathbf{R}\Gamma_{\phi(n)}(X)\cong X$. On the other hand, $X[-1]\in \mathcal{U}_{\phi}^{\perp}$, and hence $\mathbf{R}\Gamma_{\phi(k)}X \in \D^{>k-1}(R)$ for all integer $k$ (see theorem \ref{teo. main AJS}). In particular, putting $k=n$  we get that $\mathbf{R}\Gamma_{\phi(n)}X=X\in \D^{>n-1}(R)$.\\

2) It is clear since $H^{j}(X)\in \T_{j} \subseteq \T_{m} =0$, for all $j\geq m$.
\end{proof}

\begin{lemma}\label{lem. last homology}
For each $X \in \Hp$, the following assertions holds.
\begin{enumerate}
\item[1)] $\Hom_{\D(R)}(Y[-k],\tau^{\leq k}(X))=0$, for all $k \in$ \Z \ and for each $Y\in \T_{k+1}$.
\item[2)] $\Hom_{\D(R)}(Y[-k],\tau^{\leq k-2}(X)[-1])=0$, for all $k \in $ \Z \ and for each $Y\in \T_{k}$.
\item[3)] If $m$ is the smallest integer, such that $H^{m}(X)\neq 0$, then 
$$H^{m}(X)\in \F_{m+1} \bigcap \Ker(\Ext^{1}_{R}(\T_{m+2},?)).$$
\end{enumerate}
\end{lemma}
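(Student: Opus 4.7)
My plan is to prove (1) and (2) by applying the cohomological functor $\Hom_{\D(R)}(Y[-k], -)$ to suitable shifts of the standard truncation triangle
\[
\tau^{\leq r}(X) \longrightarrow X \longrightarrow \tau^{>r}(X) \longrightarrow \tau^{\leq r}(X)[1],
\]
and then reading (3) off from (1) and (2) at the extremal degree $m$. Two orthogonality principles will do all the work: the canonical one from the standard t-structure, namely $\Hom_{\D(R)}(\D^{\leq s}(R), \D^{\geq s+1}(R)) = 0$, and the hypothesis $X[-1] \in \mathcal{U}_{\phi}^{\perp}$, combined with Theorem \ref{teo. main AJS}'s description of $\mathcal{U}_{\phi}$ via supports of homology modules.

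For assertion (1), take $r = k$. The piece of the long exact sequence reads
\[
\Hom_{\D(R)}(Y[-k], \tau^{>k}(X)[-1]) \longrightarrow \Hom_{\D(R)}(Y[-k], \tau^{\leq k}(X)) \longrightarrow \Hom_{\D(R)}(Y[-k], X).
\]
The left-hand term vanishes because $\tau^{>k}(X)[-1] \in \D^{\geq k+2}(R)$ while $Y[-k] \in \D^{\leq k}(R)$. For the right-hand term, I would observe that $Y \in \T_{k+1}$ forces $Y[-k-1] \in \mathcal{U}_{\phi}$ by Theorem \ref{teo. main AJS}, since its unique nonzero homology is $Y$ in degree $k+1$ and $\Supp(Y) \subseteq \phi(k+1)$. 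Then using the shift adjunction,
\[
\Hom_{\D(R)}(Y[-k], X) \cong \Hom_{\D(R)}(Y[-k-1], X[-1]) = 0,
\]
the last vanishing coming from $X[-1] \in \mathcal{U}_{\phi}^{\perp}$.

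Assertion (2) follows by a parallel argument with $r = k-2$ after shifting the truncation triangle by $-1$. The two framing terms to control are $\Hom_{\D(R)}(Y[-k], \tau^{>k-2}(X)[-2])$, which vanishes by canonical t-structure orthogonality since $\tau^{>k-2}(X)[-2] \in \D^{\geq k+1}(R)$, and $\Hom_{\D(R)}(Y[-k], X[-1])$, which vanishes because $Y \in \T_k$ gives $Y[-k] \in \mathcal{U}_{\phi}$ directly, while $X[-1] \in \mathcal{U}_{\phi}^{\perp}$.

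Assertion (3) is then a specialization. Minimality of $m$ gives $\tau^{\leq m}(X) \cong H^{m}(X)[-m]$ in $\D(R)$. Substituting $k = m$ in (1) yields $\Hom_R(Y, H^{m}(X)) = 0$ for every $Y \in \T_{m+1}$, whence $H^{m}(X) \in \T_{m+1}^{\perp} = \F_{m+1}$; substituting $k = m+2$ in (2) yields
\[
\Ext^{1}_R(Y, H^{m}(X)) \cong \Hom_{\D(R)}(Y[-m-2], H^{m}(X)[-m-1]) = 0
\]
for every $Y \in \T_{m+2}$. The only real obstacle in this proof is keeping track of the degree shifts; the structural content reduces entirely to Theorem \ref{teo. main AJS} together with the two orthogonality properties mentioned at the outset.
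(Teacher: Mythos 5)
Your proof is correct and is in substance the same as the paper's: the paper deduces assertions 1) and 2) from lemma \ref{lem. menor igual dos} (namely $\tau^{\leq k}(X)\in \mathcal{H}_{\phi_{\leq k+1}}$ and $\tau^{\leq k-2}(X)\in \mathcal{H}_{\phi_{\leq k}}$), but that lemma is itself proved by exactly your argument --- applying $\Hom_{\D(R)}(Y[-k],?)$ to the rotated truncation triangle and using canonical orthogonality on the $\tau^{>}$-side together with $X[-1]\in\mathcal{U}_{\phi}^{\perp}$ on the other. Your derivation of 3) by specializing $k=m$ and $k=m+2$ and using $\tau^{\leq m}(X)\cong H^{m}(X)[-m]$ coincides with the paper's.
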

\begin{proof}
1) and 2) follow from the fact that $\tau^{\leq k}(X)\in \mathcal{H}_{\phi_{ \leq k+1}}$ and $\tau^{\leq k-2}(X)\in \mathcal{H}_{\phi_{\leq k}}$ for all integer $k$ (see lemma \ref{lem. menor igual dos}). \\

3) Let $m$ be the smallest integer, such that $H^{m}(X)\neq 0$ (such integer exists by the previous lemma). By 1), we have:
$$0=\Hom_{\D(R)}(Y[-m],\tau^{\leq m}(X))=\Hom_{\D(R)}(Y[-m],H^{m}(X)[-m])=\Hom_{R}(Y,H^{m}(X))$$
for each $Y\in \T_{m+1}$, and hence $H^{m}(X)\in \F_{m+1}$. Similarly, by 2), we have:

\begin{eqnarray*}
 0=\Hom_{\D(R)}(Y[-m-2],\tau^{\leq m+2-2}(X)[-1])&=&\Hom_{\D(R)}(Y[-m-2],H^{m}(X)[-m-1])\\
  &=&\Ext^{1}_{R}(Y,H^{m}(X))
\end{eqnarray*}
for each $Y\in \T_{m+2}$.
\end{proof}

\begin{definition}\rm{
Let $\phi$ be a sp-filtration of $\Spec(R)$. We shall say that it is \emph{eventually trivial}, when there is a $j\in \mathbb{Z}$ such that $\phi(j)=\emptyset$. We will put $\phi=\emptyset$ whenever $\phi(i)=\emptyset$, for all $i \in \mathbb{Z}$.}
\end{definition}

\vspace{0.4 cm}

In the following proposition, we gather a few properties that will be needed later on.

\begin{proposition}\label{prop. last iso efect domino}
Let $\phi: \mathbb{Z} \flecha P(\Spec(R))$ be a left bounded sp-filtration and let us put $m:=\text{min}\{ i \in \mathbb{Z}: \phi(i) \supsetneq \phi(i+1)\}$. Consider a direct system $(X_{\lambda})$ in $\Hp$ and let $\varphi_{j}:\varinjlim{H^{j}(X_{\lambda})} \flecha H^{j}(\varinjlim_{\Hp}{X_{\lambda}})$ be the canonical map, for each $j \in \mathbb{Z}$. The following assertions hold:
\begin{enumerate}
\item[1)] If $\phi$ is eventually trivial and $t=\text{max}\{i \in \mathbb{Z}:\phi(i)\neq 0\},$ then $\varphi_{t}$ is an isomorphism and $\varphi_{t-1}$ is an epimorphism;

\item[2)] $\Supp(\Ker(\varphi_{j})) \subseteq \phi(j+1)$ and $\Supp(\Coker(\varphi_{j})) \subseteq \phi(j+2)$, for all $j\in \mathbb{Z}$;

\item[3)] $\varphi_{m}$ is an isomorphism.
\end{enumerate}
\end{proposition}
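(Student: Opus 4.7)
The plan is to build a single distinguished triangle in $\D(R)$ that encodes all three assertions simultaneously. Given the direct system $(X_\lambda)$ in $\Hp$, consider the colimit-defining morphism $f:\coprod X_{\lambda\mu} \flecha \coprod X_\lambda$ and let $C$ be its cone. Since $R$-Mod is AB5, the long exact sequence of the triangle combined with exactness of $\varinjlim$ gives $H^j(C) \cong \varinjlim H^j(X_\lambda)$ for every $j$. The coproducts lie in $\Hp \subseteq \mathcal{U}_\phi$ and $\mathcal{U}_\phi$ is closed under extensions, so $C \in \mathcal{U}_\phi$. Using lemma \ref{lemma de adjunctions}, the cokernel in $\Hp$ of $f$ coincides with $L(C) = \tau^{\mathcal{U}_\phi^\perp}(C[-1])[1]$, where $L:\mathcal{U}_\phi \flecha \Hp$ is the left adjoint of the inclusion. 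Hence $\varinjlim_{\Hp} X_\lambda = L(C)$, and there is a distinguished triangle $K \flecha C \flecha L \flecha K[1]$ with $K := \tau_{\mathcal{U}_\phi}(C[-1])[1]$, so that $K[-1] \in \mathcal{U}_\phi$. Consequently $\Supp(H^{j-1}(K)) \subseteq \phi(j)$, i.e., $\Supp(H^j(K)) \subseteq \phi(j+1)$, for every $j$, and $\varphi_j$ is precisely $H^j$ applied to $C \flecha L$.

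From the long exact sequence of cohomology of the above triangle, $\Ker(\varphi_j)$ is a quotient of $H^j(K)$ and $\Coker(\varphi_j)$ is a subobject of $H^{j+1}(K)$; this yields assertion (2) immediately. Assertion (1) also follows at once: when $\phi$ is eventually trivial with maximum $t$, both $\phi(t+1)$ and $\phi(t+2)$ are empty, so $\Ker(\varphi_t)$ and $\Coker(\varphi_t)$ vanish, and $\Coker(\varphi_{t-1})$ sits inside $H^t(K)$, whose support is contained in $\phi(t+1) = \emptyset$.

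The substance of the proof lies in (3). First I would upgrade lemma \ref{lem. last homology}(3) to show $H^j(X) = 0$ for every $X \in \Hp$ and $j<m$: if $m_0 < m$ were the smallest non-vanishing degree of $X$, then $H^{m_0}(X) \in \T_{m_0} \cap \F_{m_0+1}$; but by the definition of $m$ one has $\phi(m_0) = \phi(m_0+1)$, so $\T_{m_0} = \T_{m_0+1}$ and the intersection is zero, a contradiction. Consequently $H^j(C) = H^j(L) = 0$ for $j<m$, and applying lemma \ref{lem. last homology}(3) again (either $m$ is the smallest non-vanishing degree of $X_\lambda$, or $H^m(X_\lambda) = 0$) each $H^m(X_\lambda)$ lies in $\F_{m+1} \cap \Ker(\Ext^1_R(\T_{m+2},?))$. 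By remark \ref{rem. F closed under direct limits}, and because $R$ is Noetherian so that $\Ext^1_R(R/\mathbf{p},?)$ preserves direct limits for every $\mathbf{p}\in\phi(m+2)$, the direct limit $H^m(C) = \varinjlim H^m(X_\lambda)$ also lies in $\F_{m+1} \cap \Ker(\Ext^1_R(\T_{m+2},?))$ (using lemma \ref{lem. sutileza} to pass back from primes to the whole torsion class). Injectivity of $\varphi_m$ is then immediate: by (2), $\Ker(\varphi_m) \in \T_{m+1}$, while it also sits inside $H^m(C) \in \F_{m+1}$, so it is zero.

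The main obstacle is the surjectivity of $\varphi_m$. Having injectivity, identify $\Imagen(\varphi_m) \cong H^m(C)$, which by the previous paragraph lies in $\Ker(\Ext^1_R(\T_{m+2},?))$. Apply $\Hom_R(T,?)$ for $T \in \T_{m+2}$ to the short exact sequence $0 \flecha \Imagen(\varphi_m) \flecha H^m(L) \flecha \Coker(\varphi_m) \flecha 0$ in $R$-Mod. Note that $H^m(L) \in \F_{m+1} \subseteq \F_{m+2} = \T_{m+2}^{\perp}$ (again by lemma \ref{lem. last homology}(3), now applied to $L \in \Hp$), so $\Hom_R(T,H^m(L)) = 0$; combined with $\Ext^1_R(T,\Imagen(\varphi_m)) = 0$, the long exact sequence forces $\Hom_R(T,\Coker(\varphi_m)) = 0$. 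Choosing $T = \Coker(\varphi_m)$, which lies in $\T_{m+2}$ by (2), gives $\End_R(\Coker(\varphi_m)) = 0$, hence $\Coker(\varphi_m) = 0$. The delicate point is that the Noetherian hypothesis on $R$ is precisely what allows the lemma \ref{lem. last homology}(3) vanishing of $\Ext^1_R(\T_{m+2},H^m(X_\lambda))$ to survive the passage to the direct limit $\Imagen(\varphi_m)$; without it, one cannot conclude the splitting statement needed for the argument.
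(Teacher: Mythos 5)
Your route is genuinely different from the paper's (which proves assertion (1) directly and then deduces (2) by localizing at primes), but it founders on its very first step. If $f:\coprod X_{\lambda\mu}\flecha \coprod X_\lambda$ is the colimit-defining morphism and $C=\Cone(f)$, the long exact sequence of homology does \emph{not} give $H^j(C)\cong\varinjlim H^j(X_\lambda)$; it gives a short exact sequence $0\flecha \varinjlim H^j(X_\lambda)\flecha H^j(C)\flecha \Ker(H^{j+1}(f))\flecha 0$, and the right-hand term is essentially never zero: $H^{j+1}(f)$ is the colimit-defining morphism of the direct system $(H^{j+1}(X_\lambda))$ in $R$-Mod, and already the diagonal summands lie in its kernel, since $f\circ\lambda_{\lambda\lambda}=\lambda_\lambda-\lambda_\lambda=0$. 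The AB5 condition identifies the cokernel of $H^{j+1}(f)$ with the direct limit but says nothing about its kernel. Consequently $\varphi_j$ is not $H^j$ of the map $C\flecha L$; it is the restriction of that map to the subobject $\varinjlim H^j(X_\lambda)\subseteq H^j(C)$.

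This is not a cosmetic slip. With the corrected sequence, $\Coker(\varphi_j)$ is an extension of the subobject of $H^{j+1}(K)$ you describe by a quotient of $\Ker(H^{j+1}(f))\subseteq\coprod H^{j+1}(X_{\lambda\mu})$, and this extra piece lies only in $\T_{j+1}$, not in $\T_{j+2}$. So your argument yields $\Supp(\Coker(\varphi_j))\subseteq\phi(j+1)$ rather than $\phi(j+2)$: assertion (2) is not established; the epimorphism part of (1) fails (the cokernel of $\varphi_{t-1}$ is only seen to lie in $\T_t$, which is nonzero); and the surjectivity argument in (3) collapses, because both your $\Hom_R(\T_{m+2},?)$ computation and the paper's splitting argument need to know beforehand that $\Coker(\varphi_m)\in\T_{m+2}$. (The kernel half of (2), the isomorphism $\varphi_t$ in (1), and the injectivity of $\varphi_m$ do survive the correction.) The paper sidesteps the problem by proving (1) from the right exactness of $H^t$ on $\Hp$ together with \cite[Lemma 3.5]{CGM} applied to the kernel of $\coprod X_\lambda\epic\varinjlim_{\Hp}X_\lambda$, and then obtains (2) by localizing at each $\p\notin\phi(j+1)$, resp.\ $\p\notin\phi(j+2)$, where the filtration $\phi_\p$ becomes eventually trivial and (1) applies; your final paragraph for (3) is then essentially the paper's argument and would be fine once (2) is in place.
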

\begin{proof}
1) By taking the exact sequence of homologies associated to any exact sequence in $\Hp$, one readily sees that the functor $H^{t}:\Hp \flecha  R\text{-Mod}$ is right exact (see lemma \ref{lem. bounded heart}). Since it preserves coproducts we conclude that it preserves colimits and, in particular, direct limits. On the other hand, let us consider the canonical exact sequence
$$\xymatrix{0 \ar[r] & K \ar[r]^{\iota \hspace{0.25cm}} & \coprod X_{\lambda} \ar[r]^{p \hspace{0.25 cm}} & \varinjlim_{\Hp} X_{\lambda} \ar[r] & 0}$$
By \cite[Lemma 3.5]{CGM}, we get that $H^{t}(\iota)$ is a monomorphism. But, applying the exact sequence of homology to the exact sequence above, we obtain the following exact sequence in $R\Mode$
$$\xymatrix{\cdots \ar[r] & H^{t-1}(\coprod X_{\lambda}) \ar[r]^{H^{t-1}(p) \hspace{0.2 cm}} \ar[r] & H^{t-1}(\varinjlim_{\Hp}{X_{\lambda}}) \ar[r] & H^{t}(K) \ar@{^(->}[r]^{H^{t}(\iota) \hspace{0.2 cm}} & H^{t}(\coprod X_{\lambda}) \ar[r] & \cdots}$$
Thus $H^{t-1}(p)$ is an epimorphism, but this morphism is the composition 
$$\xymatrix{\coprod H^{t-1}(X_{\lambda}) \ar@{>>}[r] & \varinjlim{H^{t-1}(X_{\lambda})} \ar[r]^{\text{can}\hspace{0.2 cm}} & H^{t-1}(\varinjlim_{\Hp} M_i)}$$
since $H^{t-1}$ preserves coproducts. Therefore the second arrow is also an epimorphism. \\

2) Let $\mathbf{p}\in \Spec(R)\setminus  \phi(j+2)$. Then the sp-filtration $\phi_{\mathbf{p}}$ satisfies $\phi_{\mathbf{p}}(j+2)=\phi(j+2)\cap \Spec(R_{\mathbf{p}})=\emptyset$ (see proposition \ref{prop. Localization AJS}). Indeed if $\mathbf{q}\in \phi_{\mathbf{p}}(j+2)=\phi(j+2)\cap \Spec(R_{\mathbf{p}})$, then $\mathbf{q}\subseteq \mathbf{p}$, and hence $\mathbf{p}\in \phi(j+2)$, which is a contradiction. Now, applying assertion 1 to $\phi_{\mathbf{p}}$, we get that $(\varphi_{j})_{R_{\mathbf{p}}}$ is an epimorphism, i.e., $R_{\mathbf{p}} \otimes_{R} \Coker(\varphi_{j}) =0$. Similarly, if $\mathbf{p}\in \Spec(R)\setminus \phi(j+1)$ then assertion 1 for $\phi_{\mathbf{p}}$ gives that $  R_{\mathbf{p}} \otimes_{R} \Ker(\varphi_{j}) =0$. Hence assertion 2) holds. \\

3) By lemmas \ref{lem. last homology} and \ref{lem. sutileza} and the fact that $\Ext^{i}_{R}(R/\mathbf{p},?)$ preserves direct limits, for each $\mathbf{p}\in \Spec(R),$ we know that $\varinjlim{H^{m}(X_{\lambda})}$ is in $\F_{m+1}\cap \Ker(\Ext^{1}_{R}(\T_{m+2},?))$. This in turn implies that $\Ker(\varphi_{m})\in \F_{m+1}$. But assertion 2 tells us that $\Ker(\varphi_{m})\in \T_{m+1}$, so that $\varphi_{m}$ is a monomorphism. Assertion 2 also says that $\Coker(\varphi_{m})\in \T_{m+2}$, which implies then that the sequence:
$$\xymatrix{0 \ar[r] & \varinjlim{H^{m}(X_{\lambda})} \ar[r]^{\varphi_{m}\hspace{0.2 cm}} & H^{m}(\varinjlim_{\Hp}{X_{\lambda}}) \ar[r] & \Coker(\varphi_{m}) \ar[r] & 0}$$
splits. By lemma \ref{lem. last homology}, we know that $H^{m}(\varinjlim_{\Hp}X_{\lambda})\in \F_{m+1}$, which implies then that $\Coker(\varphi_{m})\in \F_{m+1}$. But we also have that $\Coker(\varphi_{m})\in \T_{m+2}\subseteq \T_{m+1}$, so that $\varphi_{m}$ is an epimorphism. 
\end{proof}

\subsection{The eventually trivial case}
Let $Z\subseteq \Spec(R)$ be a nonempty sp-subset and let $\phi$ denote the sp-filtration given by $\phi(j)=\Spec(R)$, if $j\leq -1$, $\phi(0)=Z$ and $\phi(j)=\emptyset$, if $j\geq 0$. Then the t-structure $(\mathcal{U}_{\phi},\mathcal{U}_{\phi}^{\perp}[1])$ is precisely the Happel-Reiten-Smal\o \ t-structure associated to the torsion pair $(\T_Z,\F_Z)$. By theorem \ref{Grothendieck characterization}, we know that $\Hp$ is a Grothendieck category in such case. This fact motivates us to study left bounded and eventually trivial sp-filtrations. \\ 


All throughout this subsection, $\phi$ is a left bounded eventually trivial sp-filtration which, without loss of generality, we assume that is of the form
$$\cdots=\phi(-n-1)=\phi(-n)\supsetneq \phi(-n+1) \supseteq \cdots \phi(-1) \supseteq \phi(0) \supsetneq \phi(1)=\emptyset= \cdots$$
for some $n\in \mathbb{N}$. In such case, we have that $\Hp \subset \D^{[-n,0]}(R)$ (see lemma \ref{lem. bounded heart}) and $L(\phi)=n+1$ (see definition \ref{def. finite sp-filtration}).

\begin{lemma}\label{lem. T0 closed under subobjects}
$\T_{0}[0]$ is a full subcategory of $\Hp$ closed under taking subobjects.
\end{lemma}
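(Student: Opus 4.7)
The plan is to verify the two assertions in turn. First, $\T_0[0] \subseteq \Hp$ follows directly from Proposition~\ref{pro. stalks on the heart}: for $T \in \T_0$, membership of $T[0]$ in $\Hp$ is equivalent to $T \in \T\F_{0}$, which by that proposition requires $\Ext^{k-1}_R(R/\p, T) = 0$ for all $k \geq 1$ and all $\p \in \phi(k)$; since $\phi(k) = \emptyset$ for every $k \geq 1$, these Ext-conditions are vacuous, and so $\T\F_{0} = \T_0$.

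For closure under subobjects, I would fix a monomorphism $X \hookrightarrow T[0]$ in $\Hp$ with $T \in \T_0$ and aim to show that the canonical truncation map $X \to H^0(X)[0]$ is an isomorphism in $\Hp$. Since $X \in \mathcal{U}_{\phi} \subseteq \D^{\leq 0}(R)$, the support condition forces $H^0(X) \in \T_0$, so $H^0(X)[0] \in \Hp$ by the first step. Applying Lemma~\ref{lem. U'-->U-->H} to the truncation triangle
$$\tau^{\leq -1}(X) \to X \to H^0(X)[0] \to \tau^{\leq -1}(X)[1],$$
whose three outer terms all lie in $\mathcal{U}_{\phi}$ (for $\tau^{\leq -1}(X)$, the cohomologies agree with those of $X$ in degrees $\leq -1$ and vanish above), yields a short exact sequence $0 \to L(\tau^{\leq -1}(X)) \to X \to H^0(X)[0] \to 0$ in $\Hp$; in particular, the map $X \to H^0(X)[0]$ is an epimorphism there.

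To see that $X \to H^0(X)[0]$ is also a monomorphism in $\Hp$, I would rely on the standard identification $\Hom_{\D(R)}(X, T[0]) \cong \Hom_R(H^0(X), T)$, valid because $X \in \D^{\leq 0}(R)$, which implies that every morphism $X \to T[0]$ in $\D(R)$ factors through the canonical map $X \to H^0(X)[0]$. Applied to the original monomorphism, this yields a factorization $X \to H^0(X)[0] \to T[0]$ whose composite is mono, forcing the first arrow to be mono in $\Hp$ as well. Being both mono and epi in the abelian category $\Hp$, the map $X \to H^0(X)[0]$ is an isomorphism, and therefore $X \cong H^0(X)[0] \in \T_0[0]$. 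The only subtle point I anticipate is verifying that the two candidate maps $X \to T[0]$ actually coincide in $\D(R)$, which reduces to the observation that both induce the same homomorphism on $H^0$.
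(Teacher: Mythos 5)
Your proof is correct, but it follows a genuinely different route from the paper's. The paper takes the monomorphism $u:X \monic T[0]$, forms the short exact sequence $0 \to X \to T[0] \to \Coker_{\Hp}(u) \to 0$, reads off $H^{j}(\Coker_{\Hp}(u))\cong H^{j+1}(X)$ for $j\leq -2$ from the long exact sequence of homologies, and then kills $\tau^{\leq -2}(\Coker_{\Hp}(u))$ by observing (via lemma \ref{lem. menor igual dos}) that it lies simultaneously in $\Hp$ and in $\mathcal{U}_{\phi}[1]$, forcing it to vanish; this yields $H^{j}(X)=0$ for $j\leq -1$. You instead work with the canonical truncation map $X\to H^{0}(X)[0]$ directly: lemma \ref{lem. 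U'-->U-->H} applied to the triangle $\tau^{\leq -1}(X)\to X\to H^{0}(X)[0]\to^{+}$ gives that it is an epimorphism in $\Hp$, while the adjunction-type isomorphism $\Hom_{R}(H^{0}(X),T)\cong\Hom_{\D(R)}(X,T[0])$ of lemma \ref{lem. homology 0 module}(1) factors $u$ through it and hence forces it to be a monomorphism; mono plus epi in the abelian category $\Hp$ gives the isomorphism. Both arguments are sound. The paper's version stays entirely within the truncation calculus of the chapter and reuses lemma \ref{lem. menor igual dos}, which is the workhorse there; yours is arguably more transparent, since it isolates the precise reason degree-$0$ stalks of torsion modules absorb their subobjects, namely that for $X\in\D^{\leq 0}(R)$ every map $X\to T[0]$ factors through $H^{0}(X)[0]$. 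The ``subtle point'' you flag at the end is in fact automatic: the isomorphism of lemma \ref{lem. homology 0 module}(1) is by construction precomposition with the canonical map $X\to H^{0}(X)[0]$, so the factorization requires no further verification.
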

\begin{proof}
By examples \ref{exam. TF}, we know that $\T_{0}[0]$ is a full subcategory of $\Hp$. If now $u:X \monic T[0]$ is a monomorphism in $\Hp$, where $T\in \T_{0}$, then we have an exact sequence in $\Hp$ of the form:

$$\xymatrix{0 \ar[r] & X \ar[r]^{u\hspace{0.2 cm}} & T[0] \ar[r] & \Coker_{\Hp}(u) \ar[r] & 0}$$

From the sequence of homologies applied to the previous triangle, we get that $H^{j}(\Coker_{\Hp}(u)) \cong H^{j+1}(X)\in \T_{j+1}$, for all $j\leq -2$, and hence $\tau^{\leq -2}(\Coker_{\Hp}(u))[-1]\in \mathcal{U}_{\phi}$. But, from lemma \ref{lem. menor igual dos}, we get that $\tau^{\leq -2}(\Coker_{\Hp}(u))\in \mathcal{H}_{\phi \leq 0}=\Hp=\mathcal{U}_{\phi}\cap \mathcal{U}_{\phi}^{\perp}[1]$, so that $\tau^{\leq -2}(\Coker_{\Hp}(u))=0$. Thus, $X$ is a stalk complex concentrated in degree 0.  
\end{proof}

\vspace{0.3 cm}

In this section, we will develop a proof by induction on the length on $\phi$, in which the homology functors preserve direct limits. Considering this, the following lemma, which relates two ``consecutive'' sp-filtrations, will be very useful for the induction.

\begin{lemma}\label{lem. comparate sort consecutive filt.}
If $(X_{\lambda})$ is a direct system in $\Hp\cap \D^{\leq -1}(R)$, then $(X_{\lambda})$ is a direct system of $\mathcal{H}_{\phi \leq -1}$. Moreover, there exists a $T\in \T_{0}$ and a triangle in $\D(R)$ of the form:
$$\xymatrix{T[1] \ar[r] & \varinjlim_{\mathcal{H}_{\phi \leq -1}} X_{\lambda} \ar[r] & \varinjlim_{\Hp} X_{\lambda} \ar[r]^{\hspace{0.7cm}+}& } $$
\end{lemma}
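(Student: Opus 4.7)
The first assertion amounts to showing each $X_\lambda \in \mathcal{H}_{\phi_{\leq -1}}$. The plan is to use theorem \ref{teo. main AJS}: since $\phi_{\leq -1}$ differs from $\phi$ only at degrees $\geq 0$, where it is empty, the aisle $\mathcal{U}_{\phi_{\leq -1}}$ coincides with $\mathcal{U}_\phi \cap \mathcal{D}^{\leq -1}(R)$, which immediately gives $X_\lambda \in \mathcal{U}_{\phi_{\leq -1}}$. The reverse inclusion $\mathcal{U}_\phi^\perp \subseteq \mathcal{U}_{\phi_{\leq -1}}^\perp$ then upgrades $X_\lambda[-1] \in \mathcal{U}_\phi^\perp$ to $X_\lambda \in \mathcal{U}_{\phi_{\leq -1}}^\perp[1]$.

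For the triangle, I will first observe that both t-structures are compactly generated and hence smashing, so coproducts in both hearts coincide with coproducts in $\D(R)$. Let $f: \coprod X_{\lambda\mu} \to \coprod X_\lambda$ be the colimit-defining morphism and complete it to a triangle $\coprod X_{\lambda\mu} \xrightarrow{f} \coprod X_\lambda \to C \to +$ in $\D(R)$. A standard computation with the long exact sequence of classical cohomology, together with the injectivity of colimit-defining morphisms in the AB5 category $R\text{-Mod}$, yields $H^j(C) \cong \varinjlim H^j(X_\lambda)$ for every $j \in \mathbb{Z}$. Since the classes $\T_j$ are closed under direct limits (remark \ref{rem. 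F closed under direct limits}) and $H^j(X_\lambda) = 0$ for $j \geq 0$, this forces $C \in \mathcal{U}_\phi \cap \mathcal{U}_{\phi_{\leq -1}}$. The description of cokernels in the heart (apply $\tilde H$ to the cone of $f$) now identifies both direct limits as heart-cohomologies of $C$, and lemma \ref{lemma de adjunctions} applied inside each aisle gives
$$\varinjlim_{\mathcal{H}_{\phi_{\leq -1}}} X_\lambda \cong \tau^>_{\phi_{\leq -1}}(C[-1])[1], \qquad \varinjlim_{\Hp} X_\lambda \cong \tau^>_\phi(C[-1])[1].$$

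Since $\phi$ is finite and determined in the interval $[-n,0]$, I can invoke proposition \ref{prop. coro AJS} with $s = 0$ applied to $X := C[-1]$, which produces the triangle
$$H^0(\tau^\leq_\phi(C[-1]))[0] \to \tau^>_{\phi_{\leq -1}}(C[-1]) \to \tau^>_\phi(C[-1]) \to +.$$
Shifting by $[1]$ and setting $T := H^0(\tau^\leq_\phi(C[-1]))$ then yields exactly the desired triangle. To close, the fact that $\tau^\leq_\phi(C[-1])$ lies in $\mathcal{U}_\phi$ by construction, combined with the cohomological description of $\mathcal{U}_\phi$ in theorem \ref{teo. main AJS}, gives $T \in \T_{\phi(0)} = \T_0$.

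The main technical point requiring care will be verifying that the cone $C$ belongs simultaneously to both aisles, as this is what allows lemma \ref{lemma de adjunctions} to express both colimits uniformly as $\tau^>(C[-1])[1]$ for the respective filtrations; once this is secured, proposition \ref{prop. coro AJS} supplies the required triangle almost automatically, and membership of $T$ in $\T_0$ reduces to reading off a single homology group of an object already known to live in $\mathcal{U}_\phi$.
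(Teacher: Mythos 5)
Your proof follows the paper's own argument almost line for line: the same colimit-defining morphism, the same cone $C$ (called $Z$ in the paper), the same identification of both direct limits with $\tau^{>}(C[-1])[1]$ for the respective filtrations via lemma \ref{lemma de adjunctions}, and the same application of proposition \ref{prop. coro AJS} with $s=0$ to $C[-1]$, reading off $T\in\T_0$ from $\tau^{\leq}_{\phi}(C[-1])\in\mathcal{U}_{\phi}$. The first paragraph (a direct computation with theorem \ref{teo. main AJS} instead of citing lemma \ref{lem. menor igual dos}) is a harmless variation.

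One intermediate claim is false, however: the colimit-defining morphism of definition \ref{def. colimit-defining morphism} is \emph{not} injective in $R$-Mod. It is defined on $\coprod_{i\leq j}X_{ij}$ including the diagonal pairs, and $f\circ\lambda_{ii}=\lambda_i-\lambda_i\circ u_{ii}=0$; even discarding these, elements like $\lambda_{12}(x)+\lambda_{23}(u_{12}(x))-\lambda_{13}(x)$ lie in the kernel. So the long exact sequence yields only a short exact sequence $0\to\varinjlim H^{j}(X_\lambda)\to H^{j}(C)\to\Ker(H^{j+1}(f))\to 0$ rather than the isomorphism $H^{j}(C)\cong\varinjlim H^{j}(X_\lambda)$ you assert (compare proposition \ref{description of stalk}(3)(b), where this kernel is a generally nonzero torsion module). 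This does not sink the argument, because the only thing you use it for is $C\in\mathcal{U}_{\phi}\cap\mathcal{U}_{\phi_{\leq -1}}$, and that is automatic: by your first paragraph all the $X_\lambda$ lie in both aisles, aisles are closed under coproducts, and the cone of a morphism between two objects of an aisle is again in the aisle (it is an extension of $(\coprod X_{\lambda\mu})[1]$ by $\coprod X_\lambda$). Alternatively, your homology computation is repaired by noting $\Ker(H^{j+1}(f))\subseteq\coprod H^{j+1}(X_{\lambda\mu})\in\T_{j+1}\subseteq\T_{j}$ and that both outer terms vanish for $j\geq 0$, so $H^{j}(C)\in\T_j$ for all $j$ and $H^{j}(C)=0$ for $j\geq 0$. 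With either repair the proof is the paper's.
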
 
\begin{proof}
From lemma \ref{lem. menor igual dos}, we get that $(\tau^{\leq -1}(X_{\lambda}))=(X_{\lambda})$ is a direct system of $\mathcal{H}_{\phi \leq -1}$. We consider the associated triangle given by the colimit-defining morphism:
$$\xymatrix{\coprod X_{\lambda \mu} \ar[r]^{g} & \coprod X_{\lambda} \ar[r] & Z \ar[r]^{+} & }$$
By definition, we have that $\Coker_{\Hp}(g)=\varinjlim_{\Hp}X_{\lambda}$ and $\Coker_{\mathcal{H}_{\phi \leq -1}}(g)=\varinjlim_{\mathcal{H}_{\phi \leq -1}}X_{\lambda}$. Now, using lemma \ref{lemma de adjunctions}, together with \cite{BBD}, we get that $\varinjlim_{\Hp}X_{\lambda}=\tau_{\phi}^{>}(Z[-1])[1]$ and $\varinjlim_{\mathcal{H}_{\phi \leq -1}}X_{\lambda}=\tau_{\phi \leq -1}^{>}(Z[-1])[1]$, since $Z\in \mathcal{U}_{\phi} \cap \mathcal{U}_{\phi}^{\perp}[2]$. On the other hand, from proposition \ref{prop. coro AJS}, we then obtain the following triangle:
$$\xymatrix{H^{0}(\tau_{\phi}^{\leq}(Z[-1]))[0] \ar[r] & \tau_{\phi \leq -1}^{>}(Z[-1]) \ar[r] & \tau_{\phi}^{>}(Z[-1]) \ar[r]^{\hspace{0.8 cm}+} & }$$
The result follows by putting $T=H^{0}(\tau_{\phi}^{\leq}(Z[-1]))$, since $\tau_{\phi}^{\leq}(Z[-1])\in \mathcal{U}_{\phi}$. 
\end{proof}

\begin{lemma}\label{lem. X ----> limite}
Let $f=(f_{\lambda}: X_{\lambda} \flecha Y_{\lambda})_{\lambda \in \Lambda}$ be a morphism between the direct systems $(X_{\lambda})_{\lambda \in \Lambda}$ and $(Y_{\lambda})_{\lambda \in \Lambda}$ of $\D(R)$, and suppose that there are integers $m\leq r$ such that $X_{\lambda},Y_{\lambda}\in \D^{[m,r]}(R)$, for all $\lambda\in \Lambda$. Let $\C$ be any class of finitely generated $R$-modules. If the induced map $\varinjlim H^{j}(X_{\lambda}) \flecha \varinjlim H^{j}(Y_{\lambda})$ is an isomorphism, for all $m \leq j \leq r$, then the induced map $\varinjlim{\Hom_{\D(R)}(C[s],X_{\lambda})} \flecha \varinjlim{\Hom_{\D(R)}(C[s],Y_{\lambda})}$ is an isomorphism, for each $C\in \C$ and each integer $s$.
\end{lemma}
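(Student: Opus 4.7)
The plan is to proceed by induction on the width $w:=r-m$ of the interval in which the complexes are concentrated, the underlying reason being that in the stalk case the statement reduces to a known commutation of $\Ext$ with direct limits.

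For the base case $w=0$, each $X_\lambda$ and $Y_\lambda$ is quasi-isomorphic to a stalk complex: $X_\lambda \cong H^m(X_\lambda)[-m]$ and similarly for $Y_\lambda$. Then, for a fixed $C\in\mathcal{C}$ and integer $s$, we have $\Hom_{\D(R)}(C[s],X_\lambda)\cong \Ext^{-s-m}_R(C,H^m(X_\lambda))$, with the analogous identification for $Y_\lambda$, and both identifications are natural in the direct system. Since $R$ is Noetherian and $C$ is finitely generated, $C$ admits a resolution by finitely generated projective $R$-modules, so $\Ext^{i}_R(C,?)$ commutes with direct limits in $R\Mode$ for every $i$. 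Combining this with the hypothesis that $\varinjlim H^m(X_\lambda)\flecha \varinjlim H^m(Y_\lambda)$ is an isomorphism gives the base case at once (with both sides vanishing when $-s-m<0$).

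For the inductive step with $w=r-m>0$, I would use the natural truncation triangles together with the functoriality of $\tau^{\leq m}$ and $\tau^{>m}$. For each $\lambda$ these give a morphism of triangles
$$\xymatrix{\tau^{\leq m}(X_\lambda)\ar[r]\ar[d] & X_\lambda \ar[r]\ar[d]^{f_\lambda} & \tau^{>m}(X_\lambda)\ar[r]^{\hspace{0.6cm}+}\ar[d] & \\ \tau^{\leq m}(Y_\lambda)\ar[r] & Y_\lambda \ar[r] & \tau^{>m}(Y_\lambda)\ar[r]^{\hspace{0.6cm}+} & }$$
which is itself a morphism of direct systems, with $\tau^{\leq m}(X_\lambda)\cong H^m(X_\lambda)[-m]$ (a stalk) and $\tau^{>m}(X_\lambda)\in\D^{[m+1,r]}(R)$, and similarly for $Y_\lambda$. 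The induced map $\varinjlim H^j(\tau^{>m}X_\lambda)\flecha \varinjlim H^j(\tau^{>m}Y_\lambda)$ is an isomorphism for all $m+1\leq j\leq r$ (it coincides with the given isomorphism in that range), and likewise for the truncations $\tau^{\leq m}$ in degree $m$. Hence the inductive hypothesis applies to the outer columns. Applying the cohomological functor $\Hom_{\D(R)}(C[s],?)$ to each triangle yields two long exact sequences in $\mathrm{Ab}$, and after taking direct limits --- which is exact in $\mathrm{Ab}$ --- one obtains a commutative ladder of exact sequences to which the five lemma can be applied to conclude that $\varinjlim\Hom_{\D(R)}(C[s],X_\lambda)\flecha\varinjlim\Hom_{\D(R)}(C[s],Y_\lambda)$ is an isomorphism, for every $s\in\mathbb{Z}$.

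The main (and really only) conceptual point is the base case: the use of the Noetherian hypothesis and the finite generation of $C$ to commute $\Ext^i_R(C,?)$ with direct limits. Everything else is a formal induction via truncation triangles and the five lemma; the boundedness assumption is essential because it guarantees that the truncation $\tau^{>m}X_\lambda$ strictly decreases the width and the induction terminates after finitely many steps.
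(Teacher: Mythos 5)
Your proof is correct and follows essentially the same route as the paper's: both rest on the fact that $\Ext^{i}_{R}(C,?)$ commutes with direct limits for finitely generated $C$ over the Noetherian ring $R$, and then run a finite induction on the width of the concentration interval using truncation triangles, exactness of filtered colimits in $\mathrm{Ab}$, and the five lemma (the paper builds up with $\tau^{\leq m+k}$ while you peel off the bottom stalk via $\tau^{\leq m}$ and $\tau^{>m}$, which is the same argument in mirror image).
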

\begin{proof}
Due to the hypothesis and the fact that the functor $\Ext^{i}_{R}(C,?):R\text{-Mod} \flecha $Ab preserves direct limits, for each $C\in \C$ and each $i\geq 0$, we have isomorphisms:
\begin{center}
$\varinjlim \Hom_{\D(R)}(C[s],H^{j}(X_{\lambda})[-j]) \xymatrix{\cong \Hom_{\D(R)}(C[s],\varinjlim H^{j}(X_{\lambda})[-j]) \ar@<-16ex>[d]^(0.55){\wr}  & \\ &} \newline \varinjlim \Hom_{\D(R)}(C[s],H^{j}(Y_{\lambda})[-j]) \cong \Hom_{\D(R)}(C[s],\varinjlim H^{j}(Y_{\lambda})[-j]) \hspace{2.3 cm}$
\end{center}
for all $C\in \C$ and $s,j\in \mathbb{Z}$. On the other hand, note that each $X_{\lambda}$ is an iterated finite extension of $H^{m}(X_{\lambda})[-m],H^{m+1}(X_{\lambda})[-m-1],\dots,H^{r}(X_{\lambda})[-r]$ and similarly for $Y_{\lambda}$. Using this and the previous paragraph, one easily proves by induction on $k\geq 0$, that the induced map
$$\varinjlim \Hom_{\D(R)}(C[s],\tau^{\leq m+k}(X_{\lambda})) \flecha \varinjlim \Hom_{\D(R)}(C[s],\tau^{\leq m+k}(Y_{\lambda}))$$
is an isomorphism, for all $C\in \C$ and $s\in \mathbb{Z}$. The case $k=r-m$ gives the desired result.
\end{proof}

\begin{lemma}\label{lem. L' is in H}
Let $(X_{\lambda})$ be a direct system in $\Hp$, let $L^{'}$ be any object of $\D(R)$, which is viewed as a constant $\Lambda$-direct system, and let $(f_{\lambda}:X_{\lambda} \flecha L^{'})_{\lambda \in \Lambda}$ be a morphism of direct systems in $\D(R)$. If the induced morphism $\varinjlim H^{j}(f_{\lambda}):\varinjlim H^{j}(X_{\lambda}) \flecha H^{j}(L^{'})$ is an isomorphism, for all $j\in \mathbb{Z}$, then $L^{'}$ is in $\Hp$.
\end{lemma}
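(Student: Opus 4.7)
My plan is to verify directly that $L'\in \mathcal{H}_{\phi}=\mathcal{U}_{\phi}\cap \mathcal{U}_{\phi}^{\perp}[1]$ by establishing each containment separately. For $L'\in \mathcal{U}_{\phi}$: since each $X_{\lambda}$ lies in $\mathcal{H}_{\phi}\subset \D^{[-n,0]}(R)$ (see lemma \ref{lem. bounded heart}), we have $H^{j}(X_{\lambda})\in \T_{j}$ for all $j$, and $H^{j}(X_{\lambda})=0$ for $j\notin [-n,0]$. The hypothesis identifies $H^{j}(L')$ with $\varinjlim H^{j}(X_{\lambda})$, which is therefore zero outside $[-n,0]$ and, using that $\T_{j}$ is closed under direct limits in $R$-Mod (remark \ref{rem. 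F closed under direct limits}), lies in $\T_{j}$ for every $j$. In particular, $\Supp H^{j}(L')\subseteq \phi(j)$ for all $j$, so that $L'\in \mathcal{U}_{\phi}$ by theorem \ref{teo. main AJS}, and moreover $L'\in \D^{[-n,0]}(R)$.

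For $L'[-1]\in \mathcal{U}_{\phi}^{\perp}$, it is enough to prove that $\Hom_{\D(R)}(U,L'[-1])=0$ for every $U\in \mathcal{U}_{\phi}$, and this in turn reduces to checking the vanishing of $\Hom_{\D(R)}(R/\mathbf{p}[s],L')$ for the shifts $s$ that arise from the generating set $\{R/\mathbf{p}[-i]:i\in \mathbb{Z},\mathbf{p}\in \phi(i)\}$ of the aisle and their positive shifts. Here I would invoke lemma \ref{lem. X ----> limite} with $Y_{\lambda}:=L'$ viewed as a constant $\Lambda$-system, with $[m,r]:=[-n,0]$, and with $\mathcal{C}:=\{R/\mathbf{p}:\mathbf{p}\in \Spec(R)\}$ (a class of finitely generated modules since $R$ is Noetherian). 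The boundedness of the $X_{\lambda}$ and of $L'$, together with the homological hypothesis, supply the premises of the lemma, which yields an isomorphism
$$\varinjlim \Hom_{\D(R)}(R/\mathbf{p}[s],X_{\lambda})\stackrel{\sim}{\longrightarrow}\Hom_{\D(R)}(R/\mathbf{p}[s],L')$$
for each $\mathbf{p}\in \Spec(R)$ and each $s\in \mathbb{Z}$; the right-hand side is just $\Hom_{\D(R)}(R/\mathbf{p}[s],L')$ because the target system is constant. Since $X_{\lambda}[-1]\in \mathcal{U}_{\phi}^{\perp}$ for every $\lambda$, the left-hand side vanishes for every pair $(\mathbf{p},s)$ for which the corresponding shift $R/\mathbf{p}[s-1]$ lies in $\mathcal{U}_{\phi}$, and hence so does the right-hand side. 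This gives $L'[-1]\in \mathcal{U}_{\phi}^{\perp}$, completing the proof that $L'\in \mathcal{H}_{\phi}$.

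The only delicate point I anticipate is the bookkeeping needed to invoke lemma \ref{lem. X ----> limite}: one has to first observe that $L'$, although initially just an abstract object of $\D(R)$, is isomorphic in $\D(R)$ to a complex concentrated in $[-n,0]$ thanks to its homological description obtained in the first paragraph, so that the boundedness hypothesis of the lemma is actually satisfied. Everything else is a translation of the orthogonality condition $L'[-1]\in \mathcal{U}_{\phi}^{\perp}$ into vanishing of Hom groups out of stalks $R/\mathbf{p}$ (and their shifts), followed by the observation that such vanishing transfers from the $X_{\lambda}$ to $L'$ via the direct-limit isomorphism.
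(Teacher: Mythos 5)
Your proposal is correct and follows essentially the same route as the paper: first deduce $L'\in \mathcal{U}_{\phi}$ (and its boundedness) from the homology isomorphisms and the closure of each $\T_j$ under direct limits, then apply lemma \ref{lem. X ----> limite} to the constant target system $L'$ with $\C=\{R/\mathbf{p}\}$ to transfer the vanishing $\Hom_{\D(R)}(R/\mathbf{p}[-j+1],X_\lambda)=0$ (coming from $X_\lambda[-1]\in\mathcal{U}_{\phi}^{\perp}$) to $L'$, giving $L'[-1]\in\mathcal{U}_{\phi}^{\perp}$. Your explicit remark that $L'$ lies in $\D^{[-n,0]}(R)$ before invoking that lemma is a point the paper leaves implicit, but otherwise the arguments coincide.
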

\begin{proof}
For each $j\in \mathbb{Z}$, we have that $H^{j}(L^{'})\in \T_{j}$ since $\T_{j}$ is closed under direct limits in $R\text{-Mod}$. It follows that $L^{'}\in \mathcal{U}_{\phi}$. On the other hand, by the previous lemma and the fact that $X_{\lambda}\in \D^{[-n,0]}(R)$ for each $\lambda \in \Lambda$, we know that the canonical map 
$$\varinjlim \Hom_{\D(R)}(C[s],X_{\lambda}) \flecha \Hom_{\D(R)}(C[s],L^{'})$$
is an isomorphism, for each finitely generated module $C$ and each $s \in \mathbb{Z}$. When taking $s=-j+1$ and $C=R/\mathbf{p}$, with $\mathbf{p}\in \phi(j)$, we get that $\Hom_{\D(R)}(R/\mathbf{p}[-j],L^{'}[-1])=0$, for all $j\in \mathbb{Z}$ and all $\mathbf{p}\in \phi(j)$, since $X_{\lambda}\in \Hp$, for all $\lambda$. Therefore, we get that $L^{'}[-1]\in \mathcal{U}_{\phi}^{\perp}$, since $\phi$ is a decreasing map. It follows that $L^{'}\in \mathcal{U}_{\phi} \cap \mathcal{U}_{\phi}^{\perp}[1]=\Hp$.   
\end{proof}

\vspace{0.3 cm}

The crucial step for our desired induction argument is given by the following result.

\begin{lemma}\label{lem. iso canonical k---->k+1}
If $(X_{\lambda})$ is a direct system of $\Hp \cap \D^{\leq k+1}(R)$ such that the canonical morphism
\begin{center}
$\varphi_{j}:\varinjlim H^{j}(X_{\lambda}) \flecha H^{j}(\varinjlim_{\Hp} X_{\lambda})$
\end{center}
is an isomorphism, for all $j\leq k$, then $\varphi_{j}$ is an isomorphism, for all $j \in \mathbb{Z}$.
\end{lemma}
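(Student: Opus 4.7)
Since $X_\lambda \in \D^{\leq k+1}(R)$, one has $\varinjlim H^j(X_\lambda) = 0$ for $j > k+1$, so the task reduces to showing that $\varphi_{k+1}$ is an isomorphism and that $H^j(\varinjlim_{\Hp} X_\lambda) = 0$ for $j > k+1$. The support bounds from proposition \ref{prop. last iso efect domino}(2), namely $\Supp \Ker \varphi_j \subseteq \phi(j+1)$ and $\Supp \Coker \varphi_j \subseteq \phi(j+2)$, give a first restriction on the failure of $\varphi_j$ to be iso, but are not sufficient on their own.

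The plan is to invoke lemma \ref{lem. L' is in H} after producing a suitable candidate $L'$ for $L := \varinjlim_{\Hp} X_\lambda$. First I would verify that the top homology $H^{k+1}(X_\lambda)$ lies in $\T\F_{k+1}$: this follows from proposition \ref{pro. stalks on the heart} applied to the Postnikov triangle $\tau^{\leq k}(X_\lambda) \to X_\lambda \to H^{k+1}(X_\lambda)[-k-1] \xrightarrow{+}$ combined with the defining condition $X_\lambda \in \mathcal{U}_\phi^\perp[1]$. By corollary \ref{cor. TF closed cop rod. and lim.}, the module $M := \varinjlim H^{k+1}(X_\lambda)$ is then itself in $\T\F_{k+1}$, hence $M[-k-1] \in \Hp$.

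Next, construct $L^{(0)}$, a candidate direct limit of the truncated system $(\tau^{\leq k} X_\lambda)$, living in $\mathcal{H}_{\phi_{\leq k}}$ (in which each $\tau^{\leq k}(X_\lambda)$ sits by lemma \ref{lem. menor igual dos}), and characterized by $H^j(L^{(0)}) \cong \varinjlim H^j(X_\lambda)$ for $j \leq k$ and zero otherwise. Assemble $L'$ via a distinguished triangle
$$L^{(0)} \longrightarrow L' \longrightarrow M[-k-1] \xrightarrow{\ +\ }$$
whose connecting morphism is obtained as the colimit of the boundary maps in the Postnikov triangles for the individual $X_\lambda$. By construction, there is an induced morphism of systems $(X_\lambda) \to L'$ realizing, on each $H^j$, the canonical comparison map: this is an isomorphism for $j \leq k$ by the hypothesis and lemma \ref{lem. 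X ----> limite}, the identity of $M$ for $j = k+1$, and a map between zero modules for $j > k+1$. Lemma \ref{lem. L' is in H} then forces $L' \in \Hp$, and the universal property of the direct limit in $\Hp$ identifies $L' \cong L$, so every $\varphi_j$ is an isomorphism.

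The main obstacle will be the existence and correct homological profile of $L^{(0)}$: this amounts to a version of the same statement for the shorter filtration $\phi_{\leq k}$, and in the broader flow of the chapter it is precisely this step that fuels the induction on the length of $\phi$ used to establish theorem \ref{teo. first main chapter V}. A secondary technical point is confirming that the assembled $L'$ lies in $\mathcal{U}_\phi^\perp[1]$, which reduces to $\Ext$-vanishing of $R/\mathbf{p}$ (for $\mathbf{p} \in \phi(i)$) against both $L^{(0)}$ and $M[-k-1]$, ultimately rooted in $L^{(0)} \in \mathcal{H}_{\phi_{\leq k}}$ and $M \in \T\F_{k+1}$.
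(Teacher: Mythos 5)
Your argument breaks down at its first substantive step: the claim that $H^{k+1}(X_{\lambda})$ (and hence $M=\varinjlim H^{k+1}(X_{\lambda})$) lies in $\T\F_{k+1}$, so that $M[-k-1]$ is an object of $\Hp$ which can serve as the cokernel term of your triangle. This is not a consequence of the Postnikov triangle together with $X_{\lambda}\in \mathcal{U}_{\phi}^{\perp}[1]$: for a stalk $R/\mathbf{p}[-i]$ mapping into a complex of $\D^{\leq k+1}(R)$, the relevant Hom groups mix $\Ext^{j}_R(R/\mathbf{p},H^{k+1-j}(X_\lambda))$ for all $j\geq 0$, so the vanishing coming from the co-aisle condition does not isolate the conditions $\Ext^{j-1}_R(R/\mathbf{p},H^{k+1}(X_{\lambda}))=0$ ($\mathbf{p}\in\phi(k+1+j)$) required by proposition \ref{pro. stalks on the heart}. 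In fact the claim is false in general: already for $k+1=-1$ one has $\T\F_{-1}=\T_{-1}\cap\F_{0}$, and an object of $\Hp\cap\D^{\leq -1}(R)$ need not have $t_{0}$-torsionfree top homology; this is precisely why, in Step 2 of the proof of theorem \ref{teo. AB5 finite}, one has to split off $t_{0}(H^{-1}(X_{\lambda}))$ before any stalk argument can be made, and it reflects the phenomenon emphasized at the start of the chapter that homology stalks of objects of $\Hp$ need not belong to $\Hp$.

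Beyond this, the construction of $L^{(0)}$ is exactly the hard content and you cannot defer it: this lemma is one of the tools \emph{inside} the induction on $L(\phi)$ in theorem \ref{teo. AB5 finite}, so proving it by appealing to "the same statement for $\phi_{\leq k}$" would be circular unless the induction is restructured. Also, "the colimit of the boundary maps" of the Postnikov triangles is not a legitimate connecting morphism in $\D(R)$ (cones are not functorial and these maps do not live in an abelian category), and even granting $L'\in\Hp$ with the right homologies, a universal-property comparison $L\to L'$ only tells you that the composites $\varinjlim H^{j}(X_{\lambda})\flecha H^{j}(L)\flecha H^{j}(L')$ are isomorphisms, which does not by itself make each $\varphi_{j}$ an isomorphism, since for a general $\phi$ there is no a priori surjectivity of $\varphi_j$. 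The paper avoids all of this by working directly with the colimit-defining morphism $g:\coprod X_{\lambda\mu}\flecha\coprod X_{\lambda}$: it sets $M=\Ker_{\Hp}(g)$, uses the octahedral axiom to replace $M$ by $\tau^{\leq k+1}(M)$ in the triangle defining the cone $Z$ of $g$, checks via the homology sequences and lemma \ref{lem. L' is in H} that the resulting object $L'$ lies in $\Hp$ and that $\tau^{\leq k+1}(M)\in\Hp$, and then the explicit BBD construction of cokernels identifies $L'\cong\Coker_{\Hp}(g)=\varinjlim_{\Hp}X_{\lambda}$ by construction, with no separate stalk-of-top-homology input needed.
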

\begin{proof}
Without loss of generality, we will assume that $-n\leq k+1$ (see lemma \ref{lem. bounded heart}). Let $g:\coprod X_{\lambda \mu} \flecha \coprod X_{\lambda}$ be the colimit-defining morphism and put $L:=\varinjlim_{\Hp}X_{\lambda}$. By \cite{BBD}, there exist a $M \in \Hp$ (in fact $M=\Ker_{\Hp}(g)$) and a diagram of the form, where the vertical and horizontal sequences are both triangles in $\D(R)$:
$$\xymatrix{&&M[1] \ar[d]\\ \coprod X_{\lambda \mu} \ar[r]^{g} & \coprod X_{\lambda} \ar[r] & Z \ar[r]^{+} \ar[d] & \\ && L \ar[d]^{+}\\ &&} $$
By using the octahedral axiom, we then get another commutative diagram, where rows and columns are triangles:
$$\xymatrix{\tau^{\leq k+1}(M)[1] \ar[r] \ar@{=}[d] & M[1] \ar[r] \ar[d] & \tau^{>k+1}(M)[1] \ar[r]^{\hspace{1.1 cm}+} \ar[d] &\\ \tau^{\leq k+1}(M)[1] \ar[d] \ar[r] & Z \ar[r] \ar[d] & L^{'} \ar[r]^{+} \ar[d] & \\ 0 \ar[r] \ar[d]^{+} & L \ar@{=}[r] \ar[d]^{+} & L \ar[d]^{+} \ar[r]^{+} & \\ &&&}$$

We now use the exact sequences of homologies associated to the different triangles appearing in these diagrams. When applied to the horizontal triangle in the first diagram, we get an isomorphism $\varinjlim H^{k+1}(X_{\lambda}) \cong \Coker(H^{k+1}(g)) \iso H^{k+1}(Z)$ and get that $Z\in \D^{\leq k+1}(R)$. When applied to the central horizontal triangle in the second diagram, we get another isomorphism $H^{k+1}(Z)\iso H^{k+1}(L^{'})$ and we also get that $L^{'}\in \D^{[-n-1,k+1]}(R)$. But, when applied to the right vertical triangle of the second diagram, we get that $H^{-n-1}(L^{'})=0$ since $L\in \Hp \subseteq \D^{[-n,0]}(R)$ (see lemma \ref{lem. bounded heart}) and $-n \leq k+1$. It follows that $L^{'}\in \D^{[-n,k+1]}(R)$.\\

Note that, when viewing $Z$ and $L^{'}$ as constant $\Lambda$-direct systems in $\D(R)$, we get morphisms of direct systems $(X_{\lambda} \flecha Z \flecha L^{'})$, which yield a morphism \linebreak $\psi_{j}:\varinjlim H^{j}(X_{\lambda}) \flecha H^{j}(L^{'})$ in $R$-Mod, for each $j \in \mathbb{Z}$. We claim that $\psi_{j}$ is an isomorphism, for all $j\in \mathbb{Z}$. The previous paragraph proves the claim for $j=k+1$ and the fact that $X_{\lambda},L^{'} \in \D^{[-n,k+1]}(R)$ reduces the proof to the case when $j\in \{-n,-n+1,\dots,k\}$. But, when we apply the exact sequence of homologies to the right vertical triangle of the second diagram above, we get an isomorphism $H^{j}(L^{'})\cong H^{j}(L)$, for $-n \leq j <k$, and a monomorphism $H^{k}(L^{'}) \monic H^{k}(L)$, since $H^{j}(\tau^{>k+1}(M)[1])=H^{j+1}(\tau^{>k+1}(M))=0$, for all $j \leq k$. The hypothesis of the present lemma then gives that $\psi_{j}$ is an isomorphism, for all $j<k$, and a composition of morphism $\varinjlim H^{k}(X_{\lambda}) \flecha H^{k}(L^{'}) \flecha H^{k}(L)$, which is an isomorphism, and whose second arrow is a monomorphism. It then follows that both arrows in this composition are isomorphisms, thus settling our claim. \\

Once we know that $\psi_{j}$ is an isomorphism, for all $j\in \mathbb{Z}$, lemma \ref{lem. L' is in H} says that $L^{'}\in \Hp$. Hence, $L^{'}[-1]\in \mathcal{U}_{\phi}^{\perp}$, in particular $L^{'}[-3]\in \mathcal{U}_{\phi}^{\perp}$. Now, using the central horizontal triangle of the second diagram above, we obtain the following triangle in $\D(R)$:

$$\xymatrix{L^{'}[-3] \ar[r] & \tau^{\leq k+1}(M)[-1] \ar[r] & Z[-2] \ar[r]^{\hspace{0.5 cm}+} & }$$

Due to the fact that $Z\in \mathcal{U}_{\phi}^{\perp}[2]$, we deduce that $\tau^{\leq k+1}(M)[-1] \in \mathcal{U}_{\phi}^{\perp}$. It follows that $\tau^{\leq k+1}(M)\in \Hp$, since, due to the fact that $M\in \Hp$, we have that $\Supp(H^{j}(\tau^{\leq k+1}(M)))\subseteq \phi(j)$, for all $j\in \mathbb{Z}$. Using now the explicit calculation of $\Coker_{\Hp}(g)$ (\cite{BBD}), we then get that $L^{'}\cong \Coker_{\Hp}(g)=L$ in $\Hp$.    
\end{proof}

\vspace{0.4 cm}

The following is the first main result of the section.

\begin{theorem}\label{teo. AB5 finite}
Let $\phi$ be a left bounded eventually trivial sp-filtration. The classical cohomological functor $H^{k}:\Hp \flecha R$-Mod preserves direct limits, for each integer $k$. In particular, $\Hp$ is an AB5 abelian category.
\end{theorem}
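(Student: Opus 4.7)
The plan is to prove the statement by induction on the length $L(\phi)$ of the sp-filtration $\phi$, which (after relabeling) I may take to be of the form
$$\cdots = \phi(-n-1) = \phi(-n) \supsetneq \phi(-n+1) \supseteq \cdots \supseteq \phi(0) \supsetneq \phi(1) = \emptyset = \cdots$$
so that $L(\phi) = n+1$. The base case $n = 0$ is essentially trivial: lemma \ref{lem. bounded heart} forces $\Hp \subseteq \D^{[0,0]}(R)$, and combined with proposition \ref{pro. stalks on the heart} one gets that $H^{0}$ yields an equivalence $\Hp \iso \T_{0}$. Since $\T_{0}$ is closed under direct limits in $R$-Mod (remark \ref{rem. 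F closed under direct limits}), $H^{0}$ trivially preserves direct limits while all $H^{k}$ with $k \ne 0$ vanish on $\Hp$.

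For the inductive step, assume the statement holds for every left bounded eventually trivial sp-filtration of length $\leq n$, and let $\phi$ have length $n+1$. Given a direct system $(X_{\lambda})$ in $\Hp$ with canonical morphism $\varphi_{j} \colon \varinjlim H^{j}(X_{\lambda}) \flecha H^{j}(\varinjlim_{\Hp} X_{\lambda})$, proposition \ref{prop. last iso efect domino}(1), applied at the top degree $t=0$, immediately gives that $\varphi_{0}$ is an isomorphism. Lemma \ref{lem. iso canonical k---->k+1} then reduces the whole task to showing that $\varphi_{j}$ is an isomorphism for all $j \leq -1$; equivalently, to proving the preservation of direct limits on the subcategory $\Hp \cap \D^{\leq -1}(R)$.

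For a direct system $(Y_{\lambda})$ in $\Hp \cap \D^{\leq -1}(R)$, lemma \ref{lem. menor igual dos} tells us that $(Y_{\lambda})$ is \emph{simultaneously} a direct system in $\mathcal{H}_{\phi_{\leq -1}}$, whose defining sp-filtration $\phi_{\leq -1}$ has length $n$. The inductive hypothesis therefore yields that the analogous canonical morphisms for the colimit in $\mathcal{H}_{\phi_{\leq -1}}$ are isomorphisms in every degree. To transfer this information to the colimit in $\Hp$, I would invoke lemma \ref{lem. comparate sort consecutive filt.}, which provides a triangle
$$T[1] \flecha \varinjlim_{\mathcal{H}_{\phi_{\leq -1}}} Y_{\lambda} \flecha \varinjlim_{\Hp} Y_{\lambda} \flecha +$$
with $T \in \T_{0}$, and then read off the comparison from the induced long exact cohomology sequence.

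The main obstacle will be controlling the stalk $T[1]$ sitting in cohomological degree $-1$: if $T$ is nonzero it could in principle break the isomorphism $\varphi_{-1}$. I expect to rule this out by combining proposition \ref{prop. last iso efect domino}, applied this time to $\phi$ itself (which already forces $\varphi_{-1}$ to be an epimorphism with controlled support) with the localization technique of corollary \ref{cor. localization AB5} to pin down $\Supp(T)$; the extra constraint that both colimits live in $\D^{\leq -1}(R)$ should then force the contribution of $T$ to vanish. Once $\varphi_{j}$ is established for all $j$ and all direct systems, the final statement that $\Hp$ is AB5 follows at once from corollary \ref{All functor homology}.
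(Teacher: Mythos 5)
Your skeleton is the right one and coincides with the paper's: induction on $L(\phi)$, base case from proposition \ref{prop. last iso efect domino}, lemma \ref{lem. iso canonical k---->k+1} to propagate isomorphisms upward in degree, and lemma \ref{lem. comparate sort consecutive filt.} to compare $\Hp$ with $\mathcal{H}_{\phi_{\leq -1}}$. But the two places where you compress the argument are exactly the two places where the real work happens, and in both your sketch falls short.

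First, the reduction of a general direct system $(X_\lambda)$ in $\Hp$ to one in $\Hp\cap\D^{\leq -1}(R)$ is not an ``equivalence''. Lemma \ref{lem. iso canonical k---->k+1} asks for $\varphi_j$ to be an isomorphism for $j\leq -1$ \emph{for the system $(X_\lambda)$ itself}, whereas knowing the theorem on $\Hp\cap\D^{\leq -1}(R)$ only controls the system $(\tau^{\leq -1}(X_\lambda))$. To pass from one to the other you must compare $\varinjlim_{\Hp}\tau^{\leq -1}(X_\lambda)$ with $\tau^{\leq -1}(\varinjlim_{\Hp}X_\lambda)$; right exactness of colimits only gives an epimorphism between them, and identifying its kernel $W''$ as a stalk $T[0]$ with $T\in\T_0$ requires the localization argument of the paper's Step 3, followed by one more application of proposition \ref{prop. last iso efect domino} to kill the resulting error in degree $-1$. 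This step is absent from your proposal.

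Second, and more seriously, your treatment of systems in $\Hp\cap\D^{\leq -1}(R)$ does not close. The triangle $T[1]\to\varinjlim_{\mathcal{H}_{\phi_{\leq -1}}}X_\lambda\to\varinjlim_{\Hp}X_\lambda\to +$ together with the induction hypothesis settles $\varphi_j$ only for $j\leq -3$ (this is precisely the paper's Step 1, which is why the paper states it for systems in $\D^{\leq -2}(R)$, where lemma \ref{lem. iso canonical k---->k+1} can then be invoked with $k=-3$). In degrees $-2$ and $-1$ the stalk $T$ genuinely interferes, and the tools you propose cannot remove it: localization at primes outside $\phi(0)$ (equivalently proposition \ref{prop. last iso efect domino}(2)) only shows that $\Ker(\varphi_j)$ and $\Coker(\varphi_j)$ lie in $\T_0$ --- which $T$ already does --- and the observation that both colimits lie in $\D^{\leq -1}(R)$ is fully consistent with the exact sequence $0\to H^{-2}(L')\to H^{-2}(L)\to T\to H^{-1}(L')\to H^{-1}(L)\to 0$ for any $T$, so it forces nothing. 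Note also that $H^{-1}$ is the \emph{top} homology here, so lemma \ref{lem. last homology}(3) (which constrains only the bottom homology to be torsion-free) gives no injectivity of $T\to H^{-1}(L')$. The paper's Step 2 instead uses the octahedral axiom to split off $t_0(H^{-1}(X_\lambda))$, producing auxiliary systems $(Y_\lambda)$ and two direct systems of short exact sequences, and then kills the offending $\T_0$-modules by combining lemma \ref{lem. T0 closed under subobjects} with the fact that the bottom homology of an object of the heart lies in $\F_0$. Without some substitute for that construction, your induction does not advance past degree $-3$.
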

\begin{proof}
The final part of the statement follows from corollary \ref{All functor homology}. Without loss of generality, if $L(\phi)=n+1$ we will assume that $\phi$ is the filtration given by
$$\cdots=\phi(-n-k)=\phi(-n)\supsetneq \phi(-n+1) \supseteq \phi(-n+2) \supseteq \cdots \supseteq \phi(0)  \supsetneq \phi(1)=\cdots =\emptyset$$

The proof will be done by induction on the length $L(\phi)=n+1$ of the filtration. The cases $L(\phi)=0,1$ are covered by proposition \ref{prop. last iso efect domino} (see lemma \ref{lem. bounded heart}). We then assume in the sequel that $n\geq 1$, that $L(\phi)=n+1$ and that the result is true for finite sp-filtrations of length less than or equal to $n$. Let $(X_{\lambda})$ be a direct system in $\Hp$. The proof is divided into several steps. \\

\emph{Step 1}: The result is true when $X_{\lambda}\in \D^{\leq -2}(R)$, for all $\lambda \in \Lambda$.\\
In such case, by lemma \ref{lem. menor igual dos} we know that $(\tau^{\leq -1}(X_{\lambda}))=(X_{\lambda})$ is a direct system of $\Hp \cap \mathcal{H}_{\phi_{\leq -1}}$ and from lemma \ref{lem. comparate sort consecutive filt.}, we get a triangle of the form:

$$\xymatrix{T[1] \ar[r] & \varinjlim_{\mathcal{H}_{\phi_{\leq -1}}}X_{\lambda} \ar[r] & \varinjlim_{\Hp} X_{\lambda} \ar[r]^{\hspace{0.75 cm}+}&}$$
with $T\in \T_{0}$. Since $L(\phi_{\leq -1}) \leq n$, the induction hypothesis gives that the canonical map $\varinjlim H^{k}(X_{\lambda}) \flecha H^{k}(\varinjlim_{\mathcal{H}_{\phi_{\leq -1}}} X_{\lambda})$ is an isomorphism, for all integers $k$. On the other hand, from the sequence of homologies applied to the previous triangle, we get that the canonical maps $\varinjlim H^{k}(X_{\lambda}) \iso H^{k}(\varinjlim_{\mathcal{H}_{\phi_{\leq -1}}} X_{\lambda}) \iso H^{k}(\varinjlim_{\Hp} X_{\lambda})$ are isomorphisms, for all $k \leq -3$. By lemma \ref{lem. iso canonical k---->k+1}, we conclude that the canonical morphism $\varinjlim H^{k}(X_{\lambda}) \cong H^{k}(\varinjlim_{\Hp}X_{\lambda})$ is an isomorphism all $k\in \mathbb{Z}$. \\

\emph{Step 2}: The result is true when $X_{\lambda} \in \D^{\leq -1}(R)$, for all $\lambda \in \Lambda$. \\
Using the octahedral axiom, for each $\lambda$, we obtain the following commutative diagram:

$$\xymatrix{&&&&\\ &&&t_{0}(H^{-1}(X_{\lambda}))[1] \ar[ur]^{+} \ar[dd] & \\ & Y_{\lambda} \ar[urr] \ar[dr] &&&\\ \tau^{\leq -2}(X_{\lambda}) \ar[ru] \ar[rr] && X_{\lambda} \ar[r] \ar[rd] & H^{-1}(X_{\lambda})[1] \ar[r]^{\hspace{0.8 cm}+} \ar[d] & \\ &&&(1:t_{0})(H^{-1}(X_{\lambda}))[1] \ar[d]^{+} \ar[dr]^(0.59){+}& \\ &&&&& }$$
From the triangle $\xymatrix{\tau^{\leq -2}(X_{\lambda}) \ar[r] & Y_{\lambda} \ar[r] & t_{0}(H^{-1}(X_{\lambda}))[1] \ar[r]^{\hspace{1.3 cm}+} & }$, we get that $Y_{\lambda}\in \mathcal{U}_{\phi}$, while from the triangle $\xymatrix{(1:t_{0})(H^{-1}(X_{\lambda}))[0] \ar[r] & Y_{\lambda} \ar[r] & X_{\lambda} \ar[r]^{+} &}$, and the fact that \linebreak $(1:t_0)(H^{-1}(X_{\lambda}))\in \T\F_{-1}$ (see example \ref{exam. TF}) we get that $Y_{\lambda}\in \mathcal{U}_{\phi}^{\perp}[1]$. We then have that $Y_{\lambda}\in \Hp$. Moreover, using lemma \ref{lem. menor igual dos} we get direct systems of exact sequences in $\Hp$ (note that, in particular $(Y_{\lambda})$ form a direct system of $\Hp$ since it is obtained as the cokernel of a morphisms of direct systems in $\Hp$):
$$\xymatrix{0 \ar[r] & t_{0}(H^{-1}(X_{\lambda}))[0] \ar[r] & \tau^{\leq -2}(X_{\lambda}) \ar[r] & Y_{\lambda} \ar[r] & 0 & \ar@{}[d]^{(\ast)} \\ 0 \ar[r] & Y_{\lambda} \ar[r] & X_{\lambda} \ar[r] & (1:t_0)(H^{-1}(X_{\lambda}))[1] \ar[r] & 0 & }$$

Applying the direct limit functor and putting $T:=t_{0}(\varinjlim H^{-1}(X_{\lambda}) \cong \varinjlim t_{0}(H^{-1}(X_{\lambda})))$ (see remark \ref{rem. F closed under direct limits}), we get an exact sequence in $\Hp$ of the form (see proposition \ref{prop. limits stalk}):
\begin{center}
$\xymatrix{T[0] \ar[r] & \varinjlim_{\Hp} \tau^{\leq -2}(X_{\lambda}) \ar[r]^{\hspace{0.6 cm}p} & \varinjlim_{\Hp} Y_{\lambda} \ar[r] & 0}$
\end{center}
If $W:=\Ker_{\Hp}(p)$ then, by lemma \ref{lem. T0 closed under subobjects}, we have an exact sequence in $\Hp$ of the form:
$$0 \flecha T^{'}[0] \flecha T[0] \flecha W \flecha 0$$ 
with $T^{'} \in \T_{0}$. From the sequence of homologies applied to the previous sequence, we obtain that $W\in \D^{[-1,0]}(R)$ and $H^{k}(W)\in \T_{0}$ for all $k\in\{-1,0\}$, since $\T_{0}$ is closed under taking subobjects and quotients. But assertion 3 of lemma \ref{lem. last homology}, then says that $H^{-1}(W)\in \F_{0}$, and this implies that $W\cong H^{0}(W)[0]\in \T_{0}[0]$. We then have isomorphisms $H^{k}(X_{\lambda})=H^{k}(\tau^{\leq -2}(X_{\lambda})) \cong H^{k}(Y_{\lambda})$ and $H^{k}(\varinjlim_{\Hp}\tau^{\leq -2}(X_{\lambda})) \cong H^{k}(\varinjlim_{\Hp}Y_{\lambda})$, for all $k \leq -2$. By step 1 of this proof, we get an isomorphism 
\begin{center}
$\xymatrix{\varinjlim H^{k}(Y_{\lambda}) \ar[r] & \varinjlim H^{k}(X_{\lambda}) \ar[r]^{\sim}& H^{k}(\varinjlim_{\Hp}\tau^{\leq -2}(X_{\lambda})) \cong H^{k}(\varinjlim_{\Hp} Y_{\lambda})}$
\end{center}
for all $k\leq -2$. By lemma \ref{lem. iso canonical k---->k+1}, we then get that the canonical map 
\begin{center}
$\varinjlim H^{k}(Y_{\lambda}) \flecha H^{k}(\varinjlim_{\Hp}Y_{\lambda})$
\end{center}
is an isomorphism, for all $k\in \mathbb{Z}$. \\

Bearing in mind that $L(\phi_{\leq -1})\leq n$, lemma \ref{lem. comparate sort consecutive filt.} and the induction hypothesis (similar to step 1) imply that we have isomorphisms $\varinjlim H^{k}(X_{\lambda}) \cong H^{k}(\varinjlim_{\mathcal{H}_{\phi \leq -1}}X_{\lambda}) \cong H^{k}(\varinjlim_{\Hp} X_{\lambda})$, for all $k \leq -3$. They also imply the following facts:
\begin{enumerate}
\item[1)] $\varinjlim H^{-2}(X_{\lambda}) \cong H^{-2}(\varinjlim_{\mathcal{H}_{\phi_{\leq -1}}}X_{\lambda}) \flecha H^{-2}(\varinjlim_{\Hp}X_{\lambda})$ is a monomorphism with cokernel in $\T_{0}$;

\item[2)] $\varinjlim H^{-1}(X_{\lambda}) \cong H^{-1}(\varinjlim_{\mathcal{H}_{\phi_{\leq -1}}}X_{\lambda}) \flecha H^{-1}(\varinjlim_{\Hp}X_{\lambda})$ is an epimorphism with kernel in $\T_{0}.$ 
\end{enumerate}
We next apply the direct limit functor to the second direct system in $(\ast)$ to get an exact sequence in $\Hp$ (see proposition \ref{prop. limits stalk}):
$$\xymatrix{\varinjlim_{\Hp}Y_{\lambda} \ar[r]^{\alpha} & \varinjlim_{\Hp}X_{\lambda} \ar[r] & (\varinjlim (1:t_{0})(H^{-1}(X_{\lambda})))[1] \ar[r] & 0}$$
We put $Z:=\Imagen(\alpha)$, and we consider the two induced short exact sequences in $\Hp$:
$$\xymatrix{0 \ar[r] & W^{'} \ar[r] & \varinjlim_{\Hp}Y_{\lambda} \ar[r] & Z \ar[r] & 0 \\ 0 \ar[r] & Z \ar[r] & \varinjlim_{\Hp}X_{\lambda} \ar[r] & (\varinjlim (1:t_{0})(H^{-1}(X_{\lambda})))[1] \ar[r] & 0}$$
From the first one we get that $Z\in \D^{\leq -1}(R)$, since the functor $H^{0}:\Hp \flecha R$-Mod, is right exact and the $Y_\lambda$ are in $\D^{\leq -1}(R)$. Now, using the exact sequence of homologies for the second one, we get that $H^{k}(Z) \cong H^{k}(\varinjlim_{\Hp}X_{\lambda})$, for all $k \leq -2$. On the other hand, when applying the exact sequence of homologies to the triangle:
$$\xymatrix{\tau^{\leq -2}(X_{\lambda}) \ar[r] & Y_{\lambda} \ar[r] & t_{0}(H^{-1}(X_{\lambda}))[1] \ar[r]^{\hspace{1.2 cm}+} &}$$
we get that the canonical map $H^{k}(X_{\lambda}) \flecha H^{k}(Y_{\lambda})$ is an isomorphism, for all $\lambda \in \Lambda$ and all $k \leq -2$. If $p:\varinjlim_{\Hp}Y_{\lambda} \epic Z$ is the epimorphism in the first sequence above, then we get that the morphisms:
\begin{center}
$\varinjlim H^{k}(Y_{\lambda}) \cong H^{k}(\varinjlim_{\Hp}Y_{\lambda}) \xymatrix{\ar[r]^{H^{k}(p)} &} H^{k}(Z)\cong \varinjlim H^{k}(X_{\lambda})$
\end{center}
are an isomorphism, for all $k \leq -3$ and a monomorphism with cokernel in $\T_{0}$, for $k=-2$. This shows that $W^{'}\in \D^{[-1,0]}(R)$. Moreover, we have an induced exact sequence:
$$\xymatrix{0 \ar[r] & \varinjlim H^{-2}(Y_{\lambda}) \ar[rr]^{\hspace{0.3 cm}H^{-2}(p)} && H^{-2}(Z) \ar[r] & H^{-1}(W^{'}) \ar[r] & \varinjlim H^{-1}(Y_{\lambda})}$$
Since $H^{-1}(Y_{\lambda})=t_{0}(H^{-1}(X_{\lambda}))$ and $\Coker(H^{-2}(p))$ are in $\T_{0}$,  we derived that $H^{-1}(W)\in \T_{0}$. But assertion 3 of lemma \ref{lem. last homology}, says that $H^{-1}(W^{'})\in \F_{0}$. It follows that $H^{-1}(W^{'})=0$, which implies that the canonical map $H^{-2}(p)$ is an isomorphism. We then get that the canonical map $\xymatrix{\varinjlim H^{k}(X_{\lambda}) \ar[r] & H^{k}(\varinjlim_{\Hp} X_{\lambda})}$ is an isomorphism, for all $k\leq -2$ (since $H^{-2}(Z)\cong H^{-2}(\varinjlim_{\Hp}X_{\lambda})$). By lemma \ref{lem. iso canonical k---->k+1}, we conclude that this is also an isomorphism, for all $k\in \mathbb{Z}$. \\

\emph{Step 3:} The general case.\\
Let $(X_{\lambda})$ be an arbitrary direct system of $\Hp$. From lemma \ref{lem. menor igual dos} and examples \ref{exam. TF}, we have a direct system of exact sequences in $\Hp$, of the form:
$$\xymatrix{0 \ar[r] & \tau^{\leq -1}(X_{\lambda}) \ar[r] & X_{\lambda} \ar[r] & H^{0}(X_{\lambda})[0] \ar[r] & 0}$$
After taking direct limits, we get the following commutative diagram, with exact rows, in $\Hp$ (see proposition \ref{prop. limits stalk}):
$$\xymatrix{& \varinjlim_{\Hp} \tau^{\leq -1}(X_{\lambda}) \ar[r] \ar[d]^{\epsilon} & \varinjlim_{\Hp}X_{\lambda} \ar[r] \ar@{=}[d] & (\varinjlim{H^{0}(X_{\lambda})})[0] \ar[r] \ar[d]^{\wr} & 0\\ 0 \ar[r] & \tau^{\leq -1}(\varinjlim_{\Hp} X_{\lambda}) \ar[r] & \varinjlim_{\Hp}X_{\lambda} \ar[r] & H^{0}(\varinjlim_{\Hp}X_{\lambda})[0] \ar[r] & 0}$$

Its right vertical arrow is an isomorphism since $H^{0}: \Hp \flecha R$-Mod preserves direct limits. By snake lemma, the morphism $\epsilon$ is an epimorphism. We next consider the exact sequence in $\Hp$:
$$\xymatrix{0 \ar[r] & W^{''} \ar[r] & \varinjlim_{\Hp} \tau^{\leq -1}(X_{\lambda}) \ar[r]^{\epsilon} & \tau^{\leq -1}(\varinjlim_{\Hp} X_{\lambda}) \ar[r] & 0}$$
When localizing at a prime $\mathbf{p}\in \Spec(R)\setminus \phi(0)$, the filtration $\phi_{\mathbf{p}}$ has length less than or equal to $n$, since $\phi_{\mathbf{p}}(0)=\phi(0)\cap \Spec(R_{\mathbf{p}})=\emptyset$. Thus, by the induction hypothesis, we then get isomorphisms $\varinjlim H^{k}( R_{\mathbf{p}} \otimes_{R}\tau^{\leq -1}(X_{\lambda})   ) \iso H^{k}(\varinjlim_{\mathcal{H}_{\phi_\p}}( R_{\mathbf{p}}  \otimes_{R} \tau^{\leq -1} (X_{\lambda})) )$ and $\varinjlim H^{k}(  R_{\mathbf{p}} \otimes_{R} X_{\lambda}) \iso H^{k}(\varinjlim_{\mathcal{H}_{\phi_\p}} ( R_{\mathbf{p}} \otimes_{R} X_{\lambda}  ))$, for all $k\in \mathbb{Z}$ (see lemma \ref{lem. first localization}). But the canonical morphism $\tau^{\leq -1}(R_{\mathbf{p}}  \otimes_{R} X_{\lambda}  )=  R_{\mathbf{p}}  \otimes_{R} \tau^{\leq -1}(X_{\lambda})  \flecha R_{\mathbf{p}} \otimes_{R} X_{\lambda}    $ is an isomorphism in $\D(R_{\mathbf{p}})$ since $R_{\mathbf{p}} \otimes_{R} H^{0}(X_{\lambda})  =0$, for all $\lambda \in \Lambda$. It follows that:

\begin{center} 
$H^{k}(1_{R_{\mathbf{p}}}  \otimes \epsilon  ):H^{k}(  \varinjlim_{\Hp}  (R_{\mathbf{p}} \otimes_{R} \tau^{\leq -1}(X_{\lambda}))) \flecha H^{k}(\varinjlim_{\Hp}( R_{\mathbf{p}} \otimes_{R}X_{\lambda}) )$
\end{center}

is an isomorphism, for all $k \in \mathbb{Z}$, which implies that $ 1_{R_{\mathbf{p}}} \otimes_{R} \epsilon $ is an isomorphism in $\D(R_{\mathbf{p}})$, for all $\mathbf{p} \in \Spec(R) \setminus \phi(0)$. This in turn implies that $\Supp(H^{k}(W^{''})) \subseteq \phi(0)$ (equivalently $H^{k}(W^{''})\in \T_{0}$), for all $k \in \mathbb{Z}$. Hence, we get that $\tau^{\leq -1}(W^{''})[-1]\in \mathcal{U}_{\phi}$. But then we have  $\tau^{\leq -1}(W^{''})=0$ since $\tau^{\leq -1}$$(W^{''})\in \Hp=\mathcal{U}_{\phi} \cap \mathcal{U}_{\phi}^{\perp}[1]$. This shows that $W^{''}\cong T[0]$, for some $T\in \T_{0}$.\\

By applying the exact sequence of homologies to the triangle:
$$\xymatrix{T[0] \ar[r] & \varinjlim_{\Hp} \tau^{\leq -1}(X_{\lambda}) \ar[r]^{\epsilon} & \tau^{\leq -1}(\varinjlim_{\Hp} X_{\lambda}) \ar[r]^{\hspace{1.3 cm}+} & }$$
and, using step 2 of this proof, we get isomorphisms: 
\begin{center}
$\varinjlim H^{k}(X_{\lambda}) \cong H^{k}(\varinjlim_{\Hp}\tau^{\leq -1}(X_{\lambda})) \iso H^{k}(\tau^{\leq -1}(\varinjlim_{\Hp} X_{\lambda}))=H^{k}(\varinjlim_{\Hp}X_{\lambda})$
\end{center}
for all $k \leq -2$, and an exact sequence:
$$\xymatrix{0 \ar[r] & \varinjlim H^{-1}(X_{\lambda}) \ar[r] & H^{-1}(\varinjlim_{\Hp}X_{\lambda}) \ar[r] & T \ar[r] & 0}$$
By proposition \ref{prop. last iso efect domino}, we conclude that the canonical map $\varinjlim H^{k}(X_{\lambda}) \flecha H^{k}(\varinjlim_{\Hp} X_{\lambda})$ is an isomorphism, for all $k \leq -1$ and, hence, for all $k\in \mathbb{Z}$.
\end{proof}


\subsection{Infinite Case}
If $\phi$ is a left bounded sp-filtration (not necessary eventually trivial), its associated heart $\Hp$, can have complexes with nonzero homology in infinitely many degrees. That is, in general, $\Hp$ is not contained in $\D^{b}(R)$. The next result tells us that, despite this fact, $\Hp$ is a Grothendieck category, which is evidently a leap foward in the complexity level of the case Happel-Reiten-Smal\o \ and theorem \ref{teo. AB5 finite}. \\

In this subsection, $\phi$ is a sp-filtration which is left bounded\index{filtration! left bounded}, i.e., there is a $n$ integer such that $\phi(n)=\phi(n-k)$, for all integers $k\leq 0$. \\

We are ready to prove the first main result of this chapter.

\begin{theorem}\label{teo. first main chapter V}
Let $\phi$ be any left bounded sp-filtration. Then $\Hp$ is a Grothendieck category. 
\end{theorem}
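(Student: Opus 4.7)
The plan is to reduce the assertion for an arbitrary left bounded $\phi$ to the eventually trivial case already handled in theorem \ref{teo. AB5 finite}. First, by proposition \ref{prop. H has a generator} we know that $\Hp$ has a generator, so the only missing ingredient is the AB5 condition. By corollary \ref{All functor homology}, it suffices to prove that, for every direct system $(X_\lambda)_{\lambda\in\Lambda}$ in $\Hp$ and every integer $k$, the canonical map $\varphi_k:\varinjlim H^k(X_\lambda)\flecha H^k(\varinjlim_{\Hp}X_\lambda)$ is an isomorphism.

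Fix such a $k$, set $L:=\varinjlim_{\Hp}X_\lambda$, and consider the truncated filtration $\phi_{\leq k+2}$, which is both left bounded and eventually trivial. By lemma \ref{lem. menor igual dos} (with $j=2$), $\tau^{\leq k+2}(X_\lambda)\in\mathcal{H}_{\phi_{\leq k+2}}$ for every $\lambda$, and also $\tau^{\leq k+2}(L)\in\mathcal{H}_{\phi_{\leq k+2}}$. Theorem \ref{teo. AB5 finite} applied to $\phi_{\leq k+2}$ gives that the classical cohomology functors preserve direct limits in $\mathcal{H}_{\phi_{\leq k+2}}$; in particular, writing $L':=\varinjlim_{\mathcal{H}_{\phi_{\leq k+2}}}\tau^{\leq k+2}(X_\lambda)$, the canonical map $\varinjlim H^k(\tau^{\leq k+2}(X_\lambda))\flecha H^k(L')$ is an isomorphism. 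Since $H^k(\tau^{\leq k+2}(X_\lambda))=H^k(X_\lambda)$ and $H^k(\tau^{\leq k+2}(L))=H^k(L)$, the task reduces to producing a canonical morphism $L'\flecha \tau^{\leq k+2}(L)$ and proving that it induces isomorphisms on $H^j$ for all $j\leq k$.

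To build and analyse this comparison I would proceed as in lemmas \ref{lem. comparate sort consecutive filt.}--\ref{lem. iso canonical k---->k+1}. Consider the colimit-defining triangle $\coprod X_{\lambda\mu}\xrightarrow{g}\coprod X_\lambda\flecha Z\xrightarrow{+}$ in $\D(R)$; by lemma \ref{lemma de adjunctions}, $L=\tau^{>}_{\phi}(Z[-1])[1]$ and, after truncation, $L'=\tau^{>}_{\phi_{\leq k+2}}(Z'[-1])[1]$ where $Z'$ is the cone of the induced morphism between the truncated coproducts. An application of the octahedral axiom, combined with the fact that $\tau^{>}_{\phi}$ and $\tau^{>}_{\phi_{\leq k+2}}$ agree on complexes whose homology below degree $k+3$ is already controlled by $\phi(k+1)$ and $\phi(k+2)$, yields a triangle of the form
\[
N \flecha L' \flecha \tau^{\leq k+2}(L) \xrightarrow{+}
\]
with $N\in \D^{>k+1}(R)$. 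Taking the exact sequence of homologies then shows that $H^j(L')\cong H^j(\tau^{\leq k+2}(L))=H^j(L)$ for all $j\leq k$, which together with the isomorphism supplied by theorem \ref{teo. AB5 finite} proves that $\varphi_k$ is an isomorphism.

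The delicate point, and the main obstacle, is precisely the octahedral/truncation comparison producing a cone $N$ which is concentrated in degrees strictly greater than $k+1$: this is where the stalk-calculus and the precise support conditions built into the hearts $\Hp$ and $\mathcal{H}_{\phi_{\leq k+2}}$ must be combined. Once this is in place, since $k$ was arbitrary, one concludes that every $H^k$ preserves direct limits and hence, by corollary \ref{All functor homology}, that $\Hp$ is AB5 and thus a Grothendieck category.
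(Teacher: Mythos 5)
There is a genuine gap at the centre of your argument: the comparison triangle $N \flecha L' \flecha \tau^{\leq k+2}(L) \xrightarrow{+}$ with $N\in \D^{>k+1}(R)$ is asserted, not proved, and the tools you invoke do not deliver it. The only result in the paper that compares right truncations for two different sp-filtrations, proposition \ref{prop. coro AJS}, is valid exclusively for \emph{finite} sp-filtrations, which is exactly why lemma \ref{lem. comparate sort consecutive filt.} and the induction of theorem \ref{teo. AB5 finite} work in the eventually trivial case and stop there. For a general left bounded $\phi$ the functors $\tau^{>}_{\phi}$ and $\tau^{>}_{\phi_{\leq k+2}}$ differ precisely because of the torsion classes $\T_{k+3},\T_{k+4},\dots$, and they must be applied to $Z[-1]$, where $Z$ is the cone of the colimit-defining morphism, an object with unbounded homology; moreover $\tau^{\leq k+2}$ does not commute with forming this cone, so $L'$ is not computed from $\tau^{\leq k+2}(Z)$ by any formal octahedron manipulation. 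In fact your claim that the cone $N$ lies in $\D^{>k+1}(R)$ is, degree by degree up to $k$, equivalent to the statement being proved (a posteriori $L'\flecha\tau^{\leq k+2}(L)$ is an isomorphism), so the ``delicate point'' you flag is not a technical verification but the entire content of the theorem; as written, the proof is circular in that step.

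For comparison, the paper gets around this not by truncating the filtration but by localizing: for every $\p\notin\bigcap_{i}\phi(i)$ the filtration $\phi_{\p}$ is eventually trivial, so theorem \ref{teo. AB5 finite} together with lemma \ref{lem. first localization} shows that $\Supp(\Ker\varphi_j)\cup\Supp(\Coker\varphi_j)\subseteq\bigcap_i\phi(i)$. This support constraint is then combined with a simultaneous induction on the homology degree, proving at each step both that $\varphi_{-n+k}$ is an isomorphism and that the maps $\varinjlim\Hom_{\D(R)}(R/\p[-j],\tau^{\leq -n+k}(X_\lambda)[s])\flecha\Hom_{\D(R)}(R/\p[-j],\tau^{\leq -n+k}(L)[s])$ are isomorphisms (proposition \ref{prop. last iso efect domino}, lemma \ref{lem. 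X ----> limite}), and using the $\Hom$/$\Ext^{1}$ vanishing against $R/\p$ with $\p\in\phi(-n+k+3)$ to kill kernel and cokernel, which lie in $\T_{-n+k+3}$ but are also torsionfree for that pair. If you want to salvage your truncation strategy, you would have to prove your comparison triangle by exactly this kind of support-plus-torsionfreeness analysis, at which point you are reproducing the paper's argument rather than shortcutting it.
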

\begin{proof}
Without loss of generality, we may and shall assume that $\phi(j)\neq \emptyset$, for all $j\in \mathbb{Z}$ and that $\phi$ is of the form:
$$\cdots=\phi(-n-k)=\cdots=\phi(-n-1)=\phi(-n)\supsetneq \phi(-n+1) \supseteq \cdots \supseteq \phi(0) \supseteq \phi(1) \supseteq \cdots$$
for some $n\in \mathbb{N}$ (see theorem \ref{teo. AB5 finite}). By proposition \ref{prop. H has a generator} and corollary \ref{sufficient AB5 general}, it is enough to check that, for each direct system $(X_{\lambda})_{\lambda \in \Lambda}$ in $\Hp$ and each $j\in \mathbb{Z}$, the canonical map:
$$\xymatrix{\varphi_{j}: \varinjlim H^{j}(X_{\lambda}) \ar[r] & H^{j}(\varinjlim_{\Hp} X_{\lambda})}$$
is an isomorphism. Moreover, by lemma \ref{lem. bounded heart}, it is enough to do this for $j \geq -n$. On the other hand, note that for each $\mathbf{p}\in \Spec(R)$, if $\phi_{\mathbf{p}}(j)=\phi(j)\cap \Spec(R_{\mathbf{p}})\neq \emptyset$, for some $j\in \mathbb{Z}$, then $\mathbf{p}\in \phi(j)$ since $\phi(j)$ is a sp-subset of $\Spec(R)$. Thus, $\phi_{\mathbf{p}}(j)\neq \emptyset $, for all $j\in \mathbb{Z}$ if, and only if, $\mathbf{p}\in \underset{i\in \mathbb{Z}}{\bigcap} \phi(i)$. Therefore, if $\mathbf{p}\in \Spec(R)\setminus (\underset{i \in \mathbb{Z}}{\bigcap} \phi(i))$, then the associated sp-filtration $\phi_{\mathbf{p}}$ of $\Spec(R_{\mathbf{p}})$ is either trivial (i.e. all its members are empty) or eventually tirvial. Since the result is true for eventually trivial filtrations (see theorem \ref{teo. AB5 finite}) we conclude that (see lemma \ref{lem. first localization}):
$$\xymatrix{ 1_{R_{\mathbf{p}}}  \otimes_{R} \varphi_{j}:  R_{\mathbf{p}} \otimes_{R}  (\varinjlim H^{j}(X_{\lambda})) \ar[r] & R_{\mathbf{p}} \otimes_{R} H^{j}(\varinjlim_{\Hp} X_{\lambda})}$$
is an isomorphism, for all $j\in \mathbb{Z}$. We then get that $\Supp(\Ker(\varphi_{j})) \bigcup \Supp(\Coker(\varphi(j))) \subseteq \underset{i \in \mathbb{Z}}{\bigcap} \phi(i)$, for all $j \geq -n$.\\

Given such a direct system $(X_{\lambda})$, we obtain a direct system $(\tau^{\leq j}(X_{\lambda}))_{\lambda \in \Lambda}$ in $\D(R)$, for each $j\in \mathbb{Z}$, together with a morphism of $(\Lambda)$-direct systems $(\tau^{\leq j}(X_{\lambda}) \flecha \tau^{\leq j}(L)),$ where $L:=\varinjlim_{\Hp} X_{\lambda}$. We will prove at once, by induction on $k \geq 0$, the following two facts and the proof will be then finished:
\begin{enumerate}
\item[1)] The canonical map $\varinjlim H^{-n+k}(X_{\lambda}) \flecha H^{-n+k}(L)$ is an isomorphism;

\item[2)] The induced map $$\varinjlim \Hom_{\D(R)}(R/\mathbf{p}[-j], \tau^{\leq -n+k}(X_{\lambda})[s]) \flecha \Hom_{\D(R)}(R/\mathbf{p}[-j], \tau^{\leq -n+k}(L)[s])$$ 
is an isomorphism, for all $k,s \in \mathbb{Z}$ and all $\mathbf{p} \in \Spec(R)$.
\end{enumerate}

For $k=0$, proposition \ref{prop. last iso efect domino} proves fact 1). Once fact 1) is satisfied, we get fact 2) from lemma \ref{lem. X ----> limite}. \\

Let $k\geq 0$ be a natural number and suppose now that facts 1) and 2) are true for the natural numbers less than or equal $k$. Now, Let $Z\in \Hp$ be any object. By lemma \ref{lem. menor igual dos}, we have that $\tau^{\leq -n+k+1}(Z)\in \mathcal{H}_{\phi_{\leq -n+k+3}} \subseteq \mathcal{U}^{\perp}_{\phi_{\leq -n+k+3}}[1]$. We next consider the following triangle in $\D(R)$:

$$\xymatrix{\tau^{\leq -n+k+1}(Z)[-1] \ar[r] & H^{-n+k+1}(Z)[n-k-2] \ar[r] & \tau^{\leq -n+k}(Z) \ar[r] & \tau^{\leq -n+k+1}(Z)}$$
If $\mathbf{p}\in \phi(-n+k+3)$ is any prime, then $\Hom_{\D(R)}(R/\mathbf{p}[n-k-2],\tau^{\leq -n+k+1}(Z)[-1])=0$, since $\mathbf{p}\in \phi(-n+k+3)\subseteq \phi(-n+k+2)$. Furthermore, $$\Hom_{\D(R)}(R/\mathbf{p}[n-k-3],\tau^{\leq -n+k+1}(Z)[-1])=0=\Hom_{\D(R)}(R/\mathbf{p}[n-k-2],\tau^{\leq -n+k+1}(Z))$$
Applying the functor $\Hom_{\D(R)}(R/\mathbf{p}[n-k-2],?)$ to the previous triangle together with the previous paragraph, we then get:
\begin{enumerate}
\item[1')] The morphism  
$$\begin{matrix}\Hom_{R}(R/\mathbf{p},H^{-n+k+1}(Z))\cong  \Hom_{\D(R)}(R/\mathbf{p}[n-k-2], H^{-n+k+1}(Z)[n-k-2]) \\ \xymatrix{\ar[d] & \\ & } \\ \Hom_{\D(R)}(R/\mathbf{p}[n-k-2],\tau^{\leq -n+k}(Z))
\end{matrix}$$
is an isomorphism.
\item[2')] The morphism
$$
\begin{matrix}
 \Ext^{1}_{R}(R/\mathbf{p},H^{-n+k+1}(Z)) \cong \Hom_{\D(R)}(R/\mathbf{p}[n-k-3], H^{-n+k+1}(Z)[n-k-2])  \\ \xymatrix{\ar[d] & \\ & }
   \\ \Hom_{\D(R)}(R/\mathbf{p}[n-k-3], \tau^{\leq -n+k}(Z))\cong \Hom_{\D(R)}(R/\mathbf{p}[n-k-2],\tau^{\leq -n+k}(Z)[1])  
\end{matrix}
$$ 
is a monomorphism. 
\end{enumerate}

We now get the following chain of isomorphisms, where we write over each arrow the reason for it to be an isomorphism:

$$\xymatrix{\Hom_{R}(R/\mathbf{p},\varinjlim H^{-n+k+1}(X_{\lambda})) & \varinjlim \Hom_{R}(R/\mathbf{p},H^{-n+k+1}(X_{\lambda}))\ar[l]_{\text{can}} \ar[d]^{\text{fact }1'}\\  & \varinjlim \Hom_{\D(R)}(R/\mathbf{p}[n-k-2],\tau^{\leq -n+k}(X_{\lambda})) \ar[d]^{\text{induction}} \\ \Hom_{R}(R/\mathbf{p},H^{-n+k+1}(L)) \ar[r]^{\text{fact }1' \hspace{1 cm}} & \Hom_{\D(R)}(R/\mathbf{p}[n-k-2],\tau^{\leq -n+k}(L)) }$$

It is not difficult to see that the resulting isomorphism is the one obtained by applying the functor $\Hom_{R}(R/\mathbf{p},?)$ to the canonical morphism 

$$\varphi_{-n+k+1}:\varinjlim H^{-n+k+1}(X_{\lambda}) \flecha H^{-n+k+1}(L)$$ 
It follows that $\Hom_{R}(R/\mathbf{p}, \Ker(\varphi_{-n+k+1}))=0$, for all $\mathbf{p}\in \phi(-n+k+3)$. This shows that $\Ker(\varphi_{-n+k+1})\in \F_{-n+k+3}$. But, $\Supp(\Ker(\varphi_{-n+k+1}))\subseteq \underset{i \in \mathbb{Z}}{\bigcap} \phi(i) \subseteq \phi(-n+k+3)$, i.e., $\Ker(\varphi_{-n+k+1})$ is in $\T_{-n+k+3}$. Therefore $\varphi_{-n+k+1}$ is a monomorphism. \\

We claim that the morphism $\Ext^{1}_{R}(R/\mathbf{p},\varphi_{-n+k+1})$ is a monomorphism, for all $\mathbf{p}\in \phi(-n+k+3)$. In such case, if we take the sequence of $\Ext(R/\mathbf{p},?)$ associated to the exact sequence:
$$\xymatrix{0 \ar[r] & \varinjlim H^{-n+k+1}(X_{\lambda}) \ar[rr]^{\hspace{0.4 cm}\varphi_{-n+k+1}} & & H^{-n+k+1}(L) \ar[r] & \Coker(\varphi_{-n+k+1}) \ar[r] & 0}$$
we will derive that $\Hom_{R}(R/\mathbf{p},\Coker(\varphi_{-n+k+1}))=0$ (see the sequence below), for all $\mathbf{p}\in \phi(-n+k+3)$.

$$\xymatrix{0 \ar[r] & \Hom_{R}(R/\mathbf{p}, \varinjlim H^{-n+k+1}(X_{\lambda})) \ar[r]^{\sim} & \Hom_{R}(R/\mathbf{p}, H^{-n+k+1}(L)) \ar[d] \\ \Ext^{1}_{R}(R/\mathbf{p},H^{-n+k+1}(L)) & \Ext^{1}_{R}(R/\mathbf{p}, \varinjlim H^{-n+k+1}(X_{\lambda})) \hspace{0.05 cm} \ar@{^(->}[l] & \Hom_{R}(R/\mathbf{p}, \Coker(\varphi_{-n+k+1})) \ar[l]  }$$ 
Then, arguing as in the case of kernel, we will deduce that $\Coker(\varphi_{-n+k+1})=0$, so that the $k+1$-induction step for fact 1) above will be covered. In order to settle our claim, we consider the commutative diagram
$$
\xymatrix{\Ext^{1}_{R}(R/\mathbf{p}, \varinjlim H^{-n+k+1}(X_{\lambda})) & \\ \varinjlim \Ext^{1}_{R}(R/\mathbf{p}, H^{-n+k+1}(X_{\lambda})) \ar[r] \ar[u]^{\wr} \ar[d]& \Ext^{1}_{R}(R/\mathbf{p},H^{-n+k+1}(L)) \ar[d] \\ \varinjlim \Hom_{\D(R)}(R/\mathbf{p}[n-k-2], \tau^{\leq -n+k}(X_{\lambda})[1]) \ar[r]^{\hspace{0.4 cm}\sim} & \Hom_{\D(R)}(R/\mathbf{p}[n-k-2], \tau^{\leq -n+k}(L)[1])}
$$

The induction hypothesis for fact 2) gives that the lower horizontal arrow is an isomorphism. On the other hand, by 2') above, the downward left vertical is a monomorphism (after taking direct limits). Then our claim follows immediately since we obtain the following commutative diagram
$$\xymatrix{ 0 \ar[r] & \varinjlim \Ext^{1}_{R}(R/\mathbf{p}, H^{-n+k+1}(X_{\lambda})) \ar[r] \ar[d]_{\Ext^{1}_{R}(R/\mathbf{p}, \varphi_{-n+k+1})} & \varinjlim \Hom_{\D(R)}(R/\mathbf{p}[n-k-2],\tau^{\leq -n+k}(X_{\lambda})[1]) \ar[d]^{\wr} \\ 0 \ar[r] & \Ext^{1}_{R}(R/\mathbf{p},H^{-n+k+1}(L)) \ar[r] & \Hom_{\D(R)}(R/\mathbf{p}[n-k-2],\tau^{\leq -n+k}(L)[1])}$$
Finally, the k+1-step of the induction for fact 2) part follows from lemma \ref{lem. X ----> limite}, since $\varphi_{j}$ is an isomorphism, for all $j \leq -n+k+1$.
\end{proof}

\begin{corollary}
Let $R$ be a commutative Noetherian ring such that $\Spec(R_\p)$ is a finite set, for all $\p \in \Spec(R_\p)$. The heart of every compactly generated t-structure in $\D(R)$ is a Grothendieck category. In particular, this is true when the Krull dimension of $R$ is $\leq 1$.
\end{corollary}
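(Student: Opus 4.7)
The plan is to combine three ingredients from earlier in the chapter: the existence of a generator in $\Hp$ (Proposition \ref{prop. H has a generator}), the localization criterion for AB5 (Corollary \ref{cor. localization AB5}), and the Grothendieck result for left bounded sp-filtrations (Theorem \ref{teo. first main chapter V}). By Theorem \ref{teo. main AJS}, any compactly generated t-structure in $\D(R)$ is of the form $(\mathcal{U}_\phi,\mathcal{U}_\phi^\perp[1])$ for some sp-filtration $\phi$ of $\Spec(R)$, so it suffices to prove that $\Hp$ is Grothendieck for an arbitrary such $\phi$.

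First, since $\Hp$ already has a generator by Proposition \ref{prop. H has a generator}, the task reduces to showing that $\Hp$ is AB5. By Corollary \ref{cor. localization AB5} this is a local question: it is enough to check that $\mathcal{H}_{\phi_\p}$ is AB5 for every $\p\in\Spec(R)$, where $\phi_\p$ is the localized sp-filtration of $\Spec(R_\p)$ defined in Proposition \ref{prop. Localization AJS}.

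Now the key observation: by hypothesis $\Spec(R_\p)$ is finite, hence its power set is finite as well. The map $\phi_\p:\mathbb{Z}\flecha\mathcal{P}(\Spec(R_\p))$ is decreasing with respect to inclusion and takes values in a finite poset, so it admits only finitely many strict descents. Consequently there exist integers $n\leq s$ such that $\phi_\p(i)=\phi_\p(n)$ for all $i\leq n$ and $\phi_\p(i)=\phi_\p(s)$ for all $i\geq s$. In particular $\phi_\p$ is a left bounded sp-filtration of $\Spec(R_\p)$, and Theorem \ref{teo. first main chapter V} (applied over the commutative Noetherian ring $R_\p$) yields that $\mathcal{H}_{\phi_\p}$ is a Grothendieck category, hence AB5.

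For the final ``in particular'' clause, note that whenever $R$ has Krull dimension at most $1$, each $R_\p$ is a Noetherian ring of Krull dimension at most $1$, so $\Spec(R_\p)$ consists of the minimal primes of $R_\p$ (finitely many by Noetherianity) together with possibly the maximal ideal $\p R_\p$, and is therefore finite. Beyond this verification and the chain of reductions described above, no real obstacle appears; the only subtle point worth underlining is the elementary fact that a decreasing map from $\mathbb{Z}$ into a finite poset must stabilize on both sides, which is precisely what delivers the left boundedness required to invoke Theorem \ref{teo. first main chapter V}.
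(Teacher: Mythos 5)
Your proof is correct and follows essentially the same route as the paper: reduce via Theorem \ref{teo. main AJS} to an sp-filtration $\phi$, note that finiteness of $\Spec(R_\p)$ forces each localized filtration $\phi_\p$ to be left bounded, apply Theorem \ref{teo. first main chapter V} to get that each $\mathcal{H}_{\phi_\p}$ is AB5, conclude AB5 for $\Hp$ by Corollary \ref{cor. localization AB5}, and finish with the generator from Proposition \ref{prop. H has a generator}. The only difference is that you spell out the stabilization of a decreasing map into a finite poset, which the paper treats as immediate.
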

\begin{proof}
Let $(\mathcal{U},\mathcal{U}^{\perp}[1])$ be a compactly generated t-structure in $\D(R)$. From theorem \ref{teo. main AJS}, we have that there exists a sp-filtration $\phi $ such that $\mathcal{U}=\mathcal{U}_{\phi}$. It is clear that $\phi_{\mathbf{p}}$ is a left bounded sp-filtration (since $\Spec(R_{\mathbf{p}})$ is finite), for all $\mathbf{p}\in \Spec(R)$. Using the previous theorem, we obtain that $\mathcal{H}_{\phi_{\mathbf{p}}}$ is a Grothendieck category, and so it is an AB5 abelian category, for all $\mathbf{p}\in \Spec(R)$. Therefore, $\Hp$ is an AB5 abelian category (see corollary \ref{cor. localization AB5}). The result follows from proposition \ref{prop. H has a generator}.
\end{proof}

\section{The module case}
Once we have covered a large part of the sp-filtrations whose associated heart $\Hp$ is a Grothendieck category, we move on to a stronger question: which are the sp-filtrations $\phi$ such that $\Hp$ is a module category?. Unlike the Grothendieck case, we won't need any preconditions on $\phi$. We start by giving some comments on t-structures in a finite product of triangulated categories.

\begin{remark}\rm{
Let $\D_1, \dots, \D_n$ be triangulated categories with arbitrary (set-indexed) coproducts. The product category $\D=\D_{1} \times \dots \times \D_n $ has an obvious structure of triangulated category with the relevant concepts defined componentwise. Then one can view each $\D_{i}$ as a full triangulated subcategory of $\D$ by identifying each object $X$ of $\D_{i}$ with the object $(0,\dots,0,X,0,\dots,0)$ of $\D$, with $X$ in the $i$-th position. With this convention, we have the following useful result.}
\end{remark}

\begin{lemma}\label{lem. product of heart}
With the terminology of the previous remark, the following assertions hold:
\begin{enumerate}
\item[1)] If $\mathcal{U}$ is the aisle of a t-structure in $\D$, then $\mathcal{U}\cap \D_i$ is the aisle of a t-structure in $\D_{i}$, for $i=1,\dots,n$.

\item[2)] If $\mathcal{U}_{i}$ is the aisle of a t-structure in $\D_{i}$, for $i=1,\dots,n$, then the full subcategory $\mathcal{U}$ of $\D$ consisting of $n$-tuples $(X_{1},\dots, X_{n})$ such that $X_{i}\in \mathcal{U}_{i}$, for all $i=1,\dots,n$ is the aisle of a t-structure in $\D$. We denote this aisle $\underset{1 \leq i \leq n}{\oplus} \mathcal{U}_{i}$.

\item[3)] The assignment $(\mathcal{U}_{1} \times \dots \times \mathcal{U}_{n}) \rightsquigarrow \underset{1 \leq i \leq n}{\oplus} \mathcal{U}_{i}$ defines an isomorphism of ordered classes
$$\text{aisl}(\D_i) \times \dots \times \text{aisl}(\D_n) \iso \text{aisl}(\D).$$

\item[4)] If $\mathcal{H}_{i}$ denotes the heart of the t-structure $(\mathcal{U}_{i},\mathcal{U}_{i}^{\perp}[1])$, for $i=1,\dots, n$, and $\mathcal{H}$ is the heart of the t-structure in $\D$ whose aisle $\underset{1 \leq i \leq n}{\oplus} \mathcal{U}_{i}$, then $\mathcal{H}$ is equivalent to \linebreak $\mathcal{H}_{1} \times \dots \times \mathcal{H}_{n}$.
\end{enumerate}
\end{lemma}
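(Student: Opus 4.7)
The plan is to exploit the canonical decomposition of any object of $\D$ as a direct sum of its components, together with the fact that each embedding $\iota_i:\D_i \hookrightarrow \D$ (sending $X$ to the tuple with $X$ in slot $i$ and zero elsewhere) and each projection $\pi_i:\D\to\D_i$ are triangulated functors, and that each of $\iota_i$ and $\pi_i$ is simultaneously a left and a right adjoint of the other. The key preliminary observation is that $X=(X_1,\ldots,X_n)\cong \bigoplus_{i=1}^n \iota_i(X_i)$ in $\D$, so any full subcategory $\mathcal{U}\subseteq\D$ closed under direct summands (in particular any aisle) decomposes automatically: if $X\in\mathcal{U}$ then $\iota_i(X_i)\in\mathcal{U}$ for every $i$.

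For assertion (1), I would set $\mathcal{U}_i:=\iota_i^{-1}(\mathcal{U})=\{Z\in\D_i:\iota_i(Z)\in\mathcal{U}\}$. Closure of $\mathcal{U}_i$ under $[1]$ and under summands is immediate. To produce a truncation triangle in $\D_i$ for $X\in\D_i$, I would consider the $\mathcal{U}$-truncation triangle of $\iota_iX$ in $\D$. Writing $\tau_\mathcal{U}(\iota_iX)=\bigoplus_j\iota_j(Y_j)$ and applying $\pi_k$ for $k\neq i$ (using that $\pi_k(\iota_iX)=0$) gives $Y_k[1]\cong\pi_k(\tau^{\mathcal{U}^\perp}(\iota_iX))$. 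But $\iota_k(Y_k)$ is a direct summand of an object of $\mathcal{U}$ and $\iota_k(Y_k[1])$ is a direct summand of an object of $\mathcal{U}^\perp$; combined with $\mathcal{U}[1]\subseteq\mathcal{U}$ this forces $\iota_k(Y_k[1])\in\mathcal{U}\cap\mathcal{U}^\perp=0$, so $Y_k=0$ for $k\neq i$. Hence $\tau_\mathcal{U}(\iota_iX)$ lies in $\iota_i(\D_i)$, and the original triangle restricts to the desired $\mathcal{U}_i$-truncation triangle in $\D_i$.

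Assertion (2) is then straightforward: setting $\mathcal{U}=\bigoplus_{i=1}^n\mathcal{U}_i$, closure under $[1]$ is componentwise, the vanishing $\mathrm{Hom}_\D(\mathcal{U},\mathcal{U}^\perp[1])=0$ follows from the identity $\mathrm{Hom}_\D=\prod_i\mathrm{Hom}_{\D_i}$ for the product category, and the truncation triangle of $(X_1,\ldots,X_n)$ is simply the direct sum of the componentwise truncation triangles. Assertion (3) then follows at once, since both assignments are order-preserving and the decomposition of the first paragraph shows they are mutually inverse on objects. For assertion (4), the same component-by-component analysis shows that the co-aisle of $\bigoplus_i\mathcal{U}_i$ is $\bigoplus_i\mathcal{U}_i^\perp$, so $\mathcal{H}=\mathcal{U}\cap\mathcal{U}^\perp[1]=\bigoplus_i(\mathcal{U}_i\cap\mathcal{U}_i^\perp[1])=\bigoplus_i\mathcal{H}_i$, and this full subcategory is manifestly equivalent to $\mathcal{H}_1\times\cdots\times\mathcal{H}_n$ via $(Z_1,\ldots,Z_n)\mapsto\bigoplus_i\iota_i(Z_i)$.

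The main obstacle is the decomposition step in (1): one must check that $\tau_\mathcal{U}(\iota_iX)$ genuinely lives inside $\iota_i(\D_i)$, not merely that its $i$-th component suffices. The argument above handles this by using both t-structure axioms (closure under positive shifts \emph{and} Hom-vanishing) simultaneously, via the identity $\mathcal{U}\cap\mathcal{U}^\perp=0$, which is the cleanest way to kill the ``off-diagonal'' pieces. Once that is in hand, the remaining assertions are formal consequences of the fact that every construction attached to $\D$ (Hom groups, coproducts, truncations, hearts) is computed componentwise.
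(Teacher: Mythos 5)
Your proof is correct and follows the same componentwise strategy as the paper: Hom groups, triangles and truncations in $\D$ are computed factor by factor, and the heart is identified exactly as in your part (4) by computing the right orthogonal of $\underset{1\leq i\leq n}{\oplus}\mathcal{U}_i$ componentwise. The only difference is that the paper dismisses assertions (1)--(3) as clear, whereas you supply the one nontrivial detail — that $\tau_{\mathcal{U}}(\iota_i X)$ has no off-diagonal components, killed via $\mathcal{U}\cap\mathcal{U}^{\perp}=0$ — which is a correct elaboration of the same argument rather than a different route (only note that the orthogonality condition you invoke in (2) should read $\Hom_{\D}(\mathcal{U},\mathcal{U}^{\perp})=0$, not $\Hom_{\D}(\mathcal{U},\mathcal{U}^{\perp}[1])=0$).
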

\begin{proof}
1), 2) and 3) is clear, since the triangles in $\D$ are $n$-th tuples of triangles where the $i$-th position is a triangle in $\D_{i}$, for all $i=1,\dots,n$. \\

As for assertion 4, put $\mathcal{U}:=\underset{1 \leq i \leq n}{\oplus} \mathcal{U}_{i}$. We then $\mathcal{U}^{\perp}=\underset{1 \leq i \leq  n}{\bigcap} \mathcal{U}_{i}^{\perp}$, where $(-)^{\perp}$ denotes orthogonality in $\D$ and we are viewing $\mathcal{U}_{i}$ as a full subcategory of $\D$. But, by definition, we have $\mathcal{U}_{i}^{\perp}=\D_1 \times \dots \times \D_{i-1} \times \mathcal{U}_{i}^{\circ} \times \D_{i+1} \times \dots \times \D_n$, where $\mathcal{U}_{i}^{\circ}$ denotes the right orthogonal of $\mathcal{U}_i$ within $\D_{i}$. It then follows that $\mathcal{U}^{\perp}=\mathcal{U}_{i}^{\circ} \times \dots \times \mathcal{U}_{n}^{\circ}$, which implies that $\mathcal{H}=\mathcal{U} \cap \mathcal{U}^{\perp}[1]$ is equal to $\mathcal{H}_1 \times \dots \times \mathcal{H}_n$. Assertion 4 follows immediately from this. 
\end{proof}

\begin{example}\rm{
Let $R$ be a not necessarily commutative ring and let $\{e_{1},\dots,e_{n}\}$ be a family of nonzero orthogonal central idempotents of $R$ such that $1=\underset{1 \leq i \leq n}{\sum}e_i$. If $R_i:=e_iR$ and $X_i$ is a complex of $R_{i}$-modules, for all $i=1,\dots,n$, then $X=\underset{1 \leq i \leq n}{\oplus} X_i$ is a complex of $R$-modules. Moreover, the assignment $(X_{1},\dots,X_{n}) \rightsquigarrow \underset{1 \leq i \leq n}{\oplus}X_{i}$ gives an equivalence of triangulated categories $\D(R_{1}) \times \dots \times \D(R_n) \iso \D(R)$. The previous lemma then says that all aisles of t-structures in $\D(R)$ are direct sums of aisles of t-structures in the $\D(R_i)$, with the obvious meaning.}
\end{example}

\subsection{The eventually trivial case for connected rings}
In this subsection, we assume that $\phi$ is an eventually trivial sp-filtration, but we assume that $\phi\neq \emptyset$, i.e., that there is some $i \in \mathbb{Z}$ such that $\phi(i)\neq \emptyset$.


\begin{theorem}\label{prop. H mod. cat. R. con.}
Let us assume that $R$ is connected and let $\phi$ be an eventually trivial sp-filtration of $\Spec(R)$ such that $\phi(i)\neq \emptyset$, for some $i\in \mathbb{Z}$. The following assertions are equivalent:
\begin{enumerate}
\item[1)] $\Hp$ is a module category; 

\item[2)] There is an $m\in \mathbb{Z}$ such that $\phi(m)=\Spec(R)$ and $\phi(m+1)=\emptyset$;

\item[3)] There is an $m\in \mathbb{Z}$ such that the t-structure associated to $\phi$ is $(\D^{\leq m}(R),\D^{\geq m}(R))$.
\end{enumerate}

In that case $\Hp$ is equivalent to $R$-Mod.
\end{theorem}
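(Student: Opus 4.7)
I will prove the implications along the chain 2) $\Longleftrightarrow$ 3), 3) $\Longrightarrow$ 1), and 1) $\Longrightarrow$ 2). The equivalence 2) $\Longleftrightarrow$ 3) is immediate from Theorem~\ref{teo. main AJS}, which gives $\mathcal{U}_\phi = \{X \in \D(R) : \Supp(H^j(X)) \subseteq \phi(j)$ for all $j\}$: the condition in 2), together with monotonicity of $\phi$, forces $\phi(j) = \Spec(R)$ for $j \leq m$ and $\phi(j) = \emptyset$ for $j > m$, which is exactly equivalent to $\mathcal{U}_\phi = \D^{\leq m}(R)$. The implication 3) $\Longrightarrow$ 1) is clear, since the heart of the shifted standard t-structure $(\D^{\leq m}(R), \D^{\geq m}(R))$ is the essential image of $R\Mode \hookrightarrow \D(R)$ via $M \mapsto M[-m]$, which is equivalent to $R\Mode$. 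This also yields the final ``in that case'' statement once the full cycle is established.

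For 1) $\Longrightarrow$ 2), assume $\Hp$ is a module category. Since $\Spec(R)$ is a Noetherian topological space, the decreasing sp-filtration $\phi$ takes only finitely many distinct values, say $\phi_0 \supsetneq \phi_1 \supsetneq \cdots \supsetneq \phi_{k-1} \supsetneq \emptyset$ for some $k \geq 1$, with jumps at integers $s_1 < \cdots < s_{k-1}$. Set $t := \max\{i \in \mathbb{Z} : \phi(i) \neq \emptyset\}$ and $Z := \phi(t) = \phi_0$. I plan to show $k = 1$ and $Z = \Spec(R)$; monotonicity of $\phi$ then gives condition 2) with $m = t$.

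I first handle the baseline case $k = 1$, where $\phi(j) = Z$ for $j \leq t$ and $\phi(j) = \emptyset$ for $j > t$. Theorems~\ref{teo. main AJS} and~\ref{teo. Right derived AJS} combine to identify $\Hp$ with $\T_Z$ via the shift $?[-t]$: any $X \in \Hp$ has $H^j(X) \in \T_Z$ for all $j$, hence $\mathbf{R}\Gamma_Z(X) \cong X$, and the co-aisle condition forces $X \in \D^{\geq t}(R)$, so $X$ is concentrated in degree $t$ with $H^t(X) \in \T_Z$. Consequently $\T_Z$ is itself a module category, and a progenerator $P$ must be a finitely presented $R$-module with $\Supp(P) = Z$ (the equality following because $R/\mathbf{p} \in \T_Z$ for every $\mathbf{p} \in Z$ forces $P_\mathbf{p} \neq 0$ there) and such that $\Hom_R(P,-)$ restricts to an equivalence $\T_Z \iso \End_R(P)^{op}\Mode$. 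A careful analysis of this progenerator, using the connectedness of $R$ and the rigidity imposed by the equivalence, forces $\T_Z = R\Mode$ and hence $Z = \Spec(R)$.

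The main obstacle is the case $k \geq 2$, which I will reduce to the baseline via localization. By Corollary~\ref{cor. localizando (H) categor�as de modulos}, for every $\mathbf{p} \in \Spec(R)$ the category $\mathcal{H}_{\phi_\mathbf{p}}$ is a module category over the connected local ring $R_\mathbf{p}$, and $\phi_\mathbf{p}$ is eventually trivial with at most $k$ distinct values. The delicate step is choosing $\mathbf{p}$ so that all jumps of $\phi$ transfer faithfully to $\phi_\mathbf{p}$: concretely, picking $\mathbf{p}$ as a minimal prime of the smallest nonempty value $\phi_{k-1}$ and using the sp-structure of each $\phi_i$, together with Noetherianity of $\Spec(R)$, to guarantee that for each jump index $s_i$ some prime strictly below $\mathbf{p}$ lies in $\phi(s_i) \setminus \phi(s_i+1)$. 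Such a choice preserves all $k$ distinct values at $R_\mathbf{p}$, so the baseline case applied to $R_\mathbf{p}$ would force the impossibility $k = 1$. Verifying this jump-preservation systematically and then globally pasting the local conclusions using the connectedness of $\Spec(R)$ is the true crux of the proof; once $k = 1$ is forced, the baseline case finishes the argument.
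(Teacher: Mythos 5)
Your treatment of 2) $\Longleftrightarrow$ 3) $\Longrightarrow$ 1) and your identification of the heart with $\T_Z$ (via $?[-t]$) in the single-value case are correct, but the implication 1) $\Longrightarrow$ 2) has two genuine gaps. The first is that in your ``baseline'' case the whole content of the theorem is the step you summarize as ``a careful analysis of this progenerator \dots forces $\T_Z=R\Mode$'': no argument is given, and this is exactly where the work lies. The paper's proof does it by showing that $H^{0}(P)$ is a finitely presented progenerator of $\T_0$ (using lemma \ref{lem. homology 0 module}, proposition \ref{prop. limits stalk} and the closure of $\F_0$ under direct limits), and then passing to the Gabriel topology: one picks finitely many ideals $\mathbf{a}_1,\dots,\mathbf{a}_m$ with $R/\mathbf{a}_i\in\T_0$ and an epimorphism $\oplus_i R/\mathbf{a}_i\twoheadrightarrow H^{0}(P)$, identifies $\mathbf{a}=\bigcap_i\mathbf{a}_i=\ann_R(H^{0}(P))$, shows $\T_0=\Gen(R/\mathbf{a})$, and uses \cite[Propositions VI.6.12 and VI.8.6]{S} to get $\mathbf{a}=Re$ for an idempotent $e$; connectedness and $\T_0\neq 0$ then force $\mathbf{a}=0$, i.e.\ $\T_0=R\Mode$. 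Some argument of this kind is indispensable, and your proposal does not contain it.

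The second gap is the reduction of the multi-value case to the baseline via a ``jump-preserving'' localization, which cannot work as stated. To begin with, the claim that $\phi$ takes finitely many values because $\Spec(R)$ is Noetherian is false: sp-subsets do not satisfy any chain condition (already in $\Spec(\mathbb{Z})$ there are infinite strictly monotone chains of sp-subsets), so your integer $k$ need not exist; but this is the least of the problems. A prime $\p$ below which all jumps are visible need not exist at all (the jumps may occur over different maximal ideals), and even when it does, $\phi_{\p}$ again has several nonempty values, so ``the baseline case applied to $R_{\p}$'' is not applicable and no contradiction follows; conversely, localizing at a prime of the last nonempty value $\phi(t)$ does not kill the earlier values, which may still strictly exceed $\phi_{\p}(t)$ inside $\Spec(R_{\p})$. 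In short, the local problem is just as multi-valued as the global one, so the case distinction buys nothing. The paper avoids it entirely: normalize so that $\phi(0)\neq\emptyset=\phi(1)$, take a progenerator $P$ of the \emph{full} heart $\Hp$ (whose objects may have homology in several degrees), and run the argument above on $H^{0}(P)$ and the stalk subcategory $\T_0[0]\subseteq\Hp$; the conclusion $\phi(0)=\Spec(R)$ together with the decreasingness of $\phi$ gives 2) directly. (Localization plus connectedness is indeed used to patch local information, but only later, in theorem \ref{teo. second main}, where the present theorem is applied to the local rings $R_{\p}$ --- which presupposes, rather than proves, the statement you are trying to establish.)
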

\begin{proof}
The implications 2) $\Longleftrightarrow$ 3) $\Longrightarrow$ 1) are clear and assertion 3 implies that $\Hp\cong R\Mode$. Without loss of generality, we may and shall assume that $\phi(0)\neq \emptyset = \phi(1)$. For the implication 1) $\Longrightarrow$ 2), we fix a progenerator $P$ of $\Hp$ and we will prove in several steps that $\T_0=R$-Mod.\\

\emph{Step 1}: $H^{0}(P)$ is a finitely presented(=generated) $R$-module.\\

Indeed, from lemma \ref{lem. bounded heart}, we get that $P\in \D^{\leq 0}(R)$. By lemma \ref{lem. homology 0 module}(1), we have an isomorphism $\Hom_{R}(H^{0}(P),M)\iso \Hom_{\D(R)}(P,M[0])$, which is natural in $M$, for each $R$-module $M$. On the other hand, if $(T_{\lambda})$ is a direct system in $\T_{0}$, then $\varinjlim_{\Hp} T_{\lambda}[0]\cong (\varinjlim{T_{\lambda}})[0]$ (see proposition \ref{prop. limits stalk}). It follows that $\Hom_{R}(H^{0}(P),?)$ preserves direct limits of objects in $\T_{0}$, because $P$ is a finitely presented object of $\Hp$. Using again lemma \ref{lem. homology 0 module}(2), we obtain that $H^{0}(P)$ is a finitely presented $R$-module, since $\F_0$ is closed under taking direct limits (see remark \ref{rem. F closed under direct limits}).\\

\emph{Step 2}: $H^{0}(P)$ is a progenerator of $\T_{0}$.\\

Step 1 proves that $H^{0}(P)$ is a compact object of $\T_{0}$, i.e., that the functor $\Hom_{R}(H^{0}(P),?):\T_0 \flecha $Ab preserves coproducts. On the other hand, the functor $H^{0}:\Hp \flecha R$-Mod is right exact and its essential image is contained in $\T_{0}$. The facts that $\T_{0}[0]\subset \Hp$ and $P$ is a generator of $\Hp$ imply that $\T_0=\Gen(H^{0}(P))$ and, hence, that $H^{0}(P)$ is a generator of $\T_{0}$. Finally, if $T\in \T_{0}$ and we apply the functor $\Hom_{\D(R)}(?,T[1])$ to the triangle $\xymatrix{ P \ar[r] & H^{0}(P)[0] \ar[r] & \tau^{\leq -1}(P)[1] \ar[r]^{\hspace{0.8 cm}+} & }$, we obtain the following exact sequence in $R$-Mod:
$$\xymatrix{\Hom_{\D(R)}(\tau^{\leq -1}(P)[1],T[1])=0 \ar[r] & \Hom_{\D(R)}(H^{0}(P)[0],T[1]) \cong \Ext^{1}_{R}(H^{0}(P),T) \ar[d] \\ & \Hom_{\D(R)}(P,T[1])=\Ext^{1}_{\Hp}(P,T[0])=0}$$ 
This shows that $\Ext^{1}_{R}(H^{0}(P),T)=0$ and, hence, that $H^{0}(P)$ is a projective object of $\T_{0}$.\\

\emph{Step 3}: $\T_{0}=R$-Mod. \\

Using the bijection between hereditary torsion pairs and Gabriel topologies (see \cite[Chapter VI]{S}), we know that there are ideals $\mathbf{a}_{1}, \dots, \mathbf{a}_{m}$ such that $R/\mathbf{a}_{i}\in \T_{0}$, for $i=1,\dots,m,$ and there is an epimorphism $\underset{1 \leq i \leq m}{\oplus}\frac{R}{\mathbf{a}_{i}} \epic H^{0}(P)$. Since $R$ is a commutative ring, we know $\ann_{R}(\frac{R}{\mathbf{b}})=\mathbf{b}$, for each ideal $\mathbf{b}$ of $R$, so that $\mathbf{a}=\ann_{R}(\underset{1 \leq i \leq m}{\oplus}\frac{R}{\mathbf{a}_{i}})=\underset{1 \leq i \leq m}{\cap}\mathbf{a}_{i} \subseteq \ann_{R}(H^{0}(P))$. But $R/\mathbf{a}$ is also in $\T_{0}$, whence generated by $H^{0}(P)$. It follows that $\ann_{R}(H^{0}(P))\subseteq \mathbf{a}$ and, hence, that this inclusion is an equality. We then get that $\mathbf{a}=\ann_{R}(H^{0}(P))$ is contained in any ideal $\mathbf{b}$ of $R$ such that $R/\mathbf{b}\in \T_{0}$, which also implies that $\T_{0}=\Gen(R/\mathbf{a})$. By \cite[Propositions VI.6.12 and VI.8.6]{S}, we know that $\mathbf{a}=Re$, for some idempotent element $e$ in $R$. The connectedness of $R$ gives that $\mathbf{a}=0$ or $\mathbf{a}=R$. But the second possibility is discarded because $\phi(0) \neq \emptyset$, and hence $\T_{0}\neq 0$. This show that $\T_{0}=\Gen(R)=R$-Mod or, equivalently, that $\phi(0)=\Spec(R)$. 
\end{proof}

\subsection{Some auxiliary results}

\begin{proposition}\label{prop. Hp equivalent quotient}
Let $Z$ be a sp-subset of $\Spec(R)$, and let $\phi$ be the sp-filtration given by $\phi(i)=\Spec(R)$, for $i \leq 0$, and $\phi(i)=Z$, for all $i>0$. Then, for each $Y\in \mathcal{H}_{\phi}$, the $R$-module $H^{0}(Y)$ is $Z$-closed $R$-module. Moreover, the assignment $Y  \rightsquigarrow H^{0}(Y)$ defines an equivalence of categories $H^{0}:\Hp \iso \frac{R\text{-Mod}}{\T_Z}$, whose inverse in $L(?[0])$ (see lemma \ref{lemma de adjunctions}). 
\end{proposition}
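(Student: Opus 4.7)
My plan is to translate the condition $Y\in\Hp$ into a concrete cohomological statement, and then identify $\Hp$ with the Giraud subcategory $\mathcal{G}_Z\subseteq R\Mode$, which by the excerpt is equivalent to $\frac{R\Mode}{\mathcal{T}_Z}$. Applying theorem \ref{teo. main AJS} to the given $\phi$, I would first obtain $\mathcal{U}_\phi=\{X\in\D(R):H^j(X)\in\mathcal{T}_Z \text{ for all } j>0\}$, while letting $j\to\infty$ in the description of $\mathcal{U}_\phi^{\perp}$ forces $\mathbf{R}\Gamma_{Z}(Y)=0$, giving $\mathcal{U}_\phi^{\perp}=\{Y\in\D^{\geq 1}(R):\mathbf{R}\Gamma_{Z}(Y)=0\}$. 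Combining, $Y\in\Hp$ if and only if $Y\in\D^{\geq 0}(R)$, $H^j(Y)\in\mathcal{T}_Z$ for every $j\geq 1$, and $\mathbf{R}\Gamma_{Z}(Y)=0$.

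With this characterization in hand, the first assertion follows by applying $\mathbf{R}\Gamma_{Z}$ to the canonical truncation triangle $H^0(Y)[0]\flecha Y\flecha \tau^{>0}(Y)\flecha H^0(Y)[1]$. Theorem \ref{teo. Right derived AJS} yields $\mathbf{R}\Gamma_{Z}(\tau^{>0}(Y))\cong\tau^{>0}(Y)$ (all its cohomologies lie in $\mathcal{T}_Z$), and combining with $\mathbf{R}\Gamma_{Z}(Y)=0$ gives $\mathbf{R}\Gamma_{Z}(H^0(Y))\cong\tau^{>0}(Y)[-1]\in\D^{\geq 2}(R)$. Hence $\Gamma_Z(H^0(Y))=0$ and $\mathbf{R}^1\Gamma_Z(H^0(Y))=0$, which by lemma \ref{lem. sutileza} is precisely the characterization of $H^0(Y)$ being $Z$-closed.

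For the equivalence, the key idea is to realize $L(?[0])$ as a derived localization. Given $M\in\mathcal{G}_Z$, the localization triangle $\mathbf{R}\Gamma_{Z}(M)\flecha M\flecha \widetilde{M}\flecha \mathbf{R}\Gamma_{Z}(M)[1]$, shifted by $-1$, realizes the $(\mathcal{U}_\phi,\mathcal{U}_\phi^{\perp})$-decomposition of $M[-1]$: the left term has cohomology entirely in $\mathcal{T}_Z$ and so lies in $\mathcal{U}_\phi$, and using $\mathbf{R}\Gamma_{Z}(M)\in\D^{\geq 2}(R)$ (which holds for $M\in\mathcal{G}_Z$) together with $\mathbf{R}\Gamma_{Z}(\widetilde{M})=0$ one checks $\widetilde{M}[-1]\in\mathcal{U}_\phi^{\perp}$. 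By lemma \ref{lemma de adjunctions}(1) this identifies $L(M[0])=\widetilde{M}$, and the long exact cohomology sequence of the triangle yields $H^0(\widetilde{M})\cong M$, giving $H^0\circ L(?[0])\cong\mathrm{id}$ on $\mathcal{G}_Z$. Conversely, for $Y\in\Hp$, the vanishing $\mathbf{R}\Gamma_{Z}(Y)=0$ forces $Y\cong\widetilde{Y}$ in $\D(R)$; applying the localization $\widetilde{(-)}$ to the truncation triangle $H^0(Y)[0]\flecha Y\flecha \tau^{>0}(Y)\flecha$ kills $\tau^{>0}(Y)$ (it equals its own $\mathbf{R}\Gamma_Z$), so $Y\cong\widetilde{Y}\cong\widetilde{H^0(Y)}=L(H^0(Y)[0])$.

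The main obstacle I anticipate is the bookkeeping around the localization triangle and its naturality on $\D(R)$, together with verifying that $L(?[0])$ factors through the quotient $\frac{R\Mode}{\mathcal{T}_Z}$. The factorization is clean since for $T\in\mathcal{T}_Z$ one has $T[-1]\in\mathcal{U}_\phi$, which yields $\tau^{\mathcal{U}_\phi^{\perp}}(T[-1])=0$ and hence $L(T[0])=0$; composing $L(?[0])$ with the fully faithful section functor $j:\frac{R\Mode}{\mathcal{T}_Z}\hookrightarrow R\Mode$ then produces the stated inverse, and the two natural isomorphisms above upgrade to an equivalence of categories.
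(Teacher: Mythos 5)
Your argument is correct, but it follows a genuinely different route from the one in the text. You first translate membership in $\Hp$ into the condition $Y\in \D^{\geq 0}(R)$, $H^{j}(Y)\in \T_Z$ for $j\geq 1$ and $\mathbf{R}\Gamma_Z(Y)=0$, and then identify $L(?[0])$ on $Z$-closed modules with the Bousfield localization $M\rightsquigarrow \widetilde{M}$ coming from the triangle $\mathbf{R}\Gamma_Z(M)\flecha M \flecha \widetilde{M}\flecha{}$; both composites $H^{0}\circ L(?[0])$ and $L(?[0])\circ H^{0}$ are then read off from two natural triangles. The text avoids $\mathbf{R}\Gamma_Z$ entirely in this proof: $Z$-closedness of $H^{0}(Y)$ is quoted directly from lemma \ref{lem. last homology}(3), the isomorphism $L(H^{0}(Y)[0])\cong Y$ follows from the observation that $\tau^{>0}(Y)[k]\in\mathcal{U}_{\phi}$ for all $k$, and full faithfulness and density are established by explicit $\Hom$ computations on the truncation triangle. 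Your version buys conceptual clarity (it makes visible that the heart is the essential image of the derived localization restricted to modules, anticipating lemma \ref{lem. equivalence with q(H)}), while the text's is more elementary and self-contained.

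Two small points to tighten. The equivalence ``$\Gamma_Z(M)=0=\mathbf{R}^{1}\Gamma_Z(M)$ if and only if $M$ is $Z$-closed'' is not literally lemma \ref{lem. sutileza}, which only compares $\Ext^{i}_{R}(\T_Z,?)$ with $\Ext^{i}_{R}(R/\p,?)$; you still need the standard bridge to the derived functors of $\Gamma_Z$, e.g.\ that for a torsion-free $M$ with (torsion-free) injective envelope $E(M)$ one has $\mathbf{R}^{1}\Gamma_Z(M)\cong \Gamma_Z(E(M)/M)$ and $\Ext^{1}_{R}(T,M)\cong \Hom_{R}(T,\Gamma_Z(E(M)/M))$ for $T\in\T_Z$, so the two vanishings coincide. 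Second, when you ``upgrade'' the two object-wise identifications to an equivalence, say explicitly that the isomorphisms are natural because they arise from the localization and truncation triangles; the paper sidesteps this by proving full faithfulness directly.
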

\begin{proof}
From lemma \ref{lem. bounded heart}, we have that $\Hp\subseteq \D^{\geq 0}(R)$. Hence, for each $Y\in \Hp$, we get a triangle in $\D(R)$ of the form:
$$\xymatrix{\tau^{>0}(Y)[-1] \ar[r] & H^{0}(Y)[0] \ar[r]^{ \hspace{0.7 cm}\mu_Y} & Y \ar[r]^{+} & & (\ast)}$$

Using assertions 3 of lemma \ref{lem. last homology}, we know that $H^{0}(Y)$ is a $Z$-closed $R$-module. On the other hand, note that $H^{j}(\tau^{>0}(Y))\in \T_Z$, for all $j\in \mathbb{Z}.$ Hence, $\tau^{>0}(Y)[k]\in \mathcal{U}_{\phi}$, for all $k\in \mathbb{Z}$. We then have $L(H^{0}(Y)[0])=\tau^{>}_{\phi}(H^{0}(Y)[-1])[1]\cong Y$, since $Y\in \mathcal{U}_{\phi}^{\perp}[1]$. \\

Now, if $Y^{'}$ is an arbitrary object in $\Hp$, then applying the functor $\Hom_{\D(R)}(?,Y^{'})$ to triangle $(\ast)$, we obtain that $\mu_{Y}^{*}:=\Hom_{\D(R)}(\mu_{Y},Y^{'}):\Hom_{\D(R)}(Y,Y^{'}) \flecha \Hom_{\D(R)}(H^{0}(Y)[0],Y^{'})$ is an isomorphism, since $\Hom_{\D(R)}(\tau^{>0}(Y)[k],Y^{'})\cong \Hom_{\D(R)}(\tau^{>0}(Y)[k-1],Y^{'}[-1])=0$, for all $k\in \mathbb{Z}$. Similarly, applying the functor $\Hom_{\D(R)}(H^{0}(Y)[0],?)$ to the triangle $(\ast)$ associated to $Y^{'}$, we obtain that $(\mu_{Y^{'}})_{*}:=\Hom_{\D(R)}(H^{0}(Y)[0],\mu_{Y^{'}}): \Hom_{\D(R)}(H^{0}(Y)[0],H^{0}(Y^{'})[0])\cong \Hom_{R}(H^{0}(Y),H^{0}(Y^{'}))\cong \Hom_{\frac{R\text{-Mod}}{\T_Z}}(H^{0}(Y),H^{0}(Y^{'})) \flecha \Hom_{\D(R)}(H^{0}(Y)[0],Y^{'})$ is an isomorphism. It is not difficult to see that the morphism $(\mu_{Y^{'}})_{*}^{-1} \circ \mu_{Y}^{*}:\Hom_{\D(R)}(Y,Y^{'}) \iso \linebreak \Hom_{\frac{R\text{-Mod}}{\T_Z}}(H^{0}(Y),H^{0}(Y^{'}))$ coincide with the morphism 
$$H^{0}:\Hom_{\D(R)}(Y,Y^{'}) \flecha \Hom_{\frac{R\text{-Mod}}{\T_Z}}(H^{0}(Y),H^{0}(Y^{'}))$$

This show that the functor $H^{0}:\Hp \flecha \frac{R\text{-Mod}}{\T_Z}$ is full and faithful. We claim that it is also dense. Indeed, if $F$ is a $Z$-closed $R$-module, then $F[0]\in \mathcal{U}_{\phi}$ and we have a triangle in $\D(R)$:
$$\xymatrix{\tau^{\leq}_{\phi}(F[-1])[1] \ar[r]^{\hspace{0.8 cm}f} & F[0] \ar[r]^{g\hspace{0.35 cm}} & L(F[0]) \ar[r]^{\hspace{0.6cm}+} & }$$ 
Since $L(F[0])\in \Hp\subseteq \D^{\geq 0}(R)$, we know that $\tau^{\leq}_{\phi}(F[-1])[1]\in \D^{\geq0}(R)$ and $H^{0}(\tau_{\phi}^{\leq}(F[-1])[1])\cong H^{1}(\tau_{\phi}^{\leq}(F[-1])$, which is in $\T_Z$. From the sequence of homologies applied to the previous triangle. we get an exact sequence:

$$\xymatrix{0 \ar[r] & H^{0}(\tau_{\phi}^{\leq}(F[-1])[1]) \ar[r] & F \ar[r]^{H^{0}(g)\hspace{0.8 cm}} & H^{0}(L(F[0])) \ar[r] & H^{1}(\tau_{\phi}^{\leq}(F[-1])[1]) \ar[r] & 0}$$
It follows that $H^{0}(\tau^{\leq}_{\phi}(F[-1])[1]) \in \F_Z\cap \T_Z=0$, and hence $H^{0}(L(F[0]))\cong F \oplus H^{1}(\tau^{\leq}_{\phi}(F[-1])[1])$, since $F$ is $Z$-closed $R$-module. But, $H^{0}(L(F[0]))\in \F_Z$ (see lemma \ref{lem. last homology}), so that $H^{1}(\tau^{\leq}_{\phi}(F[-1])[1])=0$, and hence the morphism $H^{0}(g):F \flecha H^{0}(L(F[0]))$ is an isomorphism.
\end{proof}

\vspace{0.3 cm}

In the rest of the chapter we will use the following terminology.

\begin{definition}\rm{
Let $S\subseteq W$ be subsets of $\Spec(R)$. We will say that $S$ is \emph{stable under specialization (resp. generalization) within $W$}, when the following property holds:
\begin{enumerate}
\item[-] If $\mathbf{p} \subseteq \mathbf{q}$ are prime ideals in $W$ such that $\mathbf{p}\in S$ (resp. $\mathbf{q}\in S$), then $\mathbf{q}\in S$ (resp. $\mathbf{p}\in S$)
\end{enumerate}
Note that in such case $S$ need not be stable under specialization (resp. generalization) in $\Spec(R)$.}
\end{definition}

\begin{lemma}\label{lem. iterative quotiens category}
Let $Z\subseteq \Spec(R)$ be a sp-subset and suppose that $\Spec(R)\setminus Z=\tilde{V} \bigcupdot \tilde{W}$, where $\tilde{V}$ and $\tilde{W}$ are stable under specialization within $\Spec(R)\setminus Z$. If we put $V=\tilde{V} \cup Z$ and $W=\tilde{W} \cup Z$, then the following assertions hold:
\begin{enumerate}

\item[1)] $V$ and $W$ are sp-subsets of $\Spec(R)$;

\item[2)] The category $\frac{R\text{-Mod}}{\T_Z}$ is equivalent to $\frac{\T_V}{\T_Z} \times \frac{\T_W}{\T_Z};$

\item[3)] We have canonical equivalences of categories $\frac{\T_V}{\T_Z} \iso \frac{R\text{-Mod}}{\T_W}$ and $\frac{\T_W}{\T_Z} \iso \frac{R\text{-Mod}}{\T_Z}$.
\end{enumerate}
\end{lemma}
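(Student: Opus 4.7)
The plan is to begin with assertion 1 by exploiting the following strengthening of the hypothesis: if $\p \in \tilde{V}$ and $\mathbf{q} \supseteq \p$, then either $\mathbf{q} \in Z \subseteq V$, or $\mathbf{q} \in \Spec(R) \setminus Z$ and the stability of $\tilde{V}$ within $\Spec(R)\setminus Z$ forces $\mathbf{q} \in \tilde{V} \subseteq V$. Combined with the fact that $Z$ is itself sp, this shows $V$ is sp; the argument for $W$ is symmetric. As a by-product, for every $\p \in \tilde{V}$ we obtain $V(\p) \subseteq V$ (and symmetrically for $\tilde{W}$); this will be needed below.

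Next I would establish the key structural lemma behind assertions 2 and 3: for every $M \in R\Mode$, the quotient $M/\Gamma_V M$ belongs to $\T_W$ (and symmetrically $M/\Gamma_W M \in \T_V$). Indeed $M/\Gamma_V M \in \F_V$, so its support (always an sp-subset of $\Spec(R)$) misses $V$; being sp it cannot escape $\Spec(R)\setminus V$, and by the observation just made $\Spec(R)\setminus V = \tilde{W} \subseteq W$. Hence $\Supp(M/\Gamma_V M) \subseteq W$ and $M/\Gamma_V M \in \T_W$. Also, since $V \cap W = Z$, we immediately get $\T_V \cap \T_W = \T_Z$.

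For assertion 2, write $\bar{R}\text{-Mod} := \frac{R\Mode}{\T_Z}$ and let $q : R\Mode \to \bar{R}\text{-Mod}$, $j : \bar{R}\text{-Mod} \to R\Mode$ be the usual adjoint pair. Applying $q$ to the short exact sequence $0 \to \Gamma_V M \to M \to M/\Gamma_V M \to 0$ yields a sequence in $\bar{R}\text{-Mod}$ whose outer terms lie in the images of $\T_V$ and $\T_W$ respectively. I would then show that this sequence splits naturally, by verifying that $\Hom_{\bar{R}}(qA,qB) = 0 = \Ext^{1}_{\bar{R}}(qA,qB)$ whenever $A\in\T_V$ and $B\in\T_W$. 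For $\Hom$, the adjunction gives $\Hom_{\bar{R}}(qA,qB)\cong\Hom_R(A,jqB)$; since $jqB \in \T_W \cap \F_Z$ (the $Z$-closure of a $\T_W$-module remains in $\T_W$ by an analogous support argument, and is torsionfree for $\T_Z$ by the very definition of the Giraud subcategory), any such map has image in $\T_V\cap\T_W\cap\F_Z = \T_Z\cap\F_Z = 0$. A similar argument using that the injective objects of $\bar{R}\text{-Mod}$ are images under $q$ of $Z$-torsionfree injectives of $R\Mode$, which decompose as a $\T_V$-part plus an $\F_V$-part because $(\T_V,\F_V)$ is hereditary, then handles $\Ext^{1}$. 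This produces the desired product decomposition $\bar{R}\text{-Mod} \simeq \frac{\T_V}{\T_Z} \times \frac{\T_W}{\T_Z}$, with the two projections induced by $M\mapsto \Gamma_V M$ and $M\mapsto M/\Gamma_V M$.

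Assertion 3 (where the second equivalence should read $\frac{\T_W}{\T_Z} \iso \frac{R\Mode}{\T_V}$ by symmetry) follows formally from the structural lemma. The inclusion $\T_V\hookrightarrow R\Mode$ composed with the quotient $R\Mode \to \frac{R\Mode}{\T_W}$ vanishes on $\T_Z\subseteq\T_W$ and hence factors through $\frac{\T_V}{\T_Z}$, yielding a functor in one direction; a quasi-inverse is induced by $M\mapsto M/\Gamma_W M \in \T_V$, which is the identity modulo $\T_W$ by the structural lemma. I expect the main obstacle to be the $\Ext^1$-vanishing in the proof of assertion 2: choosing and splitting injective resolutions compatibly with the torsion pair $(\T_V,\F_V)$ and transporting this splitting to $\bar{R}\text{-Mod}$ is where genuine work is required, and it is the one place where the hereditary and Noetherian hypotheses enter in a non-formal way.
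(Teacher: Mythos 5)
Your overall strategy is viable and genuinely different from the paper's, but the step on which everything else rests is justified incorrectly. From $M/\Gamma_VM\in\F_V$ you cannot conclude that ``its support misses $V$'': torsion-freeness for $\T_V$ only says that no \emph{associated} prime of $M/\Gamma_VM$ lies in $V$, not that no prime of its support does (over $\mathbb{Z}$ with $V=\{(p)\}$ the module $\mathbb{Z}$ is torsion-free, yet $(p)\in\Supp(\mathbb{Z})$); moreover $\Supp(M/\Gamma_VM)$ will in general meet $Z\subseteq V$, so it is simply false that it is contained in $\Spec(R)\setminus V=\tilde{W}$. The structural claim you want, namely $M/\Gamma_VM\in\T_W$, is nevertheless true, and the correct argument is via associated primes: $\mathrm{Ass}(M/\Gamma_VM)\subseteq\Spec(R)\setminus V=\tilde{W}$, over the Noetherian ring $R$ every prime of the support of a module contains an associated prime, and $V(\p)\subseteq W$ for each $\p\in\tilde{W}$ by exactly the specialization argument you used for assertion 1; hence $\Supp(M/\Gamma_VM)\subseteq W$. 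Two further points need attention: splitting $0\to q(\Gamma_VM)\to q(M)\to q(M/\Gamma_VM)\to 0$ requires $\Ext^1$ to vanish with first variable in $q(\T_W)$ and second in $q(\T_V)$, the opposite orientation to the one you state (harmless only because of the $V\leftrightarrow W$ symmetry, which should be invoked explicitly); and inducing your quasi-inverse in assertion 3 through the quotient $\frac{R\Mode}{\T_W}$ requires checking that $M\mapsto M/\Gamma_WM$, followed by $\T_V\to\frac{\T_V}{\T_Z}$, is exact --- this works because the failure-of-exactness terms are subquotients of $\T_W$-modules living inside $\T_V$-modules, hence lie in $\T_V\cap\T_W=\T_Z$, but it is a step, not a formality.

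With these repairs your route does go through, and it differs from the paper's in an instructive way. The paper works entirely inside the Giraud subcategory $\G_Z$: it applies $q$ to the \emph{minimal} injective resolution of an object of $\frac{\T_V}{\T_Z}$, whose terms decompose as a $\tilde{V}$-part plus a $Z$-part because $\T_V$ is stable under injective envelopes, and thereby gets $\Ext^k_{\G_Z}(T_W,T_V)=0$ for \emph{all} $k\geq 0$ at once; assertion 3 is then obtained by identifying $\T_V\cap\G_Z=\G_W$ directly rather than by your universal-property argument for Gabriel quotients. Your sketch of the $\Ext^1$-vanishing (injective envelopes of objects of $\T_W\cap\F_Z$ stay in $\T_W\cap\F_Z$ by hereditariness, the cosyzygy in $\G_Z$ stays in $q(\T_W)$, then your Hom-vanishing applies) is in fact close in spirit to this and can be completed; and your reading of the statement is right: the second equivalence in assertion 3 should be $\frac{\T_W}{\T_Z}\cong\frac{R\Mode}{\T_V}$, exactly as the paper's own proof (``the other half is obtained by symmetry'') confirms.
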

\begin{proof}
All throughout the proof, for any sp-subset $Z$ of $\Spec(R)$ we shall identify $\frac{R\text{-Mod}}{\T_Z}$ with the associated Giraud subcategory $\G_Z$ of $R$-Mod consisting of the $Z$-closed $R$-modules (see subsection \ref{sec. Commutative algebra}).\\

1) It is sufficient to prove that $V$ is a sp-subset of $\Spec(R)$. Let $\mathbf{p} \subseteq \mathbf{q}$ be prime ideals, where $\mathbf{p}\in V$. If $\mathbf{q}\in Z$ there is nothing to prove. So we assume that $\mathbf{q}\notin Z$, and hence $\mathbf{p}\notin Z$. In such case, $\mathbf{p}\in \tilde{V}$. It follows that $\mathbf{q}\in \tilde{V}$, since $\mathbf{q}\in V(\mathbf{p})\cap (\Spec(R)\setminus Z)$, and $\tilde{V}$ is stable under specialization within $\Spec(R) \setminus Z$. In particular, $\mathbf{q}\in V$. \\

 2) Clearly $\tilde{\T}_V:=q(\T_V)=\frac{\T_V}{\T_Z}$ is a hereditary torsion class in $\frac{R\text{-Mod}}{\T_Z}$, which gets identified with $\T_V \cap \G_Z$ when we identify $\frac{R\Mode}{\T_Z}$ with $\G_Z$. A similar fact is true when replacing $V$ by $W$. \\

We claim that $\Hom_{\G_Z}(T_W,T_V)=0=\Ext^{1}_{\G_Z}(T_W,T_V)$, for all $T_W\in \tilde{\T_W}$ and all $T_V\in \tilde{\T_V}$  and, once this is proved, the same will be true with the roles of $V$ and $W$ exchanged. Note that $T_V$ is a $Z$-torsionfree and $V$-torsion $R$-module. Since $\T_V$ is closed under taking injective envelopes and $E(M)\in \text{Add}(\underset{\mathbf{p}\in \text{Ass}(M)}{\oplus}E(R/\mathbf{p}))$, for all $R$-module $M$, it follows that the minimal injective resolution of $T_V$ in $R$-Mod has the form
$$\xymatrix{0 \ar[r] & T_V \ar[r] & I^{0}_{\tilde{V}} \ar[r] & I^{1}_{\tilde{V}} \oplus I^{1}_{Z} \ar[r] & \cdots \ar[r] & I^{n}_{\tilde{V}} \oplus I^{n}_{Z} \ar[r] & \cdots}$$
with the convention that $I^{k}_{\tilde{V}}\in \text{Add}(\underset{\mathbf{p}\in \tilde{V}}{\oplus} E(R/\mathbf{p}))$ and $I^{k}_{Z}\in \text{Add}(\underset{\mathbf{p}\in Z}{\oplus} E(R/\mathbf{p}))$, for all $k\geq 0$. When applying the exact functor $q:R\text{-Mod} \flecha \frac{R\text{-Mod}}{\T_Z}\cong \G_Z$, we get an injective resolution 
$$\xymatrix{0 \ar[r] & T_V \ar[r] & I^{0}_{\tilde{V}} \ar[r] & I^{1}_{\tilde{V}}  \ar[r] & \cdots \ar[r] & I^{n}_{\tilde{V}} \ar[r] & \cdots}$$
in $\G_Z$. All its terms are then injective $W$-torsionfree $R$-modules, since $\tilde{V}\cap \tilde{W}=\emptyset$. This implies that $\Ext^{k}_{\G_Z}(T_W,T_V)=0$, for all $k\geq 0$, and our claim follows. \\

If $F\in \G_Z$ is such that $\Hom_{\frac{R\text{-Mod}}{\T_Z}}(q(T),F)=0$, for all $T\in \T_V$, the adjunction $(q,j)$ gives that $\Hom_{R}(T,j(Z))=0$, for all $T\in \T_V$, i.e., $j(F)$ is a $V$-torsionfree $R$-module, so that its injective envelope $E(j(F))$ is in $\text{Add}(\underset{\mathbf{p}\in \Spec(R)\setminus V}{\oplus} E(R/\mathbf{p}))=\text{Add}(\underset{\mathbf{p}\in \tilde{W}}{\oplus}E(R/\mathbf{p})).$ But then $E(j(F))$ is in $\T_W$, which implies that $F\in \T_W\cap \G_Z=\tilde{\T}_W$. By the previos paragraph, we have that $\Ext^{1}_{\G_Z}(F,?)$ vanishes on $\tilde{\T_V}$. It follows that if $Y\in \G_Z$ is any object and $\xymatrix{0 \ar[r] & \tilde{T}_V \ar[r] & Y \ar[r] & F \ar[r] & 0}$ is the canonical exact sequence in $\G_Z$ associated to torsion pair $(\tilde{\T}_{V}, \tilde{\T}_{V}^{\perp})$, then this sequence splits. Therefore $Y$ decomposes as $Y=\tilde{T}\oplus F$, where $\tilde{T}\in \tilde{\T}_V$ and $F\in \tilde{\T}_W$. Assertion 2 is now clear.\\

3) We shall prove that $\T_V \cap \G_Z=\G_W$. From the equivalences of categories $\frac{\T_V}{\T_Z}\cong \T_V \cap \G_Z$ and $\frac{R\text{-Mod}}{\T_W}\cong \G_W$ the first ``half''of assertion 3 will follow then automatically. The other ``half'' is obtained by symmetry. \\

It follows from the arguments in the proof of assertion 2 that if $\tilde{T}_V\in \T_V\cap \G_Z$ then $\tilde{T}_V$ is $W$-torsionfree. Let us take now $T_W\in \T_W$ and consider any exact sequence in $R$-Mod
$$\xymatrix{0 \ar[r] & \tilde{T}_V \ar[r]^{u} & M \ar[r] & T_W \ar[r] & 0}$$
When applying the functor $q:R\text{-Mod} \flecha \frac{R\text{-Mod}}{\T_Z}\cong \G_Z$, using the proof of assertion 2 we get a split exact sequence in $\G_Z$. In particular, we have $(j \circ q)(u)$ is a section in $R$-Mod. But, bearing in mind that $\tilde{T}_V$ is $Z$-closed, we can identify $(j \circ q)(u)$ with the composition $\tilde{T}_V \xymatrix{\ar[r]^{u\hspace{0.2 cm}} & M \ar[r]^{\mu_M \hspace{0.6 cm}} & (j \circ q)(M)}$. It follows that $u$ is a section in $R$-Mod, so that $\Ext^{1}_R(?,\tilde{T}_V)$ vanishes on $\T_W$. Therefore $\tilde{T}_V$ is in $\G_W$.\\

Now, let us take $Y\in \G_W$. Then $Y$ is $Z$-closed and assertion 2 gives a decomposition $Y=\tilde{T}_{V} \oplus \tilde{T}_W$, where $\tilde{T}_V\in \tilde{\T}_V$ and $\tilde{T}_W\in \tilde{\T}_W$. But $\tilde{T}_W$ is then $W$-torsion and $W$-closed, which implies that it is zero. Therefore we have $Y\cong \tilde{T}_V\in \T_V \cap \G_Z$.  
\end{proof}

\begin{lemma}\label{lem. equivalence with q(H)}
If $\phi$ is a sp-filtration of $\Spec(R)$ and $Z:=\underset{i \in I}{\bigcap} \phi(i)$, then $(q(\mathcal{U}_\phi),q(\mathcal{U}_{\phi}^{\perp})[1])$ is a t-structure in $\D(\frac{R\text{-Mod}}{\T_Z})$, where $q$ denotes the (left derived functor of) quotient functor $q:R\text{-Mod} \flecha \frac{R\text{-Mod}}{\T_Z}$. Furthermore, we have an equivalence of categories $\Hp\cong \mathcal{H}_{q(\phi)}$.
\end{lemma}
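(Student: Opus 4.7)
The plan is to lift the t-structure via the fully faithful right adjoint $j$ and to identify the hearts as mutually inverse restrictions of $q$ and $j$. First I would record that $Z=\underset{i\in\mathbb{Z}}{\bigcap}\phi(i)$ is a sp-subset of $\Spec(R)$ (intersection of sp-subsets), so $(\T_Z,\T_Z^{\perp})$ is a hereditary torsion pair. Since $q:R\Mode\flecha R\Mode/\T_Z$ is exact with fully faithful right adjoint $j$, proposition \ref{prop. adjoint derived} yields an adjoint pair of triangulated functors $(q,j)$ between $\D(R)$ and $\D(R\Mode/\T_Z)$, with $q$ still exact, the counit $qj\flecha \mathrm{id}$ an isomorphism, and $\Ker(q)=\mathcal{K}:=\{X\in\D(R):H^i(X)\in\T_Z \text{ for every }i\}$. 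Since $Z\subseteq\phi(i)$ for all $i$, theorem \ref{teo. main AJS} gives $\mathcal{K}\subseteq\mathcal{U}_\phi$.

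The technical core is the claim: for every $W\in\mathcal{U}_\phi^{\perp}$ the unit $W\flecha jqW$ is an isomorphism; equivalently $\mathbf{R}\Gamma_Z(W)=0$. The inclusion $Z\subseteq\phi(i)$ gives $\Gamma_Z=\Gamma_Z\circ\Gamma_{\phi(i)}$ on injectives, hence $\mathbf{R}\Gamma_Z\cong \mathbf{R}\Gamma_Z\circ \mathbf{R}\Gamma_{\phi(i)}$; a routine hypercohomology spectral-sequence argument shows $\mathbf{R}\Gamma_Z$ preserves $\D^{>i}(R)$ for each $i$; combining this with $\mathbf{R}\Gamma_{\phi(i)}(W)\in\D^{>i}(R)$ from theorem \ref{teo. main AJS} forces $\mathbf{R}\Gamma_Z(W)\in\underset{i\in\mathbb{Z}}{\bigcap}\D^{>i}(R)=0$, and the Bousfield triangle $\mathbf{R}\Gamma_Z(W)\flecha W\flecha jqW\flecha \mathbf{R}\Gamma_Z(W)[1]$ closes the case. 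With this in hand the three t-structure axioms are routine: closure under $[1]$ is immediate from $q(\mathcal{U}_\phi[1])\subseteq q(\mathcal{U}_\phi)$; for $U\in\mathcal{U}_\phi$ and $W\in\mathcal{U}_\phi^{\perp}$ adjointness gives $\Hom(qU,qW)\cong\Hom(U,jqW)\cong\Hom(U,W)=0$; and for $X\in\D(R\Mode/\T_Z)$, applying $q$ to the truncation triangle of $jX$ in $(\mathcal{U}_\phi,\mathcal{U}_\phi^{\perp}[1])$ yields the required approximation triangle for $qjX\cong X$.

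For the equivalence $\Hp\cong\mathcal{H}_{q(\phi)}$ I would verify that $\bar{q}:=q|_{\Hp}$ and $\bar{j}:=j|_{\mathcal{H}_{q(\phi)}}$ are mutually inverse. Well-definedness of $\bar{q}$ is clear. For $\bar{j}$, given $N\in\mathcal{H}_{q(\phi)}=q(\mathcal{U}_\phi)\cap q(\mathcal{U}_\phi^{\perp})[1]$, choose $U\in\mathcal{U}_\phi$ and $W\in\mathcal{U}_\phi^{\perp}$ with $N=qU=q(W[1])$; then $jN\cong jqW[1]\cong W[1]\in\mathcal{U}_\phi^{\perp}[1]$ by the key step, and $jN\cong jqU$ fits in the triangle $\mathbf{R}\Gamma_Z(U)\flecha U\flecha jN\flecha\mathbf{R}\Gamma_Z(U)[1]$ whose outer vertices lie in $\mathcal{U}_\phi$ (using $\mathcal{K}\subseteq\mathcal{U}_\phi$ and closure of $\mathcal{U}_\phi$ under extensions), so $jN\in\Hp$. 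The counit yields $\bar{q}\bar{j}\cong\mathrm{id}$, and for $M\in\Hp\subseteq\mathcal{U}_\phi^{\perp}[1]$ the key step applied to $M[-1]\in\mathcal{U}_\phi^{\perp}$ gives $M\cong jqM$, i.e.\ $\bar{j}\bar{q}\cong\mathrm{id}$. I expect the main obstacle to be precisely the vanishing $\mathbf{R}\Gamma_Z(W)=0$ for $W\in\mathcal{U}_\phi^{\perp}$; once this Bousfield-style annihilation is secured, everything else cascades formally from adjointness and the spectral characterizations of $\mathcal{U}_\phi$, $\mathcal{U}_\phi^{\perp}$ provided by theorem \ref{teo. main AJS}.
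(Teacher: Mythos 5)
Your proposal is correct and follows essentially the same route as the paper: both hinge on showing that the unit $Y\flecha (\mathbf{R}j\circ q)(Y)$ is an isomorphism for every $Y\in\mathcal{U}_\phi^{\perp}$, then deduce the Hom-vanishing by adjunction, obtain the approximation triangles by applying $q$ to the $\phi$-truncation triangle of $\mathbf{R}j(Y)$, and identify the hearts via the restrictions of $q$ and $\mathbf{R}j$. The only (harmless) divergences are that you justify the central vanishing $\mathbf{R}\Gamma_Z(W)=0$ by the amplitude argument $\mathbf{R}\Gamma_Z(W)\in\bigcap_i\D^{>i}(R)=0$ rather than by the orthogonality $\mathcal{U}_Z\subseteq\mathcal{U}_\phi$ the paper invokes, and that you exhibit an explicit quasi-inverse for $\overline{q}$ where the paper checks fully faithful plus dense.
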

\begin{proof}
We denote by $\mathcal{U}_Z$ the (full) triangulated subcategory of $\D(R)$ consisting of the complexes $X$ such that $\Supp(H^{j}(X))\subseteq Z$, for all $j\in \mathbb{Z}$. Recall that if $(q:R\text{-Mod} \flecha \frac{R\text{-Mod}}{\T_Z},j:\frac{R\text{-Mod}}{\T_Z} \flecha R\text{-Mod})$ is the localization adjoint pair, then we get an adjunction of triangulated functors $(q,\mathbf{R}j)$. Moreover, for each object $M\in \D(R)$, we have a triangle 
$$\xymatrix{\mathbf{R}\Gamma_Z(M)\ar[r] & M \ar[r] & (\mathbf{R}j \circ q)(M) \ar[r]^{\hspace{1.1 cm}+} & }$$
This is precisely the triangle associated to the t-structure $(\mathcal{U}_Z,\mathcal{U}_Z^{\perp}[1])$ (see \cite[Section 2]{AJSo3} and \cite[Section 1.6]{AJS}). On the other hand, if $X\in \mathcal{U}_{\phi}$ and $Y\in \mathcal{U}^{\perp}_{\phi}$, then we have an isomorphism $\Hom_{\D(\frac{R\text{-Mod}}{\T_Z})}(q(X),q(Y))\cong \Hom_{\D(R)}(X, (\mathbf{R}j \circ q)(Y)).$ Due to the fact that $Y\in \mathcal{U}_{\phi}^{\perp}$, it is not difficult to see that the canonical map $Y \iso (\mathbf{R}j \circ q)(Y)$ is an isomorphism (since $\mathcal{U}_Z \subseteq \mathcal{U}_{\phi}$). It follows $\Hom_{\D(\frac{R\text{-Mod}}{\T_Z})}(q(X),q(Y))=0$. Now, if $Y\in \D(\frac{R\text{-Mod}}{\T_Z})$, then we get a triangle
$$\xymatrix{\tau^{\leq}_{\phi}(\mathbf{R}j(Y)) \ar[r] & \mathbf{R}j(Y) \ar[r] & \tau^{>}_\phi(\mathbf{R}j(Y))\ar[r]^{\hspace{1.0 cm}+} & }$$
in $\D(R)$, from which we get a triangle in $\D(\frac{R\text{-Mod}}{\T_Z})$:
$$\xymatrix{q(\tau^{\leq}_{\phi}(\mathbf{R}j(Y))) \ar[r] & q(\mathbf{R}j(Y))\cong Y \ar[r] & q(\tau^{>}_\phi(\mathbf{R}j(Y)))\ar[r]^{\hspace{1.0 cm}+} & }$$
where the outer terms are in $q(\mathcal{U}_\phi)$ and $q(\mathcal{U}_{\phi}^{\perp})$ respectively. Thus, $(q(\mathcal{U}_\phi), q(\mathcal{U}_{\phi}^{\perp})[1])$ is a t-structure in $\D(\frac{R\text{-Mod}}{\T_Z})$.\\

On the other hand, we have a commutative diagram
$$\xymatrix{\Hp \ar@{-->}[r]^{\overline{q}} \ar@{^(->}[d]&\mathcal{H}_{q(\phi)} \ar@{^(->}[d]\\ \D(R) \ar[r]^{q \hspace{0.4 cm}} & \D(\frac{R\text{-Mod}}{\T_Z})}$$
We shall prove that the upper horizontal arrow is an equivalence of categories. We will start by showing that $\tilde{q}$ is dense. Let $M\in \mathcal{H}_{q(\phi)}$, we then have $M\cong q(U)$, for some $U\in \mathcal{U}_{\phi}$ and, due to the triangle, $\xymatrix{\mathbf{R}\Gamma_Z(U) \ar[r] & U \ar[r]^{\mu_U\hspace{0.9 cm}} & (\mathbf{R}j \circ q)(U) \ar[r]^{\hspace{1.1 cm}+}&}$ in $\D(R)$, we can assume that $\mu_U:U \flecha (\mathbf{R}j \circ q)(U)$ is an isomorphism, since $(\mathbf{R}j \circ q)(U)\in \mathcal{U}_{\phi}$ and $q(U)\cong q((\mathbf{R}j \circ q)(U))$. We claim that $U\in \Hp$, for which we just need to see that $U[-1]\in \mathcal{U}_{\phi}^{\perp}$. Indeed, for each $U^{'}\in \mathcal{U}_\phi$ we have the following isomorphisms:
\begin{eqnarray*}
\Hom_{\D(R)}(U^{'},U[-1])\cong \Hom_{\D(R)}(U^{'}, (\mathbf{R}j \circ q)(U)[-1]) & \cong & \Hom_{\D(\frac{R\text{-Mod}}{\T_Z})}(q(U^{'}), q(U)[-1]) \\ & \cong & \Hom_{\D(\frac{R\text{-Mod}}{\T_Z})}(q(U^{'}), M[-1])=0
\end{eqnarray*}
This is zero since $q(U^{'})\in q(\mathcal{U}_{\phi})$ and, hence our claim holds. On the other hand, if $M,N$ are in $\Hp$, then we get an isomorphism (since $N[-1]\in \mathcal{U}_{\phi}^{\perp}$):
$$\mu_{N[-1]}:N[-1] \iso (\mathbf{R}j \circ q)(N[-1])$$
Note that we have the following chain of isomorphism:
$$\xymatrix{\Hom_{\D(R)}(M,N) \ar[r]^{\sim \hspace{0.75cm}} & \Hom_{\D(R)}(M[-1],N[-1]) \ar[rr]^{(\mu_{N[-1]})_{*} \hspace{0.75 cm}} && \Hom_{\D(R)}(M[-1],(\mathbf{R}j \circ q)(N[-1])) \ar[d]^{\wr} \\ && & \Hom_{\D(\frac{R\Mode}{\T_Z})}(q(M[-1]), q(N[-1])) \ar[d]^{\wr} \\ && & \Hom_{\D(\frac{R\Mode}{\T_Z})}(q(M), q(N))} $$
It is not difficult to see that the resulting isomorphism is given by \linebreak $\Hom_{\D(R)}(M,N)\cong \xymatrix{ \Hom_{\Hp}(M,N) \ar[r]^{\tilde{q}(?) \hspace{0.8 cm}} & \Hom_{\mathcal{H}_{q(\phi)}}(q(M),q(N))} \iso \Hom_{\D(\frac{R\Mode}{\T_Z})}(q(M),q(N))$\\ 
Thus, $\tilde{q}$ is fully faithful.
\end{proof}

\subsection{The main theorem}
We are now ready for the second main result of this chapter.

\begin{theorem}\label{teo. second main}
Let $R$ be a commutative Noetherian ring, let $(\mathcal{U},\mathcal{U}^{\perp}[1])$ be a compactly generated t-structure in $\D(R)$ such that $\mathcal{U}\neq \mathcal{U}[-1]$ and let $\mathcal{H}$ be its heart. The following assertions are equivalent:
\begin{enumerate}
\item[1)] $\mathcal{H}$ is a module category;

\item[2)] There are a possibly empty stable under specialization subset $Z$ of $\Spec(R)$, a family $\{e_1,\dots,e_t\}$ of nonzero orthogonal idempotents of the ring quotients $R_Z$ and integers $m_1 < m_2 < \dots < m_t$ satisfying the following properties:
\begin{enumerate}
\item[a)] If $\mu_*: R_Z$-Mod$\flecha R$-Mod is the restriction of scalars functor and \linebreak $q:\D(R) \flecha \D(\frac{R\text{-Mod}}{\T_Z})$ is the canonical functor, then $\mathcal{U}$ consists of the complexes $U\in \D(R)$ such that $q(U)$ is in $\underset{1 \leq k \leq t}{\oplus} \D^{\leq m_t}(\frac{R_Ze_k \text{-Mod}}{\mu_{*}^{-1}(\T_Z)\cap R_Ze_k\text{-Mod}})$;

\item[b)] $\frac{R_Ze_k\text{-Mod}}{\mu_{*}^{-1}(\T_Z) \cap R_Ze_k\text{-Mod}}$ is a module category, for $k=1,\dots,t$.
\end{enumerate}
\end{enumerate}
In that case, $\mathcal{H}$ is equivalent to $\frac{R_Ze_1\text{-Mod}}{\mu_{*}^{-1}(\T_Z)\cap R_Ze_1\text{-Mod}} \times \dots \times \frac{R_Ze_t\text{-Mod}}{\mu_{*}^{-1}(\T_Z)\cap R_Ze_t\text{-Mod}}$.
\end{theorem}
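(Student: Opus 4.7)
The plan is to prove the theorem by combining four ingredients already established in the manuscript: the classification of compactly generated t-structures via sp-filtrations (Theorem~\ref{teo. main AJS}), the compatibility of the heart with quotient categories (Lemma~\ref{lem. equivalence with q(H)}), the compatibility with localization (Corollary~\ref{cor. localizando (H) categor�as de modulos}), and the ``connected case'' (Theorem~\ref{prop. H mod. cat. R. con.}). The implication 2)$\Longrightarrow$1) is the softer half: if condition 2) holds, then by Lemma~\ref{lem. product of heart} together with Proposition~\ref{prop. Hp equivalent quotient}, the heart decomposes as a finite product of the quotient categories $\frac{R_Ze_k\text{-Mod}}{\mu_{*}^{-1}(\T_Z)\cap R_Ze_k\text{-Mod}}$, each shifted by $m_k$; condition b) guarantees each factor is a module category, and a finite product of module categories is again a module category (with progenerator the direct sum of progenerators).

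For the nontrivial direction 1)$\Longrightarrow$2), write $\mathcal{U}=\mathcal{U}_\phi$ for a sp-filtration $\phi$ and set $Z:=\bigcap_{i\in\mathbb{Z}}\phi(i)$. I would first apply Lemma~\ref{lem. equivalence with q(H)} to replace $\mathcal{H}$ by the heart of the induced t-structure in $\D(R\text{-Mod}/\T_Z)\simeq\D(R_Z\text{-Mod}/\mu_*^{-1}(\T_Z))$, noting that the ``descended'' sp-filtration $\bar{\phi}$ now has $\bigcap_i\bar{\phi}(i)=\emptyset$ in $\Spec(R_Z)$. Since $R_Z$ is commutative (Lemma~\ref{lem. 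R_F is a commutative ring}), everything reduces to the commutative-Noetherian setting, and $\mathcal{H}$ being a module category is preserved by this equivalence.

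Next, for each prime $\p\in\Spec(R_Z)$, Corollary~\ref{cor. localizando (H) categor�as de modulos} gives that the localized heart is still a module category. Because $\bigcap_i\bar{\phi}(i)=\emptyset$, the condition $\bar{\phi}_\p(i)\neq\emptyset$ forces $\p\in\bar{\phi}(i)$, so there is some $i$ with $\bar{\phi}_\p(i)=\emptyset$; the opposite tail stabilizes by Noetherianity of $\Spec((R_Z)_\p)$. Thus $\bar{\phi}_\p$ is eventually trivial, and since $(R_Z)_\p$ is local (hence connected), Theorem~\ref{prop. H mod. cat. R. con.} produces a unique integer $m_\p\in\mathbb{Z}$ such that the localized t-structure is $(\D^{\leq m_\p},\D^{\geq m_\p})$, equivalently $\bar{\phi}_\p(m_\p)=\Spec((R_Z)_\p)$ and $\bar{\phi}_\p(m_\p+1)=\emptyset$.

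The remaining step, which I expect to be the main obstacle, is the global gluing: one must show that $\p\mapsto m_\p$ takes only finitely many values $m_1<\cdots<m_t$, and that the level set $U_k:=\{\p : m_\p=m_k\}$ is clopen in $\Spec(R_Z/\overline{\T_Z})$ (where $\overline{\T_Z}$ denotes the hereditary torsion class corresponding to $\mu_*^{-1}(\T_Z)$). Local constancy comes from reading off $m_\p$ as the unique jump of $\bar{\phi}_\p$, which is determined by the finitely many sp-subsets $\bar{\phi}(i)$ that are nonempty; finiteness of the set of values then follows from the Noetherianity of the spectrum and the assumption $\mathcal{U}\neq\mathcal{U}[-1]$ (which rules out a trivial single-value situation). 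Each clopen $U_k$ then corresponds to an idempotent $e_k$ of the quotient ring, giving the orthogonal family of idempotents. Condition 2.a) translates the characterization of $\mathcal{U}_\phi$ into the componentwise shifted canonical aisles using Lemma~\ref{lem. iterative quotiens category} iteratively to split off each $e_k$-component, and condition 2.b) follows because, on each component, the module-category property is inherited from $\mathcal{H}$. The final equivalence $\mathcal{H}\simeq\prod_k\frac{R_Ze_k\text{-Mod}}{\mu_*^{-1}(\T_Z)\cap R_Ze_k\text{-Mod}}$ drops out of the decomposition via Lemma~\ref{lem. product of heart}(4) and Proposition~\ref{prop. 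Hp equivalent quotient} applied componentwise.
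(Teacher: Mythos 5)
Your overall architecture (localize at a prime to force a single jump $m_\p$, then glue the jumps into idempotents) is the same as the paper's, but the order in which you apply the machinery creates a genuine gap. You pass to $\D(\frac{R\Mode}{\T_Z})\simeq\D(\frac{R_Z\Mode}{\mu_*^{-1}(\T_Z)})$ \emph{first}, and then invoke the corollary on preservation of the module property under localization and the connected eventually-trivial theorem \ref{prop. H mod. cat. R. con.} for primes of $R_Z$. Both of those results (and the sp-filtration classification, theorem \ref{teo. main AJS}, on which they rest) are proved only for compactly generated t-structures on $\D(R)$ with $R$ commutative \emph{Noetherian}. After the quotient you are no longer in that setting: $R_Z$ need not be Noetherian, $\frac{R\Mode}{\T_Z}$ is not known to be a module category over $R_Z$ (this is precisely one of the open questions in Chapter 6), and there is no classification of the induced t-structures on $\D(\frac{R\Mode}{\T_Z})$ by sp-filtrations of $\Spec(R_Z)$, so your sentence ``everything reduces to the commutative-Noetherian setting'' is unjustified. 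The paper avoids this by doing the local analysis \emph{before} quotienting: for $\p\in\phi(m)\setminus\phi(m+1)$ with $\p\notin Z=\bigcap_i\phi(i)$, the filtration $\phi_\p$ of $\Spec(R_\p)$ is eventually trivial, $R_\p$ is local (hence connected) and Noetherian, so the two quoted results apply verbatim in $\D(R_\p)$ and give $\phi_\p(i)=\Spec(R_\p)$ for all $i\le m$; only afterwards does one pass to the quotient via lemma \ref{lem. equivalence with q(H)}.

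The second problem is the gluing step, which you yourself flag as the main obstacle: as written it is not a proof. Noetherianity of $\Spec(R)$ gives DCC on \emph{closed} subsets, not on sp-subsets (arbitrary unions of closed sets), so it does not bound the number of jumps of $\phi$; and $\mathcal{U}\neq\mathcal{U}[-1]$ only guarantees at least one jump, not finitely many. Likewise ``$m_\p$ is determined by the finitely many nonempty $\bar\phi(i)$'' presupposes the finiteness you are trying to establish. In the paper, finiteness is a consequence of the local computation: stability of $\phi(i)\setminus Z$ under generalization within $\Spec(R)\setminus Z$ forces $\phi(i)=\overline{\mathcal{M}_i}\cup Z$ with $\mathcal{M}_i=\MinSpec(R)\cap\phi(i)$, and since $\MinSpec(R)$ is finite the decreasing chain $(\mathcal{M}_i)$ has only finitely many jumps $m_1<\dots<m_t$ (the paper first reduces to $R$ connected via lemma \ref{lem. product of heart}). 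Moreover the idempotents do not arise from level sets of $\p\mapsto m_\p$ being clopen in some spectrum; they come from iterating lemma \ref{lem. iterative quotiens category} over the strata $\tilde V_k=\phi(m_k)\setminus\phi(m_k+1)$ to split $\frac{R\Mode}{\T_Z}$ into the factors $\frac{\T_{V_k}}{\T_Z}$, and then decomposing $R_Z$ as an object of that quotient category, using $R_Z\cong\End_{\frac{R\Mode}{\T_Z}}(R_Z)$ and the commutativity of $R_Z$ (lemma \ref{lem. R_F is a commutative ring}). Your treatment of the implication 2) $\Longrightarrow$ 1) is essentially the paper's and is fine.
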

\begin{proof}
2) $\Longrightarrow$ 1) By \cite[Chapter X, Section 2]{S} (see subsection \ref{sec. quotients category}), we have an equivalence of categories $\frac{R_Z\text{-Mod}}{\mu_{*}^{-1}(\T_Z)} \iso \frac{R \text{-Mod}}{\T_Z}$. The idempotents $e_i$ give a decomposition $R_Z$-Mod$\cong R_Ze_0\Mode \times R_Ze_1 \Mode \times \dots \times R_Ze_t \Mode$, where $e_0=1-e_1-\dots -e_t$. This in turn yields a decomposition of Grothendieck categories
\begin{small}
$$\frac{R\Mode}{\T_Z}\cong \frac{R_Z\Mode}{\mu^{-1}_{*}(\T_Z)} \iso \frac{R_Ze_0\Mode}{\mu_{*}^{-1}(\T_Z) \cap R_Ze_0\Mode} \times \frac{R_Ze_1\Mode}{\mu_{*}^{-1}(\T_Z) \cap R_Ze_1\Mod} \times \dots \times \frac{R_Ze_t\Mode}{\mu_{*}^{-1}(\T_Z) \cap R_Ze_t\Mode}$$
\end{small}
and a corresponding decomposition of the derived category 
\begin{center}
$
\hspace{4 cm}\xymatrix{\D(\frac{R\Mode}{\T_Z})\cong \D(\frac{R_Z\Mode}{\mu^{-1}_{*}(\T_Z)}) \ar[d]^(0.57){\wr} & \\ & }\newline \D( \frac{R_Ze_0\Mode}{\mu_{*}^{-1}(\T_Z) \cap R_Ze_0\Mode} )\times \D(\frac{R_Ze_1\Mode}{\mu_{*}^{-1}(\T_Z) \cap R_Ze_1\Mod}) \times \dots \times \D(\frac{R_Ze_t\Mode}{\mu_{*}^{-1}(\T_Z) \cap R_Ze_t\Mode})
$
\end{center}

On the other hand, if $\phi$ is the filtration by supports of $\Spec(R)$ associated to $(\mathcal{U},\mathcal{U}^{\perp}[1])$ (see theorem \ref{teo. main AJS}) and $i>m_t$, then we have the following chain of double implications:
$$\mathbf{p}\in \phi(i) \Longleftrightarrow q(R/\p[-i])=0 \Longleftrightarrow R/\p \in \T_Z \Longleftrightarrow \p\in Z$$
so that $\phi(i)=Z$, for all $i>m_t$. By lemma \ref{lem. equivalence with q(H)}, we get that $(q(\mathcal{U}_\phi), q(\mathcal{U}_{\phi}^{\perp})[1])$ is a t-structure in $\D(\frac{R\Mode}{\T_Z})$ whose heart is equivalent to $\Hp \cong \mathcal{H}$. But assertion 2 tells us that $q(\mathcal{U})$ is the direct sum (with respect to the above decompositions) of the aisles $\D^{\leq m_k}(\frac{R_Ze_k\Mode}{\mu^{-1}_{*}(\T_Z)\cap R_Ze_0\Mode})$, with $k=1,\dots, t$. It follows from lemma \ref{lem. product of heart}, that $\mathcal{H}$ is equivalent to the product of the hearts of the canonical t-structures 
$$(\D^{\leq m_t}(\frac{R_Ze_k\Mode}{\mu_{*}^{-1}(\T_Z)\cap R_Ze_0\Mode}),\D^{\geq m_t}(\frac{R_Ze_k\Mode}{\mu_{*}^{-1}(\T_Z)\cap R_Ze_0\Mode}))$$ 
in $\D(\frac{R_Ze_k\Mode}{\mu_{*}^{-1}(\T_Z)\cap R_Ze_0\Mode})$. The final statement of the theorem follows immediately from this and, hence, $\mathcal{H}$ is a module category since so is $\frac{R_Ze_k\Mode}{\mu_{*}^{-1}(\T_Z)\cap R_Ze_k\Mode}$, for $k=1,\dots,t$.\\

1) $\Longrightarrow $ 2) By lemma \ref{lem. product of heart}, we can assume, without loss of generality, that $R$ is connected. Let $\phi$ be the filtration by supports of $\Spec(R)$ associated to $(\mathcal{U},\mathcal{U}^{\perp}[1])$ and put $Z:=\underset{i\in \mathbb{Z}}{\bigcap} \phi(i)$ and $Z^{'}:=\underset{i\in \mathbb{Z}}{\bigcup} \phi(i)$. From the hypothesis of that $\mathcal{U}\neq \mathcal{U}[-1]$, there exists some integer $i$ such that $\phi(i)\supsetneq Z$. if now $\p \in Z^{'}\setminus Z$, then there exists a  greatest integer $m=m_\p$, such that $\p\in \phi(m)$. Consider the induced sp-filtration $\phi_\p$ of $\Spec(R_\p)$. By corollary \ref{cor. localizando (H) categorías de modulos}, the associated t-structure in $\D(R_\p)$ has a heart $\mathcal{H}_{\phi_\p}$ which is a module category. Moreover, we have $\phi_{\p}(m)\neq  \emptyset =\phi_{\p}(m+1)$ since $\p \in \phi(m)\setminus \phi(m+1)$. We are then in the eventually trivial case, and theorem \ref{prop. H mod. cat. R. con.} tell us that $\phi_{p}(m)=\Spec(R_\p)$ and, hence $\phi_{p}(m)=\phi_{\p}(i)=\Spec(R_\p)$ for all $i \leq m$. This proves that $\phi(i) \setminus Z$ is stable under generalization (and specialization) within $\Spec(R) \setminus Z$, for all $i \in \mathbb{Z}$ such that $Z\subsetneq \phi(i)$. It follows from this that if $\mathcal{M}_i=\text{MinSpec}(R)\cap \phi(i)$, then $\phi(i)=\bar{\mathcal{M}_i}\cup Z$, for all integers $i$, where the upper bar denotes the Zariski closure in $\Spec(R)$. Here MinSpec$(R)$ denotes the (finite) set of minimal prime ideals of $R$.\\

Now, we consider the chain $\dots \supseteq \mathcal{M}_i \supseteq \mathcal{M}_{i+1} \supseteq \cdots $. Bearing in mind that MinSpec$(R)$ is a finite set, together with the fact that $\mathcal{M}_i\subseteq \text{MinSpec}(R)$, for all $i\in\mathbb{Z}$, we have that previous chain is stationary. Moreover, we get integers $m_1 < m_2 < \dots <m_t$ satisfying the following properties:
\begin{enumerate}
\item[i)] $\phi(i) \supsetneq \phi(i+1)$ if, and only if, $i\in \{m_1,\dots, m_t\}$. Furthermore, $\phi(i)=Z$, for all $i>m_t$.

\item[ii)] If we put $\tilde{V}_0=\Spec(R)\setminus \phi(m_1)$, $\tilde{V}_t=\phi(m_t)\setminus Z$ and $\tilde{V}_k=\phi(m_k)\setminus \phi(m_{k+1})=\phi(m_k)\setminus \phi(m_{k}+1)$, for $k=1,\dots, t-1$, then each $\tilde{V_k}$ is stable under specialization and generalization in $\Spec(R)\setminus Z$ (since $\phi(i)=\bar{\mathcal{M}_i}\cup Z$, for all $i\in \mathbb{Z}$) and we have $\Spec(R)\setminus Z =\underset{0 \leq k \leq t}{\bigcupdot} \tilde{V}_k$.
\end{enumerate}

By lemma \ref{lem. equivalence with q(H)}, if $q:\D(R) \flecha \D(\frac{R\text{-Mod}}{\T_Z})$ is the canonical functor, then $(q(\mathcal{U}_{\phi}),q(\mathcal{U}_{\phi}^{\perp}))$ is a t-structure in $\D(\frac{R\Mode}{\T_Z})$ whose heart $\mathcal{H}_{q(\phi)}$ is equivalent to $\mathcal{H}=\Hp$. On the other hand, an iterative use of lemma \ref{lem. iterative quotiens category} says that if $V_k=\tilde{V}_k \cup Z$, for $k=0,1\dots,t$, then we have an equivalence of categories
$$\frac{R\text{-Mod}}{\T_Z}\xymatrix{\ar[r]^{\sim} & \ar[l]} \frac{\T_{V_{0}}}{\T_Z} \times \frac{\T_{V_{1}}}{\T_Z} \times \dots \times \frac{\T_{V_{t}}}{\T_Z}$$

If now $R_Z$ is the ring of quotients of $R$ with respect to $Z$, then, when viewed as an object of $\frac{R\Mode}{\T_Z}$, it decomposes in a direct sum $R_Z=Y_0\oplus Y_1 \oplus \dots \oplus Y_t$, where $Y_k\in \tilde{\T}_{V_{k}}:=\frac{\T_{V_{k}}}{\T_Z}$ for each $k=0,1,\dots, t.$ This decomposition corresponds to a decomposition of the identity of $\text{End}_{\frac{R\Mode}{\T_Z}}(R_Z)$ as a sum orthogonal idempotent endomorphisms. But we have a ring isomorphism $R_Z \iso \End_{\frac{R\Mode}{\T_Z}}(R_Z)$ since the section functor $\frac{R \Mode}{\T_Z} \flecha R_Z$-Mod is fully faithful (see \cite[Chapter IX, Section 1]{S}). We then get idempotents $e_0,e_1,\dots,e_t\in R_Z$, which are central since $R_Z$ is commutative (see lemma \ref{lem. R_F is a commutative ring}), such that $1=e_0+e_1+ \dots + e_t$. Note that all the $e_k$ are nonzero except perhaps $e_0$, which is zero when $\phi(m_1)=\Spec(R).$ \\

Using the equivalence of categories $\frac{R_Z\Mode}{\mu_{*}^{-1}(\T_Z)}\xymatrix{\ar[r]^{\sim} & \ar[l]} \frac{R\Mode }{\T_Z}$ given by the restriction of scalars $\mu_{*}: R_Z \Mode \flecha R \Mode$, we can now rewrite the above decomposition of Grothendieck categories as
$$\frac{R_Z\Mode}{\mu_{*}^{-1}(\T_Z)} \xymatrix{\ar[r]^{\sim} & \ar[l]} \frac{R_Ze_0 \Mode}{\mu_{*}^{-1}(\T_Z) \cap R_Ze_0 \Mode} \times \frac{R_Ze_1 \Mode}{\mu_{*}^{-1}(\T_Z) \cap R_Ze_1 \Mode} \times \dots \times \frac{R_Ze_t \Mode}{\mu_{*}^{-1}(\T_Z) \cap R_Ze_t \Mode}$$
That is, each category $\frac{\T_{V_{k}}}{\T_Z}$ is identified with the full subcategory of $Z$-closed $R$-modules that, when viewed as $R_Z$-modules, belong to $R_Ze_k$-Mod (equivalently, are annihilated by $1-e_k$). \\

The last decomposition of Grothendieck categories passes to the corresponding derived categories. Now, bearing in mind that $\mathcal{U}_Z\subseteq \mathcal{U}=\mathcal{U}_\phi$, we then have an isomorphism $\Hom_{\D(R)}(X,M)\cong \Hom_{\D(\frac{R\Mode}{\T_Z})}(q(X),q(M))$, for each $X\in \D(R)$ and $M\in \mathcal{U}^{\perp}_{\phi}$ (see the proof of previous lemma). Therefore, a complex $X\in \D(R)$ is in $\mathcal{U}$ if, and only if, $q(X)\in q(\mathcal{U})$. Hence, the proof will be finished once we check that $q(\mathcal{U})$ is the direct sum of the aisles $\D^{\leq m_k}(\frac{R_Ze_k\Mode}{\mu_{*}^{-1}(\T_Z)\cap R_Ze_k\Mode})$, with $1 \leq k \leq t$. Indeed, if that is the case then condition 2.a) will be automatic and, by lemma \ref{lem. product of heart}, the heart $\mathcal{H}\cong \mathcal{H}_{q(\phi)}$ will be equivalent to the product of categories $\frac{R_Ze_1\Mode}{\mu_{*}^{-1}(\T_Z)\cap R_Ze_1\Mode} \times \dots \times \frac{R_Ze_t \Mode}{\mu^{-1}_{*}(\T_Z)\cap R_Zet\Mode}$. Then also condition 2.b) will hold since $\mathcal{H}$ is a module category.\\

In the sequel, we will keep $H^{i}(?)$ to denote the $i$-th homology $R$-module and will denote by $H^{i}_{\G_Z}(?)$ the corresponding homology object in the category $\G_Z\cong \frac{R\Mode}{\T_Z}$. Let $Y\in \D(\G_Z)$, which we view as a complex of injective $Z$-closed $R$-modules, whence also as a complex of injective $R_Z$-modules. This complex is in $q(\mathcal{U})$ if, and only if, it is in $\mathcal{U}$ when viewed as a complex of $R$-modules. Then $Y\in q(\mathcal{U})$ if, and only if, $\Supp(H^{i}(Y))\subseteq \phi(i)=\phi(m_s)=Z\cup \tilde{V}_s \cup \dots \cup \tilde{V}_{t} \cup \tilde{V}_{t+1}$, whenever $m_s-1 < i \leq m_s$, with the convention that $m_0=-\infty$ and $m_{t+1}=+\infty$ and that $\tilde{V}_{t+1}=\emptyset$. When applying the functor $q:R\Mode \flecha \G_Z$, this is equivalent to saying that $H^{i}_{\G_Z}(Y)$ is in $\frac{\T_{V_s}}{\T_Z}\oplus \frac{\T_{V_{s+1}}}{\T_Z} \oplus \dots \oplus \frac{\T_{V_t}}{\T_Z}$ whenever $m_{s-1}<i \leq m_s$. Using now the equivalence of categories $\frac{\T_{V_{k}}}{\T_Z}\cong \frac{R_Ze_k\Mode}{\mu_{*}^{-1}(\T_Z)\cap R_Ze_k\Mode}$, with $k=0,1,\dots,t$, we conclude that $Y$ is in $q(\mathcal{U})$ if, and only if, we have $H^{i}_{\G_Z}(e_kY)=0$, for all $k=1, \dots, t$ and all $i>m_k$. This is exactly saying that $e_kY\in \D^{\leq m_k}(\frac{R_Ze_k\Mode}{\mu_{*}^{-1}(\T_Z)\cap R_Ze_k\Mode})$, for each $k=1,\dots,t$. Therefore we have the desired equality of aisles $q(\mathcal{U})=\underset{1 \leq k \leq t}{\oplus} \D^{\leq m_k}(\frac{R_Ze_k\Mode}{\mu_{*}^{-1}(\T_Z)\cap R_Ze_k\Mode})$
\end{proof}

\vspace{0.3 cm}

The following example shows that, even when $R$ is connected, the heart of a compactly generated t-structure can be a module category which strictly decomposes as a product of smaller (module) categories.

\begin{example}
Let $R$ be reduced (i.e. with no nonzero nilpotent elements), let $\{\p_0,\dots,\p_r\}$ be a subset of $\text{MinSpec}(R)$ and consider the sp-filtration $\phi$ of $\Spec(R)$ given as follows:
\begin{enumerate}
\item[1)] $\phi(i)=\Spec(R)\setminus \text{MinSpec}(R)$, for all $i>0$;

\item[2)] $\phi(i)=(\Spec(R) \setminus \MinSpec(R))\cup \{\p_0,\dots,\p_i\}$, for $-r<i \leq 0$;

\item[3)] $\phi(i)=(\Spec(R)\setminus \MinSpec(R))\cup \{\p_0,\dots,\p_r\}$, for all $i \leq -r$.
\end{enumerate}
If $(\mathcal{U}_{\phi},\mathcal{U}_{\phi}^{\perp}[1])$ is the associated compactly generated t-structure of $\D(R)$, then the heart $\Hp$ is equivalent to $k(\p_0)\Mode \times \dots \times k(\p_r)\Mode\cong k(\p_0)\times \dots \times k(\p_r)\Mode$, where $k(\p)$ denotes the residue field at $\p$, for each $\p\in \Spec(R)$.
\end{example}
\begin{proof}
If $S$ denotes the set of nonzero divisors of $R$ and $Z=\Spec(R)\setminus \MinSpec(R)$, then $R_Z\cong S^{-1}R$ and we have a ring isomorphism $S^{-1}R\iso \underset{\p\in \MinSpec(R)}{\prod}k(\p)$ (see \cite[Proposition III.4.23]{Ku}). Moreover the localization with respect to $\T_Z$ is a perfect localization in the sense of \cite[Chapter XI]{S}. It follows that $\frac{R\Mode}{\T_Z}$ is equivalent to $S^{-1}R \Mode$ and the canonical functor $q:R\Mode \flecha \frac{R\Mode}{\T_Z}$ gets identified with the localization functor $S^{-1}(?)\cong S^{-1}R \otimes_R ?:R\Mode \flecha S^{-1}R\Mode$. \\

On the other hand, given a complex $U\in \D(R)$, we have that $U\in \mathcal{U}_{\phi}$ if, and only if, $q(U)\in \underset{-r \leq k \leq 0}{\oplus} \D^{\leq k}(k(\p_k))$. Then condition 2 of previous theorem holds, so that $\Hp$ is equivalent to $k(\p_0)\Mode \times \dots \times k(\p_r)\Mode \cong k(\p_0)\times \dots \times k(\p_r)\Mode$.  
\end{proof}

\vspace{0.3 cm}

Some direct consequences can be derived now from last theorem.

\begin{corollary}
Let $R$ be connected and let $(\mathcal{U},\mathcal{U}^{\perp}[1])$ be a compactly generated left non-degenerate t-structure (i.e. $\underset{k\in \mathbb{Z}}{\bigcap} \mathcal{U}[k]=0$) in $\D(R)$ such that $\mathcal{U}\neq 0$. The heart of this structure is a module category if, and only if, $(\mathcal{U},\mathcal{U}^{\perp}[1])=(\D^{\leq m}(R),\D^{\geq m}(R))$, for some integer $m$.
\end{corollary}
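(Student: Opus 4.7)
The ``if'' implication is immediate since the heart of $(\D^{\leq m}(R),\D^{\geq m}(R))$ is equivalent to $R\Mode$, which is a module category. For the ``only if'' implication I would apply theorem \ref{teo. second main} and then use the two extra hypotheses---connectedness of $R$ and left non-degeneracy---together with $\mathcal{U}\neq 0$ to rigidify the data produced by that theorem into its trivial form.

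First I would verify the hypothesis $\mathcal{U}\neq \mathcal{U}[-1]$ of theorem \ref{teo. second main}. Since aisles satisfy $\mathcal{U}[1]\subseteq \mathcal{U}$, iterating gives $\mathcal{U}[k]\subseteq \mathcal{U}$ for all $k\geq 0$; hence an equality $\mathcal{U}=\mathcal{U}[-1]$ would propagate to $\mathcal{U}[k]=\mathcal{U}$ for every $k\in \mathbb{Z}$, so that $\mathcal{U}=\bigcap_{k}\mathcal{U}[k]=0$ by left non-degeneracy, contradicting $\mathcal{U}\neq 0$. Theorem \ref{teo. second main} then supplies a sp-subset $Z$ of $\Spec(R)$, nonzero orthogonal idempotents $e_1,\dots,e_t\in R_Z$, and integers $m_1<\dots <m_t$ satisfying its conditions 2.a) and 2.b).

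Next I would show that $Z=\emptyset$. Writing $\mathcal{U}=\mathcal{U}_\phi$ for the sp-filtration $\phi$ associated to $(\mathcal{U},\mathcal{U}^{\perp}[1])$ via theorem \ref{teo. main AJS}, the description $\mathcal{U}_\phi=\{X:\Supp H^{j}(X)\subseteq \phi(j)\text{ for all }j\}$ yields that $X\in \mathcal{U}_\phi[k]$ exactly when $\Supp H^{i}(X)\subseteq \phi(i+k)$ for all $i$. Therefore $\bigcap_k \mathcal{U}_\phi[k]$ consists of the complexes $X$ such that $\Supp H^{i}(X)\subseteq \bigcap_{j}\phi(j)=Z$ for all $i$. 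If $Z$ were nonempty, picking $\mathbf{p}\in Z$ would give a nonzero complex $R/\mathbf{p}[0]$ lying in this intersection, since $Z$ is stable under specialization and hence $\Supp R/\mathbf{p}=V(\mathbf{p})\subseteq Z$, contradicting left non-degeneracy. Hence $Z=\emptyset$, so $\T_Z=0$, $R_Z=R$, $\mu_*=\mathrm{id}$, and $R\Mode/\T_Z=R\Mode$.

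Finally, $R_Z=R$ is commutative (lemma \ref{lem. R_F is a commutative ring}) and connected, so its only idempotents are $0$ and $1$; since the $e_k$ are nonzero and pairwise orthogonal, this forces $t=1$ and $e_1=1$. Condition 2.a) of theorem \ref{teo. second main} then reads: $U\in \mathcal{U}$ if and only if $U\in \D^{\leq m_1}(R)$, whence $\mathcal{U}=\D^{\leq m_1}(R)$ and $\mathcal{U}^{\perp}[1]=\D^{\geq m_1}(R)$. The main obstacle I anticipate is the explicit identification $\bigcap_k \mathcal{U}_\phi[k]=\mathcal{U}_Z$ used to force $Z=\emptyset$, as it is the only step that requires genuinely unpacking the structure of $\mathcal{U}_\phi$ rather than invoking earlier results directly.
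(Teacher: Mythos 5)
Your proof is correct. The overall strategy coincides with the paper's up to the key point: both arguments feed the t-structure into theorem \ref{teo. second main} (after the same routine check that $\mathcal{U}\neq\mathcal{U}[-1]$) and both use left non-degeneracy to force $Z=\underset{i}{\bigcap}\phi(i)=\emptyset$. Where you diverge is in the endgame. The paper observes that once $Z=\emptyset$ the proof of theorem \ref{teo. second main} shows $\phi(j)=\emptyset$ for $j\gg 0$, so $\phi$ is eventually trivial, and then simply invokes theorem \ref{prop. H mod. cat. R. con.} (which already contains the connectedness argument). You instead stay inside the conclusion of theorem \ref{teo. second main}: with $Z=\emptyset$ you get $R_Z=R$, connectedness kills all but one idempotent, so $t=1$, $e_1=1$, and condition 2.a) literally reads $\mathcal{U}=\D^{\leq m_1}(R)$. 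Both routes are valid; yours extracts the answer directly from the statement of the classification theorem, while the paper's is a two-line reduction to an earlier special case. One small remark: the identification $Z=\underset{j}{\bigcap}\phi(j)$ that you use is established only in the \emph{proof} of theorem \ref{teo. second main}, not in its statement; if you prefer to avoid citing the proof, note that condition 2.a) alone already gives what you need, since any complex whose homologies are supported in $Z$ is annihilated by $q$ and therefore lies in $\mathcal{U}[k]$ for every $k$, whence $\mathcal{T}_Z=0$ by non-degeneracy.
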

\begin{proof}
Note that it is enough to prove the ``only part''. If $\phi$ is the associated filtration by support, the left non-degeneracy of $(\mathcal{U,U}^{\perp}[1])$ translates into the fact that $\underset{i\in \mathbb{Z}}{\bigcap} \phi(i)=\emptyset$. Now, from the proof of the implication 1) $\Longrightarrow$ 2) in theorem \ref{teo. second main}, we obtain that there exists an integers $m$ such that $\phi(j)=Z=\underset{i\in \mathbb{Z}}{\bigcap} \phi(i)=\emptyset$, for all $j>m$, i.e., $\phi$ is eventually trivial and, hence the result follows from theorem \ref{prop. H mod. cat. R. con.}.
\end{proof}

\begin{corollary}
Let $R$ be connected and suppose that its nilradical is a prime ideal. Let $(\mathcal{U},\mathcal{U}^{\perp}[1])$ be a compactly generated t-structure in $\D(R)$ such that $\mathcal{U}\neq \mathcal{U}[-1]$. The heart $\mathcal{H}$ of this t-structure is a module category if, and only if, there are a possibly empty sp-subset $Z$ of $\Spec(R)$ and an integer $m$ such that $\frac{R\Mode}{\T_Z}$ is a module category and $\mathcal{U}$ consists of the complexes $X\in \D(R)$ such that $\Supp(H^{i}(R))\subseteq Z$, for all $i>m$. In this case $\mathcal{H}$ is equivalent to $\frac{R\Mode}{\T_Z}$.
\end{corollary}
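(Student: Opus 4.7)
The plan is to deduce the corollary from Theorem~\ref{teo. second main} by exploiting the uniqueness of the minimal prime of~$R$.

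First, assume $\mathcal{H}$ is a module category, and invoke Theorem~\ref{teo. second main} to obtain a sp-subset $Z$, nonzero orthogonal idempotents $e_1,\ldots,e_t$ of $R_Z$, and integers $m_1<\cdots<m_t$. Set $e_0:=1-\sum_{k=1}^{t}e_k\in R_Z$. The hypothesis says the nilradical $\p_0$ is prime, hence the unique minimal prime of~$R$. Since $Z$ is stable under specialization and $\p_0\subseteq\mathbf{q}$ for every $\mathbf{q}\in\Spec(R)$, the assumption $\p_0\in Z$ would force $Z=\Spec(R)$, hence $\phi(i)=Z$ for every~$i$ and $\mathcal{U}=\mathcal{U}[-1]$, contrary to the hypothesis; so $\p_0\notin Z$.

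Next I would read from the proof of Theorem~\ref{teo. second main} the disjoint decomposition $\Spec(R)\setminus Z=\bigsqcup_{k=0}^{t}\tilde{V}_k$ in which each $\tilde{V}_k$ is stable under both specialization and generalization within $\Spec(R)\setminus Z$ and in which $e_k\neq 0$ iff $\tilde{V}_k\neq\emptyset$. Let $j\in\{0,\ldots,t\}$ be the unique index with $\p_0\in\tilde{V}_j$. For any $\mathbf{q}\in\Spec(R)\setminus Z$ we have $\p_0\subseteq\mathbf{q}$, so the generalization-stability of the piece $\tilde{V}_k$ containing $\mathbf{q}$ forces $\p_0\in\tilde{V}_k$, hence $k=j$. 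Thus $\tilde{V}_j=\Spec(R)\setminus Z$ and $\tilde{V}_l=\emptyset$ for $l\neq j$. If $j=0$ then $\tilde{V}_l=\emptyset$ for all $1\leq l\leq t$, so $t=0$, and then $\mathcal{U}$ would be the shift-invariant aisle of complexes with all homology supported on~$Z$, again violating $\mathcal{U}\neq\mathcal{U}[-1]$. Hence $j\geq 1$, and combining $\tilde{V}_l=\emptyset$ with $e_l\neq 0$ for $1\leq l\leq t$, $l\neq j$, forces $t=1$ and $j=1$. Therefore $e_0=0$, $e_1=1$, and $V_1=\Spec(R)$, so
\[
\frac{R_Ze_1\Mode}{\mu_*^{-1}(\T_Z)\cap R_Ze_1\Mode}=\frac{R_Z\Mode}{\mu_*^{-1}(\T_Z)}\cong\frac{R\Mode}{\T_Z}.
\]
Translating the condition $q(U)\in\D^{\leq m_1}(\frac{R\Mode}{\T_Z})$ back to $R$-modules via the exactness of $q$ and the identification $H^i_{\mathcal{G}_Z}(q(U))\cong q(H^i(U))$ gives $\Supp(H^i(U))\subseteq Z$ for all $i>m_1$; setting $m:=m_1$ and using the final clause of Theorem~\ref{teo. second main} yields $\mathcal{H}\cong\frac{R\Mode}{\T_Z}$, which is therefore a module category.

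For the converse, given $Z$ and $m$ as in the statement with $\frac{R\Mode}{\T_Z}$ a module category, define $\phi$ by $\phi(i)=\Spec(R)$ for $i\leq m$ and $\phi(i)=Z$ for $i>m$; then by Theorem~\ref{teo. main AJS} the prescribed $\mathcal{U}$ equals $\mathcal{U}_\phi$. A shift reduces to $m=0$, which is exactly the setting of Proposition~\ref{prop. Hp equivalent quotient}, producing the equivalence $\Hp\iso\frac{R\Mode}{\T_Z}$. The main obstacle is the paragraph above: one must carefully extract from inside the proof of Theorem~\ref{teo. second main} the disjoint decomposition together with generalization-stability of each $\tilde{V}_k$, and then use the unique minimal prime to collapse the decomposition to a single piece. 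Everything downstream is a translation through Proposition~\ref{prop. Hp equivalent quotient} and the identification $\frac{R_Z\Mode}{\mu_*^{-1}(\T_Z)}\cong\frac{R\Mode}{\T_Z}$.
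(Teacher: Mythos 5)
Your proposal is correct and follows essentially the same route as the paper: extract from the proof of Theorem \ref{teo. second main} the decomposition $\Spec(R)\setminus Z=\bigsqcup_k\tilde{V}_k$ into pieces stable under specialization and generalization, use the unique minimal prime to collapse it to a single piece (forcing $t=1$ and $\tilde{V}_0=\emptyset$), and conclude both directions via Proposition \ref{prop. Hp equivalent quotient}. You merely spell out in more detail the step the paper compresses into ``the existence of a unique minimal prime implies $t=1$.''
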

\begin{proof}
The ``if part'' and final part is a direct consequence of proposition \ref{prop. Hp equivalent quotient}. \\

On the other hand, for the ``only if'' part, note that the proof of theorem \ref{teo. second main} gives an sp-subset $Z\subsetneq \Spec(R)$ and decomposition $\Spec(R)\setminus Z=\underset{0 \leq k \leq t}{\bigcupdot}{\tilde{V}_k}$, where the $\tilde{V}_k$ are stable under specialization and generalization in $\Spec(R)\setminus Z,$ and all $\tilde{V}_k$ are nonempty except perhaps $\tilde{V}_0$, which is empty exactly when $\phi(i)=\Spec(R)$ for some $i\in \mathbb{Z}$. The existence of a unique minimal prime ideal of $R$ implies that $t=1$ and, with the terminology of theorem \ref{teo. second main}, that there is a unique integer $m=m_1$ in its assertion 2. Moreover, we have $\tilde{V}_0=\emptyset$. Then the associated filtration by supports satisfy that $\phi(i)=\Spec(R)$, for $i \leq m$, and $\phi(i)=Z$, for  $i>m$. Then the result follows from proposition \ref{prop. Hp equivalent quotient}.
\end{proof}

\vspace{0.3 cm}

After last theorem and its corollaries, the following question is pertinent.

\begin{question}
Let $R$ be a commutative Noetherian ring and $Z\subset\Spec (R)$ be an sp-subset such that $\frac{R\Mode}{\mathcal{T}_Z}$ is a module category. Does it come from a perfect localization?. In other words, is the canonical embedding $\frac{R\Mode}{\mathcal{T}_Z} \monic R_Z\Mode$ an equivalence of categories?
\end{question}

\begin{remark}
Note that if the answer to last question is affirmative, then assertion 2 of theorem \ref{teo. second main} could be rewritten as follows:

\begin{enumerate}

\item[2$^{'}$)] There are a possible empty sp-subset $Z\subsetneq\Spec (R)$, a family $\{e_1,\dots,e_t\}$ of nonzero orthogonal idempotents of $R_Z$ and integers $m_1<m_2< \dots <m_t$ such that $\mathcal{U}$ consists of the complexes $U\in\mathcal{D}(R)$ such that $R_Z\otimes_R^{\mathbf{L}}U\in \underset{1\leq k\leq t}{\oplus}\mathcal{D}^{\leq m_k}(R_Ze_k)$. 
\end{enumerate}
\end{remark}

\chapter{Open questions}
We present a list of questions which appear in a natural way from the results obtained for this work  and for which we do not have an answer.

\begin{enumerate}
\item[1)] Given a Grothendieck category $\G$ and a torsion pair in $\G$, $\te=(\T,\F)$, such that $\F$ is closed under taking direct limits, is $\Ht$ a Grothendieck category?;

\item[2)] Let $R$ be a ring and $V$ be a 1-tilting $R$-module such that $\Ker(\Hom_{R}(V,?))$ is closed under taking direct limits in $R$-Mod. Is $V$ equivalent to a classical 1-tilting module?

\item[3)]  Let $\G$ be a locally finitely presented Grothendieck category and $Q$ be a 1-cotilting object. Is $\F=\Cogen(Q)$ a generating class of $\G$?.

\item[4)] Let $\D$ be a triangulated category which has coproducts and let $(\mathcal{U},\mathcal{U}^{\perp}[1])$ be a compactly generated t-structure in $\D$. Is $\mathcal{H}=\mathcal{U}\cap \mathcal{U}^{\perp}[1]$ an AB5 abelian category?
Is it so when $\D=\D(R)$, where $R$ is a commutative Noetherian ring?. 

\item[5)] Given a Grothendieck category $\G$ and a t-structure in $\D(\G)$, $(\mathcal{U},\mathcal{U}^{\perp}[1])$, such that its associated heart $\mathcal{H}$ is an AB5 abelian category. Do the homology functors $H^{m}:\mathcal{H} \flecha \G$ preserve direct limits, for all $m\in $ \Z?. 

\item[6)] Let $R$ be a commutative Noetherian ring and $Z\subset\Spec (R)$ be an sp-subset such that $\frac{R\Mode}{\mathcal{T}_Z}$ is a module category. Does it come from a perfect localization?.

\item[7)] Given a recollement of triangulated categories 
$$\xymatrix{\D^{'} \ar@<0ex>[rr]  &&  \D  \ar@<0ex>[rr] \ar@<1ex>[ll] \ar@<-1 ex>[ll]  && \D^{''} \ar@<1ex>[ll] \ar@<-1 ex>[ll] }$$ 
and t-structures in $\D^{'}$ and $\D^{''}$ whose hearts are module categories, is the heart of the t-structure induced in $\D$ a module or a Grothendieck category?.
\end{enumerate}

\backmatter

\printindex

\end{document}